\newlist{condenum}{enumerate}{1} 
\setlist[condenum]{label=\bfseries A\arabic*., 
                   ref=A\arabic*, wide}
\DeclareMathOperator{\im}{Im}
\DeclareMathOperator{\supp}{supp}
\DeclareMathOperator{\Tr}{Tr}
\DeclareMathOperator{\var}{Var}
\DeclareMathOperator{\vart}{Var}
\DeclareMathOperator{\Psl}{PSL}
\DeclareMathOperator{\Sl}{SL}
\newtheorem {theorem}{Theorem}[section]
\newtheorem {lemma}[theorem]{Lemma}
\newtheorem {corollary}[theorem]{Corollary}
\newtheorem {proposition}[theorem]{Proposition}
\theoremstyle{definition}
\newtheorem {definition}[theorem]{Definition}
\newtheorem{remark}[theorem]{Remark}
\newcommand{\R}{\mathbb{R}}
\newcommand{\N}{\mathbb{Z}_{>0}}
\newcommand{\Z}{\mathbb{Z}}
\newcommand{\lnb}{\left\|}
\newcommand{\rnb}{\right\|}
\newcommand{\id}{\mathrm{Id}}
\newcommand{\Gp}{\Psl_2(\mathbb{R})}
\newcommand{\B}{P^1(\mathbb{R})}
\newcommand{\Bim}{\R / \pi \Z}
\newcommand{\floor}[1]{\left \lfloor #1 \right \rfloor}
\newcommand{\ceil}[1]{\left \lceil #1 \right \rceil}
\newcommand{\A}{\mathfrak{psl}_2(\mathbb{R})}
\newcommand{\As}{\mathfrak{psl}_2^*}
\newcommand{\s}{s}
\title{Absolutely Continuous Furstenberg Measures}
\author{Samuel Kittle}
\address{Department of Mathematics\\
University College London\\
25 Gordon Street (UCL Union Building)\\
London WC1H 0AY\\
United Kingdom}
\email{s.kittle@ucl.ac.uk}
\date{\today}
\subjclass[2020]{37F35, 28A80}
\begin{document}

\thanks{The author has received funding from the European Research Council (ERC) under the European Union’s Horizon 2020 research and innovation program (grant agreement No. 803711). The Author has also received funding from a London Mathematical Society ECF. The author has also received funding from the Heilbronn Institute for Mathematical Research.}
\begin{abstract}
In this paper we provide a sufficient condition for a Furstenberg measure generated by a finitely supported measure to be absolutely continuous. Using this we give completely explicit examples of absolutely continuous Furstenberg measures including examples which are generated by measures which are not symmetric.
\end{abstract}

\maketitle
\setcounter{tocdepth}{1}
\tableofcontents

\section{Introduction} \label{section:introduction}

In this paper we find a sufficient condition for a Furstenberg measure to be absolutely continuous. Using this we are able to give explicit examples of measures $\mu$ on $\Gp$ supported on finitely many points - including examples supported on only two points - such that the Furstenberg measure $\nu$ on $\B$ generated by $\mu$ is absolutely continuous. We are able to give much broader classes of examples than are given in earlier works such as \cite{BOURGAIN_2012}. In particular we do not require $\mu$ to be symmetric.

Given a measure $\mu$ on $\Gp$ we say that a measure $\nu$ on $\B$ is a Furstenberg measure generated by $\mu$ if $\nu$ is stationary under action by $\mu$. In other words we require
\begin{equation*}
\nu = \mu * \nu
\end{equation*}
where $*$ denotes convolution. It is a theorem of Furstenberg in \cite{FURSTENBERG_KIFER_1983} that if $\mu$ is strongly irreducible and the support of $\mu$ is not contained in a compact subgroup of $\Gp$ then there is a unique Furstenberg measure generated by $\mu$. Throughout this paper we will only be concerned with the case where $\mu$ is supported on finitely many points.

Furstenberg measures have many similarities with self-similar measures. A probability measure $\lambda$ on $\R^d$ is self-similar if there are similarities $S_1, S_2, \dots, S_n : \R^d \to \R^d$ and a probability vector $(p_1, p_2, \dots, p_n)$ such that
\begin{equation*}
\lambda = \sum_{i=1}^n p_i \lambda \circ S_i^{-1}.
\end{equation*}
Some important recent developments in the study of self-similar measures and their dimensions can be found in for example \cite{SOLOMYAK_1995}, \cite{HOCHMAN_2014}, \cite{SHMERKIN_2019}, \cite{VARJU_2019} or \cite{Kittle2024ASENS}.

Two fundamental questions about Furstenberg measures are what are their dimensions? And when are they absolutely continuous?

It is a classical result by Guivarc'h \cite{guivarc1990produits} that if $\mu$ is strongly irreducible and the support of $\mu$ is not contained in a compact subgroup of $\Gp$ and there is some $\varepsilon>0$ such that $\int \| g \|^{\varepsilon} \, d \mu(g) < \infty$ then there exist $C, \delta > 0$ such that if we let $\nu$ be the Furstenberg measure generated by $\mu$, let $x \in \B$ and let $r > 0$ then
\begin{equation*}
\nu(B(x, r))  \leq C r^{\delta}
\end{equation*}
where $B(x, r)$ is the open ball in $\B$ centre $x$ and radius $r$. This implies in particular that under these conditions $\nu$ has positive dimension.

In \cite{KAIMANOVICH_LE_PRINCE_2011} it was conjectured that if $\mu$ is supported on finitely many points then its Furstenberg measure $\nu$ is singular.  This conjecture was disproved by B\'ar\'any, Pollicott, and Simon in \cite{BARANY_POLLICOTT_SIMON_2012} which gave a probabilistic construction of measures $\mu$ on $\Gp$ supported on finitely many points with absolutely continuous Furstenberg measures. A variant of this conjecture that also requires $\mu$ to be supported on a discrete subgroup of $\Gp$ remains open.

In \cite{BOURGAIN_2012} Bourgain gives examples of measures $\mu$ on $\Gp$ supported on finitely many points such that the Furstenberg measure generated by $\mu$  is absolutely continuous.

In \cite{HOCHMAN_SOLOMYAK_2017}, building on the work of Hochman in \cite{HOCHMAN_2014}, Hochman and Solomyak show that providing $\mu$ satisfies some exponential separation condition then its Furstenberg measure $\nu$ satisfies
\begin{equation*}
\dim \nu = \min \left\{ \frac{h_{RW}}{2 \chi}, 1 \right\}
\end{equation*}
where $h_{RW}$ is the random walk entropy and $\chi$ is the Lypanov exponent. In particular, they show that if $\mu$ satisfies some exponential separation condition and
\begin{equation*}
\frac{h_{RW}}{\chi} \geq 2
\end{equation*}
then $\nu$ has dimension $1$. In this paper we will show that there is some $C$ which depends on, amongst other things, the rate of the exponential separation such that if
\begin{equation*}
\frac{h_{RW}}{\chi} \geq C
\end{equation*}
then $\nu$ is absolutely continuous. The result we end up with is similar to the result of Varj\'u in \cite[Theorem 1]{VARJU_2019} but applies to Furstenberg measures rather than Bernoulli convolutions. Our techniques are somewhat inspired by those of Hochman \cite{HOCHMAN_2014}, Hochman and Solomyak \cite{HOCHMAN_SOLOMYAK_2017}, and Varj\'u \cite{VARJU_2019} but we introduce several crucial new ingredients including, amongst other things, the concept of ``detail" from \cite{Kittle2024ASENS}.

\subsection{Main results}

We now state our result on the absolute continuity of Furstenberg measures. To do this we first need some definitions.

\begin{definition} \label{definition:strongly_irreducible}
Let $\mu$ be a probability measure on $\Gp$. We say that $\mu$ is strongly irreducible if there is no finite set $S \subset \B$ which is invariant when acted upon by the support of $\mu$.
\end{definition}

\begin{definition} \label{definition:Lyapunov_exponent}
Given a measure $\mu$ on $\Gp$ we define the \emph{Lyapunov exponent} of $\mu$ to be given by the almost sure limit
\begin{equation*}
\chi := \lim_{n \to \infty} \frac{1}{n} \log \lnb \gamma_1 \gamma_2 \dots \gamma_n \rnb
\end{equation*}
where $\gamma_1, \gamma_2, \dots$ are i.i.d.\ samples from $\mu$.
\end{definition}

It is a result of Furstenberg and Kesten \cite{FURSTENBERG_KESTEN_1960} and Furstenberg \cite{Furstenberg1971} that if $$\int \log \| g \| \, d \mu(g) < \infty,$$ $\mu$ is strongly irreducible and its support is not contained in a compact subgroup of $\Gp$ then this limit exists almost surely and is positive.

Note that $\mu$ being strongly irreducible and its support not being contained in a compact subgroup is equivalent to the support of $\mu$ generating a Zariski-dense semigroup. Therefore, using the notation of \cite{BenoistQuint2016}, we will refer to such measures as \emph{Zariski-dense} measures.

Throughout this paper given some $g \in \Gp$ we will write $\lnb g \rnb$ to mean the operator norm of $\hat{g}$ where $\hat{g} \in \Sl_2(\R)$ is some representative of $g$. Note that this does not depend on our choice of $\hat{g}$. We will also fix some left invariant Riemannian metric on $\Gp$ and let $d$ be its distance function. We then have the following definition.

\begin{definition} \label{definition:splitting_rate}
Let $\mu$ be a discrete measure on $\Gp$ supported on finitely many points. Let
\begin{equation*}
S_n := \bigcup_{i=1}^{n} \supp (\mu^{*i}).
\end{equation*}
Then we define the \emph{splitting rate} of $\mu$, which we will denote by $M_{\mu}$, by
\begin{equation*}
M_{\mu} := \exp \left( \limsup \limits_{x, y \in S_n, x \neq y} -\frac{1}{n} \log d(x, y) \right).
\end{equation*}
\end{definition}

Note that all left invariant Riemannian metrics are equivalent and therefore $M_{\mu}$ does not depend on our choice of Riemannian metric. We also need to define the following.

\begin{definition} \label{definition:phi}
We define the bijective function $\phi$ by
\begin{align*}
\phi: \B & \to \Bim\\
\left[ \begin{pmatrix}
\cos x \\ 
\sin x
\end{pmatrix} \right]
& \mapsto x.
\end{align*}
\end{definition}

We now define the following quantitative non-degeneracy condition.

\begin{definition} \label{definition:alpha_0_t_non_degenerate}
Given some probability measure $\mu$ on $\Gp$ generating a Furstenberg measure $\nu$ on $\B$ and given some $\alpha_0, t > 0$ we say that $\mu$ is \emph{$\alpha_0, t$-non-degenerate} if whenever $a \in \R$ we have
\begin{equation*}
\nu ( \phi^{-1} ([a, a+t] + \pi \Z)) \leq \alpha_0. 
\end{equation*} 
\end{definition}

This just says that each arc of length $t$ has $\nu$ measure at most $\alpha_0$. We now have everything needed to state the our new result on the absolute continuity of Furstenberg measures.

\begin{theorem} \label{theorem:main_furstenberg}
For all $R > 1$, $\alpha_0 \in (0, \frac{1}{3})$ and $t >0$ there is some $C > 0$ such that the following holds. Suppose that $\mu$ is a probability measure on $\Gp$ which is Zariski-dense, $\alpha_0, t$- non-degenerate, and is such that on the support of $\mu$ the operator norm is at most $R$. Suppose that $M_{\mu} < \infty$ and
\begin{equation}
\frac{h_{RW}}{ \chi}   > C \left( \max \left \{ 1 , \log \frac{\log M_{\mu}}{h_{RW}} \right \} \right)^{2} . \label{eq:main_theorem_condition}
\end{equation}
Then the Furstenberg measure $\nu$ on $\B$ generated by $\mu$ is absolutely continuous.
\end{theorem}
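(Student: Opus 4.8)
The plan is to establish absolute continuity of $\nu$ by producing, for a suitable scale parameter, a decomposition of $\nu$ as an average of pieces each of which is close in a quantitative sense to a measure with controlled density, and then show that as the scale goes to infinity these approximations converge in $L^2$ or at least remain bounded in $L^1$-density along a subsequence. Concretely, write $\nu = \mu^{*n} * \nu$ and think of a long random word $\gamma = \gamma_1 \cdots \gamma_n$ sampled from $\mu^{*n}$; conditioning on $\gamma$ pushes a fixed reference copy of $\nu$ forward by $\gamma$, and the image $\gamma_* \nu$ lives near the attracting point of $\gamma$ at a scale governed by the contraction ratio $\|\gamma\|^{-2} \approx e^{-2 \chi n}$. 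The key tension is between \emph{entropy} — there are roughly $e^{h_{RW} n}$ essentially distinct words, so the attracting points are spread out among $\approx e^{h_{RW} n}$ locations — and \emph{scale} — each piece is smeared over an arc of size $\approx e^{-2\chi n}$. When $h_{RW}/\chi$ is large these two exponential rates make the pieces overlap heavily and average out to something absolutely continuous. This is the same mechanism as in Varj\'u's work on Bernoulli convolutions, transported to the projective line.

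The steps I would carry out, in order: (1) Fix $n$ and use the martingale/boundary structure of the random walk to write $\nu$ as an average over words $w$ of length $n$ of measures $\nu_w$, where $\nu_w$ is (close to) $\gamma_w {}_* \nu$; using $\alpha_0, t$-non-degeneracy together with the bound $\|g\| \le R$ on generators, control the diameter and the non-degeneracy of each $\nu_w$ at the appropriate scale, so each $\nu_w$ looks like an approximate probability measure on an arc of length $\asymp e^{-2\chi n}$ that still spreads its mass over a definite fraction of that arc. (2) Introduce the ``detail'' functional from \cite{KITTLE_2021} — a quantitative substitute for entropy at a given scale — and show that the random walk increment adds a definite amount of detail at each step, with the total detail accumulated up to scale $e^{-2\chi n}$ being $\gtrsim h_{RW} n$ (this is where the exponential separation / splitting rate $M_\mu < \infty$ enters: it guarantees distinct words are genuinely separated, so the entropy is not lost to collisions below the relevant scale, and the correction term $\log(\log M_\mu / h_{RW})$ in \eqref{eq:main_theorem_condition} is exactly the price for turning combinatorial separation into metric separation). (3) Convert ``large accumulated detail relative to the number of scales'' into an $L^2$ or smoothed-$L^2$ bound on $\nu$: high detail forces the Fourier transform (or the $L^2$ norms of dyadic smoothings) of $\nu$ to decay, and the hypothesis $h_{RW}/\chi > C(\max\{1, \log(\log M_\mu/h_{RW})\})^2$ is precisely what makes the gain per scale outrun the $e^{2\chi n}$ growth in the number of scales, yielding a uniform $L^2$ bound and hence $\nu \in L^2 \subset L^1_{\mathrm{ac}}$.

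The main obstacle, and the place I expect the bulk of the technical work to live, is Step (2): proving that the random walk on $\Gp$ actually injects detail at a rate comparable to $h_{RW}$ at \emph{every} scale down to $e^{-2\chi n}$, uniformly. The difficulty is that the action on $\B$ is nonlinear and the contraction rates fluctuate (the walk is not a nice self-similar IFS with fixed ratios), so one must run a Hochman-style inverse/entropy-increment argument adapted to the varying geometry: either the detail increments are large at most scales — giving the desired bound — or there is a scale at which the measure is nearly uniform-on-its-support in a way that propagates, and one must rule out the bad alternative using strong irreducibility and the non-compactness of the support (which prevent $\nu$ from concentrating on a finite set or behaving like a measure with an atom of structure). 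Managing the interaction between the multiplicative fluctuations of $\|\gamma_1 \cdots \gamma_k\|$ (controlled via the Lyapunov exponent and large deviation estimates) and the additive bookkeeping of detail across scales, while keeping all constants dependent only on $R, \alpha_0, t$, is the crux; everything else — the reduction in Step (1) and the $L^2$ conclusion in Step (3) — should be comparatively routine given the framework of \cite{KITTLE_2021} and \cite{VARJU_2019}.
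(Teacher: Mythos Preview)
Your entropy-vs-scale intuition is correct, but the proposed mechanism diverges from the paper at the two decisive points. Step~(3) targets the wrong endpoint: the paper does not prove $\nu \in L^2$ or any Fourier decay, and there is no reason to expect such a bound under these hypotheses. Absolute continuity is obtained instead via the detail criterion of Lemma~\ref{lemma:suff_abs_cont}, namely $s_r(\nu) < (\log r^{-1})^{-\beta}$ for some $\beta>1$. Reaching even this weaker target requires the new notion of \emph{order $k$ detail} $s_r^{(k)}$ (Definition~\ref{definition:order_k_detail}): the decomposition of $\nu$ one can actually build depends on the scale $r$, so one cannot control $s_r$ of the pieces across the range of scales that the convolution machinery of \cite{KITTLE_2021} would need; instead one bounds $s_r^{(k)}(\nu)$ scale by scale and recovers $s_r(\nu)$ via the integration lemma (Lemma~\ref{lemma:ind_srk_to_sr}).

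Step~(2) names the right difficulty but proposes the wrong tool. The paper does not run a Hochman-style inverse/entropy-increment argument. Instead it writes a sample from $\nu$ as $g_1 \exp(U_1) \cdots g_n \exp(U_n) b$ with the $g_i$ measurable for a conditioning $\sigma$-algebra and the $U_i$ small in $\A$ with controlled conditional variance; Taylor expansion in the $U_i$ (Proposition~\ref{proposition:intro_decomp_detail}) reduces to bounding the detail of a sum of small independent real random variables (Lemma~\ref{lemma:small_rvs_to_detail}). The conditional-variance lower bound comes from a disintegration argument (Theorem~\ref{theorem:prob_exist}) converting the entropy gap $H(gs_1)-H(s_1)-H(gs_2)+H(s_2)$ between two smoothing scales into variance, harvested over a telescoping family of scales (Proposition~\ref{proposition:lots_of_v}); the factor $(\max\{1,\log(\log M_\mu/h_{RW})\})^2$ arises from the Gaussian truncation needed here, not from collision-counting. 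The elaborate stopping-time and ``proper decomposition'' bookkeeping of Section~\ref{section:sum_of_variances}, together with the renewal theorem (Theorem~\ref{theorem:renewal_theorem}) used to align scales across successive blocks of the walk, has no counterpart in your sketch and is where most of the work actually lives.
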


The constant $C$ can be computed by following the proof. 
\begin{remark}
The condition $M_{\mu} < \infty$ is closely related to the exponential separation condition in \cite{HOCHMAN_SOLOMYAK_2017}. Indeed in \cite{HOCHMAN_SOLOMYAK_2017} Hochman and Solomyak prove that if
\begin{equation*}
\limsup \limits_{x, y \in \supp (\mu^{*n}), x \neq y} -\frac{1}{n} \log d(x, y) < \infty
\end{equation*}
and $\frac{h_{RW}}{\chi} \geq 2$ then the Furstenberg measure has dimension $1$.
\end{remark}

We will now discuss how this result compares to previously existing results.

As we mentioned above, Bourgain \cite{BOURGAIN_2012} gave examples of absolutely continuous Furstenberg measures generated by measures on $\Gp$ supported on finitely many points. Bourgain was able to construct examples with density function in $C^r$ for every finite $r>0$. His approach was revisited by several authors including Benoist and Quint \cite{BenoistQuint2018}, Boutonnet, Ioana and Golsefidy \cite{BOUTONNET_IOANA_GOLSEFIDY_2017}, Lequen \cite{LEQUEN_2022}, and Kogler \cite{KOGLER_2022}. We quote the following result from \cite{KOGLER_2022}.

\begin{theorem} \label{theorem:kogler}
For every $c_1, c_2 > 0$ and $m \in \N$ there is some positive $\varepsilon_0 = \varepsilon_0(m, c_1, c_2)$ such that the following holds. Suppose that $\varepsilon \leq \varepsilon_0$ and let $\mu$ be a symmetric probability measure on $\Gp$ such that
\begin{equation}
\mu^{*n} \left( B_{\varepsilon^{c_1 n}}(H) \right) \leq \varepsilon^{c_2 n} \label{eq:weakly_diophantine_kogler}
\end{equation}
for all proper closed connected subgroups $H < \Gp$ and all sufficiently large $n$. Suppose further that
\begin{equation}
\supp \mu \subset B_{\varepsilon} ( \id). \label{eq:near_id_kogler}
\end{equation}
Then the Furstenberg measure generated by $\mu$ is absolutely continuous with $m$-times continuously differentiable density function.
\end{theorem}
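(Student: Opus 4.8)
The plan is to establish polynomial Fourier decay for $\nu$ of arbitrarily high order — the order growing as $\varepsilon_0$ shrinks — and then conclude smoothness by summing the differentiated Fourier series. Concretely, identifying $\B$ with $\Bim$ via $\phi$ and writing $\hat\nu(k)$ for the Fourier coefficients of $\phi_*\nu$, it suffices to prove
\[
|\hat\nu(k)| \;\lesssim\; |k|^{-\tau}\qquad (k\neq 0)
\]
for some $\tau > m+1$: then $\sum_{k\in\Z} |k|^{m}|\hat\nu(k)| < \infty$, so $\nu$ has a density in $C^m(\Bim)$. The whole problem is thus to produce such a $\tau$, with $\tau \to \infty$ as $\varepsilon \to 0$, out of the near-identity and Diophantine hypotheses.

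First I would extract the soft consequences of the hypotheses. Applying \eqref{eq:weakly_diophantine_kogler} to $H$ ranging over the maximal tori and the Borel subgroups of $\Gp$ rules out $\supp\mu$ lying in a compact subgroup and rules out any finite $\supp\mu$-invariant subset of $\B$; hence $\mu$ is strongly irreducible with non-compact support, $\nu$ is the unique stationary measure, $\chi > 0$, and $\nu$ obeys a Frostman bound $\nu(B(x,r)) \le C r^\delta$. The containment \eqref{eq:near_id_kogler} makes $\chi$ small (of order $\varepsilon^2$, by symmetry of $\mu$) and keeps the products $\gamma_1\cdots\gamma_n$ inside a ball of radius $O(\varepsilon\sqrt{n})$ for $n$ up to $O(1/\chi)$ times any fixed power of $\log|k|$, which is what will let us Taylor-expand the projective action cleanly near $\id$.

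The core is an $L^2$-flattening (a ``decay increment'') for the averaging operator $\lambda \mapsto \mu^{*n}*\lambda$ on measures on $\B$, in the spirit of Bourgain's original treatment of this problem and the discretized sum--product machinery behind it. Fix a frequency $k$ and choose $n \asymp (\log|k|)/\chi$ (or a suitable stopping time) so that the typical contraction scale of $\gamma_1\cdots\gamma_n$ is $\asymp |k|^{-1}$ while $\varepsilon^{c_1 n} \ll |k|^{-1}$. Using a partition of unity on arcs at the contraction scale one localises near a base point $x\in\B$, conjugates the action by $\phi$ to the real line, and Taylor-expands $g\cdot$ for $g=\gamma_1\cdots\gamma_n$ near $\id$: to leading order $g$ acts as an affine map $y\mapsto a_g y + b_g$, with $(a_g,b_g)$ governed by the pushforward of $\mu^{*n}$. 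Condition \eqref{eq:weakly_diophantine_kogler} forbids this pushforward from concentrating near cosets of one-parameter subgroups down to scale $\varepsilon^{c_1 n}$; feeding this non-concentration together with the Frostman regularity of $\nu$ into an inverse/sum--product estimate gives, uniformly in $x$,
\[
\bigl|\widehat{\mu^{*n}*\nu}(k)\bigr|\;\le\;\kappa\cdot\sup_{0<|j|\le|k|}|\hat\nu(j)|\;+\;|k|^{-\tau-1}
\]
for a fixed $\kappa = \kappa(\varepsilon)<1$ with $\kappa\to 0$ as $\varepsilon\to 0$. Since $\nu = \mu^{*n}*\nu$, iterating this increment over a geometric sequence of frequency windows, and unwinding how $\kappa$, $n$ and the quantitative non-concentration depend on $\varepsilon,c_1,c_2$, delivers $|\hat\nu(k)| \lesssim |k|^{-\tau}$ with $\tau$ as large as required once $\varepsilon \le \varepsilon_0(m,c_1,c_2)$.

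The step I expect to be the main obstacle is this flattening estimate: converting the subgroup-non-concentration of $\mu^{*n}$ on $\Gp$ into a genuine Fourier-decay gain for the measure on the circle, uniformly over the base point and with the contraction scale varying from point to point. This needs a careful treatment of the projective action beyond its linear part (controlling the higher Taylor/Schwarzian terms), a multiscale decomposition of $\mu^{*n}$ into pieces on which it really does behave like an affine random walk, and a discretized sum--product / inverse theorem robust enough to absorb the resulting errors. Everything else — the soft consequences of the hypotheses and the passage from Fourier decay to $C^m$ regularity — is comparatively routine.
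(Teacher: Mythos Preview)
This theorem is not proved in the paper at all: it is quoted verbatim from \cite{KOGLER_2022} as a point of comparison with the author's own Theorem~\ref{theorem:main_furstenberg}. The sentence preceding the statement says ``We quote the following result from \cite{KOGLER_2022}'', and the subsequent discussion only contrasts its hypotheses with those of Theorem~\ref{theorem:main_furstenberg}. So there is no ``paper's own proof'' to compare your proposal against.

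That said, the strategy you sketch --- polynomial Fourier decay via an $L^2$-flattening increment driven by discretized sum--product and subgroup non-concentration --- is indeed the Bourgain-type machinery underlying \cite{BOURGAIN_2012}, \cite{BOUTONNET_IOANA_GOLSEFIDY_2017}, \cite{LEQUEN_2022}, and \cite{KOGLER_2022}, and is the right shape of argument for this result. Your identification of the main obstacle (the flattening lemma converting non-concentration on $\Gp$ into a Fourier gain on the circle, uniformly in the base point) is accurate. But none of this appears in the present paper, whose methods (detail at scale, order-$k$ detail, the entropy/variance disintegration argument) are entirely different and give a weaker regularity conclusion under weaker hypotheses.
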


Here $B_{\varepsilon}(\cdot)$ denotes $\varepsilon$-neighbourhood of a set with respect to our left invariant Riemannian metric.
 
The conditions of this theorem are not directly comparable to ours but they are related. Condition \eqref{eq:weakly_diophantine_kogler} can be verified for $H = \{ \id \}$ if $M_{\mu} \leq \varepsilon^{-c_1}$ and $\mu^{*n}(\id) \leq \varepsilon^{c_2 n}$ for all sufficiently large $n$. If that is the case then $h_{RW} \geq c_2 \log \varepsilon^{-1}$. When condition \eqref{eq:near_id_kogler} holds we must have $\chi \leq O(\varepsilon)$. Informally speaking the conditions \eqref{eq:weakly_diophantine_kogler} and \eqref{eq:near_id_kogler} correspond to $M_{\mu} \leq \varepsilon^{-c_1}$,  $h_{RW} \geq c_2 \log \varepsilon^{-1}$, and $\chi \leq O(\varepsilon)$. In comparison condition \eqref{eq:main_theorem_condition} in Theorem \ref{theorem:main_furstenberg} is satisfied if $M_{\mu} \leq \exp \left( \exp \left( c \varepsilon^{-1/2} \right) \right)$, $h_{RW} \geq c$, and $\chi \leq \varepsilon$ for some suitably small $c > 0$.

It is important to note however, that Theorem \ref{theorem:kogler} gives higher regularity for the Furstenberg measure than our result.

To demonstrate the applicability of our result we give several examples of measures satisfying the conditions of Theorem \ref{theorem:main_furstenberg}. We will prove that these examples satisfy the conditions of Theorem \ref{theorem:main_furstenberg} in Section \ref{section:examples}.

\begin{definition}[Height]
Let $\alpha_1$ be an algebraic number of degree $d$ with algebraic conjugates $\alpha_2, \alpha_3, \dots, \alpha_d$. Suppose that the minimal polynomial for $\alpha_1$ over $\Z[X]$ has positive leading coefficient $a_0$. Then we define the \emph{height} of $\alpha_1$ by
\begin{equation*}
\mathcal{H}(\alpha_1) := \left( a_0 \prod_{i=1}^{n} \max \{ 1, |\alpha_i| \} \right)^{1/d}.
\end{equation*}
\end{definition}

Note that the height of a rational number is the maximum of the absolute values of its numerator and denominator. Also note that the height of an algebraic number is the $d$th root of its Mahler measure.

\begin{corollary} \label{corollary:symmetric_examples}
For every $A > 0$ there is some $C > 0$ such that the following is true. Let $r > 0$ be sufficiently small (depending on $A$) and let $\mu$ be a finitely supported symmetric probability measure on $\Gp$. Suppose that all the entries of the matrices in the support of $\mu$ are algebraic and that the support of $\mu$ is not contained in any compact subgroup of $\Gp$. Let $M$ be the greatest of the heights of these entries and let $k$ be the degree of the number field generated by these entries.

Let $U$ be a random variable taking values in $\A$ such that $\lnb U \rnb \leq r$ almost surely, $\exp(U)$ has law $\mu$, and the smallest eigenvalue of the covariance matrix of $U$ is at least $A r^2$.

Suppose that for any virtually solvable group $H < \Gp$ we have $\mu(H) \leq 1/2$.

Suppose further that
\begin{equation*}
r \leq C \left( \log  k + \log  \log (M + 10)  \right)^{-2}.
\end{equation*}
Then the Furstenberg measure generated by $\mu$ is absolutely continuous.
\end{corollary}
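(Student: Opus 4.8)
The plan is to derive Corollary \ref{corollary:symmetric_examples} from Theorem \ref{theorem:main_furstenberg} by verifying its hypotheses, with the parameters $R$, $\alpha_0$ and $t$ chosen as functions of $A$ alone. Several of these are immediate. Since $\lnb U \rnb \leq r$, every element of $\supp \mu$ has operator norm at most $e^{r} \leq 2$, so we may take $R = 2$; the hypothesis that $\supp \mu$ is not contained in a compact subgroup is assumed outright; and strong irreducibility follows from $\mu(H) \leq 1/2$ for virtually solvable $H$, because the setwise stabiliser in $\Gp$ of any finite subset of $\B$ is virtually solvable (the stabiliser of one point is a Borel subgroup, of two points is virtually abelian, and of three or more points is finite, hence cyclic), so a finite invariant set would force $\supp \mu$ into such a subgroup.

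Next I would show $M_\mu < \infty$ together with a quantitative bound $\log M_\mu \leq C_1 k^{2} \log (M + 2)$ for an absolute $C_1$. Every element of $S_n$ is a product of at most $n$ matrices with entries in the degree-$k$ number field $K$ generated by the entries of $\supp \mu$; expanding such a product and clearing the common denominator of each generator (which divides the leading coefficient of a minimal polynomial, hence is at most $M^{k}$) shows that each entry of the product is an element of $K$ of logarithmic Weil height $O(n k \log (M+2))$. If $x \neq y$ lie in $S_n$ then some entry of a matrix lift of $\hat x \mp \hat y$ is a nonzero element of $K$, so by the Liouville inequality its modulus is at least $\exp(-O(n k^{2} \log (M+2)))$; comparing $d(x,y)$ with this entry (using that the lifts have operator norm at most $2^{n}$) gives the claimed bound. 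This is a routine piece of algebraic number theory once it is set up.

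For the $\alpha_0,t$-non-degeneracy I would use a limiting argument as $r \to 0$. After rescaling $U$ by $r^{-1}$ its covariance lies in the compact set of positive matrices on $\A$ with smallest eigenvalue $\geq A$ and largest eigenvalue $\leq 1$; the $\mu$-driven walk on $\B$, run for time of order $r^{-2}$, converges to a non-degenerate diffusion on $\B$ (non-degenerate because the infinitesimal action of $\A$ on the tangent bundle of $\B$ is surjective at every point and the covariance is non-degenerate), whose stationary measure has a smooth density with sup norm bounded in terms of $A$. Hence $\nu$ converges weak-$*$, uniformly over the family, to such a measure, and for $r$ small enough and a suitable $t = t(A)$ every arc of length $t$ has $\nu$-measure below any prescribed $\alpha_0 < 1/3$.

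The main difficulty is the entropy condition \eqref{eq:main_theorem_condition}. I would establish (i) the upper bound $\chi \leq C_2(A)\, r^{2}$, which uses that $\mu$ is symmetric, so $U$ is centred (for $r$ below the injectivity radius of $\exp$) and the first-order term in Furstenberg's integral formula for $\chi$ vanishes; and (ii) a lower bound $h_{RW} \geq c_3(A)\, r$. Bound (ii) is the crux. Since $\supp \mu$ lies within $r$ of $\id$ but cannot equal $\{\id\}$ (else $\mu(\{\id\}) = 1$), and, for $r$ below the Margulis constant, the group $\Gamma$ generated by $\supp \mu$ cannot be discrete in $\Gp$ unless it is virtually solvable (which is excluded by $\mu(H)\le 1/2$), $\Gamma$ is non-discrete and not virtually solvable; hence it embeds discretely in a product $\prod_{v\in S}\PSL_2(K_v)$ over a finite set $S$ of places of $K$ with $|S|\ge 2$, and the random walk escapes to infinity in one of the factors other than the main Archimedean one, so that $h_{RW}$ dominates the corresponding Furstenberg entropy; quantifying the speed of escape there, which the non-degeneracy of the covariance keeps from decaying faster than linearly in $r$, yields $h_{RW} \gtrsim_A r$. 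Combining (i), (ii), the bound on $\log M_\mu$, and the hypothesis $r \leq C(\log k + \log \log (M+10))^{-2}$ — which forces $\log \log M_\mu = O(\sqrt{1/r})$ and hence bounds the right-hand side of \eqref{eq:main_theorem_condition} by $O(1/r)$ with an implied constant proportional to $C$, while $h_{RW}/\chi \gtrsim_A 1/r$ — one checks that \eqref{eq:main_theorem_condition} holds once $C = C(A)$ is taken small enough and $r$ is sufficiently small. The single most delicate point is exactly the linear-in-$r$ lower bound for $h_{RW}$, since the whole estimate is tight there.
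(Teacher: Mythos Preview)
Your overall plan—verify the hypotheses of Theorem \ref{theorem:main_furstenberg} with parameters depending only on $A$—is the same as the paper's, and most of the ingredients (strong irreducibility, the height-based bound on $M_\mu$, the $\alpha_0,t$-non-degeneracy via a central-limit-type argument) are handled correctly or at least plausibly. The paper proves non-degeneracy by a martingale Berry--Esseen theorem (Proposition \ref{proposition:small_symmetric_is_non_degenerate}) rather than a diffusion limit, but your sketch is in the same spirit.

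The real divergence, and the real problem, is your treatment of $h_{RW}$ and $\chi$. You aim for the pair of bounds $\chi \lesssim_A r^2$ and $h_{RW} \gtrsim_A r$, whereas the paper simply uses $\chi \leq O(r)$ (trivial, since $\|U\|\le r$) together with $h_{RW} \geq c$ for an \emph{absolute} constant $c$. The latter is Proposition \ref{proposition:random_walk_entropy_lower_bound}: one passes to $\tfrac12\mu+\tfrac12\delta_{\id}$ (same Furstenberg measure), and then Breuillard's strong Tits alternative combined with Kesten's spectral bound for the free group gives a uniform spectral gap $\lnb T_{\mu,G}\rnb \le 1-\varepsilon$, hence $h_{RW}\ge -2\log(1-\varepsilon)$. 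This is far stronger and far simpler than what you attempt, and once you have it the whole verification of \eqref{eq:main_theorem_condition} is a one-line computation with no stray $\log(1/r)$ terms to absorb.

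By contrast, your route to $h_{RW}\gtrsim_A r$ has a genuine gap. You argue that $\Gamma$ embeds discretely in $\prod_{v\in S}\PSL_2(K_v)$ and that the walk escapes in some non-archimedean factor at speed $\gtrsim r$ ``because the non-degeneracy of the covariance keeps the speed from decaying faster than linearly in $r$''. But the covariance hypothesis lives entirely in the \emph{real} Lie algebra $\A$; it says nothing about the $v$-adic size of the generators for $v$ outside the chosen archimedean place, and there is no mechanism offered by which a lower bound on real covariance forces escape at a controlled rate at another place. (Indeed, nothing in the hypotheses prevents the generators from being $v$-adically very close to the identity at every other place.) So this step does not go through as written. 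The fix is not to repair the $p$-adic argument but to drop it: use the Tits-alternative bound $h_{RW}\ge c$, after which your careful $\chi\le O(r^2)$ is no longer needed and the trivial $\chi\le O(r)$ suffices.
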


In the above corollary we can replace the requirement that $\mu$ is symmetric with the requirement $\|\mathbb{E} [U]\| < c r^2$ for any $c>0$. We can also replace the requirement $\mu(H) \leq 1/2$ with $\mu(H) \leq 1 - \varepsilon$ for any $\varepsilon > 0$. If we do this then we must allow $C$ to also depend on $c$ and $\varepsilon$.

Unlike examples based on the methods of Bourgain we do not require the support of $\mu$ to be close to the identity. We may prove the following.

\begin{corollary} \label{corollary:large_example}
For all $r>0$ there exists some Zariski-dense finitely supported probability measure $\mu$ on $\Gp$ such that all the elements in the support of $\mu$ are conjugate to a diagonal matrix with largest entry at least $r$ under conjugation by a rotation and the Furstenberg measure generated by $\mu$ is absolutely continuous.
\end{corollary}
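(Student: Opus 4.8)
The plan is to build, for a suitably large $N$, a finitely supported measure $\mu = \mu_N$ and apply Theorem \ref{theorem:main_furstenberg}. Put $r_0 := \max(2, \lceil r \rceil)$, and for $\theta \in \R/\pi\Z$ let $g_\theta := R_\theta \operatorname{diag}(r_0, r_0^{-1}) R_\theta^{-1} \in \Gp$, where $R_\theta$ denotes rotation by $\theta$. Then $g_\theta$ is conjugate by the rotation $R_\theta$ to the diagonal matrix $\operatorname{diag}(r_0, r_0^{-1})$, whose largest entry is $r_0 \geq r$, and $g_\theta^{-1} = g_{\theta+\pi/2}$ has the same property, so every group element used below will satisfy the conclusion. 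I would choose $\theta_1, \dots, \theta_N \in \R/\pi\Z$ so that: (i) the entries of each $g_{\theta_i}$ are algebraic of degree at most a fixed $k$ and of height $N^{O(1)}$ --- for instance, let $(\cos\theta_i, \sin\theta_i)$ run over the first $\asymp N$ primitive Pythagorean triples, so that the entries of $g_{\theta_i}$ are rationals with denominators $\lesssim N^{O(1)}$; (ii) the $\theta_i$ equidistribute in $\R/\pi\Z$ as $N \to \infty$ (true for the Pythagorean choice); and (iii) $g_{\theta_1}, \dots, g_{\theta_N}$ freely generate a free subgroup of $\Gp$. Let $\mu_N$ be the uniform probability measure on $\{g_{\theta_i}^{\pm 1} : 1 \le i \le N\}$.

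I would then check the hypotheses of Theorem \ref{theorem:main_furstenberg}. Strong irreducibility, and the failure of $\supp\mu_N$ to lie in a compact subgroup, follow from the presence of two loxodromic elements with distinct axes. On $\supp\mu_N$ the operator norm is $r_0$, so take $R = r_0$. Every element of $\supp(\mu_N^{*m})$ has rational entries of denominator at most $D^m$ with $D = N^{O(1)}$, hence two distinct such elements (each of norm $\le r_0^m$) are $\gtrsim (D r_0)^{-2m}$ apart in $d$; so $M_{\mu_N} \le (D r_0)^2 < \infty$ and, crucially, $\log M_{\mu_N} = O(\log N + \log r_0)$. Since $\mu_N$ is the simple random walk on the free group on $N$ generators, $h_{RW} = \tfrac{N-1}{N}\log(2N-1)$, which increases to $\infty$ with $N$; and since every generator has norm $r_0$ the trivial bound $\chi \le \log r_0$ holds --- this is all we shall need (in fact $\chi \asymp \log r_0$, but only the upper bound is used). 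Consequently, for $N \ge r_0$ we have $\tfrac{\log M_{\mu_N}}{h_{RW}} \le A$ for a constant $A$ depending only on $k$, so that $\max\{1, \log \tfrac{\log M_{\mu_N}}{h_{RW}}\} \le \max\{1, \log A\}$ is bounded independently of $N$.

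For the $\alpha_0, t$-non-degeneracy I would argue by approximation. By (ii), as $N \to \infty$ the measure $\mu_N$ converges weak-$*$ to the measure $\mu_\infty$ obtained by pushing the normalised Haar measure on $\R/\pi\Z$ forward under $\theta \mapsto g_\theta$. The support of $\mu_\infty$ generates $\Gp$, so $\mu_\infty$ is strongly irreducible with support not contained in a compact subgroup, and hence has a unique Furstenberg measure $\nu_\infty$. Conjugation by $R_\phi$ carries $g_\theta$ to $g_{\theta+\phi}$, so $\mu_\infty$ is invariant under conjugation by every rotation; therefore so is $\nu_\infty$, and the only rotation-invariant probability measure on $\B$ is the Lebesgue measure $\mathrm{Leb}$ (one checks directly that $\mu_\infty * \mathrm{Leb} = \mathrm{Leb}$, since averaging any measure over all rotations gives $\mathrm{Leb}$). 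By weak-$*$ continuity of the unique stationary measure at a strongly irreducible measure of bounded support, $\nu_{\mu_N} \to \mathrm{Leb}$, and since $\mathrm{Leb}$ is non-atomic this forces $\sup_{|I| = t} \nu_{\mu_N}(I) \to t/\pi$ over arcs $I$ of any fixed length $t$. Fixing $t = \pi/5$ and $\alpha_0 = 3/10 < \tfrac13$, we conclude that $\mu_N$ is $\alpha_0, t$-non-degenerate once $N$ is large enough.

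With $R = r_0$, $\alpha_0 = 3/10$ and $t = \pi/5$ now fixed, let $C = C(r_0, 3/10, \pi/5)$ be the constant of Theorem \ref{theorem:main_furstenberg} and set $C' := C\,(\max\{1, \log A\})^2$. Choosing $N$ large enough that in addition $h_{RW} = \tfrac{N-1}{N}\log(2N-1) > C' \log r_0$, condition \eqref{eq:main_theorem_condition} holds since
\[
\frac{h_{RW}}{\chi} \;\ge\; \frac{h_{RW}}{\log r_0} \;>\; C' \;\ge\; C\left(\max\Bigl\{1, \log \tfrac{\log M_{\mu_N}}{h_{RW}}\Bigr\}\right)^{2}.
\]
Theorem \ref{theorem:main_furstenberg} then gives that the Furstenberg measure generated by $\mu_N$ is absolutely continuous, and by construction every element of $\supp\mu_N$ is conjugate by a rotation to a diagonal matrix whose largest entry is $r_0 \ge r$, proving the corollary. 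The steps I expect to cost the most work are (iii) --- showing that the explicit bounded-height algebraic matrices along a prescribed (Pythagorean) family can be taken to freely generate, which I would attack via a $p$-adic ping-pong argument or by counting coincidences among bounded-height tuples --- and, for those preferring not to invoke the soft continuity statement, a quantitative version of the non-degeneracy estimate; the symmetry identity $\nu_\infty = \mathrm{Leb}$ makes the latter route quite clean.
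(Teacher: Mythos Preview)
Your overall strategy---many generators of the form $R_\theta\operatorname{diag}(r_0,r_0^{-1})R_\theta^{-1}$ with Pythagorean angles for height control, then apply Theorem~\ref{theorem:main_furstenberg}---matches the paper's, but step (iii) is a genuine gap, not a detail. Real ping-pong on $\B$ (Lemma~\ref{lemma:apply_ping_pong}) requires the eigenvalue of each generator to exceed $C\asymp 1/\varepsilon$, where $\varepsilon$ is the minimal separation between the angles. You have fixed the eigenvalue at $r_0$ while $\varepsilon\to 0$ as $N\to\infty$, so ping-pong fails for large $N$. Your fallback suggestions are not substantiated: a $p$-adic ping-pong argument for this specific Pythagorean family with common eigenvalue $r_0$ is not in the literature and is far from routine, and ``counting coincidences'' would at best show that a \emph{generic} choice of angles works, not your explicit one. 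Without freeness (or at least $h_{RW}\to\infty$) your key estimate $\log M_{\mu_N}/h_{RW}=O(1)$ collapses, since $\log M_{\mu_N}$ still grows like $\log N$ via Proposition~\ref{proposition:m_mu_bound}.

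The paper resolves exactly this tension with a Galois trick that you are missing. It takes generators $g_{i,j}$ with eigenvalue $\lceil r+\sqrt p\rceil+\sqrt p$, which is large enough relative to the angle separation that Lemma~\ref{lemma:apply_ping_pong} applies and the $g_{i,j}$ freely generate a free semigroup. It then passes to the Galois conjugates $\hat g_{i,j}$ with eigenvalue $\lceil r+\sqrt p\rceil-\sqrt p\in[r,r+1]$: any semigroup relation among the $\hat g_{i,j}$ would yield one among the $g_{i,j}$, so freeness transfers, giving $h_{RW}=\log(5\cdot 2^n)$ while $\chi\le\log(r+1)$. This decouples the geometric requirement (large eigenvalue for ping-pong) from the dynamical one (small $\chi$). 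The paper also handles non-degeneracy more directly than your limiting argument: it builds a rotation by $\pi/5$ into the support, so the Furstenberg measure is $R_{\pi/5}$-invariant and automatically $\tfrac15,\tfrac{\pi}{5}$-non-degenerate.
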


We also have the following family of examples supported on two elements.

\begin{corollary} \label{corollary:two_gens_ac}
For all sufficiently large $n \in \N$ the following is true.

Let $A \in \Gp$ be defined by
\begin{equation*}
A := \begin{pmatrix}
\frac{n^2-1}{n^2+1} & -\frac{2n}{n^2+1} \\
\frac{2n}{n^2+1} & \frac{n^2-1}{n^2+1}
\end{pmatrix}
\end{equation*}
and let $B \in \Gp$ be defined by
\begin{equation*}
B := \begin{pmatrix}
\frac{n^3 + 1}{n^3} & 0\\
0 & \frac{n^3}{n^3 + 1}
\end{pmatrix}.
\end{equation*}
Let $\mu = \frac{1}{2} \delta_A + \frac{1}{2} \delta_B$. Then $\mu$ is Zariski-dense and the Furstenberg measure generated by $\mu$ is absolutely continuous.
\end{corollary}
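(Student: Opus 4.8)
The plan is to verify, for all sufficiently large $n$, the hypotheses of Theorem~\ref{theorem:main_furstenberg} for $\mu = \frac12\delta_A + \frac12\delta_B$, with $R = 2$ and with constants $\alpha_0 \in (0,\tfrac13)$ and $t > 0$ that do \emph{not} depend on $n$. Several hypotheses are immediate. One checks $\cos\theta_n = \frac{n^2-1}{n^2+1}$ and $\sin\theta_n = \frac{2n}{n^2+1}$ for $\theta_n := 2\arctan(1/n)$, so $A$ is the rotation by $\theta_n$ and $\|A\| = 1$, while $\|B\| = \frac{n^3+1}{n^3} < 2$; hence the operator norm on $\supp\mu$ is at most $R = 2$. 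Since $B$ is hyperbolic, $\langle B\rangle$ is not relatively compact, so $\supp\mu$ lies in no compact subgroup. By Niven's theorem $\theta_n$ is an irrational multiple of $\pi$ for $n \ge 2$, and $\phi$ conjugates the action of $A$ on $\B$ to the translation $x \mapsto x+\theta_n$ of $\Bim$; thus $A$, and a fortiori $\langle A, B\rangle$, has no finite invariant subset of $\B$, so $\mu$ is strongly irreducible. Finally $\chi \le \tfrac12\log\|A\| + \tfrac12\log\|B\| \le \tfrac1{2n^3}$.

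For the splitting rate, note that every entry of $A$ is a rational with denominator dividing $n^2+1$ and every entry of $B$ has denominator dividing $n^3(n^3+1)$, all matrices lying in $\SL_2(\R)$ with determinant $1$. Hence any element of $S_m = \bigcup_{i \le m}\supp(\mu^{*i})$ has an $\SL_2$-representative whose entries have common denominator at most $D^m$, with $D := (n^2+1)n^3(n^3+1) \le n^9$. Two distinct $x,y \in S_m$ therefore differ, on consistently chosen representatives of $x$ and of $\pm y$, in some entry by at least $D^{-2m}$ (the elementary lower bound for distances between unequal rationals), and since $d(x,y) \gtrsim \|x\|^{-1}\|x-y\| \ge 2^{-m}\|x-y\|$ when $x^{-1}y$ is near the identity (and $d(x,y)$ is bounded below otherwise), we get $d(x,y) \ge e^{-Cm\log n}$ for an absolute $C$. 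Thus $M_\mu < \infty$ with $\log M_\mu = O(\log n)$, hence also $\log\log M_\mu = O(\log\log n)$.

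Non-degeneracy is obtained via Fourier analysis of $\bar\nu := \phi_*\nu$ on $\Bim$. The stationarity $\nu = \tfrac12 A_*\nu + \tfrac12 B_*\nu$, together with the facts that $\phi$ conjugates $A$ to $x \mapsto x + \theta_n$ and conjugates $B$ to a diffeomorphism of $\Bim$ within $O(n^{-3})$ of the identity in $C^0$, gives $\hat{\bar\nu}(\ell) = \tfrac12 e^{2i\ell\theta_n}\hat{\bar\nu}(\ell) + \tfrac12\hat{\bar\nu}(\ell) + O(|\ell|n^{-3})$, whence $|\hat{\bar\nu}(\ell)| \le \frac{C|\ell|n^{-3}}{|\sin(\ell\theta_n)|}$. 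As $\tfrac1n \le \theta_n \le \tfrac2n$, for $1 \le |\ell| \le n/10$ one has $|\sin(\ell\theta_n)| \ge \tfrac{2}{\pi}\cdot\tfrac{|\ell|}{n}$, so $|\hat{\bar\nu}(\ell)| = O(n^{-2})$ uniformly in $\ell$ and $n$. Now fix, once and for all, a smooth $F\colon \Bim \to [0,\infty)$ with $F \ge \mathbf 1_{[0,t]}$ for some small fixed $t > 0$ and $\tfrac1\pi\int_0^\pi F < \tfrac13$. Testing $\bar\nu$ against a translate of $F$ and splitting its (rapidly convergent) Fourier series at $|\ell| = n/10$, using $|\hat{\bar\nu}(\ell)| \le 1$ on the negligible tail, yields $\nu(\phi^{-1}([a,a+t]+\pi\Z)) \le \tfrac1\pi\int_0^\pi F + O(n^{-2})$ for every $a$; so $\mu$ is $\alpha_0,t$-non-degenerate with the \emph{fixed} constant $\alpha_0 := \tfrac12\big(\tfrac1\pi\int_0^\pi F + \tfrac13\big) \in (0,\tfrac13)$ once $n$ is large.

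It remains to bound $h_{RW}$ from below, and this is the main obstacle. The group $\langle A, B\rangle$ is non-elementary — an elementary subgroup of $\Gp$ cannot contain both the hyperbolic element $B$ and the infinite-order elliptic element $A$ — hence not virtually solvable, so that $\mu(H) \le \tfrac12$ for every virtually solvable $H$; consequently, by the same considerations that underlie Corollary~\ref{corollary:symmetric_examples} (an entropy lower bound for non-elementary finitely supported measures in terms of the heights, which here are $O(n^9)$ and have degree $1$), one gets $h_{RW} \gtrsim (\log n)^{-1}$, which already suffices; in fact the semigroup generated by $A$ and $B$ is free, so $h_{RW} = \log 2$. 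The cleanest proof of freeness — and the genuinely technical step — is a ping-pong argument: over $\R_q$ wait, over $\Q_q$ with $q \mid n^2+1$ the matrix $A$ becomes a hyperbolic isometry of the Bruhat–Tits tree whose axis meets the standard apartment only at the base vertex, while over $\Q_\ell$ with $\ell \mid n$ the matrix $B$ becomes hyperbolic; playing these two actions against each other on the product of the two trees shows distinct positive words in $A,B$ are distinct. Granting $h_{RW} \gtrsim (\log n)^{-1}$, the left side of \eqref{eq:main_theorem_condition} is $\gg n^3/\log n$ while the right side is $O\big((\log\log n)^2\big)$, so the inequality holds for all large $n$ and Theorem~\ref{theorem:main_furstenberg} gives the conclusion. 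The hard part, as indicated, is the uniform lower bound on $h_{RW}$: one must show that a rank-two semigroup of near-identity matrices of this shape cannot carry too little random-walk entropy, for which the $p$-adic freeness argument (or a height-based entropy estimate) is needed.
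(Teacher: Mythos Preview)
Your Fourier-analytic route to $\alpha_0, t$-non-degeneracy is correct and genuinely different from the paper's: the paper instead lifts the action to $\R$, runs the walk for $\approx n^2$ steps, and applies the Berry--Esseen type Wasserstein bound of Theorem~\ref{theorem:srv_normal_wass} to compare with a Gaussian of variance $\approx 1$. Your argument, reading off Fourier decay of $\bar\nu$ directly from the stationarity equation, is arguably cleaner and gives the same uniform conclusion.

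The real gap is the entropy bound. The appeal to ``the same considerations that underlie Corollary~\ref{corollary:symmetric_examples}'' is a misreading: the entropy lower bound used there is Proposition~\ref{proposition:random_walk_entropy_lower_bound} (Breuillard's strong Tits alternative combined with Kesten), which requires $\mu(\id) > K$ --- a hypothesis your $\mu$ fails --- and involves heights nowhere; heights enter only the $M_\mu$ estimate. Nothing in the paper yields $h_{RW} \gtrsim (\log n)^{-1}$ from non-solvability plus height data alone. Your fallback, semigroup freeness via a two-prime $p$-adic ping-pong on a product of Bruhat--Tits trees, is at best a sketch (the fragment ``over $\R_q$ wait'' signals an abandoned attempt), and ping-pong on a product with different generators hyperbolic on different factors is not a standard lemma; it would need a complete argument to be credible.

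The paper sidesteps this entirely with a one-line trick: it applies Theorem~\ref{theorem:main_furstenberg} not to $\mu$ but to $\tilde\mu := \tfrac12\mu + \tfrac12\delta_{\id}$, which has the same Furstenberg measure. Then $\tilde\mu(\id) = \tfrac12$ and $\tilde\mu(H) \le \tfrac34$ for every virtually solvable $H$ (since $\langle A,B\rangle$ is not virtually solvable), so Proposition~\ref{proposition:random_walk_entropy_lower_bound} gives $h_{RW}(\tilde\mu) \ge \varepsilon$ for an absolute $\varepsilon > 0$, uniform in $n$. Your remaining estimates ($\chi \le n^{-3}$, $\log M_\mu = O(\log n)$, $R = 2$, non-degeneracy) transfer to $\tilde\mu$ with only constant-factor changes, and \eqref{eq:main_theorem_condition} follows immediately.
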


\subsection{Outline of the proof} \label{section:outline}

We will now give an overview of the proof of Theorem \ref{theorem:main_furstenberg}. We adapt the concept of detail from \cite{Kittle2024ASENS} to work with measures on $\B$ or equivalently $\Bim$ instead of measures on $\R$. The detail of a measure $\lambda$ around scale $r$, denoted by $s_r(\lambda)$, is a quantitative measure of how smooth a measure is at scale $r$. We will define this in Definition \ref{definition:detail}. We then need the following result

\begin{lemma} \label{lemma:suff_abs_cont}
Suppose that $\lambda$ is a probability measure on $\B$ and that there exists some constant $\beta>1$ such that for all sufficiently small $r>0$ we have
\begin{equation*}
s_r(\lambda) < \left( \log r^{-1} \right) ^{-\beta}.
\end{equation*}
Then $\lambda$ is absolutely continuous.
\end{lemma}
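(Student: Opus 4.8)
The plan is to show that the hypothesis $s_r(\mu) < (\log r^{-1})^{-\beta}$ for all small $r$ forces the $L^2$-norms of suitable smoothings of $\mu$ to stay bounded, and then conclude absolute continuity by a standard weak-compactness argument. Concretely, I would fix a dyadic sequence of scales $r_k = 2^{-k}$ and, using a fixed smooth bump $\psi$, let $\mu_k = \mu * \psi_{r_k}$ be the mollification of $\mu$ at scale $r_k$ (working on $\Bim \cong \R/\pi\Z$, so everything is periodic). The detail $s_r(\mu)$ is designed to control exactly the increment in smoothness between consecutive scales: one expects an inequality of the shape
\begin{equation*}
\|\mu_{k+1}\|_2^2 \leq \|\mu_k\|_2^2 \cdot \bigl(1 + O(s_{r_k}(\mu))\bigr) + O(1),
\end{equation*}
or more likely an additive version $\|\mu_{k+1}\|_2^2 - \|\mu_k\|_2^2 \lesssim s_{r_k}(\mu)\,\|\mu_k\|_2^2$, coming directly from the definition of detail in Definition \ref{definition:detail}. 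The first thing I would do is extract precisely such a recursive inequality from that definition.

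Granting the recursion, the argument is then a convergence-of-products estimate: iterating gives $\|\mu_k\|_2^2 \lesssim \prod_{j \leq k} (1 + C s_{r_j}(\mu))$, and since $s_{r_j}(\mu) < (\log r_j^{-1})^{-\beta} = (j \log 2)^{-\beta}$ with $\beta > 1$, the series $\sum_j s_{r_j}(\mu)$ converges, hence the product converges and $\sup_k \|\mu_k\|_2 < \infty$. (This is exactly where $\beta > 1$ is used — for $\beta \leq 1$ the harmonic-type series diverges and the bound is lost; the finitely many initial scales where the hypothesis may fail contribute only a bounded multiplicative constant.) Then $\{\mu_k\}$ is bounded in $L^2(\Bim)$, which is a Hilbert space, so some subsequence converges weakly to a function $f \in L^2$; on the other hand $\mu_k \to \mu$ weakly as measures since $\psi_{r_k} \to \delta$. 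Therefore $\mu = f\,dx$ is absolutely continuous (indeed with $L^2$ density).

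The main obstacle is the first step: establishing the precise recursive control of $\|\mu * \psi_{r_{k+1}}\|_2$ in terms of $\|\mu * \psi_{r_k}\|_2$ and $s_{r_k}(\mu)$ from the definition of detail. Depending on exactly how $s_r$ is defined in Definition \ref{definition:detail} (e.g. as an $L^2$ or $L^1$ discrepancy of $\mu$ between scales $r$ and $2r$, possibly phrased via differences $\mu * \psi_r - \mu * \psi_{2r}$), this may require a Fourier-side computation on $\R/\pi\Z$ — writing $\widehat{\mu_k}(n) = \widehat{\mu}(n)\widehat{\psi}(r_k n)$ and comparing $\sum_n |\widehat{\mu_k}(n)|^2$ across scales, with the detail term bounding the contribution of frequencies $|n| \sim r_k^{-1}$ — or, if detail is defined more combinatorially in terms of measures of arcs, a more hands-on partition-of-unity estimate. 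Once that quantitative link is in place, the rest is the elementary summability argument above. I would also need to check the minor point that the finitely many scales on which the hypothesis is not assumed cause no trouble, which is immediate since each contributes a bounded factor.
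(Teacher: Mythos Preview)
The paper does not prove this lemma at all; it simply cites \cite[Lemma 1.18]{KITTLE_2021} and says the same proof works on $\Bim$. So there is no ``paper's proof'' to compare against directly, but the definition of detail in Definition \ref{definition:detail} makes the intended argument transparent, and it differs from yours in an important way.

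Your overall architecture --- control the increment between consecutive scales, sum a series whose convergence is exactly the condition $\beta>1$, pass to a limit --- is correct. The gap is that you are working in the wrong norm. Detail is defined as an $L^1$ quantity,
\[
s_r(\mu)=r^2\sqrt{\tfrac{\pi e}{2}}\,\bigl\|\mu*\tilde\eta'_{r^2}\bigr\|_1,
\]
where $\tilde\eta'_y=\partial_y\tilde\eta_y$ is the $y$-derivative of the periodised heat kernel. This immediately suggests an $L^1$ argument rather than an $L^2$ one: writing $f_y=\mu*\tilde\eta_y$, the fundamental theorem of calculus gives for $0<y_1<y_2$
\[
\|f_{y_1}-f_{y_2}\|_1\le\int_{y_1}^{y_2}\|\mu*\tilde\eta'_y\|_1\,dy
=\frac{1}{\sqrt{\pi e/2}}\int_{y_1}^{y_2}\frac{s_{\sqrt y}(\mu)}{y}\,dy,
\]
and the hypothesis $s_r(\mu)<(\log r^{-1})^{-\beta}$ makes $\int_0^{y_0} s_{\sqrt y}(\mu)\,y^{-1}\,dy<\infty$ precisely when $\beta>1$ (substitute $u=\log y^{-1}$). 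Hence $(f_y)_{y>0}$ is Cauchy in $L^1$ as $y\downarrow 0$, converges to some $f\in L^1$, and since $f_y\to\mu$ weakly one gets $d\mu=f\,dx$. This is additive and requires no recursion at all.

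By contrast, your multiplicative $L^2$ recursion $\|\mu_{k+1}\|_2^2\le(1+Cs_{r_k}(\mu))\|\mu_k\|_2^2$ does not follow from the $L^1$ definition: if one tries to bound $\frac{d}{dy}\|f_y\|_2^2=-\|\partial_x f_y\|_2^2$ via integration by parts one lands on $\|f_y\|_\infty\cdot\|\mu*\tilde\eta'_y\|_1$, and there is no control of $\|f_y\|_\infty$ by $\|f_y\|_2^2$. So the ``main obstacle'' you flagged is real, and the fix is not to work harder on it but to switch to $L^1$, where the definition of detail hands you the increment bound for free.
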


A similar result for measures on $\R$ is proven in \cite[Lemma 1.18]{Kittle2024ASENS}. The same proof works for measures on $\Bim$.

In Definition \ref{definition:order_k_detail} we introduce a new quantity for measuring how smooth a measure is at some scale $r>0$ which we will call order $k$ detail around scale $r$ and denote by $s_r^{(k)}(\cdot)$. The definition is chosen such that trivially we have
\begin{equation}
s_r^{(k)}(\lambda_1 * \lambda_2 * \dots * \lambda_k) \leq s_r(\lambda_1) s_r(\lambda_2) \dots s_r(\lambda_k). \label{eq:trivial_srk_bound}
\end{equation}

We can also bound detail in terms of order $k$ detail using the following lemma.

\begin{lemma} \label{lemma:ind_srk_to_sr}
Let $k$ be an integer greater than $1$ and suppose that $\lambda$ is a probability measure on $\Bim$. Suppose that $a, b > 0$ and $\alpha \in (0,1)$. Suppose that $a < b$ and that for all $r \in [a, b]$ we have
\begin{equation*}
s_r^{(k)}(\lambda) \leq \alpha.
\end{equation*}
Then we have
\begin{equation*}
s_{a\sqrt{k}}(\lambda) \leq \alpha k \left( \frac{2e}{\pi} \right)^{\frac{k-1}{2}} + k!\cdot ka^2  b^{-2} .
\end{equation*}
\end{lemma}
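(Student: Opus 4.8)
The plan is to pass to the Fourier side, where detail and order-$k$ detail become multiplier norms, and then to compare the two relevant multipliers frequency by frequency. By the definitions of detail (Definition~\ref{definition:detail}) and order-$k$ detail (Definition~\ref{definition:order_k_detail}), both quantities are produced by convolving against a fixed smoothing kernel rescaled to scale $r$; writing $u_r$ for this kernel and $\theta_r := \delta_0 - u_r$ one has $s_r(\lambda) = \lVert \lambda * \theta_r\rVert$ and $s_r^{(k)}(\lambda) = \lVert\lambda*\theta_r^{*k}\rVert$ for the submultiplicative norm $\lVert\cdot\rVert$ used there, and on the Fourier side of $\Bim$ this amounts, up to harmless lower-order corrections, to the estimates
\begin{equation*}
s_r(\lambda)\ \asymp\ \sup_{n}\,\lvert\widehat\lambda(n)\rvert\,\lvert 1-\widehat u(rn)\rvert,\qquad
s_r^{(k)}(\lambda)\ \asymp\ \sup_{n}\,\lvert\widehat\lambda(n)\rvert\,\lvert 1-\widehat u(rn)\rvert^{k},
\end{equation*}
where $\widehat u$ is the Fourier transform of the model kernel. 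The hypothesis then reads $\lvert\widehat\lambda(n)\rvert\,\lvert 1-\widehat u(rn)\rvert^{k}\le\alpha$ for every frequency $n$ and every $r\in[a,b]$, and the goal is to bound $\sup_n\lvert\widehat\lambda(n)\rvert\,\lvert 1-\widehat u(a\sqrt k\,n)\rvert$.

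I would fix a frequency threshold $N_0\asymp b^{-1}$ and split the supremum. For $\lvert n\rvert\le N_0$, use that $\theta_{a\sqrt k}$ is small at low frequencies: since the model kernel is symmetric with variance $\sigma^2$, the elementary bound $0\le 1-\widehat u(y)\le\tfrac12\sigma^2y^2$ gives $\lvert 1-\widehat u(a\sqrt k\,n)\rvert\le\tfrac12\sigma^2ka^2n^2\le\tfrac12\sigma^2ka^2N_0^2$, which together with $\lvert\widehat\lambda(n)\rvert\le1$ keeps this part below $\tfrac12\sigma^2ka^2N_0^2$. For $\lvert n\rvert>N_0$, compare with the order-$k$ detail at the single scale $r=b\in[a,b]$: for $\lvert n\rvert>N_0\asymp b^{-1}$ the factor $1-\widehat u(bn)$ is bounded away from $0$, so the multiplier ratio $\lvert 1-\widehat u(a\sqrt k\,n)\rvert/\lvert 1-\widehat u(bn)\rvert^{k}$ is bounded above uniformly in $n$; hence on this range $\lvert\widehat\lambda(n)\rvert\,\lvert 1-\widehat u(a\sqrt k\,n)\rvert$ is at most that ratio times $s_b^{(k)}(\lambda)\le\alpha$. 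Adding the two ranges yields a bound of the form $(\text{ratio constant})\cdot\alpha+\tfrac12\sigma^2ka^2N_0^2$.

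The crux is the uniform ratio bound on $\lvert n\rvert>N_0$, and this is where the stated constants come from. One picks $N_0$ so that $1-\widehat u(bN_0)$ equals a specific value: enlarging $N_0$ shrinks the ratio constant but enlarges the low-frequency error, and for a Gaussian-type model kernel $\widehat u(y)=e^{-\sigma^2y^2/2}$ the optimal balance is at $1-\widehat u(bN_0)=\sqrt{\pi/2e}$. With this choice, writing $\kappa:=ka^2b^{-2}$ and $v:=\tfrac12\sigma^2b^2n^2$, the ratio becomes $(1-e^{-\kappa v})/(1-e^{-v})^{k}$ for $v\ge v_0$, and using the monotonicity of $t\mapsto t/(e^t-1)$ one checks that this is at most $(2e/\pi)^{(k-1)/2}$ when $b\ge a\sqrt k$ and at most $(2e/\pi)^{k/2}\le k(2e/\pi)^{(k-1)/2}$ when $b<a\sqrt k$; in all cases it is $\le k(2e/\pi)^{(k-1)/2}$, while the low-frequency error $\tfrac12\sigma^2ka^2N_0^2=(\text{const})\cdot ka^2b^{-2}$ sits comfortably below $k!\,k\,a^2b^{-2}$. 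The factorial is a deliberate over-estimate that absorbs both the threshold-dependent constant and the slack arising because the genuine kernel $u_r$ on $\Bim$ is only a periodised approximation of the model kernel. I expect this balancing computation — choosing $N_0$ and verifying that the ratio is maximised at $\lvert n\rvert=N_0$ via the monotonicity of $t\mapsto t/(e^t-1)$ — to be the only substantial step; the reduction to it is routine.
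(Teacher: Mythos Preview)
Your reduction to the Fourier side rests on two claims that are both false for the objects in this paper. First, detail is not built from a kernel of the form $\theta_r=\delta_0-u_r$: by Definition~\ref{definition:detail} the relevant kernel is $r^2\sqrt{\pi e/2}\,\tilde\eta'_{r^2}$, the \emph{variance-derivative} of the periodised heat kernel, whose Fourier multiplier at frequency $n$ is (up to normalisation) $-(rn)^2 e^{-(rn)^2/2}$, not $1-\widehat u(rn)$. Second, and more seriously, $s_r(\lambda)$ and $s_r^{(k)}(\lambda)$ are $L^1$ norms, and an $L^1$ norm on $\Bim$ is \emph{not} comparable to the supremum of Fourier coefficients; you only have the one-sided bound $\sup_n|\widehat f(n)|\le\|f\|_1$, with no reverse inequality in general. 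So the asserted equivalences $s_r(\lambda)\asymp\sup_n|\widehat\lambda(n)|\,|1-\widehat u(rn)|$ and its order-$k$ analogue simply do not hold, and the frequency-splitting argument that follows has nothing to stand on.

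The paper's proof is entirely different and does not touch the Fourier side. It works directly with the $L^1$ norm and the identity $\partial_y^{k-1}\tilde\eta_y=\partial_y^{k-1}\tilde\eta_{kb^2}-\int_y^{kb^2}\partial_u^{k}\tilde\eta_u\,du$: bounding the integrand via the hypothesis on $s_r^{(k)}$ yields a bound on $s_r^{(k-1)}$ on a slightly shifted range of scales (this is Lemma~\ref{lemma:srk_to_srk_m_1}), and iterating this step from $j=k$ down to $j=1$ produces the stated constants. The factor $(2e/\pi)^{(k-1)/2}$ arises because each inductive step multiplies the leading term by $\sqrt{2e/\pi}$, and the $k!$ appears from the accumulation of the secondary error terms across the induction, not from any slack or periodisation issue.
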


\begin{remark}
Combining Lemma \ref{lemma:ind_srk_to_sr} with \eqref{eq:trivial_srk_bound} we get a result that can be stated informally as follows. Let $\lambda_1, \lambda_2, \dots, \lambda_n$ be measures on $\R / \pi \Z$. Assume that we have some bound on $s_r(\lambda_i)$ for all integers $i \in [1,n]$ and all $r$ in a suitably large range of scales around some scale $r_0$. Then we can get a vastly improved bound for $s_{r_0}(\lambda_1 * \lambda_2 * \dots * \lambda_n)$.

This is essentially the same as \cite[Theorem 1.19]{Kittle2024ASENS}. However \cite[Theorem 1.19]{Kittle2024ASENS} is not sufficient for the proof of our result on Furstenberg measures. In what follows, we decompose the Furstenberg measure $\nu$ as the convex combination of measures that can be approximated by the convolutions of measures. This allows us to estimate $s_r^{(k)}(\nu)$ for arbitrary scales using \eqref{eq:trivial_srk_bound} amongst other things. Unlike the setting of for example \cite{Kittle2024ASENS}, we cannot estimate the detail of the convolution factors at a sufficiently large range of scales and so cannot apply \cite[Theorem 1.19]{Kittle2024ASENS}.

In fact, the decomposition we use to estimate $s_r^{(k)}(\nu)$ depends on the exact value of $r$. For this reason the notion of order $k$ detail is a key innovation of this paper that is necessary for the proof.
\end{remark}

We now need tools for bounding the detail of a measure at a given scale. One of them is the following. 
	
\begin{lemma} \label{lemma:small_rvs_to_detail}
For every $\alpha > 0$ there exists some $C > 0$ such that the following is true. Let $X_1, X_2, \dots, X_n$ be independent random variables taking values in $\Bim$ such that $|X_i| < s$ almost surely for some $s>0$. Let $\sigma > 0 $ be defined by $\sigma^2 = \sum_{i=1}^n \var X_i$. Let $ r \in (s, \sigma)$. Suppose that
\begin{equation*}
\frac{\sigma}{r}, \frac{r}{s} \geq C.
\end{equation*}
Then
\begin{equation*}
s_r(X_1 + X_2 + \dots + X_n) \leq \alpha.
\end{equation*}
\end{lemma}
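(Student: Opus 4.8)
The plan is to pass to the Fourier side of the sum $Y := X_1 + X_2 + \dots + X_n$, viewed as a random variable on $\Bim$, and to exploit that in a window of frequencies around $1/r$ its Fourier coefficients exhibit Gaussian decay governed by $\sigma$. Here $\widehat{Y}(k) = \mathbb{E}[e^{2ikY}]$ for $k \in \Z$. By the Fourier-analytic properties of detail (from the discussion around Definition~\ref{definition:detail}), $s_r(Y)$ is controlled by $\sup_{|k| \asymp 1/r}|\widehat{Y}(k)|$ — with $k$ ranging over a bounded window of integers about $1/r$ — together with a contribution from $|k| \gtrsim 1/r$ that is weighted down by the rapidly decaying mollifier built into the definition of detail. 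So it suffices to make $|\widehat{Y}(k)|$ small on a bounded window about $1/r$ and to dispose of that tail.

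The Fourier estimate itself is routine. By independence, $\widehat{Y}(k) = \prod_{i=1}^n \mathbb{E}[e^{2ikX_i}]$. For a single summand, symmetrising with an independent copy $X_i'$ gives $|\mathbb{E}[e^{2ikX_i}]|^2 = \mathbb{E}[\cos(2k(X_i - X_i'))]$; since $|X_i|, |X_i'| < s$ the circle difference has a representative of modulus below $2s$, so the elementary inequality $\cos u \le 1 - cu^2$ on $[-\pi,\pi]$ together with $1 - x \le e^{-x}$ yields, provided $|k| s$ is below an absolute constant, $|\mathbb{E}[e^{2ikX_i}]|^2 \le \exp(-c' k^2 \mathbb{E}[(X_i - X_i')^2]) = \exp(-2c' k^2 \var X_i)$. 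Multiplying over $i$ gives $|\widehat{Y}(k)| \le \exp(-c' k^2 \sigma^2)$ whenever $|k| \le c''/s$. The hypothesis $r \gg s$ places the entire window of frequencies relevant to scale $r$ inside $\{|k| \le c''/s\}$, so this bound applies there; and the hypothesis $\sigma \gg r$ then forces $|\widehat{Y}(k)| \le \exp(-c'''(\sigma/r)^2)$ throughout the window, which is at most $\alpha$ once $C$ is chosen large enough in terms of $\alpha$. The tail $|k| \gg 1/r$ is absorbed by the rapid decay of the mollifier weight, using once more that $r \gg s$ (so the mollifier weight at frequency $\sim 1/s$ is already a rapidly decreasing function of $r/s$).

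The hard part, and the reason both quantitative separation hypotheses genuinely enter, is the behaviour of $\widehat{Y}$ at frequencies $|k| \gtrsim 1/s$: there the summands may be atomic (for instance two-valued), so $|\widehat{Y}(k)|$ can return arbitrarily close to $1$ (``recurrences''), and no decay survives. In particular a naive central-limit or Berry--Esseen comparison of $Y$ with a Gaussian of variance $\sigma^2$ does not work here, since the Berry--Esseen error $\sum_i \mathbb{E}|X_i|^3/\sigma^3$ can be large when there are very many summands of tiny variance. The point is that detail at scale $r$ only probes a bounded window of frequencies about $1/r$, with quickly decaying weight beyond it; the hypothesis $r/s \ge C$ keeps this window strictly below the recurrence scale $1/s$, while $\sigma/r \ge C$ places it deep inside the Gaussian-decay regime. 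The two hypotheses thus play genuinely distinct roles.

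Finally, to keep the per-summand estimates uniform one first observes that the conclusion is only nontrivial when $s$ (and hence $r$) is below a fixed absolute constant — for larger scales $s_r$ vanishes trivially by the definition of detail — so there is no loss in assuming this. I expect the one place where real care is needed is matching the precise frequency window and mollifier tail coming from the definition of detail to the two estimates above; once the definition of detail is in hand, this is straightforward bookkeeping, and the rest is the standard characteristic-function argument sketched here.
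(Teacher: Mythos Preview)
Your Fourier-analytic outline is plausible and can be made rigorous, but it is genuinely different from the paper's proof, and your dismissal of the Berry--Esseen route is mistaken. You claim that $\sum_i \mathbb{E}|X_i|^3/\sigma^3$ ``can be large when there are very many summands of tiny variance,'' but after centering, $|X_i'| \le 2s$ forces $\mathbb{E}|X_i'|^3 \le 2s\,\var X_i$, hence $\sum_i \mathbb{E}|X_i'|^3 \le 2s\sigma^2$ regardless of how many summands there are or how small their individual variances. The paper uses exactly this observation: the Wasserstein form of the quantitative CLT (Theorem~\ref{theorem:srv_normal_wass}) gives $\mathcal{W}_1(S',\eta_{\sigma^2}) \lesssim \sum_i \mathbb{E}|X_i'|^3/\sigma^2 = O(s)$; one computes $s_r(\eta_{\sigma^2}) = r^2/(r^2+\sigma^2)$ directly; and the Wasserstein stability of detail (Lemma~\ref{lemma:detail_wasserstein}) transfers this to $s_r(S) \le O(s/r) + r^2/(r^2+\sigma^2)$. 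Both terms are small once $r/s$ and $\sigma/r$ are large.

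Your route trades this for a direct characteristic-function bound, which avoids invoking a CLT but requires a Fourier control on detail that the paper never develops: there is no ``Fourier-analytic discussion around Definition~\ref{definition:detail}'' to cite. Since $s_r$ is an $L^1$ quantity, you would need to pass through $L^2$ via Parseval (bounding $\|\lambda*\tilde\eta'_{r^2}\|_1$ by $\sqrt{\pi}\,\|\lambda*\tilde\eta'_{r^2}\|_2$ and then summing $|\widehat\lambda(k)|^2 k^4 e^{-4k^2r^2}$), splitting at $|k|\asymp 1/s$ and using your Gaussian bound on the low part and the mollifier decay on the high part. This works, but it is more bookkeeping than the paper's three-line argument, which offloads the $L^1$ work to Lemma~\ref{lemma:detail_wasserstein}.
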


Here and through out this paper when $x \in \R/ \pi \Z$ we use $|x|$ to denote $\min_{y \in x} |y|$. The idea of the proof of Theorem \ref{theorem:main_furstenberg} is to show that $\nu \circ \phi^{-1}$ can be expressed as a convex combination of measures each of which can be approximated by the law of the sum of many small independent random variables with some control over the variances of these variables. One difficulty with this is that the measures which $\nu \circ \phi^{-1}$ is a convex combination of are only approximately the laws of sums of small independent random variables of the required form. To deal with this we will need the following.

\begin{lemma} \label{lemma:detail_wasserstein}
There is some constant $C>0$ such that the following is true. Let $\lambda_1$ and $\lambda_2$ be probability measures on $\Bim$ and let $r>0$. Let $k \in \Z_{>0}$. Then
\begin{equation*}
\left| s_r^{(k)}(\lambda_1) - s_r^{(k)}(\lambda_2) \right| \leq C r^{-1} \mathcal{W}_1(\lambda_1, \lambda_2).
\end{equation*}
\end{lemma}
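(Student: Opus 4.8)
The proof rests on understanding the definition of order $k$ detail $s_r^{(k)}(\cdot)$ (Definition \ref{definition:order_k_detail}, which we have not yet seen but which specializes to ordinary detail $s_r(\cdot)$). By analogy with the detail machinery of \cite{KITTLE_2021}, $s_r^{(k)}(\lambda)$ will be defined as the supremum, over some family of test functions $f$ built from a fixed smoothing kernel at scale $r$ (and its $k$-fold structure), of an expression of the form $\left| \int f \, d\lambda \right|$ or $\left\| \int f \, d(\lambda * \cdot) \right\|$, suitably normalized. The key point I would isolate first is that each such test function $f$ is Lipschitz with Lipschitz constant $O(r^{-1})$ — this is forced by the fact that $f$ is obtained by convolving something bounded (bounded in $L^1$ or $L^\infty$ with unit mass) against a kernel whose derivative has size $O(r^{-1})$ at scale $r$. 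Once that Lipschitz bound is in hand, the lemma is essentially the Kantorovich–Rubinstein duality for $\mathcal{W}_1$.

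Concretely, I would proceed as follows. First, write down the definition of $s_r^{(k)}$ and extract the representation $s_r^{(k)}(\lambda) = \sup_{f \in \mathcal{F}} \Phi_f(\lambda)$ where $\mathcal{F}$ is the relevant test-function family and $\Phi_f$ is linear (or affine) in $\lambda$. Second, show that for each $f \in \mathcal{F}$ the map $\lambda \mapsto \Phi_f(\lambda)$ can be written as integration against a function $g_f$ on $\Bim$ with $\|g_f\|_{\mathrm{Lip}} \leq C r^{-1}$ for an absolute constant $C$; here one uses that differentiating the scale-$r$ kernel costs a factor $r^{-1}$, and that the remaining operations (convolution with a probability measure, taking an $L^\infty$ or $L^1$ norm over the auxiliary variable, averaging) do not increase the Lipschitz constant. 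Third, apply Kantorovich–Rubinstein: for each $f$,
\begin{equation*}
\left| \Phi_f(\lambda_1) - \Phi_f(\lambda_2) \right| = \left| \int g_f \, d\lambda_1 - \int g_f \, d\lambda_2 \right| \leq \|g_f\|_{\mathrm{Lip}} \, \mathcal{W}_1(\lambda_1, \lambda_2) \leq C r^{-1} \mathcal{W}_1(\lambda_1, \lambda_2).
\end{equation*}
Fourth, take the supremum over $f \in \mathcal{F}$ and use the elementary fact $\left| \sup_f a_f - \sup_f b_f \right| \leq \sup_f |a_f - b_f|$ to conclude $\left| s_r^{(k)}(\lambda_1) - s_r^{(k)}(\lambda_2) \right| \leq C r^{-1} \mathcal{W}_1(\lambda_1, \lambda_2)$.

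The main obstacle is the second step: verifying that the particular functional defining $s_r^{(k)}$ really does reduce to testing against uniformly $O(r^{-1})$-Lipschitz functions. If the definition of $s_r^{(k)}$ involves an $L^2$ norm, or a difference of the measure with a translate of itself, or an inner supremum over an auxiliary probability measure that is itself convolved in, one has to be a little careful that these operations commute appropriately with the duality pairing and do not secretly introduce a worse dependence on $r$ or on $k$ (the statement is uniform in $k$, which is a hint that $k$ only enters through a fixed normalization and not through the Lipschitz constant). I expect that, as in \cite{KITTLE_2021}, the relevant kernel is something like a triangle or smooth bump of width $\asymp r$ with $\|\kappa_r'\|_\infty \asymp r^{-1}$ and $\|\kappa_r\|_\infty \asymp r^{-1}$, $\|\kappa_r\|_1 \asymp 1$, so that the whole estimate is a routine bookkeeping of Lipschitz norms once the definitions are unwound; the factor $C$ absorbs the kernel-dependent constants. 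Working on $\Bim = \R/\pi\Z$ rather than $\R$ changes nothing, since $\mathcal{W}_1$ and the Lipschitz seminorm are defined with the quotient metric and Kantorovich–Rubinstein holds on any Polish metric space.
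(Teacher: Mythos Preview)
Your framework via Kantorovich--Rubinstein duality is correct and is essentially the dual of the paper's argument, which works directly with a coupling. The actual definition is
\[
s_r^{(k)}(\lambda) = r^{2k}\Bigl(\tfrac{\pi e}{2}\Bigr)^{k/2} \bigl\| \lambda * K_{r,k} \bigr\|_1, \qquad K_{r,k} := \left.\partial_y^k \tilde\eta_y\right|_{y=kr^2},
\]
so after dualising the $L^1$ norm against $\|g\|_\infty\le 1$ your test functions are $g*\check K_{r,k}$, with Lipschitz constant at most $\|\partial_x K_{r,k}\|_1$. Both your approach and the paper's therefore reduce to the same estimate
\[
r^{2k}\Bigl(\tfrac{\pi e}{2}\Bigr)^{k/2} \bigl\| \partial_x K_{r,k} \bigr\|_1 \le C r^{-1}
\]
uniformly in $k$.

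The gap in your plan is precisely this uniformity. The kernel $K_{r,k}$ is \emph{not} a triangle or bump of width $\asymp r$ as you conjecture: up to periodisation it is a degree-$2k$ Hermite function times a Gaussian, and a naive bound on its $x$-derivative blows up badly with $k$. The paper's device is the heat-equation identity $\partial_y \eta_y = \tfrac12 \partial_x^2 \eta_y$, which gives the convolution factorisation
\[
\left.\partial_x \partial_y^k \eta_y\right|_{y=kr^2} = (\partial_x \eta_{r^2}) * \underbrace{\eta'_{r^2} * \cdots * \eta'_{r^2}}_{k\ \text{times}},
\]
whence by Young's inequality $\|\partial_x K_{r,k}\|_1 \le \|\partial_x \eta_{r^2}\|_1 \cdot \|\eta'_{r^2}\|_1^{\,k} = Cr^{-1}\cdot\bigl(r^{-2}(\pi e/2)^{-1/2}\bigr)^k$, and the normalising factor $r^{2k}(\pi e/2)^{k/2}$ cancels this exactly. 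This factorisation is the one genuinely non-routine step; once you supply it, your duality argument and the paper's coupling argument are interchangeable.
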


Here $\mathcal{W}_1(\cdot, \cdot)$ denotes Wasserstein distance.

Now we need to explain how we express $\nu \circ \phi^{-1}$ as a convex combination of measures each of which are close to the law  of a sum of small independent random variables. To do this we will need a chart for some neighbourhood of the identity in $\Gp$.

To do this we use the logarithm from $\Gp$ to its Lie algebra $\A$ defined in some open neighbourhood of the identity in $\Gp$. We also fix some basis of $\A$ and use this to identify $\A$ with $\R^3$ and fix some Euclidean product and corresponding norm on $\A$.

Now we consider the expression
\begin{equation*}
x = \gamma_1\gamma_2 \dots \gamma_T b
\end{equation*}
where $T$ is a stopping time, $\gamma_1, \gamma_2, \dots$ are random variables taking values in $\Gp$ which are i.i.d.\ samples from $\mu$, and $b$ is a sample from $\nu$ independent of the $\gamma_i$. Clearly $x$ is a sample from $\nu$. We then construct some $\sigma$-algebra $\mathcal{A}$ such that we can write
\begin{equation}
x = g_1 \exp(u_1) g_2  \exp(u_2) \dots  g_n \exp(u_n)  b \label{eq:x_decomp}
\end{equation}
where all of the $g_i$ are $\mathcal{A}$ -measurable random variables taking values in $\Gp$ and $b$ is an $\mathcal{A}$-measurable random variable taking values in $\B$. Furthermore the $u_i$ are random variables taking values in $\A$ in a small ball around the origin such that conditional on $\mathcal{A}$ we can find a lower bound on their variance. We then Taylor expand to show that $\phi(x)$ can be approximated in the required way after conditioning on $\mathcal{A}$. To do this construction we construct stopping times $0 = T_0 < T_1 < T_2 < \dots < T_n = T$ and construct our random variables such that $$g_i \exp(u_i) = \gamma_{T_{i-1}+1} \dots \gamma_{T_i}.$$

To explain this statement more precisely we first need to define the Cartan decomposition.
	
\begin{definition}[Cartan decomposition] \label{definition:singular_value_decomp}
We can write each element $g$ of $\Gp$ with $\lnb g \rnb > 1$ in the form
\begin{equation*}
R_{\theta_1} A_{\lambda} R_{-\theta_2}
\end{equation*}
where 
\begin{equation*}
R_x := 
\begin{pmatrix}
\cos x & -\sin x\\
\sin x & \cos x
\end{pmatrix}
\end{equation*}
is the rotation by $x$ and
\begin{equation*}
A_{\lambda} := 
\begin{pmatrix}
\lambda & 0\\
0 & \lambda^{-1}
\end{pmatrix}
\end{equation*}
in exactly one way with $\lambda > 1$ and $\theta_1, \theta_2 \in \R / \pi \Z$. We will let $b^{+}(g) = \phi^{-1}(\theta_1)$ and $b^{-}(g) = \phi^{-1}(\theta_2 + \frac{\pi}{2} )$. 
\end{definition}

\begin{remark}
Note that in this notation we have that if $\lnb g \rnb$ is large then providing $b \in \B$ is not too close to $b^{-}(g)$ we have that $g b$ is close to $b^{+}(g)$. We will make this more precise in Lemma \ref{lemma:new_shape_b_simple_singular_value}.
\end{remark}

We now let $d$ denote the metric on $\B$ induced by $\phi$. In other words if $x, y \in \B$ then $d(x, y) := |\phi(x)-\phi(y)|$. Whenever we write $d(\cdot, \cdot)$ it will be clear whether we are applying it to elements of $\Gp$ or elements of $\B$ and so clear if we are referring to the distance function of our left invariant Riemannian metric on $\Gp$ or to our metric on $\B$.

By carrying out some calculations about the Cartan decomposition and applying Taylor's theorem we can prove the following.

\begin{proposition} \label{proposition:intro_decomp_detail}
For every $t > 0$ there exist $C, \delta > 0$ such that the following is true. Let $n \in \Z_{>0}$ and let $u^{(1)}, u^{(2)}, \dots, u^{(n)} \in \A$. Let $g_1, \dots, g_n \in \Gp$ and let $b \in \B$. Let $r>0$. Suppose that for each integer $i \in [1,n]$ we have
\begin{equation*}
\lnb g_i \rnb \geq C
\end{equation*}
and
\begin{equation*}
\lnb u^{(i)} \rnb \leq \lnb g_1 g_2 \dots g_i \rnb^2 r.
\end{equation*}
Suppose that for each integer $i \in [1, n-1]$ we have
\begin{equation*}
d(b^{+}(g_i), b^{-}(g_{i+1})) > t 
\end{equation*}
and also that
\begin{equation*}
d(b, b^{-}(g_{n})) > t.
\end{equation*}
Suppose further that
\begin{equation*}
\lnb g_1 g_2 \dots g_n \rnb^2 r < \delta.
\end{equation*}
Let $x$ be defined by
\begin{equation}
x = g_1 \exp(u^{(1)}) \dots g_n \exp(u^{(n)}) b. \label{eq:expression_for_x}
\end{equation}
For each integer $i \in [1, n]$ let $\zeta_i \in \As$ be the derivative defined by
\begin{equation}
\zeta_i = D_u(\phi(g_1 g_2 \dots g_i \exp(u) g_{i+1} g_{i+2} \dots g_n b ))|_{u=0} \label{eq:derivative}
\end{equation}
and let $S$ be defined by
\begin{equation*}
S = \phi(g_1g_2 \dots g_{n}b) + \sum_{i=1}^{n} \zeta_i(u^{(i)}).
\end{equation*}
Then we have
\begin{equation*}
d \left(\phi(x), S \right) \leq C^n \lnb g_1 g_2 \dots g_n \rnb^2 r^2.
\end{equation*}
\end{proposition}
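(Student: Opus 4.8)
The plan is to prove the estimate by a direct induction on $n$, tracking how the error accumulates as we peel off one factor $g_i\exp(u^{(i)})$ at a time, and using Taylor's theorem with the explicit second-order remainder for each peeling step. The key geometric input is the behaviour of the Cartan decomposition: if $\lnb g\rnb$ is large and $b$ stays a definite distance $t$ away from $b^-(g)$, then $gb$ is close to $b^+(g)$, and moreover the map $b\mapsto gb$ is strongly contracting near such $b$ — quantitatively, the derivative of $\phi(gb)$ in $b$ has size $\asymp \lnb g\rnb^{-2}$, with second derivative controlled by a similar power. This is exactly what is captured by the hypothesis $\lnb u^{(i)}\rnb\le\lnb g_1\cdots g_i\rnb^2 r$: the displacement $u^{(i)}$ is allowed to be large precisely to the extent that the product to its left contracts it.

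First I would set up the notation: write $y_i = g_{i+1}\exp(u^{(i+1)})\cdots g_n\exp(u^{(n)}) b$ for the ``tail'' point, and $\tilde y_i = g_{i+1}\cdots g_n b$ for its ``linearised'' counterpart, and let $h_i = g_1\cdots g_i$. The function $u\mapsto \phi(h_i\exp(u)\,\tilde y_i)$ is smooth near $u=0$; by Taylor's theorem on $\B$ (using the metric $d$) its value at $u=u^{(i)}$ differs from $\phi(h_i\tilde y_i)+\zeta_i(u^{(i)})$ by at most a constant times $\lnb D^2\rnb\cdot\lnb u^{(i)}\rnb^2$, where $D^2$ is the second derivative of that map. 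The crucial estimates, proved by explicit computation with the Cartan decomposition of $h_i$ (here one uses $\lnb g_i\rnb\ge C$ to guarantee $\lnb h_i\rnb$ grows, and the angular separation hypotheses $d(b^+(g_i),b^-(g_{i+1}))>t$ and $d(b,b^-(g_n))>t$ to keep $\tilde y_i$ away from the repelling point of $h_i$), are
\begin{equation*}
\lnb \zeta_i \rnb \leq C \lnb h_i \rnb^{-2}, \qquad \lnb D^2_u \phi(h_i \exp(u) \tilde y_i)|_{u=0} \rnb \leq C \lnb h_i \rnb^{-2}.
\end{equation*}
Combined with $\lnb u^{(i)}\rnb\le\lnb h_i\rnb^2 r$ and $\lnb h_n\rnb^2 r<\delta$ (so that all intermediate points stay in the chart), the $i$-th peeling contributes an error of size $\le C\lnb h_i\rnb^{-2}\lnb u^{(i)}\rnb^2 \le C\lnb h_i\rnb^2 r^2 \le C\lnb h_n\rnb^2 r^2$. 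One must also control the error from replacing the true tail $y_i$ by $\tilde y_i$ inside $\phi(h_i\exp(u)\,\cdot\,)$: since this map is Lipschitz in its point-argument with constant $\le C\lnb h_i\rnb^{-2}$, the accumulated tail error $d(y_i,\tilde y_i)$ — which by induction is at most $C^{n-i}\lnb h_n\rnb^2 r^2$ up to the linearisation already performed — gets multiplied by $\lnb h_i\rnb^{-2}\le 1$, hence stays bounded; this is where the $C^n$ factor is born, one factor of $C$ per level of the recursion.

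Summing the $n$ contributions and folding in the inductive tail error gives $d(\phi(x),S)\le C^n\lnb h_n\rnb^2 r^2$ after adjusting $C$. The main obstacle I anticipate is the geometric derivative computation: getting clean bounds $\lnb\zeta_i\rnb,\lnb D^2\rnb\le C\lnb h_i\rnb^{-2}$ that are uniform in the angular parameters as long as they stay $t$-separated requires carefully writing $h_i = R_{\theta_1}A_\lambda R_{-\theta_2}$, computing the action on $\B\cong\R/\pi\Z$ via $\phi$, and checking that the $t$-separation keeps the relevant denominators bounded below — essentially this is the content foreshadowed in Lemma \ref{lemma:new_shape_b_simple_singular_value}, and making it quantitative with the right power of $\lnb g\rnb$ is the delicate part. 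A secondary subtlety is bookkeeping: ensuring that at every stage the point we feed into $\exp$ and the composite map really does lie in the domain of the logarithm chart, which is where the hypothesis $\lnb h_n\rnb^2 r<\delta$ is used, and confirming that the constant $C$ can be chosen uniformly over $n$ so that only the harmless factor $C^n$ (not something worse) appears.
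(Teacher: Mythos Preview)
Your inductive peeling approach differs from the paper's and has a genuine gap. The paper treats $\phi(x)$ as a smooth function of all variables $(u^{(1)},\ldots,u^{(n)})\in\A^n$ simultaneously and applies the multivariate Taylor theorem (Theorem~\ref{theorem:taylor}) once. The whole work is then in Proposition~\ref{proposition:second_derivative_bounds}, which bounds \emph{every} second partial, including the mixed ones $\partial^2\phi(x)/\partial u^{(k)}_i\partial u^{(\ell)}_j$ for $k<\ell$, by $C^n\lnb g_1\cdots g_\ell\rnb^{-2}$ (this is where Lemma~\ref{lemma:g2der_trans} is used). With those bounds the Taylor remainder is at most $\sum_{k,\ell} C^n\lnb h_{\max(k,\ell)}\rnb^{-2}\lnb u^{(k)}\rnb\,\lnb u^{(\ell)}\rnb \leq n^2 C^n \lnb h_n\rnb^2 r^2$, and the $n^2$ is absorbed into $C^n$. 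No recursion, no tracking of how linear terms propagate.

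The gap in your argument is the claim that $d(y_i,\tilde y_i)\leq C^{n-i}\lnb h_n\rnb^2 r^2$. This distance is the zeroth-order discrepancy between the perturbed and unperturbed tails; it is \emph{first} order in the tail perturbations, of size roughly $\sum_{j>i}\lnb g_{i+1}\cdots g_j\rnb^{-2}\lnb u^{(j)}\rnb\lesssim (n-i)\lnb h_i\rnb^2 r$, not $r^2$. What the inductive hypothesis actually gives you is that $\phi(y_i)$ minus its tail linearisation $\phi(\tilde y_i)+\sum_{j>i}\tilde{\zeta}_j(u^{(j)})$ is $O(r^2)$. But then, when you push this through the head map $p\mapsto\phi(h_i\exp(u^{(i)})p)$, you must also verify that the tail linear terms $\tilde{\zeta}_j(u^{(j)})$ get carried to the global $\zeta_j(u^{(j)})$ up to $O(r^2)$. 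By the chain rule this holds exactly at $u^{(i)}=0$; the discrepancy for nonzero $u^{(i)}$ is controlled precisely by the mixed partial $\partial^2\phi(x)/\partial u^{(i)}\partial u^{(j)}$. So the recursion does not avoid the mixed second-derivative estimates --- it only hides them in the step where linear terms propagate, while additionally forcing you to renormalise the scale (the tail problem satisfies the hypotheses only at some $r'\approx\lnb g_1\rnb^2 r$, not at $r$). The paper's one-shot multivariate Taylor makes the mixed-partial work explicit and avoids all of this bookkeeping.
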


Informally this proposition states that under some conditions, when $x$ is  of the form \eqref{eq:expression_for_x} then $\phi(x)$ is close to its first order Taylor expansion in the $u^{(i)}$.

In \eqref{eq:derivative} $D_u$ denotes the derivative of the map with respect to $u$.

We will later use this along with some results about the first derivatives of the exponential at $0$, Lemma \ref{lemma:small_rvs_to_detail}, and \eqref{eq:trivial_srk_bound} to get a bound on the order $k$ detail of the expression $x$. We can then get an upper bound on the order $k$ detail of some sample $x$ from $\nu$ conditional on some $\sigma$-algebra $\mathcal{A}$. Due to the convexity of $s_r^{(k)} (\cdot)$ we can then find an upper bound for $s_r^{(k)} (\nu)$ by taking the expectation of this bound.

We will now outline some of the tools we will use to decompose $x$ in the way described in \eqref{eq:x_decomp}. To do this we introduce the following stopping times.

\begin{definition} \label{definition:tau_t_v}
    Suppose that $\gamma = (\gamma_1, \gamma_2, \dots)$ is a sequence of random variables taking values in $\Gp$. Then given some $P>0$ and some $v \in \B$ we define the stopping time $\tau_{P, v}(\gamma)$ by
    \begin{equation*}
        \tau_{P, v}(\gamma) := \inf \{ n : \lnb (\gamma_1 \gamma_2 \dots \gamma_n)^T \hat{v} \rnb \geq P \lnb \hat{v} \rnb \}
    \end{equation*}
    where $\hat{v} \in \R^2 \backslash \{0\}$ is a representative of $v$ and $T$ denotes transpose. Where $\gamma$ is obvious from context we will write $\tau_{P, v}$ to mean $\tau_{P, v}(\gamma)$.
\end{definition}

Note that this definition does not depend on our choice of $\hat{v}$. We now let $\gamma_1, \gamma_2, \dots$ be i.i.d.\ samples from $\mu$. We will show that we can find some $\sigma$-algebra $\hat{\mathcal{A}}$, some $\hat{\mathcal{A}}$-measurable random variable $a$ taking values in $\Gp$ and some random variable $u$ taking values in a small ball around the origin in $\A$ such that we may write $\gamma_1 \gamma_2 \dots \gamma_{\tau_{P, v}} = a \exp(u)$ and such that conditional on $\hat{\mathcal{A}}$ we know that $u$ has at least some variance.
	
First we need to define some analogue of variance for random values taking values in $\Gp$. For this we will make use of  $\log$. Specifically given some fixed $g_0 \in \Gp$ and some random variable $g$ taking values in $\Gp$ such that $g_0^{-1}g$ is always in the domain of $\log$ we will define $\Tr \vart_{g_0}[g]$ to be the trace of the covariance matrix of $\log(g_0^{-1} g)$. This clearly depends on our choice of Euclidean structure on $\A$. The proof will work with any choice of structure though the choice will affect the value of the constant $C$ we find in Theorem \ref{theorem:main_furstenberg}.

We now define the quantity $v(g;r)$ as follows.
	
\begin{definition} \label{definition:v_g_r}
Let $g$ be a random variable taking values in $\Gp$ and let $r> 0$. We then define $v(g;r)$ to be the supremum of all $v \geq 0$ such that we can find some $\sigma$-algebra $\mathcal{A}$ and some $\mathcal{A}$- measurable random variable $a$ taking values in $\Gp$ such that $|\log(a^{-1} g)| \leq r$ almost surely and
\begin{equation*}
\mathbb{E} \left[\Tr \vart_{a} \left[ g | \mathcal{A} \right] \right] \geq v r^2.
\end{equation*}
\end{definition}

\begin{proposition} \label{proposition:lots_of_v}
There is some absolute constant $c > 0$ such that the following is true. Let $\mu$ be a finitely supported Zariski-dense probability measure on $\Gp$ and let $\hat{\nu}$ be some probability measure on $\B$. Suppose that $M_{\mu} < \infty$ and that $h_{RW} / \chi$ is sufficiently large. Let $M > M_{\mu}$ be chosen large enough that $\log M \geq h_{RW}$. Suppose that $P$ is sufficiently large (depending on $\mu$ and $M$) and let $\hat{m} = \floor{\frac{\log M}{100 \chi}}$.

Let $\gamma_1, \gamma_2, \dots$ be i.i.d.\ samples from $\mu$ and let $\tau_{P, v}$ be as in Definition \ref{definition:tau_t_v}. Then there exist some $s_1, s_2, \dots, s_{\hat{m}} > 0$ such that for each $i \in [1, \hat{m}] \cap \Z$
\begin{equation*}
s_i \in \left( t^{ - \frac{\log M}{\chi}}, t^{ -\frac{h_{RW}}{10 \chi}} \right)
\end{equation*}
and for each $i \in [\hat{m}-1]$
\begin{equation*}
s_{i+1} \geq P^{3} s_i
\end{equation*}
and such that
\begin{equation*}
\sum_{i=1}^{\hat{m}} \int_{\B} v(\gamma_1 \gamma_2 \dots \gamma_{\tau_{P, w}}; s_i) \, \hat{\nu}(dw)
 \geq c \left( \frac{h_{RW}}{\chi} \right)  \left( \max \left\{ 1,  \log \frac{ \log M}{h_{RW}} \right\} \right)^{-1} .
\end{equation*}
\end{proposition}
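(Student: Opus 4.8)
The plan is to show that along a random walk trajectory, between consecutive values of the stopping times $\tau_{P,\cdot}$, one accumulates a definite amount of ``variance'' in the sense of $v(\cdot;r)$, and that this happens at a range of scales $s_1 < s_2 < \dots < s_{\hat m}$ that is geometrically spread out (factor $P^3$) yet stays inside the window $\bigl(t^{-\log M/\chi},\, t^{-h_{RW}/(10\chi)}\bigr)$. The key heuristic is that $\tau_{P,w}$ typically occurs after roughly $\frac{\log P}{\chi}$ steps (by the definition of the Lyapunov exponent applied to $(\gamma_1\cdots\gamma_n)^T\hat w$, whose norm grows like $e^{n\chi}$), so the product $\gamma_1\cdots\gamma_{\tau_{P,w}}$ has operator norm comparable to $P$; hence if we set $s_i \approx P^{3i} s_0$ and choose $\hat m \approx \frac{\log M}{100\chi}$, the top scale is about $P^{3\hat m}$, and one must calibrate $P$ (as a function of $\mu$ and $M$) so that this lies below $t^{-h_{RW}/(10\chi)}$ while the bottom scale lies above $t^{-\log M/\chi}$. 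This is the bookkeeping part and amounts to choosing $P = t^{-c'\chi \hat m^{-1}\log M/\chi}$ type expressions — routine once the main estimate is in place.

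First I would set up, for a fixed $w \in \B$, a decomposition of the product $\gamma_1\cdots\gamma_{\tau_{P,w}}$ of the form $a\exp(u)$ with $a$ measurable with respect to a suitable $\sigma$-algebra and $\lnb u \rnb$ small, and lower bound $\mathbb{E}[\Tr\vart_a[\,\cdot\,|\mathcal{A}]]$. The natural source of variance is the following: at the (random) step $\tau_{P,w}$, the last factor $\gamma_{\tau_{P,w}}$ is an independent sample from $\mu$, and conditioning on everything except the precise identity of a chosen subset of the increments leaves genuine randomness. Because $\mu$ is strongly irreducible with non-compact support, the law of $\gamma_1\cdots\gamma_{\tau_{P,w}}$ is not concentrated on a coset of a proper subgroup, and the random walk entropy $h_{RW}$ quantifies exactly how spread out $\mu^{*n}$ is. I would extract, from the entropy lower bound, that among the $\approx \frac{\log P}{\chi}$ increments contributing to reaching $\tau_{P,w}$, a positive proportion carry entropy, and translate this entropy into a lower bound on $\Tr\vart$ after passing through $\log$; the factor $\bigl(\max\{1,\log\frac{\log M}{h_{RW}}\bigr\})^{-1}$ will enter here, reflecting the loss when $h_{RW}$ is small compared to $\log M$ and one must distribute the entropy across the $\hat m$ scales and across the many increments within each block.

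Concretely, the steps in order: (1) fix $M > M_\mu$ with $\log M \ge h_{RW}$, set $\hat m = \floor{\frac{\log M}{100\chi}}$, and choose $P$ large enough that the target scales $s_i$ can be taken geometrically $P^3$-separated inside the prescribed interval; (2) for each $w$, run the walk to $\tau_{P,w}$, split $[1,\tau_{P,w}]$ into $\hat m$ consecutive sub-blocks each long enough (length $\gtrsim \frac{\log P}{\chi}$) that $\gamma$ restricted to the $i$-th block contributes operator-norm growth comparable to the scale $s_i$, and within each block identify the increment (or handful of increments) carrying the most entropy; (3) use strong irreducibility plus the bound on the operator norm on $\supp\mu$ (here $R$) and non-degeneracy to ensure that after multiplying on the right by the remaining deterministic-looking factors and by $w$, the resulting point on $\B$ still feels this randomness — i.e.\ the relevant derivative $\zeta$ of the action is bounded below, so variance in $\log$ of the group element transfers to $v(\,\cdot\,;s_i) \gtrsim$ (entropy of that block)$/(\text{number of scales})$; (4) sum over $i$ and integrate over $w \sim \hat\nu$, using that the total entropy available is $\asymp h_{RW}\cdot \tau_{P,w} \asymp h_{RW}\cdot \frac{\log M}{\chi}$ while the normalisation by block length and by $\hat m \asymp \frac{\log M}{\chi}$ leaves $\asymp \frac{h_{RW}}{\chi}$, degraded by the $\log\frac{\log M}{h_{RW}}$ factor.

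The main obstacle I anticipate is step (3): converting random-walk \emph{entropy} into a lower bound on the \emph{variance} $\Tr\vart_a[g\mid\mathcal{A}]$ of the $\log$ of the group element, and then ensuring this variance survives the passage through the nonlinear action on $\B$ (via the derivatives $\zeta_i$) uniformly over the starting point $w$. Entropy controls how many distinct values a random variable effectively takes but not a priori how far apart they are in the Riemannian metric; one needs a quantitative ``no concentration on small balls'' statement for $\mu^{*n}$, which is where $M_\mu < \infty$ (the splitting rate) is essential — it guarantees distinct elements of $S_n$ are not exponentially (in $n$, with too large a rate) close, so a set of positive entropy-many points cannot be squeezed into a tiny ball. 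Marrying the entropy lower bound with the splitting-rate upper bound to produce a clean variance estimate, and doing so with constants that are genuinely absolute (independent of $\mu$), is the delicate heart of the argument; the rest is geometry of the Cartan decomposition and careful choice of $P$ and the $s_i$.
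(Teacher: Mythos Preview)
Your proposal has a structural misconception that would derail the argument. You want to split $[1,\tau_{P,w}]$ into $\hat m$ consecutive time-blocks each of length $\gtrsim \frac{\log P}{\chi}$, but $\tau_{P,w}\approx\frac{\log P}{\chi}$ itself, so no such splitting exists. The scales $s_1,\dots,s_{\hat m}$ are \emph{not} associated to different segments of the random walk; they are different scales at which one probes the \emph{same} stopped product $q=\gamma_1\cdots\gamma_{\tau_{P,w}}$ through the function $s\mapsto v(q;s)$. The $P^3$-separation of the $s_i$ comes from a pigeonhole/averaging step after one has an integral bound $\int_{P^{-\log M/\chi}}^{P^{-h_{RW}/(10\chi)}}\frac{1}{u}\,v(q_\tau;u)\,du \gtrsim \bigl(\tfrac{h_{RW}}{\chi}\bigr)\bigl(\max\{1,\log\tfrac{\log M}{h_{RW}}\}\bigr)^{-1}\log P$ (Proposition~\ref{proposition:int_of_v}): partition the logarithmic interval into $2\hat m$ equal pieces, take every other piece, and within each piece pick an $s_i$ near the supremum.

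The entropy-to-variance conversion is also not as you describe. The paper does \emph{not} use $M_\mu<\infty$ to argue that entropy-many points must be metrically spread out; rather, $M_\mu$ enters only to guarantee that at the bottom scale $r_1\approx M^{-\log P/\chi}$ the smoothing $s_{r_1,a}$ does not merge atoms of $\mu^{*n}$, so that $H(q_\tau s_{r_1,a})=H(q_\tau)+H(s_{r_1,a})\ge \frac{h_{RW}}{\chi}\log P+H(s_{r_1,a})$. The actual bridge from entropy to variance is Theorem~\ref{theorem:prob_exist}: if $k=H(gs_1)-H(s_1)-H(gs_2)+H(s_2)$ is the entropy gap between smoothing at two nearby scales, then $\mathbb E[\Tr\vart_{gs_2}[g\mid gs_2]]\gtrsim (k-c)\,\Tr\vart_{\id}[s_1]$, which feeds directly into $v(g;2ar)$. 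One then telescopes over dyadic scales $r,2r,4r,\dots,2^N r$: the sum of the $k_i$ collapses to $H(q_\tau s_{r_1})-H(s_{r_1})-H(q_\tau s_{r_2})+H(s_{r_2})$, which Proposition~\ref{proposition:stopping_time_gap} bounds below by $(\frac{h_{RW}}{\chi}-2)\log P+3\log r_2$. The loss factor $\bigl(\max\{1,\log\tfrac{\log M}{h_{RW}}\}\bigr)^{-1}$ arises from optimising the truncation parameter $a$ in the smoothing functions $s_{r,a}$ (balancing the Gaussian tail error $e^{-a^2/4}$ against the $a^{-2}$ prefactor), not from distributing entropy across blocks. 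Finally, the derivatives $\zeta_i$ on $\B$ play no role at this stage; $v(g;r)$ is purely a group-level quantity.
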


The measure $\hat{\nu}$ for which we apply Proposition \ref{proposition:lots_of_v} comes from the following result in renewal theory.

\begin{theorem} \label{theorem:renewal_theorem}
Let $\mu$ be a Zariski-dense compactly supported probability measure on $\Gp$. Then there is some probability measure $\hat{\nu}$ on $\B$ such that the following is true. Let $\gamma_1, \gamma_2, \dots$ be i.i.d. samples from $\mu$. Then for all $v \in \B$ the law of $(\gamma_1 \gamma_2 \dots \gamma_{\tau_{P, v}})^T v$ converges weakly to $\hat{\nu}$ as $P \to \infty$. Furthermore this convergence is uniform in $v$.
\end{theorem}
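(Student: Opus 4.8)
This is a renewal theorem for random walks on $\Gp$ acting on projective space, and the natural approach is to reduce it to a renewal theorem for a random walk on $\R$ (driven by a Markov chain on $\B$) and then invoke a known renewal theorem in that setting. The plan is as follows. First, I would reformulate the stopping time $\tau_{P,v}$ in terms of a norm cocycle. Fix a representative $\hat v \in \R^2 \setminus \{0\}$ and set $w_n = (\gamma_1 \cdots \gamma_n)^T \hat v$. Then $\log \lnb w_n \rnb$ is an additive functional: writing $b_n = [w_n] \in \B$ for the direction and $\sigma(g, b) = \log \frac{\lnb g^T \hat b \rnb}{\lnb \hat b \rnb}$ for the norm cocycle (with $\hat b$ a unit representative of $b$), we have $\log \lnb w_n \rnb - \log \lnb \hat v \rnb = \sum_{k=1}^{n} \sigma(\gamma_k, b_{k-1})$, and $(b_n)$ is a Markov chain on $\B$ driven by the i.i.d.\ $\gamma_k$ acting on the left by the transpose action. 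So $\tau_{P,v}$ is the first passage time of this additive functional above the level $\log P$. Because $\mu$ is strongly irreducible and not contained in a compact subgroup, $\chi > 0$ (Furstenberg--Kesten), the transpose walk also has positive Lyapunov exponent $\chi$, and the associated Markov chain on $\B$ has a unique stationary measure; the cocycle $\sigma$ has finite (indeed bounded, since $\mu$ is finitely supported) moments and positive mean $\chi$ under the stationary regime, so the additive functional drifts to $+\infty$.

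Second, I would apply a Markov renewal theorem (the key classical input — e.g.\ in the form developed by Kesten, or the more recent treatments of renewal theory for products of random matrices by Guivarc'h--Le Page, Benoist--Quint, or Li) to the Markov random walk $(b_n, \log \lnb w_n \rnb)$. Such a theorem gives that the "overshoot" distribution — the joint law of the direction $b_{\tau_{P,v}}$ together with the excess $\log \lnb w_{\tau_{P,v}} \rnb - \log P$ — converges as $P \to \infty$, and crucially that the limit does not depend on the starting point $v$, with convergence uniform in $v$ because $\B$ is compact and the chain has a spectral gap / uniform mixing (again using finite support and strong irreducibility, which rule out any degeneracy). What we actually want is the law of $(\gamma_1 \cdots \gamma_{\tau_{P,v}})^T v = [w_{\tau_{P,v}}] = b_{\tau_{P,v}}$, i.e.\ just the direction marginal of the overshoot, so I would take $\hat\nu$ to be the direction marginal of the limiting overshoot measure. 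The uniformity in $v$ is exactly the statement that the renewal measure, suitably normalized, equidistributes independently of initial conditions; on a compact base with a spectrally-gapped driving chain this is standard.

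The main obstacle, and the step requiring the most care, is establishing the uniform-in-$v$ convergence with the correct nonlattice/aperiodicity hypotheses. A Markov renewal theorem requires the cocycle $\sigma$ not to be cohomologous to a function taking values in a discrete subgroup of $\R$ (the nonlattice condition); one must check that strong irreducibility together with the support not being contained in a compact subgroup forces this. This is where strong irreducibility is genuinely used: if the walk were reducible or the cocycle were lattice-valued, the overshoot would oscillate and no limit would exist. I would invoke the known fact (Guivarc'h--Le Page; see also Benoist--Quint's work on the local limit theorem and renewal theory for linear groups) that for a strongly irreducible, unbounded $\mu$ the norm cocycle on $\B$ is aperiodic, which gives both the existence of the limit and, via the spectral gap of the transfer operator twisted by the cocycle, the uniformity in the starting direction. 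Alternatively, since we only need weak convergence of the direction marginal (not a sharp renewal density), one could give a more self-contained argument: show the family of overshoot laws is tight (trivial, $\B$ compact), and identify all weak limits as $P \to \infty$ along subsequences by a coupling/regeneration argument exploiting that two walks started from different directions $v, v'$ contract together under the left action before the stopping time with high probability (a consequence of positive Lyapunov exponent and the contraction lemma, to be made precise via Lemma \ref{lemma:new_shape_b_simple_singular_value}), forcing all subsequential limits to coincide and to be independent of $v$. I expect the cleanest writeup cites an existing Markov renewal theorem for matrix products and then adds the short coupling argument for uniformity.
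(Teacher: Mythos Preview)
Your proposal is broadly correct and matches the paper on the non-uniform part: the paper likewise obtains pointwise convergence by invoking Kesten's Markov renewal theorem \cite{KESTEN_1974} after verifying its hypotheses via Guivarc'h--Le Page \cite{GUIVARCH_LEPAGE_2016}, exactly as you outline with the norm cocycle and the Markov chain on $\B$.

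Where you and the paper diverge is in the uniformity argument. You propose either to read uniformity off directly from a spectral-gap version of the renewal theorem, or to run a coupling argument based on contraction of the directional chain. The paper instead upgrades pointwise convergence to uniform convergence by an elementary compactness-plus-continuity argument: choose finitely many equally spaced points $v_1,\dots,v_k\in\B$, apply the pointwise result to each, and then show that for any $v$ close to some $v_i$ the two stopping times agree with high probability (Lemma \ref{lemma:taus_same}, which relies on the renewal-type estimate Lemma \ref{lemma:size_occurs_q_n} from \cite{LI_2018} to control the probability that $\log\lnb(\gamma_1\cdots\gamma_n)^T\hat v\rnb$ lands in a short window near $\log P$) and that the resulting directions are close. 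The virtue of your spectral-gap route is that it is conceptually cleaner and, if an off-the-shelf uniform Markov renewal theorem is available, shorter; the virtue of the paper's route is that it needs only the pointwise statement from the literature plus soft large-deviation and positive-dimension inputs already established elsewhere in the paper, and avoids having to verify the nonlattice/aperiodicity condition in detail. Your alternative coupling sketch (``walks from different starting directions contract before the stopping time'') is related in spirit, but note the paper's mechanism is slightly different: it is not that the two directional chains merge, but that for \emph{nearby} starting points the additive functionals stay uniformly close, so the first-passage times coincide except on an event where the functional hits a thin window around the level $\log P$.
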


We will also need the following corollary.

\begin{corollary} \label{corollary:renewal_theorem}
	Let $\mu$ be a Zariski-dense compactly supported probability measure on $\Gp$. Let $\hat{\nu}$ be as in Theorem \ref{theorem:renewal_theorem}. Let $\gamma_1, \gamma_2, \dots$ be i.i.d. samples from $\mu$. Let $a \in \Gp$, $P>0$ and define $\tau_{P, a}$ by
	\begin{equation*}
	\tau_{P, a} := \inf \{ n : \lnb a \gamma_1 \gamma_2 \dots \gamma_n \rnb \geq P \lnb a \rnb \}.
	\end{equation*}
	Then $b^{-}(a \gamma_1 \gamma_2 \dots \gamma_{\tau_{P, a}})^{\perp}$ converges weakly to $\hat{\nu}$ as $P \to \infty$. Furthermore this convergence is uniform in $a$. 
\end{corollary}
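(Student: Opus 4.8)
The plan is to deduce the corollary from Theorem \ref{theorem:renewal_theorem} applied with the vector $v=b^{-}(a)^{\perp}$. Write $c_n=\gamma_1\gamma_2\cdots\gamma_n$. Three elementary facts about the Cartan decomposition (Definition \ref{definition:singular_value_decomp}) drive the argument: for $g\in\Gp$ with $\|g\|>1$ one has $b^{-}(g)^{\perp}=b^{+}(g^{T})$ and $b^{-}(g^{T})=b^{+}(g)^{\perp}$, and moreover the action of $g^{T}$ on $\B$ sends $b^{+}(g)$ to $b^{-}(g)^{\perp}$. Applying the last fact to $a$ gives $a^{T}\cdot b^{+}(a)=b^{-}(a)^{\perp}$, so that
\begin{equation*}
(a c_n)^{T}\,b^{+}(a)=c_n^{T}\,a^{T}\,b^{+}(a)=c_n^{T}\bigl(b^{-}(a)^{\perp}\bigr)=(\gamma_1\cdots\gamma_n)^{T}\bigl(b^{-}(a)^{\perp}\bigr).
\end{equation*}
Thus the quantity whose limit we want, namely $b^{-}(ac_{\tau_{P,a}})^{\perp}=b^{+}\bigl((ac_{\tau_{P,a}})^{T}\bigr)$, should be close to $(\gamma_1\cdots\gamma_{\tau_{P,a}})^{T}(b^{-}(a)^{\perp})$, which is exactly of the shape handled by Theorem \ref{theorem:renewal_theorem}.

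First I would make this approximation precise. Apply Lemma \ref{lemma:new_shape_b_simple_singular_value} to $g=(a c_{\tau_{P,a}})^{T}$ and the point $b^{+}(a)$: here $\|g\|=\|a c_{\tau_{P,a}}\|\ge P\|a\|\ge P\to\infty$, and $b^{-}(g)=b^{+}(ac_{\tau_{P,a}})^{\perp}$, so the lemma yields $d\bigl(b^{-}(ac_{\tau_{P,a}})^{\perp},(\gamma_1\cdots\gamma_{\tau_{P,a}})^{T}(b^{-}(a)^{\perp})\bigr)\to0$ as $P\to\infty$, \emph{provided} $b^{+}(a)$ stays at bounded distance from $b^{+}(ac_{\tau_{P,a}})^{\perp}$; since $\|c_{\tau_{P,a}}\|$ is large (so $b^{+}(ac_{\tau_{P,a}})\approx a\cdot b^{+}(c_{\tau_{P,a}})$) this is equivalent to $b^{+}(c_{\tau_{P,a}})$ staying at bounded distance from $b^{-}(a)$. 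Now $\tau_{P,a}\ge\inf\{n:\|c_n\|\ge P\}\to\infty$ uniformly in $a$, and $b^{+}(c_n)$ converges almost surely, at a rate uniform in probability, to a random point whose law is the Furstenberg measure $\nu$ of $\mu$; by the classical Hölder bound $\nu(B(x,r))\le Cr^{\delta}$ the above condition fails with probability at most $C\rho^{\delta}+o_{P}(1)$ for any chosen $\rho>0$, which can be made arbitrarily small uniformly in $a$.

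Next I would match the stopping times. Taking $\hat v$ a representative of $b^{-}(a)^{\perp}$: since the image of the unit circle under $a^{T}$ is an ellipse whose long axis points in the direction $b^{-}(a)^{\perp}$ with semi-axis $\|a\|$, one has $\|a c_n\|\ge\|a\|\cdot\|c_n^{T}\hat v\|/\|\hat v\|$ for every $n$, hence $\tau_{P,a}\le\tau_{P,b^{-}(a)^{\perp}}(\gamma)$. Conversely a short Cartan-decomposition computation shows $\|a c_n\|\asymp\|a\|\|c_n^{T}\hat v\|/\|\hat v\|$ up to the factor $\max\{1,(\|a\|^{2}d(b^{+}(c_n),b^{-}(a)))^{-1}\}$; on the high-probability event that $b^{+}(c_n)$ stays at distance $\ge\rho$ from $b^{-}(a)$ throughout the relevant range, and for $\|a\|$ bounded below, this factor is bounded by a constant $c(\rho)^{-1}$, so $\tau_{c(\rho)P,b^{-}(a)^{\perp}}(\gamma)\le\tau_{P,a}$. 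Thus $\tau_{P,a}$ lies, with probability tending to $1$ uniformly in $a$, between the renewal stopping times $\tau_{c(\rho)P,b^{-}(a)^{\perp}}$ and $\tau_{P,b^{-}(a)^{\perp}}$, which differ by a bounded number of steps; together with the uniform convergence in Theorem \ref{theorem:renewal_theorem} this gives that $(\gamma_1\cdots\gamma_{\tau_{P,a}})^{T}(b^{-}(a)^{\perp})\to\hat\nu$ weakly, uniformly in $a$, and the approximation step then finishes the proof when $\|a\|$ is bounded below. For $a$ in a bounded range — in particular for $\|a\|$ near $1$, where the Cartan data degenerates — one first absorbs a fixed number $k$ of factors, replacing $a$ by $a'=a\gamma_1\cdots\gamma_k$ which has $\|a'\|$ large with high probability, and applies the previous case conditionally on $a'$; the shift by $k$ and the resulting change of threshold are negligible as $P\to\infty$.

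The main obstacle is the claim that a bounded discrepancy between $\tau_{P,a}$ and a genuine renewal stopping time does not affect the weak limit. Hitting an approximately $\hat\nu$-distributed point with a product of boundedly many transposed i.i.d.\ factors does \emph{not} preserve $\hat\nu$, so one cannot simply pass to the limit; instead one must show that $\hat\nu$ is the unique weak limit along any family of stopping times squeezed between renewal stopping times with divergent thresholds of bounded ratio. I expect this to follow from the characterisation of $\hat\nu$ underlying the proof of Theorem \ref{theorem:renewal_theorem} (a renewal-equation uniqueness statement), so at this point one genuinely needs to look inside that proof rather than use it as a black box. The remaining issues — uniformity over the non-compact group $\Gp$ and the degeneracy at $\|a\|\approx1$ — are comparatively routine, handled by the Hölder estimate for $\nu$ and the absorption device above.
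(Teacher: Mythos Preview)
Your overall strategy is right and you correctly flag the real issue: knowing only that $\tau_{P,a}$ is sandwiched between $\tau_{c(\rho)P,v}$ and $\tau_{P,v}$ does not by itself pin down the limit, since applying a handful of extra $\gamma_i^{T}$ does not fix $\hat\nu$. The paper, however, does not resolve this by opening up the renewal-equation uniqueness. Instead it upgrades your sandwich to an \emph{equality} with high probability: on the event where $b^{+}(c_n)$ stays $\sqrt r$-away from $b^{-}(a)$ (exactly your event), one has $\bigl|\log\|ac_n\|-\log\|a\|-\log\|c_n^{T}\hat v\|\bigr|\le C r^{1/2}$, and then a renewal-theoretic ``small overshoot'' estimate (Lemma~\ref{lemma:size_occurs_q_n}, taken from \cite{LI_2018}) says the probability that $\log\|c_n^{T}\hat v\|$ ever lands within $Cr^{1/2}$ of the threshold $\log P$ is $O_\mu(r^{1/2})$. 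This gives $\mathbb{P}[\tau_{P,a}\ne\tau_{P,v}]=O_\mu(r^{\delta})$ directly (Lemma~\ref{lemma:taus_same_g}), so Theorem~\ref{theorem:renewal_theorem} applies as a black box to $\tau_{P,v}$ and no uniqueness argument is needed.

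Two further differences. First, the paper applies your absorption device \emph{universally}, not only for $\|a\|\approx 1$: it first runs to an auxiliary stopping time $S$ with $\|a\gamma_1\cdots\gamma_S\|\ge\sqrt P$, sets $\overline a=a\gamma_1\cdots\gamma_S$ and $v=b^{-}(\overline a)^{\perp}$, and compares $\tau_{P,a}$ with the vector-type stopping time for the \emph{shifted} walk. This guarantees simultaneously that $\overline a$ is large enough for the Cartan comparison to be tight and that the residual threshold still tends to infinity, and it absorbs all dependence on the original $a$ into the single random point $b^{-}(\overline a)^{\perp}$, which is why uniformity in $a$ comes for free from uniformity in $v$. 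Second, your identification $b^{-}(ac_\tau)^{\perp}\approx(\gamma_1\cdots\gamma_\tau)^{T}v$ is done in the paper via Lemmas~\ref{lemma:mid_angles} and~\ref{lemma:new_shape_b_simple_singular_value} on the shifted walk, giving $d\bigl(b^{-}(ac_{\tau_{P,a}})^{\perp},(\gamma_{S+1}\cdots\gamma_{\overline S})^{T}v\bigr)<2r$ on the good event; this is the same approximation you describe. So your outline is essentially correct once the sandwich is replaced by the exact equality coming from Lemma~\ref{lemma:size_occurs_q_n}.
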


In \cite[Theorem 1]{KESTEN_1974} it is proven that Theorem \ref{theorem:renewal_theorem} holds without the condition that it is uniform in $v$ in a much more general setting providing some conditions are satisfied. In \cite[Section 4]{GUIVARCH_LEPAGE_2016} it is shown that the conditions of \cite[Theorem 1]{KESTEN_1974} are satisfied in the setting of Theorem \ref{theorem:renewal_theorem}. In Section \ref{section:appendix}, we will prove Theorem \ref{theorem:renewal_theorem} by deducing uniform convergence from (not necessarily uniform) convergence and deduce Corollary \ref{corollary:renewal_theorem} from it. A formula for $\hat{\nu}$ is given in \cite[Theorem 1]{KESTEN_1974} though this will not be needed for the purposes of this paper.

In Section \ref{section:sum_of_variances} we show how to construct the decomposition \eqref{eq:x_decomp} of a sample $x$ from $\nu$. The details are very technical so we only discuss in this outline how given a sufficiently small scale $\tilde{r}$ one can construct a stopping time $\tau$, and a $\sigma$-algebra $\mathcal{A}$ such that
\begin{equation*}
\gamma_1 \gamma_2 \dots \gamma_{\tau} = g \exp(u)
\end{equation*}
for some $\mathcal{A}$-measurable random variable $g$ taking values in $\Gp$ and some random $u$ taking values in  $\A$ such that $\lnb u \rnb \leq \lnb g \rnb^2 \tilde{r}$ almost surely and after conditioning on $\mathcal{A}$ we have a good lower bound for $\frac{\var (u)}{\lnb g \rnb^4 \tilde{r}^2}$.

We fix a small $s$ and some $P$ that is much smaller that $s^{-1}$. Let $s_{i_0}$ be one of the scales we get when we apply Proposition \ref{proposition:lots_of_v} with the measure from Theorem \ref{theorem:renewal_theorem} in the role of $\hat{\nu}$.

Fix an arbitrary $b \in \B$. Let $Q = (s / s_{i_0})^{1/2}/ P$ and let the stopping time $S$ be defined by $$S = \inf \{ n : \lnb (\gamma_1 \dots \gamma_n)^T b \rnb \geq Q \lnb b \rnb \}.$$

By Theorem \ref{theorem:renewal_theorem}, there is a random variable $w$ taking values in $\B$ such that $w^{\perp}$ has law $\hat{\nu}$ and
\begin{equation*}
d(b^{-}(\gamma_1 \gamma_2 \dots \gamma_{S}), w)
\end{equation*}
is small with high probability.

Now let $$T = \inf \{ n : \lnb (\gamma_{S+1} \gamma_{S+2} \dots \gamma_n)^T w^{\perp} \rnb \geq P \lnb w^{\perp} \rnb \}.$$ Note that by Proposition \ref{proposition:lots_of_v} there is some $\sigma$-algebra $\tilde{\mathcal{A}}$ such that
\begin{equation*}
\gamma_{S + 1} \gamma_{S+ 2} \dots \gamma_{T} = a \exp(u)
\end{equation*}
where $a$ is an $\tilde{\mathcal{A}}$-measurable random element of $\Gp$ and $u$ is a random element of $\A$ with $\lnb u \rnb \leq s_{i_0}$ and a good lower bound on $\frac{\Tr \var (u)}{s_{i_0}^2}$.

Now we define $g = \gamma_1 \dots \gamma_{S} a$. Using the definition of $w$ it is possible to show that $\lnb g \rnb$ is approximately $Q \cdot P = (s / s_{i_0})^{1/2}$.

Note that the scale $s_{i_0}$ depends on the measure $\hat{\nu}$ so the convergence in Theorem \ref{theorem:renewal_theorem} is important. On the other hand it does not matter what this limit measure is.

The construction in Section \ref{section:sum_of_variances} is significantly more elaborate. In particular, we will make use of all the scales $s_1, \dots, s_{\hat{m}}$ provided by Proposition \ref{proposition:lots_of_v}. Moreover, we will need to apply it for a carefully chosen sequence of parameters in the role of $P$. To aid with this in Section \ref{section:sum_of_variances} we construct a family of ways of writing a stopped random walk in $\Gp$ in such a way that we may apply Proposition \ref{proposition:intro_decomp_detail} which is closed under concatenation. 

Finally we discuss some ingredients of the proof of Proposition \ref{proposition:lots_of_v}. We define the entropy of an absolutely continuous random variable taking values in $\Gp$ to be the differential entropy with respect to a certain normalisation of the Haar measure and denote this by $H(\cdot)$. We define this more precisely in Section \ref{section:entropy}. We will then prove the following theorem.

\begin{theorem} \label{theorem:prob_exist}
Let $g, s_1$ and $s_2$ be independent random variables taking values in $\Gp$ such that $s_1$ and $s_2$ are absolutely continuous and  have finite entropy. Define $k$ by
\begin{equation*}
k := H(gs_1) - H(s_1) - H(g s_2) + H(s_2)
\end{equation*}
and let $c := \frac{3}{2} \log \frac{2}{3} \pi e \Tr \vart_{\id}[s_1] - H(s_1)$. Suppose that $k>0$. Suppose further that $s_1$ and $s_2$ are supported on the ball of radius $\varepsilon$ centred at the identity for some sufficiently small $\varepsilon > 0$. Suppose also that $\Tr \vart_{\id} [s_1] \geq A \varepsilon^2$ for some positive constant $A$. Then
\begin{equation*}
\mathbb{E} \left[ \Tr \vart_{g s_2} \left[ g| g s_2 \right] \right] \geq \frac{2}{3} (k - c -C \varepsilon ) \Tr \vart_{\id} [s_1]
\end{equation*}
where $C$ is some positive constant depending only on $A$.
\end{theorem}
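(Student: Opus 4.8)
\emph{Overview and reductions.} The plan is to translate the entropy gap $k$ and the Gaussian defect $c$ into a statement about variances by exploiting the second-order behaviour of differential entropy under convolution by a small measure, with the isotropic Gaussian as the extremal noise; the factor $\tfrac23$ will come out of a one-dimensional integral in ``dimension $3$''. First I would arrange that $g$ is absolutely continuous, by replacing $g$ with $g\exp(\eta)$ for an independent isotropic Gaussian $\eta$ of scale $\varepsilon^{10}$: this changes $H(gs_1),H(gs_2)$ — hence $k$ — and the left-hand side of the conclusion by $o(\varepsilon\,\Tr\vart_{\id}[s_1])$, and does not affect $c$. Using the fixed basis I identify $\A$ with $\R^3$ and transport everything that lives near $\id$ to $B_\varepsilon(0)\subset\R^3$ via $\log$; since $\Gp$ is unimodular the Jacobian of $\exp$ is $1+O(\|\cdot\|^2)$, so Haar and Lebesgue measure agree on $B_\varepsilon(0)$ up to a factor $1+O(\varepsilon^2)$, and all entropies and covariances below may be computed in $\R^3$ with errors of this order. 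Finally I note $(gs_2)^{-1}g=s_2^{-1}$ and $\log(s_2^{-1})=-\log s_2$, so the quantity to be bounded below is $\bar V_2:=\mathbb E\big[\Tr\vart_{gs_2}[g\mid gs_2]\big]=\mathbb E_z\big[\Tr\operatorname{Cov}(\log s_2\mid gs_2=z)\big]$, which, after the identification, is the mean posterior variance of $g$ in the channel $g\mapsto gs_2$ up to $O(\varepsilon^3)$.

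\emph{Entropy identities.} Independence of $s_i$ from $g$ together with left-invariance of Haar gives $H(gs_i\mid g)=H(s_i)$, hence $I(g;gs_i)=H(gs_i)-H(s_i)$ and $k=I(g;gs_1)-I(g;gs_2)$. As $g$ is now absolutely continuous we also have $I(g;gs_i)=H(g)-H(g\mid gs_i)$; and given $gs_i=z$ the map $h\mapsto zh^{-1}$ preserves Haar measure (unimodularity plus invariance of Haar under inversion) and carries the conditional law of $s_i$ onto that of $g$, so $H(g\mid gs_i)=H(s_i\mid gs_i)$. Thus $H(s_i\mid gs_i)=H(s_i)+H(g)-H(gs_i)$ and $H(s_2\mid gs_2)=H(s_1\mid gs_1)+k$.

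\emph{The Gaussian comparison (the crux).} Let $\mathcal N=N(\mathbb E[\log s_1],\tfrac{V_1}{3}I)$ with $V_1=\Tr\vart_{\id}[s_1]$, and write $g\mathcal N$ for the law of $g\exp(\eta)$ with $\eta\sim\mathcal N$; a direct computation gives the Kullback--Leibler divergence $D(s_1\,\|\,\mathcal N)=\tfrac32\log(\tfrac23\pi e\,V_1)-H(s_1)+O(\varepsilon^2)=c+O(\varepsilon^2)$. For the isotropic Gaussian channel $g\mapsto g+N(0,tI)$ (meaning $g\exp\eta$, $\eta\sim N(0,tI)$; the identities below are local and use that Haar is locally Euclidean), de~Bruijn's identity combined with Tweedie's formula gives that the trace of the Fisher information of the density of $g+N(0,tI)$ equals $\tfrac3t-\tfrac1{t^2}\bar V^{(t)}$, where $\bar V^{(t)}:=\mathbb E\|\eta-\mathbb E[\eta\mid g+N(0,tI)]\|^2$ is non-decreasing in $t$; hence $\tfrac{d}{dt}I(g;g+N(0,tI))=-\tfrac{\bar V^{(t)}}{2t^2}$ and, since $I\to0$ as $t\to\infty$,
\begin{equation*}
I\big(g;g+N(0,tI)\big)=\tfrac12\int_t^\infty\frac{\bar V^{(u)}}{u^2}\,du .
\end{equation*}
Taking $t=\tfrac{V_1}{3}$ and bounding $\bar V^{(u)}\le\bar V^{(\tau)}$ on $[\tfrac{V_1}{3},\tau]$ yields, for $\tau>\tfrac{V_1}{3}$,
\begin{equation*}
I\big(g;g+N(0,\tfrac{V_1}{3}I)\big)-I\big(g;g+N(0,\tau I)\big)\le\tfrac12\,\bar V^{(\tau)}\int_{V_1/3}^{\tau}u^{-2}\,du\le\tfrac{3}{2V_1}\,\bar V^{(\tau)},
\end{equation*}
which is the extremal form of the desired bound, the $\tfrac23$ coming from $\int u^{-2}\,du=-u^{-1}$ and the factor $3$ in $V_1/3$. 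It then remains to pass between the genuine noises and isotropic Gaussians. For $s_1$ I would use the data-processing inequality $D(gs_1\,\|\,g\mathcal N)\le D(s_1\,\|\,\mathcal N)\le c+O(\varepsilon^2)$ together with the identity $H(g\mathcal N)-H(gs_1)=D(gs_1\,\|\,g\mathcal N)+\int_{\R^3}\big(p_{gs_1}-p_{g\mathcal N}\big)\log p_{g\mathcal N}$ and a Taylor expansion of $\log p_{g\mathcal N}$ at scale $\varepsilon$, to get $I(g;gs_1)\le I(g;g\mathcal N)+c+O(\varepsilon)$. For $s_2$ one must compare $I(g;gs_2)$ from below and $\bar V_2$ from above with $I(g;g+N(0,\tau I))$ and $\bar V^{(\tau)}$ for a suitable $\tau\asymp V_2$, using the support bound $\supp s_2\subset B_\varepsilon(0)$, the matching of covariance traces, and the hypothesis $k>0$, so that this passage costs only $O(\varepsilon)\cdot V_1$.

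\emph{Assembly and the hard part.} Combining the pieces, $k-c\le\big[I(g;g\mathcal N)-I(g;gs_2)\big]+O(\varepsilon)\le\tfrac{3}{2V_1}\bar V_2+O(\varepsilon)$, i.e. $\bar V_2\ge\tfrac23 V_1\,(k-c-C\varepsilon)$; the hypothesis $V_1\ge A\varepsilon^2$ is exactly what lets each additive $O(\varepsilon)$-error — which is really an $O(\varepsilon^3)=O(\varepsilon)\cdot O(\varepsilon^2)$ — be absorbed into $C\varepsilon\,V_1$ with $C=C(A)$. I expect the main obstacle to be the last part of the Gaussian comparison: making the two comparisons between $s_2$ (and $s_1$) and isotropic Gaussians precise enough that no multiplicative constant is lost and only an $O(\varepsilon)$ additive error survives. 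This is where the support bound, the trace-matching of covariances and the sign of $k$ must all be used carefully, and it is also what pins down $c$ as exactly $D(s_1\|\mathcal N)$ rather than a constant multiple of it.
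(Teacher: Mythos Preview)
Your route via I-MMSE and Gaussian-channel comparison is attractive, but the ``Gaussian comparison'' you flag as the main obstacle is not merely hard --- it appears to be genuinely unavailable, and the difficulty already appears at the $s_1$ step. You need $H(gs_1)\le H(g\mathcal N)+O(\varepsilon)$, and your proposed control of $\int(p_{gs_1}-p_{g\mathcal N})\log p_{g\mathcal N}$ by Taylor expansion fails: since $\mathcal N$ itself has scale $\sim\varepsilon$, the density $p_{g\mathcal N}$ varies at scale $\varepsilon$, so $\nabla^3\log p_{g\mathcal N}$ is of order $\varepsilon^{-3}$ and the third-order remainder in $y\in B_\varepsilon$ is $O(1)$, not $O(\varepsilon)$. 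Worse, $s_1$ and $\mathcal N$ match only in the \emph{trace} of the covariance, not in the full second moment, so even the quadratic term in the expansion survives. There is no general principle that convolution with an arbitrary $g$ preserves the max-entropy property of the Gaussian. The $s_2$ step has the same issue and an additional one: you need to relate both $I(g;gs_2)$ and $\bar V_2$ to their Gaussian counterparts simultaneously, and these two comparisons pull in opposite directions.

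The paper's argument sidesteps all of this; it never compares either $s_i$ to a Gaussian channel. Your entropy identity already gives $\mathbb E\bigl[H\bigl((gs_1\mid gs_2)\bigr)\bigr]\ge k+H(s_1)$. Now apply the max-entropy bound \emph{once}, directly to the conditional law $(gs_1\mid gs_2)$: since $(gs_2)^{-1}gs_1=s_2^{-1}s_1\in B_{2\varepsilon}(\id)$, one has
\[
H\bigl((gs_1\mid gs_2)\bigr)\le\tfrac32\log\Bigl(\tfrac{2\pi e}{3}\,\Tr\vart_{gs_2}[gs_1\mid gs_2]\Bigr)+O(\varepsilon).
\]
Because $s_1$ is independent of $(g,s_2)$, the conditional variance splits additively up to $O(\varepsilon^3)$:
\[
\Tr\vart_{gs_2}[gs_1\mid gs_2]=\Tr\vart_{gs_2}[g\mid gs_2]+\Tr\vart_{\id}[s_1]+O(\varepsilon^3).
\]
Writing $V_1=\Tr\vart_{\id}[s_1]$ and substituting, the term $\tfrac32\log\bigl(\tfrac{2\pi e}{3}V_1\bigr)$ combines with $-H(s_1)$ to produce exactly $c$, leaving
\[
\tfrac32\,\mathbb E\Bigl[\log\Bigl(1+\tfrac{1}{V_1}\Tr\vart_{gs_2}[g\mid gs_2]+O_A(\varepsilon)\Bigr)\Bigr]\ge k-c-O(\varepsilon),
\]
and the elementary bound $\log(1+x)\le x$ finishes. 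The factor $\tfrac23$ is nothing more than $\bigl(\tfrac32\bigr)^{-1}$ from the dimension; no integral of $u^{-2}$ is needed, and the hypothesis $V_1\ge A\varepsilon^2$ is used only to absorb the $O(\varepsilon^3)$ into $O_A(\varepsilon)\cdot V_1$.
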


We apply this theorem when $s_1$ and $s_2$ are smoothing functions at appropriate scales with $s_2$ corresponding to a larger scale than $s_1$. The value $k$ can be thought of as the new information that can be gained by discretising at the scale corresponding  to $s_1$ after discretising at the scale corresponding to $s_2$. When we apply this theorem we bound $k$ in the following way. We let $g = \gamma_1 \gamma_2 \dots \gamma_{\tau}$ where the $\gamma_i$ are i.i.d.\ samples from $\mu$ and $\tau$ is some stopping time. We let $s_1, s_2, \dots, s_n$ be a sequence of smoothing random variables corresponding to various scales with $s_i$ corresponding to a larger scale than $s_j$ whenever $i > j$. For $i = 1, \dots, n-1$ we let $k_i$ be defined by $$k_i = H(g s_i) - H(s_i) - H(g s_{i+1}) + H(s_{i+1})$$ and note that we have the following telescoping sum
\begin{align*}
\sum_{i=1}^{n-1} k_i &= \sum_{i=1}^{n-1} H(g s_i) - H(s_i) - H(g s_{i+1}) + H( s_{i+1}) \\
&= H(g s_1) - H(s_1) - H(g s_n) + H(s_n).
\end{align*}
Since when we apply this theorem $s_n$ will correspond to a scale much larger than $s_1$ we are able to bound $ H(g s_1) - H(s_1) - H(g s_n) + H(s_n)$ for our careful choice of smoothing functions in terms of $h_{RW}$, $M_{\mu}$ and $\chi$.

The value $c$ in the above theorem measures how close $s_1$ is to being a spherical normal distribution. For random variables taking values in $\R^d$ it is well known that the random variable with the greatest differential entropy out of all random variables with a given covariance matrix is a multivariate normal distribution. From this it is easy to deduce that if  $X$ is a continuous random variable taking values in $\R^d$ then $H(X) \leq \frac{d}{2} \log \frac{2}{d} \pi e \Tr \var X$ with equality if and only if $X$ is a spherical normal distribution. A similar thing is true for random variables taking values in $\Gp$. In particular $c$ is small when $s_1$ is close to being the image of a spherical normal distribution on $\A$ under $\exp$.

For the conclusion of Theorem \ref{theorem:prob_exist} to be useful in proving Proposition \ref{proposition:lots_of_v} we need $g$ to almost surely be contained in some ball of radius $O \left( \sqrt{\Tr \vart_{\id} [s_1]} \right)$ centred on $g s_2$. For this reason we require $s_2$ to be compactly supported. To make our telescoping sum useful we need $s_1$ and $s_2$ to be members of the same family of random variables. For this reason we take $s_1$ and $s_2$ to be compactly supported approximations of the image of the spherical normal distribution on $\A$ under $\exp$. To do this we will find bounds on the differential entropy of various objects smoothed with these compactly supported approximations to the normal distribution at different scales.

We then combine Theorems \ref{theorem:prob_exist} and a bound for the entropy of the stopped random walk along with some calculations about the entropy and variance of the smoothing functions to prove Proposition \ref{proposition:lots_of_v}.

\subsection{Notation}
We will use Landau's $O(\cdot)$ notation. Given some positive quantity $X$ we write $O(X)$ to mean some quantity whose absolute values is bounded above by $CX$ some constant $C$. If $C$ is allowed to depend on some other parameters then these will be denoted by subscripts. Similarly we write $o(X)$ to mean some quantity whose absolute value is bounded above by $cX$, where $c$ is some positive value which tends to $0$ as $X \to \infty$. Again if $c$ is allowed to depend on some other parameters then these will be denoted by subscripts. We also let $\Theta(X)$ be some quantity which is bounded below by $CX$ where $C$ is some positive absolute constant. If $C$ is allowed to depend on some other parameters then these will be denoted by subscripts.

We write $X \lesssim Y$ to mean that there is some constant $C>0$ such that $X \leq CY$. Similarly we write $X \gtrsim Y$ to mean that there is some constant $C>0$ such that $X \geq CY$ and $X \cong Y$ to mean $X \lesssim Y$ and $X \gtrsim Y$. If these constants are allowed to depend on some other parameters then these are denoted in subscripts.

\subsection{Organisation of the Paper}

Here we give some brief remarks on the organisation of the paper. In Section \ref{section:prerequisites} we state some results on random walks on $\Gp$, entropy and probability which will be used though-out the paper. In Section \ref{section:detail} we recall some results on detail from \cite{Kittle2024ASENS} and introduce order $k$ detail. In Section \ref{section:taylor} we carry out some calculations on derivatives of various products in $\Gp$ and prove Proposition \ref{proposition:intro_decomp_detail}. In Section \ref{section:disintegration_argument} we prove some basic results about entropy, regular conditional probability and variance on $\Gp$ and use them to prove Theorem \ref{theorem:prob_exist}. In Section \ref{section:entropy_gap} we use Theorem \ref{theorem:prob_exist} and some calculations with entropy to prove Proposition \ref{proposition:lots_of_v}. In Section \ref{section:sum_of_variances} we develop some tools for putting together the variance found in Proposition \ref{proposition:lots_of_v} at different scales. In Section \ref{section:proof_of_main_theorem} we use these tools to prove Theorem \ref{theorem:main_furstenberg}. In Section \ref{section:examples} we give examples of Furstenberg measures satisfying the conditions of Theorem \ref{theorem:main_furstenberg}. Finally in Section \ref{section:appendix} we prove Theorem \ref{theorem:renewal_theorem}.

\section{Prerequisites} \label{section:prerequisites}
In this subsection we give some prerequisites for the paper.
\subsection{Random walks on $\Gp$}
Here we give some well known results about random walks on $\Gp$. These results may be found in \cite{BOUGEROL_LACROIX_1985} or follow easily from results found therein.

\begin{lemma} \label{lemma:first_large_deviations_estimates}
    Suppose that $\mu$ is a compactly supported Zariski-dense probability measure on $\Gp$ and let $\chi$ be its Lyaponuv exponent. Let $\gamma_1, \gamma_2, \dots$ be i.i.d.\ samples from $\mu$. Then for every $\varepsilon > 0$ there is some $\delta > 0$ such that the following holds.

    For all sufficiently large $n$ we have
    \begin{equation}
        \mathbb{P}\left[ \left| n \chi - \log \lnb \gamma_1 \gamma_2 \dots \gamma_n \rnb\right| > \varepsilon n \right] < \exp( - \delta n). \label{eq:large_deviations_estimate}
    \end{equation}

    Furthermore for all $v \in \R^2 \backslash \{0\}$ for all sufficiently large $n$ we have
    \begin{equation}
        \mathbb{P}\left[ \left| n \chi + \log \lnb v \rnb - \log \lnb (\gamma_1 \gamma_2 \dots \gamma_n)^T v \rnb\right| > \varepsilon n \right] < \exp( - \delta n) \label{eq:large_deviations_v}
    \end{equation}

    Furthermore if $P>0$ is sufficiently large and we define
    \begin{equation*}
        \tau_P := \inf \{n: \lnb \gamma_1 \gamma_2 \dots \gamma_n \rnb \geq P \} 
    \end{equation*}
    then
    \begin{equation}
        \mathbb{P}\left[ \left| \tau_P  - \log P / \chi \right| > \varepsilon \log P \right] < \exp( - \delta \log P) \label{eq:large_deviations_stopping_time}.
    \end{equation}

    Furthermore for all $v \in \B$ for all sufficiently large $P>0$ if we take $\tau_{P, v}$ as in Definition \ref{definition:tau_t_v} then
    \begin{equation}
        \mathbb{P}\left[ \left| \tau_{P, v}  - \log P / \chi \right| > \varepsilon \log P \right] < \exp( - \delta \log P) \label{eq:large_deviations_stopping_time_v}.
    \end{equation}
\end{lemma}

\begin{proof}
    Equation \eqref{eq:large_deviations_estimate} follows from \cite[Theorem V.6.2]{BOUGEROL_LACROIX_1985}. Equation \eqref{eq:large_deviations_v} is a special case of \cite[Theorem V.6.1]{BOUGEROL_LACROIX_1985}. 

    We now deduce \eqref{eq:large_deviations_stopping_time} from \eqref{eq:large_deviations_estimate}. If $ \tau_P> \log P / \chi + \varepsilon \log P $ then we must have
    \begin{equation*}
        \lnb \gamma_1 \gamma_2 \dots \gamma_{\lfloor \log P / \chi + \varepsilon \log P \rfloor } \rnb \leq P.
    \end{equation*}
    By \eqref{eq:large_deviations_estimate}, providing $P$ is sufficiently large, this has probability at most $\exp(- \delta \log P)$.

    Choose $R > 0$ such that $\lnb \gamma_i \rnb \leq R$ almost surely (this is possible as $\mu$ is compactly supported). If $\tau < \log P / \chi - \varepsilon \log P$ then there must be some integer $k \in \left[\log P / \log R, \log P / \chi - \varepsilon \log P \right]$ such that
    \begin{equation*}
        \log \lnb \gamma_1 \gamma_2 \dots \gamma_k \rnb \geq \log P > k (\chi + \varepsilon \chi).
    \end{equation*}
    The result now follows from \eqref{eq:large_deviations_estimate} and summing a geometric series.

    Finally \eqref{eq:large_deviations_stopping_time_v} follows from \eqref{eq:large_deviations_v} by essentially the same argument.
\end{proof}

We will need the following positive dimensionality result.
\begin{theorem} \label{theorem:positive_dimension}
    Suppose that $\mu$ is a Zariski-dense probability measure on $\Gp$ and let $\nu$ be its Furstenberg measure. Suppose that there exists some $\varepsilon > 0$ such that $$\int \| g \|^{\varepsilon} \, d \mu(g) < \infty.$$ Then there exist $C, \delta >0$ such that for any $x \in \B$ and any $r>0$ we have
    \begin{equation*}
        \nu(B(x, r)) < C r^{\delta}.
    \end{equation*}
\end{theorem}

\begin{proof}
    This is \cite[Corollary VI.4.2]{BOUGEROL_LACROIX_1985}.
\end{proof}

We also need the following facts about the speed of convergence to the Furstenberg measure.
\begin{lemma} \label{lemma:convergence_to_fm}
    Suppose that $\mu$ is a compactly supported Zariski-dense probability measure on $\Gp$ and let $\gamma_1, \gamma_2, \dots$ be i.i.d.\ samples from $\mu$. Then $b^{+}(\gamma_1 \gamma_2 \dots \gamma_n)$ converges almost surely and furthermore there exists some constant $\varepsilon > 0$ such that for all sufficiently large $n$
    \begin{equation}
        \mathbb{P}[d(b^{+}(\gamma_1 \gamma_2 \dots \gamma_n), \lim \limits_{n \to \infty} b^{+}(\gamma_1 \gamma_2 \dots \gamma_n) ) > \exp(- \varepsilon n)] < \exp(-\varepsilon n). \label{eq:exponential_convergence_to_fm}
    \end{equation}
    Furthermore for all sufficiently large $N$ we have
    \begin{equation}
        \mathbb{P}[\exists n \geq N: d(b^{+}(\gamma_1 \gamma_2 \dots \gamma_n), \lim \limits_{m \to \infty} b^{+}(\gamma_1 \gamma_2 \dots \gamma_m) ) > \exp(- \varepsilon n)] < \exp(-\varepsilon N) \label{eq:c_exponential_convergence_to_fm}
    \end{equation}
    and for all $v \in \B$ we have
    \begin{equation}
        \mathbb{P}[\exists m \geq N: d(v, b^{+}(\gamma_1 \dots \gamma_m)) < \exp(- \varepsilon m)] < \exp(- \delta N) \label{eq:products_not_too_close}.
    \end{equation}
\end{lemma}

\begin{proof}
    The convergence of $b^{+}(\gamma_1 \gamma_2 \dots \gamma_n)$ and \eqref{eq:exponential_convergence_to_fm} follow from for example \cite[Proposition V.2.3]{BOUGEROL_LACROIX_1985}. Equation \eqref{eq:c_exponential_convergence_to_fm} follows from \eqref{eq:exponential_convergence_to_fm} and summing a geometric series. Finally \eqref{eq:products_not_too_close} follows easily from \eqref{eq:c_exponential_convergence_to_fm} and Theorem \ref{theorem:positive_dimension}.
\end{proof}

We finish this subsection with the following corollary.
\begin{corollary}\label{corollary:size_after_stopping_time}
    Suppose that $\mu$ is a compactly supported Zariski-dense probability measure on $\Gp$. Let $\gamma_1, \gamma_2, \dots$ be i.i.d.\ samples from $\mu$ and let $\varepsilon > 0$. Then there exists  $delta > 0$ such that for all sufficiently large $P$ and all $v \in \B$ we have
    \begin{equation*}
        \mathbb{P}[\left| \log \lnb \gamma_1 \gamma_2 \dots \gamma_{\tau_{P, v}} \rnb - \log P \right| > \varepsilon \log P] < \exp(- \delta \log P).
    \end{equation*}
\end{corollary}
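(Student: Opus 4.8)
The plan is to compare the norm $\lnb \gamma_1 \cdots \gamma_{\tau_{P,v}} \rnb$ with the norm of the transpose applied to $\hat v$, which is exactly what the stopping time $\tau_{P,v}$ controls, and then to invoke the large deviations machinery for the stopping time itself (Corollary \ref{corollary:large_deviations_stopping_time_v}) together with the large deviations estimate for the norm (Theorem \ref{theorem:large_deviations_estimate}). First I would record that by the very definition of $\tau_{P,v}$ in Definition \ref{definition:tau_t_v} we have
\begin{equation*}
\lnb (\gamma_1 \cdots \gamma_{\tau_{P,v}})^T \hat v \rnb \geq P \lnb \hat v \rnb,
\end{equation*}
while at the previous step this norm was strictly less than $P \lnb \hat v \rnb$; since $\mu$ is compactly supported, $\lnb \gamma_{\tau_{P,v}} \rnb \leq R$ almost surely for some $R$, so $\lnb (\gamma_1 \cdots \gamma_{\tau_{P,v}})^T \hat v \rnb \leq RP \lnb \hat v \rnb$. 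Hence $\log \lnb (\gamma_1 \cdots \gamma_{\tau_{P,v}})^T \hat v \rnb = \log P + O(1)$ deterministically, and in particular $\tau_{P,v} = \frac{\log P}{\chi} + o(\log P)$ off an event of probability $\exp(-\delta \log P)$ by the analogue of Corollary \ref{corollary:large_deviations_stopping_time} proved via Theorem \ref{theorem:large_deviations_v} (the norm of $(\gamma_1\cdots\gamma_n)^T \hat v$ grows like $e^{n\chi}$ since transposing does not change the Lyapunov exponent and the relevant large deviations estimate is Theorem \ref{theorem:large_deviations_v}).

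Next I would fix a small $\varepsilon' > 0$ (to be chosen in terms of $\varepsilon$) and condition on the good event $E$ that $\left| \tau_{P,v} - \frac{\log P}{\chi} \right| \leq \varepsilon' \log P$, which by the above has complement of probability at most $\exp(-\delta_1 \log P)$. On $E$ the stopping time $\tau_{P,v}$ lies in the deterministic window $I := \left[ (\tfrac{1}{\chi} - \varepsilon')\log P, (\tfrac{1}{\chi} + \varepsilon')\log P \right]$. Now I would apply Theorem \ref{theorem:large_deviations_estimate}: for each integer $n \in I$ the probability that $\left| n\chi - \log \lnb \gamma_1 \cdots \gamma_n \rnb \right| > \varepsilon'' n$ is at most $\exp(-\delta_2 n) \leq \exp(-\delta_2 (\tfrac{1}{\chi} - \varepsilon')\log P)$, and summing over the at most $2\varepsilon' \log P + 1$ integers in $I$ via the union bound gives a total failure probability of at most $(2\varepsilon'\log P + 1)\exp(-\delta_2(\tfrac{1}{\chi}-\varepsilon')\log P) \leq \exp(-\delta_3 \log P)$ for $P$ large. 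On the complement of this bad event together with $E$, we have simultaneously $\tau_{P,v} \in I$ and $\log \lnb \gamma_1 \cdots \gamma_{\tau_{P,v}} \rnb = \chi \tau_{P,v} + O(\varepsilon'' \tau_{P,v})$, so combining with $\tau_{P,v} = \frac{\log P}{\chi} + O(\varepsilon' \log P)$ yields
\begin{equation*}
\log \lnb \gamma_1 \cdots \gamma_{\tau_{P,v}} \rnb = \log P + O\!\left( (\varepsilon' + \varepsilon'') \log P \right).
\end{equation*}
Choosing $\varepsilon', \varepsilon''$ small enough that the implied constant times $(\varepsilon' + \varepsilon'')$ is at most $\varepsilon$, and setting $\delta = \min\{\delta_1, \delta_3\}/2$ and $T$ large enough, gives the claim, with the key point that all estimates are uniform in $v$ because Theorems \ref{theorem:large_deviations_estimate} and \ref{theorem:large_deviations_v} hold uniformly (the constant $\delta$ in Theorem \ref{theorem:large_deviations_v} does not depend on $\hat v$).

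I would expect the only genuine subtlety — the "main obstacle" — to be the interchange of the random stopping time with the large deviations estimate for $\log \lnb \gamma_1 \cdots \gamma_n \rnb$: one cannot directly apply Theorem \ref{theorem:large_deviations_estimate} at the random index $\tau_{P,v}$, which is why the argument must first localize $\tau_{P,v}$ to a deterministic window of length $O(\varepsilon' \log P)$ and then take a union bound over that window; the fact that the window has logarithmic length (rather than, say, polynomial in $P$) is exactly what makes the union bound survive against the exponential-in-$n$, i.e. polynomial-in-$P$, decay. A secondary point worth stating carefully is that one actually has $\log \lnb \gamma_1 \cdots \gamma_{\tau_{P,v}} \rnb \geq \log \lnb (\gamma_1 \cdots \gamma_{\tau_{P,v}})^T \hat v \rnb - \log \lnb \hat v\rnb \geq \log P$ always (since $\lnb A^T \hat v\rnb \leq \lnb A\rnb \lnb \hat v \rnb$), so the only substantive direction is the upper bound on $\log \lnb \gamma_1 \cdots \gamma_{\tau_{P,v}}\rnb$; this observation could streamline the write-up, but the union-bound argument above handles both directions at once and is the cleanest route.
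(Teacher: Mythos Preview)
Your argument is correct, but it takes a different route from the paper's. You localize $\tau_{P,v}$ to a deterministic window of size $O(\varepsilon'\log P)$ using the large-deviations estimate for the stopping time (the analogue of Corollary~\ref{corollary:large_deviations_stopping_time} derived from Theorem~\ref{theorem:large_deviations_v} applied to the transposed walk), and then union-bound Theorem~\ref{theorem:large_deviations_estimate} over the integers in that window; the logarithmic width of the window is exactly what makes the union bound survive, as you note. The paper instead argues geometrically: the lower bound $\lnb \gamma_1\cdots\gamma_{\tau_{P,v}}\rnb\geq P$ is immediate, and for the upper bound one uses the Cartan identity $\lnb (\gamma_1\cdots\gamma_\tau)^T\hat v\rnb \asymp \lnb \gamma_1\cdots\gamma_\tau\rnb\cdot(\text{angle between }v\text{ and a singular direction})$ together with the one-step overshoot bound $\lnb (\gamma_1\cdots\gamma_\tau)^T\hat v\rnb\leq PR$, so that $\lnb\gamma_1\cdots\gamma_\tau\rnb\geq P^{1+\varepsilon}$ forces $v$ to be exponentially close (in $P$) to a specific direction determined by $b^{+}(\gamma_1\cdots\gamma_\tau)$; Corollary~\ref{corollary:products_not_to_close} (positive dimension of $\nu$ plus exponential convergence of $b^{+}$) then controls that probability. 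The paper's proof is shorter and makes direct use of the projective geometry already developed, while your approach is a clean, purely random-walk argument that avoids any reference to the Furstenberg measure or $b^{\pm}$ and would transfer unchanged to more general norm-cocycle settings.
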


\begin{proof}
    By definition we trivially have $\lnb \gamma_1 \gamma_2 \dots \gamma_{\tau_{P, v}} \rnb \geq P$. Let $R$ be chosen such that $\lnb \cdot \rnb \leq R$ on the support of $\mu$. Clearly $\tau_{P, v} \geq \log P / \log R$ and
    \begin{align*}
        PR & \geq \lnb (\gamma_1 \gamma_2 \dots \gamma_{\tau_{P, v}})^T v \rnb \\
        & = \lnb \gamma_1 \gamma_2 \dots \gamma_{\tau_{P, v}} \rnb \sin d(b^{+}(\gamma_1 \gamma_2 \dots \gamma_{\tau_{P, v}}), v).
    \end{align*}
    In particular if $\log \lnb \gamma_1 \gamma_2 \dots \gamma_{\tau_{P, v}} \rnb \geq (1+ \varepsilon) \log P$ then
    \begin{equation*}
        d(b^{+}(\gamma_1 \gamma_2 \dots \gamma_{\tau_{P, v}}), v) \leq 10 R \exp(- \varepsilon \log P).
    \end{equation*}
    The result now follows by \eqref{eq:products_not_too_close}.
\end{proof}

\subsection{Entropy}
In this subsection we will describe some of the properties of entropy used in this paper. We will describe entropy for both absolutely continuous and discrete measures on $\R^d$ and $\Gp$.

\begin{definition}[KL-divergence]
Let $\lambda_1$ be a probability measure on a measurable space $(E, \xi)$ and let $\lambda_2$ be a measure on $(E, \xi)$. Then we define the \emph{KL-divergence} of $\lambda_1$ given $\lambda_2$ by
\begin{equation*}
\mathcal{KL}(\lambda_1, \lambda_2) := \int_E \log \frac{d \lambda_1}{d \lambda_2} \, d \lambda_1.
\end{equation*}
\end{definition}
	
\begin{definition}[Entropy]
    Given a probability measure $\lambda_1$ on a measurable space $(E, \xi)$ and a measure $\lambda_2$ on the same space we define the entropy of $\lambda_1$ with respect to $\lambda_2$ by $$D(\lambda_1 || \lambda_2) := -\mathcal{KL}(\lambda_1, \lambda_2).$$
\end{definition}

\begin{definition}
    Given a discrete probability measure $\lambda$ on some measurable set $(E, \xi)$ we define the entropy of $\lambda$ to be the entropy with respect to the counting measure and we denote this by $H(\lambda)$. In other words if $\lambda = \sum_{i} p_i \delta_{x_i}$ then $$H(\lambda) := - \sum_{i} p_i \log p_i.$$
    We define the entropy of a random variable to be the entropy of its law.
\end{definition}

\begin{definition}
    Given an absolutely continuous probability measure $\lambda$ on $\R^d$ we define the entropy of $\lambda$ to be the entropy of $\lambda$ with respect to the Lebesgue measure and denote this by $H(\lambda)$. We define the entropy of a random variable to be the entropy of its law.
\end{definition}

We use $H$ to denote entropy in both cases. It will be clear from context whether $H$ is being applied to a discrete measure (or random variable) or an absolutely continuous measure (or random variable) so this will not cause confusion.

We now wish to define entropy for an absolutely continuous probability measure on $\Gp$. To do this we introduce the following normalisation of the Haar measure.

\begin{definition} \label{definition:tilde_m}
Let $\tilde{m}$ denote the Haar measure on $\Gp$ normalized such that $$\frac{d \tilde{m}}{d m \circ \log} (\id) = 1$$ where $m$ denotes the Lebesgue measure on $\A$ under our identification of $\A$ with $\R^3$.
\end{definition}

\begin{definition}
Let $\lambda$ be an absolutely continuous measure on $\Gp$. We then define the entropy of $\lambda$ to be its entropy with respect to $\tilde{m}$ and denote this by $H(\lambda)$.

Similarly if $g$ is a random variable taking values in $\Gp$ then we let $H(g)$ denote the entropy of its law.
\end{definition}

We have the following simple result.

\begin{lemma} \label{lemma:ent_increase}
Suppose that $g_1$ and $g_2$ are independent random variables taking values in some group $\mathbf{G}$ with $\sigma$-algebra $\xi$. Let $\lambda$ be a left invariant measure on $(\mathbf{G}, \xi)$. Then
\begin{equation*}
D(\mathcal{L}(g_1 g_2)||\lambda) \geq D(\mathcal{L}(g_2)||\lambda)
\end{equation*}
\end{lemma}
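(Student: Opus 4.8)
The plan is to reduce the statement to the standard data-processing / monotonicity property of KL-divergence (equivalently, of relative entropy $D(\cdot \| \cdot)$) under pushforward by a measurable map, combined with left-invariance of $\lambda$. Fix the law $\rho = \mathcal{L}(g_1)$ of $g_1$ and let $\nu_2 = \mathcal{L}(g_2)$. Since $g_1$ and $g_2$ are independent, the law of the pair $(g_1, g_2)$ is the product $\rho \otimes \nu_2$, and the law of the product $g_1 g_2$ is the pushforward of $\rho \otimes \nu_2$ under the multiplication map $m : \mathbf{G} \times \mathbf{G} \to \mathbf{G}$, $m(x,y) = xy$. The key observation is that, for each fixed $x \in \mathbf{G}$, left translation $L_x : y \mapsto xy$ preserves $\lambda$, so $D(\mathcal{L}(xg_2) \| \lambda) = D(\nu_2 \| \lambda)$ for $\rho$-a.e.\ $x$.

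First I would recall that $D(\lambda_1 \| \lambda_2) = -\mathcal{KL}(\lambda_1,\lambda_2)$ is concave in $\lambda_1$ and enjoys the following mixture inequality: if $\lambda_1 = \int \lambda_1^{(x)} \, \rho(dx)$ is a mixture of probability measures, then $D\!\left( \int \lambda_1^{(x)} \rho(dx) \,\middle\|\, \lambda_2 \right) \geq \int D(\lambda_1^{(x)} \| \lambda_2) \, \rho(dx)$; this is just Jensen applied to the convex function $t \mapsto t \log t$ in the definition of $\mathcal{KL}$, or alternatively the log-sum inequality. Applying this with $\lambda_1^{(x)} = \mathcal{L}(x g_2) = (L_x)_* \nu_2$, which by left-invariance satisfies $D(\lambda_1^{(x)} \| \lambda) = D(\nu_2 \| \lambda)$ for every $x$, and noting $\mathcal{L}(g_1 g_2) = \int (L_x)_* \nu_2 \, \rho(dx)$ by independence, we get
\begin{equation*}
D(\mathcal{L}(g_1 g_2) \| \lambda) \geq \int D((L_x)_* \nu_2 \| \lambda) \, \rho(dx) = D(\nu_2 \| \lambda) = D(\mathcal{L}(g_2) \| \lambda),
\end{equation*}
which is exactly the claim.

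The only point requiring care — and the main (mild) obstacle — is the measure-theoretic bookkeeping: checking that $(L_x)_* \lambda = \lambda$ gives $\frac{d (L_x)_* \nu_2}{d\lambda} = \left(\frac{d\nu_2}{d\lambda}\right) \circ L_x^{-1}$, so that $D((L_x)_* \nu_2 \| \lambda)$ is genuinely independent of $x$; handling the case where $\nu_2$ is not absolutely continuous with respect to $\lambda$ (in which case the right-hand side is $-\infty$ and there is nothing to prove, using the convention that $D(\lambda_1\|\lambda_2) = -\infty$ when $\lambda_1 \not\ll \lambda_2$); and justifying the interchange of the pushforward with the mixture, i.e.\ that for a product measure the law of $g_1 g_2$ really is $\int (L_x)_* \nu_2 \, \rho(dx)$, which is just Fubini. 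All of these are routine, so the substance of the proof is the concavity/mixture inequality for $D(\cdot \| \lambda)$ together with left-invariance; I would state the mixture inequality as a one-line lemma (or cite it as standard) and then assemble the three displayed equalities above.
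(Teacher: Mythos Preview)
Your argument is correct and is precisely the standard proof: write $\mathcal{L}(g_1 g_2)$ as the mixture $\int (L_x)_*\nu_2\,\rho(dx)$, apply concavity of $D(\cdot\|\lambda)$ (equivalently, convexity of $\mathcal{KL}$ in its first argument), and use left-invariance of $\lambda$ to see that each $D((L_x)_*\nu_2\|\lambda)$ equals $D(\nu_2\|\lambda)$. The paper does not spell out a proof at all---it simply declares the result well known, cites \cite[Lemma 1.15]{Johnson_2004} for the case $\mathbf{G}=(\R,+)$, and remarks that the same proof carries over; your write-up is exactly that proof.
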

Here and throughout this paper given a random variable $X$ we will use $\mathcal{L}(X)$ to denote the law of $X$
\begin{proof}
This is well known. A proof in the special case where $\mathbf{G} = (\R, +)$ is given in \cite[Lemma 1.15]{Johnson_2004}. The same proof works in the more general setting described above.
\end{proof}

We also define entropy for non-probability measures.

\begin{definition} \label{definition:non_prob_entropy}
Suppose that $\lambda$ is a finite measure discrete measure on some set $S$. Then we define
\begin{equation*}
H(\lambda) := \lnb \lambda \rnb_1 H(\lambda / \lnb \lambda \rnb_1)
\end{equation*}
where $H(\lambda / \lnb \lambda \rnb_1)$ denotes either the Shannon entropy of $\lambda / \| \lambda \|_1$. Similarly if $\lambda$ is a finite absolutely continuous measure on $\R^d$ or $\Gp$ we define $H(\lambda) := \lnb \lambda \rnb_1 H(\lambda / \lnb \lambda \rnb_1)$ where $H(\lambda / \lnb \lambda \rnb_1)$ denotes the differential entropy of $\lambda / \lnb \lambda \rnb_1$ with respect to the Lebesgue measure on $\R^d$ or $\tilde{m}$ respectively.
\end{definition}

We say that a finite discrete measure with masses $p_1, p_2, \dots$ has finite entropy if $$\sum_{i=1}^{\infty} p_i |\log p_i| < \infty.$$ Similarly we say that a finite absolutely continuous measure on $\R^d$ or $\Gp$ with density function $f$ with respect to the Lebesgue measure or our normalised version of the Haar measure has finite entropy if $$\int f |\log f| < \infty.$$

We now have the following simple lemmas.

\begin{lemma}[Entropy is concave] \label{lemma:entropy_is_concave}
Let $\lambda_1, \lambda_2, \dots$ be finite measures with finite entropy either all on $\R^d$ or all on $\Gp$ which are either all absolutely continuous or all discrete. Suppose that $\sum_{i=1}^{\infty} \lnb \lambda_i \rnb_1 < \infty$ and both $H \left( \sum_{i=N}^{\infty} \lambda_i \right)$ and $\sum_{i=N}^{\infty} H \left(  \lambda_i \right)$ tend to $0$ as $N \to \infty$. Then
\begin{equation*}
H(\sum_{i=1}^{\infty} \lambda_i) \geq \sum_{i=1}^{\infty } H( \lambda_i).
\end{equation*}
\end{lemma}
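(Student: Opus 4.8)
The plan is to reduce to the standard two-measure case and then pass to the limit. First I would establish the finite case: for finite measures $\lambda_1, \lambda_2$ that are both absolutely continuous (or both discrete) with finite entropy, $H(\lambda_1 + \lambda_2) \geq H(\lambda_1) + H(\lambda_2)$. In the discrete case, writing $\lambda_i = \sum_j p^{(i)}_j \delta_{x_j}$ with total masses $a_i = \lnb \lambda_i \rnb_1$, this is the inequality $-\sum_j (p^{(1)}_j + p^{(2)}_j) \log (p^{(1)}_j + p^{(2)}_j) \geq -\sum_j p^{(1)}_j \log p^{(1)}_j - \sum_j p^{(2)}_j \log p^{(2)}_j$, which follows termwise from the superadditivity of $t \mapsto -t \log t$ on $[0,\infty)$ (equivalently, from the concavity of $-t\log t$ together with homogeneity considerations, or directly: $-(s+t)\log(s+t) \geq -s\log s - t\log t$ because $\log(s+t) \leq \log s$ would be false, so one uses $-(s+t)\log(s+t) = -s \log(s+t) - t\log(s+t) \geq -s\log s - t \log t$ since $s+t \geq s$ and $s+t \geq t$ make the logs larger, hence the negatives smaller — wait, that gives the wrong direction, so one must instead invoke concavity of $x \mapsto -x\log x$ at $0$: $-(s+t)\log(s+t) \ge -s\log s - t\log t$ holds since $f(x)=-x\log x$ is concave with $f(0)=0$, giving $f(s)+f(t) = f(s)+f(t)-2f(0) \le$ ... ). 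The cleanest route is: $f(x) = -x\log x$ is concave on $[0,\infty)$ with $f(0)=0$, so for any $s,t \ge 0$, $f(s) + f(t) \le f(s+t) + f(0) = f(s+t)$ by the standard fact that a concave function with $f(0) \ge 0$ is superadditive. Summing over $j$ gives the discrete two-measure inequality. The absolutely continuous case is identical with $\int$ replacing $\sum$, applied pointwise to the density functions $f_1, f_2$ with respect to Lebesgue measure (on $\R^d$) or $\tilde m$ (on $\Gp$); finiteness of entropy guarantees the integrals converge.

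Next I would iterate to get the finite-$N$ statement $H\!\left(\sum_{i=1}^N \lambda_i\right) \geq \sum_{i=1}^N H(\lambda_i)$ by induction, using that a finite sum of finite absolutely continuous (resp. discrete) measures with finite entropy is again of the same type with finite entropy — this is where one checks that $\int (f_1 + \dots + f_N)|\log(f_1+\dots+f_N)| < \infty$, which follows from the two-measure finiteness fact applied repeatedly. Then to handle the infinite sum, write $\Lambda = \sum_{i=1}^\infty \lambda_i$, fix $N$, and apply the finite case to the $N+1$ measures $\lambda_1, \dots, \lambda_N, \rho_N$ where $\rho_N := \sum_{i=N+1}^\infty \lambda_i$ (which is a finite measure since $\sum_i \lnb \lambda_i\rnb_1 < \infty$, and has finite entropy by the hypothesis that $H\!\left(\sum_{i=N}^\infty \lambda_i\right) \to 0$, in particular is finite for each $N$). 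This yields
\begin{equation*}
H(\Lambda) = H\!\left(\sum_{i=1}^N \lambda_i + \rho_N\right) \geq \sum_{i=1}^N H(\lambda_i) + H(\rho_N).
\end{equation*}
Letting $N \to \infty$ and using the hypothesis $H(\rho_N) = H\!\left(\sum_{i=N+1}^\infty \lambda_i\right) \to 0$ gives $H(\Lambda) \geq \sum_{i=1}^\infty H(\lambda_i)$, as desired.

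The main obstacle is not the core superadditivity inequality — that is elementary — but rather the measure-theoretic bookkeeping needed to make the limiting argument rigorous: one must confirm that each tail $\rho_N$ is genuinely absolutely continuous (resp. discrete) with a well-defined density (resp. mass function) obtained by summing the densities of the $\lambda_i$, that this density has finite entropy so that $H(\rho_N)$ makes sense and the two-measure lemma applies, and that the series $\sum H(\lambda_i)$ converges (which is given). The hypotheses $\sum_i \lnb\lambda_i\rnb_1 < \infty$, $H\!\left(\sum_{i=N}^\infty \lambda_i\right) \to 0$, and $\sum_{i=N}^\infty H(\lambda_i) \to 0$ are precisely what is needed to push these through, so the proof amounts to: (i) the pointwise superadditivity of $x \mapsto -x\log x$, (ii) induction for finite sums, (iii) the split $\Lambda = \sum_{i=1}^N \lambda_i + \rho_N$ and a limit. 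I expect step (i) applied in the $\Gp$ case to require only the observation that $\tilde m$ is a fixed reference measure, so nothing changes from the $\R^d$ case. I would present (i)–(iii) in that order, keeping (i) as a short self-contained lemma on the real function $-x\log x$.
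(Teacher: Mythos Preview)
Your overall architecture --- prove the two-measure case, induct to finite sums, split $\Lambda = \sum_{i=1}^N \lambda_i + \rho_N$ and let $N \to \infty$ using the tail hypotheses --- is correct and is what one would expect (the paper itself does not argue this; it simply cites \cite[Lemma 4.6]{KITTLE_2021}).

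However, your proof of the base case contains a genuine error. You write $H(\lambda)$ for a non-probability discrete measure as $-\sum_j p_j \log p_j$, but that is not the definition in the paper: Definition~\ref{definition:non_prob_entropy} sets $H(\lambda) = \lnb\lambda\rnb_1 \, H(\lambda/\lnb\lambda\rnb_1)$, which unwinds to $-\sum_j p_j \log p_j + a\log a$ where $a = \lnb\lambda\rnb_1$. With the missing $a\log a$ term, the pointwise inequality you are chasing, namely $-(s+t)\log(s+t) \ge -s\log s - t\log t$, is simply false (take $s=t=1$: the left side is $-2\log 2 < 0$). Your own first computation showed this and gave the correct direction; the ``concave with $f(0)=0$ implies superadditive'' claim you then invoke is backwards --- concave with $f(0)\ge 0$ gives \emph{sub}additivity.

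The clean way to prove the two-measure case is to use the normalization built into the definition: with $a_i = \lnb\lambda_i\rnb_1$,
\[
H(\lambda_1+\lambda_2) = (a_1+a_2)\, H\!\left(\tfrac{a_1}{a_1+a_2}\cdot \tfrac{\lambda_1}{a_1} + \tfrac{a_2}{a_1+a_2}\cdot \tfrac{\lambda_2}{a_2}\right) \ge (a_1+a_2)\!\left(\tfrac{a_1}{a_1+a_2}H(\lambda_1/a_1) + \tfrac{a_2}{a_1+a_2}H(\lambda_2/a_2)\right) = H(\lambda_1)+H(\lambda_2),
\]
using only the standard concavity of $H$ on \emph{probability} measures (which does follow from concavity of $-x\log x$, applied the right way). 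With this fix, your steps (ii) and (iii) go through as written.
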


\begin{proof}
This is proven for measures on $\R^d$ in \cite[Lemma 4.6]{Kittle2024ASENS}. The same proof also works in this setting.
\end{proof}

\begin{lemma}[Entropy is almost convex] \label{lemma:entropy_is_almost_convex}
Let $\lambda_1, \lambda_2, \dots$ be probability measures either all on $\R^d$ or all on $\Gp$ which are either all absolutely continuous or all discrete. Suppose that all of the probability measures have finite entropy. Let $\mathbf{p} = (p_1, p_2, \dots)$ be a probability vector. Then
\begin{equation*}
H(\sum_{i=1}^{\infty}p_i \lambda_i) \leq \sum_{i=1}^{\infty}p_i H( \lambda_i) + H(\mathbf{p}).  \label{eq:inf_h_entropy_b}
\end{equation*}
In particular if $p_i = 0$ for all $i > k$ for some $k\in \N$ then
\begin{equation*}
H\left(\sum_{i=1}^k p_i \lambda_i\right) \leq \sum_{i=1}^k p_i H( \lambda_i) + \log k . \label{eq:k_h_entropy}
\end{equation*}
\end{lemma}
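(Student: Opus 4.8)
The plan is to reduce the statement for measures on $\R^d$ or on $\Gp$ to the standard fact that mutual information is nonnegative, or equivalently that conditioning reduces entropy, applied to an auxiliary discrete random variable that records which component a sample comes from. Concretely, first I would treat the case of finitely many measures $\lambda_1, \dots, \lambda_k$ with a probability vector $\mathbf p = (p_1, \dots, p_k)$; the infinite case then follows by a monotone/dominated convergence argument using the finite-entropy hypothesis, truncating at the first $k$ terms and renormalising, then letting $k \to \infty$ (this is where one checks that the tails of $\sum p_i H(\lambda_i)$ and of $H(\mathbf p)$ vanish, which they do since $\mathbf p$ is a probability vector with $H(\mathbf p) < \infty$ forced by the finite-entropy assumption together with the convergence of $\sum p_i$; if $H(\mathbf p) = \infty$ the inequality is vacuous).

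For the finite case, let $I$ be a random variable on $\{1, \dots, k\}$ with $\mathbb P[I = i] = p_i$, and conditional on $I = i$ let $X$ be a sample from $\lambda_i$ (all the $\lambda_i$ being either all absolutely continuous with respect to Lebesgue measure on $\R^d$, or all absolutely continuous with respect to $\tilde m$ on $\Gp$, or all discrete with respect to counting measure — the argument is identical in each case since $H$ is just $-D(\cdot \| \text{reference})$ for a common reference measure). Then $X$ has law $\sum_i p_i \lambda_i$, and the pair $(X, I)$ has a joint law whose entropy with respect to (reference measure) $\otimes$ (counting measure on $\{1,\dots,k\}$) equals $H(\mathbf p) + \sum_i p_i H(\lambda_i)$ by the chain rule for entropy (decomposing over the value of $I$). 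On the other hand, the entropy of the joint law is at most $H(X) + H(I) = H\!\left(\sum_i p_i \lambda_i\right) + H(\mathbf p)$ — wait, this is the wrong direction, so instead I would use the chain rule the other way: $H(X, I) = H(X) + H(I \mid X)$, and subadditivity/nonnegativity of conditional entropy for the discrete coordinate $I$ gives $H(I \mid X) \geq 0$, hence $H(X) \le H(X,I) = H(\mathbf p) + \sum_i p_i H(\lambda_i)$, which is exactly the claimed bound. (Here I am using that the joint entropy $H(X,I)$, computed with respect to the product reference measure, satisfies the chain rule $H(X,I) = H(I) + \mathbb E_I[H(\lambda_I)] = H(\mathbf p) + \sum_i p_i H(\lambda_i)$, and separately $H(X,I) = H(X) + H(I\mid X)$ with $H(I\mid X)\ge 0$ because $I$ is discrete.)

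The main obstacle is purely bookkeeping around the definition of entropy for (possibly non-probability, possibly infinite-collection) measures: one must make sure the chain-rule identity $H(X,I) = H(\mathbf p) + \sum_i p_i H(\lambda_i)$ genuinely holds as stated with the normalisations of Definition \ref{definition:non_prob_entropy} and Definition \ref{definition:tilde_m}, and that all the integrals defining these quantities converge absolutely so the manipulations are legitimate — this is guaranteed by the finite-entropy hypotheses on the $\lambda_i$. The second displayed inequality, the case $p_i = 0$ for $i > k$, is then immediate by taking $\mathbf p = (p_1, \dots, p_k)$ and bounding $H(\mathbf p) \le \log k$ (maximum entropy of a distribution on $k$ points). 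I would remark that this is a completely standard fact — for measures on $\R^d$ it is, e.g., \cite[Lemma 4.6]{KITTLE_2021}-adjacent or can be cited from a standard reference — and the same proof works verbatim on $\Gp$ since all that is used is the existence of a common $\sigma$-finite reference measure and the chain rule.
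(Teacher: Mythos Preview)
Your proposal is correct and is precisely the standard argument: introduce an index variable $I$ with law $\mathbf p$, use the chain rule $H(X,I)=H(\mathbf p)+\sum_i p_i H(\lambda_i)$ together with $H(X,I)=H(X)+H(I\mid X)\ge H(X)$, and conclude. The paper does not give its own proof here but simply cites \cite[Lemma~4.7]{KITTLE_2021} and remarks that the same argument works on $\Gp$; your sketch is exactly that argument, so nothing further is needed.
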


\begin{proof}
This is proven in \cite[Lemma 4.7]{Kittle2024ASENS} for measures on $\R^d$. The same proof works in this setting.
\end{proof}

\begin{lemma} \label{lemma:entropy_adds}
Let $d$ be the distance function of a left invariant metric and let $r > 0$. Suppose that $g$ is a discrete random variable taking values in $\Gp$ and that there are $x_1, x_2, \dots, x_n \in \Gp$ and a probability vector $\mathbf{p} = (p_1, p_2, \dots, p_n)$ such that
\begin{equation*}
\mathbb{P} \left[ g = x_i \right] = p_i.
\end{equation*}
Suppose further that for every $i \neq j$ we have $d(x_i, x_j) > 2r$. Let $h$ be an absolutely continuous random variable taking values in $\Gp$. Suppose that $d(\id, h) \leq r$ almost surely. Suppose further that $h$ has finite entropy. Then
\begin{equation*}
H(gh) = H(g) + H(h)
\end{equation*}
\end{lemma}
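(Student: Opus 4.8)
The plan is to observe that $gh$ has law the finite mixture $\mathcal{L}(gh) = \sum_{i=1}^n p_i\, \mathcal{L}(x_i h)$ (by independence of $g$ and $h$ and conditioning on the value of $g$), and that the summands live on pairwise disjoint sets, so that the entropy of the mixture splits exactly as $H(\mathbf{p})$ plus the $p_i$-weighted entropies of the pieces. Since $\tilde{m}$ is a Haar measure it is left invariant, so left translation by $x_i$ is a $\tilde{m}$-preserving diffeomorphism of $\Gp$; hence $\mathcal{L}(x_i h)$ is absolutely continuous with density $f_i$, where $f_i$ is the density of $\mathcal{L}(h)$ translated by $x_i$, and in particular $H(x_i h) = H(h)$ for every $i$. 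Moreover, because $d$ is left invariant, $d(x_i, x_i h) = d(\id, h) \le r$ almost surely, so $\mathcal{L}(x_i h)$ is supported on the closed ball $\overline{B(x_i, r)}$.

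Next I would use the hypothesis $d(x_i, x_j) > 2r$ for $i \neq j$: it forces the balls $\overline{B(x_i, r)}$ to be pairwise disjoint, hence the densities $f_i$ to have pairwise disjoint supports up to $\tilde{m}$-null sets. The density of $\mathcal{L}(gh)$ is then $f = \sum_i p_i f_i$, and for $\tilde{m}$-a.e.\ $y$ at most one term is nonzero, so $f(y)\log f(y) = \sum_i p_i f_i(y)\big(\log p_i + \log f_i(y)\big)$ pointwise almost everywhere. Because $h$ has finite entropy (and $\mathbf{p}$ is a finite probability vector, so $H(\mathbf{p}) < \infty$) all the integrals below converge absolutely, and integrating against $\tilde{m}$ gives
\begin{equation*}
H(gh) = -\int f \log f \, d\tilde{m} = -\sum_i p_i \log p_i \;-\; \sum_i p_i \int f_i \log f_i \, d\tilde{m} = H(\mathbf{p}) + \sum_i p_i H(x_i h).
\end{equation*}
Since $H(x_i h) = H(h)$ for every $i$ and $H(g) = H(\mathbf{p})$, this yields $H(gh) = H(g) + H(h)$, as desired.

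The only subtlety is the bookkeeping around the disjoint-support decomposition and the absolute convergence of the integrals; once the left invariance of both $\tilde{m}$ and $d$ is invoked, there is no genuine obstacle, and the statement is essentially a computation. (If one prefers, the same identity can be read off from Lemma \ref{lemma:entropy_is_concave} and Lemma \ref{lemma:entropy_is_almost_convex} applied to $\mathcal{L}(gh) = \sum_i p_i \mathcal{L}(x_i h)$, with disjointness of supports forcing the concavity inequality to be tight and the $H(\mathbf{p})$ correction term in the convexity bound to be attained; but the direct computation above is the cleanest route.)
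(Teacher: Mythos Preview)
Your proof is correct and is precisely the standard argument the paper has in mind: the paper's own ``proof'' merely cites \cite[Lemma 4.8]{KITTLE_2021} for the $\R^d$ case and asserts the same proof works here, and your direct computation---disjoint supports from the separation hypothesis, left invariance of $d$ and of $\tilde{m}$, then the mixture-entropy identity---is exactly that proof transported to $\Gp$. One tiny remark: the lemma statement omits the hypothesis that $g$ and $h$ are independent, but you correctly (and necessarily) assume it, as the paper clearly intends.
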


\begin{proof}
This is proven for random variables taken values in $\R^d$ in \cite[Lemma 4.8]{Kittle2024ASENS}. The same proof works in this context.
\end{proof}

We will also adopt the following convention for defining the entropy on a product space. Let $(E_1, \xi_1)$ and $(E_2, \xi_2)$ be measurable spaces endowed with reference measures $m_1$ and $m_2$ such that if $\lambda$ is a measure on $(E_i, \xi_i)$ then we define the entropy of $\lambda$ by $H(\lambda) := D(\lambda || m_i)$. Then we take $m_1 \times m_2$ to be the corresponding reference measure for $E_1 \times E_2$. That is given some measure $\lambda$ on $E_1 \times E_2$ we take the entropy of $\lambda$ to be defined by $H(\lambda) = D(\lambda || m_1 \times m_2)$. With this we can give the following definition.
	
\begin{definition}[Conditional Entropy]
Let $X_1$ and $X_2$ be two random variables with finite entropy. Then we define the \emph{entropy of $X_1$ given $X_2$} by
\begin{equation*}
H(X_1 | X_2) = H(X_1, X_2) - H(X_2).
\end{equation*}
\end{definition}

\subsection{Probability}
In this subsection we will list some standard results from probability which we will use in this paper.

\begin{definition}[Filtration]
	We say that a sequence of $\sigma$-algebras $\mathcal{F} = (\mathcal{F}_1, \mathcal{F}_2, \dots)$ is a \emph{filtration} if $\mathcal{F}_1 \subset \mathcal{F}_2 \subset \dots$. Furthermore if we are also given a sequence of random variables $\gamma = (\gamma_1, \gamma_2 \dots)$ then we say that $\mathcal{F}$ is a filtration for $\gamma$ if in addition $\gamma_i$ is $\mathcal{F}_i$-measurable.
\end{definition}

\begin{definition}[Stopping time]
Given a filtration $\mathcal{F} = (\mathcal{F}_1, \mathcal{F}_2, \dots)$ we say that a random variable $T$ taking values in $\Z_{>0}$ is a stopping time for $\mathcal{F}$ if for every $n \in \Z_{>0}$ the event $T=n$ is $\mathcal{F}_n$ measurable. Given a sequence of random variables $\gamma = (\gamma_1, \gamma_2, \dots)$ we say that $T$ is a stopping time for $\gamma$ if it is a stopping time for the filtration $\sigma(\gamma_1), \sigma(\gamma_1, \gamma_2), \sigma(\gamma_1, \gamma_2, \gamma_3), \dots$.
\end{definition}

Stopping times and filtrations are important objects in probability. A fundamental property is that if $\mathcal{F}$ is a filtration for a sequence of i.i.d.\ random variables  $\gamma$ with $\gamma_{i+1}$ independent of $\mathcal{F}_i$ for all $i$ and $T$ is a stopping time for $\mathcal{F}$ then $(\gamma_{T+1}, \gamma_{T+2}, \gamma_{T+3}, \dots)$ has the same law as $(\gamma_1, \gamma_2, \dots)$ and is independent of $\mathcal{F}_T$. This is known as the strong Markov property. For a more thorough introduction to stopping times and filtrations see for example \cite[Chapter 17]{KLENKE_2014}.

\begin{lemma} \label{lemma:stopping_time_invariant}
    Let $\mathbf{G}$ be a group acting on some set $\mathbf{B}$. Let $\mu$ be a probability measure on $\mathbf{G}$ and suppose that $\nu$ is some probability measure on $\mathbf{B}$ which is invariant under $\mu$ - that is $\nu = \mu * \nu$.
    
    Let $\gamma_1, \gamma_2, \dots$ be i.i.d.\ random variables with law $\mu$ and let $\mathcal{F}_i$ be a filtration for the $\gamma_i$ such that $\gamma_{i+1}$ is independent from $\mathcal{F}_{i+1}$. Let $\tau$ be a stopping time for the filtration $\mathcal{F}_i$. Let $b$ be an independent sample from $\nu$. Then
    \begin{equation*}
        \gamma_1 \gamma_2 \dots \gamma_{\tau} b
    \end{equation*}
    has law $\nu$.
\end{lemma}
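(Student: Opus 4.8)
The plan is to condition on the value of the stopping time $\tau$ and use the strong Markov property together with stationarity. First I would note that it suffices to show that for every bounded measurable test function $f$ on $\mathbf{B}$ we have $\mathbb{E}[f(\gamma_1 \cdots \gamma_\tau b)] = \int f \, d\nu$. Decompose the expectation according to the disjoint events $\{\tau = n\}$ for $n \in \Z_{>0}$, writing
\begin{equation*}
\mathbb{E}[f(\gamma_1 \cdots \gamma_\tau b)] = \sum_{n=1}^{\infty} \mathbb{E}\bigl[\mathbf{1}_{\{\tau = n\}} f(\gamma_1 \cdots \gamma_n b)\bigr].
\end{equation*}
On the event $\{\tau = n\}$, which is $\mathcal{F}_n$-measurable, the product $\gamma_1 \cdots \gamma_n$ is $\mathcal{F}_n$-measurable, while $b$ is independent of everything (in particular of $\mathcal{F}_n$) and has law $\nu$. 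Hence, conditioning on $\mathcal{F}_n$ and integrating out $b$ first, each summand equals $\mathbb{E}\bigl[\mathbf{1}_{\{\tau=n\}} \int_{\mathbf{B}} f(\gamma_1 \cdots \gamma_n \cdot x)\, \nu(dx)\bigr]$.

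The key point is then to exploit the invariance $\nu = \mu * \nu$ inductively. Since the $\gamma_i$ are i.i.d.\ with law $\mu$ and independent of $b$, for any \emph{fixed} deterministic $g \in \mathbf{G}$ we have $\int_{\mathbf{B}} f(g \cdot x)\, \nu(dx) = \int_{\mathbf{B}} f(x)\, \nu(dx)$ by stationarity applied repeatedly; more to the point, I would argue directly that for each fixed $n$ the random variable $\gamma_1 \cdots \gamma_n b$ has law $\nu$ (this is the $n$-fold iterate $\mu^{*n} * \nu = \nu$) and, crucially, that this remains true after restricting to the $\mathcal{F}_n$-measurable event $\{\tau = n\}$ because $b$ is independent of $\mathcal{F}_n$. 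Concretely, writing $g_n := \gamma_1 \cdots \gamma_n$, independence of $b$ from $(\mathcal{F}_n)$ gives
\begin{equation*}
\mathbb{E}\bigl[\mathbf{1}_{\{\tau=n\}} f(g_n b)\bigr] = \mathbb{E}\Bigl[\mathbf{1}_{\{\tau=n\}} \int_{\mathbf{B}} f(g_n \cdot x)\, \nu(dx)\Bigr],
\end{equation*}
and then I need to replace the inner integral. Here one uses that $\int f(g_n \cdot x)\,\nu(dx)$, viewed as a function of $\gamma_1,\dots,\gamma_n$, satisfies $\mathbb{E}[\,\cdot\,] = \int f\,d\nu$ when the $\gamma_i$ are i.i.d.\ $\mu$; but on the event $\{\tau = n\}$ the $\gamma_i$ are not unconditioned, so this step needs care.

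The main obstacle, and the step requiring the most care, is precisely that $\{\tau = n\}$ conditions on the $\gamma_i$, so one cannot naively say $g_n b \sim \nu$ on that event. The clean way around this is to reverse the order of the product: write $\gamma_1 \cdots \gamma_\tau b$ and condition instead on $\gamma_1, \dots, \gamma_{\tau-1}$ (or rather peel off factors from the right using the Markov structure). Alternatively — and this is the route I would actually take — prove the statement by a backward induction / tower-property argument: define $Y_k = \gamma_{k+1}\cdots\gamma_\tau b$ for $k < \tau$ on the event $\{\tau > k\}$ and $Y_k = b$ when $\tau \le k$; using the strong Markov property (the tail $(\gamma_{k+1},\gamma_{k+2},\dots)$ after the stopping-type truncation) together with $\nu = \mu * \nu$, show by induction on the structure that $\gamma_{k+1}\cdots\gamma_\tau b$ conditioned appropriately is again $\nu$-distributed, so that at $k=0$ we recover $\gamma_1 \cdots \gamma_\tau b \sim \nu$. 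In fact the slickest formulation: the identity $\gamma_1 \cdots \gamma_\tau b = \gamma_1 \cdot (\gamma_2 \cdots \gamma_\tau b)$ combined with the observation that $\tau - 1$ is a stopping time for the shifted sequence $(\gamma_2, \gamma_3, \dots)$ and that $\gamma_2 \cdots \gamma_\tau b$ therefore has law $\nu$ by an inductive hypothesis, while $\gamma_1$ is an independent sample from $\mu$, gives $\gamma_1 \cdots \gamma_\tau b \sim \mu * \nu = \nu$; one makes this rigorous by truncating $\tau$ at a finite level $N$, applying the argument to the bounded stopping time $\tau \wedge N$ by genuine induction on $N$, and passing to the limit $N \to \infty$ using that $\tau < \infty$ almost surely (or, if $\tau$ may be infinite, interpreting the statement on $\{\tau < \infty\}$).
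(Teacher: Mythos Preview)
Your proposal correctly identifies the obstacle in the naive ``sum over $\{\tau=n\}$'' approach, but the fix you settle on has a genuine gap. In your ``slickest formulation'' you write $\gamma_1\cdots\gamma_\tau b = \gamma_1\cdot(\gamma_2\cdots\gamma_\tau b)$ and claim that $\tau-1$ is a stopping time for the shifted sequence $(\gamma_2,\gamma_3,\dots)$ and that $\gamma_1$ is independent of $\gamma_2\cdots\gamma_\tau b$. Neither assertion is correct in general: the stopping time $\tau$ may depend on $\gamma_1$ through the filtration, so $\{\tau-1=n\}=\{\tau=n+1\}$ need not lie in $\sigma(\gamma_2,\dots,\gamma_{n+1})$, and for the same reason $\gamma_1$ is typically \emph{not} independent of the random product $\gamma_2\cdots\gamma_\tau b$ (the number of factors itself depends on $\gamma_1$). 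The inductive step you sketch therefore does not go through, even after truncating at $N$.

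The paper avoids this by padding on the \emph{right} rather than peeling from the left. For $\tau\le N$ bounded, the strong Markov property gives that $\gamma_{\tau+1}\cdots\gamma_N b$ has law $\nu$ and is independent of $\gamma_1,\dots,\gamma_\tau$; hence $\gamma_1\cdots\gamma_\tau b$ has the same law as $\gamma_1\cdots\gamma_\tau\,(\gamma_{\tau+1}\cdots\gamma_N b)=\gamma_1\cdots\gamma_N b$, which is $\mu^{*N}*\nu=\nu$. The general case then follows by truncating $\tau$ at $N$ and letting $N\to\infty$. The key difference is that the post-$\tau$ block $\gamma_{\tau+1}\cdots\gamma_N$ is genuinely independent of $\mathcal{F}_\tau$, whereas peeling $\gamma_1$ off the front does not give you independence from what remains.
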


\begin{proof}
    First we will deal with the case where there is some $N \in \Z_{>0}$ such that $\tau\leq N$ almost surely. By the strong Markov property we know that
    \begin{equation*}
    \gamma_{\tau+1} \gamma_{\tau+2} \dots \gamma_N b
    \end{equation*}
    has law $\nu$ and is independent from $\gamma_1, \gamma_2, \dots, \gamma_{\tau}$. In particular this means that $\gamma_1 \gamma_2 \dots \gamma_{\tau} b$ has the same law as $\gamma_1 \gamma_2 \dots \gamma_N b$ and so $\gamma_1 \gamma_2 \dots \gamma_{\tau} b$ has law $\nu$. The general case follows by considering the stopping time $\tau' = \min \{\tau, N \}$ and taking the limit as $N\to \infty$.
\end{proof}

\begin{lemma} \label{lemma:generates_independent_filtration}
    Let $(\mathbb{P}, \Omega, \xi)$ be a probability space. Suppose that $\gamma_1, \gamma_2, \dots$ are i.i.d.\ random variables on this probability space taking values in some measurable set $\mathbf{X}$ with filtration $\mathcal{A}_i$ and suppose that $\gamma_{i+1}$ is independent of $\mathcal{A}_i$. Let $S$ be a stopping time for $(\mathcal{A}_i)_{i=1}^{\infty}$ and let $\hat{\mathcal{A}} \subset \xi$ be a $\sigma$-algebra which is conditionally independent of $\gamma_{S+1}, \gamma_{S+2}, \dots$ given $\gamma_1, \gamma_2, \dots, \gamma_S$. For $i = 1, 2, \dots$ define $\mathcal{F}_i$ by
    \begin{equation*}
        \mathcal{F}_i = \{ F \in \xi: F \cap \{i < S \} \in \mathcal{A}_i, F \cap \{i \geq S\} \in \sigma(\mathcal{A}_i, \hat{\mathcal{A}}) \}.
    \end{equation*}
    Then $\mathcal{F}_i$ is a filtration for the $\gamma_i$ and $\gamma_{i+1}$ is independent of $\mathcal{F}_i$.
\end{lemma}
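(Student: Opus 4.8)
The plan is to verify the two assertions separately: first that $(\mathcal{F}_i)$ is indeed a filtration for which each $\gamma_i$ is measurable, and second that $\gamma_{i+1}$ is independent of $\mathcal{F}_i$. For the first part I would check directly from the definition that each $\mathcal{F}_i$ is a $\sigma$-algebra. The key observation is that a set $F$ lies in $\mathcal{F}_i$ iff it can be written, after intersecting with the two complementary events $\{i < S\}$ and $\{i \geq S\}$, as a set in $\mathcal{A}_i$ respectively $\sigma(\mathcal{A}_i, \hat{\mathcal{A}})$; since $\{S = j\} \in \mathcal{A}_j \subseteq \mathcal{A}_i$ for $j \leq i$, the events $\{i < S\}$ and $\{i \geq S\}$ both lie in $\mathcal{A}_i$, so $\mathcal{F}_i$ is the "gluing" of two $\sigma$-algebras along a measurable partition, which is again a $\sigma$-algebra. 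Monotonicity $\mathcal{F}_i \subseteq \mathcal{F}_{i+1}$ follows because $\mathcal{A}_i \subseteq \mathcal{A}_{i+1}$, $\{i+1 < S\} \subseteq \{i < S\}$, and one rewrites $F \cap \{i+1 \geq S\}$ as the disjoint union $(F \cap \{i \geq S\}) \cup (F \cap \{S = i+1\})$ with the first piece in $\sigma(\mathcal{A}_i,\hat{\mathcal{A}})$ and the second in $\mathcal{A}_{i+1}$. Measurability of $\gamma_i$ with respect to $\mathcal{F}_i$ follows from $\gamma_i$ being $\mathcal{A}_i$-measurable together with $\mathcal{A}_i \subseteq \sigma(\mathcal{A}_i, \hat{\mathcal{A}})$.

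For the independence claim, I would compute, for a fixed $F \in \mathcal{F}_i$ and a bounded measurable test function $f$ of $\gamma_{i+1}$,
\begin{equation*}
\mathbb{E}[\mathbf{1}_F f(\gamma_{i+1})] = \mathbb{E}[\mathbf{1}_{F \cap \{i < S\}} f(\gamma_{i+1})] + \mathbb{E}[\mathbf{1}_{F \cap \{i \geq S\}} f(\gamma_{i+1})].
\end{equation*}
The first term is handled by the hypothesis that $\gamma_{i+1}$ is independent of $\mathcal{A}_i$, since $F \cap \{i < S\} \in \mathcal{A}_i$. For the second term I would split further over the values of $S$: writing $F \cap \{i \geq S\} = \bigsqcup_{j \leq i} (F \cap \{S = j\})$, each piece $F \cap \{S = j\}$ lies in $\sigma(\mathcal{A}_i, \hat{\mathcal{A}})$, and on this event $\gamma_{i+1} = \gamma_{S+1+(i-j)}$ is one of the variables $\gamma_{S+1}, \gamma_{S+2}, \dots$. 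The crucial input is the conditional independence of $\hat{\mathcal{A}}$ from $\gamma_{S+1}, \gamma_{S+2}, \dots$ given $\gamma_1, \dots, \gamma_S$, combined with the strong Markov property which says $(\gamma_{S+1}, \gamma_{S+2}, \dots)$ is independent of $\mathcal{A}_S$ with the same law as $(\gamma_1, \gamma_2, \dots)$. Together these show that conditionally on $\{S = j\}$ the variable $\gamma_{i+1}$ is independent of $\sigma(\mathcal{A}_i, \hat{\mathcal{A}})$ restricted to that event, so $\mathbb{E}[\mathbf{1}_{F \cap \{S=j\}} f(\gamma_{i+1})] = \mathbb{P}(F \cap \{S=j\})\,\mathbb{E}[f(\gamma_{i+1})]$. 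Summing over $j$ and adding the first term gives $\mathbb{E}[\mathbf{1}_F f(\gamma_{i+1})] = \mathbb{P}(F)\,\mathbb{E}[f(\gamma_{i+1})]$, which is the desired independence.

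The main obstacle I anticipate is bookkeeping the conditional independence on the event $\{i \geq S\}$ cleanly: one must be careful that $\hat{\mathcal{A}}$ is only assumed conditionally independent of the \emph{post-$S$} increments given the \emph{pre-$S$} ones, so the argument has to be localized on each event $\{S = j\}$ where "post-$S$" becomes the concrete tail $(\gamma_{j+1}, \gamma_{j+2}, \dots)$ and $\gamma_{i+1}$ sits at a fixed position $i+1-j \geq 1$ in that tail. A convenient way to organize this is to first establish the auxiliary fact that, on $\{S = j\}$, the pair $(\sigma(\mathcal{A}_i), \hat{\mathcal{A}})$ together determine a $\sigma$-algebra contained in $\sigma(\mathcal{A}_i) \vee \hat{\mathcal{A}}$ that is independent of $(\gamma_{j+1}, \dots)$ — this reduces everything to the post-$S$ decomposition $\sigma(\mathcal{A}_i) = \sigma(\gamma_1, \dots, \gamma_j) \vee \sigma(\gamma_{j+1}, \dots, \gamma_i)$ and the hypotheses. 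Everything else is routine $\sigma$-algebra manipulation.
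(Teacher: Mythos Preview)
Your plan is correct and follows essentially the same route as the paper's proof: split $F$ over $\{i<S\}$ and $\{i\ge S\}$, handle the first piece via independence of $\gamma_{i+1}$ from $\mathcal{A}_i$, then localize the second piece on each $\{S=j\}$ and use the conditional-independence hypothesis there. The paper dismisses the filtration verification as trivial (where you are more careful), and makes the step on $\{S=j\}$ concrete by writing $F\cap\{S=j\}=\bigsqcup_n A_n\cap B_n$ with $A_n\in\mathcal{A}_i$, $A_n\subset\{S=j\}$, $B_n\in\hat{\mathcal{A}}$, which is exactly the ``rectangle'' manipulation you allude to in your final paragraph.
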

First note that this lemma is in some sense trivial. Essentially it says that if we have a sequence of independent random variables which we sequentially draw and after some stopping time we gain some extra information which is conditionally independent of everything after that stopping time given what we have seen so far then at each step in this process the value of the next random variable will be independent of all the information we have so far. We now give a formal proof.
\begin{proof}
    It is trivial that $\mathcal{F}_i$ is a filtration for the $\gamma_i$. This means that we only need to show that $\gamma_{i+1}$ is independent of $\mathcal{F}_i$. Let $D \subset \mathbf{X}$ be measurable and let $F \in \mathcal{F}_i$. Then we have $F \cap \{ i < S \} \in \mathcal{A}_i$ and so since $\gamma_{i+1}$ is independent of $\mathcal{A}_i$ we have
    \begin{equation}
        \mathbb{P}[F \cap \{ i < S \} \cap \{\gamma_{i+1} \in D\}] = \mathbb{P}[F \cap \{ i < S \}] \mathbb{P}[\{\gamma_{i+1} \in D\}]. \label{eq:some_independence}
    \end{equation}

    We also know that for each integer $k \leq i$ we have $F \cap \{S = k\} \in \sigma(\mathcal{A}_i, \hat{\mathcal{A}}_i)$. This means that for each $k\leq i$ we can write
    \begin{equation*}
        F \cap \{S = k\} = \bigsqcup_{j=1}^{\infty} A_j \cap B_j
    \end{equation*}
    with $A_j \in \mathcal{A}_i$, $A_j \subset \{S = k\}$ and $B_j \in \hat{\mathcal{A}}$. Here $\bigsqcup$ denotes a disjoint union.

    Since $\hat{\mathcal{A}}$ is conditionally independent of $\gamma_{S+1}, \gamma_{S+2}, \dots$ given $\gamma_1, \gamma_2, \dots, \gamma_S$ and $A_j \in \sigma(\gamma_1, \gamma_2, \dots, \gamma_S)$ we have
    \begin{align*}
        \mathbb{P}[A_j \cap B_j \cap \{\gamma_{i+1} \in D\}] & = \mathbb{P}[A_j \cap B_j \cap \{\gamma_{S+i+1-k} \in D\}]\\
        & = \mathbb{P}[B_j \cap \{\gamma_{S+i+1-k} \in D\} | A_j] \mathbb{P}[A_j]\\
        &= \mathbb{P}[B_j | A_j] \mathbb{P}[\{\gamma_{S+i+1-k} \in D\} | A_j] \mathbb{P}[A_j]\\
        &= \mathbb{P}[A_j \cap B_j] \mathbb{P}[\{\gamma_{S+i+1-k} \in D\}] \\
        &= \mathbb{P}[A_j \cap B_j] \mathbb{P}[\{\gamma_{i+1} \in D\}].
    \end{align*}
    Summing this result over $j$ gives
    \begin{equation*}
        \mathbb{P}[F \cap \{S = k\} \cap \{\gamma_{i+1} \in D\}] = \mathbb{P}[F \cap \{S = k\}] \mathbb{P}[ \{\gamma_{i+1} \in D\}].
    \end{equation*}
    Summing over $k$ and adding \eqref{eq:some_independence} completes the proof.
\end{proof}

\subsubsection{Regular conditional probability} \label{section:r_c_p}

In order to understand our decomposition \eqref{eq:x_decomp} after conditioning on $\mathcal{A}$ and in order to prove Theorem \ref{theorem:prob_exist} we need to introduce the concept of regular condition probability.

For a more comprehensive text on regular conditional distributions see for example \cite[Chapter 8]{KLENKE_2014}. Some readers may be more familiar with the use of conditional measures as described in for example \cite[Chapter 5]{EINSIEDLER_WARD_2010}. These two concepts are equivalent.
	
\begin{definition}[Markov Kernel]
Let $(\Omega_1, \mathcal{A}_1)$ and $(\Omega_2, \mathcal{A}_2)$ be measurable spaces. We say that a function $\kappa: \Omega_1 \times \mathcal{A}_2 : \to [0, 1]$ is a \emph{Markov Kernel} on $(\Omega_1, \mathcal{A}_1)$ and $(\Omega_2, \mathcal{A}_2)$ if:
\begin{itemize}
\item For any $A_2 \in \mathcal{A}_2$ the function $\omega_1 \mapsto \kappa(\omega_1, A_2)$ is $\mathcal{A}_1$ - measurable
\item For any $\omega_1 \in \Omega_1$ the function $A_2 \mapsto \kappa(\omega_1, A_2)$ is a probability measure.
\end{itemize}
\end{definition}
	
\begin{definition}
Let $(\Omega, \mathcal{F}, \mathbb{P})$ be a probability space, let $(E, \xi)$ be a measurable space, and let $Y : (\Omega, \mathcal{F}) \to (E, \xi)$ be a random variable. Let $\mathcal{A} \subset \mathcal{F}$ be a $\sigma$-algebra. Then we say that a Markov kernel
\begin{equation*}
\kappa_{Y, \mathcal{A}}: \Omega \times \xi \to [0,1]
\end{equation*}
on $(\Omega, \mathcal{A})$ and $(E, \xi)$ is a \emph{regular conditional distribution} for $Y$ given $\mathcal{A}$ if
\begin{equation*}
\kappa_{Y, \mathcal{A}}(\omega, B) = \mathbb{P}[Y \in B | \mathcal{A}]
\end{equation*}
for all $B \in \xi$ and almost all $\omega \in \Omega$. 

In other words we require
\begin{equation*}
\mathbb{P} \left[ A \cap \left\{ Y \in B\right\}\right] = \mathbb{E} \left[ \mathbb{I}_A \kappa_{Y, \mathcal{A}}(\cdot, B) \right] \text{ for all } A \in \mathcal{A}, B \in \xi.
\end{equation*}
\end{definition}

In the case where $Y$ is as above and $X$ is another random variable taking values in some measurable space $(E', \xi')$ then  we let the regular conditional distribution of $Y$ given $X$ refer to the regular conditional distribution of $Y$ given $\sigma(X)$. For this definition to be useful we need the following theorem.
\begin{theorem}
Let $(\Omega, \mathcal{F}, \mathbb{P})$ be a probability space, let $(E, \xi)$ be a standard Borel space, and let $Y : (\Omega, \mathcal{F}) \to (E, \xi)$ be a random variable. Then given any $\sigma$-algebra $\mathcal{A} \subset \mathcal{F}$ there exists a regular conditional distribution for $Y$ given $\mathcal{A}$.
\end{theorem}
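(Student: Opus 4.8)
The plan is to prove this in two stages: reduce to the case $E=\R$ with its Borel $\sigma$-algebra, and then handle the real case via conditional distribution functions. For the reduction, recall that a standard Borel space $(E,\xi)$ is by definition Borel isomorphic to a Borel subset $E_0\subseteq\R$; fix such an isomorphism $\iota:E\to E_0$. If $\kappa$ is a regular conditional distribution for the real random variable $\iota\circ Y$ given $\mathcal{A}$, then $\kappa(\omega,E_0)=\mathbb{P}[\iota\circ Y\in E_0\mid\mathcal{A}](\omega)=1$ for $\mathbb{P}$-a.e.\ $\omega$, so after altering $\kappa$ on an $\mathcal{A}$-measurable null set we may assume $\kappa(\omega,\cdot)$ is concentrated on $E_0$ for every $\omega$; pushing forward through $\iota^{-1}$ then gives a regular conditional distribution for $Y$ given $\mathcal{A}$. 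Hence it suffices to construct the kernel when $E=\R$.

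For the real case, for each rational $q$ choose an $\mathcal{A}$-measurable version $F_q$ of $\mathbb{P}[Y\le q\mid\mathcal{A}]$. The countably many almost-sure relations $F_q\le F_{q'}$ for $q\le q'$, $\lim_{q\to\infty}F_q=1$, $\lim_{q\to-\infty}F_q=0$, and $\lim_{q'\downarrow q}F_{q'}=F_q$ (the last by dominated convergence) all hold off a single $\mathcal{A}$-measurable $\mathbb{P}$-null set $N$. For $\omega\notin N$ put $G(\omega,x):=\inf\{F_q(\omega):q\in\mathbb{Q},\,q>x\}$; this is a genuine cumulative distribution function in $x$ and $\mathcal{A}$-measurable in $\omega$ for each fixed $x$. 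Let $\kappa(\omega,\cdot)$ be the Borel probability measure on $\R$ with distribution function $G(\omega,\cdot)$ for $\omega\notin N$, and let $\kappa(\omega,\cdot)$ be a fixed arbitrary probability measure for $\omega\in N$. Since $\kappa(\omega,(-\infty,x])=G(\omega,x)$ is an $\mathcal{A}$-measurable function of $\omega$, and the class of Borel sets $B$ for which $\omega\mapsto\kappa(\omega,B)$ is $\mathcal{A}$-measurable is a Dynkin system containing the $\pi$-system of half-lines, $\kappa$ is a Markov kernel on $(\Omega,\mathcal{A})$ and $(\R,\mathcal{B}(\R))$.

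Finally one verifies the defining property $\kappa(\omega,B)=\mathbb{P}[Y\in B\mid\mathcal{A}](\omega)$ a.s. For $B=(-\infty,q]$ this is immediate from $\kappa(\omega,(-\infty,q])=F_q(\omega)$ off $N$, and it extends to all half-lines by right-continuity. The collection of Borel $B$ for which $\int_A\kappa(\cdot,B)\,d\mathbb{P}=\mathbb{P}[A\cap\{Y\in B\}]$ for all $A\in\mathcal{A}$ is a Dynkin system (monotone convergence handles increasing and decreasing limits, using that both sides define finite measures in $B$) and contains the half-lines, so by the $\pi$--$\lambda$ theorem it is all of $\mathcal{B}(\R)$. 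I expect the only delicate point to be the null-set bookkeeping in the second paragraph: amalgamating the countably many exceptional sets into one $\mathcal{A}$-measurable $\mathbb{P}$-null set and checking that the repaired $G(\omega,\cdot)$ is simultaneously a bona fide distribution function in $x$ and measurable in $\omega$; once this is in place, the two monotone-class arguments are routine.
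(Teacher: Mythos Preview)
Your proof is correct and follows the standard textbook route (reduction to $\R$ via a Borel isomorphism, then construction through rational conditional cumulative distribution functions with a monotone-class argument). The paper itself does not prove this theorem at all: it simply cites \cite[Theorem 8.37]{KLENKE_2014}, whose proof is essentially the argument you have written out.
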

\begin{proof}
This is \cite[Theorem 8.37]{KLENKE_2014}.
\end{proof}

\begin{definition} \label{definition:r_c_p}
Given some random variable $Y$ and some $\sigma$- algebra $\mathcal{A} \subset \mathcal{F}$ (or random variable $X$) we will write $(Y|\mathcal{A})$ (or $(Y|X)$) to mean the regular conditional distribution of $Y$ given $\mathcal{A}$ (or given $X$).

We also let $[Y|\mathcal{A}]$ (or $[Y|X]$) denote random variables defined on a different probability space to $Y$ which have law $(Y|\mathcal{A})$ (or $(Y|X)$).
\end{definition}

One can easily check that if the regular conditional distribution exists then it is unique up to equality almost everywhere.

Next we will need the following simple facts about regular condition distributions.

\begin{definition}
Let $(\Omega, \mathcal{F}, \mathbb{P})$ be a probability space and let $\mathcal{A} \subset \mathcal{F}$ be a $\sigma$-algebra. We say that two $\sigma$- algebras $\mathcal{G}_1, \mathcal{G}_2 \subset \mathcal{F}$ are conditionally independent given $\mathcal{A}$ if for any $U \in \mathcal{G}_1$ and $V \in \mathcal{G}_2$ we have
\begin{equation*}
\mathbb{P}[U \cap V|\mathcal{A}] = \mathbb{P}[ U |\mathcal{A}]  \mathbb{P}[ V|\mathcal{A}] 
\end{equation*}
almost surely. Similarly we say that two random variables or a random variable and a $\sigma$-algebra are conditionally independent given $\mathcal{A}$ if the $\sigma$-algebras generated by them are conditionally independent given $\mathcal{A}$.
\end{definition}

Now we have these three lemmas.

\begin{lemma} \label{lemma:r_c_d_conditional_independence}
Let $(\Omega, \mathcal{F}, \mathbb{P})$ be a probability space and let $\mathcal{A} \subset \mathcal{F}$ be a $\sigma$-algebra. Let $g$ and $x$ be random variables on $(\Omega, \mathcal{F}, \mathbb{P})$ with $g$ taking values in $\Gp$ and with $x$ taking values in $X$ where $X$ is either $\Gp$ or $\B$. Suppose that $g$ and $x$ are conditionally independent given $\mathcal{A}$. Then
\begin{equation*}
(gx |\mathcal{A}) = (g |\mathcal{A}) * (x |\mathcal{A})
\end{equation*}
almost surely.
\end{lemma}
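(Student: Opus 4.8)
The plan is to verify directly from the defining property of a regular conditional distribution that the Markov kernel $\omega \mapsto (g|\mathcal{A})(\omega, \cdot) * (x|\mathcal{A})(\omega, \cdot)$ is a version of the regular conditional distribution of $gx$ given $\mathcal{A}$, and then invoke uniqueness almost everywhere. First I would check that $\kappa(\omega, \cdot) := (g|\mathcal{A})(\omega,\cdot) * (x|\mathcal{A})(\omega,\cdot)$ genuinely defines a Markov kernel on $(\Omega, \mathcal{A})$ and $(X, \xi)$: for fixed $\omega$ it is a convolution of two probability measures on a group acting on $X$, hence a probability measure on $X$; and $\mathcal{A}$-measurability of $\omega \mapsto \kappa(\omega, B)$ for fixed $B \in \xi$ follows because $\kappa(\omega, B) = \int \int \mathbb{I}_B(hz)\, (g|\mathcal{A})(\omega, dh)\, (x|\mathcal{A})(\omega, dz)$ is built from the $\mathcal{A}$-measurable kernels $(g|\mathcal{A})$ and $(x|\mathcal{A})$ by integration (a monotone-class / standard-machine argument starting from rectangle sets, using that the action map $\Gp \times X \to X$ is measurable).

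Next, the core computation: I must show that for every $A \in \mathcal{A}$ and every $B \in \xi$,
\begin{equation*}
\mathbb{P}[A \cap \{gx \in B\}] = \mathbb{E}[\mathbb{I}_A\, \kappa(\cdot, B)].
\end{equation*}
Here I would use conditional independence of $g$ and $x$ given $\mathcal{A}$. Writing things in terms of conditional expectations, $\mathbb{P}[gx \in B \mid \mathcal{A}]$ can be computed by first conditioning: since $g$ and $x$ are conditionally independent given $\mathcal{A}$, the conditional law of the pair $(g,x)$ given $\mathcal{A}$ is the product $(g|\mathcal{A}) \otimes (x|\mathcal{A})$ (this is the standard characterization of conditional independence via regular conditional distributions), and $gx$ is the image of $(g,x)$ under the measurable map $(h,z) \mapsto hz$; hence the conditional law of $gx$ given $\mathcal{A}$ is the pushforward of that product measure, which is exactly the convolution $\kappa(\omega, \cdot)$. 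Integrating against $\mathbb{I}_A$ gives the displayed identity. I would either cite or quickly prove the lemma that conditional independence given $\mathcal{A}$ implies the joint conditional distribution factors as a product of the marginal conditional distributions — this is where the hypothesis is really used.

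The main obstacle I anticipate is the measurability bookkeeping: making rigorous that $\omega \mapsto \int\int \mathbb{I}_B(hz)\,(g|\mathcal{A})(\omega,dh)\,(x|\mathcal{A})(\omega,dz)$ is $\mathcal{A}$-measurable, and that the product of the two kernels is a regular conditional distribution for the pair $(g,x)$. Both are handled by the usual monotone class theorem / functional monotone class argument: verify the claim for $B$ a measurable rectangle (or for $\mathbb{I}_B$ replaced by products of bounded measurable functions of $h$ and $z$ separately), where Fubini for kernels applies directly, then extend to all of $\xi$ (respectively to all bounded measurable functions on $\Gp \times X$) by the monotone class theorem. Once the joint conditional law is identified as $(g|\mathcal{A}) \otimes (x|\mathcal{A})$, pushing forward along the continuous (hence measurable) action map and using uniqueness of regular conditional distributions up to $\mathbb{P}$-null sets finishes the proof; everything here uses only that $X$ is a standard Borel space ($\Gp$ or $\B$), so the relevant regular conditional distributions exist.
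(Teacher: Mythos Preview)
Your proposal is correct and matches the paper's approach: the paper simply says ``This follows by essentially the same proof as the proof that the law of $gx$ is the convolution of the laws of $g$ and of $x$ and is left to the reader,'' and what you have written is exactly that proof spelled out in detail.
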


\begin{proof}
This follows by essentially the same proof as the proof that the law of $gx$ is the convolution of the laws of $g$ and of $x$ and is left to the reader.
\end{proof}

\begin{lemma} \label{lemma:r_c_d_subsigma_conditional_ind}
Let $(\Omega, \mathcal{F}, \mathbb{P})$ be a probability space and let $\mathcal{A} \subset \mathcal{F}$ be a $\sigma$-algebra. Let $g$ be a random variable taking values in some measurable space $(X, \xi)$. Let $\mathcal{G}$ be a $\sigma$-algebra such that
\begin{equation*}
\mathcal{A} \subset \mathcal{G} \subset \mathcal{F}
\end{equation*}
and $g$ is independent of $\mathcal{G}$ conditional on $\mathcal{A}$. Then
\begin{equation*}
(g |\mathcal{G}) = (g |\mathcal{A})
\end{equation*}
\end{lemma}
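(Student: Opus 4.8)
The plan is to verify that a regular conditional distribution of $g$ given $\mathcal{A}$, viewed as a Markov kernel on $(\Omega,\mathcal{G})$ (which is legitimate since $\mathcal{A}\subset\mathcal{G}$), already satisfies the defining property of a regular conditional distribution of $g$ given $\mathcal{G}$; the essential uniqueness of regular conditional distributions noted after Definition~\ref{definition:r_c_p} then gives $(g\mid\mathcal{G})=(g\mid\mathcal{A})$. Concretely, let $\kappa=\kappa_{g,\mathcal{A}}$ be a regular conditional distribution for $g$ given $\mathcal{A}$. For each $B\in\xi$ the map $\omega\mapsto\kappa(\omega,B)$ is $\mathcal{A}$-measurable, hence $\mathcal{G}$-measurable, and for each $\omega$ the map $B\mapsto\kappa(\omega,B)$ is a probability measure, so $\kappa$ is a Markov kernel on $(\Omega,\mathcal{G})$ and $(X,\xi)$. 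It therefore remains to show that
\begin{equation*}
\mathbb{P}\left[G\cap\{g\in B\}\right]=\mathbb{E}\left[\mathbb{I}_G\,\kappa(\cdot,B)\right]\qquad\text{for all }G\in\mathcal{G},\ B\in\xi.
\end{equation*}

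For this I would invoke the hypothesis that $g$ and $\mathcal{G}$ are conditionally independent given $\mathcal{A}$. Every event in $\sigma(g)$ is of the form $\{g\in B\}$ for some $B\in\xi$, so taking $U=\{g\in B\}$ and $V=G$ in the definition of conditional independence gives $\mathbb{P}[\{g\in B\}\cap G\mid\mathcal{A}]=\mathbb{P}[g\in B\mid\mathcal{A}]\,\mathbb{P}[G\mid\mathcal{A}]$ almost surely. Since $\mathbb{P}[g\in B\mid\mathcal{A}]=\kappa(\cdot,B)$ almost surely and $\kappa(\cdot,B)$ is $\mathcal{A}$-measurable, the right-hand side equals $\mathbb{E}[\mathbb{I}_G\,\kappa(\cdot,B)\mid\mathcal{A}]$. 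Taking expectations over $\Omega$ and applying the tower property to each side then yields $\mathbb{P}[G\cap\{g\in B\}]=\mathbb{E}[\mathbb{I}_G\,\kappa(\cdot,B)]$, which is exactly the identity required above. Hence $\kappa$ serves as a regular conditional distribution for $g$ given $\mathcal{G}$, and uniqueness finishes the proof.

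I do not expect any genuine obstacle here: the only points needing a little care are the observation that $\sigma(g)$-events are precisely the sets $\{g\in B\}$ (so the conditional-independence hypothesis applies verbatim, with no monotone-class extension needed), and the fact that the two uses of the tower property are valid because the integrands $\mathbb{I}_G\,\kappa(\cdot,B)$ and $\mathbb{I}_{G\cap\{g\in B\}}$ are bounded. An alternative, slightly longer route would be to first establish the displayed identity for $G$ ranging over a $\pi$-system generating $\mathcal{G}$ and then extend by a Dynkin argument, but this is unnecessary in the present situation.
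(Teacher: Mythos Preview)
Your proof is correct and is precisely the detailed verification that the paper leaves implicit: the paper's own proof consists only of the remark ``This is immediate from the definitions of the objects involved.'' Your argument unpacks exactly that immediacy, checking that $\kappa_{g,\mathcal{A}}$ is a Markov kernel on $(\Omega,\mathcal{G})$ and that the conditional-independence hypothesis yields the defining integral identity for $(g\mid\mathcal{G})$.
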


\begin{proof}
This is immediate from the definitions of the objects involved.
\end{proof}

\begin{lemma} \label{lemma:r_c_d_measurable_means_delta}
Let $(\Omega, \mathcal{F}, \mathbb{P})$ be a probability space and let $\mathcal{A} \subset \mathcal{F}$ be a $\sigma$-algebra. Let $g$ be a random variable taking values in some measurable space $(X, \xi)$. Suppose that $g$ is $\mathcal{A}$-measurable. Then
\begin{equation*}
(g |\mathcal{A}) = \delta_g
\end{equation*}
almost surely.
\end{lemma}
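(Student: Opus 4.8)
The plan is to unwind the definition of the regular conditional distribution and then resolve the usual order-of-quantifiers issue by passing to a countable generating $\pi$-system of $\xi$, using that a probability measure on a standard Borel space is determined by its values there.

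First I would recall that $(X, \xi)$ carries enough structure for $(g|\mathcal{A})$ to be defined in the first place, so we may fix a countable $\pi$-system $\mathcal{E} \subset \xi$ generating $\xi$. Let $\kappa = \kappa_{g, \mathcal{A}}$ be a regular conditional distribution for $g$ given $\mathcal{A}$. Fix $B \in \xi$. Since $g$ is $\mathcal{A}$-measurable, the event $\{g \in B\}$ lies in $\mathcal{A}$, so $\omega \mapsto \mathbb{I}_{\{g \in B\}}(\omega)$ is $\mathcal{A}$-measurable, and for every $A \in \mathcal{A}$ we have
\begin{equation*}
\mathbb{E}[\mathbb{I}_A \mathbb{I}_{\{g\in B\}}] = \mathbb{P}[A \cap \{g \in B\}] = \mathbb{E}[\mathbb{I}_A \kappa(\cdot, B)],
\end{equation*}
the last equality being the defining property of $\kappa$. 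As $\mathbb{I}_{\{g\in B\}}$ and $\kappa(\cdot, B)$ are both $\mathcal{A}$-measurable with equal integrals over every $A \in \mathcal{A}$, they agree $\mathbb{P}$-almost surely; that is, $\kappa(\omega, B) = \delta_{g(\omega)}(B)$ for $\mathbb{P}$-almost every $\omega$, where the exceptional null set may depend on $B$.

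Next I would upgrade this to a single null set. Applying the previous step to each of the countably many $B \in \mathcal{E}$ and letting $N$ be the union of the corresponding null sets, $N$ is $\mathbb{P}$-null and for every $\omega \notin N$ the probability measures $\kappa(\omega, \cdot)$ and $\delta_{g(\omega)}$ agree on the $\pi$-system $\mathcal{E}$. Since $\mathcal{E}$ generates $\xi$, the uniqueness theorem for measures determined by a $\pi$-system forces $\kappa(\omega, \cdot) = \delta_{g(\omega)}$ for every $\omega \notin N$, which is precisely the assertion $(g|\mathcal{A}) = \delta_g$ almost surely.

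The only point that requires any care is the quantifier interchange in the final step, handled by the countable generating $\pi$-system; everything else is a direct reading of the definitions, which is why the statement can reasonably be called immediate.
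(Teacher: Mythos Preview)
Your proof is correct and is precisely a careful spelling-out of what the paper dismisses as ``immediate from the definitions of the objects involved.'' The paper gives no argument beyond that one line, so your approach is the same in spirit; you have simply made explicit the verification of the defining property of a regular conditional distribution and handled the quantifier interchange via a countable generating $\pi$-system, which is exactly the standard route.
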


\begin{proof}
This is immediate from the definitions of the objects involved.
\end{proof}

\begin{lemma} \label{lemma:r_c_d_complicated_formula}
Let $(\Omega, \mathcal{F}, \mathbb{P})$ be a probability space and let $\mathcal{A} \subset \mathcal{F}$ be a $\sigma$-algebra. Let $g$ be a random variable taking values in some measurable space $(X, \xi)$. Let $\mathcal{G}$ be a $\sigma$-algebra such that $\mathcal{A} \subset \mathcal{G} \subset \mathcal{F}$ and $g$ is $\mathcal{G}$ measurable. Let $A \in \mathcal{A}$ and construct the $\sigma$-algebra $\hat{\mathcal{A}}$ by
\begin{equation*}
\hat{\mathcal{A}} = \sigma (\mathcal{A}, \{ G \in \mathcal{G} : G \subset A \} ).
\end{equation*}
Then for almost all $\omega \in \Omega$ we have
\begin{equation*}
(g |\hat{\mathcal{A}})(\omega, \cdot) =
\begin{cases}
\delta_g & \text{if } \omega \in A\\
(g |\mathcal{A})(\omega, \cdot) & \text{otherwise.}
\end{cases}
\end{equation*}
\end{lemma}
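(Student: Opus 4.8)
The plan is to exhibit the candidate kernel explicitly and verify it is a regular conditional distribution for $g$ given $\hat{\mathcal{A}}$; the lemma then follows from uniqueness of regular conditional distributions. Concretely, define $\kappa : \Omega \times \xi \to [0,1]$ by $\kappa(\omega, \cdot) = \delta_g$ when $\omega \in A$ and $\kappa(\omega, \cdot) = (g|\mathcal{A})(\omega, \cdot)$ when $\omega \notin A$, so that for fixed $B \in \xi$ we may write $\kappa(\omega, B) = \mathbb{I}_{A \cap g^{-1}(B)}(\omega) + \mathbb{I}_{A^c}(\omega)\,(g|\mathcal{A})(\omega, B)$. For each fixed $\omega$ this is visibly a probability measure on $\xi$, so the substantive points are (i) $\hat{\mathcal{A}}$-measurability of $\omega \mapsto \kappa(\omega, B)$ and (ii) the defining identity $\mathbb{P}[\hat A \cap \{g \in B\}] = \mathbb{E}[\mathbb{I}_{\hat A}\,\kappa(\cdot, B)]$ for all $\hat A \in \hat{\mathcal{A}}$.

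For (i): since $g$ is $\mathcal{G}$-measurable we have $g^{-1}(B) \in \mathcal{G}$, hence $A \cap g^{-1}(B) \in \mathcal{G}$ and $A \cap g^{-1}(B) \subset A$, so $A \cap g^{-1}(B)$ lies in the generating family $\{G \in \mathcal{G} : G \subset A\} \subset \hat{\mathcal{A}}$; and $(g|\mathcal{A})(\cdot, B)$ is $\mathcal{A}$-measurable while $A^c \in \mathcal{A} \subset \hat{\mathcal{A}}$. Therefore $\kappa(\cdot, B)$ is $\hat{\mathcal{A}}$-measurable. (This is the one place the hypothesis that $g$ is $\mathcal{G}$-measurable, rather than just $\mathcal{F}$-measurable, is used.)

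For (ii), both $\hat A \mapsto \mathbb{P}[\hat A \cap \{g \in B\}]$ and $\hat A \mapsto \mathbb{E}[\mathbb{I}_{\hat A}\kappa(\cdot, B)]$ are finite measures on $\hat{\mathcal{A}}$ of total mass $\mathbb{P}[g \in B]$, so by Dynkin's $\pi$-$\lambda$ theorem it suffices to check equality on the $\pi$-system $\mathcal{D} = \{A_1 \cap C_1 : A_1 \in \mathcal{A},\ C_1 \in \{G \in \mathcal{G} : G \subset A\} \cup \{\Omega\}\}$, which contains $\Omega$ and generates $\hat{\mathcal{A}}$ (it contains $\mathcal{A}$ and the family $\{G \in \mathcal{G} : G \subset A\}$, and it is stable under intersections because that family is). If $C_1 = \Omega$, so $\hat A = A_1 \in \mathcal{A}$, split the expectation over $A$ and $A^c$: the $A$-part equals $\mathbb{P}[A_1 \cap A \cap \{g \in B\}]$ directly, and the $A^c$-part equals $\mathbb{P}[A_1 \cap A^c \cap \{g \in B\}]$ by the defining property of $(g|\mathcal{A})$ applied to the set $A_1 \cap A^c \in \mathcal{A}$; summing gives $\mathbb{P}[A_1 \cap \{g \in B\}]$. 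If instead $C_1 \in \{G \in \mathcal{G} : G \subset A\}$, then $\hat A = A_1 \cap C_1 \subset A$, so only the $\delta_g$-part of $\kappa$ contributes and $\mathbb{E}[\mathbb{I}_{\hat A}\kappa(\cdot, B)] = \mathbb{E}[\mathbb{I}_{\hat A}\mathbb{I}_B(g)] = \mathbb{P}[\hat A \cap \{g \in B\}]$. In both cases the identity holds, so $\kappa$ is a regular conditional distribution for $g$ given $\hat{\mathcal{A}}$, and uniqueness gives $(g|\hat{\mathcal{A}}) = \kappa$ almost surely, which is the assertion. There is no genuine obstacle; the only care needed is in producing the generating $\pi$-system $\mathcal{D}$ and in the measurability bookkeeping of step (i).
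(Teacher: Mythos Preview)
Your proof is correct and follows the same strategy as the paper: define the candidate kernel and verify it is a regular conditional distribution for $g$ given $\hat{\mathcal{A}}$. The only difference is in how the defining identity is checked: you invoke the $\pi$-$\lambda$ theorem on the generating system $\mathcal{D}$, whereas the paper handles an arbitrary $D \in \hat{\mathcal{A}}$ directly by splitting $D = (D\cap A)\cup(D\cap A^c)$ and using the observation that $D\cap A^c \in \mathcal{A}$ for every $D \in \hat{\mathcal{A}}$ (which follows since $\{D : D\cap A^c \in \mathcal{A}\}$ is a $\sigma$-algebra containing both $\mathcal{A}$ and $\{G \in \mathcal{G}: G \subset A\}$). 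Your explicit verification of the $\hat{\mathcal{A}}$-measurability of $\kappa(\cdot,B)$ is a point the paper glosses over with ``Clearly $Q$ is a Markov kernel.''
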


\begin{proof}
Let
\begin{equation*}
Q(\omega, \cdot) :=
\begin{cases}
\delta_g & \text{if } \omega \in A\\
(g |\mathcal{A})(\omega, \cdot) & \text{otherwise.}
\end{cases}
\end{equation*}
We will show that $Q$ satisfies the conditions of being a regular conditional distribution for $g$ given $\hat{\mathcal{A}}$. Clearly $Q$ is a Markov kernel. Now let $D \in \hat{\mathcal{A}}$ and let $B \in \xi$. We simply need to show that
\begin{equation}
\mathbb{P} [D \cap \{g \in B\} ] = \mathbb{E} [\mathbb{I}_D Q(\cdot, B)]. \label{eq:r_c_d_condition_thing}
\end{equation}
First suppose that $D \subset A$. In this case the right-hand side of \eqref{eq:r_c_d_condition_thing} becomes $\mathbb{E} [\mathbb{I}_D \mathbb{I}_{g \in B}]$ which is trivially equal to the left-hand side.

Now suppose that $D \subset A^C$. This means that $D \in \mathcal{A}$. In this case by the definition of $(g |\mathcal{A})(\omega, \cdot)$ we know that \eqref{eq:r_c_d_condition_thing} is satisfied.

The general case follows by summing.

\end{proof}
We also need some results about the entropy of regular condition distributions.

\begin{definition}
Given some random variable $Y$ and a $\sigma$-algebra $\mathcal{A} \subset \mathcal{F}$ we define $H((Y|\mathcal{A}))$ to be the random variable
\begin{equation*}
H((Y|\mathcal{A})) : \omega \mapsto H((Y|\mathcal{A})(\omega, \cdot))
\end{equation*}
where $(Y|\mathcal{A})(\omega, \cdot)$ is the regular conditional distribution for $Y$ given $\mathcal{A}$. Similarly given some random variable $X$ we let $H((Y|X)) := H((Y|\sigma(X)))$.
\end{definition}

\begin{lemma} \label{lemma:relative_ent_expectation}
Let $X_1$ and $X_2$ be two random variables with finite entropy and finite joint entropy. Then
\begin{equation*}
H(X_1 | X_2) = \mathbb{E}[H((X_1 | X_2))].
\end{equation*}
\end{lemma}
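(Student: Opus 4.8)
The plan is to pass to densities with respect to the reference measures and carry out the disintegration explicitly. Write $m_1,m_2$ for the reference measures on the ranges of $X_1,X_2$ (Lebesgue measure, counting measure, or $\tilde m$, according to the case), so that, consistently with the product convention fixed above, $H(X_2)=D(\mathcal L(X_2)\|m_2)$ and $H(X_1,X_2)=D(\mathcal L(X_1,X_2)\|m_1\times m_2)$. Since $H(X_1,X_2)$ is finite, $\mathcal L(X_1,X_2)$ has a density $f$ with respect to $m_1\times m_2$ with $\int f\lvert\log f\rvert\,d(m_1\times m_2)<\infty$; put $f_2(x_2):=\int f(x_1,x_2)\,m_1(dx_1)$, which by Tonelli's theorem is a density of $\mathcal L(X_2)$ with respect to $m_2$, and note $\int f_2\lvert\log f_2\rvert\,dm_2<\infty$ by the hypothesis that $X_2$ has finite entropy. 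The ranges of $X_1$ and $X_2$ are standard Borel, so a regular conditional distribution $(X_1\mid X_2)$ exists and is unique up to almost everywhere equality.

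First I would exhibit a version of $(X_1\mid X_2)$. For $x_2$ with $0<f_2(x_2)<\infty$, let $\kappa(x_2,\cdot)$ be the probability measure on the range of $X_1$ with $\frac{d\kappa(x_2,\cdot)}{dm_1}(x_1)=f(x_1,x_2)/f_2(x_2)$, and set $\kappa(x_2,\cdot)$ equal to a fixed point mass on the $\mathcal L(X_2)$-null set where this fails; then $\kappa$ is a Markov kernel. Writing an arbitrary $A\in\sigma(X_2)$ as $\{X_2\in B_2\}$, Tonelli's theorem gives
\[
\mathbb P[X_2\in B_2,\,X_1\in B_1]=\int_{B_2}\kappa(x_2,B_1)\,\mathcal L(X_2)(dx_2)\qquad\text{for all measurable }B_1,B_2
\]
(on $\{f_2=0\}$ both sides contribute $0$ since $f(\cdot,x_2)=0$ $m_1$-a.e.\ there). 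Hence $\omega\mapsto\kappa(X_2(\omega),\cdot)$ satisfies the defining property of the regular conditional distribution, so $(X_1\mid X_2)=\kappa(X_2,\cdot)$ almost surely and therefore $H((X_1\mid X_2))=H(\kappa(X_2,\cdot))$ a.s., where for $\mathcal L(X_2)$-a.e.\ $x_2$
\[
H(\kappa(x_2,\cdot))=-\int \frac{f(x_1,x_2)}{f_2(x_2)}\,\log\frac{f(x_1,x_2)}{f_2(x_2)}\,m_1(dx_1).
\]

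Next I would take the expectation and use Fubini. By the change-of-variables formula together with $\mathcal L(X_2)(dx_2)=f_2(x_2)\,m_2(dx_2)$,
\[
\mathbb E\big[H((X_1\mid X_2))\big]=-\int\!\!\int f(x_1,x_2)\bigl(\log f(x_1,x_2)-\log f_2(x_2)\bigr)\,m_1(dx_1)\,m_2(dx_2).
\]
The pointwise bound $f\lvert\log(f/f_2)\rvert\le f\lvert\log f\rvert+f\lvert\log f_2\rvert$ shows the integrand is absolutely integrable against $m_1\times m_2$ — the first term by finiteness of $H(X_1,X_2)$, the second because $\int\!\!\int f\lvert\log f_2\rvert\,d(m_1\times m_2)=\int f_2\lvert\log f_2\rvert\,dm_2<\infty$ — so we may split the integral and interchange the order of integration, obtaining
\[
\mathbb E\big[H((X_1\mid X_2))\big]=-\int f\log f\,d(m_1\times m_2)+\int f_2\log f_2\,dm_2=H(X_1,X_2)-H(X_2)=H(X_1\mid X_2).
\]

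The only genuinely delicate point is the integrability/interchange step: one must verify absolute integrability before separating $\log(f/f_2)=\log f-\log f_2$ and swapping the order of integration, and this is exactly where both finite-entropy hypotheses enter. The identification of $\kappa(X_2,\cdot)$ with $(X_1\mid X_2)$ and the evaluation of $H(\kappa(x_2,\cdot))$ are routine once the densities are in hand. In the purely discrete case every integral above is a sum and the computation reduces to the familiar identity $H(X_1,X_2)-H(X_2)=\sum_{x_2}\mathbb P[X_2=x_2]\,H\bigl((X_1\mid X_2)(x_2,\cdot)\bigr)$.
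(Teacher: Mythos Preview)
Your proof is correct and is precisely the ``simple computation'' the paper alludes to: the paper does not give a self-contained argument but simply identifies the statement as the chain rule for conditional distributions and cites \cite[Proposition 3]{VIGNEAUX_2021}. Your careful treatment of the integrability needed for the Fubini step, using both finite-entropy hypotheses, is exactly what underlies that cited result.
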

	
\begin{proof}
This is just the chain rule for conditional distributions. It follows from a simple computation and a proof may be found in \cite[Proposition 3]{VIGNEAUX_2021}.
\end{proof}

\begin{lemma} \label{lemma:invariance_entropy}
Let $g$ be a random variable taking values in $\Gp$, let $\mathcal{A}$ be a $\sigma$-algebra, and let $a$ be a $\mathcal{A}$-measurable random variable taking values in $\Gp$. Then
\begin{equation*}
H((ag | \mathcal{A})) = H((g | \mathcal{A}))
\end{equation*}
almost surely. In particular if $h \in \Gp$ is fixed then
\begin{equation*}
H(hg ) = H(g).
\end{equation*}
\end{lemma}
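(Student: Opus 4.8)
The plan is to reduce the conditional identity to the unconditional fact that the differential entropy on $\Gp$, being defined relative to the \emph{left}-invariant measure $\tilde m$, is unchanged by left translation, and then to run that fact fibrewise over the regular conditional distribution.

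First I would establish the unconditional statement: for every $h \in \Gp$ and every measure $\lambda$ on $\Gp$ (absolutely continuous, discrete, or a finite non-probability measure in the sense of Definition \ref{definition:non_prob_entropy}) for which the entropy is defined, one has $H(\delta_h * \lambda) = H(\lambda)$. Writing $L_h : \Gp \to \Gp$ for $x \mapsto hx$, we have $\delta_h * \lambda = (L_h)_* \lambda$, and left invariance of $\tilde m$ gives $(L_h)_* \tilde m = \tilde m$. Hence in the absolutely continuous case, if $\lambda = f \, \tilde m$ then $(L_h)_*\lambda = \big(f \circ L_h^{-1}\big)\,\tilde m$, so
\begin{equation*}
H(\delta_h * \lambda) = - \int_{\Gp} \log\big(f(h^{-1}y)\big)\, f(h^{-1}y) \, d\tilde m(y) = - \int_{\Gp} \log f(x)\, f(x)\, d\tilde m(x) = H(\lambda),
\end{equation*}
the middle equality being the $\tilde m$-preserving change of variables $x = h^{-1}y$; the discrete case is identical with the (translation-invariant) counting measure, and the non-probability case follows by scaling. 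Taking $\mathcal{A}$ trivial and $a = h$ constant then immediately yields the ``in particular'' clause $H(hg) = H(g)$.

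Next I would identify the regular conditional distribution $(ag \mid \mathcal{A})$. Since $a$ is $\mathcal{A}$-measurable we have $\sigma(a) \subseteq \mathcal{A}$, so for $U \in \sigma(a)$ and $V \in \sigma(g)$ one gets $\mathbb{P}[U \cap V \mid \mathcal{A}] = \mathbb{I}_U\,\mathbb{P}[V \mid \mathcal{A}] = \mathbb{P}[U \mid \mathcal{A}]\,\mathbb{P}[V\mid\mathcal{A}]$ a.s.; that is, $a$ and $g$ are conditionally independent given $\mathcal{A}$. Combining Lemma \ref{lemma:r_c_d_conditional_independence} with Lemma \ref{lemma:r_c_d_measurable_means_delta} (which gives $(a \mid \mathcal{A}) = \delta_a$) then produces
\begin{equation*}
(ag \mid \mathcal{A}) = (a \mid \mathcal{A}) * (g \mid \mathcal{A}) = \delta_a * (g \mid \mathcal{A})
\end{equation*}
almost surely, i.e.\ the Markov kernel $\omega \mapsto \delta_{a(\omega)} * (g\mid\mathcal{A})(\omega, \cdot)$.

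Finally I would evaluate the first step at $h = a(\omega)$ and $\lambda = (g\mid\mathcal{A})(\omega,\cdot)$, obtaining $H((ag\mid\mathcal{A}))(\omega) = H\big(\delta_{a(\omega)} * (g\mid\mathcal{A})(\omega,\cdot)\big) = H\big((g\mid\mathcal{A})(\omega,\cdot)\big) = H((g\mid\mathcal{A}))(\omega)$ for almost every $\omega$, which is exactly the claimed identity. There is no serious obstacle: the one point needing care is the fibrewise identification $(ag\mid\mathcal{A}) = \delta_a * (g\mid\mathcal{A})$ — intuitively, conditioning on $\mathcal{A}$ freezes the $\mathcal{A}$-measurable left factor $a$ into a deterministic translation — but this is precisely what the two cited regular-conditional-distribution lemmas supply, and the rest is the change-of-variables computation above. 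One should read the final equality as holding for almost every $\omega$ for which either side is defined, which suffices for all later applications.
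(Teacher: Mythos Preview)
Your proposal is correct and follows essentially the same approach as the paper: identify $(ag\mid\mathcal{A})$ as the left translate $\delta_a * (g\mid\mathcal{A})$ and then invoke left invariance of the Haar measure $\tilde m$. The paper's proof is simply a terse two-line version of what you wrote out carefully (and it derives the ``in particular'' clause from the conditional statement rather than the other way around), but the content is the same.
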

\begin{proof}
For the first part note that $[ag | \mathcal{A}] = a[g | \mathcal{A}]$ almost surely. Also note that by the left invariance of the Haar measure
\begin{equation*}
H(a[g | \mathcal{A}]) = H([g | \mathcal{A}]).
\end{equation*}
The last part follows trivially by the first part.
\end{proof}
\section{Order $k$ Detail} \label{section:detail}

In this section we discuss the basic properties of detail around a scale. We will recall basic properties of detail from \cite{Kittle2024ASENS} and introduce order $k$ detail and prove some properties of it.

Detail is a quantitative measure of the smoothness of a measure at a given scale. The detail of a measure at some scale $r>0$ is close to $1$ if, for example, the measure is supported on a number of disjoint intervals of length much smaller than $r$, which are separated by a distance much greater than $r$. The detail of a measure is small if, for example, the measure is uniform on an interval of length significantly greater than $r$.

We now explain how we extend the concept of detail to measures taking values in $\B$ or equivalently $\Bim$. For this we need the following.

\begin{definition}
Given some $y>0$ let $\tilde{\eta}_y$ be the density of the pushforward of the normal distribution with mean $0$ and variance $y$ onto $\Bim$. In other words given $x \in \Bim$ let
\begin{equation*}
\tilde{\eta}_y(x) := \sum_{u \in x} \eta_y(u). \qedhere
\end{equation*}
\end{definition}
We will also use the following notation.
\begin{definition}
Given some $y>0$ let $\tilde{\eta}_y'$ be defined by
\begin{equation*}
\tilde{\eta}_y' := \frac{\partial}{\partial y} \tilde{\eta}_y. \qedhere
\end{equation*}
\end{definition}

We now define the following.
\begin{definition} \label{definition:detail}
Given a probability measure $\lambda$ on $\Bim$ and some $r>0$ we define the \emph{detail of $\lambda$ around scale $r$} by
\begin{equation*}
s_r(\lambda) := r^2 \sqrt{\frac{\pi e}{2}} \lnb \lambda *  \tilde{\eta}_{r^2}' \rnb_1.
\end{equation*}
\end{definition}
Similarly we define the detail of a probability measure on $\B$ to be the detail of the pushforward measure under $\phi$ and we define the detail of a random variable to be the detail of its law. The factor $ r^2 \sqrt{\frac{\pi e}{2}}$ exists to ensure that $s_r(\lambda) \in [0, 1]$. The smaller the value of detail around a scale the smoother the measure is at that scale.

\begin{remark}
We motivate our definition of detail as follows. Earlier work on stationary measures, including \cite{VARJU_BERLIAND_2020}, \cite{HOCHMAN_2014}, \cite{HOCHMAN_SOLOMYAK_2017} and \cite{VARJU_2019} studied quantities like $$H(\mu * F_{r_1}) - H(\mu * F_{r_2})$$
where $F_r$ is a smoothing function associated to scale $r$ (for example the law of the normal distribution with standard deviation $r$ or the law of a uniform random variable on $[0, r]$). Motivated by this and the work of Shmerkin \cite{SHMERKIN_2019}, it is natural to study quantities like
\begin{equation*}
    \| \mu * F_{r_1} \|_p - \| \mu * F_{r_2} \|_p.
\end{equation*}
However it turns out to be more useful to study
\begin{equation*}
    \| \mu * F_{r_1} - \mu * F_{r_2} \|_p
\end{equation*}
at least when $p = 1$. Detail is an infinitesimal version of this quantity
with Gaussian smoothing.

The Gaussian is chosen because the heat equation plays an important role in the proof of Lemma \ref{lemma:detail_order_k_detail_bound_many} and \cite[Lemma 2.5]{Kittle2024ASENS}. The property that the convolution of a Gaussian with a Gaussian is another Gaussian also plays a key role.
\end{remark}

In Section \ref{section:order_k_detail} we introduce a new quantity which we refer to as order $k$ detail. In Section \ref{section:order_k_detail_to_detail} we use this to bound detail. In Section \ref{section:wasserstein_distance_bound} we prove Lemma \ref{lemma:detail_wasserstein}. Finally in Section \ref{section:small_random_variables_bound} we prove Lemma \ref{lemma:small_rvs_to_detail}.

\subsection{Order $k$ detail} \label{section:order_k_detail}

We can now define the order $k$ detail around a scale.
\begin{definition}[Order $k$ detail around a scale] \label{definition:order_k_detail}
Given a probability measure $\lambda$ on $\Bim$ and some $k \in \N$ we define the \emph{order $k$ detail of $\lambda$ around scale $r$}, which we will denote by $s_r^{(k)}(\lambda)$, by
\begin{equation*}
s_r^{(k)}(\lambda) := r^{2k} \left( \frac{\pi e}{2} \right) ^{k / 2} \lnb \lambda * \left. \frac{\partial^k}{\partial y^k} \tilde{\eta}_y \right|_{y=kr^2} \rnb_1. 
\end{equation*}
\end{definition}

We also define the order $k$ detail of a measure on $\B$ to be the order $k$ detail of the pushforward measure under $\phi$ and define the order $k$ detail of a random variable to be the order $k$ detail of its law. It is worth noting that $s_r^{(1)}( \cdot) = s_r(\cdot)$. We will now prove some basic properties of order $k$ detail.

\begin{lemma} \label{lemma:detail_order_k_detail_bound_many}
Let $\lambda_1, \lambda_2, \dots, \lambda_k$ be probability measures on $\Bim$. Then we have
\begin{equation*}
s_r^{(k)}(\lambda_1 * \lambda_2 * \dots * \lambda_k) \leq s_r(\lambda_1) s_r(\lambda_2) \dots s_r(\lambda_k).
\end{equation*}
\end{lemma}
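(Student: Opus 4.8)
The plan is to unwind the definition of order $k$ detail and exploit the semigroup property of the heat kernel $\tilde\eta_y$ on $\Bim$, namely $\tilde\eta_{y_1} * \tilde\eta_{y_2} = \tilde\eta_{y_1+y_2}$, which holds because $\tilde\eta_y$ is the pushforward of the Gaussian heat kernel on $\R$ and convolution is compatible with the quotient map $\R \to \Bim$. Differentiating this identity in the variables $y_1, \dots, y_k$ and setting each $y_i = r^2$ gives
\begin{equation*}
\left. \frac{\partial}{\partial y_1} \tilde\eta_{y_1} \right|_{y_1 = r^2} * \cdots * \left. \frac{\partial}{\partial y_k} \tilde\eta_{y_k} \right|_{y_k = r^2} = \left. \frac{\partial^k}{\partial y^k} \tilde\eta_y \right|_{y = k r^2},
\end{equation*}
since $\tilde\eta_{y_1} * \cdots * \tilde\eta_{y_k} = \tilde\eta_{y_1 + \cdots + y_k}$ and $\frac{\partial}{\partial y_i}$ acting on the right-hand side becomes $\frac{\partial}{\partial y}$ evaluated at the sum, so the $k$-fold mixed derivative equals $\frac{\partial^k}{\partial y^k}$ at $y = kr^2$.

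With this identity in hand, I would write
\begin{align*}
(\lambda_1 * \cdots * \lambda_k) * \left. \frac{\partial^k}{\partial y^k} \tilde\eta_y \right|_{y = kr^2}
&= (\lambda_1 * \cdots * \lambda_k) * \left( \tilde\eta_{r^2}' * \cdots * \tilde\eta_{r^2}' \right) \\
&= \left( \lambda_1 * \tilde\eta_{r^2}' \right) * \cdots * \left( \lambda_k * \tilde\eta_{r^2}' \right),
\end{align*}
using commutativity and associativity of convolution on the abelian group $\Bim$. Then by submultiplicativity of the $L^1$ norm under convolution, $\lnb f_1 * \cdots * f_k \rnb_1 \leq \lnb f_1 \rnb_1 \cdots \lnb f_k \rnb_1$, we get
\begin{equation*}
\lnb (\lambda_1 * \cdots * \lambda_k) * \left. \tfrac{\partial^k}{\partial y^k} \tilde\eta_y \right|_{y=kr^2} \rnb_1 \leq \prod_{i=1}^k \lnb \lambda_i * \tilde\eta_{r^2}' \rnb_1.
\end{equation*}
Multiplying both sides by $r^{2k} (\pi e/2)^{k/2} = \prod_{i=1}^k \left( r^2 \sqrt{\pi e/2} \right)$ and recognizing each factor on the right as $s_r(\lambda_i)$, and the left as $s_r^{(k)}(\lambda_1 * \cdots * \lambda_k)$, gives the claim.

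The only genuine point requiring care — and the step I expect to be the main obstacle, though it is more bookkeeping than difficulty — is justifying the derivative identity and the interchange of differentiation with convolution: one needs that $\tilde\eta_y$ and its $y$-derivatives are nice enough (rapidly decaying, smooth in $y$) that $\frac{\partial^k}{\partial y^k}(\tilde\eta_{y_1} * \cdots)$ can be computed termwise and that mixed partials commute. This is standard for the heat kernel (it satisfies $\partial_y \tilde\eta_y = \frac12 \Delta \tilde\eta_y$, so all the relevant objects are Schwartz-class on each fundamental domain and uniformly controlled), so I would state it and refer to the analogous facts already used in \cite{KITTLE_2021}. Everything else is the formal manipulation of convolution on a compact abelian group together with Young's inequality $\lnb f * g \rnb_1 \le \lnb f \rnb_1 \lnb g \rnb_1$.
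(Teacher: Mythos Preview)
Your proof is correct and follows essentially the same approach as the paper: both arguments establish the factorization $\left.\frac{\partial^k}{\partial y^k}\tilde\eta_y\right|_{y=kr^2} = \tilde\eta_{r^2}' * \cdots * \tilde\eta_{r^2}'$ and then apply $L^1$-submultiplicativity of convolution. The only cosmetic difference is that the paper derives the factorization via the heat equation $\partial_y\tilde\eta_y = \tfrac12\partial_x^2\tilde\eta_y$ (converting $y$-derivatives to $x$-derivatives and distributing those across the convolution), whereas you differentiate the semigroup identity $\tilde\eta_{y_1}*\cdots*\tilde\eta_{y_k}=\tilde\eta_{y_1+\cdots+y_k}$ directly in the $y_i$; these are equivalent routes to the same identity.
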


This is \eqref{eq:trivial_srk_bound} from Section \ref{section:outline}.

\begin{proof}
From the heat equation we know that
\begin{equation*}
    \frac{\partial}{\partial y} \eta_y(x) = \frac{1}{2} \frac{\partial^2}{\partial x^2} \eta_y(x).
\end{equation*}
Therefore by standard properties of convolution we have
\begin{align*}
\left. \frac{\partial^k}{\partial y^k} \tilde{\eta}_y \right|_{y=kr^2} & = 2^{-k} \frac{\partial^{2k}}{\partial x^{2k}} \tilde{\eta}_{kr^2}\\
&= \underbrace{\left(\frac{1}{2} \frac{\partial^2}{\partial x^2} \tilde{\eta}_{r^2} \right) * \left(\frac{1}{2} \frac{\partial^2}{\partial x^2} \tilde{\eta}_{r^2} \right) * \dots * \left(\frac{1}{2} \frac{\partial^2}{\partial x^2} \tilde{\eta}_{r^2} \right)}_{k \text{ times}}\\
&= \underbrace{\tilde{\eta}_{r^2}' * \tilde{\eta}_{r^2}' * \dots * \tilde{\eta}_{r^2}'}_{k \text{ times}}
\end{align*}
and therefore
\begin{equation*}
\lambda_1 * \lambda_2 * \dots * \lambda_k * \left. \frac{\partial^k}{\partial y^k} \tilde{\eta}_y \right|_{y=kr^2} = \lambda_1 * \tilde{\eta}_{r^2}' * \lambda_2 * \tilde{\eta}_{r^2}' *  \dots * \lambda_k * \tilde{\eta}_{r^2}'.
\end{equation*}
This means
\begin{equation*}
\lnb \lambda_1 * \lambda_2 * \dots * \lambda_k * \left. \frac{\partial^k}{\partial y^k} \tilde{\eta}_y \right|_{y=kr^2} \rnb_1 \leq \lnb \lambda_1 * \tilde{\eta}_{r^2}' \rnb_1 \cdot \lnb \lambda_2 * \tilde{\eta}_{r^2}' \rnb_1 \cdot \dots \cdot \lnb \lambda_k * \tilde{\eta}_{r^2}' \rnb_1.
\end{equation*}
The result follows.
\end{proof}
The following corollary will be useful.
\begin{corollary} \label{corollary:trivial_upper_bound_on_detail}
Suppose that $\lambda$ is a probability measure on $\Bim$. Then
\begin{equation*}
s_r^{(k)}(\lambda) \leq 1.
\end{equation*}
\end{corollary}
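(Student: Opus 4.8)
The plan is to deduce this immediately from Lemma~\ref{lemma:detail_order_k_detail_bound_many} together with the fact, built into the normalising constant in Definition~\ref{definition:detail}, that $s_r(\mu) \le 1$ for every probability measure $\mu$ on $\Bim$. For $k = 1$ there is nothing to do since $s_r^{(1)}(\cdot) = s_r(\cdot)$. For $k \ge 2$, I would apply Lemma~\ref{lemma:detail_order_k_detail_bound_many} with $\lambda_1 = \lambda$ and $\lambda_2 = \dots = \lambda_k = \delta_{\bar 0}$, the Dirac mass at the identity coset of $\Bim$. Since $\delta_{\bar 0}$ is an identity for convolution on $\Bim$, we have $\lambda_1 * \lambda_2 * \dots * \lambda_k = \lambda$, and the lemma gives
\begin{equation*}
s_r^{(k)}(\lambda) \le s_r(\lambda)\, s_r(\delta_{\bar 0})^{k-1} \le 1,
\end{equation*}
using $s_r(\lambda) \le 1$ and $s_r(\delta_{\bar 0}) \le 1$.

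It remains to justify the bound $s_r(\mu) \le 1$ for an arbitrary probability measure $\mu$ on $\Bim$. First, convolution with a probability measure is an $L^1$-contraction, so $\lnb \mu * \tilde{\eta}_{r^2}' \rnb_1 \le \lnb \tilde{\eta}_{r^2}' \rnb_1$. Next, unfolding the periodisation, $\lnb \tilde{\eta}_{y}' \rnb_{L^1(\Bim)} \le \lnb \eta_{y}' \rnb_{L^1(\R)}$, where $\eta_y' = \tfrac{\partial}{\partial y}\eta_y$. Finally one computes $\lnb \eta_y' \rnb_{L^1(\R)}$ explicitly: by the heat equation $\eta_y' = \tfrac12 \tfrac{\partial^2}{\partial x^2}\eta_y$, whose two sign changes occur at $x = \pm\sqrt{y}$, so $\int_\R \lvert \tfrac{\partial^2}{\partial x^2}\eta_y \rvert = 4\,\tfrac{1}{\sqrt y}\eta_y(\sqrt y) = \tfrac{4 e^{-1/2}}{y\sqrt{2\pi}}$, giving $\lnb \eta_y' \rnb_{L^1(\R)} = \tfrac{2 e^{-1/2}}{y\sqrt{2\pi}} = \tfrac{\sqrt 2}{y\sqrt{\pi e}}$. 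Hence
\begin{equation*}
s_r(\mu) \le r^2 \sqrt{\tfrac{\pi e}{2}} \cdot \frac{\sqrt 2}{r^2 \sqrt{\pi e}} = 1,
\end{equation*}
which is exactly why the constant $r^2\sqrt{\pi e/2}$ was chosen. (Alternatively, one may bypass the Dirac-mass step and argue directly: the computation in the proof of Lemma~\ref{lemma:detail_order_k_detail_bound_many} shows $\left.\tfrac{\partial^k}{\partial y^k}\tilde{\eta}_y\right|_{y=kr^2} = (\tilde{\eta}_{r^2}')^{*k}$, so $\lnb \lambda * \left.\tfrac{\partial^k}{\partial y^k}\tilde{\eta}_y\right|_{y=kr^2}\rnb_1 \le \lnb\tilde{\eta}_{r^2}'\rnb_1^k$, and substituting the value of $\lnb\tilde{\eta}_{r^2}'\rnb_1$ bounded above yields $s_r^{(k)}(\lambda)\le 1$ at once.)

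There is no real obstacle here; the only points requiring a little care are the trivial $k=1$ case and checking that the periodisation inequality $\lnb \tilde{\eta}_{y}' \rnb_{L^1(\Bim)} \le \lnb \eta_{y}' \rnb_{L^1(\R)}$ goes in the direction needed (it follows from the triangle inequality for the sum defining $\tilde\eta_y'$). If the bound $s_r(\mu)\le 1$ has already been recorded earlier — as the remark after Definition~\ref{definition:detail} indicates, cf.\ \cite{KITTLE_2021} — then the entire proof collapses to the one-line application of Lemma~\ref{lemma:detail_order_k_detail_bound_many} above.
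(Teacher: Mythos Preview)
Your proposal is correct and matches the paper's own proof exactly: the paper simply says ``This is immediate by letting all but one of the measures in Lemma~\ref{lemma:detail_order_k_detail_bound_many} be a delta function.'' Your additional verification that $s_r(\mu)\le 1$ is more detail than the paper gives (it records this fact right after Definition~\ref{definition:detail}), but it is entirely correct and does no harm.
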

\begin{proof}
This is immediate by letting all but one of the measures in Lemma \ref{lemma:detail_order_k_detail_bound_many} be a delta function.
\end{proof}

There is no reason to assume that the bound in Corollary \ref{corollary:trivial_upper_bound_on_detail} is optimal for any $k \geq 2$. Indeed it is fairly simple to show that it is not. However the trivial upper bound of $1$ will still prove useful.

We also need the following corollary of Lemma \ref{lemma:small_rvs_to_detail} (which will be proven in Section \ref{section:small_random_variables_bound}) and Lemma \ref{lemma:detail_order_k_detail_bound_many}.

\begin{corollary}\label{corollary:small_rvs_to_order_k_detail}
For every $\alpha > 0$ there exists some $C > 0$ such that the following is true. Let $X_1, X_2, \dots, X_n$ be independent random variables taking values in $\Bim$ such that $|X_i| < s$ almost surely for some $s>0$. Let $\sigma > 0 $ be defined by $\sigma^2 = \sum_{i=1}^n \var X_i$. Let $ r \in (s, \sigma)$. Let $k \in \Z_{>0}$ and suppose that
\begin{equation*}
\frac{r}{s} \geq C
\end{equation*}
and
\begin{equation*}
    \frac{\sigma^2}{r^2} \geq Ck. 
\end{equation*}
Then
\begin{equation*}
s_r^{(k)}(X_1 + X_2 + \dots + X_n) \leq \alpha^k.
\end{equation*}
\end{corollary}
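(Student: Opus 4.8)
The plan is to reduce Corollary \ref{corollary:small_rvs_to_order_k_detail} to Lemma \ref{lemma:small_rvs_to_detail} by splitting the sum $X_1 + \dots + X_n$ into $k$ independent blocks, applying the order-$1$ detail bound (i.e.\ Lemma \ref{lemma:small_rvs_to_detail}) to each block at an appropriate scale, and then combining the blocks via Lemma \ref{lemma:detail_order_k_detail_bound_many}, which gives $s_r^{(k)}(\lambda_1 * \dots * \lambda_k) \leq \prod_j s_r(\lambda_j)$.

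More precisely, I would first fix $\alpha > 0$ and let $C_1 = C_1(\alpha)$ be the constant produced by Lemma \ref{lemma:small_rvs_to_detail} for this $\alpha$. The idea is to partition $\{1, \dots, n\}$ into $k$ consecutive groups $I_1, \dots, I_k$ so that each block sum $Y_j := \sum_{i \in I_j} X_i$ has $\var Y_j = \sum_{i \in I_j} \var X_i$ roughly $\sigma^2/k$ (one can do this up to an error of at most $\max_i \var X_i \le s^2 \le r^2$, since each $\var X_i \le s^2$; because $\sigma^2 \geq C k r^2$ with $C$ large this still leaves each block variance comfortably larger than, say, $\tfrac{1}{2}\sigma^2/k$). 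Writing $\sigma_j^2 = \var Y_j$, I want to apply Lemma \ref{lemma:small_rvs_to_detail} to the family $(X_i)_{i \in I_j}$ at scale $r$: this needs $r \in (s, \sigma_j)$, $s/r \leq C_1^{-1}$ wait — the hypothesis of Lemma \ref{lemma:small_rvs_to_detail} is $\tfrac{s}{r}, \tfrac{r}{\sigma_j} \geq C_1$; re-reading, it is $\tfrac{s}{r}$ — no, it must be $\tfrac{r}{s}$ and $\tfrac{\sigma_j}{r}$ that are large; in any case the two required separations are $r/s$ large and $\sigma_j/r$ large. The first follows from $r/s \geq C$, and the second from $\sigma_j^2 \gtrsim \sigma^2/k \gtrsim C r^2$, again taking $C$ large enough in terms of $C_1$. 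This yields $s_r(Y_j) \leq \alpha$ for every $j$.

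Then, since $X_1 + \dots + X_n = Y_1 + \dots + Y_k$ and the $Y_j$ are independent, $\mathcal{L}(X_1 + \dots + X_n) = \mathcal{L}(Y_1) * \dots * \mathcal{L}(Y_k)$, so Lemma \ref{lemma:detail_order_k_detail_bound_many} gives
\begin{equation*}
s_r^{(k)}(X_1 + \dots + X_n) \leq \prod_{j=1}^{k} s_r(Y_j) \leq \alpha^k,
\end{equation*}
which is exactly the claim. The constant $C$ in the statement is then chosen as a suitable multiple of $C_1(\alpha)$ (large enough to absorb the factor-of-$2$ losses from the imperfect partition and to guarantee $r < \sigma_j$ for each $j$).

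**The main obstacle**, such as it is, is the bookkeeping around the partition: one must make sure that each block can be made to carry variance of order $\sigma^2/k$ despite the constraint that the $X_i$ come in indivisible units of size at most $s^2$, and that after this the hypotheses of Lemma \ref{lemma:small_rvs_to_detail} are genuinely met with room to spare (in particular $r < \sigma_j$, which is where the $\sigma^2/r^2 \geq Ck$ hypothesis — rather than merely $\sigma^2/r^2 \geq C$ — is used). A minor point to be careful about is the degenerate case $k > n$ or blocks forced to be empty, but this cannot occur: if every $\var X_i \le s^2$ and $\sigma^2 \ge C k r^2 > Ck s^2$ then $n > Ck \ge k$, so there are always enough summands to form $k$ nonempty blocks. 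Everything else is a direct invocation of the two lemmas.
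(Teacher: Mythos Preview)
Your proposal is correct and matches the paper's proof essentially line for line: split the index set into $k$ blocks each carrying variance $\gtrsim C_1^2 r^2$, apply Lemma~\ref{lemma:small_rvs_to_detail} to each block to get $s_r(Y_j)\le\alpha$, then combine via Lemma~\ref{lemma:detail_order_k_detail_bound_many}. The paper phrases the partition step as ``possible by a greedy algorithm'' rather than your consecutive-groups argument, but the content is identical; your reading of the (typo-ridden) hypothesis of Lemma~\ref{lemma:small_rvs_to_detail} as requiring $r/s$ and $\sigma/r$ large is also the intended one.
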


\begin{proof}
    Let $C_1$ be the $C$ from Lemma \ref{lemma:small_rvs_to_detail} with this value of $\alpha$. Suppose that $$\frac{r}{s} \geq \max \{C_1, 1\}$$ and $$\frac{\sigma^2}{r^2} \geq (C_1^2 + 1) k.$$
    Partition $[1, n]\cap \Z$ into $k$ sets $J_1, J_2, \dots, J_k$ such that for each $i = 1, 2, \dots, k$ we have $$\sum_{j \in J_i} \var X_j \geq C_1^2 r^2.$$ This is possible by a greedy algorithm. Note that by Lemma \ref{lemma:small_rvs_to_detail} this means
    $$s_r\left( \sum_{j \in J_i} X_j \right) < \alpha.$$ Noting that $$X_1 + X_2 + \dots + X_n = \sum_{i=1}^{k} \left( \sum_{j \in J_i} X_j \right)$$ and applying Lemma \ref{lemma:detail_order_k_detail_bound_many} gives the required result.
\end{proof}

\subsection{Bounding detail using order k detail} \label{section:order_k_detail_to_detail}

The purpose of this subsection is to prove Lemma \ref{lemma:ind_srk_to_sr}. For this we first need the following result.
 
\begin{lemma} \label{lemma:srk_to_srk_m_1}
Let $k$ be an integer greater than $1$ and suppose that $\lambda$ is a probability measure on $\Bim$. Suppose that $a, b, c > 0$ and $\alpha \in (0,1)$. Suppose that $a < b$ and that for all $r \in [a, b]$ we have
\begin{equation}
s_r^{(k)}(\lambda) \leq \alpha + c r^{2k}. \label{eq:requirement_srk_ind}
\end{equation}
Then for all $r \in \left[ a\sqrt{\frac{k}{k-1}}, b \sqrt{\frac{k}{k-1}}\right]$ we have
\begin{equation*}
s_r^{(k-1)}(\lambda) \leq  \frac{k}{k-1}\sqrt{\frac{2e}{\pi}} \alpha +  \left( b^{-2k+2} + kb^2c \right) r^{2(k-1)}.
\end{equation*}
\end{lemma}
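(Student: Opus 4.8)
The plan is to lower the order of the detail by integrating in the scale variable, using the fundamental theorem of calculus where Lemma~\ref{lemma:detail_order_k_detail_bound_many} used the multiplicative (convolution) structure. Write $C_k := \left( \frac{\pi e}{2} \right)^{-k/2}$ and $\beta := \sqrt{\frac{k-1}{k}}$, so that $(k-1) r^2 = k (\beta r)^2$. First I would establish the identity
\begin{equation*}
\left. \frac{\partial^{k-1}}{\partial y^{k-1}} \tilde{\eta}_y \right|_{y = (k-1) r^2} = - \int_{(k-1) r^2}^{\infty} \left. \frac{\partial^{k}}{\partial y^{k}} \tilde{\eta}_y \right|_{y = t} \, dt ,
\end{equation*}
which is just the fundamental theorem of calculus in $y$ together with the fact that $\frac{\partial^{k-1}}{\partial y^{k-1}} \tilde{\eta}_y \to 0$ uniformly on $\Bim$ as $y \to \infty$ (immediate from the heat-kernel expansion $\tilde{\eta}_y(x) = \frac1\pi + \frac2\pi \sum_{m\geq 1} e^{-2 y m^2} \cos 2mx$, using $k-1 \geq 1$); this decay is also what guarantees absolute convergence of the integral, and it is here that $k \geq 2$ is used. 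Convolving with $\lambda$, interchanging the convolution with the integral, and applying Minkowski's integral inequality gives
\begin{equation*}
\lnb \lambda * \left. \frac{\partial^{k-1}}{\partial y^{k-1}} \tilde{\eta}_y \right|_{y = (k-1) r^2} \rnb_1 \leq \int_{(k-1) r^2}^{\infty} \lnb \lambda * \left. \frac{\partial^{k}}{\partial y^{k}} \tilde{\eta}_y \right|_{y = t} \rnb_1 \, dt ,
\end{equation*}
and after the substitution $t = k s^2$ and unwinding the definition of $s_s^{(k)}$, the right-hand side equals $\int_{\beta r}^{\infty} 2 k C_k\, s^{-2k+1} s_s^{(k)}(\lambda) \, ds$.

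Next I would estimate this integral. Since $r \in \left[ a\sqrt{\frac{k}{k-1}}, b\sqrt{\frac{k}{k-1}} \right]$ one checks $\beta r \in [a, b]$, so I can use the hypothesis $s_s^{(k)}(\lambda) \leq \alpha + c s^{2k}$ on $[\beta r, b]$ and the trivial bound $s_s^{(k)}(\lambda) \leq 1$ of Corollary~\ref{corollary:trivial_upper_bound_on_detail} on $[b, \infty)$. Splitting the integral at $b$, integrating the three resulting pieces explicitly and dropping the favourable endpoint terms leads to
\begin{equation*}
\lnb \lambda * \left. \frac{\partial^{k-1}}{\partial y^{k-1}} \tilde{\eta}_y \right|_{y = (k-1) r^2} \rnb_1 \leq \frac{k C_k \alpha}{k-1} (\beta r)^{-2k+2} + k C_k c\, b^2 + \frac{k C_k}{k-1} b^{-2k+2} .
\end{equation*}
Multiplying through by $r^{2(k-1)} \left( \frac{\pi e}{2} \right)^{(k-1)/2}$ to recover $s_r^{(k-1)}(\lambda)$, and using $\left( \frac{\pi e}{2} \right)^{(k-1)/2} C_k = \sqrt{\frac{2}{\pi e}}$ and $(\beta r)^{-2k+2} r^{2(k-1)} = \left( \frac{k}{k-1} \right)^{k-1}$, the three terms become $\frac{k}{k-1} \left( \frac{k}{k-1} \right)^{k-1} \sqrt{\frac{2}{\pi e}}\, \alpha$, then $\sqrt{\frac{2}{\pi e}}\, k b^2 c\, r^{2(k-1)}$, and $\sqrt{\frac{2}{\pi e}}\, \frac{k}{k-1}\, b^{-2k+2} r^{2(k-1)}$. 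Finally the elementary bounds $\left( \frac{k}{k-1} \right)^{k-1} \leq e$ (whence $\sqrt{\frac{2}{\pi e}}\cdot e = \sqrt{\frac{2e}{\pi}}$) and $\sqrt{\frac{2}{\pi e}}\cdot \frac{k}{k-1} \leq 1$ for $k \geq 2$ turn these into $\frac{k}{k-1}\sqrt{\frac{2e}{\pi}}\alpha$, $k b^2 c\, r^{2(k-1)}$, and $b^{-2k+2} r^{2(k-1)}$, which is exactly the claimed estimate.

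The one point requiring genuine care is the first paragraph: justifying the integral representation (the vanishing of the boundary term at $y = \infty$) and the interchange of convolution, $L^1$-norm and integration. Once that foundation is in place the remainder is a bookkeeping computation, so I expect no serious obstacle; but I would keep careful track of the hypothesis $k > 1$, which is what makes $\frac{\partial^{k-1}}{\partial y^{k-1}} \tilde{\eta}_y$ vanish at infinity and makes $\int^{\infty} s^{-2k+1}\, ds$ converge.
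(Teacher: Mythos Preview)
Your proof is correct and follows essentially the same route as the paper: translate the hypothesis into a bound on $\lnb \lambda * \partial_y^k \tilde{\eta}_y\rnb_1$, integrate in $y$ via the fundamental theorem of calculus to pass from order $k$ to order $k-1$, and then do the algebra. The only cosmetic difference is that you integrate all the way to $y=\infty$ (so you need the vanishing of $\partial_y^{k-1}\tilde\eta_y$ at infinity and then use the trivial order-$k$ bound $s_s^{(k)}\le 1$ on the tail $s\in[b,\infty)$), whereas the paper stops the integration at $y=kb^2$ and instead bounds the boundary term there by the trivial order-$(k-1)$ bound; the two tail contributions simplify to the same thing.
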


\begin{proof}
Recall that
\begin{equation*}
s_r^{(k)}(\lambda)  = r^{2k} \left( \frac{\pi e}{2} \right) ^{\frac{k}{2}} \lnb \lambda * \left. \frac{\partial ^k}{\partial y^k} \tilde{\eta}_y \right|_{y = kr^2} \rnb_1.
\end{equation*}
This means by \eqref{eq:requirement_srk_ind} that when $y = k r^2$ we have
\begin{align*}
\lnb \lambda *  \frac{\partial ^k}{\partial y^k} \tilde{\eta}_y \rnb_1 & \leq \alpha r^{-2 k} \left( \frac{\pi e}{2} \right)^{ - \frac{k}{2}} + c\left( \frac{\pi e}{2} \right)^{ - \frac{k}{2}} \\
& =\alpha y^{-k} k^{k} \left( \frac{\pi e}{2} \right)^{ - \frac{k}{2}} + c\left( \frac{\pi e}{2} \right)^{ - \frac{k}{2}} 
\end{align*}
for all $y \in [ka^2, kb^2]$. This means that for $y \in [ka^2, kb^2]$ we have
\begin{align}
\MoveEqLeft \lnb \lambda * \frac{\partial ^{k-1}}{\partial y^{k-1}} \tilde{\eta}_y \rnb_1 \nonumber\\
& \leq \lnb \lambda * \left. \frac{\partial ^{k-1}}{\partial u^{k-1}} \tilde{\eta}_u \right|_{u=kb^2} \rnb_1 + \int_y^{kb^2} \lnb \lambda * \frac{\partial ^{k}}{\partial u^{k}} \tilde{\eta}_u \rnb_1 \, du\nonumber\\
& \leq \lnb \left. \frac{\partial^{k-1}}{\partial u^{k-1}} \tilde{\eta}_u \right|_{u=kb^2} \rnb_1 + \int_y^{kb^2} \alpha u^{-k} k^{k} \left( \frac{\pi e}{2} \right)^{ - \frac{k}{2}} + c\left( \frac{\pi e}{2} \right)^{ - \frac{k}{2}}  \, du\nonumber\\
& \leq \left( \frac{kb^2}{k-1} \right)^{-k+1} \left( \frac{\pi e}{2}\right)^{-(k-1)/2} +  \alpha \frac{y^{-k+1}}{k-1} k^{k} \left( \frac{\pi e}{2} \right)^{-\frac{k}{2}} + kb^2 c\left( \frac{\pi e}{2} \right)^{ - \frac{k}{2}} \label{eq:temp_ind_srk}
\end{align}
where in \eqref{eq:temp_ind_srk} we bound $\lnb \left. \frac{\partial^{k-1}}{\partial u^{k-1}} \tilde{\eta}_u \right|_{u=kb^2} \rnb_1$ using the fact that order $k-1$ detail is at most one, we bound $\int_y^{kb^2} \alpha u^{-k} k^{k} \left( \frac{\pi e}{2} \right)^{ - \frac{k}{2}}  \, du$ by $\int_y^{\infty} \alpha u^{-k} k^{k} \left( \frac{\pi e}{2} \right)^{ - \frac{k}{2}}  \, du$ and bound $\int_y^{kb^2}  c\left( \frac{\pi e}{2} \right)^{ - \frac{k}{2}}  \, du$ by $\int_0^{kb^2} c\left( \frac{\pi e}{2} \right)^{ - \frac{k}{2}}  \, du$. Noting that
\begin{equation*}
\left( \frac{k}{k-1} \right)^{-k+1}  < 1
\end{equation*}
and
\begin{equation*}
\left( \frac{\pi e}{2} \right)^{ - \frac{1}{2}} < 1
\end{equation*}
we get
\begin{equation*}
\lnb \lambda * \frac{\partial ^{k-1}}{\partial y^{k-1}} \eta_y \rnb_1 \leq  \alpha \frac{y^{-k+1}}{k-1} k^{k} \left( \frac{\pi e}{2} \right)^{-\frac{k}{2}} + \left( b^{-2k+2} + kb^2 c \right) \left( \frac{\pi e}{2} \right)^{ - \frac{k-1}{2}}.
\end{equation*}
Substituting in the definition of order $k$ detail gives
\begin{align*}
s_r^{(k-1)}(\lambda) & = r^{2(k-1)} \left( \frac{\pi e}{2} \right) ^{\frac{k-1}{2}}  \lnb \left. \lambda * \frac{\partial ^{k-1}}{\partial y^{k-1}} \tilde{\eta}_y \right|_{y=(k-1)r^2} \rnb_1\\
& \leq r ^{2(k-1)} \left( \frac{\pi e}{2} \right)^{-\frac{1}{2}} \alpha \frac{((k-1)r^2)^{-k+1}}{k-1} k^k + r ^{2(k-1)} \left( \frac{\pi e}{2} \right)^{-\frac{1}{2}} \left( b^{-2k+2} + kb^2 c \right)
\end{align*}
and so we have
\begin{equation*}
s_r^{(k-1)}(\lambda) \leq \alpha \sqrt{\frac{2}{\pi e}} \left( 1 + \frac{1}{k-1}\right)^{k}+ ( b^{-k+1} + kcb) r^{2 (k-1)}
\end{equation*}
for all $r \in \left[ a\sqrt{\frac{k}{k-1}}, b\sqrt{\frac{k}{k-1}} \right]$. Noting that  $\left( 1 + \frac{1}{k-1}\right)^{k} \leq \frac{k}{k-1}e$ gives the required result.
\end{proof}

We apply this inductively to prove Lemma \ref{lemma:ind_srk_to_sr}.

\begin{proof}[Proof of Lemma \ref{lemma:ind_srk_to_sr}]
Using Lemma \ref{lemma:srk_to_srk_m_1} we will prove by induction for $j = k, k-1, \dots, 1$ that for all $r\in \left[ a\sqrt{\frac{k}{j}} , b \sqrt{\frac{k}{j}} \right]$ we have
\begin{equation*}
 s_r^{(j)}(\lambda) \leq \alpha \frac{k}{j} \left( \frac{2e}{\pi} \right) ^ {\frac{k-j}{2}} + \frac{k!}{j!} b^{-2j} r^{2j}.
\end{equation*}

The case $j = k$ follows by the conditions of the lemma. Suppose that for all  $r\in \left[ a\sqrt{\frac{k}{j+1}} , b \sqrt{\frac{k}{j+1}} \right]$  we have
\begin{equation*}
s_r^{(j+1)}(\lambda) \leq \alpha \frac{k}{j+1} \left( \frac{2e}{\pi} \right) ^ {\frac{k-j-1}{2}} + \frac{k!}{(j+1)!} b^{-2j-2} r^{2(j+1)}.
\end{equation*}
Then by Lemma \ref{lemma:srk_to_srk_m_1} for all $r>0$ such that $r\in \left[ a\sqrt{\frac{k}{j}} , b \sqrt{\frac{k}{j}} \right]$  we have
\begin{align*}
s_r^{(j)}(\lambda) & \leq \alpha \frac{k}{j} \left( \frac{2e}{\pi} \right) ^ {\frac{k-j}{2}} + \left( b^{-2j} + jb^2 \left(\frac{k!}{(j+1)!}  b^{-2j-2} \right) \right) r^{2j}\\
& \leq \alpha \frac{k}{j} \left( \frac{2e}{\pi} \right) ^ {\frac{k-j}{2}} +  \left( \frac{k!}{(j+1)!} b^{-2j} + jb^2 \left(\frac{k!}{(j+1)!}  b^{-2j-2} \right) \right) r^{2j}\\
& = \alpha \frac{k}{j} \left( \frac{2e}{\pi} \right) ^ {\frac{k-j}{2}} + (j+1)\frac{k!}{(j+1)!} b^{-2j} r^{2j}\\
& = \alpha \frac{k}{j} \left( \frac{2e}{\pi} \right) ^ {\frac{k-j}{2}} + \frac{k!}{j!} b^{-2j} r^{2j}
\end{align*}
as required. Lemma \ref{lemma:ind_srk_to_sr} follows easily from the $j=1$ case.
\end{proof}

\subsection{Wasserstein distance bound} \label{section:wasserstein_distance_bound}

In this subsection we will bound the difference in order $k$ detail between two measures in terms of the Wasserstein distance between those two measures. Specifically we will prove Lemma \ref{lemma:detail_wasserstein}. First we need to define Wasserstein distance.

\begin{definition}[Coupling]
Given two probability measures $\lambda_1$ and $\lambda_2$ on a set $X$ we say that a \emph{coupling} between $\lambda_1$ and $\lambda_2$ is a measure $\gamma$ on $X \times X$ such that $\gamma(\cdot \times X) = \lambda_1(\cdot)$ and $\gamma(X \times \cdot) = \lambda_2(\cdot)$.
\end{definition}

\begin{definition}[Wasserstein distance] \label{definition:wasserstein_distance}
Given two probability measures $\lambda_1$ and $\lambda_2$ on $\Bim$ the Wasserstein distance between $\lambda_1$ and $\lambda_2$, which we will denote by $\mathcal{W}_1(\lambda_1, \lambda_2)$, is given by
\begin{equation*}
\mathcal{W}_1(\lambda_1, \lambda_2) := \inf_{\gamma \in \Gamma} \int_{(\Bim) ^2} |x-y| \, \gamma(dx, dy)
\end{equation*}
where $\Gamma$ is the set of couplings between $\lambda_1$ and $\lambda_2$.
\end{definition}

We can now prove Lemma \ref{lemma:detail_wasserstein}.

\begin{proof}[Proof of Lemma \ref{lemma:detail_wasserstein}]
Let $X$ and $Y$ be random variables with laws $\lambda_1$ and $\lambda_2$ respectively. Then we have

\begin{align*}
(\lambda_1 - \lambda_2) * \left. \frac{\partial^k}{\partial y^k} \tilde{\eta}_y\right| _{y = k r^2} (v) &= \mathbb{E} \left[  \left. \frac{\partial^k }{\partial y^k} \tilde{\eta}_y\right|_{y = k r^2} ( v- X) - \left. \frac{\partial^k}{\partial y^k} \tilde{\eta}_y\right|_{y = k r^2} ( v-Y ) \right].
\end{align*}
In particular
\begin{align*}
\left| (\lambda_1 - \lambda_2) * \left. \frac{\partial^k}{\partial y^k} \tilde{\eta}_y\right| _{y = k r^2} (v) \right| & \leq \mathbb{E} \left| \left. \frac{\partial^k }{\partial y^k} \tilde{\eta}_y\right|_{y = k r^2} ( v- X) - \left. \frac{\partial^k}{\partial y^k} \tilde{\eta}_y\right|_{y = k r^2} ( v-Y ) \right|.
\end{align*}
We note that
\begin{align*}
\left| \left. \frac{\partial^k }{\partial y^k} \tilde{\eta}_y\right|_{y = k r^2} ( v- X) - \left. \frac{\partial^k}{\partial y^k} \tilde{\eta}_y\right|_{y = k r^2} ( v-Y ) \right| & \leq
\int_{X}^{Y} \left| \left. \frac{\partial^{k+1} }{\partial x \partial y^k} \tilde{\eta}_y\right|_{y = k r^2} (v-u) \right| \, |du|
\end{align*}
where
\begin{equation*}
\int_x^y \cdot \, |du| 
\end{equation*}
is understood to be the integral along the shortest path between $x$ and $y$. This means that
\begin{align*}
\lnb (\lambda_1 - \lambda_2) * \left. \frac{\partial^k}{\partial y^k} \tilde{\eta}_y\right| _{y = k r^2} \rnb_1 & \leq \int_{\Bim} \mathbb{E} \left[ \int_{X}^{Y} \left| \left. \frac{\partial^{k+1} }{\partial x \partial y^k} \tilde{\eta}_y\right|_{y = k r^2} (v-u) \right| \, |du| \right] \, dv\\
& = \mathbb{E} \left[ \int_{X}^{Y} \int_{\Bim} \left| \left. \frac{\partial^{k+1} }{\partial x \partial y^k} \tilde{\eta}_y\right|_{y = k r^2} (v-u) \right| \, dv \, |du| \right]\\
& = \mathbb{E} \left[ \int_{X}^{Y} \lnb \left. \frac{\partial^{k+1} }{\partial x \partial y^k} \tilde{\eta}_y\right|_{y = k r^2} \rnb_1  \, |du| \right]\\
& = \lnb \left. \frac{\partial^{k+1} }{\partial x \partial y^k} \tilde{\eta}_y\right|_{y = k r^2} \rnb_1 \mathbb{E} |X -Y|.
\end{align*}

We now bound $\lnb \left. \frac{\partial^{k+1}}{\partial x \partial y^k} \tilde{\eta}_y\right|_{y = k r^2}\rnb_1 $. To do this note that
\begin{equation*}
\lnb \left. \frac{\partial^{k+1}}{\partial x \partial y^k} \tilde{\eta}_y\right|_{y = k r^2}\rnb_1 \leq \lnb \left. \frac{\partial^{k+1}}{\partial x \partial y^k} \eta_y\right|_{y = k r^2}\rnb_1.
\end{equation*}
By using the relation $\eta'_y = \frac{\partial^2}{\partial x^2} \eta_y$ in the same way as in the proof of Lemma \ref{lemma:detail_order_k_detail_bound_many} we get
\begin{equation*}
\left. \frac{\partial^{k+1}}{\partial x \partial y^k} \eta_y\right|_{y = k r^2} = \left. \frac{\partial}{\partial x } \eta_y \right|_{y = r^2} * \underbrace{\left. \frac{\partial}{\partial y} \eta_y \right|_{y = r^2}  * \left. \frac{\partial}{\partial y} \eta_y \right|_{y = r^2}  * \dots * \left. \frac{\partial}{\partial y} \eta_y \right|_{y = r^2} }_{k \text{ times}}
\end{equation*}
and so
\begin{equation*}
\lnb \left. \frac{\partial^{k+1}}{\partial x \partial y^k} \eta_y\right|_{y = k r^2}\rnb_1  \leq \lnb \frac{\partial}{\partial x } \eta_{r^2} \rnb_1 \cdot \lnb  \eta_{r^2}' \rnb_1^{k}.
\end{equation*}
		
Note that trivially there is some constant $C>0$ such that
\begin{equation*}
\lnb \frac{\partial}{\partial x } \eta_{r^2} \rnb_1 = C r^{-1}.
\end{equation*}
From the fact that detail is bounded above by $1$ we have
\begin{equation*}
\lnb \left. \frac{\partial}{\partial y} \eta_y \right|_{y = r^2} \rnb_1 =  r^{-2} \sqrt{\frac{2}{\pi e}}
\end{equation*}
meaning
\begin{equation*}
\lnb \left. \frac{\partial^{k+1}}{\partial x \partial y^k} \eta_y\right|_{y = k r^2}\rnb_1 \leq C r^{-2k-1}  \left( \frac{\pi e}{2} \right) ^{- \frac{k}{2}}.
\end{equation*}
Therefore
\begin{equation*}
r^{2k} \left( \frac{ \pi e}{2} \right) ^{\frac{k}{2}} \lnb \left. \frac{\partial^{k+1}}{\partial x \partial y^k} \eta_y\right|_{y = k r^2} \rnb_1 \leq C r^{-1}.
\end{equation*}
Choosing a coupling for $X$ and $Y$ which minimizes $\mathbb{E}|X-Y|$ gives the required result.
\end{proof}

\subsection{Small random variables bound} \label{section:small_random_variables_bound}

In this subsection we prove Lemma \ref{lemma:small_rvs_to_detail}. Recall that this gives a bound for the detail of the sum of many independent random variables each of which are contained in a small interval containing $0$ and have at least some variance. To prove this we will need the following quantitative version of the central limit theorem.

\begin{theorem} \label{theorem:srv_normal_wass}
Let $X_1, X_2, \dots, X_n$ be independent random variables taking values in $\R$ with mean $0$ and for each $i \in [1, n]$ let $\mathbb{E}[X_i^2] = \omega_i^2$ and $\mathbb{E}[|X_i|^3] = \gamma_i^3 < \infty$. Let $\omega^2 = \sum_{i=1}^n \omega_i^2$ and let $S = X_1 + \dots + X_n$. Then
	
\begin{equation*}
\mathcal{W}_1(S, \eta_{\omega^2}) \lesssim \frac{\sum_{i=1}^n \gamma_i^3}{\sum_{i=1}^n \omega_i^2}.
\end{equation*}

\end{theorem}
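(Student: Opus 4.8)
The plan is to combine the standard dual (Kantorovich--Rubinstein) characterisation of $\mathcal{W}_1$ with Stein's method. Recall that
\begin{equation*}
\mathcal{W}_1(S, \eta_{\omega^2}) = \sup_{\mathrm{Lip}(h) \leq 1} \left| \mathbb{E}[h(S)] - \mathbb{E}[h(Z)] \right|,
\end{equation*}
where $Z \sim \eta_{\omega^2}$, i.e.\ $Z$ is centred Gaussian of variance $\omega^2$. So it suffices to bound $|\mathbb{E}[h(S)] - \mathbb{E}[h(Z)]|$ uniformly over $1$-Lipschitz $h$. For each such $h$, first replace $h$ by $h - \mathbb{E}[h(Z)]$ (which does not change the difference), and let $f = f_h$ be the solution of the Stein equation
\begin{equation*}
\omega^2 f'(x) - x f(x) = h(x) - \mathbb{E}[h(Z)].
\end{equation*}
The classical regularity estimates for the Stein solution (rescaled to variance $\omega^2$) give $\|f'\|_\infty \lesssim 1$ and $\|f''\|_\infty \lesssim \omega^{-1}$ when $h$ is $1$-Lipschitz. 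Thus the task reduces to bounding $\mathbb{E}[\omega^2 f'(S) - S f(S)]$.

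The next step is the usual leave-one-out expansion. Write $S_i = S - X_i$, so $S_i$ is independent of $X_i$. Since $\mathbb{E}[X_i] = 0$,
\begin{equation*}
\mathbb{E}[S f(S)] = \sum_{i=1}^n \mathbb{E}[X_i f(S)] = \sum_{i=1}^n \mathbb{E}\!\left[ X_i \big( f(S) - f(S_i) \big) \right] = \sum_{i=1}^n \mathbb{E}\!\left[ X_i^2 \int_0^1 f'(S_i + t X_i)\, dt \right],
\end{equation*}
while $\mathbb{E}[\omega^2 f'(S)] = \sum_{i=1}^n \omega_i^2\, \mathbb{E}[f'(S)] = \sum_{i=1}^n \mathbb{E}[X_i^2]\,\mathbb{E}[f'(S)]$, using independence of $X_i$ and $S_i$ only in the second form. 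Subtracting, the difference becomes a sum of terms of the form $\mathbb{E}[X_i^2 (f'(\xi) - f'(S))]$ for intermediate points $\xi$ between $S_i$ and $S$; each such term is controlled by $\|f''\|_\infty \lesssim \omega^{-1}$ times $\mathbb{E}[X_i^2 \cdot (|X_i| + \text{something small})]$. Carrying out the standard bookkeeping (splitting $f'(S) - f'(S_i + tX_i)$ and using $\mathbb{E}[X_i] = 0$ to absorb one factor of $X_i$, then bounding by third moments via Hölder or by $\mathbb{E}[|X_i|^3]$ directly) yields
\begin{equation*}
\left| \mathbb{E}[\omega^2 f'(S) - S f(S)] \right| \lesssim \omega^{-1} \sum_{i=1}^n \mathbb{E}[|X_i|^3] = \omega^{-1} \sum_{i=1}^n \gamma_i^3.
\end{equation*}
Finally one uses $\omega^2 = \sum_i \omega_i^2$ together with the elementary inequality $\omega_i^2 = \mathbb{E}[X_i^2] \leq (\mathbb{E}[|X_i|^3])^{2/3} = \gamma_i^2$, hence $\omega^3 \gtrsim \sum_i \omega_i^3$... more usefully, one bounds $\omega^{-1}\sum_i \gamma_i^3 \leq \big(\sum_i \gamma_i^3\big)/\big(\sum_i \omega_i^2\big)^{1/2}$ and notes that since each $\gamma_i \geq \omega_i$ we have $\sum_i\gamma_i^3 \geq \sum_i\omega_i^3$, but the clean statement claimed in the paper is the weaker $\mathcal{W}_1(S,\eta_{\omega^2}) \lesssim (\sum_i\gamma_i^3)/(\sum_i \omega_i^2)$; this follows from the displayed bound provided $\sum_i \omega_i^2 = \omega^2 \gtrsim 1$, or more honestly one should read the intended estimate as $\mathcal{W}_1 \lesssim \omega^{-3}\sum_i\gamma_i^3 \leq \big(\sum_i\gamma_i^3\big)/\big(\sum_i\omega_i^2\big)$ using $\omega \geq \max_i \omega_i$ is not enough — instead invoke $\omega^3 = (\sum\omega_i^2)^{3/2} \geq \sum_i\omega_i^2$ whenever $\omega \geq 1$, or simply present the scale-invariant form $\mathcal{W}_1(S/\omega, \eta_1) \lesssim \omega^{-3}\sum_i\gamma_i^3$ and rescale. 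In any case the substantive content is the $\omega^{-1}\sum\gamma_i^3$ bound from Stein's method.

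The main obstacle — and the only genuinely delicate point — is establishing the regularity bound $\|f_h''\|_\infty \lesssim \omega^{-1}$ for Lipschitz (rather than smooth) test functions $h$; for $\mathcal{W}_1$ one cannot assume $h \in C^1$, so $f_h'$ is only Lipschitz, not $C^1$, and the bound on $f_h''$ must be interpreted as a Lipschitz constant for $f_h'$ (equivalently, a bound on the a.e.\ second derivative). This is standard — it follows from the explicit integral formula for $f_h$ and careful estimation, or one simply cites it from a reference on Stein's method (e.g.\ Chen--Goldstein--Shao or Nourdin--Peccati). Everything else is routine Taylor expansion and moment bookkeeping. I would therefore state the Stein regularity estimate as a cited lemma and devote the body of the proof to the leave-one-out expansion above.
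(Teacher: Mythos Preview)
The paper does not actually prove this theorem; it simply cites Erickson (1973). Your Stein's-method argument is a legitimate alternative, and with one correction it goes through cleanly and yields exactly the stated bound without any of the awkward reconciliation at the end.

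The error is in your regularity estimates for the Stein solution. For the variance-$\omega^2$ equation $\omega^2 f'(x) - x f(x) = h(x) - \mathbb{E}[h(Z)]$ with $h$ $1$-Lipschitz, the correct bounds are $\|f'\|_\infty \lesssim \omega^{-1}$ and $\|f''\|_\infty \lesssim \omega^{-2}$, not $\|f'\|_\infty \lesssim 1$ and $\|f''\|_\infty \lesssim \omega^{-1}$. To see this, set $g(y) = \omega f(\omega y)$ and $\tilde h(y) = h(\omega y)$; then $g$ solves the standard $N(0,1)$ Stein equation $g'(y) - yg(y) = \tilde h(y) - \mathbb{E}[\tilde h(Z')]$, where $\tilde h$ is $\omega$-Lipschitz. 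The classical bounds give $\|g'\|_\infty \lesssim \omega$ and $\mathrm{Lip}(g') \lesssim \omega$, and unwinding the substitution gives $\|f'\|_\infty \lesssim \omega^{-1}$, $\|f''\|_\infty \lesssim \omega^{-2}$. (Dimensional analysis confirms this: a $1$-Lipschitz $h$ scales like $x$, so $[h]=[\omega]$, forcing $[f']=[\omega]^{-1}$ and $[f'']=[\omega]^{-2}$.)

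With $\|f''\|_\infty \lesssim \omega^{-2}$ your leave-one-out expansion gives directly
\[
\left| \mathbb{E}[\omega^2 f'(S) - S f(S)] \right| \;\lesssim\; \omega^{-2} \sum_{i=1}^n \gamma_i^3 \;=\; \frac{\sum_{i=1}^n \gamma_i^3}{\sum_{i=1}^n \omega_i^2},
\]
which is precisely the claim. The entire final paragraph of your proposal, where you try to pass from an $\omega^{-1}$ bound to the stated form via inequalities between $\omega$, $\omega_i$, and $\gamma_i$, is then unnecessary (and indeed does not work as written).
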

\begin{proof}
    Applying \cite[Theorem 1]{Erickson_1973} with $p=1$ and $\tau_k = \tau_k' = \infty$ for $k=1 \dots n$ and using the classical result that the Wasserstein distance between two real values random variables is equal to the $L^1$ distance between their cumulative distribution functions we get
    \begin{equation*}
        \mathcal{W}_1\left(\frac{S}{\omega}, \eta_{1}\right) \lesssim \frac{\sum_{i=1}^n \gamma_i^3}{\omega^3}.
    \end{equation*}
    The result follows.
\end{proof}

We are now ready to prove Lemma \ref{lemma:small_rvs_to_detail}.

\begin{proof}[Proof of Lemma \ref{lemma:small_rvs_to_detail}]
We will prove this in the case where the $X_i$ take values in $\R$. The case where they take values in $\Bim$ follows trivially from this case.

For $i=1, \dots, n$ let $X'_i = X_i - \mathbb{E}[X_i]$ and let $S' = \sum_{i=1}^n X_i'$. Note that $s_r(S) = s_r(S')$. Let $\mathbb{E}[|X_i'|^2] = \omega_i^2$ and $\mathbb{E}[|X_i'|^3] = \gamma_i^3$. Note that $\var X_i = \omega_i^2$ and so $\omega^2 = \sum_{i=1}^n \omega_i^2$. Note that almost surely $|X_i'| \leq 2 s $. This means that $\gamma_i^3 \leq 2 s \omega_i^2$. Therefore by Theorem \ref{theorem:srv_normal_wass} we have
\begin{equation*}
\mathcal{W}_1 \left( S', \eta_{\omega^2} \right) \leq O(s).
\end{equation*}
We also compute
\begin{align*}
s_r(\eta_{\omega^2}) &= \frac{ \lnb \eta_{r^2 + \omega^2}' \rnb_1}{\lnb \eta_{r^2} ' \rnb_1} \\
& = \frac{r^2}{r^2 + \omega^2}
\end{align*}
and so noting that $s_r(\cdot) = s_r^{(1)}(\cdot)$ we have by Lemma \ref{lemma:detail_wasserstein} that
\begin{align*}
s_r(S) &= s_r(S')\\
& \leq O\left(\frac{s}{r} \right) + \frac{r^2}{r^2 + \omega^2}.
\end{align*}
This gives the required result.
\end{proof}

\section{Computations for the Taylor Expansion} \label{section:taylor}

In this Section we will prove Proposition \ref{proposition:intro_decomp_detail}. We also do some computations on the derivatives $\zeta_i \in \As$ from Proposition \ref{proposition:intro_decomp_detail} which will later enable us to give bounds on the order $k$ detail of random variables produced by allowing the $u^{(i)}$ in the proposition to be appropriately chosen independent random variables. First we will give more detail on our notation.

Given normed vector spaces $V$ and $W$, some vector $v \in V$, and a function $f:V \to W$ which is differentiable at $v$ we write $D_vf(v)$ for the linear map $V \to W$ which is the derivative of $f$ at $v$. Similarly if $f$ is $n$ times differentiable at $v$ we write $D^n_v f(v)$ for the $n$-multi-linear map $V^n \to W$ which is the $n$th derivative of $f$ at $v$.

Now given some normed vector space $V$, some vector $v \in V$, and a function $f:V \to \Bim$ which is $n$ times differentiable at $v$ we can find some open set $U \subset V$ containing $v$ such that there exists some function $\tilde{f} : U \to \R$ which is $n$ times differentiable at $v$ and such that for all $u \in U$ we have
\begin{equation*}
f(u) = \tilde{f}(u) + \pi \Z.
\end{equation*}
In this case we take $D^n_v f (v)$ to be $D^n_v \tilde{f}(v)$. Clearly this does not depend on our choice of $U$ or $\tilde{f}$. Similarly given a sufficiently regular function $f : \Bim \to V$ we take $D_vf(v)$ to be $D_v \tilde{f}(v)$ where $\tilde{f} : \R \to V$ is defined by
\begin{equation*}
\tilde{f}(x) = f(x + \pi \Z).
\end{equation*}

As well as proving Proposition \ref{proposition:intro_decomp_detail} we also derive some bounds on the size of various first derivatives.
\begin{definition} \label{definition:e_vec}
Given some $b \in \B$ we let $\varrho_b \in \As$ be defined by
\begin{equation*}
\varrho_b = D_u \phi(\exp(u) b)|_{u=0}
\end{equation*}
\end{definition}

\begin{proposition} \label{proposition:large_first_derivatives}
For all $t >0$ there is some $\delta > 0$ such that the following is true. Let $v \in \A$ be a unit vector. Then there exist some $a_1, a_2 \in \R$ such that if
\begin{equation*}
b \in \B \backslash \phi^{-1}((a_1, a_1 + t) \cup (a_2, a_2 + t))
\end{equation*}
then
\begin{equation*}
| \varrho_b(v) | \geq \delta.
\end{equation*}
Furthermore we may construct $a_1$ and $a_2$ in such a way that they are measurable functions of $v$.
\end{proposition}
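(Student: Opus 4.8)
The plan is to make the vector field on $\Bim$ generated by $v$ completely explicit and then reduce the claim to an elementary statement about trigonometric polynomials. First I would observe, by a direct computation with the M\"obius action of $\Gp$ on $\B$ in the coordinate $\phi$, that for $\theta = \phi(b)$ one has
\begin{equation*}
\varrho_b(v) = c_0(v) + c_1(v)\cos 2\theta + c_2(v)\sin 2\theta,
\end{equation*}
where $v \mapsto (c_0(v), c_1(v), c_2(v))$ is \emph{linear}: it suffices to check this on a basis of $\A$, and for instance the generators of the diagonal flow, of the rotation subgroup, and of the flow of $\begin{pmatrix} 0 & 1 \\ 1 & 0\end{pmatrix}$ give the functions $-\sin 2\theta$, the constant $1$, and $\cos 2\theta$ respectively. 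This linear map $\A \to \R^3$ is injective, since if all $c_i(v)$ vanish then $\exp(tv)$ fixes every point of $\B$ and is therefore trivial in $\Gp$, forcing $v=0$; being injective between $3$-dimensional spaces it is an isomorphism, so there are absolute constants $0 < m \le M$ with $m \le \|(c_0(v),c_1(v),c_2(v))\| \le M$ whenever $\|v\|=1$. Writing $A = A(v) = \sqrt{c_1(v)^2 + c_2(v)^2}$ and picking a phase $\psi = \psi(v)$ with $c_1\cos 2\theta + c_2\sin 2\theta = A\cos(2\theta-\psi)$, we get $\varrho_b(v) = c_0 + A\cos(2\theta-\psi)$.

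Next I would isolate the following elementary compactness lemma: for every $\tau > 0$ there is $\delta'(\tau) > 0$ such that for all $\phi_0 \in [0,\pi]$ and all $s \in \R/2\pi\Z$ with $\dist(s, \{\phi_0, -\phi_0\}) \ge \tau$ one has $|\cos s - \cos\phi_0| \ge \delta'(\tau)$. This is proved by contradiction: a violating sequence has, along a subsequence, $\phi_0^{(n)} \to \phi_0$ and $s^{(n)} \to s$ with $\cos s = \cos\phi_0$ and $\dist(s,\{\pm\phi_0\}) \ge \tau$, which is impossible since $\cos s = \cos\phi_0$ forces $s \equiv \pm\phi_0$ in $\R/2\pi\Z$.

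Then I would split into two cases according to the size of $A = A(v)$. If $A \le m/2$, then $c_0^2 = \|(c_0,c_1,c_2)\|^2 - A^2 \ge \tfrac34 m^2$, so $|\varrho_b(v)| \ge |c_0| - A \ge \tfrac{\sqrt 3 - 1}{2}m$ for every $b$, and here we simply take $a_1 = a_2 = 0$. If $A > m/2$, let $\bar r$ be $-c_0/A$ truncated to $[-1,1]$, set $\phi_0 = \arccos\bar r \in [0,\pi]$, and put $a_1 = \tfrac12(\psi + \phi_0) - t/2$ and $a_2 = \tfrac12(\psi - \phi_0) - t/2$, so that the removed arcs $\phi^{-1}((a_i,a_i+t))$ are centred at the two points where $2\theta - \psi \equiv \pm\phi_0$. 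For $b$ outside these arcs, $s := 2\theta - \psi$ satisfies $\dist(s, \{\pm\phi_0\}) \ge t$, and since $\bar r$ lies between $\cos s$ and $-c_0/A$ (or equals one of them) we get $|\cos s + c_0/A| \ge |\cos s - \bar r| \ge \delta'(t)$, whence $|\varrho_b(v)| = A\,|\cos s + c_0/A| \ge \tfrac{m}{2}\delta'(t)$. Taking $\delta = \min\{\tfrac{\sqrt 3 - 1}{2}m,\ \tfrac{m}{2}\delta'(t)\}$ completes the proof. Finally, the formulas for $a_1, a_2$ are visibly measurable — indeed piecewise continuous — in $v$, being built from the linear map $v \mapsto (c_i(v))$ together with $\sqrt{\cdot}$, a two-argument arctangent, truncation and $\arccos$.

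The main difficulty will be the uniformity of $\delta$ over the unit sphere of $\A$: naively the infimum of $|\varrho_\bullet(v)|$ away from its near-zeros degenerates precisely when $v$ becomes (nearly) parabolic and the two zeros of the vector field coalesce into a double zero. The trigonometric reformulation together with the compactness lemma above is designed exactly to absorb this, and the truncation $-c_0/A \mapsto \bar r$ is what unifies the elliptic subcase (where $|c_0| > A$, there are no real zeros, but the minimum of $|\varrho_\bullet(v)|$ may still be tiny) with the hyperbolic one.
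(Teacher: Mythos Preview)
Your argument is correct, but it takes a genuinely different route from the paper's. The paper proceeds abstractly: it first shows (Lemma~4.7) that $\varrho_b(v)=0$ exactly when $\hat b$ is an eigenvector of $v$, hence there are at most two zeros; it then sets $f(v)=\max_{a_1,a_2}\min_{b\in U(a_1,a_2)}|\varrho_b(v)|$, notes $f(v)>0$ for every unit $v$ by the zero count, checks $f$ is continuous via equicontinuity of $\{\varrho_b\}_{b\in\B}$, and invokes compactness of the unit sphere in $\A$ to get a uniform $\delta$. Measurability is left as a one-line remark.

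Your approach instead makes the vector field explicit as $c_0+A\cos(2\theta-\psi)$ and pushes the compactness down to a trigonometric lemma on $[0,\pi]\times\R/2\pi\Z$. This buys you an explicit formula for $a_1,a_2$ and for $\delta$ (in terms of $m$ and $\delta'(t)$), and makes the measurability claim transparent rather than asserted. The paper's version is shorter and more conceptual---the eigenvector characterisation of the zeros is clean and generalises---but your explicit handling of the parabolic degeneration (the truncation $\bar r$ and the observation that $\bar r$ lies between $\cos s$ and $-c_0/A$) is a nice way to see concretely why the uniformity does not fail there.
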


Motivated by this we have the following definition.

\begin{definition} \label{definition:u_t}
Let $t$, $v$, $a_1$, and $a_2$ be as in Proposition \ref{proposition:large_first_derivatives} and let $\varepsilon > 0$. Then we define $U_t(v)$ and $U_{t, \varepsilon}(v)$ by
\begin{equation*}
U_t(v) := \B \backslash \phi^{-1} ((a_1, a_1 + t) \cup (a_2, a_2 + t))
\end{equation*}
and
\begin{equation*}
U_{t, \varepsilon}(v) := \B \backslash \phi^{-1}((a_1- \varepsilon, a_1 + t+ \varepsilon) \cup (a_2 - \varepsilon, a_2 + t + \varepsilon)).
\end{equation*}
\end{definition}

We also have the following.

\begin{definition} \label{definition:principal_component}
Let $X$ be a random variable taking values in some Euclidean vector space $V$. We say that $u \in V$ is a \emph{first principal component} of $X$ if it is an eigenvector of its covariance matrix with maximal eigenvalue. 
\end{definition}

\begin{definition} \label{definition:principal_component_u_t}
Given a random variable $X$ taking values in $\A$, $t >0$, and $\varepsilon > 0$ we let
\begin{equation*}
U_t(X) = \cup_{v \in P} U_t(v)
\end{equation*}
and
\begin{equation*}
U_{t, \varepsilon}(X) = \cup_{v \in P} U_{t, \varepsilon}(v)
\end{equation*}
where $P$ is the set of first principal components of $X$. Similarly if $\mu$ is a probability measure which is the law of a random variable $X$ then we define $U_{t}(\lambda) := U_t(X)$ and $U_{t,\varepsilon}(\lambda) := U_{t, \varepsilon}(X)$.
\end{definition}

From this we may deduce the following.

\begin{proposition} \label{proposition:principal_component_u_t}
For all $t > 0$ there is some $\delta > 0$ such that the following is true. Suppose that $V$ is a random variable taking values in $\A$ and that $b \in \B$. Suppose that
\begin{equation*}
b \in U_t(V).
\end{equation*}
Then
\begin{equation*}
\var \rho_b(V) \geq \delta \var V.
\end{equation*}
\end{proposition}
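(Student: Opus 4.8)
The plan is to reduce the statement about a random variable $V$ to the pointwise statement already established in Proposition \ref{proposition:large_first_derivatives}. Write $\Sigma$ for the covariance matrix of $V$; since $\A$ is three-dimensional, let $\lambda_{\max}$ be its largest eigenvalue, so that $\Tr \var V \leq 3 \lambda_{\max}$, and pick a unit first principal component $v_0$ of $V$, i.e. $\Sigma v_0 = \lambda_{\max} v_0$. The quantity $\Tr \var \varrho_b(V)$ equals $\var \varrho_b(V)$ since $\varrho_b(V)$ is scalar-valued, and because $\varrho_b$ is linear we have $\var \varrho_b(V) = \varrho_b^\top \Sigma\, \varrho_b$ (identifying $\varrho_b \in \As$ with a vector via the fixed Euclidean structure). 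Since $\Sigma$ is positive semidefinite and $v_0$ is a unit eigenvector with eigenvalue $\lambda_{\max}$, we get $\varrho_b^\top \Sigma \,\varrho_b \geq \lambda_{\max} \langle \varrho_b, v_0 \rangle^2 = \lambda_{\max} |\varrho_b(v_0)|^2$.

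Now apply Proposition \ref{proposition:large_first_derivatives} to the unit vector $v_0$: there is some $\delta_0 = \delta_0(t) > 0$ and there are $a_1, a_2 \in \R$ (measurable in $v_0$, hence we may include $v_0$ among the principal components defining $U_t(V)$) such that $|\varrho_b(v_0)| \geq \delta_0$ whenever $b \in U_t(v_0)$. Here is the one point that needs a small argument: the hypothesis is $b \in U_t(V) = \bigcup_{v \in P} U_t(v)$ where $P$ is the set of all first principal components, so a priori $b$ lies in $U_t(v)$ for some principal component $v$ that need not be our chosen $v_0$. This is harmless: the set $P$ of unit first principal components is either a pair $\{\pm v_0\}$ (when $\lambda_{\max}$ is a simple eigenvalue) or a whole great circle of unit vectors in the top eigenspace, and in the latter case all of them have the same eigenvalue $\lambda_{\max}$; so we may simply take $v_0$ to be the particular principal component $v$ with $b \in U_t(v)$, and the bound $\varrho_b^\top \Sigma\, \varrho_b \geq \lambda_{\max} |\varrho_b(v_0)|^2 \geq \lambda_{\max} \delta_0^2$ still holds with that choice. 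Combining with $\Tr \var V \leq 3\lambda_{\max}$ gives
\begin{equation*}
\Tr \var \varrho_b(V) \geq \lambda_{\max} \delta_0^2 \geq \frac{\delta_0^2}{3} \Tr \var V,
\end{equation*}
so the proposition holds with $\delta = \delta_0^2/3$.

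The main obstacle — such as it is — is purely bookkeeping: making sure the $a_1, a_2$ from Proposition \ref{proposition:large_first_derivatives} are taken relative to the \emph{same} principal component $v$ that witnesses $b \in U_t(V)$, and handling the possibility that the top eigenspace is degenerate. Neither causes real difficulty because every vector in the top eigenspace realises the eigenvalue $\lambda_{\max}$ in the Rayleigh quotient. One should also note that the constant $\delta$ depends only on $t$ (through $\delta_0$) and not on $V$ or $b$, which is exactly what is claimed. The linearity of $\varrho_b$, the spectral bound for the Rayleigh quotient, and the finite (in fact $=3$) dimension of $\A$ are the only ingredients beyond Proposition \ref{proposition:large_first_derivatives}.
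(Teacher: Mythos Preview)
Your proof is correct and follows essentially the same approach as the paper's: diagonalise the covariance matrix, use that the top eigenvalue is at least $\tfrac{1}{3}\Tr\var V$, and invoke Proposition~\ref{proposition:large_first_derivatives} for the chosen principal component to bound $|\varrho_b(v_0)|$ from below. Your handling of the degenerate top eigenspace (choosing $v_0$ to be the specific principal component witnessing $b\in U_t(V)$) is in fact slightly more careful than the paper's own write-up, which tacitly picks a single eigenvector $v_1$ without addressing this point.
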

We will prove Propositions \ref{proposition:large_first_derivatives} and \ref{proposition:principal_component_u_t} in Section \ref{section:first_derivs}.

\subsection{Cartan decomposition} \label{section:singular_value_decomp}
The purpose of this subsection is to prove the following proposition and a simple corollary of it.
\begin{proposition} \label{proposition:singular_value_shape}
Given any $t >0$ and $\varepsilon > 0$ there exist some constants $C, \delta > 0$ such that the following is true. Suppose that $n \in \Z_{>0}$, $g_1, g_2, \dots, g_n \in \Gp$, for $i = 1, \dots, n$ we have
\begin{equation*}
\lnb g_i \rnb \geq C
\end{equation*}
and for $i = 1, \dots, n-1$
\begin{equation*}
d(b^{-}(g_i), b^{+}(g_{i+1})) > t.
\end{equation*}
Suppose also that there are $u_1, u_2, \dots, u_{n-1} \in \A$ such that for $i=1, \dots, n-1$ we have
\begin{equation*}
\lnb u_i \rnb < \delta.
\end{equation*}
Then if we let $g' = g_1 \exp(u_1) g_2 \exp(u_2) \dots g_n$ we have
\begin{equation}
\lnb g' \rnb \geq C^{-(n-1)} \lnb g_1 \rnb \cdot  \lnb g_2 \rnb \cdot \dots \cdot \lnb g_n \rnb \label{eq:big_prodsize_bound}
\end{equation}
and
\begin{equation}
d(b^{+}(g'), b^{+}(g_1)) < \varepsilon \label{eq:big_vartheta_bound}
\end{equation}
and
\begin{equation}
d(b^{-}(g'), b^{-}(g_n))  < \varepsilon \label{eq:big_theta_bound}.
\end{equation}
\end{proposition}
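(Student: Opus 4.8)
The plan is to reduce everything to a single robust statement about composing two elements of $\Gp$ with large norm whose attracting/repelling directions are in "general position", then iterate. Concretely, I would first prove the following two-factor lemma: given $t > 0, \varepsilon > 0$ there exist $C_0, \delta_0$ such that if $g, h \in \Gp$ with $\lnb g\rnb, \lnb h\rnb \geq C_0$ and $d(b^-(g), b^+(h)) > t$, and if $u \in \A$ with $\lnb u\rnb < \delta_0$, then writing $g' = g \exp(u) h$ we have $\lnb g'\rnb \geq C_0^{-1} \lnb g\rnb \lnb h\rnb$, together with $d(b^+(g'), b^+(g)) < \varepsilon$ and $d(b^-(g'), b^-(h)) < \varepsilon$. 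The point is that $b^+(h)$ is, up to an error $O(\lnb h\rnb^{-2})$, the point towards which $h$ contracts the sphere, and $b^-(g)$ is the one point $g$ sends to $\infty$-scale; the hypothesis $d(b^-(g), b^+(h)) > t$ says the image point $b^+(h)$ (roughly) of $h$ avoids a $t$-neighbourhood of the bad direction of $g$, so applying $g$ expands it by a definite factor $\gtrsim_t \lnb g\rnb^2$ in operator-norm terms and lands near $b^+(g)$. Inserting $\exp(u)$ with $\lnb u\rnb$ small only perturbs this by a bounded multiplicative factor and an $O(\delta_0)$ shift in directions, which can be absorbed. This is essentially a quantitative version of the contraction-on-projective-space picture; I expect to cite or reuse the computation behind the remark after Definition \ref{definition:singular_value_decomp} (the one pointing to Lemma \ref{lemma:new_shape_b_simple_singular_value}), writing $g$ in Cartan form $R_{\theta_1} A_{\lambda} R_{-\theta_2}$ and tracking how $A_\lambda$ acts on a vector bounded away from the kernel direction.

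Granting the two-factor lemma, the proposition follows by induction on $n$. Set $C = \max\{C_0, \text{(something)}\}$ and choose $\delta \le \delta_0$; let $h_1 = g_1$ and inductively $h_{j} = h_{j-1} \exp(u_{j-1}) g_j$ for $j = 2, \dots, n$, so $h_n = g'$. I would maintain the invariant that $\lnb h_j\rnb \geq C_0^{-(j-1)} \lnb g_1\rnb \cdots \lnb g_j\rnb$ (so in particular $\lnb h_j\rnb \geq C_0$ once $\lnb g_i\rnb \ge C_0$, keeping the two-factor lemma applicable at the next step) and that $d(b^+(h_j), b^+(g_1))$ and $d(b^-(h_j), b^-(g_j))$ are small. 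The only subtlety in the induction is that at step $j$ I want to apply the two-factor lemma to $h_{j-1}$ and $g_j$, and I need $d(b^-(h_{j-1}), b^+(g_j)) > $ something positive; but by the inductive invariant $b^-(h_{j-1})$ is within $\varepsilon'$ of $b^-(g_{j-1})$, and the hypothesis gives $d(b^-(g_{j-1}), b^+(g_j)) > t$, so for $\varepsilon' < t/2$ we get $d(b^-(h_{j-1}), b^+(g_j)) > t/2$, which is the form the two-factor lemma needs (with $t/2$ in place of $t$). Hence the constants $C, \delta$ should be chosen for the parameter $t/2$ at the very start.

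There is a bookkeeping point about the accumulation of the directional errors: each application of the two-factor lemma with target accuracy $\varepsilon$ moves $b^-(h_j)$ only within $\varepsilon$ of $b^-(g_j)$, but $b^+(h_j)$ stays within $\varepsilon$ of $b^+(h_{j-1})$, hence within $j\varepsilon$ of $b^+(g_1)$ — so to land with final error $< \varepsilon$ in \eqref{eq:big_vartheta_bound} I should run the two-factor lemma with accuracy $\varepsilon/n$. That is fine because $n$ is fixed in the statement, but it means the $\delta$ and $C$ I need genuinely depend on $n$; looking at \eqref{eq:big_prodsize_bound} this is already anticipated (the $C^{-(n-1)}$). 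Actually one can avoid $n$-dependence of $\delta$: since $b^+(h_j)$ only depends on $h_j$, and $h_j \exp(u_j) g_{j+1}$ has $b^+$ within $\varepsilon_0$ of $b^+(h_j)$ for a \emph{fixed} $\varepsilon_0$ regardless of how close $b^+(h_j)$ is to $b^+(g_1)$, one can instead prove the cleaner invariant that $d(b^+(h_j), b^+(g_1)) \le \sum_{i<j}$ (errors) and choose the per-step error to be a fixed geometric series summing below $\varepsilon$; this keeps $\delta$ and the base constant $C_0$ independent of $n$ while the $C^{-(n-1)}$ norm loss is the only $n$-dependence. The main obstacle is the two-factor lemma itself, i.e. making the "$g$ expands directions bounded away from $b^-(g)$ by $\gtrsim_t \lnb g\rnb^2$ and contracts everything towards $b^+(g)$" picture fully quantitative with explicit dependence on $t$, and checking that the $\exp(u)$ insertion is a harmless perturbation — but this is exactly the sort of Cartan-decomposition computation flagged for Lemma \ref{lemma:new_shape_b_simple_singular_value}, so I would develop it there and quote it here.
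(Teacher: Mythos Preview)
Your approach is correct, and the two-factor lemma you want is essentially the paper's Lemma~\ref{lemma:mid_angles} combined with Lemma~\ref{lemma:small_left_move}. The paper, however, takes a different route that sidesteps your error-accumulation bookkeeping entirely. Rather than building partial products $h_j = h_{j-1}\exp(u_{j-1})g_j$ and tracking their Cartan data, the paper picks a single test point $b \in \B$ away from $b^-(g_n)$ and $b^-(g')$ and follows its orbit: $g_n b$, $\exp(u_{n-1})g_n b$, $g_{n-1}\exp(u_{n-1})g_n b$, and so on. The key is that applying each $g_i$ sends the current point to within a \emph{fixed} distance $\varepsilon/10$ of $b^+(g_i)$, regardless of where it was before (as long as it stayed $\gtrsim t$ away from $b^-(g_i)$); the intermediate $\exp(u_i)$ nudges by at most $\varepsilon/10$, which is absorbed at the next $g_{i-1}$ step. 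So the directional error \emph{resets} at every $g_i$ rather than accumulating, giving $d(g'b, b^+(g_1)) < \varepsilon/10$ directly with constants independent of $n$; comparing with $d(g'b, b^+(g')) < \varepsilon/10$ yields \eqref{eq:big_vartheta_bound}. Then \eqref{eq:big_theta_bound} is obtained by transposing, and \eqref{eq:big_prodsize_bound} by running the same argument on a unit representative $\hat b$ of $b$ and tracking $\lnb \cdot\, \hat b\rnb$. Your inductive scheme works too, but the accumulation you flag for $b^+(h_j)$ is an artifact of tracking $b^+$ of partial products (which drifts) instead of the orbit of a point (which resets); your geometric-series fix ultimately relies on the growing norms $\lnb h_j\rnb$, which is fine but heavier than needed.
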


\begin{corollary} \label{corollary:singular_value_shape}
Given any $t > 0$ and $\varepsilon > 0$ there exist some constants $C, \delta > 0$ such that the following is true. Suppose that $n \in \Z_{>0}$, $g_1, \dots, g_n \in \Gp$ and $u_1, u_2, \dots, u_{n} \in \A$ satisfy the conditions of Proposition \ref{proposition:singular_value_shape}. Suppose further that $b \in \B$ is such that 
\begin{equation*}
    d(b^{-}(g_n), b) > t.
\end{equation*}
Then if we let $b' = g_1 \exp(u_1) g_2 \exp(u_2) \dots  g_n \exp(u_n) b$ we have
\begin{equation*}
d(b', b^{+}(g_1)) < \varepsilon.
\end{equation*}
\end{corollary}

We will prove Proposition \ref{proposition:singular_value_shape} by induction and then deduce Corollary \ref{corollary:singular_value_shape} from it. First we need the following lemmas.

\begin{lemma} \label{lemma:new_shape_b_simple_singular_value}
Let  $g \in \Gp$, and $b \in \B$. Then

\begin{equation*}
d(b^{+}(g), gb) \lesssim \lnb g \rnb^{-2} d(b^{-}(g), b)^{-1}
\end{equation*}
and for any representative $\hat{b} \in \R^2 \backslash \{0\}$ of $b$ we have
\begin{equation*}
\lnb g\hat{b} \rnb \gtrsim \lnb g \rnb \cdot \lnb \hat{b} \rnb d(b^{-}(g), b).
\end{equation*}
\end{lemma}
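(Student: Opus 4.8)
The plan is to reduce the lemma to an explicit computation via the Cartan decomposition of Definition~\ref{definition:singular_value_decomp}. Since $b^{+}(g)$ and $b^{-}(g)$ are defined we may assume $\lnb g\rnb>1$ and write $g=R_{\theta_1}A_\lambda R_{-\theta_2}$ with $\lambda=\lnb g\rnb$, so that $b^{+}(g)=\phi^{-1}(\theta_1)$ and $b^{-}(g)=\phi^{-1}(\theta_2+\tfrac{\pi}{2})$. Fix a representative $\hat b=\lnb\hat b\rnb(\cos\psi,\sin\psi)^{T}$ of $b$, so $\phi(b)=\psi$, and set $\alpha:=\psi-\theta_2$. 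Since $R_{-\theta_2}$ is an isometry sending $\hat b$ to $\lnb\hat b\rnb(\cos\alpha,\sin\alpha)^{T}$, and $R_{\theta_1}$ is an isometry, we get $\lnb g\hat b\rnb=\lnb\hat b\rnb\sqrt{\lambda^{2}\cos^{2}\alpha+\lambda^{-2}\sin^{2}\alpha}$, and $g\hat b$ is the rotation by $\theta_1$ of the vector $(\lambda\cos\alpha,\lambda^{-1}\sin\alpha)^{T}$.

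The one elementary identity to record is that $|\cos\alpha|=\sin d(b^{-}(g),b)$ and $|\sin\alpha|=\cos d(b^{-}(g),b)$, with $d(b^{-}(g),b)\in[0,\tfrac{\pi}{2}]$; this is immediate from $d(b^{-}(g),b)=|\psi-\theta_2-\tfrac{\pi}{2}|$ in $\R/\pi\Z$, i.e.\ $\alpha\equiv\tfrac{\pi}{2}\pm d(b^{-}(g),b)\pmod\pi$. Given this, the norm bound follows from $\lnb g\hat b\rnb^{2}\ge\lnb\hat b\rnb^{2}\lambda^{2}\cos^{2}\alpha=\lnb\hat b\rnb^{2}\lnb g\rnb^{2}\sin^{2}d(b^{-}(g),b)$ together with $\sin d\ge\tfrac{2}{\pi}d$ on $[0,\tfrac{\pi}{2}]$; this is the intended content of the second displayed inequality, with $d(b^{-}(g),b)$ playing the role of the unnamed ``$\varepsilon$''. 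For the first bound, the vector $(\lambda\cos\alpha,\lambda^{-1}\sin\alpha)^{T}$ makes angle $\arctan(\lambda^{-2}\tan\alpha)$ with the horizontal axis, so $\phi(gb)=\theta_1+\arctan(\lambda^{-2}\tan\alpha)$ while $\phi(b^{+}(g))=\theta_1$, whence, using $|\arctan x|\le|x|$ and the identities above,
\[
d(b^{+}(g),gb)=\bigl|\arctan(\lambda^{-2}\tan\alpha)\bigr|\le\lambda^{-2}\frac{|\sin\alpha|}{|\cos\alpha|}=\lnb g\rnb^{-2}\frac{\cos d(b^{-}(g),b)}{\sin d(b^{-}(g),b)}\le\frac{\pi}{2}\,\lnb g\rnb^{-2}\,d(b^{-}(g),b)^{-1}.
\]

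The degenerate case $b=b^{-}(g)$ (where $\cos\alpha=0$) makes the right-hand side of the first inequality infinite, so nothing is needed there, and $\lnb g\hat b\rnb=\lambda^{-1}\lnb\hat b\rnb>0$ still holds; when $\lnb g\rnb$ is bounded both inequalities hold trivially after absorbing constants, so one loses nothing by assuming $\lnb g\rnb$ large. I do not expect a genuine obstacle: the only delicate points are the bookkeeping of angle conventions — in particular pinning down the relation between $\alpha$ and $d(b^{-}(g),b)$ and remembering that distances on $\B$ are measured modulo $\pi$ — and the handling of this degenerate case.
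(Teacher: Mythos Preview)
Your proposal is correct and takes essentially the same approach as the paper: both reduce via the Cartan decomposition to the diagonal case and compute explicitly, with the paper writing $\hat b=(\sin x,\cos x)^T$ (so $x$ plays the role of $d(b^{-}(g),b)$) and recording $\cot d(b^{+}(g),gb)=\lambda^{2}\tan x$ in place of your $\arctan$ formulation. Your identification of the stray ``$\varepsilon$'' in the statement with $d(b^{-}(g),b)$ matches the paper's proof, which concludes with $\lnb g\hat b\rnb\ge\lambda\sin x\gtrsim\lnb g\rnb\cdot\lnb\hat b\rnb\cdot d(b^{-}(g),b)$.
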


\begin{proof}
 The first part follows from \cite[Lemma A.6]{bochi2019anosov}. The second part follows from equation (A.11) in \cite[Lemma A.3]{bochi2019anosov}.
\end{proof}

We also have the following simple corollary.
\begin{corollary} \label{corollary:new_shape_b_simple_singular_value}
For every $\varepsilon > 0$ there exists some $C>0$ such that the following is true. Let $g \in \Gp$ and $b \in \B$. Suppose that
\begin{equation*}
\lnb g \rnb \geq C
\end{equation*}
and
\begin{equation*}
d(b^{-}(g), b) \geq \varepsilon.
\end{equation*}
Then
\begin{equation*}
d(b^{+}(g), gb) \leq \varepsilon
\end{equation*}
and for any representative $\hat{b} \in \R^2 \backslash \{0\}$ of $b$
\begin{equation*}
\lnb g\hat{b} \rnb \geq C^{-1} \lnb g \rnb \cdot \lnb \hat{b} \rnb.
\end{equation*}
\end{corollary}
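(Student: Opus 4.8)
The plan is to obtain both conclusions by feeding the hypothesis $d(b^{-}(g), b) \geq \varepsilon$ into the two estimates of Lemma~\ref{lemma:new_shape_b_simple_singular_value} and then choosing $C$ large enough, in terms of $\varepsilon$ and the two implied constants, to absorb everything. No new ideas are needed; the corollary is just a quantitative repackaging of the lemma.

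Concretely, first I would fix the implied constant $C_0 > 0$ from the first bound of Lemma~\ref{lemma:new_shape_b_simple_singular_value}, so that $d(b^{+}(g), gb) \leq C_0 \lnb g \rnb^{-2} d(b^{-}(g), b)^{-1}$. Using $d(b^{-}(g), b) \geq \varepsilon$ gives $d(b^{+}(g), gb) \leq C_0 \varepsilon^{-1} \lnb g \rnb^{-2}$, and the right-hand side is at most $\varepsilon$ as soon as $\lnb g \rnb \geq C_0^{1/2} \varepsilon^{-1}$. Similarly, fixing the implied constant $C_1 > 0$ from the second bound, so that $\lnb g \hat{b} \rnb \geq C_1 \lnb g \rnb \lnb \hat{b} \rnb d(b^{-}(g), b)$ for every representative $\hat{b}$, the hypothesis $d(b^{-}(g), b) \geq \varepsilon$ yields $\lnb g \hat{b} \rnb \geq C_1 \varepsilon \lnb g \rnb \lnb \hat{b} \rnb$, which is at least $C^{-1} \lnb g \rnb \lnb \hat{b} \rnb$ whenever $C \geq (C_1 \varepsilon)^{-1}$. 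Taking $C := \max\{ C_0^{1/2} \varepsilon^{-1},\, (C_1 \varepsilon)^{-1} \}$ then secures both conclusions at once.

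There is essentially no obstacle here; the only point worth flagging is that the factor appearing in the second displayed inequality of Lemma~\ref{lemma:new_shape_b_simple_singular_value} should be read as $d(b^{-}(g), b)$ (which is what its proof establishes, via $\lnb g\hat{b} \rnb \geq \lambda \sin x$), and it is this form of the bound that is used above.
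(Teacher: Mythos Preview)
Your proposal is correct and is exactly the intended argument: the paper itself declares the corollary ``trivial and left as an exercise to the reader,'' and the exercise is precisely to plug the hypothesis $d(b^{-}(g),b)\geq\varepsilon$ into the two bounds of Lemma~\ref{lemma:new_shape_b_simple_singular_value} and choose $C$ large enough, as you do. Your remark about the stray $\varepsilon$ in the second displayed bound of the lemma (which should read $d(b^{-}(g),b)$, as the proof there shows) is also accurate.
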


This corollary is trivial and left as an exercise to the reader.

\begin{lemma} \label{lemma:simple_sl2_prod_size_bound}
Let $g_1, g_2 \in \Gp$. Then
\begin{equation}
\lnb g_1 \rnb \cdot \lnb g_2 \rnb \sin d(b^{-}(g_1)  ,b^{+}(g_2)) \leq \lnb g_1 g_2 \rnb \leq \lnb g_1 \rnb \cdot \lnb g_2 \rnb. \label{eq:simple_prod_size}
\end{equation}
Furthermore, for every $A>1$ and $t > 0$ there exists some $C>0$ with $$C \leq O((A-1)^{-1} t^{-1})$$ such that if $\lnb g_1 \rnb, \lnb g_2 \rnb \geq C$ and $d(b^{-}(g_1)  ,b^{+}(g_2))  \geq t$ then
\begin{equation}
 \lnb g_1 g_2 \rnb \leq A \lnb g_1 \rnb \cdot \lnb g_2 \rnb \sin d(b^{-}(g_1)  ,b^{+}(g_2)). \label{eq:more_precise_prod_size}
\end{equation}
\end{lemma}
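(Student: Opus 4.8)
The plan is to reduce the whole statement to a single explicit $2\times 2$ computation by combining the Cartan decomposition of $g_1$ and $g_2$ with the invariance of the operator norm under multiplication by rotations on either side. First I would write $g_1 = R_{\theta_1} A_{\lambda_1} R_{-\psi_1}$ and $g_2 = R_{\theta_2} A_{\lambda_2} R_{-\psi_2}$ as in Definition~\ref{definition:singular_value_decomp} (we may assume $\lnb g_1 \rnb, \lnb g_2 \rnb > 1$, since otherwise $b^{\pm}$ of a rotation is undefined and the claimed inequalities hold trivially from submultiplicativity of the operator norm). Then $\lnb g_i \rnb = \lambda_i$, $b^{-}(g_1) = \phi^{-1}(\psi_1 + \tfrac{\pi}{2})$ and $b^{+}(g_2) = \phi^{-1}(\theta_2)$. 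Since $R_{-\psi_1} R_{\theta_2} = R_{\theta_2 - \psi_1}$, setting $\phi := \theta_2 - \psi_1$ and $M := A_{\lambda_1} R_{\phi} A_{\lambda_2}$ gives $\lnb g_1 g_2 \rnb = \lnb M \rnb$; and since $\phi(b^{-}(g_1)) - \phi(b^{+}(g_2)) = \tfrac{\pi}{2} - \phi$ in $\R/\pi\Z$, together with the fact that the distance on $\B$ takes values in $[0, \tfrac{\pi}{2}]$, on which $\sin$ is increasing, one obtains $\sin d(b^{-}(g_1), b^{+}(g_2)) = \lvert \cos \phi \rvert$.

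Next I would compute $M$ explicitly:
\[
M = \begin{pmatrix} \lambda_1 \lambda_2 \cos\phi & -\lambda_1 \lambda_2^{-1}\sin\phi \\ \lambda_1^{-1}\lambda_2 \sin\phi & \lambda_1^{-1}\lambda_2^{-1}\cos\phi \end{pmatrix}.
\]
The first displayed inequality is then immediate: its lower bound holds because $\lnb M \rnb$ is at least the absolute value of the top-left entry, namely $\lambda_1 \lambda_2 \lvert\cos\phi\rvert = \lnb g_1\rnb \lnb g_2 \rnb \sin d(b^{-}(g_1), b^{+}(g_2))$, and its upper bound is just submultiplicativity.

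For the refined bound \eqref{eq:more_precise_prod_size} I would bound $\lnb M\rnb$ by the Hilbert--Schmidt norm of $M$: since $\det M = 1$, the singular values $\sigma_1 \geq \sigma_2$ of $M$ satisfy $\sigma_1\sigma_2 = 1$ and $\sigma_1^2 + \sigma_2^2 = \sum_{i,j} M_{ij}^2$, so $\lnb g_1 g_2\rnb^2 = \sigma_1^2 \leq \sum_{i,j} M_{ij}^2$. Using the identity $\lambda_1^2\lambda_2^{-2} + \lambda_1^{-2}\lambda_2^2 = \lambda_1^2\lambda_2^2(\lambda_1^{-4}+\lambda_2^{-4})$ and dividing by $(\lambda_1\lambda_2\cos\phi)^2$ gives
\[
\frac{\lnb g_1 g_2\rnb^2}{\big(\lnb g_1\rnb \lnb g_2\rnb \sin d(b^{-}(g_1), b^{+}(g_2))\big)^2} \leq 1 + (\lambda_1^{-4} + \lambda_2^{-4})\tan^2\phi + \lambda_1^{-4}\lambda_2^{-4}.
\]
The hypothesis $d(b^{-}(g_1), b^{+}(g_2)) \geq t$ forces $\lvert\cos\phi\rvert \geq \sin t$ (again by monotonicity of $\sin$ on $[0,\tfrac{\pi}{2}]$), hence $\tan^2\phi \leq \sin^{-2}t$; combined with $\lambda_i = \lnb g_i\rnb \geq C \geq 1$ the right-hand side is at most $1 + 3C^{-4}\sin^{-2}t$. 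It then suffices to choose $C$ with $3C^{-4}\sin^{-2}t \leq A^2 - 1$, e.g.\ $C = 2(A-1)^{-1}t^{-1}$, which one checks works in the range $A \in (1, 2]$, $t \in (0, \tfrac{\pi}{2}]$ using $\sin t \gtrsim t$ and $A^2 - 1 \geq A - 1$ (outside this range a trivial argument or an absolute constant suffices); this $C$ is $O((A-1)^{-1}t^{-1})$, as required.

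I expect no serious obstacle here — this is a computational lemma. The only points needing care are the bookkeeping of the $\tfrac{\pi}{2}$-shifts in the definition of $b^{\pm}$ when identifying $\sin d(b^{-}(g_1), b^{+}(g_2))$ with $\lvert\cos\phi\rvert$, and verifying that the lower-order terms arising from the Hilbert--Schmidt estimate are genuinely absorbed into the constant once $\lvert\cos\phi\rvert$ is bounded below — which is exactly where the separation hypothesis $d(b^{-}(g_1), b^{+}(g_2)) \geq t$ is used.
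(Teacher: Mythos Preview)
Your proof is correct and follows essentially the same route as the paper: reduce via the Cartan decomposition and rotation-invariance of the operator norm to the explicit matrix $A_{\lambda_1}R_{\theta_2-\psi_1}A_{\lambda_2}$, read the lower bound off its top-left entry (the paper equivalently applies the matrix to $(1,0)^T$), and obtain \eqref{eq:more_precise_prod_size} by bounding the operator norm by the Hilbert--Schmidt norm. Your angle bookkeeping and constant tracking are somewhat more explicit than the paper's, but the substance of the argument is the same.
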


\begin{proof}
The right-hand side of \eqref{eq:simple_prod_size} is a well known result about the operator norm. For the left-hand side without loss of generality suppose that
\begin{equation*}
g_1 = \begin{pmatrix}
\lambda_1 & 0 \\ 0 & \lambda_1^{-1}
\end{pmatrix}
\end{equation*}
and
\begin{equation*}
g_2 = \begin{pmatrix}
\cos x & -\sin x \\ \sin x & \cos x
\end{pmatrix}
\begin{pmatrix}
\lambda_2 & 0 \\ 0 & \lambda_2^{-1}
\end{pmatrix} = \begin{pmatrix}
\lambda_2 \cos x & - \lambda_2^{-1} \sin x \\ \lambda_2  \sin x & \lambda_2^{-1} \cos x
\end{pmatrix}.
\end{equation*}
Note that
\begin{align*}
g_1 g_2 \begin{pmatrix}
1 \\ 0
\end{pmatrix} & = \begin{pmatrix}
\lambda_1 \lambda_2 \cos x \\ \lambda_1^{-1} \lambda_2 \sin x
\end{pmatrix}.
\end{align*}
This means $\lnb g_1 g_2 \rnb \geq \lambda_1 \lambda_2 \cos x = \lnb g_1 \rnb \cdot \lnb g_2 \rnb \sin |\phi(b^{-}(g_1) ) - \phi(b^{+}(g_2))|$ which proves \eqref{eq:simple_prod_size}.

For \eqref{eq:more_precise_prod_size} note that
\begin{align*}
g_1 g_2 = \begin{pmatrix}
\lambda_1 \lambda_2 \cos x & -\lambda_1 \lambda_2^{-1} \sin x \\
\lambda_1^{-1} \lambda_2 \sin x & \lambda_1 \lambda_2^{-1} \cos x
\end{pmatrix}.
\end{align*}
This means that 
\begin{align*}
\lnb g_1 g_2 \rnb \leq \lnb g_1 g_2 \rnb_2 \leq \left(1 + 3C^{-2} \left( \cos x \right)^{-1} \right) \lambda_1 \lambda_2 \cos x.
\end{align*}
This gives the required result.
\end{proof}

\begin{lemma} \label{lemma:mid_angles}
Given any $\varepsilon > 0$ and any $t >0$ there is some constant $C>0$ such that the following holds. Let $g_1, g_2 \in \Gp$ be such that $\lnb g_1\rnb, \lnb g_2 \rnb \geq C$ and $d(b^{-}(g_1), b^{+}(g_2)) \geq t$. Then
\begin{equation}
d( b^{+}(g_1), b^{+}(g_1 g_2) )< \varepsilon \label{eq:vartheta_bound}
\end{equation}
and
\begin{equation}
d( b^{-}(g_2), b^{-}(g_1 g_2) ) < \varepsilon. \label{eq:theta_bound}
\end{equation}
Furthermore we have $C \leq O\left( \left( \min \{\varepsilon, t \} \right)^{-1} \right)$.
\end{lemma}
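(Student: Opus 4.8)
The plan is to derive both displays from a single tool already available in the excerpt, namely Corollary~\ref{corollary:new_shape_b_simple_singular_value}: an element of $\Gp$ of large norm maps any point lying a definite distance from its repelling direction to a point close to its attracting direction. Since $d(\cdot,\cdot)\le \pi/2$ on $\B$, we may assume $t\le \pi/2$, as otherwise the hypothesis $d(b^{-}(g_1),b^{+}(g_2))\ge t$ is never satisfied. Set $\varepsilon' := \tfrac{1}{10}\min\{\varepsilon,t\}$, let $C_1=C_1(\varepsilon')$ be the constant produced by Corollary~\ref{corollary:new_shape_b_simple_singular_value} for this value of $\varepsilon'$, and put $C := \max\{C_1,\sqrt{C_1/\sin t}\}$. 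Unwinding the proof of Corollary~\ref{corollary:new_shape_b_simple_singular_value} (equivalently the estimate $d(b^{+}(g),gb)\lesssim \lnb g\rnb^{-2} d(b^{-}(g),b)^{-1}$ of Lemma~\ref{lemma:new_shape_b_simple_singular_value}) gives $C_1=O((\varepsilon')^{-1})$, and then $\sin t\gtrsim t$ yields $C=O((\min\{\varepsilon,t\})^{-1})$, as the statement demands.

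Now suppose $g_1,g_2\in\Gp$ satisfy $\lnb g_1\rnb,\lnb g_2\rnb\ge C$ and $d(b^{-}(g_1),b^{+}(g_2))\ge t$. By Lemma~\ref{lemma:simple_sl2_prod_size_bound}, $\lnb g_1g_2\rnb\ge \lnb g_1\rnb\,\lnb g_2\rnb\sin t\ge C^2\sin t\ge C_1$. Choose $p\in\B$ with $d(p,b^{-}(g_2))>\varepsilon'$ and $d(p,b^{-}(g_1g_2))>\varepsilon'$; such a $p$ exists since the two excluded arcs have total length at most $4\varepsilon'<\pi$. I would then apply Corollary~\ref{corollary:new_shape_b_simple_singular_value} three times. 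First, to $g_2$ and $p$: $d(b^{+}(g_2),g_2p)\le\varepsilon'$, whence $d(b^{-}(g_1),g_2p)\ge d(b^{-}(g_1),b^{+}(g_2))-\varepsilon'\ge t-\varepsilon'\ge\varepsilon'$. Second, to $g_1$ and $g_2p$: $d(b^{+}(g_1),g_1(g_2p))\le\varepsilon'$. Third, to $g_1g_2$ and $p$ (valid because $\lnb g_1g_2\rnb\ge C_1$ and $d(p,b^{-}(g_1g_2))>\varepsilon'$): $d(b^{+}(g_1g_2),(g_1g_2)p)\le\varepsilon'$. Since $(g_1g_2)p=g_1(g_2p)$, the triangle inequality gives $d(b^{+}(g_1),b^{+}(g_1g_2))\le 2\varepsilon'<\varepsilon$, which is \eqref{eq:vartheta_bound}.

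For \eqref{eq:theta_bound} the plan is to invoke the identities $b^{+}(g^{-1})=b^{-}(g)$, $b^{-}(g^{-1})=b^{+}(g)$ and $\lnb g^{-1}\rnb=\lnb g\rnb$, all read off from the Cartan decomposition of Definition~\ref{definition:singular_value_decomp} (using $A_{\lambda}^{-1}=R_{\pi/2}A_{\lambda}R_{-\pi/2}$, so that $g^{-1}=R_{\theta_2+\pi/2}A_{\lambda}R_{-(\theta_1+\pi/2)}$ when $g=R_{\theta_1}A_{\lambda}R_{-\theta_2}$). Applying the bound \eqref{eq:vartheta_bound} just established to the pair $(g_2^{-1},g_1^{-1})$ — whose hypotheses hold since $\lnb g_2^{-1}\rnb=\lnb g_2\rnb\ge C$, $\lnb g_1^{-1}\rnb=\lnb g_1\rnb\ge C$ and $d(b^{-}(g_2^{-1}),b^{+}(g_1^{-1}))=d(b^{+}(g_2),b^{-}(g_1))\ge t$ — yields $d(b^{+}(g_2^{-1}),b^{+}(g_2^{-1}g_1^{-1}))<\varepsilon$, i.e.\ $d(b^{-}(g_2),b^{-}(g_1g_2))<\varepsilon$, which is \eqref{eq:theta_bound}.

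I expect the only mildly delicate point to be the bookkeeping of constants: to obtain $C=O((\min\{\varepsilon,t\})^{-1})$ one must verify that the norm threshold in Corollary~\ref{corollary:new_shape_b_simple_singular_value} is $O((\varepsilon')^{-1})$, which comes from the explicit decay rate in Lemma~\ref{lemma:new_shape_b_simple_singular_value}; everything else is a routine triangle-inequality chase together with one associativity of the action. An alternative, more computational route would be to absorb the outer rotations and write $g_1g_2$ in the normal form $A_{\lambda_1}R_{\beta}A_{\lambda_2}$ with $\beta$ a definite distance from $\pi/2$, then read the attracting and repelling directions directly off $(g_1g_2)(g_1g_2)^{T}$ and $(g_1g_2)^{T}(g_1g_2)$; this makes the constant transparent but is longer than the argument above.
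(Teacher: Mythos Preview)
Your argument is correct and matches the paper's proof almost exactly: both pick a point $p\in\B$ avoiding $b^{-}(g_2)$ and $b^{-}(g_1g_2)$, apply Corollary~\ref{corollary:new_shape_b_simple_singular_value} three times (to $g_2$, then $g_1$, then $g_1g_2$) and use the triangle inequality to obtain \eqref{eq:vartheta_bound}, then pass to \eqref{eq:theta_bound} by a symmetry. The only cosmetic differences are that the paper invokes the transpose $(g_1g_2)^T=g_2^Tg_1^T$ rather than the inverse for the second display, and that your constant bookkeeping (in particular the explicit check that $\lnb g_1g_2\rnb\ge C_1$ via Lemma~\ref{lemma:simple_sl2_prod_size_bound}) is a touch more careful than the paper's.
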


\begin{proof}
This follows from \cite[Lemma A.9]{bochi2019anosov}.
\end{proof}

\begin{lemma} \label{lemma:small_left_move}
Given any $\varepsilon > 0$ there exist $C, \delta > 0$ such that the following is true. Suppose that $g \in \Gp$, $b \in \B$, and $u \in \A$. Suppose further that $\lnb g \rnb \geq C$ and $\lnb u \rnb < \delta$. Then we have
\begin{equation}
C^{-1} \lnb g \rnb \leq \lnb \exp(u) g \rnb \leq C \lnb g \rnb, \label{eq:small_left_modulus_change}
\end{equation}
\begin{equation}
d(b, \exp(u) b ) < \varepsilon, \label{eq:vartheta_close_b}
\end{equation}
and
\begin{equation}
d(b^{+}(g) , b^{+}(\exp(u) g) )  < \varepsilon. \label{eq:vartheta_close_g}
\end{equation}
\end{lemma}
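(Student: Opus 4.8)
The plan is to prove the three conclusions \eqref{eq:small_left_modulus_change}, \eqref{eq:vartheta_close_b} and \eqref{eq:vartheta_close_g} essentially separately — the first two are elementary and only the third requires real work — and then to take $C$ to be the largest, and $\delta$ the smallest, of the finitely many constants produced. First I would record that $\lnb \exp(\pm u)\rnb$ is close to $1$ when $\lnb u \rnb$ is small: writing $\hat{u} \in \mathfrak{sl}_2(\mathbb{R})$ for the matrix representing $u$, equivalence of norms on $\A \cong \R^3$ gives $\lnb \hat{u} \rnb \lesssim \lnb u \rnb$, whence $\lnb \exp(\pm u) - \id \rnb \leq \lnb \hat{u} \rnb e^{\lnb \hat{u} \rnb} = O(\lnb u \rnb)$ and $\lnb \exp(\pm u) \rnb \leq e^{\lnb \hat{u}\rnb} \leq 1 + O(\lnb u \rnb)$ for $\lnb u \rnb$ bounded. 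Shrinking $\delta$ I may thus assume $\lnb \exp(\pm u) \rnb \leq 2$. Then \eqref{eq:small_left_modulus_change} follows with $C = 2$ from submultiplicativity of the operator norm alone: $\lnb \exp(u) g \rnb \leq 2\lnb g \rnb$, and $\lnb g \rnb = \lnb \exp(-u)(\exp(u)g)\rnb \leq 2 \lnb \exp(u) g \rnb$. For \eqref{eq:vartheta_close_b} I would pick a unit representative $\hat{b}$ of $b$ and note $\lnb \exp(u)\hat{b} - \hat{b} \rnb \leq \lnb \exp(u) - \id \rnb = O(\lnb u \rnb)$, so the angle between the lines spanned by $\hat{b}$ and $\exp(u)\hat{b}$ is $O(\lnb u \rnb)$, giving $d(b, \exp(u)b) = O(\lnb u \rnb) < \varepsilon$ once $\delta$ is small; note this step uses no lower bound on $\lnb g \rnb$.

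The substantive part is \eqref{eq:vartheta_close_g}, and the step I expect to be the crux is the intermediate claim that $d(b^{-}(g), b^{-}(\exp(u)g)) = O(\lnb g \rnb^{-2})$ once $C$ is large and $\delta$ small. To prove this I would let $\hat{v}$ be a unit representative of $v := b^{-}(\exp(u)g)$; since $\hat{v}$ is the right singular vector of $\exp(u)g$ for its smaller singular value $\lnb \exp(u)g \rnb^{-1}$, we have $\lnb \exp(u)g\hat{v}\rnb = \lnb \exp(u)g\rnb^{-1}$, and hence, by the first step, $\lnb g\hat{v}\rnb \leq \lnb \exp(-u)\rnb \lnb \exp(u)g\hat{v}\rnb \leq 2\lnb \exp(u)g\rnb^{-1} \leq 4\lnb g\rnb^{-1}$. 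On the other hand Lemma~\ref{lemma:new_shape_b_simple_singular_value} gives $\lnb g\hat{v}\rnb \gtrsim \lnb g\rnb\, d(b^{-}(g), v)$. Comparing the two bounds yields $d(b^{-}(g), b^{-}(\exp(u)g)) \lesssim \lnb g\rnb^{-2}$, which is $< \pi/4$ for $\lnb g\rnb$ large.

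Finally I would deduce \eqref{eq:vartheta_close_g} by a three-term triangle inequality through $g b_0$, where $b_0 := b^{-}(g)^{\perp}$, so that $d(b_0, b^{-}(g)) = \pi/2$ and, by the intermediate claim, $d(b_0, b^{-}(\exp(u)g)) \geq \pi/2 - O(\lnb g\rnb^{-2}) \geq \pi/4$. Then
\begin{equation*}
d(b^{+}(g), b^{+}(\exp(u)g)) \leq d(b^{+}(g), g b_0) + d(g b_0, \exp(u)g b_0) + d(\exp(u)g b_0, b^{+}(\exp(u)g)).
\end{equation*}
I would bound the first and third terms using Corollary~\ref{corollary:new_shape_b_simple_singular_value} — applied to $g$ and to $\exp(u)g$ respectively, with $\min\{\varepsilon/4, \pi/4\}$ in the role of its parameter, using $\lnb \exp(u)g\rnb \geq \tfrac12\lnb g\rnb$ together with the lower bounds $d(b_0, b^{-}(g)) = \pi/2$ and $d(b_0, b^{-}(\exp(u)g)) \geq \pi/4$ — and the middle term using \eqref{eq:vartheta_close_b} with $g b_0$ in place of $b$; each term is then at most $\varepsilon/4$, so the sum is $< \varepsilon$. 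Collecting the finitely many constants $C$ and $\delta$ required in these applications finishes the argument. The one genuine obstacle is that $\{g : \lnb g \rnb \geq C\}$ is non-compact, so continuity of $g \mapsto b^{+}(g)$ does not by itself deliver a uniform bound; this is precisely why the quantitative estimates of Lemma~\ref{lemma:new_shape_b_simple_singular_value} and Corollary~\ref{corollary:new_shape_b_simple_singular_value} are used in place of a compactness argument.
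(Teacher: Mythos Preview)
Your proof is correct and follows essentially the same route as the paper: the first two conclusions by smoothness of $\exp$ and compactness, and the third by a triangle inequality through a point $b_0$ far from both $b^{-}(g)$ and $b^{-}(\exp(u)g)$, using Lemma~\ref{lemma:new_shape_b_simple_singular_value} (via Corollary~\ref{corollary:new_shape_b_simple_singular_value}) for the outer terms and \eqref{eq:vartheta_close_b} for the middle term.

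The one difference worth noting is that what you call the ``crux'' --- the intermediate claim $d(b^{-}(g), b^{-}(\exp(u)g)) = O(\lnb g\rnb^{-2})$ --- is not actually needed. The paper simply picks \emph{any} point of $\B$ not close to $b^{-}(g)$ or $b^{-}(\exp(u)g)$; such a point exists trivially since one only needs to avoid two short arcs on a circle. Your specific choice $b_0 = b^{-}(g)^{\perp}$ forces you to first prove the two $b^{-}$'s are close so that $b_0$ is far from both, and this extra singular-vector argument, while correct and yielding a sharper quantitative statement, can be bypassed entirely.
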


\begin{proof}
First note that \eqref{eq:small_left_modulus_change} and \eqref{eq:vartheta_close_b} both follow from the fact that $\exp(\cdot)$ is smooth and $\B$ is compact. \eqref{eq:vartheta_close_g} follows from \eqref{eq:small_left_modulus_change}, \eqref{eq:vartheta_close_b} and applying Lemma \ref{lemma:new_shape_b_simple_singular_value} with some element of $\B$ which is not close to $b^{-}(g)$ or $b^{-}(\exp(u) g)$ in the role of $b$.
\end{proof}

This is enough to prove Proposition \ref{proposition:singular_value_shape} and Corollary \ref{corollary:singular_value_shape}.

\begin{proof}[Proof of Proposition \ref{proposition:singular_value_shape}]
Without loss of generality assume that $\varepsilon < t$. Let $C_1$ be as in Corollary \ref{corollary:new_shape_b_simple_singular_value} with $\frac{1}{10} \varepsilon$ in the role of $\varepsilon$. Let $C_2$ and $\delta_2$ be $C$ and $\delta$ from Lemma \ref{lemma:small_left_move} with $\frac{1}{10}\varepsilon$ in the role of $\varepsilon$. 

We now take $C = \max \{ C_1C_2, \left( \sin \frac{1}{10} t \right)^{-1} \}$ and $\delta = \delta_2$.

First we will deal with \eqref{eq:big_vartheta_bound}. Choose $b$ such that
\begin{equation*}
d(b, b^{-}(g_n)) > \frac{1}{10} \varepsilon
\end{equation*}
and
\begin{equation*}
d(b, b^{-}(g')) > \frac{1}{10} \varepsilon.
\end{equation*}

Note that by Corollary \ref{corollary:new_shape_b_simple_singular_value} we know that
\begin{equation*}
d(g_n b, b^{+}(g_n)) < \frac{1}{10} \varepsilon.
\end{equation*}
By Lemma \ref{lemma:small_left_move} we know that 
\begin{equation*}
d(\exp(u_{n-1} ) g_n b, g_n b) < \frac{1}{10} \varepsilon
\end{equation*}
and so
\begin{equation*}
d(\exp(u_{n-1} ) g_n b, b^{-}(g_{n-1})) > \frac{1}{10} \varepsilon.
\end{equation*}
Repeating this process we are able to show that
\begin{equation*}
d(g' b, b^{+}(g_1)) < \frac{1}{10} \varepsilon.
\end{equation*}
We also know that
\begin{equation*}
d(g'b, b^{+}(g')) < \frac{1}{10} \varepsilon.
\end{equation*}
Hence
\begin{equation*}
d(b^{+}(g'), b^{+}(g_1)) < \varepsilon.
\end{equation*}
To prove \eqref{eq:big_theta_bound} simply take the transpose of everything.

Now to prove \eqref{eq:big_prodsize_bound}. Let $b$ be chosen as before and let $u \in b$ be a unit vector. Note that by Corollary \ref{corollary:new_shape_b_simple_singular_value}
\begin{equation*}
\lnb g_n u \rnb \geq C_1^{-1} \lnb g_n \rnb \cdot \lnb u \rnb
\end{equation*}
and by Lemma \ref{lemma:small_left_move} we know that
\begin{equation*}
\lnb \exp(u_{n-1}) g_n u \rnb \geq C_1^{-1} C_2^{-1} \lnb g_n \rnb \cdot \lnb u \rnb.
\end{equation*}
Repeating this gives the required result.

\end{proof}
We also prove Corollary \ref{corollary:singular_value_shape}.

\begin{proof}[Proof of Corollary \ref{corollary:singular_value_shape}]
This follows from applying Proposition \ref{proposition:singular_value_shape} to $$g_1 \exp(u_1) g_2 \exp(u_2) \dots g_{n-1} \exp(u_{n-1}) g_n$$ before applying Lemma \ref{lemma:small_left_move} to $\exp(u_n) b$ and then applying Lemma \ref{lemma:new_shape_b_simple_singular_value}.
\end{proof}

\subsection{Proof of Proposition \ref{proposition:intro_decomp_detail}}

In this subsection we will prove Proposition \ref{proposition:intro_decomp_detail}. To do this we will need to find an upper bound on the size of various second derivatives and apply Taylor's theorem. We will use the following version of Taylor's theorem.

\begin{theorem} \label{theorem:taylor}
Let $f : \R ^n \to \Bim$ be twice differentiable and let $R_1, R_2, \dots , R_n > 0$. Let $U = [-R_1, R_1] \times [-R_2, R_2] \times \dots \times [-R_n, R_n] $. For integers $i, j \in [1, n]$ let $K_{i, j} = \sup \limits_{U} \left| \frac{\partial ^2 f}{\partial x_i \partial x_j} \right|$ and let $\mathbf{x} \in U$. Then we have
\begin{equation*}
\left| f(\mathbf{x}) -f(0) - \sum_{i = 1}^n x_i \left. \frac{\partial f}{\partial x_i} \right|_{\mathbf{x} = 0} \right| \leq \frac{1}{2} \sum_{i, j = 1}^n x_i K_{i, j} x_j.
\end{equation*}
\end{theorem}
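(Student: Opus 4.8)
The plan is to reduce Theorem~\ref{theorem:taylor} to the classical one-variable Taylor theorem with Lagrange remainder, applied along the segment joining $0$ to $\mathbf{x}$. First I would deal with the fact that $f$ is $\Bim$-valued: since $f$ is twice differentiable it is continuous, and $\{t\mathbf{x} : t \in [0,1]\}$ is a compact connected subset of $U$, so there is no monodromy and $f$ lifts to a twice differentiable real-valued $\tilde f$ on a neighbourhood of this segment with $f = \tilde f + \pi\Z$ there. By the convention used to define derivatives of $\Bim$-valued maps (the local-lifting device introduced just before Definition~\ref{definition:e_vec}), every partial derivative in the statement is a partial derivative of $\tilde f$, and $f(\mathbf{x}) - f(0) - \sum_i x_i \left.\frac{\partial f}{\partial x_i}\right|_{0}$ is represented by the analogous real expression built from $\tilde f$; so it suffices to prove the inequality with $\tilde f$ in place of $f$.

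Next I would set $h(t) := \tilde f(t\mathbf{x})$ for $t \in [0,1]$. By the chain rule $h$ is twice differentiable with $h'(t) = \sum_{i=1}^{n} x_i \left.\frac{\partial \tilde f}{\partial x_i}\right|_{t\mathbf{x}}$ and $h''(t) = \sum_{i,j=1}^{n} x_i x_j \left.\frac{\partial^2 \tilde f}{\partial x_i \partial x_j}\right|_{t\mathbf{x}}$, and the single-variable Taylor theorem with Lagrange remainder gives $\xi \in (0,1)$ with $h(1) = h(0) + h'(0) + \tfrac{1}{2} h''(\xi)$. Since $t\mathbf{x} \in U$ for every $t \in [0,1]$, we have $\left|\left.\frac{\partial^2 \tilde f}{\partial x_i \partial x_j}\right|_{\xi\mathbf{x}}\right| \le K_{i,j}$ for all $i,j$, and hence
\begin{equation*}
\left| h(1) - h(0) - h'(0) \right| = \tfrac{1}{2}\left| \sum_{i,j=1}^{n} x_i x_j \left.\frac{\partial^2 \tilde f}{\partial x_i \partial x_j}\right|_{\xi\mathbf{x}} \right| \le \tfrac{1}{2}\sum_{i,j=1}^{n} |x_i|\, K_{i,j}\, |x_j|,
\end{equation*}
which is the asserted estimate. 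Unravelling $h(1) = \tilde f(\mathbf{x})$, $h(0) = \tilde f(0)$ and $h'(0) = \sum_i x_i \left.\frac{\partial \tilde f}{\partial x_i}\right|_{0}$, and translating back to $f$, completes the proof.

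The only step that is not completely routine is the first one, namely attaching a meaning to a second-order Taylor expansion of an $\R/\pi\Z$-valued function; but this is precisely the local-lifting convention already in force throughout this section, and along a line segment the lift exists without obstruction, so nothing genuinely new is needed. Everything else is the textbook proof of the multivariate Taylor inequality, specialised to the axis-parallel box $U$.
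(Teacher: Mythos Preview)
The paper does not prove Theorem~\ref{theorem:taylor}; it merely states it as the version of Taylor's theorem to be used and then applies it in the proof of Proposition~\ref{proposition:intro_decomp_detail}. Your argument---lift to a real-valued $\tilde f$ along the segment $\{t\mathbf{x}:t\in[0,1]\}$, set $h(t)=\tilde f(t\mathbf{x})$, and invoke the one-variable Lagrange remainder---is the standard textbook proof and is correct.

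One small remark: your computation actually yields the bound $\tfrac{1}{2}\sum_{i,j}|x_i|\,K_{i,j}\,|x_j|$, whereas the statement is written without absolute values on the $x_i$. Since the $K_{i,j}$ are non-negative, your bound is at least as large as $\tfrac{1}{2}\sum_{i,j}x_iK_{i,j}x_j$, so what you prove is the stronger (and, strictly speaking, the correct) inequality; the version in the statement should be read with $|x_i|$ in place of $x_i$, which is also how it is used downstream in the proof of Proposition~\ref{proposition:intro_decomp_detail}.
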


In order to prove Proposition \ref{proposition:intro_decomp_detail} we need the following proposition.

\begin{proposition} \label{proposition:second_derivative_bounds}
Let $t > 0$. Then there exist some constants $C, \delta >0$ such that the following holds. Suppose that $n \in \Z_{>0}$, $g_1, g_2 \dots, g_n \in \Gp$, $b \in \B$ and let $$u^{(1)}, u^{(2)}, \dots, u^{(n)} \in \A$$ be such that $\lnb u^{(i)} \rnb \leq \delta$. Suppose that for each integer $i \in [1, n]$ we have 
\begin{equation*}
\lnb g_i \rnb \geq C
\end{equation*}
and for integers $i \in [1, n-1]$ we have
\begin{equation*}
d(b^{-}(g_i), b^{+}(g_{i+1})) > t
\end{equation*}
and
\begin{equation*}
d(b^{-}(g_n) , b) > t.
\end{equation*}
Let $x$ be defined by
\begin{equation*}
x = g_1 \exp(u^{(1)}) g_2 \exp(u^{(2)}) \dots g_n \exp(u^{(n)}) b.
\end{equation*}
Then for any $i, j \in \{1, 2, 3\}$ and any integers $k, \ell \in [1, n]$ with $k \leq \ell$ we have
\begin{equation*}
\left| \frac{\partial^2}{\partial u^{(k)}_i \partial u_j^{(\ell)}} \phi( x ) \right| < C^n \lnb g_1 g_2 \dots g_{\ell} \rnb^{-2}.
\end{equation*}
\end{proposition}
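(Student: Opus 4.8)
The plan is to peel off the two relevant variables $u^{(k)}$ and $u^{(\ell)}$ and reduce the bound to a single differentiated contraction estimate for the action of a large-norm element of $\Gp$ on $\B$. That estimate, a differentiated form of Lemma~\ref{lemma:new_shape_b_simple_singular_value}, would read: for every $t>0$ there is $K=K(t)>0$ such that whenever $g\in\Gp$ with $\lnb g\rnb>1$ and $v_0\in\B$ with $d(v_0,b^{-}(g))\ge t$, the map $v\mapsto gv$, read on a neighbourhood of $v_0$ in the chart $\phi$ via a local real lift, is twice differentiable with first and second derivatives bounded in modulus by $K\lnb g\rnb^{-2}$. I would prove this by writing $g=R_{\theta_1}A_{\lambda}R_{-\theta_2}$ with $\lambda=\lnb g\rnb$; since the rotations act on $(\B,d)$ as translations in the chart $\phi$ it suffices to treat $g=A_{\lambda}$, where $v\mapsto gv$ becomes $\theta\mapsto\arctan(\lambda^{-2}\tan\theta)$ and a direct computation of the first two derivatives at points $\theta$ with $|\theta-\frac{\pi}{2}|\ge t$ gives bounds $O(\lambda^{-2}/\sin^{2}t)$ and $O(\lambda^{-2}/\sin^{4}t)$.

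Next I would factorise $x=a\,\exp(u^{(k)})\,b'\,\exp(u^{(\ell)})\,c$, where $a:=g_1\exp(u^{(1)})\cdots g_{k-1}\exp(u^{(k-1)})g_k$, $b':=g_{k+1}\exp(u^{(k+1)})\cdots g_{\ell-1}\exp(u^{(\ell-1)})g_\ell$ (read as $\id$ when $\ell=k+1$), and $c:=g_{\ell+1}\exp(u^{(\ell+1)})\cdots g_n\exp(u^{(n)})b\in\B$; when $\ell=k$ one instead uses $x=a\,\exp(u^{(k)})\,c$. None of $a,b',c$ depends on $u^{(k)}$ or $u^{(\ell)}$. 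Taking $C$ large and $\delta$ small, Proposition~\ref{proposition:singular_value_shape} gives $\lnb a\rnb\ge C^{-(k-1)}\prod_{i=1}^{k}\lnb g_i\rnb\ge C$ and $\lnb b'\rnb\ge C^{-(\ell-k-1)}\prod_{i=k+1}^{\ell}\lnb g_i\rnb\ge C$, together with $d(b^{-}(a),b^{-}(g_k))<\varepsilon$, $d(b^{-}(b'),b^{-}(g_\ell))<\varepsilon$ and $d(b^{+}(b'),b^{+}(g_{k+1}))<\varepsilon$, while Corollary~\ref{corollary:singular_value_shape} gives $d(c,b^{+}(g_{\ell+1}))<\varepsilon$ (when $\ell=n$ one uses the hypothesis $d(b^{-}(g_n),b)>t$ directly). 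Choosing $\varepsilon$ small relative to $t$ and invoking $d(b^{-}(g_i),b^{+}(g_{i+1}))>t$, one checks that $p:=\exp(u^{(\ell)})c$ stays at distance $>t/2$ from $b^{-}(b')$, and that $q:=b'p$ and then $r:=\exp(u^{(k)})q$ stay at distance $>t/2$ from $b^{-}(a)$. Smoothness of $\exp$ and of the $\Gp$-action on the compact space $\B$ (with the $u$'s in a fixed small ball) bounds all derivatives up to order two of $p$ and $r$ in $u^{(k)},u^{(\ell)}$ by absolute constants; the contraction estimate applied to $b'$ upgrades the $u^{(\ell)}$-derivatives of $q$, hence of $r$, to $O_t(\lnb b'\rnb^{-2})$.

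Finally I would write $\phi(x)=\Phi(\phi(r))$ with $\Phi$ the local real lift of $v\mapsto av$ near $r$ (well defined by the discussion preceding Definition~\ref{definition:e_vec}); the contraction estimate for $a$ gives $|\Phi'|,|\Phi''|=O_t(\lnb a\rnb^{-2})$, and the chain rule
\begin{equation*}
\frac{\partial^2\phi(x)}{\partial u^{(k)}_i\partial u^{(\ell)}_j}=\Phi''(\phi(r))\,\frac{\partial\phi(r)}{\partial u^{(k)}_i}\,\frac{\partial\phi(r)}{\partial u^{(\ell)}_j}+\Phi'(\phi(r))\,\frac{\partial^2\phi(r)}{\partial u^{(k)}_i\partial u^{(\ell)}_j}
\end{equation*}
together with the derivative bounds above yields $\bigl|\partial^2\phi(x)/\partial u^{(k)}_i\partial u^{(\ell)}_j\bigr|=O_t(\lnb a\rnb^{-2}\lnb b'\rnb^{-2})$ (the case $k=\ell$ is identical but simpler, with no $b'$ and with all first and second $u^{(k)}$-derivatives of $p$ being $O(1)$, giving $O_t(\lnb a\rnb^{-2})$). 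Using the lower bounds on $\lnb a\rnb$ and $\lnb b'\rnb$ and submultiplicativity $\prod_{i=1}^{\ell}\lnb g_i\rnb\ge\lnb g_1g_2\cdots g_\ell\rnb$, this is at most $O_t(1)\cdot C^{2\ell}\lnb g_1g_2\cdots g_\ell\rnb^{-2}$, and absorbing $O_t(1)$ and $C^{2\ell}\le(C^2)^n$ into a single constant raised to the power $n$ gives the claimed bound $C^n\lnb g_1g_2\cdots g_\ell\rnb^{-2}$.

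The main obstacle is the first step together with the distance book-keeping in the second: one must check that the constant in the differentiated contraction estimate depends only on the separation $t$, and that Proposition~\ref{proposition:singular_value_shape} and Corollary~\ref{corollary:singular_value_shape} genuinely keep $p$, $q$ and $r$ a definite distance from the repelling points $b^{-}(b')$ and $b^{-}(a)$ at every stage, since this is exactly what makes the $A_\lambda$-computation applicable with uniform constants. Handling the degenerate cases $\ell=k$, $\ell=k+1$, $k=n$ and $\ell=n$ is routine bookkeeping.
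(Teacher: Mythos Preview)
Your proposal is correct and follows essentially the same approach as the paper. Your ``differentiated contraction estimate'' is precisely the paper's Lemma~\ref{lemma:derivs}, your factorisation $x=a\exp(u^{(k)})b'\exp(u^{(\ell)})c$ is the paper's decomposition into $a_1,a_2,b$, and your chain-rule computation matches the paper's; the only cosmetic difference is that the paper isolates the compactness bound on $u\mapsto\phi(\exp(u)b)$ and the cross-derivative bound on $(x,y)\mapsto\phi(\exp(x)g\exp(y)b)$ as separate lemmas (Lemmas~\ref{lemma:g2der_cis} and~\ref{lemma:g2der_trans}) whereas you inline them.
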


We will prove this later in this subsection.

Note that given some $u \in \A$ and some $i \in \{1, 2, 3\}$ by $u_i$ we mean the $i$th component of $u$ with respect to our choice of basis for $\A$ which we will fix throughout this paper. To prove this we need to understand the size of the second derivatives. For this we will need the following lemmas.

\begin{lemma} \label{lemma:derivs} 
Let $t > 0$, let $x \in \Bim$, and let $g \in \Gp$. Suppose that
\begin{equation}
d(b^{-}(g), \phi^{-1}(x)) > t. \label{eq:not_orth_derivs}
\end{equation}
Let $y = \phi ( g \phi^{-1}(x))$. Then
\begin{equation*}
\lnb g \rnb ^{-2} \leq \frac{\partial y}{\partial x} \leq O_t \left( \lnb g \rnb ^{-2} \right)
\end{equation*}
and
\begin{equation*}
\left| \frac{\partial^2 y}{\partial x^2} \right| \leq O_t \left( \lnb g \rnb ^{-2} \right).
\end{equation*}
\end{lemma}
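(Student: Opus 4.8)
The plan is to conjugate $g$ into diagonal form and then compute directly. Using the Cartan decomposition (Definition \ref{definition:singular_value_decomp}) I would write $g = R_{\theta_1} A_{\lambda} R_{-\theta_2}$ with $\lambda = \lnb g \rnb$. If $\lnb g \rnb = 1$ then $g$ is a rotation acting on $\Bim$ by the translation $x \mapsto x + \theta_1 - \theta_2$, so $\partial y/\partial x = 1 = \lnb g\rnb^{-2}$ and $\partial^2 y / \partial x^2 = 0$ and there is nothing to prove; otherwise $\lambda > 1$. Since $R_{\theta_1}$ and $R_{-\theta_2}$ act on $\Bim$ by translations, which have derivative $1$ and second derivative $0$, and $R_{-\theta_2}$ is a Euclidean isometry, it suffices to prove the statement for $g = A_{\lambda}$. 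In this case $b^{-}(g) = \phi^{-1}(\pi/2)$, so hypothesis \eqref{eq:not_orth_derivs} becomes $|x - \tfrac{\pi}{2}| > t$ in $\Bim$, which (the claim being vacuous when $t \geq \pi/2$, since $\diam \B = \pi/2$) gives $|\cos x| \geq \sin t$. I would also note that, although $y$ is only defined modulo $\pi$, the map $x \mapsto \phi(g\,\phi^{-1}(x))$ is a smooth self-map of $\Bim$, hence locally lifts to a smooth real function, so the derivatives in the statement are well defined.

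For $g = A_{\lambda}$ one has $\tan y = \lambda^{-2}\tan x$, and differentiating gives
\begin{equation*}
\frac{\partial y}{\partial x} = \frac{1}{\lambda^{2}\cos^2 x + \lambda^{-2}\sin^2 x} = \lnb g\,\hat{v}(x)\rnb^{-2},
\end{equation*}
where $\hat{v}(x) = (\cos x, \sin x)^T$ is the unit representative of $\phi^{-1}(x)$ (this last identity in fact holds for every $g \in \SL_2(\R)$, but only the diagonal case is needed). The lower bound is then immediate: since $\lambda \geq 1$ the denominator is at most $\lambda^2(\cos^2 x + \sin^2 x) = \lambda^2$, so $\partial y / \partial x \geq \lnb g\rnb^{-2}$. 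For the upper bound the denominator is at least $\lambda^2 \cos^2 x \geq \lambda^2\sin^2 t$, so $\partial y / \partial x \leq \sin^{-2}t \cdot \lnb g\rnb^{-2} = O_t(\lnb g\rnb^{-2})$; alternatively this follows from the lower bound on $\lnb g\,\hat{v}(x)\rnb$ in Lemma \ref{lemma:new_shape_b_simple_singular_value}.

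Differentiating once more,
\begin{equation*}
\frac{\partial^2 y}{\partial x^2} = -\frac{2(\lambda^{-2} - \lambda^2)\sin x \cos x}{\left(\lambda^{2}\cos^2 x + \lambda^{-2}\sin^2 x\right)^2}.
\end{equation*}
For $\lambda \geq 1$ the numerator has absolute value $O(\lambda^2)$, and by the estimate just proved the denominator is at least $\lambda^4 \sin^4 t$, so $\left|\partial^2 y/\partial x^2\right| \leq O_t(\lnb g\rnb^{-2})$, as required. I do not expect any genuine obstacle in this argument; the only steps needing a little care are the reduction to the diagonal case via the Cartan decomposition and the translation of hypothesis \eqref{eq:not_orth_derivs} into the lower bound $|\cos x| \geq \sin t$ (equivalently, into a lower bound on $\lnb g\,\hat v(x)\rnb$), after which the estimates are completely elementary.
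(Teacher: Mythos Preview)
Your argument is correct and follows essentially the same route as the paper: both use the Cartan decomposition to reduce to the diagonal case, write $\tan y = \lambda^{-2}\tan x$, and differentiate explicitly to obtain $\partial y/\partial x = (\lambda^2\cos^2 x + \lambda^{-2}\sin^2 x)^{-1}$ together with the analogous second-derivative formula, from which the bounds follow via $|\cos x| \geq \sin t$. Your treatment is if anything slightly more careful, since you explicitly dispose of the case $\lnb g\rnb = 1$ (where the Cartan decomposition as stated does not apply) and note the interpretation $\partial y/\partial x = \lnb g\,\hat v(x)\rnb^{-2}$.
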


\begin{proof}
Let $g = R_{\phi} A_{\lambda} R_{-\theta}$. First note that
\begin{equation}
y = \tan^{-1} \left( \lambda^{-2} \tan ( x - \theta) \right) + \phi. \label{eq:phi_expression}
\end{equation}
Recall that if $v = \tan^{-1} u$ then $\frac{d v}{d u} = \frac{1}{u^2+1}$. This means that by the chain rule we have
\begin{align*}
\frac{\partial y}{\partial x} & = \left( \frac{1}{\lambda^{-4} \tan^2 ( x - \theta) + 1} \right) \cdot \lambda^{-2} \cdot \left( \frac{1}{\cos^2 (x - \theta)} \right) \\
& = \frac{1}{\lambda^{2} \cos^2 (x-\theta) + \lambda^{-2} \sin^2 (x-\theta)}. 
\end{align*}
Differentiating this again gives
\begin{equation*}
\frac{\partial^2 y}{\partial x^2} = \frac{2  (\lambda^2 + \lambda^{-2}) \cos (x-\theta) \sin (x-\theta)}{\left(\lambda^2 \cos^2 (x-\theta) + \lambda^{-2} \sin^2 (x-\theta) \right)^2}.
\end{equation*}
Noting that \eqref{eq:not_orth_derivs} forces $\cos (x-\theta) \geq \sin t$ gives the required result.
\end{proof}

We also need to bound the second derivatives of various expressions involving $\exp$.

\begin{lemma} \label{lemma:g2der_cis}
There exists some constant $C>0$ such that the following is true. Let $b \in \B$ and define $w$ by
\begin{align*}
w : \A & \to \Bim \\
u & \mapsto \phi\left( \exp(u) b \right).
\end{align*}
Then whenever $\lnb u \rnb \leq 1$ we have
\begin{equation*}
\lnb D_u w \rnb \leq C
\end{equation*}
and
\begin{equation*}
\lnb D^2_u w \rnb \leq C.
\end{equation*}
\end{lemma}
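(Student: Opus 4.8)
The plan is to obtain both bounds purely from smoothness of the maps involved together with compactness, with essentially no computation. First I would observe that the map
\[
G : \A \times \B \to \B, \qquad (u, b) \mapsto \exp(u)\, b
\]
is smooth: it is the composition of the (globally defined) smooth exponential map $\A \to \Gp$ with the smooth action map $\Gp \times \B \to \B$. For a fixed $b$ the function in the statement is simply $w = \phi \circ G(\cdot, b)$, and since $\phi : \B \to \Bim$ is a diffeomorphism of compact one-dimensional manifolds, $w$ is a smooth $\Bim$-valued map; its derivatives $D_u(w)$ and $D^2_u(w)$ in the sense of the local-lift convention fixed at the start of this section are therefore well defined near every $u$, and by the chain rule they are controlled by the first and second derivatives of $G(\cdot, b)$ together with those of $\phi$.

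Next I would note that, in any fixed finite atlas of charts for $\B$ (equivalently for $\Bim$), the coordinate expressions of $D_u(w)$ and $D^2_u(w)$ are continuous functions of the pair $(u, b)$ on the region where they are defined, since $G$ and $\phi$ have smooth coordinate expressions. The set
\[
K := \{\, u \in \A : \lnb u \rnb \leq 1 \,\} \times \B
\]
is compact, being the product of a closed ball in a finite-dimensional vector space with the compact projective line $\B$. A continuous function on a compact set is bounded, so $\lnb D_u(w) \rnb$ and $\lnb D^2_u(w) \rnb$ are bounded on $K$ by a single constant $C$, which in particular does not depend on $b$ nor on the choice of $u$ with $\lnb u \rnb \leq 1$. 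This is exactly the asserted bound.

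The only point that needs any care — and hence the closest thing to an obstacle, though it is minor — is making precise both the uniformity in $b$ and the meaning of the derivatives of a $\Bim$-valued map: one must check that the local lifts through $\phi$ patch together well enough that the resulting derivative quantities define genuinely continuous functions on $K$ rather than merely locally defined ones. This follows because $\phi$ is a \emph{global} diffeomorphism onto $\Bim$ and $\B$ can be covered by finitely many coordinate charts, so a standard finite-cover argument suffices. Everything else is immediate from the chain rule.
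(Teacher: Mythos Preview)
Your proof is correct and is essentially the same approach as the paper's: the paper's proof is a single sentence noting that $\lnb D_u(w) \rnb$ and $\lnb D^2_u(w) \rnb$ are continuous in $(u,b)$ and then invoking compactness. Your write-up just spells out this compactness argument in more detail.
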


\begin{proof}
This follows immediately from the fact that $\lnb D_u w \rnb$ and $\lnb D^2_u w \rnb$ are continuous in $b$ and $u$ and compactness.
\end{proof}

We will also need the following bound. Unfortunately this lemma doesn't follow easily from a compactness argument and needs to be done explicitly.
\begin{lemma} \label{lemma:g2der_trans}
For every $t >0$ there exist some constants $C, \delta>0$ such that the following holds. Let $g \in \Gp$, let $b \in \B$ and  let $w$ be defined by
\begin{align*}
w : \A \times \A & \to \Bim\\
(x, y) & \mapsto \phi \left( \exp (x ) g \exp(y) b \right).
\end{align*}
Suppose that 
\begin{equation*}
d(b^{-}(g), b) > t
\end{equation*}
and that $\lnb x \rnb, \lnb y \rnb \leq \delta$. Then
\begin{equation*}
\left| \frac{\partial ^2 w(x, y)}{\partial x_i \partial y_j} \right| \leq C \lnb g \rnb^{-2}.
\end{equation*}

\end{lemma}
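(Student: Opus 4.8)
The plan is to factor $w$ through the intermediate point $\phi(\exp(y)b)\in\Bim$, arranging matters so that $x$ enters only through a smooth map whose derivatives are bounded by an absolute constant, $y$ likewise enters only through such a map, and the single factor carrying the decay $\lnb g\rnb^{-2}$ is the derivative of the action of $g$ on $\B$, which is exactly what Lemma \ref{lemma:derivs} controls.

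Concretely, I would introduce the auxiliary maps $\beta:\A\to\Bim$, $\beta(y)=\phi(\exp(y)b)$; $\gamma_g(s)=\phi\bigl(g\,\phi^{-1}(s)\bigr)$, defined for those $s\in\Bim$ with $d(b^{-}(g),\phi^{-1}(s))>t/2$; and $\epsilon:\A\times\Bim\to\Bim$, $\epsilon(x,s)=\phi\bigl(\exp(x)\,\phi^{-1}(s)\bigr)$, so that $w(x,y)=\epsilon\bigl(x,\gamma_g(\beta(y))\bigr)$. The first thing to check is that this composition is legitimate on the relevant domain: by \eqref{eq:vartheta_close_b} of Lemma \ref{lemma:small_left_move} (equivalently, continuity of $\exp$ at $0$ and compactness of $\B$) there is $\delta_1>0$ such that $\lnb y\rnb\le\delta_1$ forces $d(b,\exp(y)b)<t/2$ for every $b\in\B$; together with $d(b^{-}(g),b)>t$ and the triangle inequality this gives $d(b^{-}(g),\exp(y)b)>t/2$, so $\beta(y)$ lies in the domain of $\gamma_g$. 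I would then take $\delta=\min\{\delta_1,1\}$.

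Next I would differentiate. Since $\gamma_g\circ\beta$ does not depend on $x$, the chain rule gives
\[
\frac{\partial^2 w}{\partial x_i\,\partial y_j}(x,y)
=\frac{\partial^2\epsilon}{\partial x_i\,\partial s}\bigl(x,\gamma_g(\beta(y))\bigr)\cdot\gamma_g'(\beta(y))\cdot\frac{\partial\beta}{\partial y_j}(y),
\]
where $\gamma_g'$ denotes the derivative of $\gamma_g$ in its one-dimensional argument and $\partial\epsilon/\partial s$ the derivative of $\epsilon$ in its second argument; twice differentiability of $w$ on $\lnb x\rnb,\lnb y\rnb<\delta$ follows from smoothness of $\exp$, of the $\Gp$-action on $\B$, and of $\phi$, together with the twice differentiability of $\gamma_g$ supplied by Lemma \ref{lemma:derivs}. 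Now bound the three factors. The middle factor: Lemma \ref{lemma:derivs}, applied with $t/2$ in place of $t$ and $\beta(y)=\phi(\exp(y)b)$ in place of $x$ (legitimate since $d(b^{-}(g),\exp(y)b)>t/2$), gives $|\gamma_g'(\beta(y))|\le O_t(\lnb g\rnb^{-2})$. The last factor: Lemma \ref{lemma:g2der_cis} applied to this $b$ gives $\bigl|\partial\beta/\partial y_j(y)\bigr|\le\lnb D\beta(y)\rnb\le C_1$ for $\lnb y\rnb\le1$, with $C_1$ absolute. The first factor: $(x,s)\mapsto\epsilon(x,s)$ is a composition of $\exp$, the smooth action of $\Gp$ on $\B$, and the diffeomorphism $\phi$, hence smooth, so all its second-order partials are continuous on the compact set $\{\lnb x\rnb\le1\}\times\Bim$ and thus bounded by some absolute $C_2$. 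Multiplying, $\bigl|\partial^2 w/\partial x_i\partial y_j\bigr|\le C_1C_2\,O_t(\lnb g\rnb^{-2})$, which is the assertion with $C$ this constant.

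I do not anticipate a serious obstacle: the only delicate point is keeping the intermediate point $\exp(y)b$ a definite distance away from $b^{-}(g)$, which is precisely what the smallness of $\delta$ provides and which is needed to invoke Lemma \ref{lemma:derivs}; the rest is the chain rule plus the two compactness facts. One could instead work directly with $w(x,y)=\phi(\exp(x)g\exp(y)b)$, using $\lnb\exp(x)g\rnb\cong\lnb g\rnb$ for small $x$ via \eqref{eq:small_left_modulus_change}, but then $x$ sits inside the large group element and differentiating in $x$ is less transparent; routing through $\Bim$ as above isolates each variable inside a harmless smooth map and puts all the growth into the single factor $\gamma_g'$.
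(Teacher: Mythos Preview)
Your proof is correct and follows essentially the same approach as the paper's: both factor $w$ through the intermediate points $\phi(\exp(y)b)$ and $\phi(g\exp(y)b)$, apply the chain rule, bound the $y$-factor and the mixed $x$-$s$ factor by compactness, and obtain the $\lnb g\rnb^{-2}$ decay from Lemma~\ref{lemma:derivs}. Your treatment of the domain issue (ensuring $\exp(y)b$ stays $t/2$-far from $b^{-}(g)$ via \eqref{eq:vartheta_close_b}) is what the paper handles by invoking Lemma~\ref{lemma:small_left_move} at the end.
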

	
\begin{proof}
Let $\hat{v} = \phi(\exp (y) b)$. First note that by compactness we have
\begin{equation*}
\left| \frac{\partial \hat{v}}{\partial y_j} \right| \leq O(1).
\end{equation*}
Now let $\tilde{v} := \phi( g \exp(y) b)$.  By Lemma \ref{lemma:small_left_move} and Lemma \ref{lemma:derivs} we have
\begin{equation*}
\left| \frac{\partial \tilde{v}}{\partial \hat{v}} \right| \leq O_t \left( C \lnb g \rnb^{-2} \right).
\end{equation*}
Also note that by compactness
\begin{equation*}
\left| \frac{\partial ^2 w}{\partial \tilde{v} \partial x_i} \right| \leq O(1).
\end{equation*}
Hence
\begin{equation*}
\left| \frac{\partial^2 w}{\partial x_i \partial y_j} \right| = \left| \frac{\partial ^2 w}{\partial \tilde{v} \partial x_i} \right| \cdot \left| \frac{\partial \tilde{v}}{\partial \hat{v}} \right| \cdot \left| \frac{\partial \hat{v}}{\partial y_j} \right|  \leq O_t \left( \lnb g \rnb^{-2} \right).
\end{equation*}
We are now done by Lemma \ref{lemma:small_left_move}.
\end{proof}

This is enough to prove Proposition \ref{proposition:second_derivative_bounds}.

\begin{proof}[Proof of Proposition \ref{proposition:second_derivative_bounds}]
First we will deal with the case where $\ell=k$. Let 
\begin{equation*}
a_1 = g_1 \exp(u^{(1)}) g_2 \exp(u^{(2)}) \dots  g_{k-1} \exp(u^{(k-1)}) g_k
\end{equation*}
and
\begin{equation*}
a_2 = g_{k+1} \exp(u^{(k+1)}) g_{k+2} \exp(u^{(k+2)}) \dots  g_{n} \exp(u^{(n)}) b
\end{equation*}
and let $a_3 = \phi(\exp(u^{(k)}) a_2)$. We have
\begin{equation*}
\frac{\partial x}{\partial u^{(k)}_i} = \frac{\partial x}{\partial a_3} \frac{\partial a_3}{\partial u^{(k)}_i}
\end{equation*}
and so
\begin{equation*}
\frac{\partial^2 x}{\partial u^{(k)}_i \partial u^{(k)}_j} = \frac{\partial^2 x}{\partial a_3^2} \frac{\partial a_3}{\partial u^{(k)}_i} \frac{\partial a_3}{\partial u^{(k)}_j} + \frac{\partial x}{\partial a_3} \frac{\partial^2 a_3}{\partial u^{(k)}_i \partial u^{(k)}_j}. 
\end{equation*}

By Proposition \ref{proposition:singular_value_shape} we know that providing $C$ is sufficiently large and $\delta$ is sufficiently small that
\begin{equation*}
d(b^{-}(a_1), a_2) > \frac{1}{2}t
\end{equation*}
By Lemmas \ref{lemma:derivs} and \ref{lemma:g2der_cis} this means that
\begin{equation*}
\left| \frac{\partial^2 x}{\partial u^{(k)}_i \partial u^{(k)}_j} \right| \leq O_t \left( \lnb a_1 \rnb^{-2} \right).
\end{equation*}
In particular by Proposition \ref{proposition:singular_value_shape} there is some constant $C$ depending only on $t$ such that
\begin{equation*}
\left| \frac{\partial^2 x}{\partial u^{(k)}_i \partial u^{(k)}_j} \right| \leq C^n \lnb g_1g_2 \dots g_k \rnb^{-2}
\end{equation*}
as required.

Now we will deal with the case where $\ell>k$. Let
\begin{equation*}
a_1 = g_1 \exp(u^{(1)}) g_2 \exp(u^{(2)}) \dots  g_{k-1} \exp(u^{(k-1)}) g_k
\end{equation*}
and
\begin{equation*}
a_2 = g_{k+1} \exp(u^{(k+1)}) g_{k+2} \exp(u^{(k+2)}) \dots  g_{\ell-1} \exp(u^{(\ell-1)}) g_{\ell}
\end{equation*}
and
\begin{equation*}
a_3 = g_{\ell+1} \exp(u^{(\ell+1)}) g_{\ell+2} \exp(u^{(\ell+2)}) \dots  g_{n} \exp(u^{(n)}) b.
\end{equation*}
Let $a_4 = \phi(\exp(u^{(k)})a_2\exp(u^{(\ell)})a_3)$. Again we have
\begin{equation*}
\frac{\partial^2 x}{\partial u^{(k)}_i \partial u^{(k)}_j} = \frac{\partial^2 x}{\partial a_4^2} \frac{\partial a_4}{\partial u^{(k)}_i} \frac{\partial a_4}{\partial u^{(k)}_j} + \frac{\partial x}{\partial a_4} \frac{\partial^2 a_4}{\partial u^{(k)}_i \partial u^{(k)}_j}. 
\end{equation*}
In a similar way to the case $\ell=k$ but using Lemma \ref{lemma:g2der_trans} instead of Lemma \ref{lemma:g2der_cis} we get
\begin{equation*}
\left| \frac{\partial^2 x}{\partial u^{(k)}_i \partial u^{(\ell)}_j} \right| < C^n \lnb g_1g_2 \dots g_{\ell} \rnb^{-2}
\end{equation*}
as required.
\end{proof}

From this we can now prove Proposition \ref{proposition:intro_decomp_detail}.

\begin{proof}[Proof of Proposition \ref{proposition:intro_decomp_detail}]
By Theorem \ref{theorem:taylor} and Proposition \ref{proposition:second_derivative_bounds} we know that
\begin{align*}
\MoveEqLeft \left| \phi(x) - \phi(g_1g_2 \dots g_{n+1}) - \sum_{i=1}^{n} \zeta_i(u^{(i)}) \right| \\ 
& \leq n^2 C^n \max \left\{ \lnb g_1g_2 \dots g_{i} \rnb^{2}: i \in [1, n]\right\} r^2.
\end{align*}
This is because each of the $n^2$ terms in the error term in the Taylor expansion can be bounded above by an expression of the form $C^ i \lnb g_1g_2 \dots g_{i} \rnb^{2} r^2$. The result follows by replacing $C$ with a slightly larger constant and noting that by Proposition \ref{proposition:singular_value_shape}
\begin{equation*}
\max \left\{ \lnb g_1g_2 \dots g_{i} \rnb^{2}: i  \in [1, n]\right\} = \lnb g_1g_2 \dots g_{n} \rnb^{2}. \qedhere
\end{equation*}
\end{proof}

\subsection{Bounds on first derivatives} \label{section:first_derivs}
The purpose of this subsection is to prove Propositions \ref{proposition:large_first_derivatives} and \ref{proposition:principal_component_u_t}. This bounds the size of various first derivatives. First we need the following lemma.

\begin{lemma} \label{lemma:at_most_two_zeros}
Let $u \in \A \backslash \{ 0 \}$ and given $b \in \B$ define $\varrho_b$ as in Proposition \ref{proposition:large_first_derivatives}. Then there are at most two points $b \in \B$ such that
\begin{equation*}
\varrho_b(u) = 0.
\end{equation*}
\end{lemma}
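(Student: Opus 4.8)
The plan is to compute $\varrho_b(u)$ explicitly in the chart $\phi$ and then reduce the statement to an elementary fact about trigonometric polynomials of degree $2$.

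First I would fix a traceless representative $u = \begin{pmatrix} p & q \\ r & -p \end{pmatrix}$ of the given element of $\A = \mathfrak{psl}_2(\mathbb{R})$ and parametrize $b = \phi^{-1}(x)$ for $x \in \Bim$. Writing $\exp(su) = I + su + O(s^2)$ and using that the action of $g$ on $\B$ sends $\phi^{-1}(x)$ to the line $[(g_{11} \cos x + g_{12} \sin x, \ g_{21} \cos x + g_{22} \sin x)^T]$, one differentiates $s \mapsto \phi(\exp(su) b)$ at $s = 0$. The computation gives
\begin{equation*}
\varrho_b(u) = r \cos^2 x - 2 p \sin x \cos x - q \sin^2 x .
\end{equation*}
(As a check, this is a nonzero constant when $u$ generates rotations, has two zeros when $u$ is diagonal, and has one zero when $u$ is nilpotent.)

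Next I would rewrite this with the double-angle identities as
\begin{equation*}
\varrho_b(u) = \frac{r - q}{2} - p \sin 2x + \frac{r + q}{2} \cos 2x ,
\end{equation*}
and observe that $u \neq 0$ forces the triple $\left( \frac{r-q}{2}, \, p, \, \frac{r+q}{2} \right)$ to be nonzero. Since $x \mapsto 2x$ is a bijection from $\Bim = \R/\pi\Z$ onto $\R/2\pi\Z$, it now suffices to prove that any equation $\alpha + \beta \cos \theta + \gamma \sin \theta = 0$ with $(\alpha, \beta, \gamma) \neq (0,0,0)$ has at most two solutions $\theta \in \R/2\pi\Z$. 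This is immediate: if $\beta = \gamma = 0$ then $\alpha \neq 0$ and there are no solutions, while otherwise $\beta \cos \theta + \gamma \sin \theta = \sqrt{\beta^2 + \gamma^2} \cos(\theta - \psi)$ for a suitable phase $\psi$, so the equation reduces to $\cos(\theta - \psi) = -\alpha / \sqrt{\beta^2 + \gamma^2}$, which has at most two solutions in $\R/2\pi\Z$. This gives the lemma.

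There is no real obstacle here beyond carrying out the first-order computation of $\varrho_b(u)$ carefully (in particular tracking the sign of the cross term), and the same explicit formula will serve as the starting point for the quantitative version in Proposition \ref{proposition:large_first_derivatives}. Alternatively, one could argue conceptually: $\varrho_b(u) = 0$ says exactly that the orbit curve $s \mapsto \exp(su) b$ has vanishing velocity at $s = 0$, which by the group law forces $\exp(su) b = b$ for all $s$; hence the zeros of $\varrho_{\bullet}(u)$ are the common fixed points in $\B$ of the nontrivial one-parameter subgroup $\{ \exp(su) : s \in \R \}$, and a non-identity element of $\Gp$ fixes at most two points of $\B = P^1(\mathbb{R})$ because $\PGL_2(\mathbb{R})$ acts sharply $3$-transitively on $P^1(\mathbb{R})$.
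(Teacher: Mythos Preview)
Both of your arguments are correct. Your primary route --- computing $\varrho_b(u)$ explicitly in the chart $\phi$ and reducing to a degree-two trigonometric equation --- is a different approach from the paper's. The paper argues directly at the level of the infinitesimal action: writing $\tilde\phi:\R^2\setminus\{0\}\to\Bim$ for the quotient map, it observes that $\varrho_b(u)=0$ precisely when the tangent vector $u\hat b = D(\exp(v)\hat b)|_{v=0}(u)$ lies in $\ker D_{\hat b}\tilde\phi$, which is the line spanned by $\hat b$. Hence $\varrho_b(u)=0$ if and only if $\hat b$ is an eigenvector of the matrix $u$, and a nonzero $2\times 2$ traceless matrix has at most two eigendirections in $\B$. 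This is essentially your alternative conceptual argument, except that the paper stays at the Lie-algebra level and never exponentiates to the one-parameter subgroup; your flow argument is a slight detour but lands in the same place.

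What each approach buys: the paper's eigenvector formulation is shorter and coordinate-free, while your explicit formula gives a concrete expression that could in principle be fed into the quantitative Proposition~\ref{proposition:large_first_derivatives}. In fact, though, the paper proves that proposition by a soft compactness argument that uses only the qualitative statement of the present lemma, so the explicit formula is not exploited later.
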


\begin{proof}
Let $\tilde{\phi}$ be defined by
\begin{align*}
\tilde{\phi} : \R^2 \backslash \{ 0\} & \to \Bim\\
\hat{b} & \mapsto \phi([\hat{b}])
\end{align*}
where $[\hat{b}]$ denotes the equivalent class of $\hat{b}$ in $\B$. 

Given $b \in \B$ let $\hat{b} \in \R^2 \backslash \{ 0\}$ be some representative of $b$. Note that this means
\begin{equation*}
\phi(\exp(v) b) = \tilde{\phi}(\exp (v) \hat{b}).
\end{equation*}
This means that $\varrho_b(v) = 0$ if and only if $D(\exp(u) \hat{b})|_{u=0}(v)$ is in the kernel of $D_{\hat{b}}(\tilde{\phi}(\hat{b}))$. Trivially the kernel of $D_{\hat{b}}(\tilde{\phi}(\hat{b}))$ is just the space spanned by $\hat{b}$. It also follows by the definition of the matrix exponential that for any $v \in \A$ we have
\begin{equation*}
D(\exp(u) \hat{b})|_{u=0}(v) = v \hat{b}.
\end{equation*}
Hence $\varrho_b(v) = 0$ if and only if $\hat{b}$ is an eigenvector of $v$. Clearly for each $v \in \A \backslash \{ 0 \}$ there are at most two $b \in \B$ with this property. The result follows.
\end{proof}

\begin{proof}[Proof of Proposition \ref{proposition:large_first_derivatives}]
Given $a_1, a_2 \in \R$ let $U(a_1, a_2)$ be defined by
\begin{equation*}
U(a_1, a_2) =  \B \backslash \phi^{-1}( ((a_1, a_1 + t) \cup (a_2, a_2 + t))).
\end{equation*}
In other words $U(a_1, a_2)$ is all of $\B$ except for two arcs of length $t$ starting at $a_1$ and $a_2$ respectively. Given some $v \in \A$ let $f(v)$ be given by
\begin{equation*}
f(v) := \max_{a_1, a_2 \in \R} \min_{b \in U(a_1, a_2)} |\varrho_b(v)|.
\end{equation*}
Both the $\min$ and the $\max$ are achieved due to a trivial compactness argument. By Lemma \ref{lemma:at_most_two_zeros} we know that $f(v) > 0$ whenever $\lnb v \rnb = 1$. Note that $\left \{ \varrho_b(\cdot) : b \in \B\right \}$ is a bounded set of linear maps and so is uniformly equicontinuous. This means that $f$ is continuous. Since the set of all $v \in \A$ with $\lnb v \rnb = 1$ is compact this means that there is some $\delta > 0$ such that $f(v) \geq \delta$. Finally note that trivially we can choose the $a_1$ and $a_2$ using this construction in such a way that they are measurable as functions of $v$.
\end{proof}

We will now prove Proposition \ref{proposition:principal_component_u_t}.

\begin{proof}[Proof of Proposition \ref{proposition:principal_component_u_t}]
By elementary linear algebra we can write $X$ as
\begin{equation*}
X = X_1v_1 + X_2v_2 + X_3v_3
\end{equation*}
where $X_1$, $X_2$ and $X_3$ are uncorrelated random variables taking values in $\R$ and $v_1$, $v_2$, and $v_3$ are the eigenvectors of the covariance matrix of $X$ with corresponding eigenvalues $\var X_1$, $\var X_2$, and $\var X_3$. Furthermore we may assume that $\var X_1 \geq \var X_2 \geq \var X_3$ and so in particular $\var X_1 \geq \frac{1}{3} \Tr \var X$. Without loss of generality we may assume that $X_1$, $X_2$, $X_3$, and $X$ have mean $0$. We also note that since $v_1$ is a principal component of $X$ by Proposition \ref{proposition:large_first_derivatives} we have $|\rho_b(v_1)| \geq \delta$.

We then compute
\begin{align*}
\var \rho_b(X) &= \mathbb{E} \left[ |\rho_b(X)|^2 \right] \\
& = \mathbb{E} \left[ X_1^2 |\rho_b(v_1)|^2 + X_2^2 |\rho_b(v_2)|^2 + X_3^2 |\rho_b(v_3)|^2\right]\\
& \geq \mathbb{E} \left[ X_1^2 |\rho_b(v_1)|^2\right]\\ 
& \geq \frac{1}{3} \delta^2 \Tr \var X.
\end{align*}
This gives the required result.
\end{proof}

\section{Disintegration Argument} \label{section:disintegration_argument}

The purpose of this section is to prove Theorem \ref{theorem:prob_exist}. We first discuss some basic properties of entropy and variance for random variables taking values in $\Gp$. After these preparations, which occupy most of the section, the proof of Theorem \ref{theorem:prob_exist} will be short.

Before we begin we outline the main steps of the proof of Theorem \ref{theorem:prob_exist}.

The first step is the following simple lemma.

\begin{lemma} \label{lemma:rewrite_conditional_entropy}
Let $g$, $s_1$ and $s_2$ be independent random variables taking values in $\Gp$. Suppose that $s_1$ and $s_2$ are absolutely continuous with finite entropy and that $gs_1$ and $gs_2$ have finite entropy. Define $k$ by
\begin{equation*}
k := H(g s_1) - H(s_1) - H(g s_2) + H(s_2).
\end{equation*}
Then
\begin{equation*}
\mathbb{E}[H((gs_1|gs_2))] \geq k + H(s_1).
\end{equation*}
\end{lemma}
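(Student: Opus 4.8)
The plan is to unwind the definitions of conditional entropy and use the chain rule together with the fact that $g$ is independent of the pair $(s_1, s_2)$ — really just of each $s_i$ — and the entropy-increase lemma (Lemma~\ref{lemma:ent_increase}). First I would write the joint entropy of $(gs_1, gs_2)$ in two ways. On one hand, conditioning on $gs_2$,
\begin{equation*}
H(gs_1, gs_2) = H(gs_2) + H(gs_1 \mid gs_2) = H(gs_2) + \mathbb{E}[H((gs_1 \mid gs_2))],
\end{equation*}
where the last equality is Lemma~\ref{lemma:relative_ent_expectation}. On the other hand, I want a lower bound on $H(gs_1, gs_2)$. The key observation is that $(gs_1, gs_2)$ is obtained from $(s_1, s_2)$ by the (left) action of the single random variable $g$, and $g$ is independent of $(s_1, s_2)$: indeed $(gs_1, gs_2) = (g,g)\cdot(s_1,s_2)$ in the group $\Gp \times \Gp$, and the diagonal copy of $\Gp$ acts on the left. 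Applying Lemma~\ref{lemma:ent_increase} with $\mathbf{G} = \Gp \times \Gp$, with $\lambda = \tilde m \times \tilde m$ (which is left-invariant, in particular under the diagonal), $g_1 = (g,g)$ and $g_2 = (s_1,s_2)$, gives
\begin{equation*}
H(gs_1, gs_2) = D(\mathcal{L}((g,g)(s_1,s_2)) \| \tilde m \times \tilde m) \geq D(\mathcal{L}((s_1,s_2)) \| \tilde m \times \tilde m) = H(s_1, s_2) = H(s_1) + H(s_2),
\end{equation*}
using independence of $s_1$ and $s_2$ for the final equality.

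Combining the two displays yields
\begin{equation*}
\mathbb{E}[H((gs_1 \mid gs_2))] = H(gs_1, gs_2) - H(gs_2) \geq H(s_1) + H(s_2) - H(gs_2).
\end{equation*}
Now I substitute the definition of $k$: since $k = H(gs_1) - H(s_1) - H(gs_2) + H(s_2)$, we have $H(s_2) - H(gs_2) = k - H(gs_1) + H(s_1)$, so the right-hand side becomes
\begin{equation*}
H(s_1) + k - H(gs_1) + H(s_1) = k + 2H(s_1) - H(gs_1).
\end{equation*}
Finally, by Lemma~\ref{lemma:ent_increase} applied in $\Gp$ with $g_1 = g$, $g_2 = s_1$, $\lambda = \tilde m$, we get $H(gs_1) = D(\mathcal{L}(gs_1)\|\tilde m) \geq D(\mathcal{L}(s_1)\|\tilde m) = H(s_1)$, hence $2H(s_1) - H(gs_1) \geq H(s_1)$, and therefore $\mathbb{E}[H((gs_1 \mid gs_2))] \geq k + H(s_1)$, as claimed.

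The only genuinely delicate point is making sure all the entropies and conditional entropies involved are well-defined and finite, so that the chain rule (Lemma~\ref{lemma:relative_ent_expectation}) and the manipulations above are legitimate. By hypothesis $s_1, s_2$ are absolutely continuous with finite entropy and $gs_1, gs_2$ have finite entropy; absolute continuity of $s_i$ forces $gs_i$ to be absolutely continuous, and one checks the joint laws $(s_1,s_2)$ and $(gs_1, gs_2)$ are absolutely continuous on $\Gp\times\Gp$ with finite entropy (the latter because $H(gs_1,gs_2) \le H(gs_1)+H(gs_2)$ is finite by subadditivity, and it is bounded below as above). I expect this bookkeeping to be the main obstacle; the inequality chain itself is essentially forced once one sets up the product-group picture.
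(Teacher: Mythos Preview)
Your final inequality is backwards. From $H(gs_1)\geq H(s_1)$ you get $2H(s_1)-H(gs_1)\leq H(s_1)$, not $\geq$, so the chain collapses one step before the conclusion. The problem is upstream: your product-group application of Lemma~\ref{lemma:ent_increase} yields
\[
H(gs_1,gs_2)\ \geq\ H(s_1)+H(s_2),
\]
which is true but too weak. What is actually needed is the sharper lower bound
\[
H(gs_1,gs_2)\ \geq\ H(gs_1)+H(s_2),
\]
and this is exactly what the paper proves, by the one-line observation
\[
H(gs_2\mid gs_1)\ \geq\ H(gs_2\mid g,s_1)\ =\ H(s_2),
\]
since conditioning on more reduces entropy, and once $g$ and $s_1$ are known $gs_2$ is just a left translate of the independent $s_2$. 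With that bound in hand the computation is immediate:
\[
H(gs_1\mid gs_2)=H(gs_1,gs_2)-H(gs_2)\geq H(gs_1)+H(s_2)-H(gs_2)=k+H(s_1).
\]
So your diagonal-action idea in $\Gp\times\Gp$ is correct but loses precisely the amount $H(gs_1)-H(s_1)$ that you then try (and fail) to recover at the end. Replace the product-group step by the conditioning argument above and the proof goes through.
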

Here $(g s_1 | g s_2)$ denotes the regular conditional distribution which is defined in Section \ref{section:r_c_p}. We prove this lemma in Section \ref{section:entropy}.

We will apply this lemma when $s_1$ and $s_2$ are smoothing random variables, and $s_2$ corresponds to a larger scale than $s_1$. The quantity $k$ can be thought of as the difference between the information of $g$ discretized at the scales corresponding to $s_1$ and $s_2$.

It is well known that amongst all random vectors whose covariance matrix has a given trace, the spherical normal distribution has the largest (differential) entropy. This allows us to estimate the variance of a random vector in terms of its entropy from below. Once the definitions are in place, we can translate this to random elements of $\Gp$.

\begin{lemma} \label{lemma:entvartbound}
Let $\varepsilon > 0$ and suppose that $g$ is an absolutely continuous random variable taking values in $\Gp$ such that $g_0^{-1} g$ takes values in the ball of radius $\varepsilon$ and centre $\id$ for some $g_0 \in \Gp$. Then providing $\varepsilon$ is sufficiently small we have
\begin{equation*}
H(g) \leq \frac{3}{2} \log \frac{2 \pi e}{3} \Tr \vart_{g_0} [g] + O(\varepsilon).
\end{equation*}
\end{lemma}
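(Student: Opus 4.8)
The plan is to transport the inequality to the Lie algebra $\A \cong \R^3$ via $\log$, where it becomes the classical maximum-entropy bound for random vectors with prescribed covariance trace.

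First I would reduce to the case $g_0 = \id$. Setting $g' := g_0^{-1} g$, Lemma \ref{lemma:invariance_entropy} gives $H(g) = H(g')$, while by the definition of $\vart$ we have $\Tr \vart_{g_0}[g] = \Tr \vart_{\id}[g']$ (both are the trace of the covariance matrix of $\log(g_0^{-1}g)$), and $g'$ takes values in the ball of radius $\varepsilon$ about $\id$. So it suffices to prove the statement with $g'$ in place of $g$ and $\id$ in place of $g_0$; I will keep writing $g$ for $g'$.

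Next, taking $\varepsilon$ small enough that $\exp$ restricts to a diffeomorphism from the $\varepsilon$-ball of $\A$ onto its image, I would set $U := \log g$, a random variable on $\A \cong \R^3$ with $\lnb U \rnb \leq \varepsilon$ almost surely. Then $g$ is absolutely continuous with respect to $\tilde m$ if and only if $U$ is absolutely continuous with respect to the Lebesgue measure $m$; writing $f$ for the density of $U$ and $h$ for the density of $g$, the change of variables $x = \exp u$ gives $f(u) = h(\exp u) J(u)$, where $J(u) := \frac{d \tilde m}{d(m \circ \log)}(\exp u)$, and hence $H(g) = -\int h \log h \, d\tilde m = -\int f \log(f/J) \, dm = H(U) + \int f \log J \, dm$, with $H(U)$ the differential entropy on $\R^3$. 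Since $J$ is smooth (it is built from the fixed analytic normalisation of the Haar measure) and $J(0) = 1$ by Definition \ref{definition:tilde_m}, we get $|\log J(u)| \leq O(\lnb u \rnb)$ on the $\varepsilon$-ball, so $H(g) = H(U) + O(\varepsilon)$. Note also that $U$ being absolutely continuous forces its covariance to be nondegenerate, so $\Tr \var U > 0$.

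Finally I would apply the standard fact that among absolutely continuous random vectors in $\R^d$ with a given value of $\Tr \var$ the differential entropy is maximised by the spherical Gaussian, so with $d = 3$, $H(U) \leq \frac{3}{2} \log\!\left(\frac{2\pi e}{3} \Tr \var U\right)$. Concretely this follows from the nonnegativity of the KL-divergence of $U$ against the spherical Gaussian centred at $\mathbb{E}[U]$ with each coordinate variance $\frac{1}{3}\Tr \var U$; the same computation shows $H(U) < \infty$, and if $H(U) = -\infty$ the claim is trivial. Combining this with $H(g) = H(U) + O(\varepsilon)$ and $\Tr \var U = \Tr \vart_{\id}[g]$ finishes the proof. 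The only mildly delicate point is the Jacobian estimate of the previous paragraph — i.e. that the difference between entropy on $\Gp$ relative to $\tilde m$ and differential entropy on $\A$ is $O(\varepsilon)$ with an absolute implied constant — and this is exactly where the normalisation $\frac{d\tilde m}{d(m\circ\log)}(\id) = 1$ is used.
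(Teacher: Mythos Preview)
Your proof is correct and follows essentially the same route as the paper: reduce to $g_0=\id$ by left-invariance, compare the entropy on $\Gp$ with the differential entropy of $U=\log g$ on $\R^3$ via the Jacobian $\frac{d\tilde m}{d(m\circ\log)}$ (the paper packages this step as Lemma \ref{lemma:klbound}, which is exactly your $\int f\log J\,dm$ bounded by $\sup|\log J|=O(\varepsilon)$), and then invoke the maximum-entropy inequality Lemma \ref{lemma:rdentvar}.
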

We will prove this in Section \ref{section:entropy}. Combining the above two lemmas, we can get a lower bound on $\Tr \vart_{g s_2}[ g s_1 | g s_2]$. Here $\vart_{\cdot}[\cdot | \cdot]$ denotes the conditional variance of a random variable taking values in $\Gp$ which we will define in Definition \ref{definition:vart_random}. The last part of the proof of Theorem \ref{theorem:prob_exist} is the following.
\begin{lemma} \label{lemma:vart_split_add}
Let $\varepsilon > 0$ be sufficiently small and let $a$ and $b$ be random variables taking values in $\Gp$ and let $\mathcal{A}$ be a $\sigma$-algebra. Suppose that $b$ is independent from $a$ and $\mathcal{A}$. Let $g_0$ be an $\mathcal{A}$-measurable random variable taking values in $\Gp$. Suppose that $g_0^{-1} a $ and $b$ are almost surely contained in a ball of radius $\varepsilon$ around $\id$.
Then
\begin{equation*}
\Tr \vart_{g_0} [ab|\mathcal{A}] = \Tr \vart_{g_0}[a | \mathcal{A}] + \Tr \vart_{\id}[b] + O(\varepsilon^3).
\end{equation*}
\end{lemma}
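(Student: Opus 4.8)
The plan is to reduce the statement to the Baker--Campbell--Hausdorff formula on the Lie algebra $\A$. Set $u := \log(g_0^{-1} a)$ and $v := \log b$, so that $g_0^{-1} a b = \exp(u)\exp(v)$. The hypotheses give $\lnb u \rnb \lesssim \varepsilon$ and $\lnb v \rnb \lesssim \varepsilon$ almost surely, so for $\varepsilon$ small enough $g_0^{-1} a b$ lies in the domain of $\log$ and
\begin{equation*}
\log(g_0^{-1} a b) = u + v + \tfrac{1}{2}[u, v] + \rho ,
\end{equation*}
where $\rho$ collects the cubic and higher order terms of the Baker--Campbell--Hausdorff series and therefore satisfies $\lnb \rho \rnb \lesssim \varepsilon^3$ almost surely. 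In particular, writing $R := \tfrac{1}{2}[u,v] + \rho$, we have $\lnb R \rnb \lesssim \varepsilon^2$ almost surely.

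Next I would work conditionally on $\mathcal{A}$. Since $g_0$ is $\mathcal{A}$-measurable, $u$ is $\sigma(a, \mathcal{A})$-measurable; since $b$ is independent of $\sigma(a, \mathcal{A})$, conditionally on $\mathcal{A}$ the variable $v$ is independent of $u$ and its conditional law equals its unconditional law. Writing $\mathrm{Cov}[\,\cdot \mid \mathcal{A}]$ for the conditional covariance matrix of the relevant $\A$-valued random variable given $\mathcal{A}$, bilinearity of covariance gives
\begin{equation*}
\mathrm{Cov}[u + v + R \mid \mathcal{A}] = \mathrm{Cov}[u \mid \mathcal{A}] + \mathrm{Cov}[v] + \mathrm{Cov}[R \mid \mathcal{A}] + \mathrm{Cov}[u+v, R \mid \mathcal{A}] + \mathrm{Cov}[R, u+v \mid \mathcal{A}],
\end{equation*}
where the $u$--$v$ cross term has vanished by the conditional independence just noted. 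Taking traces, $\Tr \mathrm{Cov}[u \mid \mathcal{A}] = \Tr \vart_{g_0}[a \mid \mathcal{A}]$ by definition, and $\Tr \mathrm{Cov}[v] = \Tr \vart_{\id}[b]$, the conditioning being irrelevant because $v$ is independent of $\mathcal{A}$.

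Finally I would estimate the error terms. From $\lnb R \rnb \lesssim \varepsilon^2$ a.s. we get $\Tr \mathrm{Cov}[R \mid \mathcal{A}] \lesssim \varepsilon^4$ a.s., and the conditional Cauchy--Schwarz inequality for traces of cross-covariances, combined with $\lnb u + v \rnb \lesssim \varepsilon$ and $\lnb R \rnb \lesssim \varepsilon^2$, bounds each of the two cross terms by $O(\varepsilon^3)$ almost surely. Summing the displays above and recalling that $\Tr \vart_{g_0}[ab \mid \mathcal{A}] = \Tr \mathrm{Cov}[\log(g_0^{-1} ab) \mid \mathcal{A}]$ yields the claimed identity. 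The main obstacle is really just careful bookkeeping: one must check that every $O(\varepsilon^3)$ (and $O(\varepsilon^4)$) term is a genuine almost-sure pointwise bound, since the assertion is an equality of $\mathcal{A}$-measurable random variables, and one must use cleanly that $b$ is independent of the \emph{joint} data $(a, \mathcal{A})$ --- not merely of $a$ and of $\mathcal{A}$ separately --- in order to transfer the independence and the law of $v$ to the conditional picture. The analytic inputs (the Baker--Campbell--Hausdorff remainder bound on a fixed small ball in $\A$, and Cauchy--Schwarz) are standard and, because the relevant neighbourhood of $\id$ is fixed, uniform.
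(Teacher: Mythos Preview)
Your argument is correct and is essentially the paper's approach. The paper factors the proof into two pieces: first the unconditional Lemma~\ref{lemma:vartadd}, proved exactly as you do by writing $\log(\exp X \exp Y) = X + Y + E$ with $|E| \le O(\varepsilon^2)$ and expanding $\Tr\var$, and then a one-line appeal to Lemma~\ref{lemma:r_c_d_conditional_independence} (which says $(ab\mid\mathcal{A}) = (a\mid\mathcal{A}) * (b\mid\mathcal{A})$ under conditional independence) to pass to the conditional statement. You simply merge these two steps, carrying out the same Taylor/BCH expansion directly in the conditional setting; your observation that one needs $b$ independent of the \emph{joint} data $(a,\mathcal{A})$ is precisely what makes that reduction work in the paper as well.
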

We prove this in Section \ref{section:variance}.

\subsection{Variance on $\Gp$} \label{section:variance}

Recall from the introduction that given some random variable $g$ taking values in $\Gp$ and some fixed $g_0 \in \Gp$ such that $g_0^{-1} g$ is always in the domain of $\log$ we define $\vart_{g_0}[g]$ to be the covariance matrix of $\log [g_0^{-1} g]$.

We need the following lemma.

\begin{lemma} \label{lemma:vartadd}
Let $\varepsilon >0 $ be sufficiently small and let $g$ and $h$ be independent random variables taking values in $\Gp$. Suppose that the image of $g$ is contained in a ball of radius $\varepsilon$ around $\id$ and the image of $h$ is contained in a ball of radius $\varepsilon$ around some $h_0 \in \Gp$. Then
\begin{equation*}
\Tr \vart_{h_0}[hg] = \Tr \vart_{h_0}[h] + \Tr \vart_{\id}[g]  + O(\varepsilon^3).
\end{equation*}
\end{lemma}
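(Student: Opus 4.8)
The plan is to reduce the statement to a second–order Taylor expansion of group multiplication in logarithmic coordinates together with the bilinearity of covariance. Set $X := \log(h_0^{-1}h)$ and $Y := \log g$. By left-invariance of $d$ the hypotheses say that $h_0^{-1}h$ and $g$ each lie in the ball of radius $\varepsilon$ about $\id$, so for $\varepsilon$ sufficiently small (using that $\log$ is smooth with $\log \id = 0$) we have $\lnb X \rnb \le C\varepsilon$ and $\lnb Y \rnb \le C\varepsilon$ almost surely for an absolute constant $C$, and $X, Y$ are independent because $h$ and $g$ are. By definition $\vart_{h_0}[h]$ is the covariance matrix of $X$, $\vart_{\id}[g]$ that of $Y$, and since $h_0^{-1}hg = \exp(X)\exp(Y)$, the matrix $\vart_{h_0}[hg]$ is the covariance matrix of $Z := \log(\exp(X)\exp(Y))$.

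Next I would record the Taylor expansion of $m : (x,y) \mapsto \log(\exp(x)\exp(y))$, a smooth map on a neighbourhood of $(0,0)$ in $\A \times \A$ satisfying $m(x,0) = x$ and $m(0,y) = y$. Because $m$ is affine in $x$ when $y=0$ and affine in $y$ when $x=0$, its degree-$2$ Taylor polynomial at the origin contains no pure $xx$ or $yy$ term, so there is a bilinear map $B : \A \times \A \to \A$ (namely $B(x,y) = \tfrac12[x,y]$, though its exact form will not matter) and a constant $C'$ with
\begin{equation*}
m(x,y) = x + y + B(x,y) + \rho(x,y), \qquad \lnb \rho(x,y) \rnb \le C'(\lnb x \rnb + \lnb y \rnb)^3
\end{equation*}
for all sufficiently small $x, y$. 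Plugging in $x = X$, $y = Y$ gives $Z = X + Y + B(X,Y) + R$ with $\lnb B(X,Y) \rnb = O(\varepsilon^2)$ and $\lnb R \rnb = O(\varepsilon^3)$ almost surely.

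Finally I would expand the covariance. Writing $\mathrm{Cov}(\cdot)$ for the covariance matrix and $\mathrm{Cov}(\cdot,\cdot)$ for the cross-covariance matrix, we have $\mathrm{Cov}(U + V) = \mathrm{Cov}(U) + \mathrm{Cov}(V) + \mathrm{Cov}(U,V) + \mathrm{Cov}(U,V)^T$; apply this with $U = X + Y$ and $V = B(X,Y) + R$. Independence of $X$ and $Y$ gives $\mathrm{Cov}(X+Y) = \mathrm{Cov}(X) + \mathrm{Cov}(Y)$; the bound $\lnb V \rnb = O(\varepsilon^2)$ gives $\Tr \mathrm{Cov}(V) = O(\varepsilon^4)$; and Cauchy–Schwarz entrywise, each entry of $\mathrm{Cov}(U,V)$ being at most $\sqrt{\var U_i}\,\sqrt{\var V_j} = O(\varepsilon)\cdot O(\varepsilon^2)$, gives $\Tr\big(\mathrm{Cov}(U,V) + \mathrm{Cov}(U,V)^T\big) = O(\varepsilon^3)$. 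Taking traces,
\begin{equation*}
\Tr \vart_{h_0}[hg] = \Tr \mathrm{Cov}(Z) = \Tr \mathrm{Cov}(X) + \Tr \mathrm{Cov}(Y) + O(\varepsilon^3) = \Tr \vart_{h_0}[h] + \Tr \vart_{\id}[g] + O(\varepsilon^3),
\end{equation*}
as claimed. The only point needing care is the order-of-magnitude bookkeeping in this last step: the genuinely $O(\varepsilon^3)$ contribution comes from the cross term $\mathrm{Cov}(X+Y, B(X,Y))$, which in general cannot be improved and is controlled by Cauchy–Schwarz rather than by any special structure of $B$; everything else is routine.
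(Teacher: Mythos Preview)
Your proof is correct and follows essentially the same approach as the paper: define $X=\log(h_0^{-1}h)$, $Y=\log g$, Taylor-expand $\log(\exp X\exp Y)=X+Y+E$ with $|E|=O(\varepsilon^2)$, and then check that the cross terms in the covariance expansion are $O(\varepsilon^3)$. The paper is slightly more economical---it does not separate the bilinear BCH term $B(X,Y)$ from the cubic remainder, and it bounds the cross term directly via $|\mathbb{E}[(X+Y)\cdot E]|\le \mathbb{E}[|X+Y|\,|E|]=O(\varepsilon^3)$ rather than invoking Cauchy--Schwarz on covariance entries---but the argument is the same.
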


\begin{proof}
Let $X = \log(h_0 ^ {-1} h)$ and let $Y = \log(g)$. Then by Taylor's theorem
\begin{equation*}
\log(\exp(X)\exp(Y)) = X + Y + E
\end{equation*}
where $E$ is some random variable with $|E| \leq O(\varepsilon^2)$ almost surely. Note that we also have $|X|, |Y| \leq O(\varepsilon)$. Therefore
\begin{align*}
\Tr \vart_{h_0}[hg] &= \mathbb{E}[|X+Y+E|^2] - |\mathbb{E}[X+Y+E]|^2\\
&= \mathbb{E}[|X+Y|^2] - |\mathbb{E}[X+Y]|^2 + 2 \mathbb{E}[(X+Y) \cdot E] + \mathbb{E}[|E|^2] \\
& \,\,\,\,\, - 2 \mathbb{E}[X+Y] \cdot \mathbb{E}[E] - |\mathbb{E}[E]|^2\\
&= \var[X+Y] + O(\varepsilon^3)
\end{align*}
as required.
\end{proof}

We also need to describe the variance of a regular conditional distribution.

\begin{definition} \label{definition:vart_random}
Given some random variable $g$ taking values in $\Gp$, some $\sigma$-algebra $\mathcal{A}$ and some $\mathcal{A}$-measurable random variable $g_0$ taking values in $\Gp$ we let $\Tr \vart_{g_0} [g | \mathcal{A}]$ to be the $\mathcal{A}$-measurable random variable given by
\begin{equation*}
\Tr \vart_{g_0} [g | \mathcal{A}](\omega) = \Tr \vart_{g_0(\omega)} [(g | \mathcal{A})(\omega )].
\end{equation*}
Similarly given a random variable $h$ and some $\sigma(h)$-measurable random variable $g_0$ taking values in $\Gp$ we let $\Tr \vart_{g_0} [g | h] = \Tr \vart_{g_0} [g | \sigma(h)]$.
\end{definition}

Lemma \ref{lemma:vart_split_add} now follows easily from Lemma \ref{lemma:vartadd}.

\begin{proof}[Proof of Lemma \ref{lemma:vart_split_add}]
This follows immediately from Lemma \ref{lemma:vartadd} and Lemma \ref{lemma:r_c_d_conditional_independence}.
\end{proof}

\subsection{Entropy} \label{section:entropy}

First we need the following well known result.

\begin{lemma} \label{lemma:rdentvar}
If $X$ is an absolutely continuous random variable taking values in $\R^d$ and $\Tr \var X = r^2$ then
\begin{equation*}
H(X) \leq \frac{d}{2} \log \left( \frac{2 \pi e}{d} r^2 \right)
\end{equation*}	
with equality if and only if $X$ is a spherical normal distribution.
\end{lemma}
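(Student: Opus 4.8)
The plan is to establish the classical maximum-entropy property of the spherical normal distribution. First I would recall that the differential entropy of the multivariate normal distribution $N(\mu, \Sigma)$ on $\R^d$ is $\frac12 \log\left( (2\pi e)^d \det \Sigma \right)$. Specializing to $\Sigma = \sigma^2 I$ with $d\sigma^2 = r^2$, i.e.\ $\sigma^2 = r^2/d$, this gives exactly $\frac{d}{2}\log\left(\frac{2\pi e}{d} r^2\right)$, so the claimed bound is the entropy of the spherical normal with the prescribed trace of covariance.

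Next I would prove that this is indeed the maximum. The standard argument is via nonnegativity of relative entropy (Gibbs' inequality): let $X$ have density $f$ with $\Tr \var X = r^2$, and let $g$ be the density of the spherical normal $N(\mathbb{E}[X], \frac{r^2}{d} I)$. Then
\begin{equation*}
0 \le \mathcal{KL}(f, g) = \int f \log \frac{f}{g} = -H(X) - \int f \log g.
\end{equation*}
Since $\log g(x)$ is (up to an additive constant) a negative-definite quadratic form in $x - \mathbb{E}[X]$, the integral $\int f \log g$ depends on $f$ only through $\mathbb{E}[X]$ and $\mathbb{E}[|X - \mathbb{E}[X]|^2] = \Tr\var X = r^2$; hence $\int f \log g = \int g \log g = -H(g)$. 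Therefore $H(X) \le -\int f \log g = H(g) = \frac{d}{2}\log\left(\frac{2\pi e}{d} r^2\right)$, with equality iff $\mathcal{KL}(f,g) = 0$, i.e.\ $f = g$ almost everywhere, which is precisely the condition that $X$ is a spherical normal distribution.

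One technical point I would address carefully is integrability: one must know $\int f \log g\, dx$ is finite (so the manipulation is legitimate) and that $H(X)$ is well-defined in $[-\infty, \infty)$. Since $-\log g$ grows quadratically and $X$ has finite second moment, $\int f (-\log g) < \infty$; and the identity $0 \le \mathcal{KL}(f,g)$ combined with this shows $H(X) > -\infty$ automatically. I expect this integrability bookkeeping to be the only real obstacle, and it is minor; the core inequality is immediate from Gibbs. Since the lemma is stated as "well known," I would keep the write-up brief, citing a standard reference (e.g.\ the covariance/entropy inequality in \cite{Johnson_2004} or any information-theory text) and sketching the relative-entropy computation above.
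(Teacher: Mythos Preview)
Your proposal is correct and is essentially the standard argument behind the paper's one-line citation: the paper simply says ``This is well known and follows trivially from \cite[Example 12.2.8]{COVER_THOMAS_2006}'' without writing anything out. Your Gibbs-inequality computation is precisely the proof one finds in such references; the only minor difference is that you compare directly with the spherical Gaussian (so that $\int f\log g$ depends only on $\Tr\var X$), whereas the cited result gives the bound $H(X)\le \tfrac12\log((2\pi e)^d\det\Sigma)$ for the full covariance $\Sigma$, from which the trace version follows by the AM--GM inequality $\det\Sigma\le(\Tr\Sigma/d)^d$.
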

\begin{proof}
This is well known and follows trivially from \cite[Example 12.2.8]{COVER_THOMAS_2006}.
\end{proof}

We now wish to prove a similar result for random variables taking values in $\Gp$. First we need the following.

\begin{lemma} \label{lemma:klbound}
Let $\lambda_1$ be a probability measure on some measurable space $E$ and let $ \lambda_2$ and $\lambda_3$ be measures on $E$ and let $U \subset E$. Suppose that the support of $\lambda_1$ is contained in $U$. Then,
\begin{equation*}
\left| \mathcal{KL}(\lambda_1, \lambda_2) - \mathcal{KL}(\lambda_1, \lambda_3) \right| \leq \sup \limits_{x \in U} \left| \log \frac{d \lambda_2}{d \lambda_3} \right|.
\end{equation*}
\end{lemma}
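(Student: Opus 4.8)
The plan is to reduce everything to the chain rule for Radon--Nikodym derivatives. First, if $\sup_{x \in U} \left| \log \frac{d\lambda_2}{d\lambda_3} \right| = \infty$ there is nothing to prove, so I would assume this quantity is finite. In that case $\lambda_2$ and $\lambda_3$ are mutually absolutely continuous where it matters; to make this precise without fuss I would pass to the common dominating measure $\mu := \lambda_2 + \lambda_3$, write $f_i := \frac{d\lambda_i}{d\mu}$ for $i = 1,2,3$, and note that $\lambda_1 \ll \lambda_2 \ll \mu$ (the first since $\mathcal{KL}(\lambda_1, \lambda_2)$ is defined), so that $\frac{d\lambda_1}{d\lambda_2} = f_1/f_2$ and $\frac{d\lambda_1}{d\lambda_3} = f_1/f_3$ hold $\lambda_1$-almost everywhere — here one uses that $\lambda_1(\{f_2 = 0\}) = \lambda_1(\{f_3 = 0\}) = 0$, which follows from $\lambda_1 \ll \lambda_2$ and $\lambda_1 \ll \lambda_3$.

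Next I would compute directly from the definition $\mathcal{KL}(\lambda_1, \lambda_i) = \int_E \log \frac{d\lambda_1}{d\lambda_i}\, d\lambda_1$ that
\[
\mathcal{KL}(\lambda_1, \lambda_2) - \mathcal{KL}(\lambda_1, \lambda_3) = \int_E \log \frac{d\lambda_1/d\lambda_2}{d\lambda_1/d\lambda_3}\, d\lambda_1 = \int_E \log \frac{d\lambda_3}{d\lambda_2}\, d\lambda_1 = - \int_E \log \frac{d\lambda_2}{d\lambda_3}\, d\lambda_1,
\]
where the middle equality is the chain rule, valid $\lambda_1$-a.e.\ since $\frac{d\lambda_1}{d\lambda_3} > 0$ holds $\lambda_1$-a.e. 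Finally, since $\lambda_1$ is a probability measure whose support is contained in $U$, the triangle inequality gives
\[
\left| \mathcal{KL}(\lambda_1, \lambda_2) - \mathcal{KL}(\lambda_1, \lambda_3) \right| = \left| \int_E \log \frac{d\lambda_2}{d\lambda_3}\, d\lambda_1 \right| \leq \int_U \left| \log \frac{d\lambda_2}{d\lambda_3} \right| d\lambda_1 \leq \sup_{x \in U} \left| \log \frac{d\lambda_2}{d\lambda_3} \right|,
\]
which is the claimed bound.

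The only genuine obstacle is the measure-theoretic bookkeeping — ensuring the various Radon--Nikodym derivatives exist and that the chain rule is applied on a set of full $\lambda_1$-measure. As indicated above, this is dealt with by first discarding the trivial case of an infinite right-hand side and then working relative to $\lambda_2 + \lambda_3$; once that is set up, the statement is a one-line computation followed by the triangle inequality and the fact that $\lambda_1(U) = 1$.
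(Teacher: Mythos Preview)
Your proof is correct and follows essentially the same approach as the paper: both reduce the difference $\mathcal{KL}(\lambda_1,\lambda_2)-\mathcal{KL}(\lambda_1,\lambda_3)$ to the integral of $\log\frac{d\lambda_2}{d\lambda_3}$ against $\lambda_1$ and then bound by the supremum on $U$. Your version is simply more careful about the measure-theoretic bookkeeping (dominating measure, chain rule) where the paper's proof manipulates the logarithms directly without comment.
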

	
\begin{proof}
We have
\begin{align*}
\left| \mathcal{KL}(\lambda_1, \lambda_2) - \mathcal{KL}(\lambda_1, \lambda_3) \right| & = \left| \int_U \log \frac{d \lambda_1}{d \lambda_2} \, d \lambda_1 - \int_U \log \frac{d \lambda_1}{d \lambda_3} \, d \lambda_1 \right|\\
& \leq\int_U  \left| \log \frac{d \lambda_1}{d \lambda_2} - \log \frac{d \lambda_1}{d \lambda_3}  \right| \, d \lambda_1 \\
& = \int_{U} \left| \log \frac{d \lambda_2}{d \lambda_3} \right| \, d \lambda_1\\
& \leq \sup \limits_{x \in U} \left| \log \frac{d \lambda_2}{d \lambda_3} \right|.
\end{align*}
\end{proof}
	
We can now prove Lemma \ref{lemma:entvartbound}.

\begin{proof}[Proof of Lemma \ref{lemma:entvartbound}]
This follows easily from Lemma \ref{lemma:rdentvar} and Lemma \ref{lemma:klbound}. 

Let $U$ be the ball in $\Gp$ of centre $\id$ and radius $\epsilon$. Due to properties of the Haar measure we have $H(g) = H(g_0^{-1} g)$ and by definition $\Tr \vart_{g_0}[g] = \Tr \vart_{\id}[g_0^{-1} g]$. This means that it is sufficient to show that
\begin{equation*}
H(g_0^{-1} g) \leq \frac{3}{2} \log \frac{2 \pi e}{3} \Tr \vart_{\id} [g_0^{-1} g] + O(\varepsilon).
\end{equation*}

Recall that $\frac{d \tilde{m}}{d m \circ \log}$ is smooth and equal to $1$ at $\id$. This means that providing $\varepsilon < 1$ on $U$ we have
\begin{equation*}
\frac{d \tilde{m}}{d m \circ \log} = 1 + O(\varepsilon).
\end{equation*}
In particular providing $\varepsilon$ is sufficiently small we have
\begin{equation*}
\sup \limits_{U} \left| \log \frac{d \tilde{m}}{d m \circ \log}  \right| < O(\varepsilon).
\end{equation*}
Clearly 
\begin{equation*}
\mathcal{KL}(g_0^{-1}g, m \circ \log) = \mathcal{KL}(\log(g_0^{-1}g), m).
\end{equation*}
We have by definition that $H(g_0^{-1} g) = \mathcal{KL}(g_0^{-1}g, \tilde{m})$ and by Lemma  \ref{lemma:klbound} we have $\left| \mathcal{KL}(g_0^{-1}g, m \circ \log) - \mathcal{KL}(g_0^{-1}g, \tilde{m}) \right| \leq O(\varepsilon)$. By Lemma \ref{lemma:rdentvar} we know that
\begin{equation*}
\mathcal{KL}(\log(g_0^{-1}g), m) \leq \frac{3}{2} \log \frac{2 \pi e}{3} \Tr \vart_{\id} [g_0^{-1} g].
\end{equation*}
Therefore
\begin{equation*}
H(g_0^{-1} g) \leq \frac{3}{2} \log \frac{2 \pi e}{3} \Tr \vart [g_0^{-1} g] + O(\varepsilon)
\end{equation*}
as required.
\end{proof}

We now have all the tools required to prove Lemma \ref{lemma:rewrite_conditional_entropy}.

\begin{proof}[Proof of Lemma \ref{lemma:rewrite_conditional_entropy}]
First note that we have
\begin{equation*}
H(gs_2 | gs_1) \geq H(g s_2 | g, s_1) = H(s_2)
\end{equation*}
and so
\begin{equation*}
H(gs_2 , gs_1) \geq H(gs_1) + H(s_2).
\end{equation*}
This means that
\begin{align*}
H(gs_1 | g s_2) & = H(gs_2 , gs_1) - H(g s_2)\\
& \geq H(gs_1) - H(gs_2) + H(s_2)\\
& = k + H(s_1).
\end{align*}

Recalling that by Lemma \ref{lemma:relative_ent_expectation} $H(gs_1 | g s_2) = \mathbb{E}[H ((gs_1 | g s_2))]$ we get
\begin{equation*}
\mathbb{E}[H ((gs_1 | g s_2))] \geq k + H(s_1)
\end{equation*}
as required.
\end{proof}

\subsection{Proof of Theorem \ref{theorem:prob_exist}}

We now have everything needed to prove Theorem \ref{theorem:prob_exist}.

\begin{proof}[Proof of Theorem \ref{theorem:prob_exist}]
Note that by Lemma \ref{lemma:rewrite_conditional_entropy} we have
\begin{equation*}
\mathbb{E}[H ((gs_1 | g s_2))] \geq k + H(s_1)
\end{equation*}
and so by Lemma \ref{lemma:entvartbound} we have
\begin{equation}
\mathbb{E}\left[\frac{3}{2} \log\frac{2}{3} \pi e \Tr \vart_{g s_2} [g s_1 | g s_2] \right] + O(\varepsilon) \geq k + H(s_1). \label{eq:vart_thing}
\end{equation}

Note that $(g s_2)^{-1} g = s_2^{-1}$ which is contained in a ball of radius $\varepsilon$ centred on the identity. Therefore by Lemma \ref{lemma:vart_split_add} we have 
\begin{equation*}
\Tr \vart_{g s_2} [g s_1 | g s_2] \leq \Tr \vart_{g s_2} [g | g s_2] + \Tr \vart_{\id} [s_1] +O(\varepsilon^3).
\end{equation*}
Putting this into \eqref{eq:vart_thing} gives
\begin{equation*}
\mathbb{E}\left[\frac{3}{2} \log\frac{2}{3} \pi e (\Tr \vart_{g s_2} [g | g s_2] + \Tr \vart_{\id} [s_1] +O(\varepsilon^3)) \right]+ O(\varepsilon) \geq k + H(s_1) 
\end{equation*}
which becomes
\begin{equation*}
\mathbb{E}\left[\log\ (1 + \frac{\Tr \vart_{g s_2} [g | g s_2]}{\Tr \vart_{\id} [s_1]} +O_A(\varepsilon)) \right] + O(\varepsilon) \geq \frac{2}{3}(k + H(s_1) - \frac{3}{2} \log \frac{2}{3} \pi e \Tr \vart_{\id}[s_1]).
\end{equation*}

Noting that for $x \geq 0$ we have $x \geq \log(1+x)$ we get
\begin{equation*}
\mathbb{E}[\Tr \vart_{g s_2} [g | g s_2] ] \geq \frac{2}{3}(k -c - O_A(\varepsilon))\Tr \vart_{\id}[s_1] 
\end{equation*}
as required.
\end{proof}

\section{Entropy Gap} \label{section:entropy_gap}
The purpose of this section is to prove Proposition \ref{proposition:lots_of_v}. This shows that for a stopped random walk $\gamma_1 \gamma_2 \dots \gamma_{\tau}$ there are many choices of $s$ such that $v(\gamma_1 \gamma_2 \dots \gamma_{\tau};s)$ is large. 

Recall that $v(g;s)$ is defined to be the supremum of all $v \geq 0$ such that we can find some $\sigma$-algebra $\mathcal{A}$ and some $\mathcal{A}$- measurable random variable $a$ taking values in $\Gp$ such that $|\log(a^{-1} g)| \leq s$ and
\begin{equation*}
\mathbb{E} \left[\Tr \vart_{a} \left[ g | \mathcal{A} \right] \right] \geq v s^2.
\end{equation*}

We apply Theorem \ref{theorem:prob_exist} with a careful choice of $s_1$ and $s_2$. We will take these to be compactly supported approximations to the image of spherical normal random variables on $\A$ under $\exp$. More precisely we have the following.

\begin{definition} \label{definition:eta_r_a}
Given $r > 0 $ and $a \geq 1$ let $\eta_{r, a}$ be the random variable on $\R^3$ with density function $f:\R^3 \to \R$ given by
\begin{equation*}
f(x) = 
\begin{cases}
C e^{-\frac{\lnb x \rnb^2}{2r^2}} & \text{ if } \lnb x \rnb \leq ar \\
0 & \text{ otherwise }
\end{cases} 
\end{equation*}
where $C$ is a normalizing constant chosen to ensure that $f$ integrates to $1$.
\end{definition}

We can then define the following family of smoothing functions.

\begin{definition}
Given $r > 0 $ and $a \geq 1$ let $\s_{r, a}$ be the random variable on $\Gp$ given by
\begin{equation*}
\s_{r, a} = \exp(\eta_{r,a}).
\end{equation*}
\end{definition}

In this definition we use our identification of $\A$ with $\R^3$.

After doing some computations on the entropy and variance of the $\eta_{r, a}$ we can prove the following proposition by putting these estimates into Theorem \ref{theorem:prob_exist}.

\begin{proposition} \label{proposition:ent_gap_to_v}
There is some constant $c> 0$ such that the following holds. Let $g$ be a random variable taking values in $\Gp$, let $a \geq 1$ and let $r>0$. Define $k$ by
\begin{equation*}
k = H(g \s_{r, a}) - H(\s_{r, a}) - H(g \s_{2r, a}) + H(\s_{2r, a}).
\end{equation*}
Then
\begin{equation*}
v(g;2ar) \geq c a^{-2} (k - O(e^{-\frac{a^2}{4}}) - O_a(r)).
\end{equation*}
\end{proposition}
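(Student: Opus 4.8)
The plan is to apply Theorem \ref{theorem:prob_exist} with $s_1 = \s_{r,a}$ and $s_2 = \s_{2r,a}$, and to control the error terms $c$ and $C\varepsilon$ appearing there by explicit computations on the entropy and variance of the truncated Gaussians $\eta_{r,a}$. First I would record the scaling relations: since $\eta_{r,a}$ is obtained from $\eta_{1,a}$ by scaling by $r$, we have $H(\eta_{r,a}) = H(\eta_{1,a}) + 3\log r$ and $\Tr \var \eta_{r,a} = r^2 \Tr \var \eta_{1,a}$. Both $H(\eta_{1,a})$ and $\Tr \var \eta_{1,a}$ converge to the corresponding quantities for the full spherical Gaussian $\eta_1$ at rate $O(e^{-a^2/4})$, since the truncated density differs from the true Gaussian density only in its normalizing constant (which is $1 - O(e^{-a^2/2})$) and on the tail $\{\lnb x\rnb > a\}$ (which carries Gaussian mass $O(e^{-a^2/2})$ and contributes $O(a^2 e^{-a^2/2}) = O(e^{-a^2/4})$ to both the entropy integral and the second moment). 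For the full Gaussian, $\frac32 \log \frac23 \pi e \Tr\var\eta_1 - H(\eta_1) = 0$ by the equality case of Lemma \ref{lemma:rdentvar}; hence the quantity $c = \frac32 \log\frac23\pi e\,\Tr\vart_{\id}[\s_{r,a}] - H(\s_{r,a})$ in Theorem \ref{theorem:prob_exist} (note $r$-scaling cancels in this difference, as it must by dimensional analysis: $3\log r$ appears in $H$ and $\log r^2$ inside the $\frac32\log$) equals $O(e^{-a^2/4})$, after also accounting for the discrepancy between $H(\s_{r,a})$ and $H(\eta_{r,a})$ coming from $\frac{d\tilde m}{dm\circ\log}$ being $1 + O(ar)$ on the support — a contribution of $O_a(r)$, handled exactly as in the proof of Lemma \ref{lemma:entvartbound}.

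Next I would verify the remaining hypotheses of Theorem \ref{theorem:prob_exist}: $s_1 = \s_{r,a}$ and $s_2 = \s_{2r,a}$ are absolutely continuous with finite entropy, they are supported on the ball of radius $2ar$ (so take $\varepsilon = 2ar$, which is ``sufficiently small'' once $r$ is small relative to $a$), and $\Tr\vart_{\id}[\s_{r,a}] \geq A\varepsilon^2$ holds with $A$ a function of $a$ only, since $\Tr\vart_{\id}[\s_{r,a}] = r^2\Tr\var\eta_{1,a}(1+O_a(r))$ and $\Tr\var\eta_{1,a} \geq \frac12 \Tr\var\eta_1 = \frac32$ for $a$ not too small, while $\varepsilon^2 = 4a^2r^2$; so $A \cong a^{-2}$. (Here one has to be slightly careful: Theorem \ref{theorem:prob_exist} takes $A$ as given and produces a constant $C = C(A)$; since $A$ depends on $a$, the resulting $C$ depends on $a$, which is why the error term $O_a(r)$ in the conclusion is allowed to depend on $a$.) The value $k$ in Theorem \ref{theorem:prob_exist} is literally the $k$ in the statement of this proposition, and it is assumed positive there; if $k \leq 0$ the asserted lower bound is vacuous once the $O(\cdot)$ terms are absorbed with an appropriate sign convention, so I would dispose of that case first or simply note the bound is trivial.

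Theorem \ref{theorem:prob_exist} then gives
\begin{equation*}
\mathbb{E}\left[\Tr\vart_{g\s_{2r,a}}[g \mid g\s_{2r,a}]\right] \geq \tfrac23(k - c - C\varepsilon)\,\Tr\vart_{\id}[\s_{r,a}],
\end{equation*}
with $c = O(e^{-a^2/4}) + O_a(r)$, $C\varepsilon = O_a(r)$, and $\Tr\vart_{\id}[\s_{r,a}] \cong r^2$. To conclude, I would unwind the definition of $v(g; 2ar)$ (Definition \ref{definition:v_g_r}): taking $\mathcal{A} = \sigma(g\s_{2r,a})$ and $a_{\mathrm{rv}} = g\s_{2r,a}$ as the $\mathcal{A}$-measurable element, we have $a_{\mathrm{rv}}^{-1} g = \s_{2r,a}^{-1}$, whose $\log$ has norm $\leq 2ar$ almost surely, so this choice is admissible and witnesses
\begin{equation*}
v(g; 2ar) \geq \frac{\mathbb{E}\left[\Tr\vart_{g\s_{2r,a}}[g \mid g\s_{2r,a}]\right]}{(2ar)^2} \geq \frac{\tfrac23(k - O(e^{-a^2/4}) - O_a(r))\,\Tr\vart_{\id}[\s_{r,a}]}{4a^2r^2}.
\end{equation*}
Since $\Tr\vart_{\id}[\s_{r,a}] = r^2\Tr\var\eta_{1,a}(1+O_a(r)) = \Theta(r^2)$ with the implied constants absolute (for $a$ bounded below), the right side is $\geq c\,a^{-2}(k - O(e^{-a^2/4}) - O_a(r))$ for a suitable absolute $c > 0$, as claimed. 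The main obstacle I anticipate is not any single hard estimate but rather the bookkeeping: keeping straight which error terms are genuinely absolute, which scale like $O(e^{-a^2/4})$ in $a$, and which are $O_a(r)$ coming from the non-flatness of the Haar measure near $\id$ — in particular making sure that the $\frac32\log r^2$ terms cancel cleanly in $c$ so that $c$ has no $r$-dependence beyond the $O_a(r)$ Haar-measure correction, and that the final division by $r^2$ leaves a constant independent of both $a$ and $r$ multiplying $k$.
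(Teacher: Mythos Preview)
Your proposal is correct and follows essentially the same approach as the paper: apply Theorem \ref{theorem:prob_exist} with $s_1 = \s_{r,a}$, $s_2 = \s_{2r,a}$, control the deficit $c$ via the entropy and variance estimates for the truncated Gaussians (Lemmas \ref{lemma:var_bound_trunc_normal_g} and \ref{lemma:ent_bound_trunc_normal_g} in the paper), and then witness $v(g;2ar)$ by taking $\mathcal{A} = \sigma(g\s_{2r,a})$ and noting $\lnb\log((g\s_{2r,a})^{-1}g)\rnb \leq 2ar$. Your discussion of the error bookkeeping is in fact more explicit than the paper's, which simply cites the preparatory lemmas and writes down the resulting bound.
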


This will be proven in Section \ref{section:smoothing_rvs}.

To make this useful we will need a way to bound $k$ from Proposition \ref{proposition:ent_gap_to_v} from below for appropriately chosen scales. We will do this by bounding
\begin{equation*}
H(g \s_{r, a}) - H(\s_{r, a}) - H(g \s_{2^nr, a}) + H(\s_{2^nr, a})
\end{equation*}
for some carefully chosen $n$ and $r$ and then noting the identity
\begin{align*}
\MoveEqLeft H(g \s_{r, a}) - H(\s_{r, a}) - H(g \s_{2^n r, a}) + H(\s_{2^n r, a})\\ & = \sum_{i=1}^n H(g \s_{2^{i-1}r, a}) - H(\s_{2^{i-1}r, a}) - H(g \s_{2^i r, a}) + H(\s_{2^i r, a}).
\end{align*}
We use this to find scales where we can apply Proposition \ref{proposition:ent_gap_to_v}. Specifically we will prove the following.

\begin{proposition} \label{proposition:stopping_time_gap}
Let $\mu$ be a finitely supported Zariski-dense measure on $\Gp$. Suppose that $M_{\mu} < \infty$ and $h_{RW}/ \chi$ is sufficiently large. Let $\gamma_1, \gamma_2, \dots$ be i.i.d. samples from $\mu$. Let $P>0$, let $w \in \B$ and let $\tau = \tau_{P, w}$ be as in Definition \ref{definition:tau_t_v}. Suppose that $0< r_1 < r_2 < 1$. Suppose that $r_1 < M ^{- \log P / \chi}$. Let $a \geq 1$. Then
\begin{equation}
H (\gamma_1 \gamma_2 \dots \gamma_{\tau} \s_{r_1, a}) \geq \frac{h_{RW}}{\chi} \log P + H ( \s_{a, r_1})  + o_{M, \mu, a, w}(\log P) \label{eq:r1_ent_lower_bound}
\end{equation}
and
\begin{equation}
H (\gamma_1 \gamma_2 \dots \gamma_{\tau} \s_{r_2, a}) \leq 2 \log P + o_{M, \mu, a, w}(\log P). \label{eq:r2_ent_upper_bound}
\end{equation}
In particular
\begin{align}
 \MoveEqLeft H (\gamma_1 \gamma_2 \dots \gamma_{\tau} \s_{r_1, a}) - H ( \s_{r_1, a})  - H (\gamma_1 \gamma_2 \dots \gamma_{\tau} \s_{r_2, a}) + H(\s_{r_2, a}) \nonumber \\ &\geq \left(\frac{h_{RW}}{ \chi} -2 \right) \log P   + 3 \log r_2 + o_{M, \mu, a, w}(\log P). \label{eq:ent_gap}
\end{align}
\end{proposition}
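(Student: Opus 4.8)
The plan is to establish the two entropy bounds \eqref{eq:r1_ent_lower_bound} and \eqref{eq:r2_ent_upper_bound} separately, and then simply combine them. For \eqref{eq:ent_gap} note that once we have those two inequalities, subtracting the second from the first gives
\[
H(\gamma_1\cdots\gamma_\tau \s_{r_1,a}) - H(\gamma_1\cdots\gamma_\tau \s_{r_2,a}) \geq \left(\tfrac{h_{RW}}{\chi}-2\right)\log P + H(\s_{r_1,a}) + o_{M,\mu,a,w}(\log P),
\]
so that after adding $H(\s_{r_2,a}) - H(\s_{r_1,a})$ to both sides we only need to control $H(\s_{r_2,a}) - H(\s_{r_1,a})$. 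Since $\s_{r,a} = \exp(\eta_{r,a})$ and $\eta_{r,a}$ is a truncated spherical Gaussian at scale $r$ on $\A \cong \R^3$, a direct computation (using that $\exp$ has Jacobian $1+O(r)$ near the identity, as in the proof of Lemma \ref{lemma:entvartbound}) gives $H(\s_{r,a}) = 3\log r + O_a(1)$, hence $H(\s_{r_2,a}) - H(\s_{r_1,a}) = 3\log r_2 - 3\log r_1 + O_a(1)$; absorbing the $3\log r_1$ term and the $O_a(1)$ into the $o_{M,\mu,a,w}(\log P)$ error (valid since $r_1$ is bounded below in terms of $P$ only via $r_1 < M^{-\log P/\chi}$, which does not force $\log r_1$ to be negligible — here I need to be careful and instead keep $3\log r_1$ explicit or note $H(\s_{r_1,a})$ appears on the left of \eqref{eq:ent_gap} and cancels) yields \eqref{eq:ent_gap}. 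The cleanest route is: \eqref{eq:ent_gap} is literally \eqref{eq:r1_ent_lower_bound} minus \eqref{eq:r2_ent_upper_bound} minus $H(\s_{r_1,a})$ plus $H(\s_{r_2,a})$, and $H(\s_{r_2,a}) = 3\log r_2 + O_a(1)$ with the $O_a(1)$ absorbed into the error term, so essentially no work is needed beyond the two main bounds and the entropy computation for the smoothing function.

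For the upper bound \eqref{eq:r2_ent_upper_bound}: the random variable $\gamma_1\cdots\gamma_\tau$ is supported in $\Gp$ with operator norm roughly $P$ (Corollary \ref{corollary:size_after_stopping_time}), so $\gamma_1\cdots\gamma_\tau \s_{r_2,a}$ is an absolutely continuous random variable concentrated on a region of $\Gp$ of ``size'' controlled by $P$. The plan is to bound its entropy using the maximum-entropy principle: restrict to a large-probability event where $\lnb \gamma_1\cdots\gamma_\tau\rnb \leq P^{1+\varepsilon}$, cover the relevant part of $\Gp$ by a bounded-multiplicity family of balls, and use that an absolutely continuous measure supported on a set of $\tilde m$-measure $V$ has entropy at most $\log V$ plus the entropy contributed by the smoothing at scale $r_2$. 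The Haar measure of the set of $g \in \Gp$ with $\lnb g \rnb \leq P^{1+\varepsilon}$ is $\Theta(P^{2(1+\varepsilon)})$ (this is the standard volume estimate $\tilde m(\{\lnb g\rnb \leq T\}) \cong T^2$), giving the main term $2\log P$; the contribution of the low-probability tail is handled via Lemma \ref{lemma:entropy_is_almost_convex} and crude entropy bounds, contributing $o(\log P)$.

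For the lower bound \eqref{eq:r1_ent_lower_bound}: this is where the random walk entropy $h_{RW}$ enters, and I expect it to be the main obstacle. The idea is that $\tau \approx \log P/\chi$ with exponentially good concentration (Corollary \ref{corollary:large_deviations_stopping_time_v}), so $\gamma_1\cdots\gamma_\tau$ is a product of roughly $\log P/\chi$ i.i.d.\ factors, and the discrete entropy of such a product is $\approx (\log P/\chi) h_{RW}$ — provided the map from words to group elements is sufficiently injective, which is exactly what $M_\mu < \infty$ and $r_1 < M^{-\log P/\chi}$ guarantee: distinct words of length $\lesssim \log P/\chi$ produce group elements separated by at least $M^{-\log P/\chi} > r_1$, so smoothing by $\s_{r_1,a}$ (supported in a ball of radius $ar_1$, essentially of radius $r_1$) keeps the pieces disjoint and Lemma \ref{lemma:entropy_adds} gives $H(\gamma_1\cdots\gamma_\tau \s_{r_1,a}) = H(\gamma_1\cdots\gamma_\tau) + H(\s_{r_1,a})$ (after conditioning on $\tau$ and handling the $\tau$-dependence, which costs $O(H(\tau)) = o(\log P)$ by Lemma \ref{lemma:entropy_is_almost_convex} since $\tau$ is concentrated in an interval of polynomial length). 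It then remains to show $H(\gamma_1\cdots\gamma_\tau) \geq \frac{h_{RW}}{\chi}\log P + o(\log P)$, which follows by comparison with the fixed-length random walk: $H(\gamma_1\cdots\gamma_n)/n \to h_{RW}$ by definition/subadditivity of random walk entropy, $\tau$ concentrates at $\log P/\chi$, and entropy is monotone under the projection forgetting the last few factors (Lemma \ref{lemma:ent_increase}) so one controls the difference between the stopped and the deterministic-length walk. The separation bound $r_1 < M^{-\log P/\chi}$ must be combined with the tail estimate on $\tau$ to ensure that even on the event $\{\tau$ slightly larger than $\log P/\chi\}$ the separation still holds (or that event has negligible probability and negligible entropy contribution) — getting this bookkeeping right, together with a clean statement of the subadditivity giving $h_{RW} = \lim \frac1n H(\mu^{*n})$ and transferring it to the stopped walk, is the delicate part.
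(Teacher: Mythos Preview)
Your proposal is correct and follows essentially the same route as the paper. The upper bound \eqref{eq:r2_ent_upper_bound} is exactly as you describe (restrict to $\lnb q_\tau\rnb \leq P^{1+\varepsilon}$, use the Haar-volume estimate $\tilde m(\{\lnb g\rnb\leq T\}) \lesssim T^2$ and Jensen, handle the tail by large deviations), and the deduction of \eqref{eq:ent_gap} from the two bounds plus $H(\s_{r_2,a})=3\log r_2+O_a(1)$ is the paper's argument verbatim. For the lower bound, the paper implements precisely your ``compare the stopped walk to the fixed-length walk by paying for the last $O(\varepsilon m)$ factors'' idea: it sets $m=\lfloor\log P/\chi\rfloor$, introduces the auxiliary variable $\tilde X=(q_{\lfloor(1-\varepsilon)m\rfloor},\gamma_{\lfloor(1-\varepsilon)m\rfloor+1},\dots,\gamma_{\lceil(1+\varepsilon)m\rceil})$, and bounds $H(q_{\tilde\tau})\geq H(\tilde X)-H(\tilde X\mid q_{\tilde\tau})\geq h_{RW}m - O(\varepsilon m\log K)$ by counting preimages. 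Your phrase ``condition on $\tau$, cost $O(H(\tau))$'' is slightly imprecise (conditioning on $\tau$ changes the law of $q_n$, so one does not directly get $H(q_n\mid\tau=n)\geq n h_{RW}$), but your next sentence --- comparing to the deterministic-length walk via monotonicity after revealing the last $O(\varepsilon m)$ increments --- is exactly the right fix and is what the $\tilde X$ device encodes.
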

This is proven in Section \ref{section:subsection_entropy_gap}.
This proposition is unsurprising. To motivate \eqref{eq:r1_ent_lower_bound} note that it is well known that with high probability $\tau \approx \log P / \chi$. We also know by the definition of $h_{RW}$ that
\begin{equation*}
H(\gamma_1 \gamma_2 \dots \gamma_{\floor{\log P / \chi}}) \geq h_{RW} \floor{\log P / \chi}.
\end{equation*}
Providing $P$ is sufficiently large $\s_{r_1, a}$ is contained in a ball with centre $\id$ and radius $O_{M, \mu, a}(M^{- \log P / \chi})$. In particular providing $P$ is sufficiently large this radius is less than half the minimum distance between points in the image of $\gamma_1 \gamma_2 \dots \gamma_{\floor{\log P / \chi}}$ and so $H(\gamma_1 \gamma_2 \dots \gamma_{\floor{\log P / \chi}} \s_{r_1, a}) = H(\gamma_1 \gamma_2 \dots \gamma_{\floor{\log P / \chi}}) + H(\s_{r_1, a}) $. It turns out we can prove something similar when $\floor{\log P / \chi}$ is replaced by $\tau$.

The bound \eqref{eq:r2_ent_upper_bound} follows easily from the fact that the Haar measure of most of the image of $\gamma_1 \gamma_2 \dots \gamma_{\tau} \s_{r_2, a}$ is at most $O_{\mu, a}(P^2)$.

Finally \eqref{eq:ent_gap} follows from combining \eqref{eq:r1_ent_lower_bound} and \eqref{eq:r2_ent_upper_bound} and noting that $H(\s_{r_2, a}) = 3 \log r_2 + O(1)$.

We then combine Propositions \ref{proposition:ent_gap_to_v} and \ref{proposition:stopping_time_gap} to get the following.

\begin{proposition} \label{proposition:int_of_v}
There is some absolute constant $c > 0$ such that the following is true. Suppose that $\mu$ finitely supported Zariski-dense probability measure. Suppose that $M_{\mu} < \infty$ and that $h_{RW}/\chi$ is sufficiently large. Let $M > M_{\mu}$. Suppose that $M$ is chosen large enough that $h_{RW} \leq \log M$. Let $\gamma_1, \gamma_2, \dots$ be i.i.d.\ samples from $\mu$ and let $b \in \B$. Then for all sufficiently large (depending on $M$, $\mu$ and $w$) $P$ we have
\begin{equation*}
\int_{P^{ - \frac{\log M}{\log \chi}}}^{P^{ - \frac{h_{RW}}{10 \log \chi}}} \frac{1}{u} v(\gamma_1 \gamma_2 \dots \gamma_{\tau_{P, b}}; u) \, du \geq c \left(\frac{h_{RW}}{ \chi}\right) \left( \max \left \{ 1,  \log \frac{\log M}{\chi} \right \} \right)^{-1} \log P .
\end{equation*}

\end{proposition}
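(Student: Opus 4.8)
The plan is to telescope the entropy gap of Proposition \ref{proposition:stopping_time_gap} along a geometric sequence of scales, convert each increment into a lower bound for $v(\gamma_1 \gamma_2 \dots \gamma_{\tau_{P,b}};\cdot)$ via Proposition \ref{proposition:ent_gap_to_v}, and then pass from the resulting sum over scales to the integral. Write $g := \gamma_1 \gamma_2 \dots \gamma_{\tau_{P,b}}$.

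\textbf{Setting up the scales.} Fix a parameter $a \ge 1$ to be chosen at the end, put $r_1 := \frac{1}{2a} P^{-\log M / \chi}$, let $n$ be the largest integer with $2^{n+1} a r_1 \le P^{-h_{RW}/(10\chi)}$, set $r_2 := 2^n r_1$, and let $u_i := 2a\, 2^{i-1} r_1$ for $i = 1, 2, \dots, n+1$. Then $u_1 = P^{-\log M/\chi}$, $u_{n+1} \le P^{-h_{RW}/(10\chi)}$, and the dyadic intervals $[u_i, u_{i+1}] = [u_i, 2u_i]$ partition $[u_1, u_{n+1}] \subseteq [P^{-\log M/\chi}, P^{-h_{RW}/(10\chi)}]$. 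Since $P^{-\log M/\chi} = M^{-\log P / \chi}$ and $2a > 1$ we get $r_1 < M^{-\log P/\chi}$; moreover $0 < r_1 < r_2 < 1$ and $n = \Theta\big((\log M/\chi)\log P\big)$, once $P$ is large (depending on $M$, $\mu$, $b$; note $\log M \ge h_{RW}$ guarantees the range is nonempty).

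\textbf{Telescoping and applying the two propositions.} For $i = 1, \dots, n$ set
\begin{equation*}
k_i := H(g\, \s_{2^{i-1}r_1, a}) - H(\s_{2^{i-1}r_1, a}) - H(g\, \s_{2^{i}r_1, a}) + H(\s_{2^{i}r_1, a}),
\end{equation*}
so that $\sum_{i=1}^n k_i = H(g \s_{r_1,a}) - H(\s_{r_1,a}) - H(g \s_{r_2,a}) + H(\s_{r_2,a})$, which by \eqref{eq:ent_gap} is at least $(h_{RW}/\chi - 2)\log P + 3\log r_2 + o_{M,\mu,a,b}(\log P)$. Since $\log r_2 = -\tfrac{h_{RW}}{10\chi}\log P + O_a(1)$ and $h_{RW}/\chi$ is large, this gives $\sum_{i=1}^n k_i \gtrsim (h_{RW}/\chi)\log P$ for $P$ large. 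On the other hand, applying Proposition \ref{proposition:ent_gap_to_v} to $g$ with $r = 2^{i-1}r_1$ gives $v(g; u_i) \ge c\, a^{-2}\big(k_i - O(e^{-a^2/4}) - O_a(2^{i-1}r_1)\big)$ for each $i$; summing over $i$ and using $\sum_{i=1}^n 2^{i-1}r_1 \le r_2$,
\begin{equation*}
\sum_{i=1}^n v(g; u_i) \ge c\, a^{-2}\Big( \sum_{i=1}^n k_i - n\, O(e^{-a^2/4}) - O_a(r_2) \Big).
\end{equation*}

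\textbf{Choosing $a$ and passing to the integral.} Taking $a^2 = 8\max\{1, \log(\log M/\chi)\}$ makes $e^{-a^2/4} \le (\log M/\chi)^{-2}$, so $n\, O(e^{-a^2/4}) = O\big((\log M/\chi)^{-1}\log P\big) = O\big((h_{RW}/\chi)^{-1}\log P\big)$, which is a small fraction of $(h_{RW}/\chi)\log P$ once $h_{RW}/\chi$ is large, while $O_a(r_2) = o_{M,\mu}(\log P)$; hence $\sum_{i=1}^n v(g; u_i) \gtrsim a^{-2}(h_{RW}/\chi)\log P$. Finally, the definition of $v(g;\cdot)$ immediately yields $v(g; u') \ge (u/u')^2\, v(g; u)$ for $u' \ge u$ (any witnessing $\sigma$-algebra and measurable element for $v(g;u)$ also witnesses $v(g;u')$ after rescaling the variance bound), so $v(g; u) \ge \tfrac14 v(g; u_i)$ for $u \in [u_i, 2u_i]$, and therefore
\begin{equation*}
\int_{P^{-\log M/\chi}}^{P^{-h_{RW}/(10\chi)}} \frac{v(g; u)}{u}\, du \ge \sum_{i=1}^n \int_{u_i}^{2u_i} \frac{v(g; u)}{u}\, du \ge \frac{\log 2}{4} \sum_{i=1}^n v(g; u_i) \gtrsim a^{-2}\,\frac{h_{RW}}{\chi}\,\log P.
\end{equation*}
Substituting $a^{-2} = \Theta\big((\max\{1, \log(\log M/\chi)\})^{-1}\big)$ gives the claim with an absolute constant $c$.

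\textbf{The main obstacle.} The delicate point is the competition between the number of scales and the per-scale loss. The total entropy gap is only of order $(h_{RW}/\chi)\log P$, but the prescribed range of scales forces $n = \Theta((\log M/\chi)\log P)$ dyadic steps, which can be far larger; so the cumulative errors $n\, O(e^{-a^2/4})$ coming from the compact-support cutoff built into the smoothing variables $\s_{r,a}$ in Proposition \ref{proposition:ent_gap_to_v} would swamp the main term for bounded $a$. Letting $a$ grow like $\sqrt{\log(\log M/\chi)}$ is exactly what makes $e^{-a^2/4}$ beat $n$, and the resulting factor $a^{-2} = \Theta((\max\{1,\log(\log M/\chi)\})^{-1})$ is precisely the weakening in the conclusion. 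One must also check that the $o_{M,\mu,a,b}(\log P)$ term in \eqref{eq:ent_gap} and the $O_a(\cdot)$ errors are genuinely negligible: this is fine because, once $a$ is fixed as a function of $M$ and $\mu$, these become $o_{M,\mu,b}(\log P)$, and $P$ is taken large depending on $M$, $\mu$, $b$.
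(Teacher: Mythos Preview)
Your proof is correct and follows essentially the same skeleton as the paper's: telescope the entropy gap from Proposition \ref{proposition:stopping_time_gap} along dyadic scales, convert each increment via Proposition \ref{proposition:ent_gap_to_v}, and tune the truncation parameter $a$ so that the accumulated $n\,O(e^{-a^2/4})$ error is beaten by the main term. The choice of $a$ and the bookkeeping of error terms are handled correctly.

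The one genuine difference is in how the discrete sum $\sum_i v(g;u_i)$ is converted into the integral $\int \tfrac{1}{u} v(g;u)\,du$. The paper introduces a sliding parameter $u \in [1,2)$, replacing each scale $2^{i-1}r_1$ by $2^{i-1}u\,r_1$, and then integrates over $u$; by the change of variables $s = a\,2^i u\,r_1$ this exactly assembles the logarithmic integral over the full range. You instead observe the elementary monotonicity $v(g;u') \ge (u/u')^2\, v(g;u)$ for $u' \ge u$ (which follows directly from Definition \ref{definition:v_g_r}, since any witness at scale $u$ is also a witness at scale $u'$ with the variance ratio rescaled), and use it to bound $v(g;u) \ge \tfrac14 v(g;u_i)$ on each dyadic block. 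Your route is arguably cleaner, avoiding the auxiliary integration. The paper's route has the minor advantage that it does not lose the factor $\tfrac{\log 2}{4}$, but this is irrelevant for an absolute-constant conclusion. A further small difference: the paper chooses $a^2 \asymp \log(\log M / h_{RW})$, which in fact yields the slightly stronger bound stated in Proposition \ref{proposition:lots_of_v}; your choice $a^2 \asymp \log(\log M/\chi)$ matches the (weaker) denominator appearing in the statement of Proposition \ref{proposition:int_of_v} exactly.
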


We prove this in Section \ref{section:variance_of_disint_stopped_rw}. Proposition \ref{proposition:lots_of_v} follows easily from this.

\subsection{Smoothing random variables} \label{section:smoothing_rvs}

In this subsection we give bounds on the variance and entropy of the $\s_{r, a}$ and use this to prove Proposition \ref{proposition:ent_gap_to_v}.

Recall the definition of $\eta_{r, a}$ from Definition \ref{definition:eta_r_a}. First we have the following.

\begin{lemma} \label{lemma:var_bound_trunc_normal}
Let $r > 0 $ and $a \geq 1$. Then
\begin{equation*}
\Theta( r^2) \leq \Tr \var \eta_{r, a} \leq 3 r^2.
\end{equation*}
\end{lemma}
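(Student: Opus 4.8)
The plan is to reduce to $r = 1$ by scaling and then read off both bounds from elementary estimates on the one-dimensional radial density of $\eta_{1,a}$.

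First I would note that if $X$ has law $\eta_{r,a}$ then $X/r$ has law $\eta_{1,a}$, and that since the density of $\eta_{r,a}$ is spherically symmetric, $X$ has mean $0$ and covariance a scalar multiple of the identity; interpreting $\var \eta_{r,a}$ as $\mathbb{E}\|X\|^2$ (equivalently, the trace of the covariance matrix) this gives $\var \eta_{r,a} = r^2 \var \eta_{1,a}$, so it is enough to prove $c \le \var \eta_{1,a} \le 3$ for an absolute constant $c > 0$, uniformly in $a \ge 1$. Now $\eta_{1,a}$ is the law of a standard Gaussian $Z \sim N(0, I_3)$ conditioned on the event $\{\|Z\| \le a\}$. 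Writing $\rho$ for the density of $\|Z\|$ on $[0,\infty)$ — so $\rho(s)$ is proportional to $s^2 e^{-s^2/2}$, has total mass $1$, and satisfies $\int_0^\infty s^2 \rho(s)\,ds = \mathbb{E}\|Z\|^2 = 3$ — we obtain
$$\var \eta_{1,a} = \mathbb{E}\big[\,\|Z\|^2 \,\big|\, \|Z\| \le a \,\big] = \frac{\int_0^a s^2\rho(s)\,ds}{\int_0^a \rho(s)\,ds}.$$

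For the upper bound I would show $\int_0^a (3 - s^2)\rho(s)\,ds \ge 0$, which rearranges to $\var \eta_{1,a} \le 3$: if $a \le \sqrt{3}$ the integrand is nonnegative throughout $[0,a]$, while if $a \ge \sqrt{3}$ one uses $\int_0^\infty (3 - s^2)\rho(s)\,ds = 0$ to rewrite the integral as $\int_a^\infty (s^2 - 3)\rho(s)\,ds \ge 0$. For the lower bound, since $a \ge 1$ the numerator above is at least $\int_0^1 s^2\rho(s)\,ds$ and the denominator is at most $\int_0^\infty \rho(s)\,ds = 1$, so $\var \eta_{1,a} \ge \int_0^1 s^2\rho(s)\,ds =: c > 0$, an absolute constant; multiplying through by $r^2$ finishes the proof. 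I do not anticipate any real difficulty here: the only points requiring care are fixing the meaning of $\var$ for an $\R^3$-valued variable and the bookkeeping in the scaling and conditioning reductions.
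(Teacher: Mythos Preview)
Your proof is correct and complete. The paper itself declares this lemma trivial and leaves the proof to the reader, so there is nothing to compare against; your scaling reduction followed by the elementary estimates on the conditioned chi distribution is exactly the kind of argument the author is implicitly invoking.
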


The proof of this lemma is trivial and is left to the reader.

\begin{lemma} \label{lemma:ent_bound_trunc_normal}
There is some constant $c>0$ such that the following is true. Let $r > 0 $ and $a \geq 1$. Then
\begin{equation*}
H(\eta_{r, a}) = \frac{3}{2} \log 2 \pi e r^{2} + O(e^{ - \frac{a^2}{4}}).
\end{equation*} 
\end{lemma}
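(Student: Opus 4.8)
The plan is to first reduce to the case $r=1$ by scaling. The density of $\eta_{r,a}$ is obtained from that of $\eta_{1,a}$ by the substitution $x \mapsto x/r$, so $\eta_{r,a} = r\,\eta_{1,a}$ in distribution, and hence $H(\eta_{r,a}) = H(\eta_{1,a}) + 3\log r$. Since $\frac{3}{2}\log(2\pi e r^2) = \frac{3}{2}\log(2\pi e) + 3\log r$, it is enough to prove
\[
H(\eta_{1,a}) = \frac{3}{2}\log(2\pi e) + O(e^{-a^2/4}),
\]
with an absolute implied constant.

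Next I would compute $H(\eta_{1,a})$ explicitly. Write $\varphi$ for the standard Gaussian density on $\R^3$, let $B = B(0,a)$ be the ball of radius $a$, and let $p = \int_B \varphi\,dx = \mathbb{P}(\|Z\|\le a)$, where $Z$ is standard Gaussian on $\R^3$; then $\eta_{1,a}$ has density $\varphi/p$ on $B$ and $0$ elsewhere. Using $\log\varphi(x) = -\frac{3}{2}\log(2\pi) - \frac{1}{2}\|x\|^2$, a direct calculation gives
\[
H(\eta_{1,a}) = -\int_B \frac{\varphi}{p}\log\frac{\varphi}{p}\,dx = \frac{3}{2}\log(2\pi) + \frac{1}{2p}\int_B \|x\|^2\varphi(x)\,dx + \log p.
\]
Writing $p = 1-q$ with $q = \mathbb{P}(\|Z\|>a)$, and $\int_B\|x\|^2\varphi\,dx = 3 - E$ with $E = \mathbb{E}\bigl[\|Z\|^2\mathbf{1}_{\{\|Z\|>a\}}\bigr]$ (using $\int_{\R^3}\|x\|^2\varphi = 3$), this rearranges to
\[
H(\eta_{1,a}) - \frac{3}{2}\log(2\pi e) = \frac{3q - E}{2(1-q)} + \log(1-q).
\]

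It then remains to bound the tail quantities $q$ and $E$. I would use that $\|Z\|$ has density $\sqrt{2/\pi}\,s^2 e^{-s^2/2}$ on $(0,\infty)$; integration by parts gives $q = \sqrt{2/\pi}\bigl(ae^{-a^2/2} + \int_a^\infty e^{-s^2/2}\,ds\bigr) \le C(1+a)e^{-a^2/2}$ and, similarly, $E = \sqrt{2/\pi}\int_a^\infty s^4 e^{-s^2/2}\,ds \le C(1+a^3)e^{-a^2/2}$. For $a\ge 1$ the polynomial factors are absorbed into the exponential, at the cost of weakening the exponent $a^2/2$ to $a^2/4$, so $q, E = O(e^{-a^2/4})$; moreover $q \le \mathbb{P}(\|Z\|>1) < 1$ is bounded away from $1$ for $a\ge 1$, so $\tfrac{1}{1-q} = O(1)$ and $|\log(1-q)| \le \tfrac{q}{1-q} = O(e^{-a^2/4})$. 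Substituting into the displayed identity gives the claim. There is no serious obstacle here: the whole argument is elementary, and the only point requiring a little care is carrying the polynomial-in-$a$ corrections through the tail estimates — which is exactly why the slack between $e^{-a^2/2}$ and the claimed $e^{-a^2/4}$ is present.
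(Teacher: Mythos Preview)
Your proof is correct and follows essentially the same approach as the paper: reduce to $r=1$ by scaling, compute the entropy explicitly in terms of Gaussian integrals, and absorb the polynomial prefactors in the tail bounds into the gap between $e^{-a^2/2}$ and $e^{-a^2/4}$. The only organizational difference is that you derive a single exact identity for $H(\eta_{1,a}) - \tfrac{3}{2}\log(2\pi e)$ and bound it directly, whereas the paper establishes the lower bound by the same direct computation but obtains the upper bound separately from the entropy--variance inequality $H(X) \le \tfrac{3}{2}\log\bigl(\tfrac{2\pi e}{3}\Tr\var X\bigr)$ combined with $\Tr\var\eta_{r,a}\le 3r^2$; your route is marginally cleaner but not materially different.
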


The proof of Lemma \ref{lemma:ent_bound_trunc_normal} is a simple computation which we will do later. We deduce the following about $\s_{r, a}$.

\begin{lemma} \label{lemma:var_bound_trunc_normal_g}
Let $r > 0 $ and $a \geq 1$. Suppose that $ar$ is sufficiently small. Then
\begin{equation*}
\Theta( r^2) \leq \Tr \vart_{\id} \s_{r, a} \leq  3 r^2.
\end{equation*}
\end{lemma}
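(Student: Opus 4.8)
The plan is to reduce the statement directly to Lemma \ref{lemma:var_bound_trunc_normal} via the definition of $\vart$. First I would recall that, by definition, $\vart_{\id}[g]$ is the covariance matrix of $\log(g)$, so that $\Tr \vart_{\id} \s_{r, a}$ is the trace of the covariance matrix of $\log(\s_{r, a})$. The one point that needs a word of justification is that $\log(\s_{r, a}) = \eta_{r, a}$ almost surely: since $ar$ is sufficiently small, the closed ball of radius $ar$ about the origin in $\A$ lies inside the open neighbourhood of $0$ on which $\exp$ is a diffeomorphism with inverse $\log$, and $\eta_{r, a}$ is supported on that ball, so $\log(\exp(\eta_{r, a})) = \eta_{r, a}$ holds pointwise on the support of $\eta_{r,a}$. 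Hence $\Tr \vart_{\id} \s_{r, a}$ equals the trace of the covariance matrix of $\eta_{r,a}$, i.e.\ $\var \eta_{r, a}$.

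Then I would simply invoke Lemma \ref{lemma:var_bound_trunc_normal}, which asserts $\Theta(r^2) \leq \var \eta_{r, a} \leq 3r^2$, and this is exactly the claimed bound on $\Tr \vart_{\id} \s_{r, a}$, completing the proof.

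There is no substantive obstacle here: the entire content is the identification $\Tr \vart_{\id} \s_{r,a} = \var \eta_{r,a}$. The only thing requiring care is the assertion that $\exp$ is a diffeomorphism on the ball of radius $ar$ with $\log$ as a genuine inverse there, which is precisely what the hypothesis ``$ar$ sufficiently small'' is present to guarantee, given that $\log$ is defined on some fixed open neighbourhood of $\id$ in $\Gp$ and $d\exp|_0 = \id$.
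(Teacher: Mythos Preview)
Your proof is correct and follows exactly the paper's approach: the paper's proof is the single sentence ``This follows immediately from substituting Lemma \ref{lemma:var_bound_trunc_normal} into the definition of $\vart$.'' Your added justification that $\log(\s_{r,a}) = \eta_{r,a}$ when $ar$ is sufficiently small is the only detail left implicit in the paper, and you have supplied it correctly.
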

\begin{proof}
This follows immediately from substituting Lemma \ref{lemma:var_bound_trunc_normal} into the definition of $\vart_{\id}$.
\end{proof}

\begin{lemma} \label{lemma:ent_bound_trunc_normal_g}
Let $r > 0 $ and $a \geq 1$. Then
\begin{equation*}
H(\s_{r, a}) = \frac{3}{2} \log 2 \pi e r^{2} + O(e^{-\frac{a^2}{4}}) + O_a(r).
\end{equation*} 
\end{lemma}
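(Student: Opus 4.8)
The plan is to push the computation down to the Lie algebra $\A$ via $\exp$, where Lemma \ref{lemma:ent_bound_trunc_normal} already does the work, and then to estimate the error coming from the fact that the reference measure on $\Gp$ is $\tilde m$ rather than the pushforward of Lebesgue measure on $\A$.

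Recall from Definition \ref{definition:tilde_m} that $\tilde m$ is the Haar measure normalised so that $\frac{d\tilde m}{d m\circ\log}(\id)=1$, where $m$ is Lebesgue measure on $\A\cong\R^3$. Equivalently, there is a smooth positive function $J$ defined on a neighbourhood of $0$ in $\A$ with $J(0)=1$ such that $\tilde m$ is the pushforward under $\exp$ of the measure with density $J$ with respect to $m$. Now $\s_{r,a}=\exp(\eta_{r,a})$, and $\eta_{r,a}$ has a density $f$ with respect to $m$ supported in the ball of radius $ar$ about $0$ (Definition \ref{definition:eta_r_a}); assuming $ar$ is small enough that $\exp$ is a diffeomorphism on this ball, the density of $\mathcal{L}(\s_{r,a})$ with respect to $\tilde m$ is $u\mapsto f(u)/J(u)$ in the chart given by $\log$. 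Unwinding the definition of entropy as minus the KL divergence against $\tilde m$ and changing variables back to $\A$,
\begin{equation*}
H(\s_{r,a}) = -\int_{\A}\log\frac{f(u)}{J(u)}\,f(u)\,dm(u) = H(\eta_{r,a}) + \int_{\A}\bigl(\log J(u)\bigr)f(u)\,dm(u).
\end{equation*}

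Since $J$ is smooth with $J(0)=1$, the function $\log J$ is Lipschitz near $0$ and vanishes at $0$, so $|\log J(u)|\leq O(\lnb u\rnb)$ on the support of $f$; as that support lies in the ball of radius $ar$, the correction term is $O(ar)=O_a(r)$. Combining this with Lemma \ref{lemma:ent_bound_trunc_normal}, which gives $H(\eta_{r,a})=\frac{3}{2}\log 2\pi e r^{2}+O(e^{-a^2/4})$, yields
\begin{equation*}
H(\s_{r,a}) = \frac{3}{2}\log 2\pi e r^{2}+O(e^{-\frac{a^2}{4}})+O_a(r)
\end{equation*}
as required. The only point requiring care — and the closest thing to an obstacle — is the change-of-reference-measure bookkeeping: one must check that the Radon--Nikodym derivative of a pushforward under the injective map $\exp$ is the composition of the derivative with $\log$, and that the Jacobian $J$ relating $\tilde m$ to Lebesgue measure is Lipschitz at $\id$ with value $1$ there, both of which are immediate from smoothness of $\exp$ and of the Haar density; once this is in place the estimate is a one-line consequence of Lemma \ref{lemma:ent_bound_trunc_normal}.
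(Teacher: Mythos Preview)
Your proof is correct and follows essentially the same route as the paper. The paper's one-line proof cites Lemma~\ref{lemma:ent_bound_trunc_normal} together with Lemma~\ref{lemma:klbound}; your explicit change-of-variables identity $H(\s_{r,a})=H(\eta_{r,a})+\int (\log J)\,f\,dm$ and the bound $|\log J(u)|\leq O(\lnb u\rnb)$ on the support are exactly the content of applying Lemma~\ref{lemma:klbound} with $\lambda_2=\tilde m$ and $\lambda_3=m\circ\log$, so the two arguments are the same computation at different levels of abstraction.
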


\begin{proof}
This follows immediately from Lemma \ref{lemma:ent_bound_trunc_normal} and Lemma \ref{lemma:klbound}.
\end{proof}
We also have the following fact.

\begin{lemma} \label{lemma:psi_m_small}
Let $r > 0 $ and $a \geq 1$. Suppose that $ar$ is sufficiently small. Then
\begin{equation*}
\lnb \log(\s_{r, a}) \rnb \leq ar
\end{equation*}
almost surely.
\end{lemma}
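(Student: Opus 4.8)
The plan is simply to unwind the definition: $\s_{r,a} = \exp(\eta_{r,a})$, so $\log(\s_{r,a}) = \log(\exp(\eta_{r,a}))$, and then to use that $\log$ is a genuine local inverse of $\exp$ near $\id$. Recall that $\exp \colon \A \to \Gp$ is a diffeomorphism from some open neighbourhood of $0$ onto an open neighbourhood of $\id$, and that $\log$ was fixed to be an inverse of $\exp$ on such a neighbourhood. Hence there is some $\rho > 0$, depending only on our fixed choices of basis and of the branch of $\log$, such that $\exp$ restricts to a diffeomorphism on the closed ball $\{ u \in \A : \lnb u \rnb \leq \rho \}$ and $\log(\exp(u)) = u$ for every such $u$.

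First I would make precise that ``$ar$ sufficiently small'' means $ar \leq \rho$ for this $\rho$. By Definition \ref{definition:eta_r_a}, the density of $\eta_{r,a}$ vanishes outside $\{ x \in \R^3 : \lnb x \rnb \leq ar \}$, so under our identification of $\A$ with $\R^3$ we have $\lnb \eta_{r,a} \rnb \leq ar \leq \rho$ almost surely. Consequently $\eta_{r,a}$ lies in the region where $\log \circ \exp$ is the identity, and therefore
\begin{equation*}
\log(\s_{r,a}) = \log(\exp(\eta_{r,a})) = \eta_{r,a}
\end{equation*}
almost surely, which gives $\lnb \log(\s_{r,a}) \rnb = \lnb \eta_{r,a} \rnb \leq ar$ almost surely, as required.

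There is no real obstacle here: the only input is the choice of the threshold $\rho$ below which $\exp$ is invertible with inverse $\log$, and once $ar$ is below that threshold the identity $\log(\s_{r,a}) = \eta_{r,a}$ holds exactly, with no error term. I expect this to be the most routine statement in the section, included only so that later arguments may freely assume $\s_{r,a}$ is supported in the ball of radius $ar$ about $\id$.
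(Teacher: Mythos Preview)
Your argument is correct and is exactly what the paper does: the proof there simply reads ``This is trivial from the definition of $\s_{r,a}$,'' and your unwinding of that definition is the intended justification.
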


\begin{proof}
This is trivial from the definition of $\s_{r, a}$.
\end{proof}

We now have enough to prove Proposition \ref{proposition:ent_gap_to_v}.

\begin{proof}[Proof of Proposition \ref{proposition:ent_gap_to_v}]
We apply Theorem \ref{theorem:prob_exist} with $s_1 = \s_{r, a}$ and $s_2 = \s_{2r, a}$. We also take $\varepsilon = 3ar$.

By Lemma \ref{lemma:var_bound_trunc_normal_g} we know that
\begin{equation*}
\Tr \vart_{\id} [s_1] \geq \Theta(r^2) \geq \Theta_a(\varepsilon^2)
\end{equation*}
and by Lemmas \ref{lemma:ent_bound_trunc_normal_g} and \ref{lemma:var_bound_trunc_normal_g} we know that
\begin{equation*}
c = \frac{3}{2} \log \frac{2}{3} \pi e \Tr \vart[s_1] - H(s_1) \leq O(e^{- \frac{a^2}{4}}).
\end{equation*}
This means that
\begin{equation*}
\mathbb{E}[\Tr \vart_{g s_2}[g | g s_2]] \geq \frac{2}{3} (k - O(e^{- \frac{a^2}{4}}) - O_a(r)) ( c r^2)
\end{equation*}
for some absolute constant $c >0$.

We know that
\begin{equation*}
\lnb \log \left( (g s_2)^{-1} g \right) \rnb = \lnb \log s_2 \rnb \leq 2ar
\end{equation*}
and so by the definition of $v(\cdot; \cdot)$ we have
\begin{align*}
v(g; 2ar) &\geq (2ar)^{-2} \mathbb{E}[\Tr \vart_{g s_2}[g | g s_2]]\\
& \geq c' a^{-2} (k - O(e^{- \frac{a^2}{4}}) - O_a(r))
\end{align*}
for some absolute constant $c'>0$.
\end{proof}

To finish the subsection we just need to prove Lemma \ref{lemma:ent_bound_trunc_normal}.

\begin{proof}[Proof of Lemma \ref{lemma:ent_bound_trunc_normal}]
Recall that $\eta_{a,r}$ has density function $f:\R^3 \to \R$ given by
\begin{equation*}
f(x) = 
\begin{cases}
C e^{-\frac{\lnb x \rnb^2}{2r^2}} & \text{ if } \lnb x \rnb \leq ar \\
0 & \text{ otherwise }
\end{cases} 
\end{equation*}
where $C$ is a normalizing constant chosen to ensure that $f$ integrates to $1$. 

First we will deal with the case where $r=1$. Note that
\begin{equation*}
\int_{x \in \R^3 : \lnb x \rnb \leq a} e^{-\frac{x^2}{2}} \, dx \leq \int_{ \R^3} e^{-\frac{x^2}{2}} \, dx = \left( 2 \pi \right)^{\frac{3}{2}}
\end{equation*}
and
\begin{align*}
\int_{x \in \R^3 : \lnb x \rnb \geq a} e^{-\frac{x^2}{2}} \, dx &=  \int_{u=a}^{\infty} 4 \pi u^2 e^{ - \frac{ u^2}{2}} \, du \\
&\leq O\left( \int_{u=a}^{\infty} 4 \pi a^2 e^{ - \frac{a u}{3}} \, du \right)\\
& \leq O \left( e^{ - \frac{a^2}{4}} \right).
\end{align*}
This means
\begin{equation*}
\int_{x \in \R^3 : \lnb x \rnb \leq a} e^{-\frac{x^2}{2}} \, dx =\left( 2 \pi \right)^{\frac{3}{2}} - \int_{x \in \R^3 : \lnb x \rnb \geq a} e^{-\frac{x^2}{2}} \, dx \geq \left( 2 \pi \right)^{\frac{3}{2}} - O \left( e^{ - \frac{a^2}{4}} \right).
\end{equation*}
Therefore
\begin{equation*}
 C = \left( 2 \pi \right)^{-3/2} + O \left( e^{ - \frac{a^2}{4}} \right).
\end{equation*}

Note that
\begin{align*}
H(\eta_{1, a}) & = \int_{\lnb x \rnb \leq a} - C e^{- \lnb x \rnb^2/2} \log \left( C e^{- \lnb x \rnb^2/2} \right) \, dx\\
& = \int_{\lnb x \rnb \leq a} C \left(\frac{\lnb x \rnb^2}{2} - \log C \right) e^{- \lnb x \rnb^2/2} \, dx.
\end{align*}
We have
\begin{align*}
\MoveEqLeft \int_{x \in \R^3} C \left(\frac{\lnb x \rnb^2}{2} - \log C \right) e^{- \lnb x \rnb^2/2} \, dx\\ & = \left( 2 \pi \right)^{3/2} C \left( \frac{3}{2} - \log C \right) \\
&= \left(1 + O \left( e^{ - \frac{a^2}{4}} \right) \right) \left( \frac{3}{2} \log e + \frac{3}{2} \log 2 \pi + O \left( e^{ - \frac{a^2}{4}} \right) \right)\\
& = \frac{3}{2} \log 2 \pi e +  O \left( e^{ - \frac{a^2}{4}} \right).
\end{align*}
We also have
\begin{align*}
\MoveEqLeft \int_{x \in \R^3: \lnb x \rnb \geq a} C \left(\frac{\lnb x \rnb^2}{2} - \log C \right) e^{- \lnb x \rnb^2/2} \, dx\\
& = \int_{u=a}^{\infty} 4 \pi u^2 C \left( \frac{u^2}{2} - \log C \right) e^{-u^2/2} \, du\\
& \leq O \left( \int_{u=a}^{\infty} a^4 e^{-au/3} \, du \right)\\
\
& \leq  O \left( e^{-a^2/4}\right).
\end{align*}
This gives
\begin{equation*}
H(\eta_{1, a}) \geq \frac{3}{2} \log 2 \pi e + O( e^{- a^2 /4}).
\end{equation*}
From this we may immediately deduce that
\begin{equation*}
H(\eta_{r, a}) \geq \frac{3}{2} \log 2 \pi e r^2 + O( e^{- a^2 /4})
\end{equation*}
as required. The fact that $H(\eta_{r, a}) \leq \frac{3}{2} \log 2 \pi e r^2$ follows immediately from Lemmas \ref{lemma:rdentvar} and \ref{lemma:var_bound_trunc_normal}.
\end{proof}

\subsection{Entropy gap} \label{section:subsection_entropy_gap}

We now prove Proposition \ref{proposition:stopping_time_gap}. This Proposition bounds the difference in entropy of $\gamma_1 \gamma_2 \dots \gamma_{\tau}$ smoothed at two different scales. Before proving this we need the following results about entropy.

\begin{lemma} \label{lemma:relative_ent_small}
Let $X$ and $Y$ be discrete random variables defined on the same probability space each having finitely many possible values. Suppose that $K$ is an integer such that for each $y$ in the image of $Y$ there are at most $K$ elements $x$ in the image of $X$ such that
\begin{equation*}
\mathbb{P} \left[ \{X = x\} \cap \{Y = y\} \right] > 0.
\end{equation*}
Then
\begin{equation*}
H(X|Y) \leq \log K.
\end{equation*}
\end{lemma}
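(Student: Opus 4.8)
The plan is to prove the elementary entropy bound $H(X \mid Y) \leq \log K$ under the hypothesis that each fibre of $Y$ meets at most $K$ values of $X$ with positive joint probability. The key observation is that $H(X \mid Y)$ decomposes as an average over the values $y$ of $Y$ of the entropy of the conditional law of $X$ given $Y = y$, and each such conditional law is supported on a set of size at most $K$.

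More precisely, I would first recall that since $X$ and $Y$ are discrete with finitely many values,
\begin{equation*}
H(X \mid Y) = \sum_{y} \mathbb{P}[Y = y] \, H(X \mid Y = y),
\end{equation*}
where $H(X \mid Y = y)$ denotes the entropy of the probability measure $x \mapsto \mathbb{P}[X = x \mid Y = y]$ on the image of $X$. (This is the discrete analogue of Lemma~\ref{lemma:relative_ent_expectation}; alternatively it follows directly from the definition $H(X \mid Y) = H(X, Y) - H(Y)$ by expanding $H(X,Y) = -\sum_{x,y} \mathbb{P}[X=x, Y=y] \log \mathbb{P}[X=x,Y=y]$ and $H(Y) = -\sum_y \mathbb{P}[Y=y]\log\mathbb{P}[Y=y]$ and regrouping.) Then, for each fixed $y$ with $\mathbb{P}[Y=y] > 0$, the conditional law of $X$ given $Y = y$ is a probability measure supported on the set $\{x : \mathbb{P}[X = x, Y = y] > 0\}$, which by hypothesis has cardinality at most $K$. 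Since the entropy of any probability measure supported on a set of size at most $K$ is at most $\log K$ (maximised by the uniform distribution on $K$ points), we get $H(X \mid Y = y) \leq \log K$ for every such $y$.

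Finally, substituting this into the decomposition,
\begin{equation*}
H(X \mid Y) = \sum_{y} \mathbb{P}[Y = y] \, H(X \mid Y = y) \leq \sum_y \mathbb{P}[Y = y] \log K = \log K,
\end{equation*}
which is the desired bound. There is no real obstacle here; the only minor point requiring care is the justification of the fibrewise decomposition of conditional entropy and the standard fact that a probability vector of length at most $K$ has entropy at most $\log K$, both of which are completely routine (the latter is Jensen's inequality applied to $-\log$, or concavity of entropy). I would simply cite the concavity/convexity lemmas already recorded in the paper (Lemmas~\ref{lemma:entropy_is_concave} and~\ref{lemma:entropy_is_almost_convex}) or prove the one-line bound inline.
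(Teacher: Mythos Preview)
Your proposal is correct and follows essentially the same approach as the paper's proof: the paper observes that $(X|Y)$ is almost surely supported on at most $K$ points, hence $H((X|Y)) \leq \log K$ almost surely, and then applies Lemma~\ref{lemma:relative_ent_expectation} to conclude. Your argument is just the discrete, more explicitly written-out version of this.
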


\begin{proof}
Note that $(X|Y)$ is almost surely supported on at most $K$ points. This means that
\begin{equation*}
H((X|Y)) \leq  \log K
\end{equation*}
almost surely. The result now follows by Lemma \ref{lemma:relative_ent_expectation}.
\end{proof}

\begin{lemma} \label{lemma:haar_small}
Given $u> 0$ let $K_u$ denote the set
\begin{equation*}
K_u := \{ g \in \Gp : \lnb g \rnb \leq u\}.
\end{equation*}
Then
\begin{equation*}
\tilde{m}(K_u) \leq O(u^2).
\end{equation*}
Here $\tilde{m}$ is the Haar measure on $\Gp$ defined in \ref{definition:tilde_m}.
\end{lemma}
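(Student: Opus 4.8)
The plan is to evaluate $\tilde m(K_u)$ directly using the Cartan ($KAK$) decomposition of Definition \ref{definition:singular_value_decomp} together with the standard integration formula for the Haar measure in those coordinates. First I would dispose of the trivial range: every $g \in \Gp$ satisfies $\lnb g \rnb \geq 1$, since a representative in $SL_2(\R)$ has determinant $1$ and hence its larger singular value is at least $1$. So $K_u = \emptyset$ whenever $u < 1$ and the inequality holds vacuously there; from now on I assume $u \geq 1$.

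For $u \geq 1$ I would use that, by Definition \ref{definition:singular_value_decomp}, every $g$ with $\lnb g \rnb > 1$ can be written uniquely as $R_{\theta_1} A_{\lambda} R_{-\theta_2}$ with $\lambda = \lnb g \rnb$ and $\theta_1, \theta_2 \in \R/\pi\Z$. Hence, up to the $\tilde m$-null set $\{\lnb g \rnb = 1\}$, the set $K_u$ is the image of $(\R/\pi\Z) \times (1, u] \times (\R/\pi\Z)$ under $(\theta_1, \lambda, \theta_2) \mapsto R_{\theta_1} A_{\lambda} R_{-\theta_2}$. In these coordinates every Haar measure on $\Gp$ --- in particular $\tilde m$ --- has density proportional to $\sinh(2 \log \lambda)\, \lambda^{-1}$ against $d\lambda\, d\theta_1\, d\theta_2$; this also shows that $K_u$, being precompact, has finite $\tilde m$-measure. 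It then remains to carry out a one-line integral: substituting $\lambda = e^t$ gives $\int_0^{\log u} \sinh(2t)\, dt = \frac{1}{2}\bigl(\cosh(2\log u) - 1\bigr) = \frac{1}{4}(u - u^{-1})^2 \leq \frac{1}{4} u^2$, while the angular integration contributes only the finite constant coming from $(\R/\pi\Z)^2$. Multiplying the constants together yields $\tilde m(K_u) = O(u^2)$, as claimed.

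I do not anticipate a genuine obstacle here. The only point that needs a word of justification is the $KAK$ integration formula with Jacobian proportional to $\sinh(2\log\lambda)/\lambda$, and the remark that it applies to the particular normalization $\tilde m$ because all Haar measures on $\Gp$ agree up to a positive scalar --- a scalar which the $O(\cdot)$ in the statement absorbs. Everything else is an elementary computation, and in fact the same computation shows the bound is sharp up to the constant (the measure grows like $u^2$).
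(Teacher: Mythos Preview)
Your argument is correct. The approach differs from the paper's: the paper uses the Iwasawa (NAK) decomposition with the Haar density $y^{-2}\,dx\,dy\,d\theta$ (Lemma \ref{lemma:haar_nak_form}), bounds $\lnb M_{x,y,\theta}\rnb \geq \max\{y^{1/2},\,|x|y^{-1/2},\,y^{-1/2}\}$ by evaluating on two test vectors, and then integrates over the resulting region $P^{-2}\leq y\leq P^2$, $|x|\leq Py^{1/2}$ to obtain $O(P^2)$. You instead use the Cartan ($KAK$) decomposition, in which $\lnb g\rnb$ is literally the radial coordinate $\lambda$, so the region of integration is the product $(\R/\pi\Z)\times(1,u]\times(\R/\pi\Z)$ and the computation reduces to $\int_0^{\log u}\sinh(2t)\,dt$. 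Your route is arguably the more natural one for this particular set, and it gives the exact value (up to the normalization constant), showing the bound is sharp; the paper's Iwasawa computation only yields an upper bound because the inequality on $\lnb M_{x,y,\theta}\rnb$ enlarges the domain of integration. Both arguments rely on the same underlying fact that $\tilde m$ differs from the textbook Haar formula only by a positive scalar absorbed into the $O(\cdot)$.
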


The proof of Lemma \ref{lemma:haar_small} is a simple computation involving the Haar measure which we will carry out later in this section.

We now have everything we need to prove Proposition \ref{proposition:stopping_time_gap}.

\begin{proof}[Proof of Proposition \ref{proposition:stopping_time_gap}]
First we will deal with \eqref{eq:r1_ent_lower_bound}. Fix some $\varepsilon > 0$ which is sufficiently small depending on $M$ and $\mu$. Let $m = \floor{\frac{\log P}{\chi}}$ and define $\tilde{\tau}$ by
\begin{equation*}
\tilde{\tau} = 
\begin{cases}
\ceil{(1+\varepsilon) m} & \text{if } \tau > \ceil{(1+\varepsilon) m}\\
\floor{(1-\varepsilon) m} & \text{if } \tau < \floor{(1-\varepsilon) m}\\
\tau & \text{otherwise.}
\end{cases}
\end{equation*}
Given some random variable $X$ let $\mathcal{L}(X)$ denote its law. If we are also given some event $A$ we will let $\mathcal{L}(X)|_{A}$ denote the (not necessarily probability) measure given by the push forward of the restriction of $\mathbb{P}$ to $A$ under the random variable $X$. Note that $\lnb \mathcal{L}(X)|_{A} \rnb_1 = \mathbb{P}[A]$.

Given $n \in \Z_{>0}$ let $q_n = \gamma_1 \dots \gamma_n$. We have the following inequality.
\begin{align}
H(q_{\tau} \s_{r_1, a}) &= H(\mathcal{L}(q_{\tau}) * \mathcal{L}( \s_{r_1, a})) \nonumber \\
& \geq H(\mathcal{L}(q_{\tau})|_{\tau = \tilde{\tau}} * \mathcal{L}( \s_{r_1, a})) + H(\mathcal{L}(q_{\tau})|_{\tau \neq \tilde{\tau}} * \mathcal{L}( \s_{r_1, a})) \label{eq:some_random_temp}\\
& \geq H(\mathcal{L}(q_{\tau})|_{\tau = \tilde{\tau}} * \mathcal{L}( \s_{r_1, a})) + \mathbb{P}[\tau \neq \tilde{\tau}] H(\mathcal{L}( \s_{r_1, a})) \label{eq:r1_ent_lower_bound_ex}
\end{align}
Here \eqref{eq:some_random_temp} follows from Lemma \ref{lemma:entropy_is_concave} and \eqref{eq:r1_ent_lower_bound_ex} follows from Lemmas \ref{lemma:invariance_entropy} and \ref{lemma:entropy_is_concave}. 

First we will bound $H(\mathcal{L}(q_{\tau})|_{\tau = \tilde{\tau}})$. To do this we let for $i \in \Z_\geq 0$ we let $q_i := \gamma_1 \gamma_2 \dots \gamma_i$ and we introduce the random variable $\tilde{X}$ which is defined by
\begin{equation*}
\tilde{X} = \left( q_{\floor{(1-\varepsilon) m}}, \gamma_{\floor{(1-\varepsilon) m}+1}, \gamma_{\floor{(1-\varepsilon) m}+2}, \dots, \gamma_{\ceil{(1+\varepsilon) m}} \right).
\end{equation*}
We know that $q_{\tilde{\tau}}$ is completely determined by $\tilde{X}$ so
\begin{equation}
H(\tilde{X} | q_{\tilde{\tau}}) = H(\tilde{X}) - H(q_{\tilde{\tau}}). \label{eq:q_tau_tilde_x_1}
\end{equation}

Let $K$ be the number of points in the support of $\mu$. Clearly if $$\gamma_{\floor{(1-\varepsilon) m}+1}, \gamma_{\floor{(1-\varepsilon) m}+2}, \dots, \gamma_{\ceil{(1+\varepsilon) m}}$$ and $\tilde{\tau}$ are fixed then for any possible value of $q_{\tilde{\tau}}$ there is at most one choice of $q_{\floor{(1-\varepsilon) m}}$ which would lead to this value of $q_{\tilde{\tau}}$. Therefore for each $y$ in the image of $q_{\tilde{\tau}}$ there are at most
\begin{equation*}
(2 \varepsilon m + 2) K^{(2 \varepsilon m + 2)}
\end{equation*}
elements $x$ in the image of $\tilde{X}$ such that $\mathbb{P}[\tilde{X} = x \cap  q_{\tilde{\tau}} = y] > 0$. By Lemma \ref{lemma:relative_ent_small} this gives
\begin{equation}
H(\tilde{X} | q_{\tilde{\tau}}) \leq \log \left( (2 \varepsilon m + 2) K^{(2 \varepsilon m + 2)} \right) \leq \frac{ 2 \varepsilon \log K}{\chi} \log P + o_{\mu}(\log P). \label{eq:q_tau_tilde_x_2}
\end{equation}
We also know that
\begin{equation}
H(\tilde{X}) \geq H(q_{m}) \geq h_{RW} \cdot m \geq \frac{h_{RW}}{\chi} \log P - o_{\mu}(\log P). \label{eq:q_tau_tilde_x_3}
\end{equation}
Combining equations \eqref{eq:q_tau_tilde_x_1}, \eqref{eq:q_tau_tilde_x_2} and \eqref{eq:q_tau_tilde_x_3} gives
\begin{equation*}
H(q_{\tilde{\tau}}) \geq \frac{h_{RW} - 2 \varepsilon \log K}{\chi } \log t - o_{\mu}(\log t).
\end{equation*}

We note by Lemma \ref{lemma:entropy_is_almost_convex} that
\begin{equation*}
H(\mathcal{L}(q_{\tilde{\tau}})) \leq H(\mathcal{L}(q_{\tilde{\tau}})|_{\tau = \tilde{\tau}}) + H(\mathcal{L}(q_{\tilde{\tau}})|_{\tau \neq \tilde{\tau}}) + H(\mathbb{I}_{\tau = \tilde{\tau}}).
\end{equation*}
We wish to use this to bound $H(\mathcal{L}(q_{\tilde{\tau}})|_{\tau = \tilde{\tau}})$ from below. First note that trivially $H(\mathbb{I}_{\tau = \tilde{\tau}}) \leq \log 2 \leq o(\log P)$. Note that by \eqref{eq:large_deviations_stopping_time_v} from Lemma \ref{lemma:first_large_deviations_estimates} we have that providing $P$ is sufficiently large depending on $\varepsilon$ and $\mu$
\begin{equation*}
\mathbb{P}\left[ \tau \neq \tilde{\tau} \right] \leq \alpha^m
\end{equation*}
for some $\alpha \in (0, 1)$ which depends only on $\varepsilon$ and $\mu$. We also know that conditional on $\tau \neq \tilde{\tau}$ there are at most $K^{\ceil{(1+\varepsilon) m}} + K^{\floor{(1-\varepsilon) m}}$ possible values for $q_{\tilde{\tau}}$. This means that
\begin{equation*}
H(\mathcal{L}(q_{\tilde{\tau}})|_{\tau \neq \tilde{\tau}}) \leq \alpha^m \log \left( K^{\ceil{(1+\varepsilon) m}} + K^{\floor{(1-\varepsilon) m}} \right) \leq o_{\mu, \varepsilon}(\log P).
\end{equation*}
Therefore
\begin{equation*}
H(\mathcal{L}(q_{\tilde{\tau}})|_{\tau = \tilde{\tau}}) \geq \frac{h_{RW} - 2 \varepsilon \log K}{\chi} \log P - o_{\mu, \varepsilon}(\log P).
\end{equation*}

Recall that $d$ is the distance function of some left invariant Riemannian metric and that by the definition of $M_{\mu}$ given any $N \in \Z_{>0}$ and any two distinct $x, y \in \Gp$ such that for each of them there is some $n \leq N$ such that they are in the support of $\mu^{*n}$ we have
\begin{equation*}
d(x, y) \geq M_{\mu}^{-N+o_{\mu}(N)}
\end{equation*}
In particular this means that if $x$ and $y$ are both in the image of $q_{\tilde{\tau}}$ then
\begin{equation*}
d(x, y) \geq M_{\mu}^{-m(1+\varepsilon)+o_{\mu}(N)}.
\end{equation*}

Note also that trivially for all sufficiently small $r$ we have $d(\exp(u), \id) \leq O (r)$ whenever $u\in \A$ satisfies $\lnb u \rnb \leq r$. In particular since $r_1 < M^{-m}$ this means that providing $P$ is sufficiently large depending on $M$ and $a$ we have
\begin{equation*}
d(\s_{r_1, a}, \id) \leq O( a M^{-m})
\end{equation*}
almost surely. Therefore, providing $\varepsilon$ is small enough that $M_{\mu}^{(1+\varepsilon)} < M$ and $t$ is sufficiently large depending on $\mu$, $a$, $\varepsilon$ and $M$ we have
\begin{equation*}
d(\s_{r_1, a}, \id) < \frac{1}{2} \min_{x, y \in \supp \mathcal{L}(q_{\tilde{\tau}}), x \neq y} d(x, y).
\end{equation*}
In particular by Lemma \ref{lemma:entropy_adds} and Definition \ref{definition:non_prob_entropy} we have
\begin{equation*}
H(\mathcal{L}(q_{\tau})|_{\tau = \tilde{\tau}} * \mathcal{L}( \s_{r_1, a})) = H(\mathcal{L}(q_{\tau})|_{\tau = \tilde{\tau}}) + \mathbb{P}[\tau = \tilde{\tau}] H(\mathcal{L}( \s_{r_1, a})).
\end{equation*}
Putting this into the estimate \eqref{eq:r1_ent_lower_bound_ex} for $H(q_{\tau} \s_{r_1, a})$ we get 
\begin{equation*}
H(q_{\tau} \s_{r_1, a}) \geq \frac{h_{RW} - 2 \varepsilon \log K}{\chi} \log P + H(\s_{\s_1, a}) - o_{\mu, M, a, \varepsilon}(\log P).
\end{equation*}
Since $\varepsilon$ can be made arbitrarily small this becomes
\begin{equation*}
H(q_{\tau} \s_{r_1, a}) \geq \frac{h_{RW}}{\chi} \log P + H(\s_{r_1, a}) - o_{\mu, M, a}(\log P)
\end{equation*}
as required.

Now to prove \eqref{eq:r2_ent_upper_bound}. Fix some $\varepsilon > 0$ and let $A$ be the event that $$ \lnb q_{\tau} \rnb < P^{1+ \varepsilon}.$$
First note that by \eqref{eq:large_deviations_estimate} and \eqref{eq:large_deviations_stopping_time_v} from Lemma \ref{lemma:first_large_deviations_estimates} there is some $\delta$ depending on $\mu$ and $\varepsilon$ such that for all sufficiently large (depending on $\mu$, $\varepsilon$ and $b$) $t$ we have $$\mathbb{P}[A^C] < t^{-\delta}.$$

Note that when $A$ occurs $\lnb q_{\tau} \s_{r_2, a} \rnb \leq P^{1+\varepsilon} a r_2$. Therefore by Lemma \ref{lemma:haar_small} when $A$ occurs $q_{\tau} \s_{r_2, a}$ is contained in a set of $\tilde{m}$-measure at most $O_{\mu, a}(P^{2+2 \varepsilon})$ where $\tilde{m}$ is our normalised Haar measure. Trivially by Jensen's inequality this gives
\begin{equation}
H(\mathcal{L}(q_{\tau} \s_{r_2, a})|_A) \leq (2+ 2 \varepsilon)\log P + o_{\mu, M, a}(\log P). \label{eq:bound_something_on_a}
\end{equation}

Now we need to bound $H(\mathcal{L}(q_{\tau} \s_{r_2, a})|_{A^C})$. We will do this by bounding the Shannon entropy $H(\mathcal{L}(q_{\tau})|_{A^C})$. It is easy to see that the contribution to this from the case where $\tau < \frac{2 \log P}{\chi}$ is at most $P^{-\delta} \frac{2 \log P}{\chi} \log K$. By \eqref{eq:large_deviations_stopping_time_v} from Lemma \ref{lemma:first_large_deviations_estimates} the contribution from the case where $\tau = n$ for some $n \geq \frac{2 \log P}{\chi}$ can be bounded above by $\alpha^{n} n \log K$ where $\alpha \in (0, 1)$ is some constant depending only on $\mu$. From summing over $n$ it is easy to see that $$H(\mathcal{L}(q_{\tau})|_{A^C}) \leq o_{\mu}(\log P).$$ This gives
 $H(\mathcal{L}(q_{\tau} \s_{r_2, a})|_{A^C}) < o_{\mu, M, a}(\log P)$. Combining this with \eqref{eq:bound_something_on_a} and noting that $\varepsilon$ can be arbitrarily small gives \eqref{eq:r2_ent_upper_bound}.

Subtracting \eqref{eq:r2_ent_upper_bound} from \eqref{eq:r1_ent_lower_bound} gives
\begin{equation*}
H(q_{\tau} \s_{r_1, a}) - H(q_{\tau} \s_{r_2, a}) \geq \left( \frac{h_{RW}}{\chi} -2\right) \log P + H(\s_{r_1, a}) - o_{M, \mu, a}(\log P).
\end{equation*}

Noting that $|H(\s_{r_2, 1}) - 3 \log r_2| \leq O_{a}(1) \leq o_{M, \mu, a}(\log P)$ gives \eqref{eq:ent_gap} as required.
\end{proof}

We will now prove Lemma \ref{lemma:haar_small}. To do this we will use the following explicit formula for the Haar measure on $\Gp$.

\begin{definition}[Iwasawa decomposition]
Each element of $\Gp$ can be written uniquely in the form
\begin{equation*}
\begin{pmatrix}
1 & x\\
0& 1
\end{pmatrix}
\begin{pmatrix}
y^{\frac{1}{2}} & 0\\
0& y^{-\frac{1}{2}} 
\end{pmatrix}
\begin{pmatrix}
\cos \theta & - \sin \theta\\
\sin \theta & \cos \theta
\end{pmatrix}
\end{equation*}
with $x \in \R$, $y \in \R_{>0}$ and $\theta \in \Bim$. This is called the Iwasawa decomposition.
\end{definition}
	
\begin{lemma} \label{lemma:haar_nak_form}
There is a Haar measure for $\Gp$ which is given in the Iwasawa decomposition by
\begin{equation*}
\frac{1}{y^2} \,dx\,dy\,d\theta.
\end{equation*}
\end{lemma}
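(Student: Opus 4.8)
The plan is to verify directly that the measure $\frac{1}{y^2}\,dx\,dy\,d\theta$ written in the Iwasawa coordinates $(x,y,\theta)$ is left invariant under $\Gp$. Since $\Gp$ is a connected Lie group, left invariance of a smooth positive density is enough to conclude it is a Haar measure (uniqueness up to scalar then gives "a Haar measure"). Because left translations by the subgroup $N = \{\begin{pmatrix} 1 & x\\ 0 & 1\end{pmatrix}\}$, the subgroup $A = \{\begin{pmatrix} y^{1/2} & 0\\ 0 & y^{-1/2}\end{pmatrix}\}$, and the subgroup $K = \{R_\theta\}$ together generate $\Gp$, it suffices to check invariance under left translation by each of these three one-parameter families separately.

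First I would record how left multiplication by each generator acts on the coordinates $(x,y,\theta)$. Writing a general element as $g(x,y,\theta) = n_x a_y R_\theta$ in the Iwasawa form: left multiplication by $n_{x_0}$ sends $g(x,y,\theta)$ to $g(x+x_0, y, \theta)$, so the $dx$ factor is translation-invariant and $y,\theta$ are untouched — invariance is immediate. Left multiplication by $a_{y_0} = \operatorname{diag}(y_0^{1/2}, y_0^{-1/2})$ sends $n_x a_y R_\theta$ to $a_{y_0} n_x a_{y_0}^{-1} \cdot a_{y_0} a_y R_\theta = n_{y_0 x}\, a_{y_0 y}\, R_\theta$, i.e. $(x,y,\theta) \mapsto (y_0 x, y_0 y, \theta)$; the Jacobian of this map is $y_0 \cdot y_0 \cdot 1 = y_0^2$, while the density factor $y^{-2}$ scales by $(y_0 y)^{-2} = y_0^{-2} y^{-2}$, and the two cancel, giving invariance. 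The only genuine computation is left multiplication by $R_{\theta_0} \in K$: here $R_{\theta_0} n_x a_y R_\theta$ must be rewritten in Iwasawa form $n_{x'} a_{y'} R_{\theta'}$, which amounts to performing the $NAK$ (Iwasawa) factorisation of $R_{\theta_0} n_x a_y$. A short $2\times 2$ matrix computation gives explicit formulas for $(x',y',\theta')$ as functions of $(x,y,\theta)$ (with $\theta_0$ as parameter), and one then checks that the Jacobian $\left|\frac{\partial(x',y',\theta')}{\partial(x,y,\theta)}\right|$ equals $(y'/y)^2$, so that again $y'^{-2}\,dx'\,dy'\,d\theta' = y^{-2}\,dx\,dy\,d\theta$.

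The main obstacle is this last step: carrying out the Iwasawa decomposition of $R_{\theta_0} n_x a_y$ explicitly and then computing the $3\times 3$ Jacobian determinant cleanly. A slicker way to organise it, which I would use to avoid the messy determinant, is to instead exhibit the claimed measure intrinsically as $d\mu = \frac{dx\,dy}{y^2}\,d\theta$ where $\frac{dx\,dy}{y^2}$ is the well-known $\PSL_2(\mathbb{R})$-invariant measure on the upper half-plane $\mathbb{H}$ (via the transitive action with $K$ as stabiliser of $i$) and $d\theta$ is Haar measure on $K$. Left invariance then follows by combining invariance of the hyperbolic area form on $\mathbb{H} = \Gp/K$ with the fact that the fibration $\Gp \to \mathbb{H}$ has compact fibre $K$ carrying its own invariant measure; concretely, for $f \in C_c(\Gp)$ and $g_0 \in \Gp$ one writes $\int f(g_0 g)\,d\mu(g)$, pushes forward along $\Gp \to \mathbb{H}$, uses that $g_0$ acts on $\mathbb{H}$ by an area-preserving isometry, and uses translation-invariance of $d\theta$ on each fibre. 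Either route finishes the proof; I would present the direct coordinate computation for $N$ and $A$ and the $\mathbb{H}$-fibration argument for the $K$-part, since that is where the bookkeeping is lightest.
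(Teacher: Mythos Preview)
Your proposal is correct and is in fact more detailed than what the paper does: the paper simply cites \cite[Chapter III]{LANG_1985} and gives no argument. Your direct verification of left invariance under $N$ and $A$ is accurate, and both options you offer for the $K$-part (explicit Jacobian or the $\mathbb{H}\cong \Gp/K$ fibration with the hyperbolic area form $\frac{dx\,dy}{y^2}$) are standard and valid; the fibration route is indeed the cleaner one and is essentially how Lang organises it.
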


\begin{proof}
This is proven in for example \cite[Chapter III]{LANG_1985}.
\end{proof}

\begin{proof}[Proof of Lemma \ref{lemma:haar_small}]
First let
\begin{equation*}
M_{x, y, \theta} : =
\begin{pmatrix}
1 & x\\
0& 1
\end{pmatrix}
\begin{pmatrix}
y^{\frac{1}{2}} & 0\\
0& y^{-\frac{1}{2}} 
\end{pmatrix}
\begin{pmatrix}
\cos \theta & - \sin \theta\\
\sin \theta & \cos \theta
\end{pmatrix}.
\end{equation*}
Note that we have
\begin{equation*}
M_{x, y, \theta}
\begin{pmatrix}
\cos \theta \\-\sin \theta
\end{pmatrix} = 
\begin{pmatrix}
y^{\frac{1}{2}}\\0
\end{pmatrix}
\end{equation*}
and
\begin{equation*}
M_{x, y, \theta}
\begin{pmatrix}
\sin \theta \\ \cos \theta
\end{pmatrix} = 
\begin{pmatrix}
xy^{-\frac{1}{2}}\\y^{-\frac{1}{2}}
\end{pmatrix}
\end{equation*}
meaning that
\begin{equation*}
\lnb M_{x, y, \theta} \rnb \geq \max \{ y^{\frac{1}{2}}, |x|y^{-\frac{1}{2}}, y^{-\frac{1}{2}} \}.
\end{equation*}
By Lemma \ref{lemma:haar_nak_form} and the fact that any two Haar measures differ only by a positive multiplicative constant we have
\begin{align*}
\tilde{m} (K_P) & \leq O \left( \int_{P^{-2}}^{P^2} \int_{-P y^{\frac{1}{2}}}^{P y^{ \frac{1}{2}}} \int_0^{2 \pi} \frac{1}{y^2} \, d \theta\, dx \, dy \right) \\
&= O \left( P \int_{P^{-2}}^{P^2} y ^{- \frac{3}{2}} \, dy \right) \\
&\leq O \left( P \int_{P^{-2}}^{\infty} y ^{- \frac{3}{2}} \, dy \right) \\
&= O(P^2)
\end{align*}
as required.
\end{proof}

\subsection{Variance of a disintegration of a stopped random walk} \label{section:variance_of_disint_stopped_rw}
In this subsection we will prove Proposition \ref{proposition:int_of_v} and then use this to prove Proposition \ref{proposition:lots_of_v}.

\begin{proof}[Proof of Proposition \ref{proposition:int_of_v}]
Let $\tau = \tau_{P,b}$ and let $a \geq 1$ be a number we will choose later. Let $r_1 = a^{-1}M^{-\frac{\log P}{\chi}}$ and let
\begin{equation*}
N = \floor{(1-\frac{h_{RW}}{10 \log M}) \frac{\log M \log P}{\chi \log 2}} - 1.
\end{equation*}
Note that
\begin{equation*}
 \frac{1}{4} P^{\frac{\log M}{\chi}} / P^{\frac{h_{RW}}{10 \chi}} \leq 2^N \leq \frac{1}{2} P^{\frac{\log M}{\chi}} / P^{\frac{h_{RW}}{10 \chi}}.
\end{equation*}

Given $u \in [1, 2)$ and an integer $i \in [1, N]$ let
\begin{equation*}
k_i(u) := H(q_{\tau} m_{2^{i-1} ur_1, a}) - H( m_{2^{i-1}u r_1, a}) - H(q_{\tau} m_{2^{i} ur_1, a}) + H(m_{2^{i} ur_1, a}).
\end{equation*}
Note that by Proposition \ref{proposition:ent_gap_to_v} there is some absolute constant $c > 0$ such that we have
\begin{equation}
v(q_{\tau} ; a 2^{i} ur_1) \geq c a^{-2} (k_i(u) - O(e^{-\frac{a^2}{4}}) - O_a( 2^{i} r_1)). \label{eq:c_def}
\end{equation}
This means that
\begin{equation*}
\sum_{i=1}^N v(q_{\tau} ; a 2^{i} ur_1) \geq c a^{-2} \sum_{i=1}^{N} k_i(u) - O(N e^{-\frac{a^2}{4}} a^{-2}) - O_a( N  2^{N} r_1).
\end{equation*}
Note that for $u \in [1, 2)$ we have
\begin{equation*}
a 2^{N} ur_1 \leq t^{-\frac{h_{RW} }{10 \chi}}
\end{equation*}
and
\begin{equation*}
a 2^{1} ur_1 \geq t^{-\frac{\log M }{\chi}}.
\end{equation*}
This means that
\begin{equation}
\int_{P^{ - \frac{\log M}{\log \chi}}}^{t^{ - \frac{h_{RW}}{10 \log \chi}}} \frac{1}{u} v(q_{\tau}; u) \, du \geq c a^{-2} \int_{1}^{2} \frac{1}{u} \sum_{i=1}^{N} k_i(u) \, du - O(N e^{-\frac{a^2}{4}} a^{-2}) - O_a( N  2^{N} r_1). \label{eq:first_int_exp}
\end{equation}

Clearly for any fixed $u \in [1, 2)$ we have
\begin{equation*}
\sum_{i=1}^{N} k_i(u) = H(q_{\tau} m_{ ur_1, a}) - H( m_{u r_1, a}) - H(q_{\tau} m_{2^{N} ur_1, a}) + H(m_{2^{N} ur_1, a}).
\end{equation*}
This means that by Proposition \ref{proposition:stopping_time_gap} we have
\begin{align}
 \sum_{i=1}^{N} k_i(u) &\geq \left(\frac{h_{RW}}{\chi} -12\right) \log P + 3 \log 2^{N} ur_1 +o_{M, \mu, a, w}(\log P)\nonumber\\
 &\geq \left(\frac{h_{RW}}{\chi} -2 - \frac{3 h_{RW} }{10 \chi} \right) \log P  + o_{M, \mu, a, w}(\log P). \label{eq:k_sum_thing}
\end{align}

Let $C$ be chosen such that the error term $O(N e^{- \frac{a^2}{4}} a^{-2})$ in \eqref{eq:first_int_exp} can be bounded above by $CN e^{- \frac{a^2}{4}} a^{-2}$. Note that this is at most $ O \left( \frac{\log M}{\chi \log 2} e^{- \frac{a^2}{4}} a^{-2} \log P \right)$. Let $c$ be as in \eqref{eq:c_def}. We take our value of $a$ to be
\begin{equation*}
a = 2 \sqrt{\log \left( \frac{100 C}{c \log 2} \frac{\log M}{h_{RW}} \right)}.
\end{equation*}
Note that $a$ depends only on $\mu$ and $M$. This means 
\begin{equation*}
CN e^{- \frac{a^2}{4}} a^{-2} \leq a^{-2} \frac{h_{RW}}{100 \chi} c \log P.
\end{equation*}
Note also that $N  2^{N} r_1 \leq o_{\mu, M}(\log P)$. Therefore putting \eqref{eq:k_sum_thing} into \eqref{eq:first_int_exp} we get
\begin{equation*}
\int_{P^{ - \frac{\log M}{\chi}}}^{P^{ - \frac{h_{RW}}{10\chi}}} \frac{1}{u} v(q_{\tau}; u) \, du \geq c a^{-2} \left(\frac{h_{RW}}{\chi} -2 -  \frac{3 h_{RW} }{10 \chi}  - \frac{h_{RW}}{100 \chi} \right) \log P  + o_{M, \mu, w}(\log P). 
\end{equation*}
In particular providing $\frac{h_{RW}}{\chi} > 10$ we have
\begin{equation*}
\int_{P^{ - \frac{\log M}{\chi}}}^{P^{ - \frac{h_{RW}}{10\chi}}} \frac{1}{u} v(q_{\tau}; u) \, du \gtrsim a^{-2} \left(\frac{h_{RW}}{\chi} \right) \log P  + o_{M, \mu, w}(\log P). 
\end{equation*}
Noting that $a^{2} \leq O(\max \left \{ 1,  \log \frac{\log M}{h_{RW}} \right \}  )$ we have  that for all sufficiently large (depending on $\mu$, $M$, and $w$) $P$ we have
\begin{equation*}
\int_{P^{ - \frac{\log M}{\log \chi}}}^{P^{ - \frac{h_{RW}}{10 \log \chi}}} \frac{1}{u} v(q_{\tau}; u) \, du \gtrsim \left(\frac{h_{RW}}{ \chi}\right) \left( \max \left \{ 1,  \log \frac{\log M}{h_{RW}} \right \} \right)^{-1} \log P 
\end{equation*}
as required.

\end{proof}

We wish to prove Proposition \ref{proposition:lots_of_v}. First we need the following corollary of Proposition \ref{proposition:int_of_v}.

\begin{corollary} \label{corollary:int_of_v}
Suppose that $\hat{\nu}$ is a probability measure on $\B$. Suppose that $\mu$ is a finitely supported Zariski-dense probability measure. Suppose further that $M_{\mu} < \infty$ and let $M > M_{\mu}$. Suppose that $M$ is chosen large enough that $h_{RW} \leq \log M$. Then for all sufficiently large (depending on $\mu$, $\hat{\nu}$, and $M$) $P$ we have
\begin{align*}
\int_{\B} \int_{P^{ - \frac{\log M}{\log \chi}}}^{P^{ - \frac{h_{RW}}{10 \log \chi}}} \frac{1}{u} v(q_{\tau_{P, b}}; u) \, du \, \hat{\nu} (db) \gtrsim\\  \left(\frac{h_{RW}}{ \chi}\right) \left( \max \left \{ 1,  \log \frac{\log M}{\chi} \right \} \right)^{-1} \log P  .
\end{align*}

\end{corollary}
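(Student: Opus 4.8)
The plan is to deduce Corollary~\ref{corollary:int_of_v} from Proposition~\ref{proposition:int_of_v} by integrating the conclusion of the latter over $b \in \B$ against $\hat{\nu}$. The only subtlety is that the threshold ``$P$ sufficiently large'' in Proposition~\ref{proposition:int_of_v} is allowed to depend on $w$ (the base point $b$), so a pointwise application of the proposition at each $b$ does not immediately give a uniform statement. I would resolve this by a more careful bookkeeping of where that dependence on $w$ enters the proof of Proposition~\ref{proposition:int_of_v}: it comes only through the $o_{M,\mu,a,w}(\log P)$ error terms, which in turn come from Proposition~\ref{proposition:stopping_time_gap}, whose $w$-dependence comes from the large deviation estimates for $\tau_{P,w}$ (Corollaries~\ref{corollary:large_deviations_stopping_time_v} and~\ref{corollary:size_after_stopping_time}) and from Theorem~\ref{theorem:large_deviations_estimate}. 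All of these are in fact \emph{uniform} in the base point $v \in \B$ (Corollary~\ref{corollary:size_after_stopping_time} is explicitly stated uniformly in $v$, and Theorem~\ref{theorem:large_deviations_estimate} does not involve a base point at all). Hence the error terms in Proposition~\ref{proposition:int_of_v} are really $o_{M,\mu,a}(\log P)$, uniformly in $b \in \B$.

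Concretely, I would first re-examine the proof of Proposition~\ref{proposition:stopping_time_gap} and observe that every invocation of a large-deviations-type estimate there is uniform in the base point, so that the inequalities \eqref{eq:r1_ent_lower_bound}, \eqref{eq:r2_ent_upper_bound}, \eqref{eq:ent_gap} hold with $o_{M,\mu,a}(\log P)$ error terms that do not depend on $w$ (for $P$ larger than a threshold depending only on $\mu$, $M$, $a$, $\varepsilon$). Then the same is true of the proof of Proposition~\ref{proposition:int_of_v}: the choice of $a$ there depends only on $\mu$ and $M$, and the final chain of inequalities yields
\begin{equation*}
\int_{P^{-\frac{\log M}{\chi}}}^{P^{-\frac{h_{RW}}{10\chi}}} \frac{1}{u} v(q_{\tau_{P,b}}; u)\, du \gtrsim \left(\frac{h_{RW}}{\chi}\right)\left(\max\left\{1, \log\frac{\log M}{h_{RW}}\right\}\right)^{-1}\log P
\end{equation*}
for all $b \in \B$ and all $P$ sufficiently large depending only on $\mu$ and $M$ (note $h_{RW}/\chi$ is fixed by $\mu$, so the ``sufficiently large $h_{RW}/\chi$'' hypothesis is absorbed). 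Once this uniform-in-$b$ form of Proposition~\ref{proposition:int_of_v} is in hand, I would integrate both sides over $b \in \B$ with respect to the probability measure $\hat{\nu}$; since the left-hand side is a nonnegative measurable function of $b$ (measurability follows from the measurability built into Definition~\ref{definition:v_g_r} and Fubini) and the right-hand side is a constant, the integral of the right-hand side over the probability measure $\hat{\nu}$ is just that same constant, giving exactly the claimed bound.

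The main obstacle I anticipate is the honest verification that the implicit constants and thresholds in the proofs of Propositions~\ref{proposition:stopping_time_gap} and~\ref{proposition:int_of_v} are genuinely independent of the base point $w$ — that is, tracking the $w$-dependence through the nested $o(\cdot)$ notation and confirming that each large-deviation input used is a uniform one. This is not conceptually hard, but it requires care because the notation $o_{M,\mu,a,w}(\cdot)$ was written with $w$ listed as a parameter, and one must check that this listing was merely conservative rather than necessary. An alternative, if one prefers not to reopen those proofs, is to note that $\B$ is compact and the relevant quantities depend continuously (or at least measurably with uniform control) on the base point, then use a covering argument to reduce to finitely many base points, for each of which Proposition~\ref{proposition:int_of_v} applies with its own threshold; taking the maximum of finitely many thresholds then gives a uniform one. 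Either way, after establishing uniformity the integration step is immediate and the corollary follows.
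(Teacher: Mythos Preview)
Your proposal takes a genuinely different route from the paper. The paper's argument is a short exhaustion trick: for each $b \in \B$ let $P_0(b)$ be the threshold (depending on $\mu$, $M$, $b$) from Proposition~\ref{proposition:int_of_v}, and set $S(P) := \{b \in \B : P \geq P_0(b)\}$. Then $S(P) \nearrow \B$ as $P \to \infty$, so $\hat{\nu}(S(P)) \to 1$; once $P$ is large enough (depending now also on $\hat{\nu}$) that $\hat{\nu}(S(P)) \geq \tfrac{1}{2}$, one integrates the inner integral over $b \in S(P)$, uses the pointwise bound there, and uses nonnegativity of $v(\cdot;\cdot)$ to discard $\B \setminus S(P)$. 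This halves the implicit constant and finishes the proof without touching the earlier arguments.

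Your main approach---tracing the $w$-dependence through the proofs of Propositions~\ref{proposition:stopping_time_gap} and~\ref{proposition:int_of_v} and arguing it was merely conservative notation---may well be correct and would in fact yield a stronger conclusion (a threshold independent of $\hat{\nu}$), but it requires reopening two nested proofs and arguing against the paper's own stated parameter lists. Your alternative compactness/covering suggestion has a real gap: you have not established any continuity or upper semicontinuity of $b \mapsto P_0(b)$, so covering $\B$ by finitely many balls does not let you control the threshold at nearby points. The paper's exhaustion argument bypasses both issues: the key observation you are missing is that since the integrand is nonnegative, one can simply absorb the bad set of $b$'s into the implicit constant once its $\hat{\nu}$-measure drops below $\tfrac{1}{2}$.
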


\begin{proof}
Given $\mu$ and $M$ let 
\begin{equation*}
S(P) := \{b \in \B : P \text{ is large enough to satisfy Proposition \ref{proposition:int_of_v} for this } b, \mu \text{ and } M \}.
\end{equation*}
By Proposition \ref{proposition:int_of_v} we know that $S(P) \nearrow \B$. Therefore $\hat{\nu}(S(P)) \nearrow 1$. In particular providing $P$ is sufficiently large (depending on $\mu$ and $M$) we have $\hat{\nu}(S(P)) \geq \frac{1}{2}$. This, along with the fact that $v(\cdot ; \cdot)$ is always non-negative, is enough to prove Corollary \ref{corollary:int_of_v}.
\end{proof}

This is enough to prove Proposition \ref{proposition:lots_of_v}.

\begin{proof}[Proof of Proposition \ref{proposition:lots_of_v}]
Recall that $\hat{m} = \floor{\frac{\log M}{100 \chi}}$. Let

\begin{equation*}
A := P^{\frac{\log M}{2\hat{m} \chi} - \frac{h_{RW}}{20 \hat{m} \chi}}.
\end{equation*}
Define $a_1, a_2 ,\dots, a_{2 \hat{m} + 1}$ by
\begin{equation*}
a_i := P^{- \frac{\log M}{\chi}} A^{i-1}.
\end{equation*}
Note that this means $a_1 = P^{- \frac{\log M}{\chi}}$ and $a_{2 \hat{m} + 1} = P^{- \frac{h_{RW}}{10 \chi}}$. Furthermore, providing $h_{RW} / \chi$ is sufficiently large we have
\begin{equation*}
P^3 \leq A \leq P^{50}.
\end{equation*}
In particular $a_{i+1} \geq P^3 a_i$.

Let $U, V$ be defined by
\begin{equation*}
U := \bigcup_{i=1}^{\hat{m}} [a_{2i - 1}, a_{2i})
\end{equation*}
and 
\begin{equation*}
V := \bigcup_{i=1}^{\hat{m}} [a_{2i}, a_{2i+1}).
\end{equation*}
Note that $U$ and $V$ partition $\left[ P^{- \frac{\log M}{\chi}}, P^{- \frac{h_{RW}}{10 \chi}} \right)$.

Let $c>0$ be the absolute constant in Corollary \ref{corollary:int_of_v}. By Corollary \ref{corollary:int_of_v} providing $P$ is sufficiently large depending on $\mu$ and $M$ we have
\begin{equation*}
 \int_{U \cup V} \int_{\B} \frac{1}{u} v(q_{\tau_{P, b}}; u) \, \hat{\nu}( db) \, du \geq c \left( \frac{h_{RW}}{\chi} \right) \left( \max \left \{ 1, \log \frac{\log M}{h_{RW}} \right \} \right) ^{-1} \log P.
\end{equation*}
In particular either
\begin{equation}
 \int_{U } \int_{\B} \frac{1}{u} v(q_{\tau_{P, b}}; u) \, \hat{\nu}( db) \, du \geq \frac{1}{2} c \left( \frac{h_{RW}}{\chi} \right) \left( \max \left \{ 1, \log \frac{\log M}{h_{RW}} \right \} \right) ^{-1} \log P. \label{eq:int_on_u_large}
\end{equation}
or
\begin{equation*}
 \int_{V} \int_{\B} \frac{1}{u} v(q_{\tau_{P, b}}; u) \, \hat{\nu}( db) \, du \geq \frac{1}{2} c \left( \frac{h_{RW}}{\chi} \right) \left( \max \left \{ 1, \log \frac{\log M}{h_{RW}} \right \} \right) ^{-1} \log P.
\end{equation*}

Without loss of generality assume that \eqref{eq:int_on_u_large} holds. For $i = 1, 2, \dots, \hat{m}$ let $s_i \in (a_{2i-1}, a_{2i})$ be chosen such that
\begin{equation*}
\int_{\B} v(q_{\tau_{P, b}}; s_i) \, \hat{\nu}( db) \geq \frac{1}{2} \sup_{u \in (a_{2i-1}, a_{2i})} \int_{\B} v(q_{\tau_{P, b}}; u) \, \hat{\nu}( db).
\end{equation*}
In particular this means that
\begin{equation*}
\int_{\B} v(q_{\tau_{P, b}}; s_i) \, \hat{\nu}( db) \geq \frac{1}{2 \log A} \int_{a_{2i-1}}^{a_{2i}} \int_{\B} \frac{1}{u} v(q_{\tau_{P, b}}; u) \, \hat{\nu}( db) \, du.
\end{equation*}
Summing over $i$ gives
\begin{align*}
\sum_{i=1}^{\hat{m}} \int_{\B} v(q_{\tau_{P, b}}; s_i) \, \hat{\nu}( db)& \geq \frac{1}{2 \log A} \int_{U } \int_{\B} \frac{1}{u} v(q_{\tau_{P, b}}; u) \, \hat{\nu}( db) \, du \\
& \geq \frac{1}{4 \log A} c \left( \frac{h_{RW}}{\chi} \right) \left( \max \left \{ 1, \log \frac{\log M}{h_{RW}} \right \} \right) ^{-1} \log P.
\end{align*}
Noting that $\log A \leq O(\log t)$ we get that providing $P$ is sufficiently large depending on $\mu$ and $M$ that
\begin{equation*}
\sum_{i=1}^{\hat{m}} \int_{\B} v(q_{\tau_{P, b}}; s_i) \, \hat{\nu}( db)  \geq c' \left( \frac{h_{RW}}{\chi} \right) \left( \max \left \{ 1, \log \frac{\log M}{h_{RW}} \right \} \right) ^{-1} 
\end{equation*}
for some absolute constant $c' > 0$. Finally note that $A \geq P^3$ means that $s_{i+1} \geq P^3 s_i$.
\end{proof}

\section{Variance Sum} \label{section:sum_of_variances}

Recall from the introduction that the strategy of the proof is as follows. We let $\left( \gamma_i \right)_{i=1}^{\infty}$ be i.i.d.\ samples from $\mu$ and let $b$ be an independent sample from $\nu$ and for each sufficiently small scale $r>0$ we construct some $\sigma$-algebra $\mathcal{A}$ and some stopping time $\tau$. We also construct some $n \in \Z_{>0}$, some $\mathcal{A}$-measurable random variables $g_1, g_2, \dots, g_n$ taking values in $\Gp$ and some random variables $U_1, U_2, \dots, U_n$ taking values in $\A$ such that 
\begin{equation}
\gamma_1 \gamma_2 \dots \gamma_{\tau} b = g_1 \exp(U_1) g_2 \exp(U_2) \dots g_n \exp(U_n) b. \label{eq:decomp_for_explanation}
\end{equation}
We also require the $U_i$ to be small and have at least some variance after conditioning on $\mathcal{A}$. We then condition on $\mathcal{A}$ and Taylor expand in the $U_i$ so that after disintegrating we may express the Furstenberg measure as the law of the sum of many small random variables each of which have at least some variance.

In order to carry out this Taylor expansion we will use Proposition \ref{proposition:intro_decomp_detail}. This requires the $g_i$ to satisfy a number of conditions. We wish to construct a class of ways of expressing random variables of the form $\gamma_1 \dots \gamma_{\tau}$ in the form $g_1 \exp(U_1) \dots g_n \exp(U_n)$ such that the $g_i$ and $U_i$ satisfy (amongst other things) the conditions of Proposition \ref{proposition:intro_decomp_detail} and so that this class is closed under concatenation. To this end we define the following.

\begin{definition} \label{definition:proper_decomposition}
    Let $\mu$ be a probability measure on $\Gp$, let $n, K \in \Z_{\geq 0}$, let $a$ and $\overline{a}$ be random variables taking values in $\Gp$ and let $C, t, \varepsilon, r>0$. Let $f=(f_i)_{i=1}^{n}$ and $h=(h_i)_{i=1}^{n}$ be sequences of random variables taking values in $\Gp$. Let $U=(U_i)_{i=1}^{n}$ be a sequence of random variables taking values in $\A$, let $\mathcal{A}=\left(\mathcal{A}_i\right)_{i=0}^{n}$ be a sequence of $\sigma$-algebras, let $A$ be an $\mathcal{A}_n$-measurable event, let $I$ be a random subset of $[1,n] \cap \Z$ and let $m=\left( m_i \right)_{i=1}^{n}$ be a sequence of non-negative real numbers. Let $\gamma=\left( \gamma_i \right)_{i=1}^{\infty}$ be i.i.d.\ samples from $\mu$ and let $\mathcal{F}=\left(\mathcal{F}_i\right)_{i=1}^{\infty}$ be a filtration for $\gamma$ and suppose that for all $i$ we have that $\gamma_{i+1}$ is independent of $\mathcal{F}_i$. Let $S=\left(S_i\right)_{i=1}^{n}$ and $T=\left(T_i\right)_{i=1}^{n}$ be sequences of stopping times for the filtration $\mathcal{F}$. Let $\ell$ be a random variable taking values in $\Gp$. Then we say that $$\left(f, h, U, m, \mathcal{A}, A, I,\gamma, \mathcal{F}, S, T, \ell \right)$$ is a \emph{proper} decomposition for $\left( \mu, n, K, a, \overline{a}, t, C , \varepsilon \right)$ at scale $r$ if $\mathbb{P}[A] \geq 1 - \varepsilon$ and on $A$ the following conditions are satisfied.
    \begin{condenum}
        \item We have $S_1 \leq T_1 \leq S_2 \leq T_2 \leq \dots \leq S_n \leq T_n$. \label{condition:increasing_s_t}
        \item We have $f_1 \exp(U_1) = \gamma_1 \dots \gamma_{S_1}$ and for $i = 2, \dots, n$ we have $f_i \exp(U_i) = \gamma_{T_{i-1}+1} \dots \gamma_{S_i}$. \label{condition:shape_of_f}
        \item We have $h_i = \gamma_{S_i+1} \dots \gamma_{T_i}$. \label{condition:shape_of_h}
        \item The $\mathcal{A}_i$ are nested - that is $\mathcal{A}_0 \subset \mathcal{A}_1 \subset \dots \subset \mathcal{A}_n$. \label{condition:As_nested}
        \item For each $i = 1, 2, \dots, n$ we have that $U_i$ is conditionally independent of $\mathcal{A}_n$ given $\mathcal{A}_i$. \label{condition:u_i_conditional_indep_of_a_i}
        \item The $U_i$ are conditionally independent given $\mathcal{A}_n$. \label{condition:u_i_conditional_indep_of_u_i}
        \item We have that $a$ and $\overline{a}$ are $\mathcal{A}_0$ measurable and for each $i = 1, \dots, n$ the $f_i$ and $h_i$ are $\mathcal{A}_i$-measurable. \label{condition:f_and_h_a_measurable}
        \item For each $i = 1, 2, \dots, n$ we have $$\mathbb{E} \left[ \frac{\var [U_{i} | \mathcal{A}_i]}{\lnb a f_1 h_1 f_2 h_2 \dots f_{i-1} h_{i-1} f_i \rnb ^4 r^2} | \mathcal{A}_{i-1} \right] \geq m_i.$$ \label{condition:average_variance_large}
        \item For each $i \in [1, n] \cap \Z \backslash I$ we have $U_i = 0$. \label{condition:u_i_zero}
        \item For each $i \in I$ we have $$\lnb U_i \rnb \leq \lnb a f_1 h_1 f_2 h_2 \dots f_{i-1} h_{i-1} f_i \rnb ^2 r$$ almost surely and $b^{+}(h_i) \in U_{t/4, t/8}(U_i|\mathcal{A}_n)$. \label{condition:in_u_t}
        \item When $I$ is not empty if we enumerate $I$ as $\{ j_1, \dots, j_{p} \}$ with $j_1 < \dots < j_p$ and define $g_1 := \overline{a} f_1 h_1 f_2 h_2 \dots f_{j_1} $ and for $i = 2, \dots, p$ define $g_i :=h_{j_{i-1}} f_{j_{i-1}+1} h_{j_{i-1}+1} \dots f_{j_{i} - 1} h_{j_{i} -1} f_{j_{i}}$. Then for each $i=1, \dots, p$ we have $$\lnb g_i \rnb \geq C.$$ \label{condition:breaks_up_large}
        \item With $g_i$ defined as above when $I$ is not empty for $i = 1, \dots, \ell$ we have $$d(b^{-}(g_i), b^{+}(h_{j_i})) > t/4.$$ \label{condition:breaks_up_alligned}
        \item For $i = 1, \dots, n$ we have $T_i \geq S_i+K$. \label{condition:k_gap}
        \item We have $\ell = h_{j_m} f_{j_m+1} h_{j_m+1} \dots f_{n} h_{n}$ \label{condition:l_def}
    \end{condenum}
\end{definition}

We refer to $\ell$ as the \emph{tail} of the decomposition.

This definition is chosen such that given a proper decomposition we can write
\begin{equation*}
    a \gamma_1 \dots \gamma_{T_n} = a g_1 \exp(U_1) g_2 \exp(U_2) \dots g_m \exp(U_m) g_{m+1}
\end{equation*}
and then Taylor expand in the $U_i$ after conditioning on $\mathcal{A}_n$. The $\sigma$- algebra $\mathcal{A}_n$ will play a similar role to the $\sigma$-algebra $\mathcal{A}$ in \eqref{eq:decomp_for_explanation}.

We will now briefly discuss the purpose of each of these conditions. Conditions \ref{condition:increasing_s_t}, \ref{condition:shape_of_f} and \ref{condition:shape_of_h} are needed to describe the shape of the decomposition. We require Conditions \ref{condition:As_nested} and \ref{condition:u_i_conditional_indep_of_a_i} in order to ensure that $\var[U_i|\mathcal{A}_n] = \var[U_i|\mathcal{A}_i]$ and in particular is an $\mathcal{A}_i$ measurable random variable. This enables us to apply a quantitative version Cramer's Theorem (see Lemma \ref{lemma:cramer}) to show that after conditioning on $\mathcal{A}_n$ the sum of the variances of the random variables produced by Taylor expanding \eqref{eq:decomp_for_explanation} in the $U_i$ will, with very high probability, not be too small. Condition \ref{condition:u_i_conditional_indep_of_u_i} is needed for the small random variables given by this to be independent. Condition \ref{condition:f_and_h_a_measurable} is also important in this step and is needed to ensure that the $g_i$ are $\mathcal{A}_n$-measurable.

We need to introduce the set $I$ because if $b^{-}(f_i)$ is too close to $b^{+}(h_i)$ then we will not have good control on the derivatives with respect to $U_i$. This will prevent us from being able to use our Taylor expansion. We cannot get around by for example replacing $f_i$ by $$\tilde{f}_i := \begin{cases}
    f_i & \text{ if $i \in I$}\\
    f_i \exp(U_i) h_i f_{i+1} & \text{ otherwise}
\end{cases}$$ 
and replacing $U_i$ by
$$\tilde{U}_i:= \begin{cases}
    U_i & \text{ if $i \in I$}\\
    U_{i+1} & \text{ otherwise}
\end{cases}$$ 
in this case because we will not know if we want $i \in I$ or not until after we define $h_i$. This means that $S_i$ will not be a stopping time.

Condition \ref{condition:average_variance_large} is needed to ensure that the small random variables we aquire after Taylor expanding in the $U_i$ have at least some variance.

Conditions \ref{condition:u_i_zero}, \ref{condition:in_u_t}, \ref{condition:breaks_up_large}, and \ref{condition:breaks_up_alligned} are needed to ensure that the conditions of Proposition \ref{proposition:intro_decomp_detail} are satisfied. Condition \ref{condition:k_gap} is needed to ensure that $b^{+}(h_{j_i})$ is a good approximation of $b^{+}(g_{i+1})$.

We introduce the filtration $\left(\mathcal{F}_i\right)_{i=1}^{\infty}$ instead of just taking $\mathcal{F}_i = \sigma(\gamma_1, \gamma_2, \dots, \gamma_i)$ because in our construction of a proper decomposition in Proposition \ref{proposition:v_to_varaince_sum} we need the $f_i$ to be $\mathcal{F}_{S_i}$ - measurable. The $f_i$ are not in general products of $\gamma_j$ and so are not in general $\sigma( \gamma_1, \gamma_2, \dots, \gamma_{S_i})$ - measurable.

Note that when $n=0$ a proper decomposition will always exist. We will call this the trivial proper decomposition.

\begin{definition} \label{definition:variance_sum}
    Given some probability measure $\mu$ on $\Gp$, some $P\geq 1$ some fixed $a, \overline{a}\in \Gp$ such that $\lnb a \rnb \leq P$, some $n, K \in \Z_{\geq 0}$, and some  $t, C, \varepsilon > 0$ we define the \emph{variance sum} for $\mu, n, K, t, C, \varepsilon$ from $a, \overline{a}$ to $P$ at scale $r$ to be the  maximum for $k = 0, 1, \dots, n$ of the supremum of all possible values of $$\sum_{i=1}^{k} m_i$$
    where $$\left(f, h, U, m, \mathcal{A}, A, I, \gamma, \mathcal{F}, S, T, \ell \right)$$ is a proper decomposition for $\left( \mu, k, K, a, \overline{a}, t, C, \varepsilon \right)$ at scale $r$ with $\lnb a f_1 h_1 \dots f_k h_k \rnb \leq P$ on the event $A$. We denote this by $W(\mu, n, K, a, \overline{a}, P, t, C, \varepsilon; r)$.
\end{definition}

To avoid trivial obstructions we also take this supremum over all possible underlying probability spaces. In particular we allow the probability space to be a regular space.

Note that since a proper decomposition always exists when $k=0$ we have $W(\mu, n, K, a, P, t, C, \varepsilon; r) \geq 0$. We now introduce the following.

\begin{definition}
    Given a probability measure $\mu$ on $\Gp$, $n \in \Z_{\geq 0}$, $P_1, P_2 \in \R$ with $1 \leq P_1 \leq P_2$ and some $t, C, \varepsilon, r > 0$ we define $$V(\mu, n, K, P_1, P_2, t, C, \varepsilon; r) := \inf \limits_{a, \overline{a} \in \Gp, \lnb a \rnb \leq P_1} W(\mu, n, K,  a, \overline{a}, P_2, t, C, \varepsilon; r).$$
\end{definition}

Trivially $V(\mu, n, P_1, P_2, t, C, \varepsilon; r) \geq 0$ due to the existence of the trivial decomposition. It is also clear that it is decreasing in $P_1$ and increasing in $P_2$. The quantity $V(\cdot, \cdot, \cdot, \cdot, \cdot, \cdot, \cdot; \cdot)$ will play an important role in the proof as is shown by the following propositions.

\begin{proposition} \label{proposition:varaince_sum_adds}
    Suppose that $\mu$ is a probability measure on $\Gp$, $n_1,  n_2 \in \Z_{\geq 0}$, $P_1, P_2, P_3 \in \R$ with $1 \leq P_1 \leq P_2 \leq P_3$ and $t, C, r, \varepsilon_1, \varepsilon_2 > 0$. Then we have
    \begin{align*}
        \MoveEqLeft V(\mu, n_1+n_2, P_1, P_3, t, C, \varepsilon_1+\varepsilon_2; r)\\
        &\geq V(\mu, n_1, P_1, P_2, t, C,\varepsilon_1; r) + V(\mu, n_2, P_2, P_3, t, C, \varepsilon_2; r).
    \end{align*}
\end{proposition}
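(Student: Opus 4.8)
The plan is to establish the inequality by \emph{concatenating} two near-optimal proper decompositions, one realising (almost) $V(\mu,n_1,P_1,P_2,t,C,\varepsilon_1;r)$ and one realising (almost) $V(\mu,n_2,P_2,P_3,t,C,\varepsilon_2;r)$, exploiting the fact that Definition~\ref{definition:proper_decomposition} was arranged precisely so that this operation is legal. Fix $a,\overline{a}\in\Gp$ with $\lnb a\rnb\le P_1$ and fix $\delta>0$; since the left-hand side is an infimum over such pairs, it suffices to exhibit a proper decomposition for $(\mu,k_1+k_2,K,a,\overline{a},t,C,\varepsilon_1+\varepsilon_2)$ at scale $r$ (with $k_1\le n_1$, $k_2\le n_2$), with final product of operator norm at most $P_3$ on its good event and variance sum at least $V(\mu,n_1,P_1,P_2,t,C,\varepsilon_1;r)+V(\mu,n_2,P_2,P_3,t,C,\varepsilon_2;r)-2\delta$. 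By the definitions of $W$ and $V$ we may pick a proper decomposition $\mathcal{D}^{(1)}=(f^{(1)},h^{(1)},U^{(1)},m^{(1)},\mathcal{A}^{(1)},A^{(1)},I^{(1)},\gamma,\mathcal{F}^{(1)},S^{(1)},T^{(1)},\ell^{(1)})$ for $(\mu,k_1,K,a,\overline{a},t,C,\varepsilon_1)$ at scale $r$ with $\lnb af^{(1)}_1h^{(1)}_1\cdots f^{(1)}_{k_1}h^{(1)}_{k_1}\rnb\le P_2$ on $A^{(1)}$ and $\sum_{i\le k_1}m^{(1)}_i\ge V(\mu,n_1,P_1,P_2,t,C,\varepsilon_1;r)-\delta$.

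On $A^{(1)}$ we have reached the $\mathcal{A}^{(1)}_{k_1}$-measurable element $a':=af^{(1)}_1h^{(1)}_1\cdots f^{(1)}_{k_1}h^{(1)}_{k_1}$ with $\lnb a'\rnb\le P_2$, and the tail $\ell^{(1)}$ of $\mathcal{D}^{(1)}$ is also $\mathcal{A}^{(1)}_{k_1}$-measurable. By the strong Markov property the shifted sequence $\gamma':=(\gamma_{T^{(1)}_{k_1}+j})_{j\ge1}$ is i.i.d.\ $\mu$ and independent of $\mathcal{F}^{(1)}_{T^{(1)}_{k_1}}\supseteq\mathcal{A}^{(1)}_{k_1}$. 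For each deterministic pair $(b,\overline{b})\in\Gp^2$ with $\lnb b\rnb\le P_2$ we fix, built from $\gamma'$, a proper decomposition $\mathcal{D}^{(2)}[b,\overline{b}]$ for $(\mu,k_2,K,b,\overline{b},t,C,\varepsilon_2)$ at scale $r$ with final product of norm $\le P_3$ on its good event and variance sum $\ge V(\mu,n_2,P_2,P_3,t,C,\varepsilon_2;r)-\delta$; a measurable selection theorem lets this assignment be chosen jointly measurable, after enlarging the probability space to carry the auxiliary randomness realising the $\sigma$-algebras and the variables $U^{(2)}$ --- this is the only place the proviso ``providing we take our underlying probability space large enough'' is needed. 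We then take $\mathcal{D}^{(2)}:=\mathcal{D}^{(2)}[a',\ell^{(1)}]$, i.e.\ we feed the running product $a'$ into the ``$a$''-slot and the previous tail $\ell^{(1)}$ into the ``$\overline{a}$''-slot, and splice it onto $\mathcal{D}^{(1)}$: concatenate $f,h,U,m$, set $S_{k_1+j}:=T^{(1)}_{k_1}+S^{(2)}_j$, $T_{k_1+j}:=T^{(1)}_{k_1}+T^{(2)}_j$, $\mathcal{A}_{k_1+j}:=\mathcal{A}^{(1)}_{k_1}\vee\mathcal{A}^{(2)}_j$, $I:=I^{(1)}\cup(k_1+I^{(2)})$, $A:=A^{(1)}\cap A^{(2)}$ (so $\mathbb{P}[A^{c}]\le\varepsilon_1+\varepsilon_2$), tail $\ell:=\ell^{(2)}$, and build the filtration $\mathcal{F}$ by Lemma~\ref{lemma:generates_independent_filtration} applied with the stopping time $T^{(1)}_{k_1}$ and the extra $\sigma$-algebra generated by $\mathcal{A}^{(1)}_{k_1}$ together with the selection randomness, whose hypothesis holds because $\gamma'$ is independent of $\mathcal{F}^{(1)}_{T^{(1)}_{k_1}}$ and of the (fresh) selection randomness.

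The reason for putting $\ell^{(1)}$ in the ``$\overline{a}$''-slot is that then the junction block of the concatenation, which runs from the last active index $j_{m_1}$ of $\mathcal{D}^{(1)}$ to the first active index of $\mathcal{D}^{(2)}$, equals $\ell^{(1)}$ composed with $f^{(2)}_1h^{(2)}_1\cdots f^{(2)}_{j^{(2)}_1}$, i.e.\ exactly the first $g$-block $g^{(2)}_1$ of $\mathcal{D}^{(2)}$; consequently conditions \ref{condition:breaks_up_large} and \ref{condition:breaks_up_alligned} at the junction reduce verbatim to those for $\mathcal{D}^{(2)}$. Likewise, for $i>k_1$ the norm $\lnb af_1h_1\cdots f_{i}\rnb$ normalising $U_i$ in condition \ref{condition:average_variance_large} equals $\lnb a'f^{(2)}_1h^{(2)}_1\cdots f^{(2)}_{i-k_1}\rnb$, so after conditioning on $\mathcal{A}^{(1)}_{k_1}$ (on which $\gamma'$ and the stage-$2$ randomness are fresh) and using the tower property the bounds $m^{(2)}_j$ transfer unchanged, giving variance sum $\sum_i m^{(1)}_i+\sum_j m^{(2)}_j$. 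The remaining conditions \ref{condition:increasing_s_t}--\ref{condition:l_def} all hold by juxtaposing the two stages: \ref{condition:increasing_s_t}--\ref{condition:shape_of_h} and \ref{condition:k_gap}, \ref{condition:l_def} from $\gamma'_j=\gamma_{T^{(1)}_{k_1}+j}$ and the shifts above, \ref{condition:As_nested} and \ref{condition:f_and_h_a_measurable} from $\mathcal{A}^{(1)}_{k_1}\subseteq\mathcal{A}_{k_1+j}$ and the measurability built into the selection, and \ref{condition:u_i_conditional_indep_of_a_i}, \ref{condition:u_i_conditional_indep_of_u_i}, \ref{condition:u_i_zero}, \ref{condition:in_u_t} because the independence of $\gamma'$ from $\mathcal{F}^{(1)}_{T^{(1)}_{k_1}}$ lets the stage-$2$ conditional-independence and size statements survive the extra conditioning on $\mathcal{A}^{(1)}_{k_1}$. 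Letting $\delta\to0$ gives the claim. I expect the genuinely delicate step to be the measurable selection of the family $\mathcal{D}^{(2)}[b,\overline{b}]$ and the attendant bookkeeping of which auxiliary $\sigma$-algebras live where; once that plumbing is in place the verification of \ref{condition:increasing_s_t}--\ref{condition:l_def} is essentially mechanical, which is exactly what the elaborate form of Definition~\ref{definition:proper_decomposition} is designed to guarantee.
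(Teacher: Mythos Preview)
Your proposal is correct and follows essentially the same approach as the paper: the paper isolates the concatenation step as a separate lemma (Lemma~\ref{lemma:concaninate}), which takes a proper decomposition $D_1$ for $(\mu,n_1,K,a,\overline{a},\dots)$ and a proper decomposition $D_2$ for $(\mu,n_2,K,a',\ell_1,\dots)$ that is conditionally independent of $(a,D_1)$ given $(a',\ell_1)$, and splices them exactly as you describe (same shifts of $S,T$, same $\mathcal{A}$'s, same $I$, same filtration construction). The paper's proof of the proposition is then a two-line appeal to that lemma plus the remark that a large enough probability space lets one realise the required $D_2$; you have unpacked this inline and, helpfully, made explicit the measurable-selection step that the paper sweeps into the phrase ``providing we take our underlying probability space large enough''.
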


We also wish to show that when the variance sum is large the order $k$ detail is small.

\begin{proposition} \label{proposition:variance_sum_to_order_k_detail}
    For every $\alpha, t > 0$ there are some constants $C, Q > 0$ such that the following is true. Suppose that $\mu$ is a finitely supported Zariski-dense probability measure on $\Gp$. Then there is some $c = c(\mu) >0$ such that whenever $P \geq 1$ and $k, K, n  \in \Z_{>0}$ with $K$ and $n$ sufficiently large (in terms of $t$, $\alpha$ and $\mu$), $r > 0$ is sufficiently small (in terms of $t$, $\alpha$ and $\mu$) and $$V(\mu, n, K, 1, P, t, C, \varepsilon; r) > Ck$$ we have
    \begin{equation}
        s_{Qr}^{(k)}(\nu) < \alpha^k + n \exp(- c K) + P^2 r C^n + \varepsilon. \label{eq:srk_var_bound}
    \end{equation}
\end{proposition}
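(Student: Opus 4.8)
The plan is to combine the construction of a proper decomposition with Proposition \ref{proposition:intro_decomp_detail} and the independence/variance properties of the $U_i$ to express the Furstenberg measure (conditioned on an appropriate $\sigma$-algebra) as an approximate convolution of $k$ factors, each of which has small detail at scale $Qr$; then apply Lemma \ref{lemma:detail_order_k_detail_bound_many} together with Lemma \ref{lemma:detail_wasserstein} to control the error coming from the fact that we only have an \emph{approximate} convolution. Concretely, fix $\alpha, t > 0$. Using Definition \ref{definition:variance_sum}, the hypothesis $V(\mu, n, K, 1, P, t, C, \varepsilon; r) > Ck$ gives us, for every choice of $a, \overline{a}$ with $\lnb a \rnb \leq 1$, a proper decomposition $(f, h, U, m, \mathcal{A}, A, I, \gamma, \mathcal{F}, S, T, \ell)$ for $(\mu, k', K, a, \overline{a}, t, C, \varepsilon)$ at scale $r$ (for some $k' \le n$) with $\sum_{i=1}^{k'} m_i > Ck$ and $\lnb a f_1 h_1 \dots f_{k'} h_{k'}\rnb \le P$ on $A$. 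Take $a = \overline a = \id$. On the event $A$, which has probability at least $1-\varepsilon$, Conditions \ref{condition:u_i_zero}–\ref{condition:breaks_up_alligned} are exactly what is needed so that, writing $g_1, \dots, g_m$ for the grouped products as in Condition \ref{condition:breaks_up_large}, we have $\gamma_1 \dots \gamma_{T_{k'}} b = g_1 \exp(U_{j_1}) g_2 \exp(U_{j_2}) \dots g_m \exp(U_{j_m}) \ell b$, and the hypotheses of Proposition \ref{proposition:intro_decomp_detail} are met (with $t/4$ in the role of $t$, after absorbing $\ell$ and shrinking $r$ so that $\lnb g_1 \dots g_m \rnb^2 r < \delta$, which we can do since $\lnb g_1 \dots g_m \rnb \le P$ and $r$ is small).

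Next I would apply Proposition \ref{proposition:intro_decomp_detail}, conditioning on $\mathcal{A}_n$: for each realization in $A$, $\phi(x)$ is within $C^n \lnb g_1 \dots g_m\rnb^2 r^2 \le C^n P^2 r^2$ of the affine expression $S = \phi(g_1 \dots g_m \ell b) + \sum_{i \in I} \zeta_i(U_i)$. By Condition \ref{condition:in_u_t} and the defining property of $U_{t/4, t/8}(\cdot)$ (Proposition \ref{proposition:large_first_derivatives} / Definition \ref{definition:u_t}), the linear functionals $\zeta_i$ are non-degenerate on a first principal component of $U_i|\mathcal{A}_n$, so by Proposition \ref{proposition:principal_component_u_t} the random variable $\zeta_i(U_i)$ has variance $\gtrsim_t \var(U_i|\mathcal{A}_n)$. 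Conditions \ref{condition:u_i_conditional_indep_of_a_i} and \ref{condition:u_i_conditional_indep_of_u_i} guarantee that, conditional on $\mathcal{A}_n$, the $\zeta_i(U_i)$ are independent one-dimensional random variables; Condition \ref{condition:in_u_t} bounds their range by $\lnb g_1 \dots g_i\rnb^2 r \lesssim P^2 r$; and Condition \ref{condition:average_variance_large} gives, after taking expectation over $\mathcal{A}_{i-1}$ and summing, that $\mathbb{E}\big[\sum_{i \in I} \var(\zeta_i(U_i)|\mathcal{A}_n)\big] \gtrsim_t (\sum m_i) P^{-4} \cdot (\text{something})$; here I need to be a little careful about the normalization by $\lnb \cdot \rnb^4 r^2$ versus absolute variance — the point is that after rescaling everything to scale $Qr$, the total variance is $\gtrsim Ck$ times $(Qr)^2$ in expectation. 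A quantitative Cramér/large-deviations bound (the paper's Lemma \ref{lemma:cramer}, referenced in the remarks) then shows that with probability $\ge 1 - (\text{exp. small})$ over $\mathcal{A}_n$, the conditional sum of variances exceeds, say, $\tfrac12 Ck (Qr)^2$; on this good event we can split the index set $I$ greedily into $k$ blocks each of conditional variance $\gtrsim C'(Qr)^2/k \cdot k = C'(Qr)^2$ — wait, more precisely each block has conditional variance $\gtrsim (Qr)^2$ provided $C$ was chosen large enough relative to the constant in Corollary \ref{corollary:small_rvs_to_order_k_detail}.

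Then Corollary \ref{corollary:small_rvs_to_order_k_detail}, applied conditionally on $\mathcal{A}_n$ to the sum $S - \phi(g_1\dots g_m \ell b)$ (which is $\mathcal{A}_n$-measurable plus the block sums), gives $s_{Qr}^{(k)}\big(\mathcal{L}(S \mid \mathcal{A}_n)\big) \le \alpha^k$ on the good event, for a suitable choice of $Q$ and $C$ depending only on $\alpha$ and $t$, since the range/variance ratios $Qr/(P^2 r)$ and the per-block variance bound are in the required regime once $r$ is small and $K$ is large (the $K$-largeness feeds in via Condition \ref{condition:k_gap} and the exponential convergence, which is what produces the $n\exp(-cK)$ term — it controls the event that the alignment approximations $b^+(h_{j_i}) \approx b^+(g_{i+1})$ fail). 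On the complementary events (the $\mathcal{A}_n$-bad event, and $A^c$), I bound $s_{Qr}^{(k)}$ trivially by $1$ using Corollary \ref{corollary:trivial_upper_bound_on_detail}, contributing $\varepsilon + n\exp(-cK)$. Finally, the Taylor error: by Lemma \ref{lemma:detail_wasserstein}, $|s_{Qr}^{(k)}(\mathcal{L}(\phi(x)\mid\mathcal{A}_n)) - s_{Qr}^{(k)}(\mathcal{L}(S\mid\mathcal{A}_n))| \lesssim (Qr)^{-1} C^n P^2 r^2 \lesssim C^n P^2 r$ (absorbing constants into $C^n$). Integrating over $\mathcal{A}_n$ and using that $\nu = \mathcal{L}(x)$ by Lemma \ref{lemma:stopping_time_invariant}, together with the convexity of $s_r^{(k)}$ (which follows since it is an $L^1$ norm of a convex combination — the paper uses this implicitly), yields
\[
s_{Qr}^{(k)}(\nu) \le \alpha^k + n\exp(-cK) + P^2 r C^n + \varepsilon
\]
as required. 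The main obstacle I expect is making the bookkeeping in the Cramér step precise: one must track how the \emph{conditional} variances $\var(U_i \mid \mathcal{A}_i)$ (which by Conditions \ref{condition:As_nested}, \ref{condition:u_i_conditional_indep_of_a_i} equal $\var(U_i\mid\mathcal{A}_n)$ and are $\mathcal{A}_i$-measurable) aggregate along the filtration so that a martingale/Cramér argument gives concentration of $\sum_i \var(\zeta_i(U_i)\mid\mathcal{A}_n)$ around its mean, and to verify that the normalization by $\lnb a f_1 h_1 \cdots f_i\rnb^4 r^2$ in Condition \ref{condition:average_variance_large} converts correctly to a lower bound at the single scale $Qr$ after all the products are absorbed; the geometric estimates (Proposition \ref{proposition:singular_value_shape} and Corollary \ref{corollary:singular_value_shape}) are what make $\lnb g_1 \cdots g_i \rnb \approx \prod \lnb g_j \rnb$ up to $C^n$, which is where the $C^n$ in the error term ultimately comes from.
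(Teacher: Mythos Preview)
Your approach is essentially the paper's proof: proper decomposition, Taylor expansion via Proposition \ref{proposition:intro_decomp_detail}, variance lower bounds from Proposition \ref{proposition:principal_component_u_t}, Cram\'er concentration (Corollary \ref{corollary:cramer}) on the normalized variances $X_i := \var[U_i|\mathcal{A}_n]/(\lnb f_1h_1\cdots f_i\rnb^4 r^2)$, Corollary \ref{corollary:small_rvs_to_order_k_detail} on the good event, Wasserstein error via Lemma \ref{lemma:detail_wasserstein}, and convexity of $s_r^{(k)}$.

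The one point you flagged as uncertain is indeed where your write-up goes wrong, and it is worth fixing because the correct version is what makes the whole argument work at the \emph{single} scale $Qr$. You write that the range of $\zeta_i(U_i)$ is $\lnb g_1\cdots g_i\rnb^2 r \lesssim P^2 r$; this is the bound on $\lnb U_i\rnb$ from Condition \ref{condition:in_u_t}, but $\zeta_i$ factors through the action of $g_1\cdots g_i$ on $\B$, and Lemma \ref{lemma:derivs} gives $\bigl|\partial y/\partial x\bigr| \le O_t(\lnb g_1\cdots g_i\rnb^{-2})$ on the event $E_1$. Hence $|\zeta_i(U_i)| \lesssim_t \lnb g_1\cdots g_i\rnb^{-2}\cdot\lnb U_i\rnb \le r$, not $P^2 r$. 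The same scaling, combined with Proposition \ref{proposition:principal_component_u_t}, gives $\var[\zeta_i(U_i)|\hat{\mathcal{A}}] \gtrsim_t \lnb g_1\cdots g_i\rnb^{-4}\var[U_i|\hat{\mathcal{A}}]$, so that $\sum_i \var[\zeta_i(U_i)|\hat{\mathcal{A}}]/r^2 \gtrsim_t \sum_i X_i$. This is exactly the conversion you were worried about: the $\lnb\cdot\rnb^4 r^2$ normalization in Condition \ref{condition:average_variance_large} is designed so that after the derivative scaling the $\zeta_i(U_i)$ all live at the common scale $r$, with range $\lesssim r$ and total variance $\gtrsim_t r^2\sum X_i$. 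Then Corollary \ref{corollary:small_rvs_to_order_k_detail} applies at scale $Qr$ with $Q$ a constant depending only on $\alpha$ (not on $P$), and the Cram\'er step concentrates $\sum X_i$ around $\sum m_i > Ck$ using that each $X_i \in [0,1]$ (again from Condition \ref{condition:in_u_t}) and is $\mathcal{A}_i$-measurable. One further small point: the paper conditions on $\hat{\mathcal{A}} = \sigma(\mathcal{A}_n, b)$ with $b = \ell\,\overline{b}$ for $\overline{b}\sim\nu$ independent, so that $b$ is treated as deterministic in the Taylor expansion; you should do the same.
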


When we apply this proposition the most important term in \eqref{eq:srk_var_bound} will be $\alpha^k$. Finally we need the following.

\begin{proposition} \label{proposition:lots_of_v_varaince_sum}
    For any $\alpha_0 \in (0, 1/3)$ and any $t, R > 0$ there exists some $c = c (\alpha_0, t, R) > 0$ such that the following is true. Suppose that $\mu$ is a finitely supported Zariski-dense probability measure. Suppose further that $\mu$ is $\alpha_0, t$-non-degenerate and that the operator norm is at most $R$ on the support of $\mu$. Suppose that $M_{\mu} < \infty$ and that $h_{RW} / \chi$ is sufficiently large.  Then there is some constant $c_2 = c_2(\mu) > 0$ such that the following holds. Let $M > M_{\mu}$ be chosen large enough that $\log M \geq h_{RW}$. Suppose that $P$ is sufficiently large (depending on $\mu$, $M$, $C$, $\alpha_0$, $t$ and $R$) and let $\hat{m} = \floor{\frac{\log M}{100 \chi}}$.

    Suppose that $r \in \left(0, P^{-\frac{ \log M}{\chi} - 4}\right)$ and that $K$ is a positive integer with $K \leq \frac{\log P}{10 \chi}$ and $K$ is sufficiently large (depending on $\mu$, $M$, $C$, $\alpha_0$, $t$ and $R$). Then
    \begin{align*}
        \MoveEqLeft V(\mu, \hat{m}, K, P^{-\frac{\log M}{ \chi}} r^{-1/2}, P^{-\frac{h_{RW}}{40 \chi}} r^{- 1/2}, t, C, \exp(-c_2 K); r)\\ &\geq c \left( \frac{h_{RW}}{\chi} \right)  \left( \max \left\{ 1,  \log \frac{ \log M}{h_{RW}} \right\} \right)^{-1}.
    \end{align*}

\end{proposition}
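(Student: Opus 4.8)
The strategy is to deduce the bound from Proposition \ref{proposition:lots_of_v} by an explicit gluing construction. Fix $a, \overline a \in \Gp$ with $\lnb a\rnb \le P^{-\log M/\chi} r^{-1/2}$; by Definition \ref{definition:variance_sum} it suffices to exhibit one proper decomposition for $\left(\mu, \hat m, K, a, \overline a, t, C, \exp(-c_2 K)\right)$ at scale $r$ whose variance sum $\sum_{i=1}^{\hat m} m_i$ meets the claimed lower bound and keeps $\lnb a f_1 h_1 \cdots f_{\hat m} h_{\hat m}\rnb \le P^{-h_{RW}/(40\chi)} r^{-1/2}$ on the good event $A$. The first step is to apply Proposition \ref{proposition:lots_of_v} with $\hat\nu$ the renewal measure supplied by Theorem \ref{theorem:renewal_theorem}; this produces scales $s_1 < \dots < s_{\hat m}$ lying in $\left(P^{-\log M/\chi}, P^{-h_{RW}/(10\chi)}\right)$ with $s_{i+1} \ge P^3 s_i$ and $\sum_{i=1}^{\hat m} \int_{\B} v(\gamma_1 \gamma_2 \dots \gamma_{\tau_{P, w}}; s_i)\, \hat\nu(dw) \gtrsim (h_{RW}/\chi)\left(\max\{1, \log(\log M/h_{RW})\}\right)^{-1}$. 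Block $i$ of the decomposition will be responsible for the variance coming from scale $s_i$; the separation $s_{i+1}\ge P^3 s_i$ together with $r < P^{-\log M/\chi - 4}$ is exactly what lets the partial products grow enough between consecutive blocks that $\lnb a f_1 h_1 \cdots f_i\rnb \cong (s_i/r)^{1/2}$, which is what Condition \ref{condition:in_u_t} requires, while staying below the target norm.

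Next I would build the $\hat m$ blocks inductively, at each stage enlarging the filtration for $\gamma$ via Lemma \ref{lemma:generates_independent_filtration} so as to incorporate the conditioning $\sigma$-algebra handed over by the definition of $v(\cdot\,;\cdot)$ while keeping $\gamma_{i+1}$ independent of the current $\sigma$-algebra; this, with Lemma \ref{lemma:r_c_d_subsigma_conditional_ind}, yields the nesting and conditional-independence Conditions \ref{condition:As_nested}--\ref{condition:u_i_conditional_indep_of_u_i}. Block $i$ has a \emph{renewal part} $h_i$ and a \emph{variance part} $f_i\exp(U_i)$. The renewal part runs the walk, using Corollary \ref{corollary:renewal_theorem}, until the tracked direction lies, uniformly in the starting configuration, within $o(1)$ of a direction whose orthogonal complement is $\hat\nu$-distributed, and its length is forced to be at least $K$ by Condition \ref{condition:k_gap}; the variance part is the stopped walk $\tau_{P, w_i}$ for that direction, to which the definition of $v(\cdot\,;s_i)$ associates an $\mathcal A_i$-measurable approximation $f_i$ and a remainder $U_i$ with $\lnb U_i\rnb \le s_i \le \lnb a f_1 h_1 \cdots f_i\rnb^2 r$ and, after conditioning, a lower bound on $\var[U_i\mid\mathcal A_i]$ normalised as in Condition \ref{condition:average_variance_large}. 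The partial-product norms and the bounds $\lnb g_k\rnb\ge C$ of Condition \ref{condition:breaks_up_large} are controlled throughout by Proposition \ref{proposition:singular_value_shape} and the large-deviation estimates of Theorem \ref{theorem:large_deviations_estimate} and Corollaries \ref{corollary:large_deviations_stopping_time}, \ref{corollary:size_after_stopping_time}, with the stopping-time parameters calibrated as in the outline of this section.

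The random set $I$ serves to discard the blocks for which the geometry needed by Proposition \ref{proposition:intro_decomp_detail} fails: I would set $i \in I$ precisely when $b^{+}(h_i) \in U_{t/4, t/8}(U_i\mid\mathcal A_n)$, taking $U_i = 0$ otherwise in accordance with Condition \ref{condition:u_i_zero}, and Conditions \ref{condition:breaks_up_large} and \ref{condition:breaks_up_alligned} for the resulting groups $g_i$ then follow from the size and alignment estimates already in place. This is where the hypotheses on $\mu$ enter. Since $h_i$ is a product of at least $K$ independent samples, its attracting direction $b^{+}(h_i)$ is, conditionally on the past, within $\exp(-cK)$ of a $\nu$-distributed point by Corollary \ref{corollary:exponential_convergence_to_fm}, while $U_{t/4, t/8}(U_i\mid\mathcal A_n)$ omits only two arcs of length at most $t/2$; as $\mu$ is $\alpha_0, t$-non-degenerate with $\alpha_0 < 1/3$, these arcs carry $\nu$-mass at most $2\alpha_0 < 2/3$, so conditionally on $\mathcal A_{i-1}$ the event $i \in I$ has probability bounded below by a constant depending only on $\alpha_0$. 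Hence each $m_i$ may be taken to be that constant times the renewal-averaged normalised conditional-variance lower bound for block $i$; summing over $i$ and using the uniformity in Theorem \ref{theorem:renewal_theorem} to make the conditional-on-$\mathcal A_{i-1}$ average of $v(\cdot\,;s_i)$ dominate $\int_{\B} v(\cdots;s_i)\,\hat\nu(dw)$ up to an error that large $P$ makes negligible then gives $\sum_{i=1}^{\hat m} m_i \gtrsim_{\alpha_0, t, R} (h_{RW}/\chi)\left(\max\{1, \log(\log M/h_{RW})\}\right)^{-1}$.

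The main obstacle is the bookkeeping required to make all of Conditions \ref{condition:increasing_s_t}--\ref{condition:l_def} hold simultaneously, and in particular to reconcile the deterministic numbers $m_i$ and the conditional-expectation form of Condition \ref{condition:average_variance_large} with the fact that membership in $I$ — hence whether $U_i$ vanishes — is revealed only after $h_i$ is drawn; this is what forces the renewal parts to be run long enough, exploiting the uniform convergence in Theorem \ref{theorem:renewal_theorem}, that the conditional law of the tracked direction is $\hat\nu$-close regardless of history, so the variance lower bounds hold conditionally on $\mathcal A_{i-1}$ rather than only in expectation, and the roles of $\overline a$ versus $a$ and of the tail $\ell$ must be carried through the same calculation. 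A secondary point is to choose $c_2$ and the length constraints on $K$ so that all the exceptional events contributed by Corollaries \ref{corollary:large_deviations_stopping_time}, \ref{corollary:size_after_stopping_time} and \ref{corollary:exponential_convergence_to_fm} together have probability at most $\exp(-c_2 K)$, which is possible since each is exponentially small in $K$.
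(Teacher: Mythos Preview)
Your proposal is correct and follows the same underlying strategy as the paper, but the paper organises the argument differently. Rather than building all $\hat m$ blocks in one inductive pass, the paper first isolates the single-block case as a separate statement, Proposition~\ref{proposition:v_to_varaince_sum}: for a single scale $s$ and any starting pair $a,\overline a$, it constructs a proper decomposition with $n=1$ whose variance sum is $\gtrsim_{\alpha_0,t,R}\int v(\gamma_1\cdots\gamma_{\tau_{P,w}};s)\,\hat\nu(dw)$, yielding $V(\mu,1,K,\exp(-2K\chi)P^{-1}\sqrt{s/r},\exp(2K\chi)\sqrt{s/r},t,C,\exp(-c_2K);r)\ge c_1 v-\delta$. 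That single-block construction contains exactly the ingredients you describe (the initial walk and renewal coupling via Corollary~\ref{corollary:renewal_theorem}, the stopped walk $\tau_{P,y}$, the disintegration from $v(\cdot\,;s)$, the $K$-step tail, and the definition of $I$ via non-degeneracy), but packaged so that Conditions~\ref{condition:increasing_s_t}--\ref{condition:l_def} need only be checked for $n=1$. The proof of Proposition~\ref{proposition:lots_of_v_varaince_sum} is then a short formal deduction: apply the single-block bound at each scale $s_i$ from Proposition~\ref{proposition:lots_of_v}, observe that $s_{i+1}\ge P^3 s_i$ together with $K\le\log P/(10\chi)$ gives $\exp(2\chi K)\sqrt{s_i/r}\le \exp(-2\chi K)P^{-1}\sqrt{s_{i+1}/r}$ so the norm windows nest, and invoke the concatenation machinery of Proposition~\ref{proposition:varaince_sum_adds}. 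This modularisation is what absorbs the bookkeeping you flag in your final paragraph; your one-pass approach would work but carries that bookkeeping through the induction. One small point: in the paper's single block the renewal step (growing to $8P^{-1}\sqrt{s/r}$ and coupling the direction) is folded into $f_1$ via an intermediate stopping time $\underline S$, not into $h_1$, which is simply $K$ further steps; your assignment of the renewal to $h_i$ is a harmless reorganisation but would need separate handling for the first block.
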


The rest of this section will be devoted to proving these three propositions. Later we will prove Theorem \ref{theorem:main_furstenberg} by using these three propositions to bound the order $k$ detail of the Furstenberg measure and then applying Lemma \ref{lemma:ind_srk_to_sr} and Lemma \ref{lemma:suff_abs_cont}.

\subsection{Proof of Proposition \ref{proposition:varaince_sum_adds}}

The proof of Proposition \ref{proposition:varaince_sum_adds} follows easily from the following Lemma.

\begin{lemma} \label{lemma:concaninate}
    Let $\mu$ be a probability measure on $\Gp$, let $n_1, n_2, K \in \Z_{\geq 0}$, let $a, \overline{a}$ be a random variables taking values in $\Gp$, and let $t, C, r, \varepsilon_1, \varepsilon_2 >0$. Suppose that $$\left(f^{(1)}, h^{(1)}, U^{(1)}, m^{(1)}, \mathcal{A}^{(1)}, A_1, I_1,  \gamma^{(1)} , \mathcal{F}^{(1)}, S^{(1)}, T^{(1)}, \ell_1\right)$$ is a proper decomposition for $\left( \mu, n_1, K, a, \overline{a}, t, C, \varepsilon_1 \right)$ at scale $r$ and denote it by $D_1$. Suppose that 
    $$\left(f^{(2)}, h^{(2)}, U^{(2)}, m^{(2)}, \mathcal{A}^{(2)}, A_2, I_2,  \gamma^{(2)} , \mathcal{F}^{(2)}, S^{(2)}, T^{(2)}, \ell_2\right)$$
    is a proper decomposition for $\left( \mu, n_2, K, af^{(1)}_1h^{(1)}_1 \dots f^{(1)}_{n_1}h^{(1)}_{n_1}, \ell_1, t, C, \varepsilon_2 \right)$ at scale $r$ and denote it by $D_2$. Suppose that $D_2$ is conditionally independent of $(a, D_1)$ given $af^{(1)}_1h^{(1)}_1 \dots f^{(1)}_{n_1}h^{(1)}_{n_1}$ and $\ell_1$.
    For $i=1, \dots, n_1 + n_2$ define $f_i^{(3)}$ by
    $$f_i^{(3)} = \begin{cases}
        f_i^{(1)} & \text{ if } i \leq n_1\\
        f_{i-n_1}^{(2)} & \text{ otherwise.}
    \end{cases}$$
    and define $h_i^{(3)}$, $m_i^{(3)}$, $S_i^{(3)}$ and $T_i^{(3)}$ similarly. Define $\mathcal{A}_i^{(3)}$ by
    $$\mathcal{A}_i^{(3)} = \begin{cases}
        \mathcal{A}_i^{(1)} & \text{ if } i \leq n_1\\
        \sigma(\mathcal{A}_{n_1}^{(1)}, \mathcal{A}_{i-n_1}^{(2)}) & \text{ otherwise.}
    \end{cases}$$
    Define $$I_3 := I_1 \cup \{i + n_1 : i \in I_2 \}.$$
    Let $T := T^{(1)}_{n_1}$ and for $i = 1, 2, \dots$ define $\gamma_i^{(3)}$ by
    $$\gamma_i^{(3)} = \begin{cases}
        \gamma_i^{(1)} & \text{ if } i \leq T\\
        \gamma_{i-T}^{(2)} & \text{ otherwise.}
    \end{cases}$$
    Define $\mathcal{F}_i^{(3)}$ by
    \begin{align*}
    \MoveEqLeft \mathcal{F}_i^{(3)} := \{ A \in \xi: A \cap \{ T \geq i \} \in \mathcal{F}_i^{(1)} \text{ and for all } j < i \text{ we have }\\ &\qquad\qquad\qquad A \cap \{T = j \} \in \sigma(\mathcal{F}_{T}^{(1)}, \mathcal{F}_{i-j}^{(2)}) \}
	\end{align*}      
	where $\xi$ is the set of events in our underlying probability space. Let $\ell_3 = \ell_2$. Then
    $$\left(f^{(3)}, h^{(3)}, U^{(3)}, m^{(3)}, \mathcal{A}^{(3)}, A_1 \cap A_2, I_1,  \gamma^{(3)} , \mathcal{F}^{(3)}, S^{(3)}, T^{(3)}, \ell_1\right)$$
    is a proper decomposition for $\left( \mu, n_1+n_2, K, a, P_2, t, C, \varepsilon_1+\varepsilon_2 \right)$ at scale $r$.
\end{lemma}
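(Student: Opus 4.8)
The plan is to verify, one at a time, the fourteen conditions \ref{condition:increasing_s_t}--\ref{condition:l_def} of Definition \ref{definition:proper_decomposition} for the concatenated tuple on the event $A_1 \cap A_2$ (with index set $I_3 = I_1 \cup \{i + n_1 : i \in I_2\}$ and tail $\ell_2$), together with the probability bound $\mathbb{P}[A_1 \cap A_2] \ge 1 - \varepsilon_1 - \varepsilon_2$, which is immediate from $\mathbb{P}[A_i] \ge 1 - \varepsilon_i$ and inclusion--exclusion. The conditions fixing the shape of the decomposition --- \ref{condition:increasing_s_t}, \ref{condition:shape_of_f}, \ref{condition:shape_of_h}, \ref{condition:u_i_zero}, \ref{condition:k_gap} --- and the nesting condition \ref{condition:As_nested} follow by splitting the index range at $n_1$ and quoting the corresponding condition for $D_1$ on $\{1,\dots,n_1\}$ and for $D_2$ on the shifted range; here one uses that $T := T^{(1)}_{n_1}$ is precisely the time at which $\gamma^{(2)}$ is grafted onto $\gamma^{(1)}$, so $S^{(1)}_{n_1} \le T^{(1)}_{n_1} = T \le T + S^{(2)}_1 = S^{(3)}_{n_1+1}$. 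The measurability condition \ref{condition:f_and_h_a_measurable} is likewise immediate: $a$ and $\overline{a}$ are $\mathcal{A}^{(1)}_0 = \mathcal{A}^{(3)}_0$-measurable, while for $i \le n_1$ (resp.\ $i > n_1$) the factors $f^{(3)}_i, h^{(3)}_i$ are $\mathcal{A}^{(1)}_i$-measurable (resp.\ $\mathcal{A}^{(2)}_{i-n_1}$-measurable), hence $\mathcal{A}^{(3)}_i$-measurable.

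The first substantive step is the probabilistic backbone: that $\gamma^{(3)}$ is a sequence of i.i.d.\ samples from $\mu$, that $\mathcal{F}^{(3)}$ is a filtration for it with $\gamma^{(3)}_{i+1}$ independent of $\mathcal{F}^{(3)}_i$, and that the $S^{(3)}_i, T^{(3)}_i$ are stopping times for $\mathcal{F}^{(3)}$. Since $T$ is an $\mathcal{F}^{(1)}$-stopping time and, by hypothesis, the whole of $D_2$ (in particular $\gamma^{(2)}$ and $\mathcal{F}^{(2)}$) is conditionally independent of $(a, D_1)$ given the $\mathcal{A}^{(1)}_{n_1}$-measurable pair $(af^{(1)}_1 h^{(1)}_1 \cdots f^{(1)}_{n_1} h^{(1)}_{n_1}, \ell_1)$, one combines the strong Markov property for $\gamma^{(1)}$ at time $T$ with Lemma \ref{lemma:generates_independent_filtration} (working on a sufficiently rich probability space) to see that the grafted sequence $\gamma^{(3)}$ is i.i.d.\ from $\mu$ and that $\mathcal{F}^{(3)}$, as defined, is an adapted filtration with the asserted independence. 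The stopping-time claims for $S^{(3)}, T^{(3)}$ then follow by the same bookkeeping: for $i \le n_1$ they coincide with those of $D_1$, and for $i > n_1$ they are $T$ plus the corresponding $\mathcal{F}^{(2)}$-stopping time of the shifted walk, whose events lie in $\mathcal{F}^{(3)}$ by the definition of $\mathcal{F}^{(3)}$.

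The part I expect to be the main obstacle is the conditional-independence conditions \ref{condition:u_i_conditional_indep_of_a_i} and \ref{condition:u_i_conditional_indep_of_u_i} across the seam between the two blocks. The delicate case of \ref{condition:u_i_conditional_indep_of_a_i} is an index $i \le n_1$, where one needs $U^{(1)}_i$ conditionally independent of $\mathcal{A}^{(3)}_{n_1+n_2} = \sigma(\mathcal{A}^{(1)}_{n_1}, \mathcal{A}^{(2)}_{n_2})$ given $\mathcal{A}^{(1)}_i$. I would obtain this by chaining condition \ref{condition:u_i_conditional_indep_of_a_i} for $D_1$ (which controls the $\mathcal{A}^{(1)}_{n_1}$ part) with the hypothesis that $\sigma(D_2) \supseteq \mathcal{A}^{(2)}_{n_2}$ is conditionally independent of $\sigma(a, D_1) \supseteq \sigma(U^{(1)}_i, \mathcal{A}^{(1)}_{n_1})$ given the $\mathcal{A}^{(1)}_{n_1}$-measurable endpoint/tail pair, using the tower and contraction properties of conditional independence. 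Condition \ref{condition:u_i_conditional_indep_of_u_i} is handled similarly: the $U^{(1)}_i$ are conditionally independent given $\mathcal{A}^{(1)}_{n_1}$, the $U^{(2)}_i$ given $\mathcal{A}^{(2)}_{n_2}$, and the two families are conditionally independent of each other given $\mathcal{A}^{(3)}_{n_1+n_2}$ because $D_2 \perp D_1$ given the endpoint/tail pair. This is elementary but genuinely fiddly measure theory, and is where the bulk of the care goes.

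Finally, the quantitative and geometric conditions \ref{condition:average_variance_large}, \ref{condition:in_u_t}, \ref{condition:breaks_up_large}, \ref{condition:breaks_up_alligned}, \ref{condition:l_def} transfer once one matches up the normalizations. For $i > n_1$ the norm $\lnb a f^{(3)}_1 h^{(3)}_1 \cdots f^{(3)}_{i-1} h^{(3)}_{i-1} f^{(3)}_i \rnb$ appearing in \ref{condition:average_variance_large} and \ref{condition:in_u_t} equals $\lnb (a f^{(1)}_1 h^{(1)}_1 \cdots f^{(1)}_{n_1} h^{(1)}_{n_1}) f^{(2)}_1 h^{(2)}_1 \cdots f^{(2)}_{i-n_1} \rnb$, which is exactly the normalization in the corresponding condition for $D_2$ whose first argument is $a f^{(1)}_1 h^{(1)}_1 \cdots f^{(1)}_{n_1} h^{(1)}_{n_1}$, so these transfer verbatim, while for $i \le n_1$ they are the conditions for $D_1$. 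For \ref{condition:breaks_up_large}, \ref{condition:breaks_up_alligned} and \ref{condition:l_def}, one enumerates $I_3$ and checks that the blocks $g_1, \dots, g_{m_1 + m_2}$ it produces are exactly $g^{(1)}_1, \dots, g^{(1)}_{m_1}, g^{(2)}_1, \dots, g^{(2)}_{m_2}$: the only block straddling the seam, built from the largest element of $I_1$ and the smallest element $j_1$ of $I_2$, equals $\ell_1 f^{(2)}_1 h^{(2)}_1 \cdots f^{(2)}_{j_1} = \overline{a}^{(2)} f^{(2)}_1 h^{(2)}_1 \cdots f^{(2)}_{j_1} = g^{(2)}_1$ since $\overline{a}^{(2)} = \ell_1$, which by condition \ref{condition:l_def} for $D_1$ is $h^{(1)}_{j_m} f^{(1)}_{j_m+1} \cdots f^{(1)}_{n_1} h^{(1)}_{n_1}$. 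Hence the bounds $\lnb g_i \rnb \ge C$, the alignment bounds $d(b^{-}(g_i), b^{+}(h_{j_i})) > t/4$, and the tail identity all follow from the corresponding statements for $D_1$ and $D_2$; the degenerate cases with $I_1$ or $I_2$ empty are dealt with by the convention for the empty index set and reduce to one of the two given decompositions.
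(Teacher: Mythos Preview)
Your approach is the same as the paper's: the paper's proof is a three-sentence sketch saying the $\gamma_i^{(3)}$ are i.i.d.\ by standard stopping-time arguments, that $\mathcal{F}^{(3)}$ is a filtration with the required stopping-time properties, and that all of the conditions \ref{condition:increasing_s_t}--\ref{condition:l_def} ``follow immediately from construction.'' You carry out exactly this verification, but in considerably more detail, correctly flagging the conditional-independence conditions \ref{condition:u_i_conditional_indep_of_a_i} and \ref{condition:u_i_conditional_indep_of_u_i} across the seam as the one place requiring genuine care and correctly identifying the seam block in \ref{condition:breaks_up_large}--\ref{condition:l_def} via $\overline{a}^{(2)}=\ell_1$. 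You also implicitly (and correctly) read the shifted stopping times as $S^{(3)}_{n_1+j}=T+S^{(2)}_j$ and use $I_3,\ell_2$ rather than the $I_1,\ell_1$ appearing in the displayed tuple, which are evident typos in the statement.
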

\begin{proof}
    It is easy to check that the $\gamma_i^{(3)}$ are independent by standard properties of stopping times. It is clear from checking the definition that $\mathcal{F}^{(3)}$ is a filtration for $\gamma^{(3)}$ and that the $T_i$ and $S_i$ are stopping times for this filtration. All of the conditions in Definition \ref{definition:proper_decomposition} follow immediately from construction.
\end{proof}

This is enough to prove Proposition \ref{proposition:varaince_sum_adds}.

\begin{proof}[Proof of Proposition \ref{proposition:varaince_sum_adds}]
    This follows immediately from Lemma \ref{lemma:concaninate}.
\end{proof}

\subsection{Proof of Proposition \ref{proposition:variance_sum_to_order_k_detail}}

In this subsection we will prove Proposition \ref{proposition:variance_sum_to_order_k_detail}. Before proving the proposition we need the following lemma.

\begin{lemma} \label{lemma:cramer}
Let $a, b, c > 0$ with $c \leq a$ and let $n \in \N$. Let $X_1, \dots, X_n$ be random variables taking values in $\R$ and let $m_1, \dots, m_n \geq 0$ be such that we have almost surely
\begin{equation*}
\mathbb{E}\left[ X_i | X_1, \dots, X_{i-1} \right] \geq m_i.
\end{equation*}
Suppose that $\sum_{i=1}^n m_i = an$. Suppose also that we have almost surely $X_i \in [0, b]$ for all integers $i \in [1, n]$. Then we have
\begin{equation*}
\mathbb{P}[X_1 + \dots +X_n \leq nc] \leq \left( \left( \frac{a}{c} \right) ^{\frac{c}{b}} \left( \frac{b-a}{b-c} \right) ^{1 - \frac{c}{b}} \right)^n.
\end{equation*}
\end{lemma}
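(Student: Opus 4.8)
The plan is to prove Lemma \ref{lemma:cramer} by the standard exponential (Chernoff/Cramér) method adapted to the martingale-difference setting, where the role of independence is replaced by the conditional lower bound on the $X_i$. Writing $S_n = X_1 + \dots + X_n$, I want to bound $\mathbb{P}[S_n \leq nc]$ from above. For any $\lambda > 0$ we have, by Markov's inequality applied to $e^{-\lambda S_n}$,
\begin{equation*}
\mathbb{P}[S_n \leq nc] = \mathbb{P}[e^{-\lambda S_n} \geq e^{-\lambda nc}] \leq e^{\lambda nc} \, \mathbb{E}[e^{-\lambda S_n}].
\end{equation*}
The task then reduces to controlling $\mathbb{E}[e^{-\lambda S_n}]$, which I would do by peeling off one variable at a time and conditioning.

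The key technical input is the following: if $X \in [0, b]$ almost surely and $\mathbb{E}[X \mid \mathcal{G}] \geq m$ for some $\sigma$-algebra $\mathcal{G}$ (here $\mathcal{G} = \sigma(X_1, \dots, X_{i-1})$), then $\mathbb{E}[e^{-\lambda X} \mid \mathcal{G}] \leq e^{-\lambda m'}$ for a suitable $m'$, or more precisely one can bound it by the value of $e^{-\lambda x}$ at the two-point distribution on $\{0, b\}$ with the smallest allowed mean. Concretely: the function $x \mapsto e^{-\lambda x}$ is convex, so on $[0,b]$ it lies below the chord joining $(0,1)$ and $(b, e^{-\lambda b})$, giving $e^{-\lambda x} \leq 1 - \frac{x}{b}(1 - e^{-\lambda b})$; taking conditional expectations and using $\mathbb{E}[X \mid \mathcal{G}] \geq m$ yields $\mathbb{E}[e^{-\lambda X} \mid \mathcal{G}] \leq 1 - \frac{m}{b}(1 - e^{-\lambda b})$. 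Iterating this over $i = n, n-1, \dots, 1$ (conditioning on $\sigma(X_1,\dots,X_{i-1})$ at each stage and using the tower property) gives
\begin{equation*}
\mathbb{E}[e^{-\lambda S_n}] \leq \prod_{i=1}^n \left( 1 - \frac{m_i}{b}(1 - e^{-\lambda b}) \right) \leq \exp\left( -\frac{1 - e^{-\lambda b}}{b} \sum_{i=1}^n m_i \right) = \exp\left( - an (1 - e^{-\lambda b}) \right),
\end{equation*}
using $1 - x \leq e^{-x}$ and $\sum m_i = an$. (The product bound needs $\frac{m_i}{b}(1-e^{-\lambda b}) \leq 1$, which holds since $m_i \leq \mathbb{E}[X_i \mid \cdots] \leq b$, or one simply works with the exponential bound directly; I would handle the edge cases $c = a$ or $a = b$ by a limiting argument or by noting the claimed bound is then trivial.)

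Combining, $\mathbb{P}[S_n \leq nc] \leq \exp\left( n \left[ \lambda c - a(1 - e^{-\lambda b}) \right] \right)$, and it remains to optimize over $\lambda > 0$. Differentiating the bracket in $\lambda$ and setting the derivative to zero gives $c = ab\, e^{-\lambda b}$, i.e. $e^{-\lambda b} = \frac{c}{ab}$, hence $\lambda b = \log\frac{ab}{c}$ and $\lambda c = \frac{c}{b}\log\frac{ab}{c}$. Substituting back, the exponent becomes $n\left[ \frac{c}{b}\log\frac{ab}{c} - a + \frac{c}{b} \right]$. A short algebraic manipulation — rewriting $\frac{c}{b}\log\frac{ab}{c} = \frac{c}{b}\log\frac{a}{c} + \frac{c}{b}\log b$ and recognizing the remaining terms as a relative-entropy-type expression — should recast this exactly as $n\log\left( (a/c)^{c/b} ((b-a)/(b-c))^{1 - c/b} \right)$; this is precisely the Cramér rate function for a Bernoulli comparison, so the identity is the expected one. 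I would verify this last identity by noting that for the Bernoulli random variable taking value $b$ with probability $a/b$ (mean $a$), the large-deviation rate at mean level $c$ is exactly $\frac{c}{b}\log\frac{c/b}{a/b} + (1 - \frac{c}{b})\log\frac{1 - c/b}{1 - a/b}$, whose negative exponential is the claimed product. The main obstacle is bookkeeping: making sure the chord/convexity bound is applied correctly with the conditioning, checking that the optimized $\lambda$ is indeed positive (it is, since $c \leq a \leq b$ forces $\frac{ab}{c} \geq 1$), and carrying out the algebra that matches the optimized exponent to the stated closed form without sign errors. None of these steps is deep; the content is entirely the conditional convexity estimate plus the optimization.
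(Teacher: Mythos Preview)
Your overall Chernoff strategy and the conditional chord bound are exactly what the paper uses. The gap is in how you pass from the product $\prod_{i=1}^n\bigl(1-\tfrac{m_i}{b}(1-e^{-\lambda b})\bigr)$ to a single expression in $a$. You apply $1-x\le e^{-x}$ termwise, which is strictly lossy: after optimizing in $\lambda$ (and fixing the dropped factor $1/b$ in your display---the correct bracket is $\lambda c-\tfrac{a}{b}(1-e^{-\lambda b})$, optimized at $e^{-\lambda b}=c/a$) you obtain
\[
\mathbb{P}[S_n\le nc]\le\Bigl[(a/c)^{c/b}\,e^{-(a-c)/b}\Bigr]^n,
\]
and one checks via $\log(1-x)\le -x$ that $\bigl(\tfrac{b-a}{b-c}\bigr)^{1-c/b}\le e^{-(a-c)/b}$ with strict inequality in general. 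So your ``short algebraic manipulation'' cannot recover the stated closed form; you end up with a genuinely weaker inequality.

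The paper avoids this loss by replacing your $1-x\le e^{-x}$ step with an AM--GM (equivalently, concavity of $\log(1-x)$) argument: for fixed $\sum m_i=an$ one has
\[
\prod_{i=1}^n\Bigl(1-\tfrac{m_i}{b}(1-e^{-\lambda b})\Bigr)\le\Bigl(1-\tfrac{a}{b}(1-e^{-\lambda b})\Bigr)^n=\Bigl((1-\tfrac{a}{b})+\tfrac{a}{b}e^{-\lambda b}\Bigr)^n,
\]
which is sharp when all $m_i$ are equal. Optimizing this in $\lambda$ (the minimizer is $e^{-\lambda b}=\tfrac{c(b-a)}{a(b-c)}$, not $c/(ab)$) then yields exactly the stated bound. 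So the fix is simply to swap your exponential majorization for the AM--GM step; everything else in your outline is correct.
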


The proof of this lemma is very similar to the standard proof of Cramer's Theorem. We will prove it after proving Propositon \ref{proposition:variance_sum_to_order_k_detail}. We also need the following Corollary.

\begin{corollary} \label{corollary:cramer}
    There is some constant $c>0$ such that the following is true for all $a \in [0,1)$. Let $n \in \Z_{>0}$, let $X_1, \dots, X_n$ be random variables taking values in $\R$ with and let $m_1, \dots, m_n \geq 0$ be such that we have almost surely
    \begin{equation*}
    \mathbb{E}\left[ X_i | X_1, \dots, X_{i-1} \right] \geq m_i.
    \end{equation*}
    Suppose that $\sum_{i=1}^n m_i = an$. Suppose also that we have almost surely $X_i \in [0, 1]$ for all integers $i \in [1, n]$. Then
    \begin{equation*}
        \log \mathbb{P}[X_1 + \dots + X_n \leq \frac{1}{2} n a] \leq - cna.
    \end{equation*}
\end{corollary}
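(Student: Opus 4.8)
The plan is to deduce Corollary \ref{corollary:cramer} from Lemma \ref{lemma:cramer} by taking $c = \tfrac{1}{2}a$ and $b=1$ in the lemma and then estimating the resulting exponential base. Concretely, applying Lemma \ref{lemma:cramer} with these parameters gives
\begin{equation*}
\mathbb{P}\left[X_1 + \dots + X_n \leq \tfrac{1}{2}na\right] \leq \left( \left(\tfrac{a}{a/2}\right)^{a/2} \left(\tfrac{1-a}{1-a/2}\right)^{1-a/2} \right)^n = \left( 2^{a/2} \left(\tfrac{1-a}{1-a/2}\right)^{1-a/2} \right)^n,
\end{equation*}
so it suffices to show that there is an absolute constant $c>0$ such that for all $a \in [0,1)$ we have
\begin{equation*}
2^{a/2} \left(\frac{1-a}{1-a/2}\right)^{1-a/2} \leq e^{-ca}.
\end{equation*}

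To prove this, I would take logarithms and define $\psi(a) := \tfrac{a}{2}\log 2 + (1-\tfrac{a}{2})\log\!\left(\tfrac{1-a}{1-a/2}\right)$ for $a \in [0,1)$, and show $\psi(a) \leq -ca$. First I would handle $a$ bounded away from $1$, say $a \in [0, 1-\delta]$ for a fixed small $\delta$: here $\psi$ is smooth, $\psi(0) = 0$, and a Taylor expansion at $0$ gives $\psi(a) = \psi'(0) a + O(a^2)$ where one computes $\psi'(0) = \tfrac{1}{2}\log 2 - \tfrac{1}{2} < 0$ (since $\log 2 < 1$); thus $\psi(a) \leq -c_1 a$ for some $c_1 > 0$ on a neighbourhood of $0$, and on the compact interval away from $0$ and $1$ one uses that $\psi(a)/a$ is continuous and strictly negative (strict negativity following from strict concavity-type considerations, or simply from $\tfrac{1-a}{1-a/2} < 1$ for $a>0$ combined with a direct check that the $2^{a/2}$ factor does not overcome it). For $a$ near $1$, note $\log\!\left(\tfrac{1-a}{1-a/2}\right) \to -\infty$ while its coefficient $1 - \tfrac{a}{2} \to \tfrac{1}{2}$, so $\psi(a) \to -\infty$, and in particular $\psi(a) \leq -c_2 a$ there with room to spare. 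Combining the two regimes and taking $c = \min\{c_1, c_2\}$ (shrinking if necessary) gives the claim, and hence $\log \mathbb{P}[X_1 + \dots + X_n \leq \tfrac{1}{2}na] \leq -cna$.

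The only mild subtlety — the ``main obstacle,'' though it is quite minor — is making the estimate $\psi(a) \leq -ca$ genuinely uniform in $a$ over the whole half-open interval $[0,1)$, since the base of the exponential in Lemma \ref{lemma:cramer} is delicate as $a \to 1$ (the factor $\left(\tfrac{1-a}{1-a/2}\right)^{1-a/2}$ is an indeterminate-looking $0^{1/2}$-type expression that actually tends to $0$, which is even better than we need). I would dispatch this by splitting at a convenient threshold such as $a = \tfrac{1}{2}$: for $a \in [0,\tfrac12]$ use the Taylor/continuity argument on a compact interval, and for $a \in [\tfrac12, 1)$ crudely bound $1 - \tfrac a2 \geq \tfrac14$ and $\tfrac{1-a}{1-a/2} \leq \tfrac{1-a}{1/2} = 2(1-a) \leq 1$, so that $\left(\tfrac{1-a}{1-a/2}\right)^{1-a/2} \leq 1$ and $2^{a/2} \leq 2^{1/2}$... which does not immediately give decay, so instead on $[\tfrac12,1)$ I would just use $\psi(a) \leq \tfrac12\log 2 + \tfrac14 \log\!\left(\tfrac{1-a}{1-a/2}\right) \leq \tfrac12\log 2 + \tfrac14\log\!\big(2(1-a)\big)$, which is $\leq -c_2 a$ for suitable $c_2>0$ once $1-a$ is small enough, and on the remaining compact sub-interval where $1-a$ is not small we again invoke continuity and strict negativity of $\psi(a)/a$. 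This is all routine and I would present it compactly.
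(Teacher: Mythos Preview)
Your proposal is correct and follows the same approach as the paper: apply Lemma \ref{lemma:cramer} with $b=1$ and $c=\tfrac{a}{2}$, define the same function $\psi(a)$ (the paper calls it $f$), and show $\psi(a)\le -ca$. The only difference is in establishing this last inequality: where you split into cases and appeal to compactness and continuity, the paper simply computes $f''(a)\le 0$ on $[0,1)$, so that concavity together with $f(0)=0$ and $f'(0)=-\tfrac{1}{2}(1-\log 2)<0$ gives $f(a)\le f'(0)\,a$ for all $a\in[0,1)$ directly, with the explicit constant $c=\tfrac{1}{2}(1-\log 2)$ and no case analysis.
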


We are now ready to prove Proposition \ref{proposition:variance_sum_to_order_k_detail}.

\begin{proof}[Proof of Proposition \ref{proposition:variance_sum_to_order_k_detail}]
The strategy of the proof is to apply Proposition \ref{proposition:intro_decomp_detail} to write our sample from the Furstenberg measure after conditioning on $\mathcal{A}$ as a sum of small independent random variables with at least some variance. We then use Lemma \ref{lemma:small_rvs_to_detail} and Lemma \ref{lemma:detail_order_k_detail_bound_many} to bound the order $k$ detail of this in terms of the sum of the variances of the small independent random variables. We then use Lemma \ref{lemma:cramer} to show that the sum of the variances is large with high probability and conclude by using the concavity of order $k$ detail.

First let $$(f, h, U, m, \mathcal{A}, A, I, \gamma, \mathcal{F}, S, T, \ell)$$ be a proper decomposition for $(\mu, n, K, \id, \id, t, C, \varepsilon)$ at scale $r$ such that $$\sum_{i=1}^{n} m_i \geq \frac{1}{2} C k.$$
Let $\overline{b}$ be an independent sample from $\nu$, let $b = \ell \overline{b}$ and let $\hat{\mathcal{A}} = \sigma(\mathcal{A}_n, b)$.

Let $p = |I|$ (note that this is an $\mathcal{A}_n$ measurable random variable) and let $g_1, \dots, g_p$ and $j_1, \dots, j_p$ be as in Definition \ref{definition:proper_decomposition}. For $i = 1, \dots, m$ let $u^{(i)} = U_{j_i}$. Let $x$ be defined by
$$x := g_1 \exp(u^{(1)}) \dots g_p \exp(u^{(p)}) b.$$
By Lemma \ref{lemma:stopping_time_invariant} $x$ is a sample from $\nu$.

Let $E_1$ be the event that for each $i = 1, \dots, p -1$ we have $$d(b^{+}(h_{j_i}), b^{+}(g_{i+1})) < t / 100$$
$$d(b^{+}(h_{j_i}), g_{i+1} g_{i+2} \dots g_p b) < t/ 100 $$
and  $$d(b^{+}(h_{j_p}), b) < t / 100.$$
Clearly $E_1$ is an $\hat{\mathcal{A}}$-measurable event and by \eqref{eq:c_exponential_convergence_to_fm} from Lemma \ref{lemma:convergence_to_fm} there is some $c>0$ depending only on $\mu$ such that providing $K$ is sufficiently large (in terms of $\mu$) we have $$\mathbb{P}[E_1] \geq 1 - n \exp(- c K).$$

Let $C_1$ be the $C$ from Proposition \ref{proposition:intro_decomp_detail} with $\frac{1}{8} t$ in the role of $t$. It is easy to check that, providing we choose $C$ to be sufficiently large, when $A \cap E_1$ occurs all of the conditions of Proposition \ref{proposition:intro_decomp_detail} are satisfied with $\frac{1}{8}t$ in the role of $t$ and $C_1$ in the role of $C$. This means that if for $i = 1, \dots, p$ we define $$\zeta_i := D_u (\phi(g_1 \dots g_i u g_{i+1} \dots g_p b))|_{u=0}$$
and we define $S \in \Bim$ by $$S := \phi(g_1g_2 \dots g_p) + \sum_{i=1}^{p} \zeta_i(u^{(i)})$$ then $$d(\phi(x), S) \leq C_1^n P^2 r^2.$$

In particular by Lemma \ref{lemma:detail_wasserstein} there is some absolute constant $C_2>0$ such that on $A \cap E_1$ we have $$s_{Qr}^{(k)}(x|\hat{\mathcal{A}}) \leq s_{Qr}^{(k)}(S|\hat{\mathcal{A}}) + C_2 C_1^n P^2 r.$$

We now wish to bound $s_{Qr}^{(k)}(S|\hat{\mathcal{A}})$ using Corollary \ref{corollary:small_rvs_to_order_k_detail}. To do this we need to estimate the variance of the $\zeta_i(u^{(i)})$ after conditioning on $\hat{\mathcal{A}}$. 

As in Definition \ref{definition:e_vec} given $y \in \B$ define $\rho_y \in \As$ by $$\rho_y := D_u(\phi(\exp(u) y))|_{u=0}.$$
By the chain rule we know that $$\zeta_i(u) = \left. \frac{\partial}{\partial y} \phi(g_1 g_2 \dots g_i y) \right|_{y = g_{i+1} \dots g_p b} \cdot \rho_{g_{i+1} \dots g_m p}(u).$$
By Proposition \ref{proposition:singular_value_shape} we know that providing $C$ is sufficiently large in terms of $t$ on the event $E_1$ we have
\begin{equation*}
    d(b^{-}(g_1 g_2 \dots g_i), g_{i+1} \dots g_p b) > t / 10.
\end{equation*}
In particular by Lemma \ref{lemma:derivs} there is some $c_1$ depending only on $t$ such that on the event $E_1$ we have
\begin{equation*}
    c_1 \lnb g_1 g_2 \dots g_i \rnb^{-2} \leq \left. \frac{\partial}{\partial y} \phi(g_1 g_2 \dots g_i y) \right|_{y = g_{i+1} \dots g_p b} \leq \lnb g_1 g_2 \dots g_i \rnb^{-2}.
\end{equation*}
Combining this with the first part of Condition \ref{condition:in_u_t} and the fact that for all $y$ we have $\lnb \rho_y \rnb \leq 1$ we see that on $A \cap E_1$ we have $$|\zeta_i(u^{(i)})| < r.$$ 

We also have that $$\var [\zeta_i(u^{(i)})|\hat{\mathcal{A}}] \geq c_1^{2} \lnb g_1 g_2 \dots g_i \rnb^{-4} \var [\rho_{g_{i+1} \dots g_p b}(u^{(i)})|\hat{\mathcal{A}}]. $$
By Proposition \ref{proposition:principal_component_u_t} there is some constant $c_2>0$ depending only on $t$ such that on the event $A_1$ we have $$\var [\rho_{g_{i+1} \dots g_p b}(u^{(i)})|\hat{\mathcal{A}}] \geq c_2 \var[u^{(i)} | \hat{\mathcal{A}}].$$

Now let $C_3$ be the $C$ from Corollary \ref{corollary:small_rvs_to_order_k_detail} with the same value for $\alpha$. Let $Q = C_3$. Let $E_2$ be the event that
$$\sum_{i=1}^{p} \frac{\var [u^{(i)} |\mathcal{A}]}{\lnb g_1 g_2 \dots g_i \rnb^4 r^2} > C_3 Q^2 c_1^{-2} c_2^{-1} k . $$
Note that on $A \cap E_1 \cap E_2$ by Corollary \ref{corollary:small_rvs_to_order_k_detail} we have $$s_{Qr}^{(k)}(S | \mathcal{A}) < \alpha^k$$ and so on $A \cap E_1 \cap E_2$ we have $$s_{Qr}(x|\mathcal{A}) < \alpha^k + C_2 C_1^n P^2 r.$$ To conclude we simply need to show that $E_2$ occurs with high probability.

Note that
$$\sum_{i=1}^{p} \frac{\var [u^{(i)} |\mathcal{A}]}{\lnb g_1 g_2 \dots g_i \rnb^4 r^2} = \sum_{i=1}^{n} \frac{\var[U_i|\mathcal{A}]}{\lnb f_1 h_1 f_2 h_2 \dots f_i \rnb^4 r^2}.$$
For $i = 1, \dots, n$ let $$X_i := \frac{\var[U_i|\mathcal{A}]}{\lnb f_1 h_1 f_2 h_2 \dots f_i \rnb^4 r^2}.$$

Note that by Condition \ref{condition:in_u_t} we have $X_i \leq 1$ and by Condition \ref{condition:average_variance_large} and the fact that each $X_i$ is $\mathcal{A}_i$-measurable we have $$\mathbb{E}[X_i|X_1, X_2, \dots, X_{i-1}] \geq m_i.$$
Let $c_3$ be the $c$ in Corollary \ref{corollary:cramer}. Note that by Corollary \ref{corollary:cramer} if we choose $C$ sufficiently large then 
$$\mathbb{P}[E_2] \geq 1 - \exp( - c_3 C k).$$
In particular if we take $C$ to be sufficiently large in terms of $\alpha$ then $$\mathbb{P}[E_2] \geq 1 - \alpha^k.$$
We now conclude by noting that 
\begin{align*}
    s_{Qr}^{(k)}(x) &\leq \mathbb{E}[s_{Qr}^{(k)}(x |\mathcal{A})]\\
    & \leq \alpha^k + C_2 C_1^n P^2 r + \mathbb{P}[A^C] + \mathbb{P}[E_1^C] + \mathbb{P}[E_2^C]\\
    & \leq 2 \alpha^k + C_2 C_1^n p^2 r + \varepsilon + \exp(-c K).
\end{align*}
The result follows by replacing $\alpha$ with a slightly smaller value.
\end{proof}

We now prove Lemma \ref{lemma:cramer}.

\begin{proof}[Proof of Lemma \ref{lemma:cramer}]
First note that by Jensen's inequality for any $\lambda \geq 0$ we have

\begin{equation}
\mathbb{E}[e^{-\lambda X_i} | X_1,\dots, X_{i-1}] \leq \left( 1 - \frac{m_i}{b} \right) + \frac{m_i}{b} e ^{-\lambda b}. \label{eq:better_than_hoeffding}
\end{equation}
Therefore we have
\begin{align}
\mathbb{E}[e^{- \lambda (X_1 + \dots +X_n) }] &\leq \prod_{i=1}^{n} \left( \left( 1 - \frac{m_i}{b} \right) + \frac{m_i}{b} e^{- \lambda b} \right) \nonumber\\
&\leq  \left( \left( 1 - \frac{a}{b} \right) + \frac{a}{b} e ^{-\lambda b}  \right)^n. \label{eq:by_am_gm}
\end{align}
with \eqref{eq:by_am_gm} following from the AM-GM inequality. Applying Markov's inequality for any $\lambda \geq 0$ we have
\begin{align}
\mathbb{P}(X_1 + \dots +X_n \leq nc) & \leq  e^{\lambda n c} \mathbb{E}[e^{- \lambda (X_1 + \dots +X_n) }]\nonumber\\
& \leq \left( e^{\lambda c} \left( \left( 1 - \frac{a}{b} \right) + \frac{a}{b} e ^{-\lambda b} \right) \right)^n.\label{eq:cramer_to_minimize}
\end{align}
We wish to substitute in the value of $\lambda$ which minimizes the right hand side of \eqref{eq:cramer_to_minimize}. It is easy to check by differentiation that this is $\lambda = - \frac{1}{b} \log \frac{c (b-a)}{a (b-c)}$. It is easy to see that this value of $\lambda$ is at least $0$ because $c \leq a$. Note that with this value of $\lambda$ we get $e^{- \lambda b} = \frac {c (b-a)}{a(b-c)}$ and $e ^ {\lambda c} = \left( \frac{c (b-a)}{a (b-c)} \right) ^ {-c/b}$. Hence
\begin{align*}
\left( 1 - \frac{a}{b} \right) + \frac{a}{b} e ^{-\lambda b} & = \left( 1 - \frac{a}{b} \right) + \frac{a}{b} \frac {c (b-a)}{a (b-c)}\\
& = \frac{(b-a)(b-c)}{b(b-c)} + \frac{c(b-a)}{b(b-c)}\\
&= \frac{b-a}{b-c}.
\end{align*}
The result follows.
\end{proof}
From this we deduce Corollary \ref{corollary:cramer}.
\begin{proof}[Proof of Corollary \ref{corollary:cramer}]
    Let $$f(a):= \log \left( 2^{a/2} \left( \frac{1-a}{1-\frac{a}{2}}\right)^{1 - a/2} \right).$$
    Note that by Lemma \ref{lemma:cramer} we have $$\log \mathbb{P}[X_1 + \dots + X_n \leq \frac{1}{2} a] \leq n f(a).$$
    We note that $$f(a) = \frac{a}{2} \log 2 + (1 - \frac{a}{2}) \log (1-a) - (1- \frac{a}{2}) \log (1 - \frac{a}{2})$$ and compute \begin{align*}
        f'(a) &= \frac{1}{2} \log 2 - \frac{1}{2} \log (1 - a) - \frac{1 - \frac{a}{2}}{1 - a} + \frac{1}{2} \log (1 - \frac{a}{2}) + \frac{1}{2}\\
        & = \frac{1}{2}\left(- \frac{1}{1-a} + \log(2 - a) - \log(1-a) \right)
    \end{align*} 
    and
    \begin{align*}
        f''(a) &= \frac{1}{2} \left( - \frac{1}{(1-a)^2} - \frac{1}{2-a} + \frac{1}{1-a} \right).
    \end{align*}
    In particular $f'(0) = -\frac{1}{2}(1 - \log 2) < 0$ and $f''(a) \leq 0$ for all $a \in [0,1)$. This proves the result for $c = \frac{1}{2}(1 - \log 2)$.
\end{proof}

\begin{remark}
We could deduce a result similar to Lemma \ref{lemma:cramer} from the Azuma–Hoeffding inequality. In our application of this result $a$ will be very small compared to $b$. In this regime the Azuma–Hoeffding inequality is inefficient for several reasons the most important of which is the inefficiency of Hoeffding's Lemma in this regime. Indeed using Hoeffding's Lemma to bound the left hand side of \eqref{eq:better_than_hoeffding} would lead to a bound of
\begin{equation*}
\exp \left( - \lambda m_i + \frac{\lambda^2 b^2}{8} \right).
\end{equation*}
When we apply the lemma we end up with $m_i$ being very small, $b = 1$, and $\lambda \approx \log 2$. Clearly this bound is weak when this occurs.
It turns out that the bound from Azuma-Hoeffding is not strong enough to prove Theorem \ref{theorem:main_furstenberg} in its current form but we could prove a similar result with  \eqref{eq:main_theorem_condition} replaced by
\begin{equation*}
\frac{h_{RW}}{ \chi} > C \left(\max \left\{1, \frac{\log M_{\mu}}{h_{RW}} \right\}\right)  \left( \max \left \{ 1 , \log \frac{\log M_{\mu}}{h_{RW}} \right \} \right)^{3}.
\end{equation*}
\end{remark}

\subsection{Proof of Proposition \ref{proposition:lots_of_v_varaince_sum}}

In this subsection we prove Proposition \ref{proposition:lots_of_v_varaince_sum}. First we need the following proposition.

\begin{proposition} \label{proposition:v_to_varaince_sum}
	For any $\alpha_0 \in (0, 1/3)$ and any $t, R > 0$ there exists some $c_1 = c_1(\alpha_0, t, R) > 0$ such that the following is true.  Let $\mu$ be a finitely supported Zariski-dense probability measure and suppose that $\mu$ is $\alpha_0, t$- non-degenerate and that the operator norm is bounded above by $R$ on the support of $\mu$. Then there is a constant $c_2= c_2(\mu)>0$ depending on $\mu$ such that the following holds. Let $\chi$ be the Lyaponuv exponent of $\mu$ and let $C, \delta>0$. Let $P, s > 0$ with $P$ sufficiently large (in terms of $\mu$, $C$ and $\delta$) and $s>0$ sufficiently small (in terms of $\mu$, $C$ and $\delta$). Let $K \in \Z_{>0}$ and suppose that $K$ is sufficiently large (in terms of $\mu$, $C$ and $\delta$). 

    Let $\hat{\nu}$ be as in Theorem \ref{theorem:renewal_theorem}, let $\gamma_1, \gamma_2, \dots$ be i.i.d.\ samples from $\mu$ and let $\tau_{P, y}$ be as in Definition \ref{definition:tau_t_v}. Let $$v = \int_y v(\gamma_1 \gamma_2 \dots \gamma_{\tau_{P, y}};s) \, \hat{\nu}(dy).$$
    Then for any $r \in (0, P^{-2} \exp(-4K\chi) s)$ we have
    \begin{equation*}
        V(\mu, 1, K, \exp(-2K\chi) P^{-1} \sqrt{s/r}, \exp(2K\chi) \sqrt{s/r}, t, C, \exp(- c_2 K); r) > c_1 v - \delta.
    \end{equation*}
\end{proposition}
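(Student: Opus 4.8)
The plan is to convert a single large value of $v(\gamma_1\gamma_2\dots\gamma_{\tau_{P,y}};s)$, integrated against $\hat\nu$, into a proper decomposition with one nontrivial step (so $n=1$). First I would fix $y\in\B$ and, by the definition of $v(\cdot;s)$, choose a $\sigma$-algebra $\mathcal{A}$ and an $\mathcal{A}$-measurable $a$ with $|\log(a^{-1}\gamma_1\dots\gamma_{\tau_{P,y}})|\le s$ and $\mathbb{E}[\Tr\vart_a[\gamma_1\dots\gamma_{\tau_{P,y}}\mid\mathcal{A}]]\ge (v(\gamma_1\dots\gamma_{\tau_{P,y}};s)-\epsilon)s^2$; writing $\gamma_1\dots\gamma_{\tau_{P,y}}=a\exp(u)$ this gives the random variable $u$ taking values in a ball of radius $O(s)$ around $0$ in $\A$ with a lower bound on its conditional variance. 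To fit the template of Definition \ref{definition:proper_decomposition} I would set $f_1\exp(U_1):=$ this product stopped at $S_1:=\tau_{P,y}$, then run the walk a further $K$ steps to define $T_1:=S_1+K$ and $h_1:=\gamma_{S_1+1}\dots\gamma_{T_1}$, so Condition \ref{condition:k_gap} holds by construction. Here $U_1=u$ and $f_1=a$; by Corollary \ref{corollary:size_after_stopping_time} with high probability $\lnb f_1\rnb\cong P\lnb y\rnb$ up to a subpolynomial factor, which (after the substitution $r\in(0,P^{-2}e^{-4K\chi}s)$) makes the interval $[\,e^{-2K\chi}P^{-1}\sqrt{s/r},\,e^{2K\chi}\sqrt{s/r}\,]$ the right range for $\lnb a\rnb$ and simultaneously forces $\lnb U_1\rnb\le s\le\lnb f_1\rnb^2 r$, i.e. the first half of Condition \ref{condition:in_u_t}.

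The key point for the set $I$ and Conditions \ref{condition:in_u_t}--\ref{condition:breaks_up_alligned} is the geometric placement of $b^{+}(h_1)$. I would use Corollary \ref{corollary:renewal_theorem}: since $h_1=\gamma_{S_1+1}\dots\gamma_{S_1+K}$ is $K$ more i.i.d.\ steps, by strong Markov $b^{+}(h_1)$ is (for $K$ large) $e^{-\Theta(K)}$-close to $b^{+}$ of the full forward walk, which by Corollary \ref{corollary:exponential_convergence_to_fm} and Theorem \ref{theorem:renewal_theorem} concentrates according to $\hat\nu$, in fact is $\hat\nu$-distributed in the limit $P\to\infty$. Using $\alpha_0,t$-non-degeneracy of $\mu$ (hence of $\nu$, and of $\hat\nu$ by the renewal formula; if needed one argues via the absolute bound $\alpha_0<1/3$ on any arc of length $t$) together with Proposition \ref{proposition:large_first_derivatives}, the probability that $b^{+}(h_1)$ avoids the two bad arcs $U_{t/4,t/8}(U_1\mid\mathcal{A}_n)$ is bounded below by a constant $c_1=c_1(\alpha_0,t,R)>0$. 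I would then let the $\mathcal{A}_n$-measurable event defining $I=\{1\}$ be exactly this good event intersected with the large-deviation events from Corollaries \ref{corollary:large_deviations_stopping_time} and \ref{corollary:size_after_stopping_time}; on its complement we set $U_1=0$, so Condition \ref{condition:u_i_zero} holds. Since on $\{I=\{1\}\}$ we have $m_1\ge c_1'\big(v(\gamma_1\dots\gamma_{\tau_{P,y}};s)-\epsilon\big)$ for the $m_1$ demanded by Condition \ref{condition:average_variance_large} (using $\lnb af_1\rnb^4 r^2\cong s^2$), integrating over $y$ against $\hat\nu$ and taking the supremum in the definition of $W$ and then $V$ yields $V(\dots)\gtrsim c_1 v-\delta$ after absorbing the $\epsilon$ and the failure-probability losses into $\delta$.

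I would verify the remaining conditions of Definition \ref{definition:proper_decomposition} as follows: \ref{condition:increasing_s_t}, \ref{condition:shape_of_f}, \ref{condition:shape_of_h} are immediate from the construction; \ref{condition:As_nested}--\ref{condition:f_and_h_a_measurable} follow by taking $\mathcal{A}_0\subset\mathcal{A}_1=\mathcal{A}_n$ with $a,\overline a=f_1=a$ being $\mathcal{A}_0$-measurable and $h_1$ being $\mathcal{F}_{T_1}$-measurable and adjoined to $\mathcal{A}_1$, invoking Lemma \ref{lemma:generates_independent_filtration} and Lemma \ref{lemma:r_c_d_subsigma_conditional_ind} to get the required conditional independence of $U_1$ from $\mathcal{A}_n$ given $\mathcal{A}_1$; \ref{condition:breaks_up_large} reduces to $\lnb g_1\rnb=\lnb \overline a f_1 h_1 \rnb=\lnb a\cdot a\cdot h_1\rnb\gtrsim P$ via Lemma \ref{lemma:simple_sl2_prod_size_bound} and the alignment of $b^{-}(\cdot)$ with $\hat\nu$ being $t$-away from the relevant $b^{+}$, and \ref{condition:breaks_up_alligned} is the same alignment statement; \ref{condition:l_def} just defines $\ell=h_1 f_2 h_2\dots=h_1$ (with $n=1$, $\ell=h_{j_m}\cdots=h_1$). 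The main obstacle I anticipate is the geometric/measure-theoretic step showing $b^{+}(h_1)\in U_{t/4,t/8}(U_1\mid\mathcal{A}_n)$ with probability bounded below: this couples the renewal-theoretic convergence (Theorem \ref{theorem:renewal_theorem}, which gives convergence only as $P\to\infty$, hence the requirement that $P$ be sufficiently large in terms of $\mu,C,\delta$) with the non-degeneracy hypothesis and the fact that the bad set depends on the first principal component of the \emph{conditional} law $(U_1\mid\mathcal{A}_n)$, so one must control it uniformly in the conditioning; handling the measurability of the bad arcs as functions of $\omega$ via Proposition \ref{proposition:large_first_derivatives}'s measurable choice of $a_1,a_2$ and Fubini will require some care.
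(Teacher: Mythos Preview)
Your proposal has the right ingredients but misses the key structural idea and contains a couple of concrete errors.

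\textbf{The main gap.} You propose to fix $y\in\B$, build a proper decomposition using $\tau_{P,y}$, and then ``integrate over $y$ against $\hat\nu$''. This cannot work: $V(\mu,1,K,P_1,P_2,\dots)$ is defined as $\inf_{a,\overline a}\sup_{\text{decompositions}}\sum m_i$, so for each fixed $a,\overline a$ you must exhibit a \emph{single} proper decomposition whose $m_1$ already exceeds $c_1v-\delta$. You cannot average a family of decompositions indexed by $y$. The paper's solution is to make $y$ a \emph{random} sample from $\hat\nu$, generated inside the construction and coupled via Corollary~\ref{corollary:renewal_theorem} so that $y$ is close to $b^-(a\gamma_1\dots\gamma_{\underline S})^\perp$ with high probability. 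The walk is run in two stages: first from the arbitrary starting $a$ (with $\lnb a\rnb\le e^{-2K\chi}P^{-1}\sqrt{s/r}$) until $\lnb a\gamma_1\dots\gamma_{\underline S}\rnb\ge 8P^{-1}\sqrt{s/r}$; then $y$ is sampled; then the continuation $\gamma_{\underline S+1}\dots\gamma_{S_1}$ is stopped at $\tau_{P,y}$ applied to the fresh i.i.d.\ tail. It is this second piece, call it $g$, to which the $v(\cdot;s)$ decomposition $g=\overline f\exp(U)$ is applied, and one sets $f_1=\underline f\,\overline f$. The average over $y\sim\hat\nu$ then appears automatically when computing $\mathbb E[\var[U\mid\mathcal A_1]]$, giving $m_1\ge c_1v-\delta$.

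\textbf{Two further errors.} First, you conflate the $a$ in the definition of $V$ (an arbitrary element of $\Gp$ with $\lnb a\rnb\le P_1$, over which one takes the infimum) with the $\mathcal A$-measurable $a$ produced by the definition of $v(g;s)$. These are entirely different objects; the former is why the preliminary run to $\underline S$ is needed. Second, $b^+(h_1)$ is approximately $\nu$-distributed (the Furstenberg measure), not $\hat\nu$-distributed: $h_1$ is a fresh $K$-step product, and $b^+$ of such a product converges to $\nu$ by Lemma~\ref{lemma:exponential_convergence_to_fm}. The $\alpha_0,t$-non-degeneracy hypothesis is a hypothesis on $\nu$, and this is exactly what is used to bound $\mathbb P[b^+(h_1)\in U_{t/4,t/8}(\cdot)]$ from below by $1-3\alpha_0$. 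Your appeal to non-degeneracy of $\hat\nu$ ``via the renewal formula'' is neither stated nor needed.

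The two-sided norm estimate $\sqrt{s/r}\le\lnb af_1\rnb\le 64R^2\sqrt{s/r}$, which you wave at, is obtained via Lemma~\ref{lemma:simple_sl2_prod_size_bound} together with the coupling $d(y,b^-(a\underline f)^\perp)<\varepsilon$; this is what makes both halves of Condition~\ref{condition:in_u_t} and Condition~\ref{condition:average_variance_large} go through simultaneously.
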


\begin{proof}
	Suppose that $a, \overline{a} \in \Gp$ with $\lnb a \rnb \leq \exp(-2K\chi) P^{-1} \sqrt{s/r}$. We wish to construct a proper decomposition for $(\mu, 1, K, a, \overline{a}, t, C, \exp(-c_2K))$ at scale $s$. Let $\gamma_1, \gamma_2, \dots$ be i.i.d.\ samples from $\mu$. Let $\underline{S}$ be defined by
	\begin{equation*}
	\underline{S} := \inf \{ n : \lnb  a \gamma_1 \gamma_2 \dots \gamma_n \rnb \geq 8 P^{-1} \sqrt{s / r} \}.
	\end{equation*}
	We take $\varepsilon >0$ to be some small constant which depends on $\mu, \alpha_0, t, R$ and $\delta$ which we will choose later. Let $\hat{\nu}$ be as in Theorem \ref{theorem:renewal_theorem} and let $y$ be a sample from $\hat{\nu}$ such that
	\begin{equation*}
	\mathbb{P}[d(y, b^{-}(a \gamma_1 \gamma_2 \dots \gamma_{\underline{S}})^{\perp}) \geq \varepsilon] < \varepsilon
	\end{equation*}
	and $y$ is independent from $\gamma_{\underline{S}+1}, \gamma_{\underline{S}+2}, \dots$. This is possible by Corollary \ref{corollary:renewal_theorem}. Let $S_1$ be defined by
	\begin{equation*}
	S_1 := \inf \{ n \geq \underline{S} : \lnb (\gamma_{\underline{S}+1} \gamma_{\underline{S}+2} \dots \gamma_n)^T y \rnb \geq P\}.
	\end{equation*}
	Define
	\begin{equation*}
	\underline{f} := \gamma_1 \dots \gamma_{\underline{S}}
	\end{equation*}
	and define
	\begin{equation*}
	g := \gamma_{\underline{S}+1} \gamma_{\underline{S}+2} \dots \gamma_{S_1}.
	\end{equation*}
	By the definition of $v(\cdot ; \cdot)$ we can construct some $\sigma$-algebra $\hat{\mathcal{A}}$ which is conditionally independent of $\gamma_1, \gamma_2, \dots, \gamma_{\underline{S}}$ given $y$, some $\hat{\mathcal{A}}$-measurable random variable $\overline{f}$ taking values in $\Gp$ and some random variable $V$ taking values in $\A$ such that
	\begin{equation*}
	g = \overline{f} \exp(U),
	\end{equation*}
	\begin{equation*}
	\lnb V \rnb \leq r
	\end{equation*}
	and
	\begin{equation*}
	\mathbb{E} \left[ \var \left[ V | \hat{\mathcal{A}}, y \right] \right] \geq \frac{1}{2} v r^2.
	\end{equation*}
	We define $T_1$ by $T_1 := S_1 + K$ and define $h_1$ by
	\begin{equation*}
		h_1 = \gamma_{S_1+1} \gamma_{S_1+2} \dots \gamma_{T_1}.
	\end{equation*}
	
	We take $I$ to be $\{1\}$ if and only if the following conditions hold
	\begin{itemize}
	\item $d(y, b^{-}(a \underline{f})) < \varepsilon$
	\item $d(y, b^{+}(\overline{f})) > 100 \varepsilon$
    \item $b^{+}(h) \in U_{t/4, t/8}(V)$
    \item $d(b^{-}(\overline{f}), b^{+}(h_1)) > t/4$.
	\end{itemize}
   Otherwise we take $I = \emptyset$. Let $E_1$ be the event that $d(y, b^{-}(a \gamma_1 \gamma_2 \dots \gamma_n)) < \varepsilon$ and $d(y, b^{+}(\overline{f})) > 100 \varepsilon$ and let $E_2$ be the event that $b^{+}(h) \in U_{t/4, t/8}$ and $d(b^{-}(\overline{f}), b^{+}(h)) > t/4$. Clearly $\{1 \in I\} = E_1 \cap E_2$.

	We now define $U_1$ by
	\begin{equation*}
	U_1 = \begin{cases}
	V & \text{ if } I = \{1\}\\
	0 & \text{ if } I = \emptyset
	\end{cases}
	\end{equation*}
	and define $f_1$ by
	\begin{equation*}
	f_1 = \begin{cases}
	\underline{f} \overline{f} & \text{ if } I = \{1\}\\
	\underline{f} g & \text{ if } I = \emptyset.
	\end{cases}
	\end{equation*}

    We define $\mathcal{A}_1 := \sigma(f_1, h_1, a, \overline{a})$ and take $\mathcal{A}_0 := \sigma(a, \overline{a})$. Take $A$ to be the event that $\lnb a f h \rnb \leq \exp(2 K \chi) \sqrt{s/r}$, $\lnb\overline{a} f \rnb \geq C$. This is clearly $\mathcal{A}_1$ measurable and it is easy to see by applying \eqref{eq:large_deviations_estimate} from Lemma \ref{lemma:first_large_deviations_estimates} and \eqref{eq:products_not_too_close} from Lemma \ref{lemma:convergence_to_fm} that providing $P$ and $K$ are sufficiently large (depending on $\mu$) $\mathbb{P}[A] \geq 1 - \exp(-c_2 K)$ for some constant $c_2 >0$ depending only on $\mu$.
    
    We wish to show that we can choose $m_1 \geq \Theta_{\alpha_0, t, R}(v) - \delta$ and construct some filtration $\mathcal{F}=\left(\mathcal{F}_i\right)_{i=1}^{\infty}$ such that if we take $f = \left( f_i \right)_{i=1}^{1}$, define $h, U, m, S$ and $T$ similarly and take $\mathcal{A} : = \left(\mathcal{A}_i \right)_{i=0}^{1}$ then
    $$(f, h, U, m, \mathcal{A}, I, \gamma, \mathcal{F}, S, T, h_1)$$ is a proper decomposition for $(\mu, 1, a, \overline{a}, t, C, \exp(-c_2K))$ at scale $s$.
	
	Conditions \ref{condition:increasing_s_t}, \ref{condition:shape_of_f}, \ref{condition:shape_of_h}, \ref{condition:As_nested}, \ref{condition:u_i_conditional_indep_of_a_i}, \ref{condition:u_i_conditional_indep_of_u_i}, \ref{condition:f_and_h_a_measurable}, \ref{condition:k_gap} and \ref{condition:l_def} follow immediately from our construction. 
    Providing $\varepsilon$ is sufficiently small on $E_1$ we have
    \begin{align*}
    \lnb a \underline{f} \overline{f} \rnb & \geq \frac{1}{2} \lnb a \underline{f} \rnb \cdot \lnb \overline{f} \rnb \sin d(b^{-1}(a \underline{f}), b^{+}(\overline{f})) \\
    & \geq \frac{1}{4} \lnb a \underline{f} \rnb \cdot \lnb \overline{f} \rnb \cos d(y, b^{+}(\overline{f}))\\
    &= \frac{1}{4} \lnb a \underline{f} \rnb \cdot \lnb \overline{f}^T y \rnb\\
    & \geq \frac{1}{8} \lnb a \underline{f} \rnb \cdot \lnb g^T y \rnb\\
    & \geq \frac{1}{8}(8 \sqrt{s/r} P^{-1})\cdot P\\
    & = \sqrt{s/r}.
	\end{align*}
	In particular this means that $\lnb U_1 \rnb \leq \lnb a f_1 \rnb^2 r$. This together with the definition of $I$ shows that Condition \ref{condition:in_u_t} is satisfied. Condition \ref{condition:breaks_up_large} follows from our definition of $A$ and Condition \ref{condition:breaks_up_alligned} follows from our definition of $I$.
    
    We now show that Condition \ref{condition:average_variance_large} is satisfied. To do this we bound $\mathbb{E}[\frac{\var [U | \mathcal{A}]}{\lnb af_1 \rnb^{4} r^2}]$ from below.

    By Lemma \ref{lemma:simple_sl2_prod_size_bound} we know that providing $P$ and $K$ are sufficiently large and $\varepsilon$ and $r$ are sufficiently small whenever we have $1 \in I$ we have
    \begin{align}
        \lnb a\underline{f} \overline{f} \rnb & \leq  2 \lnb a\underline{f} \rnb \cdot \lnb \overline{f} \rnb \sin d(b^{-}(\underline{f}), b^{+}(\overline{f})) \nonumber\\
        & \leq 4 \lnb a\underline{f} \rnb \cdot \lnb \overline{f} \rnb \sin d(y, b^{+}(\overline{f})) \nonumber \\
        & = 4 \lnb a\underline{f} \rnb \cdot \lnb \overline{f}^T y \rnb \nonumber\\
        & \leq 8 \lnb a \underline{f} \rnb \cdot \lnb g^T y \rnb \nonumber\\
        & \leq 8 \cdot (R 8 P^{-1} \sqrt{s / r}) \cdot (R P)\nonumber\\
        & \leq 64 R^2 \sqrt{s / r}. \label{eq:ff_size_upperbound}
    \end{align}

    Clearly $\var [U | \mathcal{A}_1] = \var [V | \mathcal{A}_1] \mathbb{I}_{E_2} -  \var [V | \mathcal{A}] \mathbb{I}_{E_2} \mathbb{I}_{E_1^C}$. We know that $\var [V | \mathcal{A}]$ is $\mathcal{A}$- measurable and at most $s^2$. It is also clear from \eqref{eq:c_exponential_convergence_to_fm} from Lemma \ref{lemma:convergence_to_fm} and the definition of $\alpha_0, t$ - non-degeneracy that
    \begin{equation*}
        \mathbb{P}[E_2 | \mathcal{A}_1] \geq (1 - 3 \alpha_0)
    \end{equation*}
    almost surely. We also know by \eqref{eq:products_not_too_close} from Lemma \ref{lemma:convergence_to_fm} that
    \begin{equation*}
        \mathbb{P}[E_1^C] \leq \delta
    \end{equation*}
    for some $\delta = \delta(\varepsilon)$ such that $\delta \to 0$ as $\varepsilon \to 0$. In particular this means that
    \begin{align*}
        \mathbb{E}[\var[U|\mathcal{A}_1]] & \geq (1 - 3 \alpha_0) \mathbb{E}[\var[V | \mathcal{A}_1]] - \delta s^2\\
        & \geq \frac{1}{2}(1 - 3 \alpha_0) v s^2 - \delta s^2.
    \end{align*}
    Combining this with our estimate \eqref{eq:ff_size_upperbound} we see that there is some constant $c_1 >0$ depending only on $R$ and $\alpha_0$ such that
    \begin{equation*}
        \mathbb{E}[\frac{\var [U | \mathcal{A}]}{\lnb f \rnb^{4} r^2}] \geq c_1 v - \delta.
    \end{equation*}
    We take $m_1 = \max \{ c_1 v - \delta, 0\}$. 

    Finally we construct our $\mathcal{F}_i$. Suppose that $\xi$ is the set of events in our underlying probability space and define $\left(\mathcal{F}_i\right)_{i=1}^{\infty}$ by
    \begin{align*}
        \MoveEqLeft \mathcal{F}_i := \{F \in \xi : F \cap \{i < \underline{S}\} \in \sigma(\gamma_1, \gamma_2, \dots, \gamma_i),\\ &F \cap \{ \underline{S} \leq i < S \} \in \sigma(\gamma_1, \gamma_2, \dots, \gamma_i, y), F \cap \{ i \geq S \} \in \sigma(\gamma_1, \gamma_2, \dots, \gamma_i, y, \hat{\mathcal{A}}) \}.
    \end{align*}
    
    Applying Lemma \ref{lemma:generates_independent_filtration} twice shows that this is a filtration for the $\gamma_i$ and that $\gamma_{i+1}$ is independent from $\mathcal{F}_i$.
    
    This means that $$(f, h, U, m, \mathcal{A}_1, I, \gamma, \mathcal{F}, S, T, h)$$ is a proper decomposition for $(\mu, 1, a, \overline{a}, t, C, \exp(-c_2K))$ at scale $r$. By the definition of $V(\cdot)$ this means that
    \begin{equation*}
        V(\mu, 1, \chi^{-K} P^{-1} \sqrt{s/r}, \chi^{K} \sqrt{s/r}, t, C, \exp(- c_2 K); r) > c_1 v - \delta
    \end{equation*}
    as required.
\end{proof}

We can combine this result with Proposition \ref{proposition:lots_of_v} to prove Proposition \ref{proposition:lots_of_v_varaince_sum}.

\begin{proof}[Proof of Proposition \ref{proposition:lots_of_v_varaince_sum}]
    Let $s_1, s_2, \dots, s_{\hat{m}}$ be as in Proposition \ref{proposition:lots_of_v} and let $$v_i := \int v(\gamma_1 \gamma_2 \dots \gamma_{\tau_{P, y}};s_i) \, dy.$$ By Proposition \ref{proposition:v_to_varaince_sum} we know that there is some constant $c_1>0$ depending only on $R, \alpha_0$ and $t$ and some constant $c_2>0$ depending only on $\mu$ such that for every $\delta >0$ providing $P$ and $K$ are sufficiently large in terms of $\delta$, $\mu$ and $C$ we have
    \begin{equation*}
        V(\mu, 1, \chi^{-K} P^{-1} \sqrt{s_i/r}, \chi^{K} \sqrt{s_i/r}, t, C, \exp(- c_2 K); r) > c_1 v_i - \delta.
    \end{equation*}
    In particular providing $P$ is sufficiently large depending on $\mu$, $\delta$ and $C$ we have
    \begin{align*}
        \MoveEqLeft \sum_{i=1}^{\hat{m}} V(\mu, 1, \exp(-2\chi K) P^{-1} \sqrt{s_i/r}, \exp(2\chi K) \sqrt{s_i/r}, t, C, \exp(- c_1 K); r) \\ &> c_3 \left( \frac{h_{RW}}{\chi} \right)  \left( \max \left\{ 1,  \log \frac{ \log M}{h_{RW}} \right\} \right)^{-1} - \hat{m}\delta
    \end{align*}
    for some constant $c_3$ depending only on $R, \alpha_0$ and $t$. We now note that for $i = 1, \dots, \hat{m}-1$ we have
    \begin{align*}
        \exp(2\chi K) \sqrt{s_i/r} & \leq \exp(2\chi K) \sqrt{P^{-3}s_{i+1}/r}\\
        & = P^{-3/2} \exp(2\chi K) \sqrt {s_{i+1}/r}\\
        & \leq P^{-1} \exp(-2\chi K) \sqrt {s_{i+1}/r}.
    \end{align*}
    Letting $\delta = \frac{c_3}{2 \hat{m}}$ and applying Proposition \ref{proposition:varaince_sum_adds} we see that
    \begin{align*}
       \MoveEqLeft V(\mu, \hat{m}, K, P^{-\frac{\log M}{2 \chi}-2} r^{-1/2}, P^{-\frac{h_{RW}}{20 \chi}+1} r^{- 1/2}, t, C, \hat{m} \exp(-c_1 K); r)\\ &\geq \frac{c_3}{2} \left( \frac{h_{RW}}{\chi} \right)  \left( \max \left\{ 1,  \log \frac{ \log M_{\mu}}{h_{RW}} \right\} \right)^{-1}.
    \end{align*}
\end{proof}

\section{Proof of main theorem} \label{section:proof_of_main_theorem}

We now have all the tools required to prove Theorem \ref{theorem:main_furstenberg}. First we will prove the following.

\begin{proposition} \label{proposition:order_k_detail_small_bound}
    For all $\alpha_0 \in (0, 1/3)$ and every $t, R>0$ there exists some constant $C>0$ such that the following is true. Suppose that $\mu$ is a finitely supported Zariski-dense probability measure. Suppose that $\mu$ is $\alpha_0, t$-non-degenerate and that the operator norm is bounded above by $R$ on the support of $\mu$. Let $h_{RW}$ be its random walk entropy, let $\chi$ be its Lyapunov exponent and let $M_{\mu}$ be its splitting rate. Suppose that
    \begin{equation*}
        \frac{h_{RW}}{\chi} > C \left(\max \left\{1, \log \frac{\log M_{\mu}}{h_{RW}} \right\} \right)^2.
    \end{equation*}
    Then for all sufficiently small (in terms of $\mu, R, \alpha_0$ and $t$) $r>0$ and all $k \in [\log \log r^{-1}, 2 \log \log r^{-1}] \cap \Z$ we have $$s_r^{(k)}(\nu) < \left( \log r^{-1} \right)^{-10}.$$
\end{proposition}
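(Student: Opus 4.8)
\textbf{Proof strategy for Proposition \ref{proposition:order_k_detail_small_bound}.}

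The plan is to combine Proposition \ref{proposition:lots_of_v_varaince_sum} (which provides a large variance sum coming from the entropy gap) with Proposition \ref{proposition:variance_sum_to_order_k_detail} (which converts a large variance sum into a small order $k$ detail), choosing the parameters $P$, $K$, $C$, $\varepsilon$, $n$ carefully in terms of $r$ and $k$. First I would fix $\alpha_0, t, R$ and let $C_0, Q$ be the constants from Proposition \ref{proposition:variance_sum_to_order_k_detail} associated to some $\alpha$ to be chosen (we will want $\alpha$ small enough that $\alpha^k \le \tfrac12 (\log r^{-1})^{-10}$ once $k \ge \log\log r^{-1}$; since $k\to\infty$ as $r\to0$ this is automatic for any fixed $\alpha<1$, but we must be slightly careful about the relationship between $\alpha^k$ and $(\log r^{-1})^{-10}$ — taking $\alpha = 1/2$ gives $\alpha^k \le (\log r^{-1})^{-\log 2}$, which is \emph{not} small enough, so in fact we need $\alpha$ depending on the exponent $10$: choose $\alpha$ with $-\log\alpha \ge 20/\log 2$, wait — actually since $k\ge \log\log r^{-1}$ we get $\alpha^k \le \exp(-(\log\alpha^{-1})\log\log r^{-1}) = (\log r^{-1})^{-\log\alpha^{-1}}$, so it suffices to pick $\alpha$ with $\log\alpha^{-1} > 10$, i.e. $\alpha < e^{-10}$). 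Fix such an $\alpha$, and let $C_0 = C_0(\alpha,t,\mu)$, $Q = Q(\alpha, t)$ be the resulting constants; then set the $C$ in our proposition large enough to dominate $C_0$ and to satisfy the hypotheses of Proposition \ref{proposition:lots_of_v_varaince_sum}.

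The choice of parameters is the crux. Given small $r>0$, I would let $M$ be a fixed choice with $M > M_\mu$ and $\log M \ge h_{RW}$ (e.g. $M = \max\{2M_\mu,\, e^{h_{RW}}\}$, noting $\log M \cong \log M_\mu + h_{RW}$). We need $P$ large and we need the scale at which Proposition \ref{proposition:variance_sum_to_order_k_detail} outputs a bound, namely $Qr$, to match the scale $r$ appearing there — so I would run Proposition \ref{proposition:lots_of_v_varaince_sum} at the \emph{internal} scale $r/Q$ and deduce a bound on $s_r^{(k)}(\nu)$. The constraint $r \in (0, P^{-\log M/\chi - 4})$ from Proposition \ref{proposition:lots_of_v_varaince_sum} forces $\log P \lesssim \frac{\chi}{\log M}\log r^{-1}$; I would take $P$ essentially as large as allowed, $\log P \cong \frac{\chi}{\log M}\log r^{-1}$, so that $\log P \to \infty$ as $r\to 0$. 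With $K$ chosen to grow slowly (say $K \cong \log\log r^{-1}$, large enough for both propositions but with $K \le \frac{\log P}{10\chi}$, which holds for small $r$), Proposition \ref{proposition:lots_of_v_varaince_sum} gives
\begin{equation*}
V\big(\mu, \hat m, K, P^{-\frac{\log M}{\chi}}(r/Q)^{-1/2}, P^{-\frac{h_{RW}}{40\chi}}(r/Q)^{-1/2}, t, C, e^{-c_2 K}; r/Q\big) \geq c\,\frac{h_{RW}}{\chi}\Big(\max\{1,\log\tfrac{\log M}{h_{RW}}\}\Big)^{-1}.
\end{equation*}
The monotonicity of $V$ in its lower/upper endpoints lets me replace the endpoints by $1$ and $P_2 := P^{-\frac{h_{RW}}{40\chi}}(r/Q)^{-1/2}$ (after checking $P^{-\log M/\chi}(r/Q)^{-1/2} \ge 1$, which follows from $r \lesssim P^{-\log M/\chi - 4}$). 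Then the hypothesis condition $\frac{h_{RW}}{\chi} > C\max\{1,(\log\frac{\log M}{h_{RW}})^2\}$ guarantees $c\,\frac{h_{RW}}{\chi}(\max\{1,\log\frac{\log M}{h_{RW}}\})^{-1} > C_0 k$ provided $k \le 2\log\log r^{-1}$ — here is where the \emph{square} in the exponent is used: we need the variance-sum lower bound $\gtrsim \frac{h_{RW}}{\chi}/\log(\cdot)$ to exceed $C_0 k$ where $k$ is only of order $\log\log r^{-1}$, which is a constant-order quantity compared to $\frac{h_{RW}}{\chi}$, so this is comfortable as long as $\frac{h_{RW}}{\chi}$ beats $\log(\tfrac{\log M}{h_{RW}})$ times a constant; the extra factor of $\log(\cdot)$ (making it a square) is needed for the downstream use in Theorem \ref{theorem:main_furstenberg} rather than here, so here I just need $C$ large. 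Applying Proposition \ref{proposition:variance_sum_to_order_k_detail} with $n = \hat m$ (which is large for small $r$), $P \mapsto P_2$, $\varepsilon = e^{-c_2 K}$, and internal scale $r/Q$ then yields
\begin{equation*}
s_r^{(k)}(\nu) < \alpha^k + \hat m\, e^{-cK} + P_2^2 (r/Q)\, C_0^{\hat m} + e^{-c_2 K}.
\end{equation*}

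The remaining work is to check each of the four error terms is $\le \tfrac14 (\log r^{-1})^{-10}$ for small $r$, with the parameter choices above. The term $\alpha^k \le (\log r^{-1})^{-\log\alpha^{-1}} < (\log r^{-1})^{-10}$ by the choice of $\alpha$. For $\hat m e^{-cK} + e^{-c_2 K}$: $\hat m \cong \frac{\log M}{\chi}$ is polynomial in $\log r^{-1}$ (since $\log M$ is fixed, $\hat m$ is actually bounded — wait, $\hat m = \lfloor \log M/(100\chi)\rfloor$ is a \emph{fixed} number once $\mu, M$ are fixed, so this term is just $\le (\text{const}) e^{-cK}$), and $e^{-cK} \cong e^{-c\log\log r^{-1}} = (\log r^{-1})^{-c}$, which beats $(\log r^{-1})^{-10}$ once $K$ (hence the implicit constant in $K \cong \log\log r^{-1}$) is chosen large enough — this is why $K$ must grow, not be constant. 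For the term $P_2^2(r/Q)C_0^{\hat m}$: $C_0^{\hat m}$ is a fixed constant, $P_2^2 = P^{-\frac{h_{RW}}{20\chi}}(Q/r)$, so $P_2^2(r/Q) = P^{-h_{RW}/(20\chi)} \to 0$ polynomially (in fact super-polynomially, since $\log P \cong \frac{\chi}{\log M}\log r^{-1}$ so $P^{-h_{RW}/(20\chi)} = \exp(-\frac{h_{RW}}{20\log M}\log r^{-1})$, a genuine power of $r$), which is far smaller than any power of $(\log r^{-1})^{-1}$. The main obstacle in writing this out carefully is bookkeeping: one must verify all the side conditions of Propositions \ref{proposition:lots_of_v_varaince_sum} and \ref{proposition:variance_sum_to_order_k_detail} simultaneously — in particular that $K \le \frac{\log P}{10\chi}$ (needs $\log\log r^{-1} \lesssim \frac{\chi}{10\chi}\cdot\frac{\chi}{\log M}\log r^{-1}$, true for small $r$), that $n = \hat m$ and $K$ are "sufficiently large" in the sense required (so $\hat m$ must itself be large, which needs $\frac{\log M}{\chi}$ large — this follows from $\frac{h_{RW}}{\chi}$ large and $\log M \ge h_{RW}$), and that $r/Q$ is "sufficiently small" — all of which hold for $r$ small enough depending only on $\mu, R, \alpha_0, t$. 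Assembling these gives the claimed bound.
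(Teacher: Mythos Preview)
Your proposal has a genuine gap at the key quantitative step. A single application of Proposition \ref{proposition:lots_of_v_varaince_sum} yields
\[
V \;\geq\; c\,\frac{h_{RW}}{\chi}\Big(\max\{1,\log\tfrac{\log M}{h_{RW}}\}\Big)^{-1},
\]
which is a \emph{fixed constant} depending only on $\mu$ (and $M$). You then need $V > C_0 k$ with $k \in [\log\log r^{-1}, 2\log\log r^{-1}]$, and you assert that ``$k$ is a constant-order quantity compared to $\frac{h_{RW}}{\chi}$''. This is false: $h_{RW}/\chi$ is fixed once $\mu$ is, whereas $k \to \infty$ as $r \to 0$. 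So for $r$ small enough the inequality $V > C_0 k$ fails, and Proposition \ref{proposition:variance_sum_to_order_k_detail} cannot be invoked.

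The paper repairs exactly this by \emph{iterating} Proposition \ref{proposition:lots_of_v_varaince_sum} across a cascade of scales $P_1 > P_2 > \dots > P_m$ with $P_1 = r^{-\chi/(2\log M)}$ and $P_{i+1} = P_i^{h_{RW}/(40\log M)}$, then using Proposition \ref{proposition:varaince_sum_adds} to concatenate. The number of iterations is
\[
m \;\gtrsim\; \Big(\max\{1,\log\tfrac{\log M}{h_{RW}}\}\Big)^{-1}\,\log\log r^{-1},
\]
so the accumulated variance sum is $\gtrsim \frac{h_{RW}}{\chi}\big(\max\{1,\log\frac{\log M}{h_{RW}}\}\big)^{-2}\log\log r^{-1}$. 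Requiring this to exceed $C_0 k \asymp C_0\log\log r^{-1}$ is precisely where the squared logarithm in the hypothesis is consumed --- contrary to your remark, it is needed \emph{here}, not downstream in Theorem \ref{theorem:main_furstenberg}. (Correspondingly, $n$ becomes $m\hat m \lesssim_\mu \log\log r^{-1}$ rather than the fixed $\hat m$, and the paper takes $K = \exp(\sqrt{\log\log r^{-1}})$ so that both $n e^{-cK}$ and $C_1^n P^2 r$ are negligible.)
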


\begin{proof}
    
    Let $C_1$ be the $C$ from Proposition \ref{proposition:variance_sum_to_order_k_detail} with $\exp(-11)$ in the role of $\alpha$ and $t$ in the role of $t$. Note that by Proposition \ref{proposition:variance_sum_to_order_k_detail} it is sufficient to show that there is some constant $c_1 = c_1(\mu) > 0$ and some constant $A_1 = A_1(\mu, R, \alpha_0, t)>0$ such that for all sufficiently small $r>0$ we can find some $n < A_1 \log \log r^{-1}$ such that if we let $K = \exp(\sqrt{\log \log r^{-1}})$ then
    \begin{equation}
        V(\mu, n, K, 1, r^{-1/2} \exp(-c_1K), t, C_1, \exp(-c_1K); r) > 2 C_1 \log \log r^{-1}. \label{eq:required_varaince_sum}
    \end{equation}
    Indeed when this occurs by Proposition \ref{proposition:variance_sum_to_order_k_detail} for all $k \in [\log \log r^{-1}, 2 \log \log r^{-1}] \cap \Z$ we have
    \begin{align*}
        \MoveEqLeft s_{Qr}^{(k)}(\nu) < \exp(- 11 k) + A_1 \log \log r^{-1} \exp(- c_2 K) \\&+  C_1^{A_1 \log \log r^{-1}}\exp(-c_1K) + \exp(-c_1K)
    \end{align*}
    for some constant $c_2 >0$ depending only on $\mu$. Clearly this is less than $\left( \log (Qr)^{-1} \right)^{-10}$ whenever $r$ is sufficiently small.

    We will prove \eqref{eq:required_varaince_sum} by repeatedly applying Proposition \ref{proposition:lots_of_v_varaince_sum} and Proposition \ref{proposition:varaince_sum_adds}. Given $r$ we wish to construct some $m \in \Z_{>0}$ and some decreasing sequence $\left(P_i\right)_{i=1}^{m}$ such that for each $i = 1, 2, \dots, m$ we can apply Proposition \ref{proposition:lots_of_v_varaince_sum} with $P_i$ in the role of $P$ and then apply Proposition \ref{proposition:varaince_sum_adds} to the resulting bounds on the variance sums.

    First we let $P_1  = r^{- \frac{\chi}{2 \log M}}$ and inductively we take $P_{i+1} = P_i^{\frac{h_{RW}}{40 \log M}}$. Note that this gives
    \begin{equation*}
        P_i = \exp \left( \frac{\chi \log r^{-1}}{2 \log M} \left( \frac{h_{RW}}{40 \log M} \right)^{i - 1} \right).
    \end{equation*}
    We then choose $m$ as large as possible so that we may ensure that $P_m \geq \exp((\max \{1, 10 \chi \})K)$. Note that this means
    \begin{align*}
        m = \left \lfloor \frac{\log \frac{\chi \log r^{-1}}{2 (\max \{1, 10 \chi \}) K \log M}}{\log \frac{40 \log M}{h_{RW}}} \right \rfloor + 1.
    \end{align*}
    In particular there is some absolute constant $c_3 > 0$ such that for all sufficiently small (depending on $\mu$) $r> 0$ we have
    \begin{equation*}
        m \geq c_3 \left( \max \left\{ 1,  \log \frac{ \log M_{\mu}}{h_{RW}} \right\} \right)^{-1} \log \log r^{-1}
    \end{equation*}
    and $m \leq O_{\mu}(\log \log r^{-1})$.

    Note that our construction of the $P_i$ gives
    \begin{equation*}
        P_{i+1}^{- \frac{\log M}{\chi}} r^{- 1/2} \geq P_{i}^{- \frac{h_{RW}}{40 \chi}} r^{-1/2}
    \end{equation*}
    and so applying Proposition \ref{proposition:v_to_varaince_sum} and Proposition \ref{proposition:varaince_sum_adds} repeatedly we get
    \begin{align*}
        \MoveEqLeft V(\mu, m \hat{m}, K, P_1^{-\frac{\log M}{\chi}} r^{-1/2}, P_m^{-\frac{h_{RW}}{40 \chi}} r^{-1/2}, t, C, m \exp(- c_1 K); r)\\&>c_4 \frac{h_{RW}}{\chi} \left( \max \left\{ 1,  \log \frac{ \log M_{\mu}}{h_{RW}} \right\} \right)^{-1} \log \log r^{-2}.
    \end{align*}
    By Proposition \ref{proposition:variance_sum_to_order_k_detail} this is enough to complete the proof.
\end{proof}

We will now prove Theorem \ref{theorem:main_furstenberg}.

\begin{proof}[Proof of Theorem \ref{theorem:main_furstenberg}]
    We will prove this by combining Proposition \ref{proposition:order_k_detail_small_bound} with Lemma \ref{lemma:ind_srk_to_sr} to get an upper bound on $s_r(\nu)$ for all sufficiently small $r$. We will then conclude using Lemma \ref{lemma:suff_abs_cont}.

    Given $r>0$ sufficiently small let $k = \frac{3}{2} \log \log r^{-1}$, let $a = r / \sqrt{k}$, let $b = r \exp(k \log k)$ and let $\alpha = (\log r^{-1})^{-10}$. We wish to apply Lemma \ref{lemma:ind_srk_to_sr} with this choice of $a, b$ and $\alpha$.

    Suppose that $s\in [a, b]$. It follows by a simple computation that $k \in [\log \log s^{-1}, 2 \log \log s^{-1}]$ and so by Proposition \ref{proposition:order_k_detail_small_bound} providing $r$ is sufficiently small we have $$s_s^{(k)}(\nu) < \alpha.$$
    By Lemma \ref{lemma:ind_srk_to_sr} this means that
    \begin{align*}
        s_r(\nu) & \leq \left( \log r^{-1} \right)^{-10} \left( \frac{ 2 e}{\pi} \right)^{\frac{k-1}{2}} + k! \cdot k a^2 b^{-2}.
    \end{align*}
    We then compute
    \begin{align*}
        \left( \log r^{-1} \right)^{-10} \left( \frac{ 2 e}{\pi} \right)^{\frac{k-1}{2}} + k! \cdot k a^2 b^{-2} &\leq \left( \log r^{-1} \right)^{-10} e^{k/2} + k ^{-k}.
    \end{align*}
    Clearly this is less than $\left( \log r^{-1} \right)^{-2}$ providing $r$ is sufficiently small. By Lemma \ref{lemma:suff_abs_cont} we have that $\nu$ is absolutely continuous.
\end{proof}

\section{Examples} \label{section:examples}

In this section we will give examples of measures $\mu$ on $\Gp$ which satisfy the conditions of Theorem \ref{theorem:main_furstenberg}.

\subsection{Heights and separation}

In this subsection we will review some techniques for bounding $M_{\mu}$ using heights. First we need the following definition.

\begin{definition}[Height]
Let $\alpha_1$ be algebraic with algebraic conjugates $\alpha_2, \alpha_3, \dots, \alpha_d$. Suppose that the minimal polynomial for $\alpha_1$ over $\Z[X]$ has positive leading coefficient $a_0$. Then we define the \emph{height} of $\alpha_1$ by
\begin{equation*}
\mathcal{H}(\alpha_1) := \left( a_0 \prod_{i=1}^{n} \max \{ 1, |\alpha_i| \} \right)^{1/d}.
\end{equation*}
\end{definition}

We wish to use this to bound the size of polynomials of algebraic numbers. To do this we need the following way of measuring the complexity of a polynomial.

\begin{definition}
Given some polynomial $P \in \Z [X_1, X_2, \dots, X_n]$ we define the \emph{length} of $P$, which we denote by $\mathcal{L}(P)$, to be the sum of the absolute values of the coefficients of $P$.
\end{definition}
We also need the following basic fact about heights.
\begin{lemma} \label{lemma:height_reciprical}
Let $\alpha \neq 0$ be an algebraic number. Then
\begin{equation*}
\mathcal{H}(\alpha^{-1}) = \mathcal{H}(\alpha).
\end{equation*}
\end{lemma}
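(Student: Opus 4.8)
The claim is that $\mathcal{H}(\alpha^{-1}) = \mathcal{H}(\alpha)$ for a nonzero algebraic number $\alpha$. The plan is to work directly from the definition of height in terms of the minimal polynomial and its roots. First I would let $\alpha = \alpha_1$ have degree $d$ with algebraic conjugates $\alpha_1, \dots, \alpha_d$ and minimal polynomial $p(X) = a_0 \prod_{i=1}^d (X - \alpha_i) \in \Z[X]$ with $a_0 > 0$. Since $\alpha \neq 0$, all $\alpha_i$ are nonzero, and the reciprocals $\alpha_1^{-1}, \dots, \alpha_d^{-1}$ are exactly the conjugates of $\alpha^{-1}$; in particular $\alpha^{-1}$ also has degree $d$. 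The reversed polynomial $q(X) := X^d p(1/X) = a_0 \prod_{i=1}^d (1 - \alpha_i X) = (-1)^d a_0 (\prod_i \alpha_i) \prod_i (X - \alpha_i^{-1})$ is, up to sign, an integer polynomial vanishing at $\alpha^{-1}$, and its leading coefficient is $(-1)^d a_0 \prod_i \alpha_i = p(0) = $ the constant term of $p$, call it $a_d$ (up to sign); note $a_d \neq 0$ since $\alpha \neq 0$. After normalizing $q$ to have positive leading coefficient, this is (a scalar multiple of) the minimal polynomial of $\alpha^{-1}$, and one checks it is primitive because $p$ is, so it \emph{is} the minimal polynomial of $\alpha^{-1}$ over $\Z[X]$ with leading coefficient $|a_d|$.

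The key computation is then to compare the two Mahler-measure-type products. We have $\mathcal{H}(\alpha)^d = a_0 \prod_{i=1}^d \max\{1, |\alpha_i|\}$ and $\mathcal{H}(\alpha^{-1})^d = |a_d| \prod_{i=1}^d \max\{1, |\alpha_i|^{-1}\}$. The standard identity to invoke is that for any nonzero complex number $z$, $\max\{1, |z|^{-1}\} = |z|^{-1} \max\{1, |z|\}$. Applying this termwise gives
\begin{equation*}
|a_d| \prod_{i=1}^d \max\{1, |\alpha_i|^{-1}\} = |a_d| \left( \prod_{i=1}^d |\alpha_i| \right)^{-1} \prod_{i=1}^d \max\{1, |\alpha_i|\}.
\end{equation*}
Since $a_d = (-1)^d a_0 \prod_i \alpha_i$, we get $|a_d| = a_0 \prod_i |\alpha_i|$, so $|a_d| \left( \prod_i |\alpha_i| \right)^{-1} = a_0$, and the right-hand side collapses to $a_0 \prod_{i=1}^d \max\{1, |\alpha_i|\} = \mathcal{H}(\alpha)^d$. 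Taking $d$-th roots yields $\mathcal{H}(\alpha^{-1}) = \mathcal{H}(\alpha)$.

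The only genuinely delicate point — the ``main obstacle'' such as it is — is the bookkeeping showing that the reversed polynomial $q$, after clearing signs and checking primitivity, really is the minimal polynomial of $\alpha^{-1}$ with the correct positive leading coefficient, so that the formula for $\mathcal{H}(\alpha^{-1})$ applies with leading coefficient exactly $|a_d|$. This is routine: $q$ is irreducible over $\Q$ (any factorization of $q$ would pull back to one of $p$ via $X \mapsto 1/X$ and clearing denominators), it has content $1$ because reversing coefficients permutes them, and its degree is $d$ because $a_d \neq 0$. Everything else is the one-line product identity above. I would present the argument in roughly this order: set up $p$ and its roots; introduce $q$ and identify it as the (scaled) minimal polynomial of $\alpha^{-1}$; record $|a_d| = a_0 \prod_i |\alpha_i|$; then do the termwise product manipulation and conclude.
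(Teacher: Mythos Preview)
Your proof is correct and follows exactly the approach the paper gestures at: the paper's own proof is the single line ``This follows easily from the definition and is proven in \cite[Section 14]{MASSER_2016},'' and what you have written is precisely the standard definition-based argument that citation points to. No gap, no divergence in strategy.
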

\begin{proof}
This follows easily from the definition and is proven in \cite[Section 14]{MASSER_2016}.
\end{proof}

\begin{lemma} \label{lemma:height_poly_bound}
Given $P \in \Z[X_1, X_2, \dots, X_n]$ of degree at most $L_1 \geq 0$ in $X_1$, $\dots$, $L_n \geq 0$ in $X_n$ and algebraic numbers $\xi_1, \xi_2, \dots, \xi_n$ we have
\begin{equation*}
\mathcal{H}(P(\xi_1, \xi_2, \dots, \xi_n)) \leq \mathcal{L}(P) \mathcal{H}(\xi_1)^{L_1} \dots \mathcal{H}(\xi_n)^{L_n}
\end{equation*}
\end{lemma}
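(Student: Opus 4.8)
The plan is to reduce the claim to two facts about heights: first, that the height of a sum of algebraic numbers is controlled by the sum of the heights (up to a factor of two), and second, that the height of a product is submultiplicative. These are standard; the first says $\mathcal{H}(\alpha + \beta) \leq 2 \mathcal{H}(\alpha) \mathcal{H}(\beta)$ and the second says $\mathcal{H}(\alpha \beta) \leq \mathcal{H}(\alpha) \mathcal{H}(\beta)$. Both follow from the product formula for absolute values over a number field, or can be quoted from a reference such as \cite{MASSER_2016}. More precisely, for a single monomial $c X_1^{e_1} \cdots X_n^{e_n}$ with $c \in \Z$ and $0 \le e_i \le L_i$, one has
\begin{equation*}
\mathcal{H}\left( c\, \xi_1^{e_1} \cdots \xi_n^{e_n} \right) \leq |c| \, \mathcal{H}(\xi_1)^{e_1} \cdots \mathcal{H}(\xi_n)^{e_n} \leq |c| \, \mathcal{H}(\xi_1)^{L_1} \cdots \mathcal{H}(\xi_n)^{L_n},
\end{equation*}
using $\mathcal{H}(c) = |c|$ for a rational integer, submultiplicativity of $\mathcal{H}$ under products, and $\mathcal{H}(\xi_i) \geq 1$ always.

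Next I would handle the sum. Write $P = \sum_{j} M_j$ as a sum of its monomials, where the number of monomials, counted with multiplicity equal to the absolute value of the coefficient, is exactly $\mathcal{L}(P)$; equivalently, split each monomial $c X^e$ into $|c|$ copies of $\pm X^e$. Then $P(\xi_1, \dots, \xi_n)$ is a sum of $\mathcal{L}(P)$ algebraic numbers each of height at most $\mathcal{H}(\xi_1)^{L_1} \cdots \mathcal{H}(\xi_n)^{L_n}$. Applying the subadditivity bound for heights of sums inductively (or in the sharper form $\mathcal{H}(\sum_{j=1}^m \beta_j) \leq m \prod_j \mathcal{H}(\beta_j)$, which again comes from the product formula and the fact that at each archimedean place the sum of $m$ terms has size at most $m$ times the max), one obtains
\begin{equation*}
\mathcal{H}\big( P(\xi_1, \dots, \xi_n) \big) \leq \mathcal{L}(P) \, \mathcal{H}(\xi_1)^{L_1} \cdots \mathcal{H}(\xi_n)^{L_n},
\end{equation*}
which is exactly the assertion. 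The cleanest route is to invoke the standard height inequality $\mathcal{H}(\beta_1 + \dots + \beta_m) \le m\,\mathcal{H}(\beta_1)\cdots\mathcal{H}(\beta_m)$ and $\mathcal{H}(\beta_1\cdots\beta_m)\le \mathcal{H}(\beta_1)\cdots\mathcal{H}(\beta_m)$ directly from \cite[Section 14]{MASSER_2016} (the same reference already used for Lemma \ref{lemma:height_reciprical}), and then combine them as above.

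I do not expect any genuine obstacle here; the lemma is a bookkeeping exercise once the two basic height inequalities are in hand. The only point requiring a little care is making sure the factor is $\mathcal{L}(P)$ and not something like $2^{\deg P}\mathcal{L}(P)$: this is why one should split into $\mathcal{L}(P)$ signed monomials and apply the $m$-term sum bound once with $m = \mathcal{L}(P)$, rather than applying the two-term bound $\mathcal{H}(\alpha+\beta)\le 2\mathcal{H}(\alpha)\mathcal{H}(\beta)$ repeatedly, which would accumulate spurious powers of $2$. With that organization the proof is a couple of lines.
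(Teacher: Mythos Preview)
The paper's own proof is a one-line citation to \cite[Proposition 14.7]{MASSER_2016}, so there is no internal argument to compare against; however, your sketch has a genuine gap at the sum step.

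The inequality you quote, $\mathcal{H}\!\left(\sum_{j=1}^m \beta_j\right) \le m \prod_{j=1}^m \mathcal{H}(\beta_j)$, is correct, but applying it with $m = \mathcal{L}(P)$ and each $\beta_j$ a signed monomial does \emph{not} yield the claimed bound. It gives
\[
\mathcal{H}(P(\xi)) \le \mathcal{L}(P)\,\prod_{j=1}^{\mathcal{L}(P)} \mathcal{H}(\beta_j)
\le \mathcal{L}(P)\,\Bigl(\mathcal{H}(\xi_1)^{L_1}\cdots\mathcal{H}(\xi_n)^{L_n}\Bigr)^{\mathcal{L}(P)},
\]
since the right-hand side is a \emph{product} over the $\mathcal{L}(P)$ monomials, not a maximum. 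Knowing only that each $\mathcal{H}(\beta_j)\le B$ globally is not enough to conclude $\mathcal{H}(\sum\beta_j)\le mB$; consider for instance $\beta_j = 1/p_j$ for distinct primes, where the heights of the summands are bounded but the height of the sum blows up because the denominators interact.

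The fix is exactly what you allude to in your parenthetical remark but do not actually carry out: work place by place on the whole polynomial at once rather than passing through global heights of individual monomials. At an archimedean place $v$ one has $|P(\xi)|_v \le \sum_e |c_e|\,|\xi^e|_v \le \mathcal{L}(P)\prod_i \max(1,|\xi_i|_v)^{L_i}$, while at a non-archimedean place $|P(\xi)|_v \le \max_e |\xi^e|_v \le \prod_i \max(1,|\xi_i|_v)^{L_i}$ since the integer coefficients have $|c_e|_v\le 1$. Taking the product over all places then gives exactly $\mathcal{L}(P)\prod_i \mathcal{H}(\xi_i)^{L_i}$, because the factor $\mathcal{L}(P)$ appears only at archimedean places and these have total normalized degree $1$. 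This is the argument in Masser that the paper cites; your decomposition into global monomial heights loses precisely the place-by-place uniformity that makes the bound sharp.
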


\begin{proof}
This is \cite[Proposition 14.7]{MASSER_2016}.
\end{proof}
To make the above lemma useful for bounding the absolute value of expressions we need the following.
\begin{lemma}
Suppose that $\alpha \in \mathbb{C} \backslash \{ 0 \}$ is algebraic and that its minimal polynomial has degree $d$. Then
\begin{equation*}
\mathcal{H}(\alpha)^{-d} \leq |\alpha| \leq \mathcal{H}(\alpha)^d.
\end{equation*}
\end{lemma}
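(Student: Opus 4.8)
The plan is to bound $|\alpha|$ from above and below using the definition of the height and the fact that $\alpha$ is a root of its minimal polynomial. Write the minimal polynomial of $\alpha$ over $\Z[X]$ as $a_0 X^d + a_1 X^{d-1} + \dots + a_d$, with $a_0 > 0$ the leading coefficient, and let $\alpha = \alpha_1, \alpha_2, \dots, \alpha_d$ be the algebraic conjugates of $\alpha$, i.e.\ all the roots. By definition
\begin{equation*}
\mathcal{H}(\alpha)^d = a_0 \prod_{i=1}^{d} \max\{1, |\alpha_i|\}.
\end{equation*}
The key observation is that the product of the roots is $\pm a_d / a_0$, so $|\alpha_1 \alpha_2 \cdots \alpha_d| = |a_d| / a_0$.

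First I would prove the upper bound $|\alpha| \leq \mathcal{H}(\alpha)^d$. This is immediate: $|\alpha| = |\alpha_1| \leq \max\{1, |\alpha_1|\} \leq a_0 \prod_{i=1}^d \max\{1, |\alpha_i|\} = \mathcal{H}(\alpha)^d$, where the middle inequality uses $a_0 \geq 1$ (the $a_i$ are integers with $a_0 > 0$) and $\max\{1, |\alpha_i|\} \geq 1$ for the remaining factors.

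Next I would prove the lower bound $|\alpha| \geq \mathcal{H}(\alpha)^{-d}$. The cleanest route is to apply Lemma \ref{lemma:height_reciprical}: since $\alpha \neq 0$, we have $\mathcal{H}(\alpha^{-1}) = \mathcal{H}(\alpha)$, and $\alpha^{-1}$ is algebraic whose minimal polynomial also has degree $d$ (its roots are the $\alpha_i^{-1}$). Applying the already-established upper bound to $\alpha^{-1}$ gives $|\alpha^{-1}| \leq \mathcal{H}(\alpha^{-1})^d = \mathcal{H}(\alpha)^d$, i.e.\ $|\alpha| \geq \mathcal{H}(\alpha)^{-d}$. Alternatively, one can argue directly: the reciprocal polynomial $a_d X^d + a_{d-1} X^{d-1} + \dots + a_0$ (up to sign and clearing content) has $\alpha^{-1}$ as a root, and its conjugates are the $\alpha_i^{-1}$, so $\mathcal{H}(\alpha^{-1})^d = |a_d| \prod_i \max\{1, |\alpha_i^{-1}|\}$; combining with $\mathcal{H}(\alpha)^d = a_0 \prod_i \max\{1,|\alpha_i|\}$ and $\prod_i |\alpha_i| = |a_d|/a_0$ one checks the two heights agree. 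I would use the first route since Lemma \ref{lemma:height_reciprical} is already available.

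There is no real obstacle here; the only point requiring a little care is making sure the degree of the minimal polynomial of $\alpha^{-1}$ is indeed $d$ (so that the exponent in the bound is the same), which follows because inversion is a bijection on the Galois orbit, and that $a_0 \geq 1$ as an integer. The whole argument is a couple of lines once Lemma \ref{lemma:height_reciprical} is invoked.
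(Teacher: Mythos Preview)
Your proof is correct and follows exactly the same approach as the paper: the upper bound is immediate from the definition of height, and the lower bound is obtained by applying Lemma \ref{lemma:height_reciprical} together with the upper bound for $\alpha^{-1}$. The paper's proof is just a terser version of what you wrote.
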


\begin{proof}
The fact that $|\alpha| \leq \mathcal{H}(\alpha)^d$ is immediate from the definition of height. The other side of the inequality follows from Lemma \ref{lemma:height_reciprical}.
\end{proof}

\begin{proposition} \label{proposition:m_mu_bound}
Suppose that $\mu$ is a measure on $\Gp$ supported on a finite set of points. For each element in the support of $\mu$ choose a representative in $\Sl_2(\R)$. Let $S \subset \Sl_2(\R)$ be the set of these representatives.

Suppose that all the entries of the elements of $S$ are algebraic. Let $(\xi_1, \xi_2, \dots, \xi_k)$ be the set of these entries. Let $K = \mathbb{Q}[\xi_1, \xi_2, \dots, \xi_k]$ be the number field generated by the $\xi_i$ and let
\begin{equation*}
C = \max \{  \mathcal{H}(\xi_i) : i \in [k] \}.
\end{equation*}
Then
\begin{equation*}
M_{\mu} \leq 4^{[K:\mathbb{Q}]} C^{8 [K:\mathbb{Q}]}.
\end{equation*}
\end{proposition}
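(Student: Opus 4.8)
The plan is to bound $M_\mu$ by estimating how close two distinct elements of $S_n = \bigcup_{i=1}^n \supp(\mu^{*i})$ can be. Every element of $S_n$ is a product of at most $n$ matrices from $S$, so its entries are polynomials in the $\xi_j$ of degree at most $n$ in each variable and with length bounded in terms of $n$ (each matrix multiplication at most squares the length and doubles the degree, roughly). The key point is that if $x \neq y$ lie in $S_n$, then $x - y$ has at least one nonzero entry, and that entry is a nonzero algebraic number lying in $K$, hence of degree at most $[K:\mathbb{Q}]$ over $\mathbb{Q}$. Applying the lower bound $|\alpha| \geq \mathcal{H}(\alpha)^{-[K:\mathbb{Q}]}$ for that entry, together with Lemma \ref{lemma:height_poly_bound} to bound $\mathcal{H}$ of the entry from above in terms of $C$ and the polynomial data, gives a lower bound on $d(x,y)$ of the form $\exp(-O(n))$ with an explicit constant, and then taking $-\frac1n\log$ and $\limsup$ yields the stated bound on $M_\mu$.

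Concretely, first I would fix representatives and write, for $x \in S_n$, each entry of $\hat x$ as $P(\xi_1,\dots,\xi_k)$ where $P$ has degree at most $n$ in each $\xi_j$; one checks by induction on the number of factors that one can take $\mathcal{L}(P) \le (\text{something like }2)^{n}$ — more precisely, multiplying two $2\times 2$ matrices whose entries have length at most $L$ produces entries of length at most $2L^2$, so after $n$ multiplications the length is at most $2^{2^n-1}$ in the crudest bound; I would instead track the bound more carefully (or simply absorb it, since we only need $\exp(O(n))$ growth) to get $\mathcal{L}(P) \le 4^n$ say. Then a difference of two such entries is also a polynomial in the $\xi_j$ of degree at most $n$ in each variable with length at most $2 \cdot 4^n \le 4^{n+1}$, so by Lemma \ref{lemma:height_poly_bound},
\begin{equation*}
\mathcal{H}\bigl(P_x(\xi) - P_y(\xi)\bigr) \le 4^{n+1} \, C^{n} \cdots C^{n} = 4^{n+1} C^{kn}.
\end{equation*}

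Next, since $x \ne y$ there is some entry position where $P_x(\xi) - P_y(\xi) \ne 0$; call this value $\alpha \in K \setminus\{0\}$, an algebraic number of degree at most $[K:\mathbb{Q}]$. By the lemma bounding $|\alpha|$ below by $\mathcal{H}(\alpha)^{-\deg \alpha}$ we get
\begin{equation*}
|\alpha| \ge \mathcal{H}(\alpha)^{-[K:\mathbb{Q}]} \ge \bigl(4^{n+1} C^{kn}\bigr)^{-[K:\mathbb{Q}]}.
\end{equation*}
Since $|\alpha|$ is bounded below by a coordinate-difference of the two matrices, and the Riemannian distance $d(x,y)$ on $\Gp$ is comparable to the Euclidean distance of representatives (on the relevant bounded region — here one uses that all elements of $S$, hence a controlled region of $S_n$ after appropriate normalization, stay in a fixed compact set, or one works with a projectively invariant comparison), we conclude $d(x,y) \gtrsim \bigl(4^{n+1}C^{kn}\bigr)^{-[K:\mathbb{Q}]}$, up to a multiplicative constant independent of $n$. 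Therefore $-\frac1n \log d(x,y) \le [K:\mathbb{Q}]\bigl(\log 4 + k \log C\bigr) + o(n)/n$, and taking the $\limsup$ gives $\log M_\mu \le [K:\mathbb{Q}](\log 4 + k\log C)$, i.e. $M_\mu \le 4^{[K:\mathbb{Q}]} C^{k[K:\mathbb{Q}]}$. To match the claimed exponent $8[K:\mathbb{Q}]$ I would note $k \le 8$ is not automatic in general, so in fact the intended reading is presumably that $k$ (the number of distinct entries) contributes but one absorbs it; more likely the clean statement uses that a $2\times 2$ matrix has $4$ entries and the degree/length bookkeeping naturally produces the factor $8$ — I would redo the length estimate tracking that each of the $4$ entries after one multiplication is a sum of $2$ products, giving the precise constant $8$ in the exponent rather than $k$.

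The main obstacle will be the bookkeeping in the first step: getting an honest, clean bound on the length $\mathcal{L}$ of the entry-polynomials of an $n$-fold product that grows only like $\exp(O(n))$ rather than doubly-exponentially, and pinning down the exact constant so that the exponent comes out as $8[K:\mathbb{Q}]$. The naive "length at most $2L^2$ per multiplication" recursion is too lossy; the fix is to observe that an entry of $\hat\gamma_1 \cdots \hat\gamma_n$ is a sum over length-$n$ words of products of single entries, and each single entry $\xi_j$ has $|\xi_j| \le C^{[K:\mathbb{Q}]}$, so the entry has absolute value at most $2^{n-1} \cdot \bigl(C^{[K:\mathbb{Q}]}\bigr)^{n}$ directly — but we need a \emph{height} bound, not an absolute-value bound, for the lower bound step, and heights of sums behave additively in length, so one should express the entry as a $\mathbb{Z}$-polynomial with at most $2^{n-1}$ monomials each of coefficient $1$, giving $\mathcal{L} \le 2^{n-1}$ and each variable-degree $\le n$; then Lemma \ref{lemma:height_poly_bound} gives $\mathcal{H} \le 2^{n-1} C^{kn}$ and the difference has $\mathcal{H} \le 2^n C^{kn}$. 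Combined with the comparison between $d(\cdot,\cdot)$ and the entry difference, this is what yields the final inequality with the stated constants once the entry-count factor $4$ and the degree factor are accounted for.
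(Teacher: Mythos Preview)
Your overall strategy is right, but there is a genuine gap in the step where you compare $d(x,y)$ to the entry-difference $|\alpha|$. You write that the Riemannian distance is ``comparable to the Euclidean distance of representatives on the relevant bounded region'', but the elements of $S_n$ do \emph{not} stay in any fixed compact set --- their operator norms typically grow like $e^{\chi n}$. On an unbounded region the left-invariant metric $d$ is not uniformly comparable to the coordinate distance $\lnb \hat x - \hat y \rnb$, so a lower bound on $|\alpha|$ does not give a lower bound on $d(x,y)$.

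The paper's fix is exactly the ``projectively invariant comparison'' you gesture at but do not carry out: use left-invariance to write $d(a,b) = d(\id, a^{-1}b)$ and then compare near the identity, where one does have $d(\id, g) \gtrsim \min\{\lnb I - \hat g\rnb,\ \lnb I + \hat g\rnb\}$ (the $\pm$ accounts for the quotient by $\pm I$; your proposal only considers $P_x - P_y$ and would miss the case $\hat a = -\hat b$). For $\hat a \in SL_2(\R)$ the inverse $\hat a^{-1}$ has the same entries as $\hat a$ up to sign, so the entries of $I \pm \hat a^{-1}\hat b$ are again polynomials in the generator-entries $\zeta_{i,j,k}$, with degree at most $L_i$ in each of the four variables $\zeta_{i,1,1},\dots,\zeta_{i,2,2}$ attached to the $i$th generator, where $\sum_i L_i = m+n \le 2n$. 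Lemma~\ref{lemma:height_poly_bound} then gives $\mathcal H(\alpha) \le 2^{m+n+1} C^{4(m+n)}$, and this is where the $4$ (hence $8 = 4\cdot 2$ after $m+n\le 2n$) comes from --- it is the number of entries per generator matrix, not the total number $k$ of distinct entries that appears in your bound $C^{kn}$. This single move of passing to $a^{-1}b$ both repairs the distance comparison and pins down the constant you were unable to identify.
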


\begin{proof}

Let $a \in S^m$ and $b \in S^n$. We find an upper bound for $d(a, b)$ where $d$ is the distance function of our left-invariant Riemannian metric introduced in the introduction. We have that
\begin{equation*}
d(a, b) = d(\id, a^{-1} b) \geq \Theta \left(\min \left\{ \lnb I - a^{-1} b \rnb_2, \lnb I + a^{-1} b \rnb_2 \right\} \right).
\end{equation*}

For $i \in [|S|]$ and $j, k \in \{1, 2\}$ let $\zeta_{i, j, k}$ be the $(j, k)$-th entry of the $i$-th element of $S$. Let $L_i$ be the sum of the number of times the $i$-th element of $S$ appears in our word for $a$ and the number of times it appears in our word for $b$. Note that the components of $a^{-1}$ are components of $a$ possibly with a sign change. We know that each component of $I \pm a^{-1} b $ is of the form $P(\zeta_{1, 1, 1}, \dots, \zeta_{|S|, 2, 2})$ where $P$ is some polynomial of degree at most $L_i$ in $\zeta_{i, j, k}$. We also know that the $L_i$ sum to $m+n$.

It is easy to see by induction that $\mathcal{L}(P) \leq 2^{m+n} + 1$. In particular $\mathcal{L}(P) \leq 2^{m+n+1}$. By Lemma \ref{lemma:height_poly_bound} this means that if $\alpha$ is a coefficient of $I \pm a^{-1}b$ then
\begin{equation*}
\mathcal{H}(\alpha) \leq 2^{m+n+1}  C^{4 (m+n)}.
\end{equation*}

We know that $\alpha \in K$ and so in particular the degree of its minimal polynomial is at most $[K: \mathbb{Q}]$. This means that if $\alpha \neq 0$ then
\begin{equation*}
|\alpha| \geq 2^{-(m+n+1)  [K: \mathbb{Q}] } C^{- 4(m+n)  [K: \mathbb{Q}]}.
\end{equation*}
In particular this means that if $a \neq b$ then
\begin{equation*}
d(a, b) \geq \Theta \left( 2^{-(m+n + 1) [K: \mathbb{Q}] } C^{- 4 (m+n)  [K: \mathbb{Q}]} \right)
\end{equation*}
and so
\begin{equation*}
M_{\mu} \leq 4^{ [K:\mathbb{Q}]} C^{8 [K:\mathbb{Q}]}. \qedhere
\end{equation*}
\end{proof}

\subsection{Bounding the random walk entropy using the Strong Tits alternative}

In this subsection we will combine Breuillard's strong Tits alternative \cite{BREUILLARD_2008} with the results of Kesten \cite{KESTEN_1959} in order to obtain an estimate on the random walk entropy. The main result of this section will be the following.

\begin{proposition} \label{proposition:random_walk_entropy_lower_bound}
There is some $c >0$ such that the following is true. Let $\mu$ be a finitely supported probability measure on $\Gp$ and let $h_{RW}$ be its random walk entropy. Let $K > 0$ and suppose that for every virtually solvable subgroup $H < \Gp$ we have
\begin{equation*}
\mu(H) < 1 - K.
\end{equation*}
Suppose further that $\mu(\id) > K$. Then
\begin{equation*}
h_{RW} > c K.
\end{equation*}
\end{proposition}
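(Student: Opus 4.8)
The plan is to combine two classical ingredients: Breuillard's uniform Tits alternative, which gives a ``ping-pong'' pair inside any non-virtually-solvable finitely generated subgroup of $\GL_2$ after a bounded number of steps, and Kesten's amenability criterion, which bounds the spectral radius (and hence the entropy) of a random walk on a group containing a free subgroup away from that of $\Z$. First I would reduce the random walk entropy $h_{RW}$ to a spectral quantity: by a standard argument (Kaimanovich--Vershik / Derriennic) the random walk entropy of $\mu$ equals the entropy of the random walk on the group $\Gamma$ generated by $\supp\mu$, and this is positive and quantitatively bounded below as soon as the $n$-step return probability $\mu^{*n}(\id)$ decays exponentially, with a rate controlled by $-\frac1n\log\mu^{*n}(\id)$. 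Concretely, $h_{RW} \geq -\limsup_n \frac1n \log \mu^{*n}(\id) =: -\log\rho$, where $\rho$ is the spectral radius of the Markov operator; so it suffices to show $\rho \leq 1 - \Theta(K)$.

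To bound $\rho$, I would pass to the subgroup $\Gamma$ and use that $\Gamma$ is not virtually solvable: since $\mu(H) < 1-K$ for every virtually solvable $H$, in particular $\Gamma$ itself is not virtually solvable, so by the strong Tits alternative of Breuillard \cite{BREUILLARD_2008} there is an absolute constant $N$ (the uniformity is the whole point — it depends only on the ambient group $\PSL_2(\R)$, not on $\mu$) and two words $w_1, w_2$ of length at most $N$ in $\supp\mu$ generating a free subgroup of rank $2$. The measure $\mu^{*N}$ then has mass at least roughly $K^N$ (using $\mu(\id)>K$ to pad shorter words, or more carefully $\mu^{*N}$ gives mass bounded below in terms of $K$ to each of $w_1^{\pm1}, w_2^{\pm1}$) on a generating set of a free group. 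Then I would invoke a quantitative form of Kesten's theorem: a random walk whose step distribution puts mass $\geq \varepsilon$ on each of four elements generating a free group of rank $2$ has spectral radius at most $1 - c'\varepsilon$ for an absolute $c'>0$ (this follows from comparing with the simple random walk on the $4$-regular tree, whose spectral radius is $\frac{\sqrt3}{2}<1$, together with a standard comparison / Dirichlet form argument handling the residual mass). Applying this to $\mu^{*N}$ gives $\rho(\mu^{*N}) \leq 1 - c' K^N$, hence $\rho(\mu) \leq (1 - c'K^N)^{1/N} \leq 1 - \frac{c'}{N}K^N$, and therefore $h_{RW} \geq -\log\rho(\mu) \geq \frac{c'}{N}K^N \geq cK$ for a suitable absolute $c>0$ once we absorb $N$ and the fact that $K\in(0,1)$ (here one uses $K^N \geq$ some multiple of $K$ only if $K$ is bounded below, so more care is needed — see below).

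The main obstacle I anticipate is exactly the last bookkeeping step: producing a bound that is \emph{linear} in $K$ rather than polynomial $K^N$. The naive route above loses a power of $K$ for each of the $N$ steps needed to find the ping-pong pair. To fix this I would argue slightly differently: instead of asking $\mu^{*N}$ to charge the free generators, I would use that $\mu(\id) > K$ to write $\mu = K\delta_{\id} + (1-K)\mu'$ and run the walk so that with probability $\Theta(K)$ at a ``useful'' moment we make genuine progress; equivalently, use a Dirichlet-form / Poincaré-inequality comparison directly between $\mu$ and a walk supported on $\{\id, w_1^{\pm1}, w_2^{\pm1}\}$ where the $\id$-loop carries mass $1-\Theta(K)$, which only costs one factor of $K$. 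The existence of the free pair at bounded distance is what lets this comparison be made with constants depending only on the ambient group. Another technical point to handle carefully is that Breuillard's theorem is stated for finitely generated linear groups over a field and gives free subgroups; one must check the virtually-solvable exclusion in the hypothesis is exactly what rules out the other alternative (finite/virtually solvable image), and that $\PSL_2(\R)$-membership lets us take $N$ truly uniform. Modulo these points the argument is routine, and I would present it as: reduce to spectral radius, invoke uniform Tits to get a free pair of bounded length, then a one-step Dirichlet comparison with Kesten's free-group bound to get $\rho(\mu) \leq 1 - cK$.
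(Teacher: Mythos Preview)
Your overall strategy --- reduce to a spectral bound via $h_{RW}\ge -2\log\|T_{\mu,G}\|$ (Avez), then combine Breuillard's uniform Tits alternative with Kesten's free-group bound --- is exactly the paper's. You also correctly isolate the real difficulty: obtaining a bound \emph{linear} in $K$ rather than $K^N$.

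The gap is in your proposed fix. A Dirichlet/path comparison between $\mu$ and a lazy walk $\nu$ on $\{\id,w_1^{\pm1},w_2^{\pm1}\}$ cannot be carried out with a constant depending only on the ambient group. The generators $w_1,w_2$ are words of length $\le N$ in $\supp\mu$; writing $w=s_1\cdots s_m$, the standard path bound gives
\[
\sum_x|f(xw)-f(x)|^2 \;\le\; m\sum_{i=1}^m \frac{2}{\mu(s_i)}\,\mathcal{E}_\mu(f),
\]
so the comparison constant carries $p_{\min}:=\min_{g\in\supp\mu}\mu(g)$. Tracking the factors, the $\Theta(K)$ in $1-\rho(\nu)$ cancels against the $\Theta(K)$ you placed on the non-identity part of $\nu$, and you are left with $1-\rho(\mu)\gtrsim p_{\min}/N^2$, not $\gtrsim K$. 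Nothing in the hypotheses controls $p_{\min}$ in terms of $K$; one can have $\mu(H)<1-K$ for every virtually solvable $H$ and $\mu(\id)>K$ while some generator has arbitrarily small mass. (There is also the side issue that $\mu$ is not assumed symmetric, so the Dirichlet form is not directly available; the paper handles non-symmetry by passing to $T_\mu T_\mu^\dag$ inside the analogue of your Kesten step.)

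The paper's missing ingredient is a measure decomposition (Lemma~\ref{lemma:decomposes_correctly}): from $\mu(H)<1-K$ for every virtually solvable $H$ one peels off
\[
\sum_i k_i\Bigl(\tfrac13\delta_{a_i}+\tfrac13\delta_{b_i}+\tfrac13\delta_{c_i}\Bigr),\qquad \sum_i k_i\ge K,
\]
where each triple $\{a_i,b_i,c_i\}\subset\supp\mu$ generates a non-virtually-solvable group. The individual masses $\mu(a_i),\mu(b_i),\mu(c_i)$ may be tiny --- that is irrelevant; only the \emph{total} peeled mass matters. Each triple, combined with a share of the $K\delta_{\id}$, yields $\nu_i=\tfrac14(\delta_{a_i}+\delta_{b_i}+\delta_{c_i}+\delta_{\id})$, for which Breuillard${}+{}$Kesten (Lemma~\ref{lemma:norm_less_than_one}) give a \emph{uniform} $\|T_{\nu_i,G}\|\le 1-\varepsilon$. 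Linearity of $\mu\mapsto T_{\mu,G}$ then gives $\|T_{\mu,G}\|\le 1-\Theta(\varepsilon K)$ directly, with no $p_{\min}$ anywhere.
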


A similar result which further requires $\mu$ to be symmetric is discussed in \cite[Chapter 7]{Sert_2017}. In \cite{Sert_2017} much of the proof of their result is done by citing unpublished lecture notes so we give a full proof of Proposition \ref{proposition:random_walk_entropy_lower_bound} here.

$\Gp$ acts on the closed complex half plane $\overline{\mathbb{H}} = \{ z \in \mathbb{C} : \im z \geq 0 \}$ by M\"{o}bius transformations. It is well known that the virtually solvable subgroups of $\Gp$ are precisely those which either have a common fixed point in $\overline{\mathbb{H}}$ or for which there exists a pair of points in $\overline{\mathbb{H}}$ such that each element in the subgroup either fixes both points or maps them both to each other.

To prove Proposition \ref{proposition:random_walk_entropy_lower_bound} we introduce the following. We let $G$ be a countable group and let $\mu$ be a finite measure on $G$. We let $T_{\mu, G} : l^2(G) \to l^2(G)$ be the operator defined by $T_{\mu, G}(f)(g) = \int_{G} f(gh) d \mu(h)$. It is clear that $T_{\mu, G}$ is a bounded linear operator and that when $\mu$ is symmetric $T_{\mu, G}$ is self-adjoint. To prove Proposition \ref{proposition:random_walk_entropy_lower_bound} we need the following results.

\begin{lemma} \label{lemma:t_op_linear}
The operator $T_{\mu, G}$ is linear in $\mu$. In other words
\begin{equation*}
T_{\lambda_1 \mu_1 + \lambda_2 \mu_2, G} = \lambda_1 T_{\mu_1, G} + \lambda_2 T_{\mu_2, G}.
\end{equation*}
\end{lemma}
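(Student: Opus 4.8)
The statement to prove is \textbf{Lemma \ref{lemma:t_op_linear}}, the linearity of the operator $T_{\mu, G}$ in the measure argument.

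The plan is to simply unwind the definition of $T_{\mu, G}$ and use the linearity of the integral. First I would fix $f \in l^2(G)$ and $g \in G$, and write out both sides of the claimed identity evaluated at the point $g$. The left-hand side is, by definition, $T_{\lambda_1 \mu_1 + \lambda_2 \mu_2, G}(f)(g) = \int_G f(gh) \, d(\lambda_1 \mu_1 + \lambda_2 \mu_2)(h)$. Since integration against a finite signed combination of measures splits as the corresponding combination of integrals, this equals $\lambda_1 \int_G f(gh) \, d\mu_1(h) + \lambda_2 \int_G f(gh) \, d\mu_2(h)$, which is precisely $\lambda_1 T_{\mu_1, G}(f)(g) + \lambda_2 T_{\mu_2, G}(f)(g)$, i.e.\ the right-hand side evaluated at $g$. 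As $f$ and $g$ were arbitrary, the operator identity follows.

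The only point that requires a word of care — and this is the closest thing to an "obstacle," though it is routine — is justifying that the integral $\int_G f(gh)\, d\mu(h)$ makes sense and the splitting is valid; here $G$ is countable and the measures are finite, so the integral is just an absolutely convergent sum $\sum_{h \in G} f(gh) \mu(\{h\})$ (using that $f \in l^2(G) \subset l^\infty(G)$ is bounded on the support, together with finiteness of $\mu$), and rearrangement/linearity of absolutely convergent series gives the claim immediately. One should also note that $\lambda_1 \mu_1 + \lambda_2 \mu_2$ should be a finite measure (or at least finite signed measure) for the statement to be meaningful, which holds whenever $\lambda_1, \lambda_2 \geq 0$ and $\mu_1, \mu_2$ are finite, the case we need. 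I would present this in two or three lines without belaboring the measure-theoretic bookkeeping.
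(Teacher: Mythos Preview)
Your proof is correct and is precisely the routine verification one would expect; the paper itself does not give a proof, stating only that ``This lemma is trivial and its proof is left to the reader.'' Your argument is exactly the intended one-line unwinding of the definition via linearity of the integral (or sum), so nothing further is needed.
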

This lemma is trivial and its proof is left to the reader.

\begin{lemma} \label{lemma:random_walk_bound_op_norm}
Let $\mu$ be a finitely supported probability measure on some group $G$. Let $h_{RW}$ be the random walk entropy of $\mu$. Then
\begin{equation*}
h_{RW} \geq - 2\log \lnb T_{\mu, G} \rnb.
\end{equation*}
\end{lemma}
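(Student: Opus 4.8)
The plan is to bound the random walk entropy from below using the spectral theory of the random walk operator $T_{\mu, G}$. Recall that the random walk entropy is $h_{RW} = \lim_n \frac{1}{n} H(\mu^{*n})$ and that for a probability measure $\lambda$ on a countable set one has the elementary bound $H(\lambda) \geq -\log \lnb \lambda \rnb_2^2 = -\log \sum_i \lambda_i^2$, since $\sum_i \lambda_i \log \lambda_i \leq \log \max_i \lambda_i \leq \log \lnb \lambda \rnb_2$ wait --- more carefully, $H(\lambda) = -\sum \lambda_i \log \lambda_i \geq -\sum \lambda_i \log \lnb\lambda\rnb_\infty \geq -\log \lnb \lambda \rnb_\infty \geq -\log \lnb \lambda \rnb_2^2$ using $\lnb\lambda\rnb_\infty \le \lnb\lambda\rnb_2^2 / $ hmm, actually $\lnb\lambda\rnb_\infty \le \lnb\lambda\rnb_2 \le \lnb\lambda\rnb_1^{1/2}\lnb\lambda\rnb_\infty^{1/2}$, so $\lnb\lambda\rnb_\infty \le \lnb\lambda\rnb_2^2$. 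Thus $H(\mu^{*n}) \geq -\log \lnb \mu^{*n} \rnb_2^2$. So it suffices to control $\lnb \mu^{*n}\rnb_2$ in terms of $\lnb T_{\mu,G}\rnb$.

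First I would relate $\lnb \mu^{*n}\rnb_2$ to the operator norm of $T_{\mu,G}$. Evaluating $T_{\mu,G}^n$ applied to the indicator $\delta_e$ of the identity, one gets (depending on conventions) the function $g \mapsto \mu^{*n}(g^{-1})$ or $\mu^{*n}(g)$, whose $l^2$ norm is exactly $\lnb \mu^{*n}\rnb_2$. Hence $\lnb\mu^{*n}\rnb_2 = \lnb T_{\mu,G}^n \delta_e\rnb_2 \leq \lnb T_{\mu,G}^n \rnb \leq \lnb T_{\mu,G}\rnb^n$. Combining with the entropy bound gives $H(\mu^{*n}) \geq -2n \log \lnb T_{\mu,G}\rnb$, and dividing by $n$ and taking the limit yields $h_{RW} \geq -2\log\lnb T_{\mu,G}\rnb$, which is exactly the claim.

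There is a subtlety about whether $T_{\mu,G}$ needs to be self-adjoint for $\lnb T_{\mu,G}^n\rnb \le \lnb T_{\mu,G}\rnb^n$ --- it does not, this holds for any bounded operator --- so the argument works for non-symmetric $\mu$ as well, matching the statement of the lemma (which does not assume symmetry here, unlike the symmetric version in the literature). The one point requiring a little care is matching the convention in the paper's definition $T_{\mu,G}(f)(g) = \int_G f(gh)\,d\mu(h)$: I would compute $T_{\mu,G}^n \delta_e$ explicitly and check it is $g \mapsto \mu^{*n}(g^{-1})$ (so that its $l^2$-norm equals $\lnb\mu^{*n}\rnb_2$, since $g \mapsto g^{-1}$ is a bijection of $G$), which is a direct unwinding of the definition of convolution.

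The main obstacle, such as it is, will simply be getting the convolution-and-inverse bookkeeping right in the paper's left-multiplication convention, and being careful that the elementary inequality $H(\lambda) \geq -\log \lnb\lambda\rnb_2^2$ is applied correctly. No deep input is needed for this lemma: it is a standard Kesten-type spectral bound on entropy, and the real work of Proposition \ref{proposition:random_walk_entropy_lower_bound} lies in the subsequent step of bounding $\lnb T_{\mu,G}\rnb$ away from $1$ via the strong Tits alternative, which is not part of this statement.
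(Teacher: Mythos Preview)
The paper does not give its own proof here; it simply cites Avez \cite[Theorem IV.5]{Avez_1976}. Your argument is essentially the standard Avez proof and the overall strategy --- bound $H(\mu^{*n}) \geq -\log \lnb\mu^{*n}\rnb_2^2$, then identify $\lnb\mu^{*n}\rnb_2 = \lnb T_{\mu,G}^n \delta_e\rnb_2 \leq \lnb T_{\mu,G}\rnb^n$, divide by $n$ and pass to the limit --- is correct and is exactly what that reference does.

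There is, however, a genuine slip in your justification of the first inequality. You claim $\lnb\lambda\rnb_\infty \leq \lnb\lambda\rnb_2^2$ for a probability vector $\lambda$, and deduce $H(\lambda) \geq -\log \lnb\lambda\rnb_\infty \geq -\log \lnb\lambda\rnb_2^2$. The inequality $\lnb\lambda\rnb_\infty \leq \lnb\lambda\rnb_2^2$ is \emph{false}: for $\lambda = (\tfrac12,\tfrac14,\tfrac14)$ one has $\lnb\lambda\rnb_\infty = \tfrac12$ but $\lnb\lambda\rnb_2^2 = \tfrac38$. (What is true is the reverse, $\lnb\lambda\rnb_2^2 \leq \lnb\lambda\rnb_\infty$, via $\sum \lambda_i^2 \leq \lnb\lambda\rnb_\infty \sum \lambda_i$.) The correct justification of $H(\lambda) \geq -\log \lnb\lambda\rnb_2^2$ is Jensen's inequality: writing $\lnb\lambda\rnb_2^2 = \sum_i \lambda_i \cdot \lambda_i = \mathbb{E}_\lambda[\lambda]$, concavity of $\log$ gives
\[
\log \lnb\lambda\rnb_2^2 = \log \mathbb{E}_\lambda[\lambda] \geq \mathbb{E}_\lambda[\log \lambda] = \sum_i \lambda_i \log \lambda_i = -H(\lambda),
\]
which is precisely the comparison $H_1 \geq H_2$ between Shannon and R\'enyi-$2$ entropy. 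With this fix your proof goes through unchanged.
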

This lemma is proven by Avez in \cite[Theorem IV.5]{Avez_1976}.

\begin{lemma} \label{lemma:norm_less_than_one}
There is some $\varepsilon > 0$ such that the following is true. Suppose that $a, b, c \in \Gp$ generate a non-virtually solvable subgroup. Let $G$ be the group generated by $a$, $b$, and $c$. Let
\begin{equation*}
\mu = \frac{1}{4} \delta_a + \frac{1}{4} \delta_b + \frac{1}{4} \delta_c + \frac{1}{4} \delta_{\id}.
\end{equation*}
Then
\begin{equation*}
\lnb T_{\mu, G} \rnb < 1 - \varepsilon.
\end{equation*}
\end{lemma}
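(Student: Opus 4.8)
The plan is to exploit the strong Tits alternative of Breuillard \cite{BREUILLARD_2008} together with Kesten's theorem \cite{KESTEN_1959} on the spectral radius of random walks on groups. Concretely, since $a$, $b$, $c$ generate a non-virtually-solvable subgroup $G$ of $\Gp$, Breuillard's uniform Tits alternative provides an absolute constant $N$ (independent of $a$, $b$, $c$) such that among words of length at most $N$ in $a^{\pm 1}, b^{\pm 1}, c^{\pm 1}$ one can find two elements generating a free subgroup of rank $2$. The point of uniformity is crucial: because $N$ is absolute, the two free generators $w_1, w_2$ are words of bounded length, so the $\mu$-random walk reaches each of them with probability bounded below by some absolute $\delta_0>0$ (using that $\mu$ gives mass $1/4$ to each of $a$, $b$, $c$, $\id$, so in particular $\mu^{*N}(w_i) \geq 4^{-N}$, and $\id$ is in the support so we may pad to exact length $N$).

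Next I would compare the operator $T_{\mu,G}$ with the operator associated to the simple random walk on the free group $F_2 = \langle w_1, w_2\rangle$ sitting inside $G$. The standard strategy (Kesten) is that the norm $\lnb T_{\mu,G}\rnb$ can be bounded away from $1$ whenever the random walk "contains" a free-group random walk with uniformly positive weight. More precisely, let $\mu_N := \mu^{*N}$; by Lemma \ref{lemma:t_op_linear} and submultiplicativity of operator norms we have $\lnb T_{\mu,G}\rnb^N = \lnb T_{\mu,G}^N \rnb = \lnb T_{\mu_N, G}\rnb$. Now decompose $\mu_N = \delta_0(\tfrac12\delta_{w_1} + \tfrac12\delta_{w_2}) + (1-\delta_0)\rho$ for some sub-probability... rather, $\mu_N = \delta_0 \nu_0 + (1-\delta_0)\rho$ where $\nu_0 = \tfrac12\delta_{w_1}+\tfrac12\delta_{w_2}$ and $\rho$ is a probability measure; again by Lemma \ref{lemma:t_op_linear}, $T_{\mu_N,G} = \delta_0 T_{\nu_0,G} + (1-\delta_0) T_{\rho,G}$, so $\lnb T_{\mu_N,G}\rnb \leq \delta_0 \lnb T_{\nu_0,G}\rnb + (1-\delta_0)$. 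It therefore suffices to show $\lnb T_{\nu_0,G}\rnb \leq 1 - \kappa$ for some absolute $\kappa>0$.

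For that last bound I would invoke Kesten's theorem: since $w_1, w_2$ freely generate $F_2 \leq G$, the restriction of $\nu_0$ (a symmetric, but not quite — note $\nu_0$ is not symmetric since it only charges $w_1, w_2$ and not their inverses)... to handle symmetry I would instead work with $\tilde\nu_0 := \tfrac14(\delta_{w_1}+\delta_{w_1^{-1}}+\delta_{w_2}+\delta_{w_2^{-1}})$, obtained by averaging $\nu_0$ with a corresponding length-$N$ piece built from inverse words; alternatively one bounds $\lnb T_{\nu_0,G}\rnb^2 = \lnb T_{\nu_0^* * \nu_0, G}\rnb$ where $\nu_0^* * \nu_0$ is symmetric and charges $\id$ with probability $1/2$ plus a free-group piece. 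Either way one reduces to the norm of a symmetric free-group random-walk operator, which by Kesten is at most $\frac{\sqrt{2k-1}}{k}<1$ with $k$ the number of free generators (here $k=2$, giving $\frac{\sqrt3}{2}$), and more importantly the $l^2$-operator norm on $G$ of an operator supported on a free subgroup equals the corresponding norm on that free subgroup (since $l^2(G)$ decomposes as a direct sum of copies of $l^2(F_2)$ over cosets, each invariant). Chaining the inequalities gives $\lnb T_{\mu,G}\rnb \leq \left(1 - \delta_0\kappa\right)^{1/N} =: 1-\varepsilon$ with $\varepsilon$ absolute, which is the claim.

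The main obstacle I expect is the bookkeeping around symmetry and around the exact form in which Breuillard's theorem is applied: the uniform Tits alternative must be invoked in the precise quantitative form giving a \emph{bounded-length} free pair with a \emph{uniform} length bound $N$, and one must then carefully control the weight $\delta_0$ and the symmetrization so that no implicit dependence on $a, b, c$ sneaks in. Once those uniformities are pinned down, the operator-norm manipulations via Lemma \ref{lemma:t_op_linear}, the coset decomposition of $l^2(G)$, and Kesten's bound are routine. (The passage from this lemma back to Proposition \ref{proposition:random_walk_entropy_lower_bound} will then be a short argument combining Lemma \ref{lemma:random_walk_bound_op_norm}, Lemma \ref{lemma:t_op_linear}, the characterization of virtually solvable subgroups of $\Gp$ via fixed points on $\overline{\mathbb H}$, and a pigeonhole to extract three elements of the support generating a non-virtually-solvable group.)
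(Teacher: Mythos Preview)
Your proposal has the right high-level ingredients (uniform Tits alternative plus Kesten), but there is a genuine gap coming from the fact that $\mu$ is \emph{not symmetric}. You assert $\mu^{*N}(w_i)\geq 4^{-N}$ because $\mu$ charges $a,b,c,\id$; however Theorem~\ref{theorem:strong_tits} requires a \emph{symmetric} generating set $F$, so the free generators $w_1,w_2$ it produces lie in $\{\id,a,a^{-1},b,b^{-1},c,c^{-1}\}^N$ and may involve inverses. Since $a^{-1},b^{-1},c^{-1}\notin\supp\mu$, there is no reason for $w_i\in\supp\mu^{*N}$ at all. Your two proposed repairs both fail: averaging in $w_i^{-1}$ runs into the same support problem, and the alternative of passing to $\hat\nu_0*\nu_0=\tfrac12\delta_{\id}+\tfrac14\delta_{w_1^{-1}w_2}+\tfrac14\delta_{w_2^{-1}w_1}$ gives a lazy symmetric walk on the \emph{cyclic} group $\langle w_1^{-1}w_2\rangle\cong\Z$, whose $l^2$ operator norm is exactly $1$, so the triangle inequality yields nothing. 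In particular $\lnb T_{\nu_0,G}\rnb=1$, and your decomposition $\mu^{*N}=\delta_0\nu_0+(1-\delta_0)\rho$ only recovers the trivial bound.

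The paper's proof resolves both issues simultaneously by symmetrizing \emph{before} invoking Tits: one bounds $\lnb T_{\mu,G}\rnb^{2N}=\lnb(T_{\mu,G}T_{\mu,G}^{\dag})^N\rnb=\lnb T_{(\mu*\hat\mu)^{*N},G}\rnb$, where $\hat\mu$ is the pushforward of $\mu$ under $g\mapsto g^{-1}$. The measure $\mu*\hat\mu$ is symmetric and assigns mass at least $1/16$ to each element of $F=\{\id,a,a^{-1},b,b^{-1},c,c^{-1}\}$, so Theorem~\ref{theorem:strong_tits} applies to $F$ and $(\mu*\hat\mu)^{*N}$ dominates $16^{-N}(\delta_f+\delta_{f^{-1}}+\delta_g+\delta_{g^{-1}})$ for a free pair $f,g\in F^N$. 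This last piece \emph{is} the symmetric simple walk on $F_2$, so Corollary~\ref{corollary:kesten_symmetric} gives norm $\tfrac{\sqrt3}{2}$, and now the triangle inequality finishes. In short, your outline becomes correct once you insert the passage to $\mu*\hat\mu$ (equivalently, to $T_{\mu}T_{\mu}^{\dag}$) at the outset; without it the argument does not close.
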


\begin{lemma} \label{lemma:decomposes_correctly}
Let $\lambda$ be a finite non-negative measure on $\Gp$ with finite support. Let $T$ be the total mass of $\lambda$. Let $K \geq 0$ and suppose that for every virtually solvable subgroup $H < \Gp$ we have
\begin{equation}
\lambda(H) < T - K. \label{eq:decompose_condition}
\end{equation}
Then there exists some $n \in \mathbb{Z}_{\geq 0}$ such that for each integer $i \in [1, n]$ there exists $a_i, b_i, c_i \in \Gp$ and $k_i >0$ such that
\begin{equation*}
\lambda = \lambda' + \sum_{i=1}^{n} k_i \left( \frac{1}{3} \delta_{a_i} + \frac{1}{3} \delta_{b_i} + \frac{1}{3} \delta_{c_i} \right)
\end{equation*}
for some non-negative measure $\lambda'$ and for each integer $i \in [1, n]$ the set $\{a_i, b_i, c_i \}$ generates a non-virtually solvable group. Furthermore the sum of the $k_i$ is at least $K$.
\end{lemma}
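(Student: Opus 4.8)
The plan is to decompose $\lambda$ greedily into pieces of the required form. I would set up an induction (or iterative process) on, say, the number of points in the support of $\lambda$, or just run a greedy algorithm. The key observation is that condition \eqref{eq:decompose_condition} guarantees that, as long as $\lambda$ still has mass concentrated in a way that violates being supported on a virtually solvable subgroup by at least the margin $K$, one can find three group elements $a, b, c$ in the support of $\lambda$ generating a non-virtually-solvable group, and subtract off a small multiple of $\frac{1}{3}\delta_a + \frac{1}{3}\delta_b + \frac{1}{3}\delta_c$ while keeping the remaining measure non-negative. The total mass peeled off this way should add up to at least $K$ because the ``virtually solvable remainder'' can absorb at most $T - K$ of the mass.

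First I would make precise the claim that if $\lambda$ is a finite non-negative finitely supported measure on $\Gp$ and \emph{every} virtually solvable subgroup $H$ satisfies $\lambda(H) < T - K$ with $T$ the total mass, then there exist $a, b, c \in \supp \lambda$ generating a non-virtually-solvable subgroup. This follows from the structure of virtually solvable subgroups of $\Gp$ recalled in the text: such subgroups either have a common fixed point in $\overline{\mathbb{H}}$ or preserve an unordered pair of points in $\overline{\mathbb{H}}$. If no three points of $\supp\lambda$ generated a non-virtually-solvable group, then (a Helly-type / Tits-alternative argument) the whole support would generate a virtually solvable group, which would contradict \eqref{eq:decompose_condition} since that group contains all the mass $T > T - K \ge T - K$. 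Actually one must be slightly careful: I would argue that if every triple from $\supp\lambda$ generates a virtually solvable group then $\langle \supp\lambda\rangle$ itself is virtually solvable — this is a standard consequence of the fact that $\Gp = \PSL_2(\R)$ has the property that a subgroup is virtually solvable iff it does not contain a non-abelian free subgroup (Tits alternative in rank one), combined with the ping-pong characterization; alternatively use that non-virtual-solvability is detected by a pair of ``independent'' hyperbolic elements, hence by finitely many elements, hence by a triple after relabeling.

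Next, given such a triple $a_1, b_1, c_1$, let $p = \min\{\lambda(\{a_1\}), \lambda(\{b_1\}), \lambda(\{c_1\})\} > 0$, set $k_1 = 3p$ and subtract $k_1(\frac13\delta_{a_1} + \frac13\delta_{b_1} + \frac13\delta_{c_1})$ from $\lambda$. The new measure $\lambda^{(1)}$ is non-negative, finitely supported, has strictly smaller support (at least one of the three atoms is now killed), and total mass $T - k_1$. I would then check whether $\lambda^{(1)}$ still satisfies the hypothesis with the \emph{same} $K$: for any virtually solvable $H$ we have $\lambda^{(1)}(H) \le \lambda(H) < T - K = (T - k_1) - (K - k_1)$; if $k_1 < K$ this says $\lambda^{(1)}(H) < T^{(1)} - (K - k_1)$, so the process continues with $K$ replaced by $K - k_1$. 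Repeat; since the support strictly decreases each step the process terminates after finitely many steps, say $n$ of them, with $\sum_{i=1}^n k_i \ge K$ (the process only stops early if at some point the residual threshold $K - \sum k_i$ has become $\le 0$, i.e.\ we've already peeled off mass $\ge K$; if the support is exhausted first then the residual measure is supported on a virtually solvable group — the trivial group or a single cyclic group — forcing $\sum k_i \ge K$ by the hypothesis applied to that final trivial $\lambda'$). Setting $\lambda'$ to be whatever non-negative measure remains at the end completes the decomposition.

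The main obstacle I expect is the purely group-theoretic lemma that a finitely supported measure whose every triple of support points generates a virtually solvable group must itself have virtually solvable support-generated group — one has to rule out the possibility that $\supp\lambda$ is ``locally solvable but globally not''. The clean way around it is to phrase non-virtual-solvability of a subset $F \subseteq \Gp$ via the action on $\overline{\mathbb{H}}$: $\langle F\rangle$ is virtually solvable iff either $\bigcap_{g\in F}\mathrm{Fix}(g) \ne \emptyset$ in $\overline{\mathbb{H}}$ or $F$ globally preserves some two-point set; both conditions, being intersections/preservations, are ``matroidal'' enough that failure is witnessed by a bounded-size subset — and in $\PSL_2$ a subset of size $3$ always suffices (two hyperbolic elements with distinct axes, plus possibly one more to break a dihedral symmetry). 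Once that is in hand, the rest is the elementary greedy bookkeeping sketched above, and the bound on the number of steps $n$ is not needed for the statement (only finiteness is), so I would not optimize it.
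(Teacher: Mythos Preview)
Your approach is essentially the paper's: both run an induction on $|\supp\lambda|$, at each step extracting a uniform measure on a triple generating a non-virtually-solvable group and reducing $K$ accordingly. The paper handles your ``main obstacle'' exactly along the lines you sketch at the end --- it never proves an abstract ``every triple virtually solvable $\Rightarrow$ whole group virtually solvable'' statement, but instead constructs the triple directly from the $\overline{\mathbb{H}}$-action: pick $a\in\supp\lambda$ of minimal mass, use \eqref{eq:decompose_condition} to find $b\in\supp\lambda$ with no common fixed point, and if $\langle a,b\rangle$ is already non-virtually-solvable peel off the two triples $(a,a,b)$ and $(a,b,b)$ (repeats are allowed), otherwise $\langle a,b\rangle$ preserves a two-point set and \eqref{eq:decompose_condition} supplies a $c$ breaking that symmetry. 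Allowing repeated entries in the triple is the one small trick you did not mention, and it is what lets two elements suffice when they already generate a non-virtually-solvable group.
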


Proposition \ref{proposition:random_walk_entropy_lower_bound} follows immediately by combining these lemmas. The rest of this subsection will be concerned with proving Lemma \ref{lemma:norm_less_than_one} and Lemma \ref{lemma:decomposes_correctly}.

First we will prove Lemma \ref{lemma:norm_less_than_one}. A proof of a similar result for symmetric measures may be found in \cite{Breuillard_2015}. The key ingredient is the following result of Breuillard.

\begin{theorem} \label{theorem:strong_tits}
There exists some $N \in \N$ such that if $F$ is a finite symmetric subset of $\Gp$ containing $\id$, either $F^N$ contains two elements which freely generate a non-abelian free group, or the group generated by $F$ is virtually solvable (i.e. contains a finite index solvable subgroup).
\end{theorem}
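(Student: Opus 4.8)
The statement is Breuillard's strong Tits alternative, and in the present paper it is quoted from \cite{BREUILLARD_2008} (where it is established in the far greater generality of $\GL_n$ over an arbitrary field) rather than reproved; what follows is the strategy one would use to establish the $\Gp$-case directly. The starting point is the classical Tits alternative: if $\langle F \rangle$ is not virtually solvable then, by the characterisation of virtually solvable subgroups recalled just before the theorem (no common fixed point in $\overline{\mathbb{H}}$ and no common invariant pair of boundary points), it acts suitably irreducibly on $\B$, and a non-abelian free subgroup is produced by a ping-pong argument using two \emph{very proximal} elements, i.e.\ two hyperbolic isometries of the hyperbolic plane whose pairs of fixed points on $\B$ are pairwise disjoint and whose translation lengths are bounded below by an absolute constant. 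The entire content of the \emph{strong} form is to force this ping-pong configuration to occur inside a bounded power $F^N$ with $N$ absolute (here depending only on the fixed dimension $2$).

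The plan would be to first produce a single very proximal element in some $F^{N_1}$. Since the elements of $F$ have no common fixed point and no common invariant pair, a bounded-length word in $F$ must be hyperbolic; the delicate point is a \emph{quantitative escape} statement asserting that in a bounded number of steps one reaches an element whose translation length (equivalently, the gap between the moduli of its two eigenvalues) exceeds an absolute constant, \emph{unless} all of $\langle F\rangle$ stays uniformly close to a point of $\overline{\mathbb{H}}$ or to an unordered pair of boundary points — which would make it virtually solvable. This is where the arithmetic machinery of \cite{BREUILLARD_2008} enters: the proper subvarieties of $\PSL_2$ and of $\B$ that one must avoid have bounded degree, and an effective escape-from-subvarieties lemma combined with a uniform (Lehmer-type) lower bound on the height / translation length of a non-elliptic element supplies the constant $N_1$.

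Once a very proximal $a \in F^{N_1}$ is in hand, the second such element would be obtained by conjugation: one finds $g \in F^{N_2}$ moving the attracting and repelling fixed points of $a$ off themselves — possible in boundedly many steps, since otherwise the fixed-point pair of $a$ would be $\langle F\rangle$-invariant — so that $a' = g a g^{-1}$ has fixed points disjoint from those of $a$. Replacing $a,a'$ by a bounded power $M$ to separate their attracting/repelling neighbourhoods on $\B$ and applying the ping-pong lemma, $\langle a^M, a'^M\rangle$ is free of rank $2$, with $a^M, a'^M \in F^N$ for $N = N(N_1, N_2, M)$ absolute. The main obstacle throughout is exactly the uniformity of $N$: for a fixed $F$ every step above is elementary hyperbolic geometry, but making each bound independent of $F$ requires the effective escape estimates, the uniform lower bounds on heights/spectral gaps, and their interaction with uniform exponential growth developed in \cite{BREUILLARD_2008} — which is why, in this paper, the theorem is invoked as a black box and only used to derive Lemma~\ref{lemma:norm_less_than_one} and hence Proposition~\ref{proposition:random_walk_entropy_lower_bound}.
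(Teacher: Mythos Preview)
Your proposal correctly identifies that the paper does not reprove this result but simply cites it as a special case of \cite[Theorem 1.1]{BREUILLARD_2008}; your additional sketch of Breuillard's strategy for the $\Gp$ case is a reasonable outline of the ideas behind that reference, though of course none of it appears in the present paper. In short, your treatment agrees with the paper's (which is a one-line citation), and the extra exposition you provide is supplementary rather than divergent.
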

\begin{proof}
This is a special case of \cite[Theorem 1.1]{BREUILLARD_2008}.
\end{proof}

We also need the following result of Kesten and a corollary of it.

\begin{theorem} \label{theorem:kesten_symmetric}
Let $G$ be a countable group. Suppose that $a, b \in G$ freely generate a free group. Let $A < G$ be the subgroup generated by $a$ and $b$. Let $\mu$ be the measure on $A$ given by
\begin{equation*}
\mu = \frac{1}{4} \left( \delta_a + \delta_{a^{-1}} + \delta_b + \delta_{b^{-1}} \right).
\end{equation*}
Then $\lnb T_{\mu, A} \rnb = \frac{\sqrt{3}}{2}$.
\end{theorem}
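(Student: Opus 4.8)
The statement to prove is Theorem \ref{theorem:kesten_symmetric}: for $a, b$ freely generating a free group, with $\mu = \frac14(\delta_a + \delta_{a^{-1}} + \delta_b + \delta_{b^{-1}})$ on the subgroup $A = \langle a, b\rangle$, one has $\lnb T_{\mu, A}\rnb = \frac{\sqrt 3}{2}$.

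The plan is to use the classical computation of the norm of the Markov operator of the simple random walk on a regular tree. Since $a$ and $b$ freely generate $A$, the Cayley graph of $A$ with respect to the generating set $\{a, a^{-1}, b, b^{-1}\}$ is the $4$-regular tree $T_4$. The operator $4 T_{\mu, A}$ is precisely the adjacency operator of this tree acting on $\ell^2(A) = \ell^2(T_4)$. Thus it suffices to show that the adjacency operator $M$ of the $q$-regular tree has operator norm $2\sqrt{q-1}$, and then specialise to $q = 4$, giving $\lnb 4 T_{\mu, A}\rnb = 2\sqrt 3$, hence $\lnb T_{\mu, A}\rnb = \frac{\sqrt 3}{2}$.

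First I would establish the upper bound $\lnb M \rnb \le 2\sqrt{q-1}$. The cleanest route is a positivity/rescaling argument: fix the root $o$ and let $r(x)$ denote the graph distance from $o$ to $x$. Consider the multiplication operator $D$ given by $(Df)(x) = (q-1)^{r(x)/2} f(x)$, and compute that the conjugated operator $D M D^{-1}$ has entries bounded so that, applying the Schur test with the all-ones weight, one gets the bound $2\sqrt{q-1}$; more concretely, writing $M = M^{+} + M^{-}$ where $M^{+}$ moves away from the root and $M^{-} = (M^{+})^{*}$ moves toward it, one shows $\lnb M^{+}\rnb \le \sqrt{q-1}$ by a direct $\ell^2$ computation using that each vertex has $q-1$ forward neighbours (except the root which has $q$), and then $\lnb M\rnb \le \lnb M^{+}\rnb + \lnb M^{-}\rnb = 2\sqrt{q-1}$. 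A small care is needed at the root, but since changing finitely many matrix entries is a finite-rank (indeed bounded) perturbation that does not affect the argument's conclusion for the norm — or one simply notes the forward-degree bound $q-1$ already holds at the root after accounting for the $q$ edges, the estimate $\lnb M^+ \rnb^2 \le q-1$ goes through by Cauchy--Schwarz. For the lower bound $\lnb M\rnb \ge 2\sqrt{q-1}$, I would exhibit approximate eigenfunctions: take $f_n$ supported on the ball of radius $n$ with $f_n(x) = (q-1)^{-r(x)/2}$ there, and compute $\langle M f_n, f_n\rangle / \lnb f_n\rnb^2 \to 2\sqrt{q-1}$ as $n \to \infty$ by a direct count of vertices at each level (there are $q(q-1)^{k-1}$ at level $k$) and a geometric-series estimate; the boundary terms contribute negligibly relative to the bulk.

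The main obstacle is purely bookkeeping: getting the root vertex's anomalous degree ($q$ instead of $q-1$ forward neighbours) to not spoil either the Schur-test upper bound or the approximate-eigenfunction lower bound, and being careful that $T_{\mu,A}$ really is $\frac14$ times the tree adjacency operator — this last point uses crucially that $a,b$ generate a \emph{free} group so there are no relations collapsing distinct words, i.e.\ the Cayley graph has no cycles. Once the identification with $T_4$ is made, everything reduces to the standard Kesten spectral radius computation for free groups, which I would either cite (this is Kesten's original theorem, \cite{KESTEN_1959}) or reproduce via the two-paragraph argument sketched above. Given the paper already invokes Kesten's results, the shortest honest proof is to observe the Cayley-graph identification explicitly and then quote the value $2\sqrt{q-1} = 2\sqrt 3$ for the norm of the adjacency operator of the $4$-regular tree.
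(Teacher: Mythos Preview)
Your proposal is correct and aligns with the paper's approach: the paper's proof simply cites \cite[Theorem 3]{KESTEN_1959} together with the observation that the spectral radius of a self-adjoint operator equals its norm. Your sketch of the tree computation is a valid self-contained alternative, but your final suggestion to just cite Kesten is exactly what the paper does.
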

\begin{proof}
This follows from \cite[Theorem 3]{KESTEN_1959} and the fact that the spectral radius of a self-adjoint operator is its norm.
\end{proof}

\begin{corollary} \label{corollary:kesten_symmetric}
Let $G$ be a countable group. Suppose that $a, b \in G$ freely generate a free group. Let $A < G$ be the subgroup generated by $a$ and $b$. Let $\mu$ be the measure on $G$ given by
\begin{equation*}
\mu = \frac{1}{4} \left( \delta_a + \delta_{a^{-1}} + \delta_b + \delta_{b^{-1}} \right).
\end{equation*}
Then $\lnb T_{\mu, G} \rnb = \frac{\sqrt{3}}{2}$.
\end{corollary}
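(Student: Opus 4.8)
The plan is to deduce Corollary \ref{corollary:kesten_symmetric} from Theorem \ref{theorem:kesten_symmetric} by exploiting the fact that $l^2(G)$ decomposes as an orthogonal direct sum of $T_{\mu, G}$-invariant subspaces, each isomorphic as a $T_{\mu,G}$-module to $l^2(A)$ with the operator $T_{\mu, A}$.

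First I would observe that since $\mu$ is supported on $A$, the operator $T_{\mu, G}$ acts "within cosets" in the following sense: for $f \in l^2(G)$ and $g \in G$, the value $T_{\mu, G}(f)(g) = \sum_h f(gh)\mu(h)$ depends only on the restriction of $f$ to the right coset $gA$. Choose a set $R$ of representatives for the right cosets $\{gA : g \in G\}$ (using the axiom of choice if necessary; $G$ is countable so this is harmless). Then $l^2(G) = \bigoplus_{g \in R} l^2(gA)$ as an orthogonal direct sum, and each summand $l^2(gA)$ is $T_{\mu, G}$-invariant. Moreover the map $l^2(gA) \to l^2(A)$ given by $f \mapsto (a \mapsto f(ga))$ is a unitary isomorphism that intertwines $T_{\mu,G}|_{l^2(gA)}$ with $T_{\mu, A}$, since $\sum_h f(ga h)\mu(h)$ corresponds exactly to $\sum_h \tilde f(ah)\mu(h)$ where $\tilde f(a) = f(ga)$.

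From this decomposition it follows that $\lnb T_{\mu, G} \rnb = \sup_{g \in R} \lnb T_{\mu, G}|_{l^2(gA)} \rnb = \lnb T_{\mu, A} \rnb$, since the norm of an operator that is block-diagonal with respect to an orthogonal direct sum is the supremum of the norms of the blocks, and here every block is unitarily equivalent to $T_{\mu, A}$. Combining this with Theorem \ref{theorem:kesten_symmetric}, which gives $\lnb T_{\mu, A} \rnb = \frac{\sqrt 3}{2}$, yields $\lnb T_{\mu, G} \rnb = \frac{\sqrt 3}{2}$ as desired.

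I do not expect any serious obstacle here; the only point requiring a little care is writing the coset decomposition and the intertwining unitary correctly (in particular getting the left/right conventions consistent with the definition $T_{\mu,G}(f)(g) = \int_G f(gh)\,d\mu(h)$, which forces the use of \emph{right} cosets $gA$). One could phrase the whole argument slightly more slickly by noting that $l^2(G)$ is a direct sum of copies of the left regular representation of $A$ restricted appropriately, but the explicit coset argument is cleanest to write down and avoids invoking representation-theoretic machinery.
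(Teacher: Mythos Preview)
Your proof is correct and is essentially identical to the paper's argument: decompose $l^2(G)$ along cosets of $A$, observe that $T_{\mu,G}$ preserves each summand and acts on it as $T_{\mu,A}$, and then invoke Theorem \ref{theorem:kesten_symmetric}. One small terminological slip: the sets $gA$ you use are \emph{left} cosets of $A$, not right cosets; the mathematics is unaffected since you consistently work with $gA$ throughout.
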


\begin{proof}
Let $H \subset G$ be chosen such that each left coset of $A$ in $G$ can be written uniquely as $h A$ for some $h \in H$. This means that
\begin{equation*}
l^2(G) \cong \bigoplus_{h \in H} l^2(hA).
\end{equation*}
We also note that for any $h \in H$ the map $T_{\mu, G}$ maps $l^2(hA)$ to $l^2(hA)$ and its action on $l^2(hA)$ is isomorphic to the action of $T_{\mu|_A, A}$ on $l^2(A)$. This means that $\lnb T_{\mu, G} \rnb = \lnb T_{\mu|_A, A} \rnb$. The result now follows by Theorem \ref{theorem:kesten_symmetric}.
\end{proof}

One difficulty we need to overcome is that Theorems \ref{theorem:strong_tits} and \ref{theorem:kesten_symmetric} require symmetric sets and measures but symmetry is not a requirement of Proposition \ref{proposition:random_walk_entropy_lower_bound}. We will do this by bounding $\lnb T_{\mu, G} T^{\dag}_{\mu, G} \rnb$. First we need the following two simple lemmas.

\begin{lemma} \label{lemma:convolve_product_t}
Let $G$ be a countable group and let $\mu_1, \mu_2$ be measures on $G$. Then
\begin{equation}
T_{\mu_1, G} T_{\mu_2, G} = T_{\mu_1 * \mu_2, G}.
\end{equation}
\end{lemma}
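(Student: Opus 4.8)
The final statement to prove is Lemma \ref{lemma:convolve_product_t}, asserting $T_{\mu_1, G} T_{\mu_2, G} = T_{\mu_1 * \mu_2, G}$ for measures $\mu_1, \mu_2$ on a countable group $G$.

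This is a routine computation, so the plan is essentially to unwind the definitions and swap the order of two integrations (sums). Recall that $T_{\mu, G}$ is defined by $(T_{\mu, G} f)(g) = \int_G f(gh) \, d\mu(h)$, so the plan is to write, for $f \in l^2(G)$ and $g \in G$,
\begin{equation*}
(T_{\mu_1, G} T_{\mu_2, G} f)(g) = \int_G (T_{\mu_2, G} f)(g h_1) \, d\mu_1(h_1) = \int_G \int_G f(g h_1 h_2) \, d\mu_2(h_2) \, d\mu_1(h_1).
\end{equation*}
Then I would apply Fubini/Tonelli — justified since $\mu_1, \mu_2$ are finite measures on a countable group and, with $f \in l^2(G)$, the relevant integrand is absolutely integrable (the sums are over countable sets and $T_{\mu_i, G}$ are bounded operators) — and substitute $h = h_1 h_2$. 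Using the definition of convolution $\mu_1 * \mu_2$ as the pushforward of $\mu_1 \times \mu_2$ under the product map $(h_1, h_2) \mapsto h_1 h_2$, the double integral becomes $\int_G f(gh) \, d(\mu_1 * \mu_2)(h) = (T_{\mu_1 * \mu_2, G} f)(g)$. Since $f$ and $g$ were arbitrary, the operator identity follows.

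I do not anticipate any real obstacle here; the only point requiring a sentence of care is the interchange of the two integrations, and in the finitely supported case (which is the relevant application in this paper, via Lemma \ref{lemma:decomposes_correctly}) this is simply a finite sum manipulation with no convergence issue at all. One could also phrase the whole argument purely in terms of finite sums if one only needs the finitely supported case, but stating it for general measures costs nothing beyond invoking Fubini. I would keep the written proof to two or three lines, essentially displaying the chain of equalities above and remarking that the middle step is the definition of convolution as a pushforward together with Fubini's theorem.
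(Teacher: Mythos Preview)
Your proposal is correct and is exactly the standard computation one would expect; the paper itself does not give a proof at all, stating simply that the lemma is trivial and leaving it to the reader. Your written version would be a perfectly acceptable fleshing-out of that omission.
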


\begin{lemma} \label{lemma:inverse_hermitian}
Let $G$ be a group, let $n \in \N$, and let $\left( p_i \right)_{i=1}^n$ be a probability vector. Let $g_1, g_2, \dots, g_n \in G$ and let $\mu$ be defined by
\begin{equation*}
\mu = \sum_{i=1}^n p_i g_i
\end{equation*}
and let $\hat{\mu}$ be defined by
\begin{equation*}
\hat{\mu} = \sum_{i=1}^n p_i g_i^{-1}.
\end{equation*}
Then
\begin{equation*}
T_{\mu, G}^{\dag} = T_{\hat{\mu}, G}.
\end{equation*}
\end{lemma}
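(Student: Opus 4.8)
The plan is to verify directly the defining relation of the adjoint, namely that $\langle T_{\mu, G} f_1, f_2 \rangle = \langle f_1, T_{\hat\mu, G} f_2 \rangle$ for all $f_1, f_2 \in l^2(G)$. Since $\hat\mu$ is a finite positive combination of point masses, $T_{\hat\mu, G}$ is a finite positive combination of right-translation operators, each of which is unitary on $l^2(G)$; hence $T_{\hat\mu, G}$ is a bounded operator and the computation above, once established, identifies it with $T_{\mu, G}^{\dag}$ by uniqueness of the adjoint.

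First I would unwind the definitions: for $g \in G$ one has $(T_{\mu, G} f_1)(g) = \sum_{i=1}^n p_i f_1(g g_i)$, so
\begin{align*}
\langle T_{\mu, G} f_1, f_2 \rangle = \sum_{g \in G} \sum_{i=1}^n p_i f_1(g g_i) \overline{f_2(g)}.
\end{align*}
The double sum is absolutely convergent: for each fixed $i$ the substitution $g \mapsto g g_i$ is a bijection of $G$, so by Cauchy--Schwarz $\sum_{g} |f_1(g g_i)|\,|f_2(g)| \le \lnb f_1 \rnb_2 \lnb f_2 \rnb_2 < \infty$, and there are only finitely many $i$. Thus I may interchange the summations and, for each $i$, reindex by $g' = g g_i$ (i.e.\ $g = g' g_i^{-1}$), which yields
\begin{align*}
\langle T_{\mu, G} f_1, f_2 \rangle = \sum_{g' \in G} f_1(g') \sum_{i=1}^n p_i \overline{f_2(g' g_i^{-1})} = \sum_{g' \in G} f_1(g') \overline{(T_{\hat\mu, G} f_2)(g')} = \langle f_1, T_{\hat\mu, G} f_2 \rangle,
\end{align*}
as desired.

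There is essentially no obstacle: the only point needing care is the justification for exchanging the order of summation and reindexing, which is covered by the Cauchy--Schwarz bound and the finiteness of the support of $\mu$. If one prefers to avoid even mentioning absolute convergence, the identity can first be checked on finitely supported $f_1, f_2$ (a finite computation) and then extended to all of $l^2(G)$ by density together with the continuity of the two bounded operators.
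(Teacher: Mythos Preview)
Your proof is correct. The paper actually states that this lemma is trivial and leaves the proof to the reader, so your direct verification of the adjoint identity via reindexing is precisely the expected argument.
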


These lemmas are trivial and their proofs are left to the reader.

We are now ready to prove Lemma \ref{lemma:norm_less_than_one}.

\begin{proof}[Proof of Lemma \ref{lemma:norm_less_than_one}]
We will prove this by bounding $\lnb (T_{\mu, G} T_{\mu, G}^{\dag} )^N \rnb$ where $N$ is as in Theorem \ref{theorem:strong_tits}. Note that this is equal to $\lnb T_{\mu, G} \rnb^{2N}$.

Let $\hat{\mu}$ be as in Lemma \ref{lemma:inverse_hermitian}. Note that we may write
\begin{equation*}
\mu * \hat{\mu}  = \eta + \frac{1}{16}(\delta_{\id} + \delta_{a} + \delta_{a^{-1}} + \delta_b + \delta_{b^{-1}} + \delta_{c} + \delta_{c^{-1}})
\end{equation*}
where $\eta$ is some positive measure of total mass $\frac{9}{16}$.

By applying Theorem \ref{theorem:strong_tits} with $F = \{\id, a, a^{-1}, b, b^{-1}, c, c^{-1} \}$ we know that there is some $f, g \in F^N$ which freely generate a free group. We write
\begin{equation*}
(\mu * \hat{\mu} )^{*N} = \eta' + \frac{1}{16^N}(\delta_f + \delta_{f^{-1}} + \delta_g + \delta_{g^{-1}})
\end{equation*}
where $\eta'$ is some positive measure with total mass $1 - \frac{4}{16^N}$.

By Theorem \ref{theorem:kesten_symmetric} and Lemma \ref{lemma:t_op_linear} we know that
\begin{equation*}
\lnb T_{\frac{1}{16^N}(\delta_c + \delta_{c^{-1}} + \delta_d + \delta_{d^{-1}}), G} \rnb \leq  \frac{2 \sqrt{3}}{16^N}.
\end{equation*}
Therefore
\begin{equation*}
\lnb T_{(\mu * \hat{\mu} )^{*N}, G} \rnb \leq 1 - \frac{4}{16^N} (1 - \frac{\sqrt{3}}{2})
\end{equation*}
and therefore
\begin{equation*}
\lnb T_{\mu , G} \rnb \leq \left( 1 - \frac{4}{16^N} (1 - \frac{\sqrt{3}}{2}) \right)^{1/2N} < 1. \qedhere
\end{equation*}
\end{proof}

Finally we need to prove Lemma \ref{lemma:decomposes_correctly}.

\begin{proof}[Proof of Lemma \ref{lemma:decomposes_correctly}]
We prove this by induction on the number of elements in the support of $\lambda$. If $\lambda$ is the zero measure then the statement is trivial so we have our base case. If $K=0$ then the statement is trivial so assume $K > 0$ . Let $a \in \supp \lambda$ be chosen such that $\lambda (a)$ is minimal amongst all non-identity elements in the support of $\lambda$.

Now choose some $b \in \supp \lambda$ such that $a$ and $b$ do not share a common fixed point. This is possible by \eqref{eq:decompose_condition} and the fact that $K>0$.

If $a$ and $b$ generate a non virtually solvable group then we may write $$\lambda = \lambda' + \lambda(a) \left( \frac{1}{3} \delta_{a} + \frac{1}{3} \delta_{a}  + \frac{1}{3} \delta_{b}  \right) + \lambda(a) \left( \frac{1}{3} \delta_{a} + \frac{1}{3} \delta_{b}  + \frac{1}{3} \delta_{b}  \right)$$ where $\lambda'$ is a non-negative measure with smaller support that $\lambda$. We then apply the inductive hypothesis to $\lambda'$ with $\max \{K - 2 \lambda(a), 0 \}$ in the role of $K$ and $T- 2 \lambda(a)$ in the role of $T$.

If $a$ and $b$ generate a virtually solvable group then there must be two distinct points $g_1, g_2 \in \Gp$ such that the set $\{g_1, g_2 \}$ is stationary under both $a$ and $b$. If this is the case then choose some $c \in \supp \lambda$ such that $\{g_1, g_2 \}$ is not stationary under $c$. This is possible by \eqref{eq:decompose_condition}. Note that $a, b$ and $c$ generate a non virtually solvable group. Write
\begin{equation*}
\lambda = \lambda' + 3 \lambda(a) \left( \frac{1}{3} \delta_{a} + \frac{1}{3} \delta_{b}  + \frac{1}{3} \delta_{c}  \right).
\end{equation*}
We then apply the inductive hypothesis to $\lambda'$ with $\max \{K - 3 \lambda(a), 0 \}$ in the role of $K$ and $T- 3\lambda(a)$ in the role of $T$.
\end{proof}

\subsection{Symmetric and nearly symmetric examples}

The purpose of this subsection is to prove Corollary \ref{corollary:symmetric_examples}. We will do this using Theorem \ref{theorem:main_furstenberg}. First we need the following proposition.

\begin{proposition} \label{proposition:small_symmetric_is_non_degenerate}
For all $\alpha_0, c, A > 0$ there exists $t>0$ such that for all sufficiently small (depending on $\alpha_0$, $c$, and $A$) $r>0$ the following is true.

Suppose that $\mu$ is a compactly supported probability measure on $\Gp$ and that $U$ is a random variable taking values in $\A$ such that $\exp(U)$ has law $\mu$.  Suppose that $\lnb U \rnb \leq r$ almost surely and that $\lnb \mathbb{E}[U] \rnb \leq c r^2$. Suppose that the smallest eigenvalue of the covariance matrix of $U$ is at least $A r^2$. Then $\mu$ is $\alpha_0$, $t$ - non-degenerate.
\end{proposition}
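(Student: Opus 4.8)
The plan is to use the identity $\nu=\mu^{*n}*\nu$ to realise a sample from $\nu$ as $\exp(U_1)\cdots\exp(U_n)\,b$, where $U_1,\dots,U_n$ are i.i.d.\ copies of $U$ and $b$ is an independent sample from $\nu$ (Lemma \ref{lemma:stopping_time_invariant} with the constant stopping time $n$), to Taylor expand the action of each $\exp(U_k)$ on $\B$ around the identity, and then to recognise $\phi$ of the sample as a shift depending only on $b$ plus a sum of many bounded increments whose conditional variances are uniformly bounded below. An elementary characteristic-function estimate then forces this sum to be spread out at scale $\asymp t/\alpha_0$, which yields the bound on arcs of length $t$. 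Concretely I would fix the relevant absolute constants first, then choose $t$ small in terms of $\alpha_0,c,A$, then set $n=n(r):=\big\lceil c_1 t^2/(\alpha_0^2 A r^2)\big\rceil$, and finally let $r\to0$.

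Two uniform facts about the functionals $\varrho_b\in\As$ from Definition \ref{definition:e_vec} drive the estimate. By the computation in the proof of Lemma \ref{lemma:at_most_two_zeros}, $\varrho_b(v)=0$ exactly when $\hat b$ is an eigenvector of $v$, so $\varrho_b$ is a nonzero linear functional for every $b\in\B$; by continuity of $b\mapsto\varrho_b$ and compactness of $\B$ there are absolute constants $0<\delta_0\le C_0$ with $\delta_0\le\lnb\varrho_b\rnb\le C_0$ for all $b$. Hence, using that the smallest eigenvalue of the covariance of $U$ is at least $Ar^2$, for every $b$ one has $\delta_0^2 A r^2\le\var\varrho_b(U)\le C_0^2 r^2$, while $|\varrho_b(U)|\le C_0 r$ almost surely and $|\varrho_b(\mathbb E U)|\le C_0 c r^2$. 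Writing $b_k:=\exp(U_{k+1})\cdots\exp(U_n)\,b$ (so $b_n=b$, $b_0$ is our sample and $b_{k-1}=\exp(U_k)b_k$), the map $(u,y)\mapsto\phi(\exp(u)y)$ is $C^2$ on the compact set $\{\lnb u\rnb\le1\}\times\B$, so there is an absolute $C'$ with $|\phi(b_{k-1})-\phi(b_k)-\varrho_{b_k}(U_k)|\le C' r^2$, and therefore
\[
\phi(b_0)=\phi(b)+\sum_{k=1}^n\varrho_{b_k}(U_k)+E,\qquad |E|\le C' n r^2 .
\]

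Since $b_k$ is measurable with respect to $\sigma(b,U_{k+1},\dots,U_n)$ and $U_k$ is independent of it, with respect to the filtration obtained by revealing $U_n,U_{n-1},\dots,U_1$ in that order the variables $D_k:=\varrho_{b_k}(U_k-\mathbb E U)$ form a martingale difference sequence with $|D_k|\le2C_0 r$ and conditional variances at least $\delta_0^2 A r^2$. The elementary bound $|\mathbb E[e^{i\lambda D_k}\mid\mathcal F_{k-1}]|\le1-\tfrac14\lambda^2\mathbb E[D_k^2\mid\mathcal F_{k-1}]$ for $|\lambda|\le c''/(C_0 r)$, combined with $\prod_k(1-x_k)\le\exp(-\sum_k x_k)$, gives $|\mathbb E[e^{i\lambda\bar S}\mid\sigma(b)]|\le\exp\big(-\tfrac14 n\delta_0^2 A r^2\lambda^2\big)$ for all such $\lambda$, where $\bar S:=\sum_k D_k$; with our choice of $n$ the coefficient is $\ge c_1\delta_0^2 t^2/(4\alpha_0^2)$, while $|\lambda|$ is allowed up to $c''/(C_0 r)\to\infty$. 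A standard smoothing argument then shows that, provided the absolute constant $c_1$ is large enough and $r$ is small enough, $\sup_J\mathbb P[\bar S\in J\mid\sigma(b)]\le\alpha_0$ over all arcs $J\subset\Bim$ of length $3t$. Finally $|\phi(b_0)-\phi(b)-\bar S|\le\Delta_r:=n C_0 c r^2+C' n r^2=O_{\alpha_0,A}\big((1+c)t^2/(\alpha_0^2 A)\big)$ surely, so provided $t$ is small enough in terms of $\alpha_0,c,A$ that $\Delta_r<t$: for any arc $I$ of length $t$, conditionally on $\sigma(b)$ the sample $\phi(b_0)$ lies in $I$ only if $\bar S$ lies in the $\Delta_r$-enlargement of the arc $I-\phi(b)$, which has length at most $3t$; hence $\mathbb P[\phi(b_0)\in I\mid\sigma(b)]\le\alpha_0$. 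Taking expectation over $b$ gives $\nu(\phi^{-1}([a,a+t]+\pi\Z))\le\alpha_0$ for every $a$, so $\mu$ is $\alpha_0,t$-non-degenerate.

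The main obstacle is the anti-concentration step: because each $b_k$ depends on the later increments $U_j$, the summands $\varrho_{b_k}(U_k)$ are only conditionally independent, so Lemma \ref{lemma:small_rvs_to_detail} does not apply directly and one must work with the martingale-difference structure, taking care that the sum of conditional variances, although random, stays in the fixed window $[\,n\delta_0^2 A r^2,\,n C_0^2 r^2\,]$, and one must control the interaction between $\bar S$ and the random shift $\phi(b)+E+\sum_k\varrho_{b_k}(\mathbb E U)$ — handled here by conditioning on $\sigma(b)$ and enlarging the target arc by the surely-bounded amount $\Delta_r$. The remaining ingredients — the uniform two-sided bound on $\lnb\varrho_b\rnb$, the uniform second-order Taylor estimate, the characteristic-function product bound, and the bookkeeping of the $O(t^2)$ errors against the macroscopic scale $t$ — are routine.
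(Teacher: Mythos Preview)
Your overall strategy is essentially that of the paper: realise a sample from $\nu$ as $\gamma_n\gamma_{n-1}\cdots\gamma_1 b$, Taylor expand each $\exp(U_k)$ step using the derivative functionals $\varrho_{b_k}$, and reduce to an anti-concentration estimate for a martingale sum whose conditional variances are uniformly bounded above and below by $\asymp r^2$. The choice $n\asymp r^{-2}$ and the bookkeeping of the $O(nr^2)$ drift and Taylor errors are also the same as in the paper.

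There is, however, a genuine gap at the anti-concentration step. You assert that the conditional bound $|\mathbb{E}[e^{i\lambda D_k}\mid\mathcal F_{k-1}]|\le 1-\tfrac14\lambda^2\mathbb{E}[D_k^2\mid\mathcal F_{k-1}]$ ``combined with $\prod_k(1-x_k)\le\exp(-\sum_k x_k)$'' yields $|\mathbb{E}[e^{i\lambda\bar S}\mid\sigma(b)]|\le\exp(-\tfrac14 n\delta_0^2 A r^2\lambda^2)$. But for martingale difference sequences the characteristic function of the sum is \emph{not} bounded by the product of bounds on the conditional characteristic functions: writing $\psi_j:=\mathbb{E}[e^{i\lambda\tilde D_j}\mid\mathcal F_{j-1}]$, one has $\mathbb{E}[e^{i\lambda M_n}\mid\mathcal F_{n-1}]=e^{i\lambda M_{n-1}}\psi_n$, and when you condition further on $\mathcal F_{n-2}$ the factor $\psi_n$ is still random and cannot be pulled out. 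Concretely, with $\tilde D_1=\pm1$ and, conditionally on $\tilde D_1$, $\tilde D_2=\pm2$ or $\pm1$, at $\lambda=\pi/3$ one computes $|\mathbb{E}[e^{i\lambda M_2}]|=\sqrt3/4>1/4=\rho^2$ with $\rho=\sup|\psi_j|=1/2$; so the product bound fails as a general statement. Your uniform lower bound on conditional variances does not rescue this, and the regime you need ($|\lambda|\asymp 1/t$ with $t$ fixed, $r\to0$) is not one in which you can argue by Taylor expansion in $\lambda$ alone.

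The paper circumvents exactly this by a different device: it introduces the deterministic time-change $f(x)=\int_0^x(\var\varrho_{\phi^{-1}(u)}(U))^{-1/2}\,du$, so that the increments $X_i:=(\var[\varrho_{b_{i-1}}(U)])^{-1/2}\varrho_{b_{i-1}}(U_i)$ have conditional variance \emph{exactly} $1$, and then applies Bolthausen's martingale Berry--Esseen theorem (the paper's Theorem~\ref{theorem:martingale_berry_essen}) to $Y_i:=X_i-\mathbb{E}[X_i\mid X_1,\dots,X_{i-1}]$. The anti-concentration for $S=\sum Y_i$ is then transferred back to $\phi(b_n)$ using the two-sided bound \eqref{eq:f_first_deriv_bound} on $f'$, and the wrap-around modulo $\pi$ is handled by Chebyshev on $S$. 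Replacing your characteristic-function paragraph by this normalisation plus the martingale Berry--Esseen citation would close the gap; the rest of your argument (the uniform two-sided bound on $\lnb\varrho_b\rnb$, the second-order Taylor estimate, the $O(nr^2)$ error control) is correct and in line with the paper.
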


This is enough to prove Corollary \ref{corollary:symmetric_examples}.

\begin{proof}[Proof of Corollary \ref{corollary:symmetric_examples}]
Note that by Proposition \ref{proposition:small_symmetric_is_non_degenerate} there is some $t>0$ such that providing $r$ is sufficiently small $\mu$ is $\frac{1}{4}$, $t$ - non-degenerate. Note that we can make $r$ arbitrarily small be choosing our $C$ to be arbitrarily large.

Note that by Proposition \ref{proposition:random_walk_entropy_lower_bound} 
\begin{equation*}
h_{RW} \geq \Theta(T).
\end{equation*}

Note that by Proposition \ref{proposition:m_mu_bound}
\begin{equation*}
M_{\mu} \leq 4^k M^{8k}.
\end{equation*}

Note that trivially
\begin{equation*}
\chi \leq O(r).
\end{equation*}

The result now follows from Theorem \ref{theorem:main_furstenberg}.
\end{proof}

In order to prove Proposition \ref{proposition:small_symmetric_is_non_degenerate} we first need the following result and a corollary of it.

\begin{theorem} \label{theorem:martingale_berry_essen}
For all $\gamma \in (1, \infty)$ there is some $L >0$ such that the following is true. Suppose that $X_1, X_2, \dots, X_n$ are random variables taking values in $\R$ and suppose that for each integer $i \in [1, n]$
\begin{equation*}
\mathbb{E}[X_i| X_1, X_2, \dots, X_{i-1}] = 0,
\end{equation*}
\begin{equation*}
\mathbb{E}[X_i^2| X_1, X_2, \dots, X_{i-1}] = 1,
\end{equation*}
and
\begin{equation*}
|X_i| \leq \gamma
\end{equation*}
almost surely. Then
\begin{equation*}
\sup_t \left| \Phi(t) - \mathbb{P}\left[ \frac{X_1 + X_2 + \dots + X_n}{\sqrt{n}} < t \right] \right| \leq L n^{-1/2} \log n
\end{equation*}
where
\begin{equation*}
\Phi(t) := \frac{1}{\sqrt{2 \pi}} \int_{-\infty}^{t} \exp ( -x^2 / 2) dx
\end{equation*}
is the c.d.f.\ of the standard normal distribution. 
\end{theorem}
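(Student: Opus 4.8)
The plan is to reduce the statement to a known martingale central limit theorem with an explicit Berry--Esseen-type rate, rather than to reprove such a theorem from scratch. The hypotheses say precisely that, with respect to the filtration $\mathcal{G}_i := \sigma(X_1,\dots,X_i)$, the sequence $(X_i)$ is a bounded martingale difference sequence with conditional variances identically equal to $1$; in particular $S_n := X_1 + \dots + X_n$ is a martingale with $\langle S\rangle_n = n$ almost surely. Results of exactly this shape --- a rate of order $n^{-1/2}\log n$ for the Kolmogorov distance between $S_n/\sqrt n$ and the standard normal under a boundedness assumption $|X_i|\le \gamma$ and exact conditional variances --- are classical; the cleanest reference is the work of Bolthausen on the rate of convergence in the martingale CLT, together with its refinements. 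So the first step is to state the precise external theorem being invoked and check that our hypotheses are a special case of it, with the constant $L$ depending only on the bound $\gamma$ on $|X_i|$ (which controls all higher conditional moments, e.g. $\mathbb{E}[|X_i|^3\mid\mathcal{G}_{i-1}]\le\gamma$, and makes the conditional variance exactly $1$ rather than merely bounded).

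If one instead wants a self-contained argument, the plan would be the standard Stein's method / characteristic function route adapted to the martingale setting. First I would fix $t$ and, via a smoothing inequality (e.g. Esseen's smoothing lemma), reduce bounding $|\Phi(t) - \mathbb{P}[S_n/\sqrt n < t]|$ to bounding $|\mathbb{E}[f(S_n/\sqrt n)] - \mathbb{E}[f(Z)]|$ for a suitable smooth test function $f$ solving the Stein equation $f'(x) - x f(x) = \mathbf{1}_{(-\infty,t]}(x) - \Phi(t)$, paying a controlled error for the non-smoothness. Then I would write the telescoping decomposition
\begin{equation*}
\mathbb{E}[f'(S_n/\sqrt n)] - \mathbb{E}\Big[\tfrac{S_n}{\sqrt n} f(S_n/\sqrt n)\Big] = \sum_{i=1}^n \mathbb{E}\Big[\tfrac{X_i}{\sqrt n}\big(f(S_n/\sqrt n) - f(S_{i-1}/\sqrt n)\big) - \tfrac{X_i^2}{n} f'(S_{i-1}/\sqrt n)\Big]
\end{equation*}
using $\mathbb{E}[X_i\mid\mathcal{G}_{i-1}] = 0$ and $\mathbb{E}[X_i^2\mid\mathcal{G}_{i-1}] = 1$ to insert and cancel terms, and then Taylor-expand $f$ to second order, with the boundedness $|X_i|\le\gamma$ controlling the remainder. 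Summing the error terms gives a bound of order $n^{-1/2}$ from the smooth part; the logarithmic factor $\log n$ enters when one optimizes the smoothing parameter against the Stein-equation derivative bounds for the indicator test function.

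The main obstacle is obtaining the correct rate $n^{-1/2}\log n$ rather than a weaker power such as $n^{-1/4}$: the naive Stein/Lindeberg swap loses a square root, and recovering the optimal order requires the more delicate smoothing and test-function analysis (or, more honestly, requires quoting a sharp result such as Bolthausen's). For the purposes of this paper I would therefore take the first route --- cite the sharp martingale Berry--Esseen bound and verify that our hypotheses are a special case --- so that the proof here reduces to: (i) identify $(X_i,\mathcal{G}_i)$ as a bounded martingale difference array with unit conditional variances, (ii) note $\langle S\rangle_n \equiv n$ so no further normalization is needed, (iii) apply the cited theorem with $L = L(\gamma)$, and (iv) observe that the dependence on $\gamma$ is the only dependence, as claimed. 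The only genuine checking is the conditional-moment bookkeeping in step (i), which is immediate from the hypotheses.
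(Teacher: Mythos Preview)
Your proposal is correct and matches the paper's approach: the paper's proof consists of the single sentence ``This is a special case of \cite[Theorem 2]{BOLTHAUSEN\_1982},'' which is exactly the citation-based route you advocate in your first paragraph. Your additional sketch of a self-contained Stein-method argument goes beyond what the paper does, but for the purposes of this theorem the paper simply quotes Bolthausen, so your step (i)--(iv) plan is precisely what is needed.
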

\begin{proof}
This is a special case of \cite[Theorem 2]{BOLTHAUSEN_1982}.
\end{proof}

\begin{corollary}
For all $\varepsilon, \gamma > 0$ there exists $\delta >0$ and $N \in \N$ such that the following is true. Let $n \geq N$ and let $X_1, \dots, X_n$ be as in Theorem \ref{theorem:martingale_berry_essen} with this value of $\gamma$. Then for all $a \in \R$ we have
\begin{equation*}
\mathbb{P} \left[ \frac{X_1 + X_2 + \dots + X_n}{\sqrt{n}} \in [a, a + \delta] \right] \leq \varepsilon.
\end{equation*}
\end{corollary}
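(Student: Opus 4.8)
The plan is to deduce this corollary directly from Theorem \ref{theorem:martingale_berry_essen} by a standard "small interval has small probability" argument, exploiting the fact that the limiting standard normal distribution has bounded density. First I would fix $\varepsilon, \gamma > 0$ and let $L = L(\gamma)$ be the constant provided by Theorem \ref{theorem:martingale_berry_essen}. The idea is that for the normalized sum $S_n := (X_1 + \dots + X_n)/\sqrt{n}$, the probability of landing in $[a, a+\delta]$ can be written as a difference of c.d.f.\ values, $\mathbb{P}[S_n \in [a, a+\delta]] \leq \mathbb{P}[S_n < a + \delta] - \mathbb{P}[S_n < a] + \mathbb{P}[S_n = a]$; more cleanly, I would bound $\mathbb{P}[S_n \in [a, a+\delta]] \leq (\mathbb{P}[S_n < a+\delta] - \mathbb{P}[S_n < a]) + \mathbb{P}[S_n = a+\delta]$, or simply use a half-open interval and write it as an exact difference of c.d.f.\ values after replacing $[a,a+\delta]$ by $[a, a+2\delta)$, say. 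Either way the point is that $\mathbb{P}[S_n \in [a,a+\delta]]$ differs from $\Phi(a+\delta) - \Phi(a)$ by at most $2 L n^{-1/2} \log n$ by the theorem applied at the two points $a$ and $a+\delta$.

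Next I would bound the normal contribution: since $\Phi' = \frac{1}{\sqrt{2\pi}} e^{-x^2/2} \leq \frac{1}{\sqrt{2\pi}}$ everywhere, we have $\Phi(a+\delta) - \Phi(a) \leq \frac{\delta}{\sqrt{2\pi}}$ for all $a \in \R$. Therefore $\mathbb{P}[S_n \in [a, a+\delta]] \leq \frac{\delta}{\sqrt{2\pi}} + 2 L n^{-1/2} \log n$. Now I would choose the parameters: first pick $\delta > 0$ small enough that $\frac{\delta}{\sqrt{2\pi}} < \varepsilon/2$, and then, since $n^{-1/2} \log n \to 0$ as $n \to \infty$, pick $N \in \N$ large enough that $2 L n^{-1/2} \log n < \varepsilon/2$ for all $n \geq N$. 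With these choices, for all $n \geq N$ and all $a \in \R$ we get $\mathbb{P}[S_n \in [a, a+\delta]] \leq \varepsilon$, which is exactly the claim.

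The only genuinely delicate point — and it is minor — is handling the interval endpoints and the fact that Theorem \ref{theorem:martingale_berry_essen} controls the c.d.f.\ $t \mapsto \mathbb{P}[S_n < t]$ rather than $t \mapsto \mathbb{P}[S_n \leq t]$; since $S_n$ could in principle have atoms (the $X_i$ need not be continuous), one must be slightly careful that $\mathbb{P}[S_n \in [a, a+\delta]] \leq \mathbb{P}[S_n < a + \delta'] - \mathbb{P}[S_n < a]$ for any $\delta' > \delta$, and then let $\delta' \downarrow \delta$ or just absorb the difference by working with a slightly larger $\delta$ from the start. This is a routine $\epsilon$-management issue and poses no real obstacle. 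In short: the whole corollary is an immediate consequence of the Berry--Esseen-type bound of Theorem \ref{theorem:martingale_berry_essen} together with the trivial uniform bound $\Phi' \leq (2\pi)^{-1/2}$ on the Gaussian density.
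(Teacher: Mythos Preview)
Your proof is correct and is exactly the standard argument the paper has in mind; the paper's own proof simply reads ``This follows immediately from Theorem~\ref{theorem:martingale_berry_essen},'' and what you wrote is precisely that immediate deduction spelled out. The endpoint/atom issue you flag is indeed a routine $\varepsilon$-management detail and does not affect the argument.
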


\begin{proof}
This follows immediately from Theorem \ref{theorem:martingale_berry_essen}.
\end{proof}

We will now prove Proposition \ref{proposition:small_symmetric_is_non_degenerate}.

\begin{proof}[Proof of Proposition \ref{proposition:small_symmetric_is_non_degenerate}]
To prove Proposition \ref{proposition:small_symmetric_is_non_degenerate} we will show that there is some $n$ such that for all $b_0 \in \B$ the measure $\mu^{*n} * \delta_{b_0}$ has mass at most $\alpha_0$ on any interval of length at most $t$. To do this, given an $n$-step random walk on $\B$ generated by $\mu$ we will construct an $n$-step random walk on $\R$. Specifically we have the following.

We let $n \in \N$ be some value we will choose later. Let $b_0 \in \B$ and let $\gamma_1, \gamma_2, \dots, \gamma_n$ be i.i.d.\ samples from $\mu$. Let $b_i := \gamma_i \gamma_{i-1} \dots \gamma_1 b_0$. Let $U_i := \log \gamma_i$ and define the real valued random variables $X_1, X_2, \dots, X_n$ by
\begin{equation*}
X_i := \left( \var \left[ \varrho_{b_{i-1}}(U) \right] \right)^{-1/2} \varrho_{b_{i-1}}(U_i)
\end{equation*}
where $\varrho_b \in \As$ is defined to be $D_u(\exp(u)b)|_{u=0}$ as in Definition \ref{definition:e_vec}. We let $Y_1, Y_2, \dots, Y_n$ be defined by
\begin{equation*}
Y_i = X_i - \mathbb{E}[X_i| X_1, X_2, \dots, X_{i-1}]
\end{equation*}
and let $S = Y_1 + Y_2 + \dots + Y_n$. 

Clearly $\mathbb{E}[Y_i|Y_1, Y_2, \dots, Y_{i-1}] = 0$ and $\mathbb{E}[Y_i^2|Y_1, Y_2, \dots, Y_{i-1}] = 1$. This enables us to apply Theorem \ref{theorem:martingale_berry_essen}. We now need to show that understanding $S$ gives us some information about the distribution of $b_n$.

Now let $c_1, c_2, \dots$ denote positive constants which depend only on $\alpha_0$, $c$, and $A$. We define $f: \R \to \R$ by
\begin{equation*}
f: x \mapsto \int_{0}^{x} \left( \var \left[ \varrho_{\phi^{-1}(u)}(U) \right] \right)^{-1/2} du.
\end{equation*}
This definition is chosen such that $f(\phi(b_{i})) - f(\phi(b_{i-1}))$ is approximated $X_i$. We will use this fact along with Theorem \ref{theorem:martingale_berry_essen} to show that there is some $n$ such that $f(b_n)$ can be approximated by a normal distribution.

We have
\begin{align*}
D_u f(\phi( \exp(u) b_{i-1})) |_{u=0} = \left( \var \left[ \varrho_{b_{i-1}}(U) \right] \right)^{-1/2} \varrho_{b_{i-1}}(U_i)
\end{align*}
and so $X_i = D_u f(\phi( \exp(u) b_{i-1})) |_{u=0}(U_i)$. This means that to bound $$|f(\phi(b_{i})) - f(\phi(b_{i-1})) - X_i|$$ it is sufficient to bound $\lnb D^2_u f(\phi( \exp(u) b_{i-1})) \rnb$ for $\lnb u \rnb \leq 1$. 

By compactness the norms of the first and second derivatives of the exponential function are bounded on the unit ball. Note that for all $u \in \R$
\begin{equation}
c_1^{-1} r^2 \leq \var \varrho_{\phi^{-1}(u)} (U) \leq c_1 r^2 \label{eq:var_size_bound}
\end{equation}
for some absolute constant $c_1 > 0$. Therefore
\begin{equation}
c_2^{-1} r^{-1} \leq f' \leq c_2 r^{-1} \label{eq:f_first_deriv_bound}
\end{equation}
for some absolute constant $c_2 >0$. Also note that $\var \varrho_{\phi^{-1}(u)} (U)$ can be written as
\begin{equation*}
\var \varrho_{\phi^{-1}(u)} (U) = v^T \Sigma v
\end{equation*}
where $\Sigma$ is the covariance matrix of $U$ and $v \in \R^3$ depends smoothly on $u$ and depends on nothing else. In particular
\begin{align*}
\left| \frac{d}{du} \var \varrho_{\phi^{-1}(u)} (U) \right| & = \left| v'(u)^T \Sigma v(u) + v(u)^T \Sigma v'(u) \right| \\
& \leq O(r^2).
\end{align*}
Note that
\begin{align*}
f''(x) & = \frac{d}{dx} \left( \var \varrho_{\phi^{-1}(x)} (U) \right)^{-1/2} \\
& =  \left( \var \rho_{\phi^{-1}(x)} (U) \right)^{-3/2} \left( \frac{d}{du} \var \rho_{\phi^{-1}(u)} (U) \right)
\end{align*}
and so in particular
\begin{equation}
|f''(x)| \leq O_A(r^{-1}). \label{eq:f_second_deriv_bound}
\end{equation}

In particular this means that whenever $\lnb u \rnb \leq 1$ we have
\begin{equation*}
\lnb D_u^2 f(\phi( \exp(u) b_{i-1})) \rnb \leq O_A( r^{-1} ).
\end{equation*}

Also note that there is some $M$ with $ M \cong_A r^{-1}$ such that for all $x \in \R$
\begin{equation*}
f(x + \pi) = f(x) + M.
\end{equation*}

Note that by \eqref{eq:f_second_deriv_bound} and Taylor's Theorem
\begin{equation*}
|f(\phi(b_i)) - f(\phi(b_{i-1})) - X_i| \leq O_A(r).
\end{equation*}
Note that by \eqref{eq:var_size_bound} and the conditions of the proposition
\begin{equation*}
|X_i - Y_i| = |\mathbb{E}[X_i]| \leq O_A(r).
\end{equation*}
Therefore
\begin{equation*}
|f(\phi(b_i)) - f(\phi(b_{i-1})) - Y_i| \leq O_A(r).
\end{equation*}
In particular
\begin{equation}
|f(\phi(b_n)) - f(\phi(b_0)) - S| \leq O_A(n r). \label{eq:n_gap}
\end{equation}

We now let $n = \floor{Kr^{-2}}$ where $K$ is some positive constant depending on $\alpha_0$, $A$, and $c$ which we will choose later. Choose $N \in \N$ and $T > 0$ such that by applying Theorem \ref{theorem:martingale_berry_essen} we may ensure that whenever $n \geq N$ and $a \in \R$ we have
\begin{equation*}
\mathbb{P} \left[\frac{S}{\sqrt{n}} \in [a, a+T] \right] \leq \frac{\alpha_0}{2}.
\end{equation*}
Note that
\begin{equation*}
\mathbb{E}[S^2] = n
\end{equation*}
and so
\begin{equation*}
\mathbb{P} \left[ |S| \geq \frac{M}{2} \right] \leq \frac{4 n}{M^2} \leq O_A(K).
\end{equation*}
Therefore whenever $n \geq N$ and $a \in \R$
\begin{equation*}
\mathbb{P} \left[S \in [a, a + T \sqrt{n}] + M \Z \right] \leq \frac{\alpha_0}{2} + O_A(K).
\end{equation*}
Substituting in our value for $n$ gives
\begin{equation*}
\mathbb{P} \left[S \in [a, a +  T \sqrt{K} r^{-1}] + M \Z \right] \leq \frac{\alpha_0}{2} + O_A(K).
\end{equation*}
From \eqref{eq:n_gap} we may deduce that
\begin{equation*}
\mathbb{P} \left[f(\phi(b_n))  \in [a, a + (c_{3} \sqrt{K} - c_{4} K) r^{-1}] + M \Z \right] \leq \frac{\alpha_0}{2} + c_{5} K
\end{equation*}
where $c_3, c_4$ and $c_5$ are positive constants depending only on $A, \alpha_0$ and $c$. By taking $K = \min \left \{ \frac{\alpha_0}{2 c_{3}}, \frac{c_{4}^2}{2 c_{5}^2} \right \}$ we get
\begin{equation*}
\mathbb{P} \left[f(\phi(b_n)) \in [a, a + c_{6} r^{-1}] + M \Z \right] \leq \alpha_0
\end{equation*}
for some positive constant $c_6$ depending only on $A, \alpha_0$ and $c$. By \eqref{eq:f_first_deriv_bound} this means that
\begin{equation*}
\mathbb{P} \left[\phi(b_n) \in [a, a + c_{7}] + \pi \Z \right] \leq \alpha_0
\end{equation*}
for some positive constant $c_6$ depending only on $A, \alpha_0$ and $c$ providing $n \geq N$. Noting that $n \to \infty$ as $r \to 0$ completes the proof.
\end{proof}

\subsection{Examples with rotational symmetry}

One way in which we can ensure that the Furstenberg measure satisfies our $\alpha_0, t$- non-degeneracy condition is to ensure that it has some kind of rotational symmetry. In particular we can prove the following corollary of Theorem \ref{theorem:main_furstenberg}.

\begin{corollary}
For every $a, b \in \N$ with $a \geq 4$ and $K > 0$ there exist some $C, \varepsilon >0$ such that the following is true.

Suppose that $x>C$. Suppose that $A_1, A_2, \dots, A_b \in \Gp$  have operator norms at most $1 + 1/x$ and have entries whose Mahler measures are at most $\exp( \exp(\varepsilon \sqrt{x}))$. Suppose further that the degree of the number field generated by the entries of the $A_i$ is at most $\exp( \varepsilon \sqrt{x} )$.

Let $R \in \Gp$ be a rotation by $\pi / a$ and let $\mu$ be defined by
\begin{equation*}
\mu := \frac{1}{ab} \sum_{i=0}^{a-1} \sum_{j=1}^{b} \delta_{R^{i} A_j R^{-i}}.
\end{equation*}

Suppose further that for every virtually solvable $H < \Gp$ we have $\mu(H) \leq 1 - K$.

Then the Furstenberg measure generated by $\mu$ is absolutely continuous.

\end{corollary}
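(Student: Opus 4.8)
The plan is to deduce the corollary from Theorem~\ref{theorem:main_furstenberg}, applied not to $\mu$ itself but to $\mu' := \frac12 \mu + \frac12 \delta_{\id}$. Since $\delta_{\id}$ acts trivially on $\B$, a probability measure on $\B$ is $\mu'$-stationary if and only if it is $\mu$-stationary, so $\mu$ and $\mu'$ have the same Furstenberg measure $\nu$; the only purpose of passing to $\mu'$ is to create an atom at $\id$, which is needed in order to invoke Proposition~\ref{proposition:random_walk_entropy_lower_bound}. First I would record the qualitative hypotheses. Writing $R := R_{\pi/a}$, we have $R^a = \id$ in $\Gp$ (as $R_{\pi/a}^a = R_\pi = -I$), so conjugation by $R$ permutes the multiset $\{R^i A_j R^{-i} : 0 \le i < a,\ 1 \le j \le b\}$; hence $R\mu R^{-1} = \mu$ and $R\mu' R^{-1} = \mu'$. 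The hypothesis $\mu(H) \le 1-K$ for virtually solvable $H$ gives $\mu'(H) \le 1-K/2$ for all such $H$; since the stabiliser in $\Gp$ of a finite subset of $\B$ of cardinality $1$, $2$, or $\ge 3$ is respectively contained in a Borel subgroup, in the normaliser of a torus, or finite, and every compact subgroup of $\Gp$ is conjugate into the rotation group, all of these groups are virtually solvable. Consequently $\supp\mu'$ lies in no compact subgroup and $\mu'$ is strongly irreducible, so it has a unique Furstenberg measure $\nu$, which is non-atomic by Theorem~\ref{theorem:positive_dimension}, has Lyapunov exponent $\chi := \chi_{\mu'} > 0$, and on whose support the operator norm is at most $1 + 1/x \le 2$.

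Next I would verify $\alpha_0, t$-non-degeneracy, which is the place where the rotational symmetry does its work. Because $R\mu' R^{-1} = \mu'$ and the Furstenberg measure is unique, $\nu$ is invariant under the action of $R$ on $\B$; in the coordinate $\phi$ this action is translation by $\pi/a$ on $\R/\pi\Z$, so $\nu\circ\phi^{-1}$ is invariant under the cyclic group of order $a$ that this translation generates. Partitioning $\R/\pi\Z$ into $a$ half-open arcs of length $\pi/a$, which are permuted transitively, and using that $\nu$ is non-atomic, every closed arc of length $t := \pi/a$ has $\nu$-measure exactly $1/a \le 1/4$. Hence $\mu'$ is $\alpha_0, t$-non-degenerate with $\alpha_0 := 1/4 \in (0,1/3)$.

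It remains to check the quantitative condition~\eqref{eq:main_theorem_condition} for $\mu'$. Subadditivity of $\log\|\cdot\|$ along the random walk gives $\chi \le \log(1+1/x) \le 1/x$. The entries of the elements of $\supp\mu'$ are integer polynomials of $a$-bounded complexity in $\cos(\pi/a)$, $\sin(\pi/a)$ and the entries of the $A_j$; these generators lie in a number field of degree at most $d_a\exp(\varepsilon\sqrt x)$ for some $d_a$ depending only on $a$, and (height being the degree-th root of, hence at most, the Mahler measure) have height at most $\exp(\exp(\varepsilon\sqrt x))$ once $x$ is large, so Lemma~\ref{lemma:height_poly_bound} and Proposition~\ref{proposition:m_mu_bound} give $\log\log M_{\mu'} \le 2\varepsilon\sqrt x + O_a(1)$. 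Also $\mu'(\id) \ge \frac12 > K/3$ and $\mu'(H) \le 1-K/2 < 1-K/3$ for virtually solvable $H$, so Proposition~\ref{proposition:random_walk_entropy_lower_bound} yields $h_{RW} \ge c_0 K$ with $c_0 > 0$ absolute. Fixing $\alpha_0 = 1/4$, $t = \pi/a$, and the operator-norm parameter $R_0 = 2$ in Theorem~\ref{theorem:main_furstenberg} produces a constant $C_1 = C_1(a)$; then $h_{RW}/\chi \ge c_0 K x$, whereas $\left(\max\{1, \log(\log M_{\mu'}/h_{RW})\}\right)^2 \le (2\varepsilon\sqrt x + O_{a,K}(1))^2 \le 9\varepsilon^2 x$ for $x$ large in terms of $a, K, \varepsilon$. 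Choosing $\varepsilon = \varepsilon(a,K)$ so small that $9C_1\varepsilon^2 < c_0 K$, and then $C = C(a,K)$ so large that all the above "$x$ large" estimates hold for $x > C$, condition~\eqref{eq:main_theorem_condition} holds for $\mu'$, and Theorem~\ref{theorem:main_furstenberg} shows that $\nu$ is absolutely continuous.

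I expect the non-degeneracy step to be the only genuinely substantive part: transferring the conjugation-invariance of $\mu$ to translation-invariance of $\nu$ and reading off the uniform arc bound. The rest is bookkeeping --- verifying~\eqref{eq:main_theorem_condition} amounts to noting that the left-hand side grows linearly in $x$ while the right-hand side grows only like $\varepsilon^2 x$, so choosing $\varepsilon$ small (depending on $C_1(a)$ and the absolute constant $c_0$) and $x$ large suffices --- and the replacement of $\mu$ by $\frac12\mu + \frac12\delta_{\id}$ is merely a device to meet the atom hypothesis of Proposition~\ref{proposition:random_walk_entropy_lower_bound} while keeping the Furstenberg measure unchanged.
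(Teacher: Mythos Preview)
Your proposal is correct and follows the same approach as the paper's proof: pass to $\mu'=\tfrac12\mu+\tfrac12\delta_{\id}$, read off $\alpha_0,t$-non-degeneracy from the $R$-invariance of $\nu$, bound $\chi\le 1/x$, $h_{RW}\gtrsim K$ via Proposition~\ref{proposition:random_walk_entropy_lower_bound}, $\log\log M_{\mu'}\lesssim \varepsilon\sqrt{x}$ via Proposition~\ref{proposition:m_mu_bound}, and then verify~\eqref{eq:main_theorem_condition}. Your write-up is in fact more careful than the paper's --- you explicitly justify strong irreducibility, non-compactness of the support, and why every arc of length $\pi/a$ has mass exactly $1/a$ --- but the underlying argument is identical.
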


\begin{proof}
We wish to apply Theorem \ref{theorem:main_furstenberg} to $\frac{1}{2} \mu + \frac{1}{2} \delta_{\id}$.

Note that this measure is clearly $\frac{1}{a}$, $\frac{\pi}{a}$- non-degenerate. Also note that we may assume that $C\geq 1$ and so take $R=2$ in Theorem \ref{theorem:main_furstenberg}. Clearly $\chi < \frac{1}{x}$. 

Note that by Proposition \ref{proposition:random_walk_entropy_lower_bound} we have $h_{RW} \geq \Theta(K)$. 

Note that by Proposition \ref{proposition:m_mu_bound} we know that $M_{\mu} \leq \exp (A \exp(\varepsilon x))$ where $A$ is some constant depending only on $a$ and $b$. The result now follows by Theorem \ref{theorem:main_furstenberg}.
\end{proof}

\subsection{Examples supported on large elements}
 
The purpose of this subsection is to prove Corollary \ref{corollary:large_example}. First we will need the following lemma.

\begin{lemma}[The Ping-Pong Lemma] \label{lemma:ping_pong}
Suppose that $G$ is a group which acts on a set $X$. Let $n \in \Z$ and suppose that we can find $g_1, g_2 , \dots, g_n \in G$ and pairwise disjoint non-empty sets $$A_1^+, A_2^+, \dots, A_n^+, A_1^-, A_2^- \dots, A_n^- \subset X$$ such that for all integers $i \in [1, n]$ and all $x \in X \backslash A_{i}^{-}$ we have $g_i x \in A_{i}^{+}$. Then $g_1, g_2, \dots, g_n$ freely generate a free  semi-group.
\end{lemma}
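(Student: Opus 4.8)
The statement to prove is the Ping-Pong Lemma: given $g_1,\dots,g_n$ and pairwise disjoint nonempty sets $A_i^{\pm}$ with $g_i x \in A_i^+$ for all $x \in X \setminus A_i^-$, the $g_i$ freely generate a free semigroup.

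\textbf{Plan of proof.} The plan is to show that any nonempty reduced word $w = g_{i_1} g_{i_2} \cdots g_{i_k}$ (with each $i_j \in [1,n]$, $k \geq 1$, and \emph{no} cancellation needed since we are in a semigroup — "reduced" just means it is a genuine word in the generators) acts nontrivially on $X$, and moreover that two distinct such words act differently. The standard argument: pick any index $m$ with $m \neq i_1$ (this requires $n \geq 2$; if $n = 1$ the statement that $g_1$ generates a free semigroup on one generator just means $g_1$ has infinite order, which I will handle separately or note is covered by the same idea applied to powers). Choose a base point $x_0 \in A_m^+$. Since $A_m^+$ is disjoint from $A_{i_k}^-$, we have $x_0 \notin A_{i_k}^-$, so $g_{i_k} x_0 \in A_{i_k}^+$. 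Now inductively, $g_{i_{j}} x \in A_{i_j}^+$ whenever $x \notin A_{i_j}^-$; since $A_{i_{j+1}}^+$ is disjoint from $A_{i_j}^-$, the point produced at each stage stays outside the relevant "$-$" set, so we may continue. Hence $w x_0 \in A_{i_1}^+$.

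\textbf{Key steps in order.} First I would set up the induction: define $y_k = x_0$, $y_{j-1} = g_{i_j} y_j$ for $j = k, k-1, \dots, 1$, and prove by downward induction on $j$ that $y_{j-1} \in A_{i_j}^+$. The base case uses $x_0 = y_k \in A_m^+ \subset X \setminus A_{i_k}^-$ (disjointness, and $m \neq i_k$ is needed — so actually I should choose $m$ distinct from \emph{both} $i_1$ and $i_k$ when possible, or more cleanly choose $x_0$ to lie in some $A_m^+$ with $m \notin \{i_1, i_k\}$; if $n = 2$ and $i_1 = i_k$ this still works, if $i_1 \neq i_k$ a single well-chosen $m$ suffices, and when $n=2$, $i_1 \neq i_k$ we can take $x_0 \in A_{i_1}^+$ directly provided $i_1 \ne i_k$; the cleanest fix is: take $x_0 \in A_{i_k}^+$ is wrong, instead note $A_{i_1}^+$ and $A_{i_k}^-$ are disjoint when $i_1 \ne i_k$, and when $i_1 = i_k$ pick any third region — so $n \geq 2$ with a little care, $n \geq 3$ trivially). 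The inductive step: given $y_j \in A_{i_{j+1}}^+$ and $A_{i_{j+1}}^+ \cap A_{i_j}^- = \emptyset$, conclude $y_j \notin A_{i_j}^-$, hence $y_{j-1} = g_{i_j} y_j \in A_{i_j}^+$. Conclude $w x_0 = y_0 \in A_{i_1}^+$. Second, for freeness: suppose $w = g_{i_1}\cdots g_{i_k}$ and $w' = g_{i_1'}\cdots g_{i_\ell'}$ represent the same semigroup element. If $i_1 \neq i_1'$, apply the above to a base point $x_0$ lying outside $A_{i_1}^-$ and $A_{i_1'}^-$ (possible by disjointness and nonemptiness, choosing $x_0$ in a region $A_m^+$ with $m$ avoiding the at most two forbidden indices — here is where I genuinely want $n$ large enough, but for $n = 2$ one checks the finitely many cases by hand): then $w x_0 \in A_{i_1}^+$ and $w' x_0 \in A_{i_1'}^+$, which are disjoint, contradiction. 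So $i_1 = i_1'$; then I would like to cancel $g_{i_1}$ on the left — in a semigroup this is not automatic, so instead I argue directly that $g_{i_1}$ is injective on a suitable set: actually the map $x \mapsto g_{i_1}x$ restricted to the complement of $A_{i_1}^-$ is injective because its image lies in $A_{i_1}^+$ and one can recover... hmm, cleaner: iterate the base-point argument to track the \emph{entire trajectory}, which is determined by the word, so equal words give equal trajectories and hence are equal letter-by-letter. Formally, induct on $k + \ell$: having matched $i_1 = i_1'$, the points $g_{i_2}\cdots g_{i_k} x_0$ and $g_{i_2'}\cdots g_{i_\ell'} x_0$ both lie outside $A_{i_1}^-$ and $g_{i_1}$ maps them to the same point $w x_0 = w' x_0$; since $g_{i_1}$ is injective on $X \setminus A_{i_1}^-$ — which follows because if $g_{i_1} x = g_{i_1} x'$ with $x, x' \notin A_{i_1}^-$ then... this injectivity is \emph{not} given by the hypotheses.

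\textbf{Main obstacle.} The genuine subtlety — and the step I expect to be the crux — is exactly this: the hypotheses only control \emph{where points go}, not injectivity of the $g_i$, so the naive "cancel from the left and recurse" does not work in a bare semigroup. The resolution is to avoid left cancellation entirely: one shows that for two distinct reduced words $w, w'$ of possibly different lengths, if $w$ is not a prefix-obstructed variant of $w'$ then $w x_0 \neq w' x_0$, and the only remaining case is when one word's initial segment agrees with the other's — here I compare at the first index of disagreement $j$, writing $w = u g_{i_j} v$, $w' = u g_{i_j'} v'$ with $u$ a common prefix and $i_j \neq i_j'$; then $v x_0 \in A_{(\text{last letter of }v)}^+$ lies outside both $A_{i_j}^-$ and $A_{i_j'}^-$ (after also ensuring $x_0$ itself was chosen generically), so $g_{i_j} v x_0 \in A_{i_j}^+$ and $g_{i_j'} v' x_0 \in A_{i_j'}^+$ are in disjoint sets, and then applying the common prefix $u$ preserves disjointness by the trajectory argument; hence $w x_0 \neq w' x_0$. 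The case where $w'$ is an honest prefix of $w$ (say $w = w' v$ with $v$ nonempty) needs the base point argument applied to $v x_0 \neq x_0$, i.e. $v$ moves $x_0$ out of wherever $x_0$ started — achievable by choosing $x_0$ in a region not equal to $A_{(\text{last letter of }v)}^+$, which again wants $n \geq 2$ and a slightly careful choice. Once all these base-point choices are pinned down (the bookkeeping is the only real work), the conclusion that the $g_i$ freely generate a free semigroup follows. I would present the $n \geq 2$ choices cleanly by noting that since the $2n$ sets $A_i^\pm$ are pairwise disjoint and nonempty, there is always a region available disjoint from any prescribed list of at most, say, three of them when $n \geq 3$, and dispatch $n \leq 2$ by direct inspection.

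Since this lemma is entirely standard, an alternative is simply to cite it; but the self-contained argument above, organized as (1) the trajectory/induction lemma "$w x_0 \in A_{i_1}^+$", and (2) the comparison of trajectories at the first point of disagreement, gives a complete proof with the free-semigroup (rather than free-group) conclusion matching the hypothesis that each $g_i$ maps almost all of $X$ \emph{into} $A_i^+$ (a one-sided condition, consistent with only getting a free semigroup).
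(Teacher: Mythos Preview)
The paper does not prove this lemma at all: it simply says ``This lemma is well known and we will not prove it.'' So there is no proof to compare against, and any correct argument you supply is an improvement.

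Your trajectory argument is the standard one and is essentially correct for $n \geq 2$, but you have misidentified the ``main obstacle.'' You write that injectivity of $g_{i_1}$ on $X \setminus A_{i_1}^-$ ``is \emph{not} given by the hypotheses,'' and then work hard to avoid left cancellation. But the hypothesis is that $G$ is a \emph{group} acting on $X$: every $g_i$ is therefore a bijection of $X$, and left cancellation is automatic. With this observation the proof collapses to two lines. First, for any $x_0$ lying in some $A_m^+$ (hence outside every $A_i^-$), the downward induction you describe gives $g_{i_1}\cdots g_{i_k}\, x_0 \in A_{i_1}^+$. Second, if two words $w=g_{i_1}\cdots g_{i_k}$ and $w'=g_{j_1}\cdots g_{j_\ell}$ coincide in $G$, then $w x_0 = w' x_0$ forces $A_{i_1}^+ \cap A_{j_1}^+ \neq \emptyset$, hence $i_1=j_1$; now multiply both sides on the left by $g_{i_1}^{-1}$ and recurse on the shorter words. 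The prefix case (one word empty, the other not) is handled by choosing $m$ different from the first letter of the nonempty word, which is always possible once $n \geq 2$. All of your careful bookkeeping about ``choosing $x_0$ generically to avoid three regions'' and ``tracking trajectories at the first point of disagreement'' is unnecessary.

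One further remark: the lemma as literally stated is false for $n=1$. Take $X=\{a,b,c\}$, $g_1$ the transposition $(a\,b)$, $A_1^+=\{a,b\}$, $A_1^-=\{c\}$; the hypothesis holds but $g_1^2=\id$. So your promise to ``handle $n=1$ separately'' cannot be kept. This is a defect of the stated hypotheses rather than of your method, and is harmless for the paper since the lemma is only ever applied with many generators.
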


This lemma is well known and we will not prove it. From this we may deduce the following.

\begin{lemma} \label{lemma:apply_ping_pong}
For every $\varepsilon > 0$ there is some $C \leq O(\varepsilon^{-1})$ such that the following is true. Let $n \in \N$. Suppose that $\theta_1, \theta_2, \dots, \theta_{n} \in \R / \pi \Z$ and that for every $i \neq j$ we have $|\theta_i - \theta_j| \geq \varepsilon$ and $|\theta_i - \theta_j + \pi / 2| \geq \varepsilon$. Let $\lambda_1, \lambda_2, \dots \lambda_n$ be real numbers which are at least $C$. Then the set
\begin{equation*}
\left \{ R_{\theta_{i}} \begin{pmatrix} \lambda_i & 0\\ 0 & \lambda_i^{-1} \end{pmatrix} R_{-\theta_{i}} : i \in [1, n] \cap \Z \right \} \subset \Gp
\end{equation*}
freely generates a free semi-group.
\end{lemma}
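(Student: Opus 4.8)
The plan is to apply the Ping-Pong Lemma (Lemma \ref{lemma:ping_pong}) with $\Gp$ acting on $\B$ in the usual way. Write $g_i := R_{\theta_i} A_{\lambda_i} R_{-\theta_i}$. Since the $\lambda_i$ exceed $1$, this is exactly the Cartan decomposition of $g_i$ (Definition \ref{definition:singular_value_decomp}), so $\lnb g_i \rnb = \lambda_i$, $b^{+}(g_i) = \phi^{-1}(\theta_i)$ and $b^{-}(g_i) = \phi^{-1}(\theta_i + \tfrac{\pi}{2})$. I may assume $\varepsilon < \tfrac{\pi}{4}$: for larger $\varepsilon$ the hypotheses only get stronger, so one can use the constant produced for $\varepsilon = \tfrac{\pi}{4}$, which is still $O(\varepsilon^{-1})$.

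First I would fix the ping-pong sets. For each $i$ let
\[
A_i^{+} := \{ b \in \B : d(b, b^{+}(g_i)) < \varepsilon/4 \}, \qquad A_i^{-} := \{ b \in \B : d(b, b^{-}(g_i)) < \varepsilon/8 \},
\]
where $d$ is the metric on $\B$ induced by $\phi$. These $2n$ sets are clearly non-empty, and I would check that they are pairwise disjoint directly from the hypotheses: two sets $A_i^{+}, A_j^{+}$ with $i \neq j$ have centres at distance $\geq \varepsilon$ and radii summing to $\tfrac{\varepsilon}{2} < \varepsilon$; similarly for $A_i^{-}, A_j^{-}$; and a set $A_i^{+}$ and a set $A_j^{-}$ have centres at distance $|\theta_i - \theta_j + \tfrac{\pi}{2}|$ in $\R/\pi\Z$, which is $\geq \varepsilon$ when $i \neq j$ by hypothesis and equals $\tfrac{\pi}{2} \geq \varepsilon$ when $i = j$, while their radii sum to $\tfrac{3\varepsilon}{8} < \varepsilon$.

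The key input is Corollary \ref{corollary:new_shape_b_simple_singular_value} applied with $\tfrac{\varepsilon}{8}$ in the role of $\varepsilon$: it yields a constant $C$ such that whenever $\lnb g \rnb \geq C$ and $d(b^{-}(g), b) \geq \tfrac{\varepsilon}{8}$ one has $d(b^{+}(g), g b) \leq \tfrac{\varepsilon}{8}$. Moreover, by the first inequality of Lemma \ref{lemma:new_shape_b_simple_singular_value}, $d(b^{+}(g), gb) \lesssim \lnb g \rnb^{-2} d(b^{-}(g), b)^{-1}$, so this $C$ may be taken to be $O(\varepsilon^{-1})$ (enlarging it to be at least $2$ if necessary, so that $\lambda_i > 1$ is automatic). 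Now suppose $\lambda_i \geq C$ for all $i$, and take any $x \in \B \backslash A_i^{-}$; then $d(x, b^{-}(g_i)) \geq \tfrac{\varepsilon}{8}$, so the corollary gives $d(b^{+}(g_i), g_i x) \leq \tfrac{\varepsilon}{8} < \tfrac{\varepsilon}{4}$, i.e.\ $g_i x \in A_i^{+}$. Thus the hypotheses of Lemma \ref{lemma:ping_pong} are met, and $g_1, \dots, g_n$ freely generate a free semi-group.

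I do not expect a serious obstacle; the only points requiring care are the bookkeeping of radii (taking each $A_i^{-}$ strictly smaller than $A_i^{+}$, so the non-strict bound from Corollary \ref{corollary:new_shape_b_simple_singular_value} still lands inside the open set $A_i^{+}$), the verification of all disjointness cases (including $A_i^{+}$ versus $A_i^{-}$, which are at distance $\tfrac{\pi}{2}$), and tracking the dependence $C = O(\varepsilon^{-1})$ back through Lemma \ref{lemma:new_shape_b_simple_singular_value}.
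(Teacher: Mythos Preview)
Your proposal is correct and follows essentially the same approach as the paper: the paper also applies Lemma~\ref{lemma:ping_pong} with $G=\Gp$ acting on $X=\B$, takes $A_i^{+}$ to be an arc centred at $\phi^{-1}(\theta_i)$ and $A_i^{-}$ the perpendicular arc, and invokes Lemma~\ref{lemma:new_shape_b_simple_singular_value} for the contraction and the $C=O(\varepsilon^{-1})$ bound. Your choice of radii $\varepsilon/4$ and $\varepsilon/8$ (versus the paper's $\varepsilon/2$) is an inessential variation that just makes the strict-versus-non-strict bookkeeping cleaner.
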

\begin{proof}
This follows immediately by applying Lemma \ref{lemma:ping_pong} with $G = \Gp$, $X = \B$, $A_i^{+} = \phi^{-1}((\theta_i - \varepsilon / 2, \theta_i + \varepsilon / 2))$, and $A_i^{-} = \phi^{-1}((\theta_i - \varepsilon / 2, \theta_i + \varepsilon / 2))^{\perp}$ along with Lemma \ref{lemma:new_shape_b_simple_singular_value}.
\end{proof}

\begin{lemma} \label{lemma:choose_theta_n}
For all $n \in \Z$ there exists some $\theta_n \in \left( \frac{1}{2n}, \frac{2}{n} \right)$ such that $\sin \theta_n$ and $\cos \theta_n$ are rational and have height at most $4 n^2 + 1$.
\end{lemma}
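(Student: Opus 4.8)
The plan is to use the classical rational (Weierstrass) parametrization of the unit circle: for a real parameter $t>0$ the point $\left( \frac{1-t^2}{1+t^2}, \frac{2t}{1+t^2}\right)$ lies on the unit circle and equals $(\cos\theta, \sin\theta)$ with $\theta = 2\arctan t$. The idea is simply to choose $t = 1/n$, which places the corresponding angle near $2/n$ — in particular inside the required window — while keeping the coordinates rational with small height. Concretely I would take
\begin{equation*}
\theta_n := 2\arctan\tfrac{1}{n}, \qquad \cos\theta_n = \frac{n^2-1}{n^2+1}, \qquad \sin\theta_n = \frac{2n}{n^2+1},
\end{equation*}
which are manifestly rational (here $n$ is understood to be a positive integer).

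Next I would verify that $\theta_n \in \left(\frac{1}{2n}, \frac{2}{n}\right)$. For this I use the elementary bounds $x - \frac{x^3}{3} < \arctan x < x$, valid for all $x>0$. The right-hand inequality gives $\theta_n = 2\arctan\frac1n < \frac2n$ at once. The left-hand inequality gives $\theta_n > \frac2n - \frac{2}{3n^3}$, and one checks that $\frac2n - \frac{2}{3n^3} > \frac{1}{2n}$ for every $n \geq 1$ (this reduces to $\frac32 > \frac{2}{3n^2}$, which is clear). Hence $\theta_n$ lies strictly inside the prescribed interval.

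Finally, for the height bound I would observe that, writing $\cos\theta_n$ and $\sin\theta_n$ as the fractions displayed above, all numerators and denominators have absolute value at most $n^2+1$; since the height of a rational number is the maximum of the absolute values of the numerator and denominator in lowest terms, and passing to lowest terms only decreases these, both $\mathcal{H}(\cos\theta_n)$ and $\mathcal{H}(\sin\theta_n)$ are at most $n^2+1 \leq 4n^2+1$, as required. There is essentially no obstacle in this argument: the only point needing (routine) care is the pair of elementary inequalities pinning $\arctan\frac1n$ between $\frac1n - \frac{1}{3n^3}$ and $\frac1n$, and the generous slack factor $4$ in the claimed height bound means no fine-tuning of the parameter $t$ (for instance, replacing $1/n$ by a nearby rational) is necessary.
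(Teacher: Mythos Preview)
Your proof is correct and uses essentially the same approach as the paper: the rational parametrization of the circle via $t\mapsto\bigl(\tfrac{1-t^2}{1+t^2},\tfrac{2t}{1+t^2}\bigr)$. The only difference is the parameter value---the paper takes $t=\tfrac{1}{2n}$, yielding $\sin\theta_n=\tfrac{4n}{4n^2+1}$ and $\cos\theta_n=\tfrac{4n^2-1}{4n^2+1}$ (so $\theta_n\approx\tfrac1n$), whereas you take $t=\tfrac1n$ (so $\theta_n\approx\tfrac2n$); your choice in fact gives the sharper height bound $n^2+1$, and you verify the interval membership more carefully than the paper does.
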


\begin{proof}
Choose $\theta_n$ such that
\begin{equation*}
\sin \theta_n = \frac{4n}{4n^2+1}
\end{equation*}
and
\begin{equation*}
\cos \theta_n = \frac{4n^2 -1}{4n^2+1}.
\end{equation*}
\end{proof}

We are now ready to prove Corollary \ref{corollary:large_example}.
\begin{proof}[Proof of Corollary \ref{corollary:large_example}]
Given some $r > 0$ and some $n \in \Z$ define $\beta_0, \dots, \beta_{n-1} > 0$ by letting $\beta_k = \theta_{8^{n+1-k}}$ where $\theta_{\cdot}$ is as in Lemma \ref{lemma:choose_theta_n}. We then define $\alpha_0, \alpha_1, \dots, \alpha_{2^n-1} \geq 0$ by letting
\begin{equation*}
\alpha_k = \sum_{i=0}^{n-1} \xi_i^{(k)} \beta_i
\end{equation*}
where the $\xi_i^{(k)}$ are the binary expansion of $k$. In other words $k = \sum_{i=0}^{n-1} \xi_i^{(k)} 2^i$ with $\xi_i^{(k)} \in \{0, 1\}$. Clearly
\begin{equation*}
0 = \alpha_0 < \alpha_1 < \dots < \alpha_{2^n-1}.
\end{equation*}
Furthermore $\alpha_{i+1} > \alpha_i + \varepsilon$ where $\varepsilon = \frac{1}{2 \cdot 8^{n+1}}$. We also have that 
\begin{align*}
\alpha_{2^n-1} &< \frac{2}{8^2} + \frac{2}{8^3} + \frac{2}{8^4} + \dots \\
& = \frac{1}{32} \cdot \frac{8}{7}\\
& < \frac{\pi}{10} - \varepsilon.
\end{align*}

We now let $C$ be the $C$ from Lemma \ref{lemma:apply_ping_pong} with this value of $\varepsilon$ and we choose some prime number $p$ such that $p \geq C^2$, $p \leq O(8^{2n})$, and $X^2 - p$ is irreducible in the field $\mathbb{Q}[\sin \frac{\pi}{5}, \cos \frac{\pi}{5}]$. 

Now for $i = 0, 1, \dots, 2^n-1$ and $j = 0, 1, \dots, 4$ we let $g_{i, j}$ be defined by
\begin{equation*}
g_{i, j} := R_{\frac{j \pi}{5} + \alpha_i} \begin{pmatrix}
\ceil{r + \sqrt{p}} + \sqrt{p} & 0 \\ 0 & (\ceil{r + \sqrt{p}} + \sqrt{p})^{-1}
\end{pmatrix} R_{-\frac{j \pi}{5} - \alpha_i}.
\end{equation*}
By Lemma \ref{lemma:apply_ping_pong} we know that the $g_{i,j}$ freely generate a free semi-group. Now for $i = 0, 1, \dots, 2^n-1$ and $j = 0, 1, \dots, 4$ we let $\hat{g}_{i, j}$ be defined by
\begin{equation*}
\hat{g}_{i, j} := R_{\frac{j \pi}{5} + \alpha_i} \begin{pmatrix}
\ceil{r + \sqrt{p}} - \sqrt{p} & 0 \\ 0 & (\ceil{r + \sqrt{p}} - \sqrt{p})^{-1}
\end{pmatrix} R_{-\frac{j \pi}{5} - \alpha_i}.
\end{equation*}
Clearly the $\hat{g}_{i, j}$ are Galois conjugates of the $g_{i,j}$ and so also freely generate a free semi-group. We now let $\mu$ be defined by
\begin{equation*}
\mu = \sum_{i=0}^{2^n-1} \sum_{j=0}^{4} \frac{1}{5 \cdot 2^n} \delta_{\hat{g}_{i, j}}.
\end{equation*}
We wish to use Theorem \ref{theorem:main_furstenberg} to show that the Furstenberg measure generated by $\mu$ is absolutely continuous providing $n$ is sufficiently large in terms of $r$.

Let $\nu$ be the Furstenberg measure generated by $\mu$. By the construction of $\mu$ we know that $\nu$ is invariant under rotation by $\pi / 5$. In particular this means that it is $\frac{1}{5}$, $\frac{\pi}{5}$ - non-degenerate. We also know that for each $i, j$ we have $\lnb \hat{g}_{i, j} \rnb = \ceil{r + \sqrt{p}} - \sqrt{p} \leq r + 1$. This means that $\chi \leq r$ and that we may take $R = r+1$. Since the $\hat{g}_{i, j}$ freely generate a free semi-group we know that $h_{RW} = \log \left( 5 \cdot 2^n \right) \geq \Theta(n)$. Finally we need to bound $M_{\mu}$.

To bound the $M_{\mu}$ we will apply Proposition \ref{proposition:m_mu_bound}. We know by Lemma \ref{lemma:choose_theta_n} that the heights of the entries in the $\beta_i$ are at most $O(8^{2n})$. We also know that the height of $\ceil{r + \sqrt{p}} - \sqrt{p}$ is at most $O_r(\sqrt{p})$ which is at most $O_r(8^n)$. By Lemma \ref{lemma:height_poly_bound} this means that the height of entries in the $\hat{g}_{i, j}$ is at most $O_r(2^{2n} \cdot 8^{4n^2 + n} )$ which is at most $O_r(8^{5n^2})$. It is easy to show that $\left[ \mathbb{Q}[\sin \frac{\pi}{5}, \cos \frac{\pi}{5}] : \mathbb{Q} \right] = 4$. This means that by Proposition \ref{proposition:m_mu_bound} we have
\begin{equation*}
M_{\mu} \leq O_r \left( 8^{8 \cdot 4 \cdot 5 n^2} \right) \leq \exp( O_r(n^2) ).
\end{equation*}

Therefore
\begin{align*}
\frac{h_{RW}}{\chi} \left( \max \left \{1, \log \log \frac{M_{\mu}}{h_{RW}} \right \} \right)^{-2} &\gtrsim \frac{n}{r+1} \left( \log \log \exp(O_r(n^2)) \right)^{-2} \\
& \geq \frac{n}{O_r((\log n)^2)}\\
& \to \infty.
\end{align*}
This means that by Theorem \ref{theorem:main_furstenberg} the Furstenberg measure is absolutely continuous providing $n$ is sufficiently large in terms of $r$.
\end{proof}

\subsection{Examples with two generators}

In this subsection we will prove Corollary \ref{corollary:two_gens_ac}.

\begin{proof}[Proof of Corollary \ref{corollary:two_gens_ac}]
First we will show that $\mu$ is Zariski-dense. The compact subgroups of $\Gp$ are exactly those subgroups which are conjugate to the group of rotations. Since the rotations form a subgroup $A$ is only conjugate to a rotation under conjugation by another rotation and $B$ is not conjugate to a rotation under conjugation by a rotation. Therefore support of $\mu$ is not contained in any compact subgroup of $\Gp$. Since $A$ is an irrational rotation the orbit of any $b \in \B$ under $A$ is infinite. Therefore $\mu$ is strongly irreducible.

Next we will show that there is some $\alpha_0 \in \left(0, \frac{1}{3} \right)$ and $t > 0$ such that $\mu$ is $\alpha_0$, $t$ - non-degenerate for all sufficiently large $n$. 

First note that $A$ is a rotation by $\theta_n$ where $\theta_n = \frac{1}{n} + O(\frac{1}{n^2})$. Also note that for all $x \in \B$ we have $d(x, Bx) \leq O(n^{-3})$.

We now let $\tilde{A} : \R \to \R, x \mapsto x + \theta_n$ and choose $\tilde{B} : \R \to \R$ such that $\tilde{B}(x) \in \phi(B\phi^{-1}(x))$ and for all $x \in \R$ we have $|x - \tilde{B}(x)| \leq O(n^{-3})$. We then let $\tilde{\mu} = \frac{1}{2} \delta_{\tilde{A}} + \frac{1}{2} \delta_{\tilde{B}}$.

By Theorem \ref{theorem:srv_normal_wass} (a simple bound on the Wasserstein distance between a sum of independent random variables and a normal distribution) we know that for any $x \in \R$ we have
\begin{equation*}
\mathcal{W}_1 \left( \tilde{\mu}^{*n^2} * \delta_x, N(x + \frac{1}{2} n^2 \theta_n, n^2 \theta_n^2) \right) < O(n^{-1}).
\end{equation*}
Noting that $n^2 \theta_n^2 \to 1$ we can see that there is some $\alpha_0 \in \left(0, \frac{1}{3} \right)$ and $t > 0$ such that $\mu$ is $\alpha_0$, $t$ - non-degenerate for all sufficiently large $n$.

We will apply Theorem \ref{theorem:main_furstenberg} to $\frac{1}{2} \mu + \frac{1}{2} \delta_{\id}$. Note that this generates the same Furstenberg measure as $\mu$ and so in particular it is $\alpha_0$, $t$ - non-degenerate.

Note that by Proposition \ref{proposition:random_walk_entropy_lower_bound} there is some $\varepsilon > 0$ such that for all $n$ we have $h_{RW} \geq \varepsilon$.

Note that by Proposition \ref{proposition:m_mu_bound} we have $M_{\tilde{\mu}} \leq 4(n^3 + 1)^8$. Clearly we may take $R=2$. Also note that $\chi \leq n^{-3}$.

This means that to prove the corollary it is sufficient to prove that
\begin{equation*}
\varepsilon n^{3} \left( \log \log \frac{4 (n^3 + 1)^8}{\varepsilon} \right)^{-2}
\end{equation*}
tends to $\infty$ as $n \to \infty$. This is trivially true.
\end{proof}

\section{Appendix} \label{section:appendix}

\subsection{Proof of Theorem \ref{theorem:renewal_theorem}}

We extend the result of Kesten \cite[Theorem 1]{KESTEN_1974} to show that the convergence is uniform in the vector $v$.

\begin{theorem} \label{theorem:renewal_non_uniform}
Suppose that $\mu$ is a compactly supported Zariski-dense probability measure. Then there exists some probability measure measure $\hat{\nu}$ on $\B$ such that the following is true. Let $\gamma_1, \gamma_2, \dots$ be i.i.d.\ samples from $\mu$. Then given any $\varepsilon > 0$ and $v \in \B$ there exists some $T > 0$ such that given any $P > T$ we can find some random variable $x$ with law $\hat{\nu}$ such that
\begin{equation*}
\mathbb{P}[d((\gamma_1 \gamma_2 \dots \gamma_{\tau_{P, v}})^Tv, x) > \varepsilon] < \varepsilon.
\end{equation*}
\end{theorem}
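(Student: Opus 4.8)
The plan is to deduce Theorem~\ref{theorem:renewal_non_uniform} from Kesten's renewal theorem \cite[Theorem 1]{KESTEN_1974}, whose hypotheses in the present setting are verified by Guivarc'h and Le Page \cite[Section 4]{GUIVARCH_LEPAGE_2016}, after recasting $(\gamma_1 \gamma_2 \dots \gamma_{\tau_{P, v}})^T v$ inside the Markov--additive framework to which those results apply. First I would set up the relevant Markov chain. Put $h_i := \gamma_i^T$; these are i.i.d.\ samples from the push-forward $\mu^T$ of $\mu$ under transpose, which is again strongly irreducible, has support not contained in any compact subgroup of $\Gp$, and has the same Lyapunov exponent $\chi>0$ (since $\lnb g^T \rnb = \lnb g \rnb$). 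Fixing a representative $\hat v$ of $v$, set $W_n := (\gamma_1 \gamma_2 \dots \gamma_n)^T \hat v = h_n h_{n-1} \cdots h_1 \hat v$ and $V_n := [W_n] \in \B$. Then $V_n = h_n \cdot V_{n-1}$, so $(V_n)$ is a Markov chain on the compact space $\B$ driven by $\mu^T$, and $\log \lnb W_n \rnb - \log \lnb W_{n-1} \rnb = \sigma(h_n, V_{n-1})$, where $\sigma(g, x) := \log(\lnb g \hat x \rnb / \lnb \hat x \rnb)$ is the norm cocycle; thus $(V_n, \log \lnb W_n \rnb)$ is a Markov--additive process whose additive part has positive drift equal to the common Lyapunov exponent $\chi$ of $\mu$ and $\mu^T$, by \cite{FURSTENBERG_KESTEN_1960}. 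The stopping time $\tau_{P, v}$ is exactly the first passage time of $\log \lnb W_n \rnb$ above the level $\log P + \log \lnb \hat v \rnb$, and $(\gamma_1 \gamma_2 \dots \gamma_{\tau_{P, v}})^T v = V_{\tau_{P, v}}$.

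Next I would invoke the renewal theorem. The spectral-gap, non-lattice (aperiodicity) and ergodicity hypotheses of \cite[Theorem 1]{KESTEN_1974} are precisely those established in \cite[Section 4]{GUIVARCH_LEPAGE_2016} for random walks on $\Gp$ of the present type; in particular the driving chain on $\B$ has a unique stationary measure. Applying Kesten's theorem with the first-passage level $\log P + \log \lnb \hat v \rnb \to \infty$ produces a probability measure $\hat\nu$ on $\B$, which by the aperiodicity and the ergodicity of the chain is independent both of the level and of the starting point $[\hat v]$, such that $\mathcal{L}(V_{\tau_{P, v}})$ converges weakly to $\hat\nu$ as $P \to \infty$ (the overshoot converges jointly, but only the $\B$-marginal is needed here). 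This is also the source of the explicit formula for $\hat\nu$ mentioned in \cite[Theorem 1]{KESTEN_1974}, although that formula plays no role in what follows.

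Finally I would translate weak convergence into the coupling statement of the theorem. Since $\B$ is a compact metric space, $\mathcal{L}(V_{\tau_{P, v}}) \to \hat\nu$ weakly is equivalent to the Lévy--Prokhorov distance $d_{LP}(\mathcal{L}(V_{\tau_{P, v}}), \hat\nu)$ tending to $0$, and by Strassen's theorem $d_{LP}(\lambda_1, \lambda_2) < \varepsilon$ guarantees the existence of a coupling $(X, Y)$ with $X \sim \lambda_1$, $Y \sim \lambda_2$ and $\mathbb{P}[d(X, Y) > \varepsilon] < \varepsilon$. Hence, given $\varepsilon > 0$ and $v$, one picks $T$ so that $d_{LP}(\mathcal{L}(V_{\tau_{P, v}}), \hat\nu) < \varepsilon$ for all $P > T$ and applies Strassen's theorem to obtain the required random variable $x$ with law $\hat\nu$.

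I do not expect any substantial obstacle in this argument: the analytic content is quoted wholesale from \cite{KESTEN_1974} and \cite{GUIVARCH_LEPAGE_2016}, so the only care needed is in the bookkeeping — identifying the transposed walk with a Markov chain on $\B$ driven by $\mu^T$, confirming that the limiting measure $\hat\nu$ is genuinely independent of $v$ (which is exactly where the aperiodicity and ergodicity of the driving chain enter), and passing from weak convergence to the stated coupling formulation. The genuinely new contribution of this appendix is the subsequent Theorem~\ref{theorem:renewal_theorem}, where this convergence has to be upgraded to be uniform in $v$; that is not at issue in the present statement.
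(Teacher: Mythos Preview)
Your proposal is correct and follows essentially the same approach as the paper: both cite Kesten's renewal theorem \cite[Theorem 1]{KESTEN_1974} together with the verification of its hypotheses in \cite[Section 4]{GUIVARCH_LEPAGE_2016}. The paper's proof is in fact just those two citations, so your write-up simply makes explicit the bookkeeping (the Markov--additive framework for the transposed walk and the passage from weak convergence to a coupling via Strassen's theorem) that the paper leaves implicit.
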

Here $\tau_{P, v}$ is as in Definition \ref{definition:tau_t_v}. 
\begin{proof}
In \cite[Theorem 1]{KESTEN_1974} it is proven that this holds in a much more general setting providing some conditions are satisfied. In \cite[Section 4]{GUIVARCH_LEPAGE_2016} it is shown that the conditions of \cite[Theorem 1]{KESTEN_1974} are satisfied in this setting.
\end{proof}

We deduce uniform convergence from this fact. To do this we show that if $v, w \in \B$ are close then with high probability $\tau_{P, v} = \tau_{P, w}$ and $(\gamma_1 \gamma_2 \dots \gamma_{\tau_{P, v}})^Tv$ is close to $(\gamma_1 \gamma_2 \dots \gamma_{\tau_{P, v}})^Tw$.

\begin{lemma} \label{lemma:size_occurs_q_n}
Suppose that $\mu$ is a compactly supported Zariski-dense probability measure. Then given any $c_1, c_2 > 0$ there exists $T$ such that for any $P > T$ and any unit vector $b \in \R^2$
\begin{equation*}
\mathbb{P}[\exists n : \log P \leq \log \lnb (\gamma_1 \gamma_2 \dots \gamma_n)^T b \rnb \leq \log P + c_1] \lesssim c_1 / \chi + c_2. 
\end{equation*}
\end{lemma}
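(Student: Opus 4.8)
The plan is to recognise the quantity inside the probability as the $\log$-norm of a Markov--additive random walk with positive drift, to dominate the probability of ever hitting the band by the renewal mass of that band, and then to invoke Kesten's renewal theorem to identify the limiting mass as $c_1/\chi$. First I would pass to the transposed picture. Let $\bar\mu$ denote the pushforward of $\mu$ under transposition, let $R := \sup\{\lnb\gamma\rnb : \gamma\in\supp\mu\}<\infty$, and set $v_n := (\gamma_1\gamma_2\cdots\gamma_n)^T b$. Since $(\gamma_1\cdots\gamma_{n+1})^T = \gamma_{n+1}^T(\gamma_1\cdots\gamma_n)^T$ one has the forward recursion $v_{n+1}=\gamma_{n+1}^T v_n$ with $\gamma_{n+1}^T$ independent of $v_n$ and distributed according to $\bar\mu$; also $v_0=b$ and $\lnb v_0\rnb=1$. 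Hence $Y_n := \log\lnb v_n\rnb$ is an additive functional of the direction chain $\bar v_n := v_n/\lnb v_n\rnb\in\B$, its increments satisfy $|Y_n-Y_{n-1}|\le \log R$ (because $\lnb\gamma^T w\rnb/\lnb w\rnb\in[\,\lnb\gamma\rnb^{-1},\lnb\gamma\rnb\,]$ for $\gamma\in\SL_2(\R)$), and $Y_0=0$. As $\bar\mu$ is strongly irreducible, is not supported in a compact subgroup, and has the same Lyapunov exponent $\chi>0$ as $\mu$, the pair $(\bar v_n,Y_n)$ is exactly a Markov--additive process of the type considered by Kesten, with drift $\chi$.

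Next, let $E$ be the event that $Y_n\in[\log P,\log P+c_1]$ for some $n$, and put $Z := \#\{n\ge 0 : Y_n\in[\log P,\log P+c_1]\}$. Then
\[
\mathbb{P}[E] = \mathbb{P}[Z\ge 1] \le \mathbb{E}[Z] = \sum_{n\ge 0}\mathbb{P}\bigl[Y_n\in[\log P,\log P+c_1]\bigr] = U_b\bigl([\log P,\log P+c_1]\bigr),
\]
where $U_b := \sum_{n\ge 0}\mathbb{P}[Y_n\in\cdot\,]$ is the renewal (occupation) measure of the walk started from the direction $b$; the $n=0$ term contributes $0$ once $P>1$ since $Y_0=0$. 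So it suffices to bound $U_b([\log P,\log P+c_1])$ by $c_1/\chi+c_2$ for $P$ large.

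Here the renewal theorem does the work. By \cite[Theorem 1]{KESTEN_1974}, whose hypotheses are verified in exactly this setting in \cite[Section 4]{GUIVARCH_LEPAGE_2016} (the crucial non-arithmeticity of the additive component coming from strong irreducibility together with the support not lying in a compact subgroup of $\Gp$), one has $U_b([a,a+c_1])\to c_1/\chi$ as $a\to\infty$. I would need this convergence to be uniform over the starting direction $b$, which ranges over the compact space $\B$. If Kesten's statement does not deliver this directly, I would obtain it by a short burn-in argument: for a fixed large $N_0$ the law of $\bar v_{N_0}$ is within $\varepsilon(N_0)$ of the Furstenberg measure of $\bar\mu$ uniformly in $b$ (using the uniform exponential convergence to the Furstenberg measure, cf.\ Corollary~\ref{corollary:exponential_convergence_to_fm}), while $|Y_{N_0}|\le N_0\log R$; conditioning on $\mathcal{F}_{N_0}$ and combining the pointwise renewal convergence with a uniform a priori bound on the $U$-mass of a unit interval (a consequence of bounded increments and positive drift via the large deviations estimate, Theorem~\ref{theorem:large_deviations_v}) upgrades pointwise to uniform convergence. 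Granting this, choose $T$ so large that $\sup_{b}\bigl|U_b([a,a+c_1])-c_1/\chi\bigr|\le c_2$ for all $a\ge\log T$; then for every $P>T$ and every unit vector $b$,
\[
\mathbb{P}[E] \le U_b\bigl([\log P,\log P+c_1]\bigr) \le \frac{c_1}{\chi}+c_2 ,
\]
which is the asserted estimate.

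The main obstacle is precisely this uniformity in $b$: classical and Markov renewal theorems are usually phrased for a fixed initial condition, and passing to a bound uniform over the projective line takes a little care, which is why I would keep the burn-in/contraction argument in reserve. A secondary point worth flagging is that the band $[\log P,\log P+c_1]$ has width $<\log R$ in the only interesting regime --- when $c_1\ge\log R$ the conclusion is immediate, since $\chi\le\log R$ forces $c_1/\chi\gtrsim 1$ --- so the walk can in principle jump over the band, and it is exactly the non-arithmeticity input that prevents the first-passage overshoot from concentrating and thereby makes the $c_1/\chi$ (rather than $O(1)$) bound possible.
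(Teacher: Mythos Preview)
Your argument is correct. The paper's own proof is a single sentence citing \cite[Proposition~4.8]{LI_2018} as a black box, so you have supplied a self-contained argument where the paper simply outsources one. The route you take---bounding the hitting probability by the occupation measure $U_b([\log P,\log P+c_1])$ and then invoking Kesten's Markov renewal theorem in its Blackwell form, with the hypotheses checked by Guivarc'h--Le~Page---is the natural one and is presumably what underlies Li's result as well. Your attention to uniformity in the starting direction $b$ is warranted (Kesten's statement is pointwise in the initial condition), and the burn-in device you sketch---run the direction chain $N_0$ steps so its law is uniformly close to the stationary measure, then combine the pointwise renewal limit with a uniform a~priori bound on the $U$-mass of a unit interval---is the standard repair; the a~priori bound does follow from large deviations plus the strong Markov property at the first passage time, though it takes a couple of lines rather than one. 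One minor remark: your closing paragraph about non-arithmeticity and overshoot is a slight tangent, since the argument as written never touches the overshoot distribution, only the occupation measure; non-arithmeticity enters purely as the hypothesis under which the Blackwell-type limit is Lebesgue rather than lattice.
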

\begin{proof}
This follows immediately from \cite[Proposition 4.8]{LI_2018}.
\end{proof}

\begin{lemma} \label{lemma:taus_same}
Let $\mu$ be a finitely supported Zarisk- dense probability measure. Given $v \in \B$ and $P>0$ let $\tau_{P, v}$ be as in Definition \ref{definition:tau_t_v}. Then there exists some $\delta> 0$ depending on $\mu$ such that given any $r > 0$ for all sufficiently large (depending on $r$ and $\mu$) $P$ the following is true. Suppose that $v, w \in \B$ and $d(v, w) < r$. Then
\begin{equation*}
\mathbb{P}[\tau_{P, v} = \tau_{P, w}] \geq 1 - O_{\mu}(r^{\delta}).
\end{equation*}
\end{lemma}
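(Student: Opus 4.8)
\textbf{Proof proposal for Lemma \ref{lemma:taus_same}.}

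The plan is to compare the two stopping times $\tau_{P,v}$ and $\tau_{P,w}$ by tracking the two random walks $n \mapsto \lnb (\gamma_1 \cdots \gamma_n)^T \hat v \rnb$ and $n \mapsto \lnb (\gamma_1 \cdots \gamma_n)^T \hat w \rnb$ simultaneously, where $\hat v, \hat w$ are unit representatives of $v$ and $w$. First I would observe that, since $\gamma_i \in \Gp$ acts by an isometry of the round metric composed with the singular-value stretching, for any matrix $g \in SL_2(\R)$ one has $\lnb g^T \hat v \rnb$ comparable to $\lnb g \rnb \cdot d(b^{+}(g^T), v)$ plus a bounded error, and more precisely $\left| \lnb g^T \hat v \rnb - \lnb g^T \hat w \rnb \right| \leq \lnb g \rnb \cdot d(v,w) \leq \lnb g \rnb r$ by the triangle inequality and linearity of $g^T$. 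So the two walks differ multiplicatively by a factor $1 + O(r \cdot \lnb(\gamma_1\cdots\gamma_n)^T\hat w\rnb^{-1} \lnb \gamma_1 \cdots \gamma_n \rnb )$; by Lemma \ref{lemma:new_shape_b_simple_singular_value} this factor is controlled as long as neither $v$ nor $w$ is abnormally close to $b^{-}(\gamma_1\cdots\gamma_n)$.

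Next I would set up the event on which $\tau_{P,v} = \tau_{P,w}$. The stopping times can disagree only if at the crossing time $n = \tau_{P,v} \wedge \tau_{P,w}$ one of the two quantities $\lnb (\gamma_1 \cdots \gamma_n)^T \hat v \rnb$, $\lnb (\gamma_1 \cdots \gamma_n)^T \hat w \rnb$ has just crossed the threshold $P$ while the other has not; since the two differ by at most a factor $1 + O(r^{\delta'})$ for some $\delta' > 0$ (on the good event where the walk is not too close to the unstable direction, using the positive-dimension estimate Theorem \ref{theorem:positive_dimension} together with Corollary \ref{corollary:products_not_to_close} to rule out the walk passing near $v$ or $w$), disagreement forces the walk value to lie in the thin multiplicative window $[\log P, \log P + O(r^{\delta'})]$ at some time $n$. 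Here is where Lemma \ref{lemma:size_occurs_q_n} enters: applied with $c_1 = O(r^{\delta'})$ and $c_2 = r^{\delta'}$, it gives that the probability that $\log\lnb(\gamma_1\cdots\gamma_n)^T b\rnb$ ever lands in such a window is $\lesssim r^{\delta'}/\chi + r^{\delta'} = O_\mu(r^{\delta'})$, for all sufficiently large $P$. Combining this with the $O_\mu$-probability bound from Corollary \ref{corollary:products_not_to_close} that the walk stays away from the bad directions, I would take $\delta = \delta'$ (or the minimum of the two exponents produced) and conclude $\mathbb{P}[\tau_{P,v} = \tau_{P,w}] \geq 1 - O_\mu(r^\delta)$.

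The main obstacle I anticipate is making the "the two walks stay multiplicatively close" step uniform and quantitative with a genuine power of $r$ rather than just $o(1)$: one must show that with probability $1 - O_\mu(r^\delta)$, for \emph{every} time $n$ up to the stopping time, $b^{+}((\gamma_1\cdots\gamma_n)^T)$ — equivalently $b^{-}(\gamma_1\cdots\gamma_n)$ — stays at distance $\gtrsim r^{\delta}$ from both $v$ and $w$. This is exactly the content one extracts from Corollary \ref{corollary:products_not_to_close} (taking $\varepsilon$ there proportional to $\delta$ and $n$ of order $\log P$), but one has to be careful that the relevant comparison is between $\lnb g^T \hat v \rnb / \lnb g \rnb$ and $d(b^{-}(g), v)$, and that the error in Lemma \ref{lemma:new_shape_b_simple_singular_value} is genuinely multiplicative. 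Once that uniform separation is in hand, everything else is a routine application of Lemma \ref{lemma:size_occurs_q_n} and a union bound, and the conclusion follows for all $P$ large enough depending on $r$ and $\mu$.
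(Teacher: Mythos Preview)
Your proposal is correct and follows essentially the same route as the paper: on a good event (the paper uses separation scale $\sqrt{r}$, via Corollary~\ref{corollary:exponential_convergence_to_fm} and Theorem~\ref{theorem:positive_dimension}) one gets $|\log\lnb q_n^T\hat v\rnb - \log\lnb q_n^T\hat w\rnb| \leq O(\sqrt{r})$, and then Lemma~\ref{lemma:size_occurs_q_n} bounds the probability of landing in the resulting thin window around $\log P$. One small slip: the direction that must stay away from $v,w$ is $b^{-}((\gamma_1\cdots\gamma_n)^T) = b^{+}(\gamma_1\cdots\gamma_n)^{\perp}$, not $b^{+}$ of the transpose (equivalently not $b^{-}(\gamma_1\cdots\gamma_n)$); this is cosmetic and does not affect your argument.
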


\begin{proof}
Let $A$ be the event that
\begin{equation*}
d(v, b^{-}((\gamma_1 \gamma_2 \dots \gamma_n)^T)) > \sqrt{r}
\end{equation*}
and
\begin{equation*}
d(w, b^{-}((\gamma_1 \gamma_2 \dots \gamma_n)^T)) > \sqrt{r}
\end{equation*}
for all $n \geq \log P / \log R$. By \eqref{eq:products_not_too_close} from Lemma \ref{lemma:convergence_to_fm} we know that providing $P$ is sufficiently large in terms of $\mu$ and $r$ there is some $\delta > 0$ such that
\begin{equation*}
\mathbb{P}[A] \geq 1 - O_{\mu}(r^{\delta}).
\end{equation*}

Let $\hat{v}, \hat{w} \in \R^2$ be unit vectors which are representatives of $v$ and $w$ respectively. By Lemma \ref{lemma:simple_sl2_prod_size_bound} we know that there is some constant $C>0$ such that on the event $A$
\begin{equation*}
|\log \lnb (\gamma_1 \gamma_2 \dots \gamma_n)^T \hat{v} \rnb - \log \lnb (\gamma_1 \gamma_2 \dots \gamma_n)^T \hat{w} \rnb | < C r^{1/2}
\end{equation*}
for all $n \geq \log P / \log R$. Now let $B$ be the event that there exists $n$ such that
\begin{equation*}
|\log \lnb (\gamma_1 \gamma_2 \dots \gamma_n)^T \hat{v} \rnb - P | < 10 C r^{1/2}.
\end{equation*}
By Lemma \ref{lemma:size_occurs_q_n} we know that providing $P$ is sufficiently large in terms of $\mu$ and $r$, $\mathbb{P}[B] \leq O_{\mu}(r^{1/2})$. We also know that $\{ \tau_{P, v} = \tau_{P, w} \} \supset A \backslash B$. Therefore
\begin{equation*}
\mathbb{P}[\tau_{P, v} = \tau_{P, w}] \geq 1 - O_{\mu}(r^{\delta})
\end{equation*}
as required.
\end{proof}

\begin{proof}[Proof of Theorem \ref{theorem:renewal_theorem}]
Given $\varepsilon > 0$ we wish to show that we can find some $T$ (depending on $\mu$ and $\varepsilon$) such that whenever $P > T$ and $v \in \B$ we can find some random variable $x$ with law $\hat{\nu}$ such that
\begin{equation*}
\mathbb{P}[d(x, (\gamma_1 \gamma_2 \dots \gamma_{\tau_{P, v}})^Tv) > \varepsilon] < \varepsilon.
\end{equation*}

First let $\varepsilon > 0$. Choose $k \in \Z_{>0}$ and let $v_1, v_2, \dots, v_k \in \B$ be equally spaced. Let $T_1$ be the greatest of the $T$ from Theorem \ref{theorem:renewal_non_uniform} with $\frac{1}{10} \varepsilon$ in the role of $\varepsilon$ and $v_1, v_2, \dots, v_k$ in the role of $v$ and let $x_1, x_2, \dots, x_k$ be the $x$. Let $T_2$ be the $T$ from Lemma \ref{lemma:taus_same} with $r = \frac{\pi}{k}$. Let $T = \max \{ T_1, T_2 \}$. Thus whenever $t > T$ and $i \in [k]$
\begin{equation*}
\mathbb{P} \left[ d(x_i, (\gamma_1\gamma_2\dots \gamma_{\tau_{P, v_i}})^T v_i) > \frac{\varepsilon}{10} \right] < \frac{\varepsilon}{10}.
\end{equation*}

Now let $P > T$ and let $v \in \B$. Suppose without loss of generality that $v_1$ is the closest of the $v_i$ to $v$. In particular $d(v_1, w) < \frac{\pi}{k}$. By Lemma \ref{lemma:taus_same} this means that
\begin{equation}
\mathbb{P}[\tau_{P, v_1} = \tau_{P, v}] \geq 1 - O(k^{-\delta}) \label{eq:taus_same}
\end{equation}
for some $\delta > 0$ depending only on $\mu$.

We know by for example Lemma \ref{lemma:derivs} that providing
\begin{equation*}
d(b^{-1}((\gamma_1 \gamma_2 \dots \gamma_n)^T), v_1) > 100 k^{-1}
\end{equation*}
we have
\begin{equation*}
d((\gamma_1 \gamma_2 \dots \gamma_n)^T v_1, (\gamma_1 \gamma_2 \dots \gamma_n)^T v) < O_k(\lnb (\gamma_1 \gamma_2 \dots \gamma_n)^T \rnb^{-2}).
\end{equation*}
In particular by \eqref{eq:products_not_too_close} from Lemma \ref{lemma:convergence_to_fm} we know that
\begin{equation*}
\mathbb{P} \left[ d((\gamma_1 \gamma_2 \dots \gamma_{\tau_{P, v_1}})^Tv_1, (\gamma_1 \gamma_2 \dots \gamma_{\tau_{P, v_1}})^T v) < O_k(P^{-2}) \right] \geq 1 - O(k^{-\delta}).
\end{equation*}

Combining this with \eqref{eq:taus_same} we know that providing $P$ is sufficiently large depending on $k$ and $\mu$
\begin{equation*}
\mathbb{P} \left[ d((\gamma_1 \gamma_2 \dots \gamma_{\tau_{P, v_1}})^Tv_1, (\gamma_1 \gamma_2 \dots \gamma_{\tau_{P, v}})^T v) > O_k(P^{-2}) \right] < O(k^{-\delta}).
\end{equation*}
In particular this means that providing $P$ is sufficiently large depending on $k$ and $\mu$
\begin{equation*}
\mathbb{P} \left[ d(x_1, (\gamma_1 \gamma_2 \dots \gamma_{\tau_{P, v}})^T v) > \frac{1}{10} \varepsilon + O_k(P^{-2}) \right] < \frac{1}{10} \varepsilon + O(k^{-\delta})
\end{equation*}
and so if we choose $k$ large enough (depending on $\mu$ and $\varepsilon$) and then choose $P$ large enough (depending on $\mu$, $k$, and $\varepsilon$) then
\begin{equation*}
\mathbb{P} \left[ d((x_1, \gamma_1 \gamma_2 \dots \gamma_{\tau_{P, v}})^T v) >  \varepsilon  \right] < \varepsilon 
\end{equation*}
as required.

\end{proof}

We now wish to deduce Corollary \ref{corollary:renewal_theorem}. First we need the following Lemma.

\begin{lemma} \label{lemma:taus_same_g}
Let $\mu$ be a finitely supported Zariski-dense probability measure. Given $v \in \B$ let $\tau_{P, v}$ be as in Definition \ref{definition:tau_t_v} and given $a \in \Gp$ let $\tau_{P, a}$ be defined by
\begin{equation*}
\tau_{P, a} := \inf \{ n : \lnb a \gamma_1 \gamma_2 \dots \gamma_n \rnb \geq P \lnb a \rnb \}.
\end{equation*}
Then there exists some $\delta> 0$ depending on $\mu$ such that given any $r > 0$ for all sufficiently large (depending on $r$ and $\mu$) $P$ the following is true. Suppose that $v \in \B$, $a \in \Gp$ and $d(v, b^{-}(a)^{\perp}) < r$. Suppose that $a$ is sufficiently large (depending on $r$ and $\mu$). Then
\begin{equation*}
\mathbb{P}[\tau_{P, v} = \tau_{P, a}] \geq 1 - O_{\mu}(r^{\delta}).
\end{equation*}
\end{lemma}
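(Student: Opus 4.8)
~\ref{lemma:taus_same_g} is the natural analogue of Lemma~\ref{lemma:taus_same} with one endpoint replaced by a fixed group element $a$ acting on the left, and the plan is to deduce it from Lemma~\ref{lemma:taus_same} together with Corollary~\ref{corollary:renewal_theorem}'s underlying mechanism. First I would compare the two relevant stopping conditions. By Corollary~\ref{corollary:new_shape_b_simple_singular_value} (or directly Lemma~\ref{lemma:new_shape_b_simple_singular_value}), if $b$ is a unit representative of a point of $\B$ that stays $\sqrt r$-away from $b^{-}((\gamma_1\gamma_2\dots\gamma_n)^T)$ for all $n$ past the threshold $\log P/\log R$, then $\lnb(\gamma_1\gamma_2\dots\gamma_n)^T b\rnb \gtrsim_\mu \sqrt r\,\lnb\gamma_1\gamma_2\dots\gamma_n\rnb$; and by Lemma~\ref{lemma:simple_sl2_prod_size_bound}, since $d(v,b^{-}(a)^{\perp})<r$, i.e. $b^{+}(a^T)$ (which is $b^{-}(a)^{\perp}$) is within $r$ of $v$, we get
\begin{equation*}
\bigl|\log\lnb a\gamma_1\gamma_2\dots\gamma_n\rnb - \log\lnb a\rnb - \log\lnb(\gamma_1\gamma_2\dots\gamma_n)^T\hat v\rnb\bigr| \lesssim_\mu r^{1/2}
\end{equation*}
on a good event, for all $n$ past the threshold, provided $\lnb a\rnb$ is large enough (in terms of $r$ and $\mu$). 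Here I use $\lnb a g\rnb = \lnb (ag)^T\rnb = \lnb g^T a^T\rnb$ and apply the lemma to the pair $(g^T, a^T)$, noting $b^{-}(g^T a^T) \approx b^{-}(g^T)$ is irrelevant and what matters is $d(b^{-}(g^T), b^{+}(a^T)) = d(b^{-}(g^T), b^{-}(a)^\perp)$, which is $\gtrsim \sqrt r$ on the good event since $v$ is $r$-close to $b^{-}(a)^\perp$ and $b^{-}(g^T)$ is $\sqrt r$-far from $v$.

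The two reference thresholds $P\lnb a\rnb$ (for $\tau_{P,a}$, measured by $\lnb a\gamma_1\cdots\gamma_n\rnb$) and $P$ (for $\tau_{P,v}$, measured by $\lnb(\gamma_1\cdots\gamma_n)^T\hat v\rnb$, with $\hat v$ a unit vector) differ by exactly the quantity controlled above up to an error of size $O_\mu(r^{1/2})$ in the logarithm. So the only way $\tau_{P,v}\neq\tau_{P,a}$ on the good event is if the log-size of $(\gamma_1\cdots\gamma_n)^T\hat v$ lands within $O_\mu(r^{1/2})$ of $\log P$ for some $n$ — exactly the ``near-miss'' event whose probability Lemma~\ref{lemma:size_occurs_q_n} bounds by $O_\mu(r^{1/2})$. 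Combining: let $A$ be the event that $b^{-}((\gamma_1\cdots\gamma_n)^T)$ stays $\sqrt r$-away from $v$ (and hence, by the triangle inequality and $d(v,b^{-}(a)^\perp)<r$, also $\gtrsim\sqrt r$-away from $b^{-}(a)^\perp$) for all $n\geq \log P/\log R$, which has probability $\geq 1 - O_\mu(r^\delta)$ by Corollary~\ref{corollary:exponential_convergence_to_fm} and Theorem~\ref{theorem:positive_dimension}; let $B$ be the near-miss event, with $\mathbb P[B]\leq O_\mu(r^{1/2})$ by Lemma~\ref{lemma:size_occurs_q_n}; then $\{\tau_{P,v}=\tau_{P,a}\}\supseteq A\setminus B$, giving the claim with $\delta$ replaced by $\min\{\delta,1/2\}$.

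I expect the main obstacle to be bookkeeping the transpose/perp conversions cleanly: one must be careful that $\lnb a g\rnb$ is governed by how $b^{+}(a^T) = b^{-}(a)^\perp$ sits relative to $b^{-}(g^T)$, and that the hypothesis $d(v,b^{-}(a)^\perp)<r$ is precisely what makes $v$ (the vector driving $\tau_{P,v}$) almost aligned with the expanding direction fed in from the left by $a$. A second minor subtlety is that we need $\lnb a\rnb$ large enough that the ``$n$ past the threshold $\log P/\log R$'' regime captures essentially all of the relevant stopping behaviour; this is where the hypothesis that $a$ is sufficiently large (depending on $r$ and $\mu$) gets used, and it should follow from Corollary~\ref{corollary:size_after_stopping_time} or a direct large-deviations argument showing $\tau_{P,a}$ and $\tau_{P,v}$ are both comparable to $\log P/\chi$ with overwhelming probability. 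Once these conversions are set up, the probabilistic estimates are immediate from the cited lemmas, so the proof should be short.
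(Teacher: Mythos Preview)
Your proposal is correct and follows essentially the same route as the paper: define a good event $A$ on which $b^{-}((\gamma_1\cdots\gamma_n)^T)$ stays $\sqrt r$-far from $v$ (and hence from $b^{-}(a)^\perp$), use Lemma~\ref{lemma:simple_sl2_prod_size_bound} to show the two log-norms differ by $O_\mu(r^{1/2})$, and then invoke Lemma~\ref{lemma:size_occurs_q_n} for the near-miss event exactly as in Lemma~\ref{lemma:taus_same}. The only cosmetic difference is that the paper imposes the two separation conditions $d(v,b^{-}(g^T))>\sqrt r$ and $d(b^{-}(a),b^{+}(g))>\sqrt r$ independently rather than deducing the second from the first via the triangle inequality and the identity $b^{+}(g)=b^{-}(g^T)^\perp$; your transpose bookkeeping is correct and arguably cleaner.
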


\begin{proof}
This follows by a very similar proof to Lemma \ref{lemma:taus_same}. Let $A$ be the event that
\begin{equation*}
d(v, b^{-}((\gamma_1 \gamma_2 \dots \gamma_n)^T)) > \sqrt{r}
\end{equation*}
and
\begin{equation*}
d(b^{-}(a), b^{+}(\gamma_1 \gamma_2 \dots \gamma_n)) > \sqrt{r}
\end{equation*}
for all $n \geq \log P / \log R$. By \eqref{eq:products_not_too_close} from Lemma \ref{lemma:convergence_to_fm} we know that providing $P$ is sufficiently large in terms of $\mu$ and $r$ there is some $\delta > 0$ such that
\begin{equation*}
\mathbb{P}[A] \geq 1 - O_{\mu}(r^{\delta}).
\end{equation*}

Let $\hat{v} \in \R^2$ be a unit vector which is a representative of $v$. By Lemma \ref{lemma:simple_sl2_prod_size_bound} we know that there is some constant $C>0$ such that on the event $A$
\begin{equation*}
|\log \lnb (\gamma_1 \gamma_2 \dots \gamma_n)^T \hat{v} \rnb - \log \lnb a\gamma_1 \gamma_2 \dots \gamma_n \rnb + \log \lnb a \rnb | < C r^{1/2}
\end{equation*}
for all $n \geq \log P / \log R$. The result now follows by the same argument as Lemma \ref{lemma:taus_same}.
\end{proof}

We now prove Corollary \ref{corollary:renewal_theorem}.
\begin{proof}[Proof of Corollary \ref{corollary:renewal_theorem}]
Let $S$ be defined by
\begin{equation*}
S = \inf \{n : \lnb a \gamma_1 \gamma_2 \dots \gamma_n \rnb \geq \sqrt{P} \} 
\end{equation*}
let $\overline{a} = a \gamma_1 \gamma_2 \dots \gamma_S$ and let  $v = b^{-}(\overline{a})^{\perp}$. Let $\overline{S}$ be defined by $$ \overline{S} := \inf \{n \geq S: \lnb (\gamma_{S+1} \gamma_{S+2} \dots \gamma_n)^T \hat{v} \rnb \geq \frac{P}{\lnb a \gamma_1 \gamma_2 \dots \gamma_n \rnb} \lnb \hat{v} \rnb \}$$ where $\hat{v} \in \R^2 \backslash \{ 0 \}$ is a representative of $v$. Let $r > 0$ be arbitrarily small. By Lemma \ref{lemma:taus_same_g} providing $P$ is sufficiently large (in terms of $\mu$ and $r$) we have
\begin{equation*}
\mathbb{P}[\overline{S} = \tau_{a, P}] \geq 1 - O_{\mu}(r^{\delta_1})
\end{equation*}
for some $\delta_1>0$ depending only on $\mu$. Let $A$ be the event that for all $n \geq \frac{\log P}{2 \log R} - 1$ we have
\begin{equation*}
d(b^{+}(\gamma_{S+1} \gamma_{S+2} \dots \gamma_n), b^{-}(\overline{a})) > r.
\end{equation*}
By \eqref{eq:products_not_too_close} from Lemma \ref{lemma:convergence_to_fm} we know that $\mathbb{P}[A] \geq 1 - O_{\mu}(r^{\delta_2})$ for some $\delta_2>0$ depending only on $\mu$. By Lemmas \ref{lemma:mid_angles} and \ref{lemma:new_shape_b_simple_singular_value} we know that on the event $A$ providing $P$ is sufficiently large (in terms of $r$) we have
\begin{equation*}
d((\gamma_{S+1} \gamma_{S+2} \dots \gamma_{\overline{S}})^T v, b^{-}(\gamma_{S+1} \gamma_{S+2} \dots \gamma_{\overline{S}})^{\perp}) < r
\end{equation*}
and
\begin{equation*}
d(b^{-}(a \gamma_1 \gamma_2 \dots \gamma_{\tau_{P, a}}), b^{-}(\gamma_{S+1} \gamma_{S+2} \dots \gamma_{\tau_{P, a}})) < r.
\end{equation*}
In this means that on the event $A \cap \{ \tau_{P, a} = \overline{S}\}$ we have
\begin{equation*}
d(b^{-}(a \gamma_1 \gamma_2 \dots \gamma_{\tau_{P, a}})^{\perp}, (\gamma_{S+1} \gamma_{S+2} \dots \gamma_{\overline{S}})^T v) < 2r.
\end{equation*}
We are now done by Theorem \ref{theorem:renewal_theorem}.
\end{proof}

\section{Acknowledgements}

First of all I would like to thank my supervisor P\'eter  Varj\'u for his help and detailed comments in preparing this paper. I would also like to thank Emmanuel Breuillard, Constantin Kogler and Ioannis Kontoyiannis for their helpful comments which greatly improved the readability of this paper. Finally I would like to thank the anonymous referees for their helpful and detailed comments.

\bibliographystyle{plain}
\bibliography{references.bib}
\end{document}